\definecolor{e-mail}{rgb}{0,.40,.80}
\definecolor{reference}{rgb}{.20,.60,.22}
\definecolor{citation}{rgb}{0,.40,.80}
\theoremstyle{plain}
\newtheorem{maintheorem}{Theorem}
\newtheorem{mainconjecture}[maintheorem]{Conjecture}
\newtheorem{theorem}{Theorem}[section]
\newtheorem{corollary}[theorem]{Corollary}
\newtheorem{proposition}[theorem]{Proposition}
\newtheorem{lemma}[theorem]{Lemma}
\newaliascnt{assumption}{theorem}
\crefname{assumption}{assumption}{assumptions}
\theoremstyle{definition}
\newtheorem{definition}[theorem]{Definition}
\newtheorem{example}[theorem]{Example}
\newtheorem{remark}[theorem]{Remark}
\let\oldtocsection=\tocsection
\let\oldtocsubsection=\tocsubsection
\let\oldtocsubsubsection=\tocsubsubsection
\renewcommand{\tocsection}[2]{\hspace{0em}\oldtocsection{#1}{#2}}
\renewcommand{\tocsubsection}[2]{\hspace{1em}\oldtocsubsection{#1}{#2}}
\renewcommand{\tocsubsubsection}[2]{\hspace{2em}\oldtocsubsubsection{#1}{#2}}
\newcommand{\defterm}[1]{\textbf{\emph{#1}}}
\newcommand{\dbtilde}[1]{\tilde{\raisebox{0pt}[0.9\height]{$\tilde{#1}$}}}
\def\textin{\quad\textup{in}\quad}
\def\and{\quad\textup{and}\quad}
\def\where{\quad\text{where}\quad }
\def\all{\mathrm{all}}
\def\BC{\mathrm{BC}}
\def\CAlg{\mathrm{CAlg}}
\def\cart{\ar@{}[rd]|{\Box}}
\def\Cat{\mathrm{Cat}}
\def\colim{\operatorname{colim}}
\def\Corr{\mathrm{Corr}}
\def\counit{\mathrm{counit}}
\def\cC{\mathscr{C}}
\def\cE{\mathscr{E}}
\def\cV{\mathscr{V}}
\def\bD{\mathbf{D}}
\def\Db{\mathbf{D}^{\mathrm{b}}}
\def\Dbc{\mathbf{D}^{\mathrm{b}}_{\mathrm{c}}}
\def\ex{\mathrm{ex}}
\def\Fun{\mathrm{Fun}}
\def\cF{\mathscr{F}}
\def\cG{\mathscr{G}}
\def\Gpd{\mathrm{Gpd}}
\def\cHom{\mathcal{H}\mathrm{om}}
\def\Hom{\mathrm{Hom}}
\def\id{\mathrm{id}}
\def\Ind{\mathrm{Ind}}
\def\Map{\mathrm{Map}}
\def\Mod{\mathrm{Mod}}
\def\pr{\mathrm{pr}}
\def\Perf{\mathrm{Perf}}
\def\PrSt{\mathrm{Pr}^{\mathrm{St}}}
\def\Set{\mathrm{Set}}
\def\bu{\mathbf{1}}
\def\unit{\mathrm{unit}}
\def\lch{\mathrm{lch}}
\def\Top{\mathrm{Top}}
\def\bD{\mathbf{D}}
\def\bbD{\mathbb{D}}
\def\Ex{\mathrm{Ex}}
\def\scF{\mathscr{F}}
\def\cF{\scF}
\def\fgsp{\mathrm{fgsp}}
\def\scG{\mathscr{G}}
\def\cK{\mathscr{K}}
\def\Loc{\mathrm{Loc}}
\def\Perv{\mathrm{Perv}}
\def\bPerv{\mathbb{P}\mathrm{erv}}
\renewcommand{\Pr}{\mathrm{Pr}}
\def\cP{\mathscr{P}}
\def\pur{\mathrm{pur}}
\def\cQ{\mathscr{Q}}
\def\Shv{\mathrm{Shv}}
\def\TS{\mathrm{TS}}
\def\rH{\mathrm{H}}
\def\HBM{\mathrm{H}^{\mathrm{BM}}}
\def\Ker{\mathrm{Ker}}
\def\op{\mathrm{op}}
\def\Tot{\mathrm{Tot}}
\def\C{\mathbb{C}}
\def\Q{\mathbb{Q}}
\def\R{\mathbb{R}}
\def\Z{\mathbb{Z}}
\def\dual{^{\vee}}
\def\rO{\mathrm{O}}
\def\q{\mathfrak{q}}
\def\rank{\mathrm{rk}}
\renewcommand{\Re}{\operatorname{Re}}
\def\SO{\mathrm{SO}}
\def\vol{\mathrm{vol}}
\def\bA{\mathbb{A}}
\def\an{\mathrm{an}}
\def\Art{\mathrm{Art}}
\def\B{\mathrm{B}}
\def\cl{\mathrm{cl}}
\def\ev{\mathrm{ev}}
\def\ft{\mathrm{ft}}
\def\geometric{\mathrm{geometric}}
\def\Gm{\mathbb{G}_{\mathrm{m}}}
\def\gr{\mathrm{gr}}
\def\cI{\mathcal{I}} 
\def\I{\cI}
\def\bL{\mathbb{L}}
\def\cL{\mathscr{L}}
\def\ft{\mathrm{ft}}
\def\lft{\mathrm{lft}}
\def\lis{{\triangleleft}}
\def\rN{\mathrm{N}}
\def\cO{\mathscr{O}} 
\def\O{\cO}
\def\OG{\mathrm{OG}}
\def\Pic{\mathrm{Pic}}
\def\pt{\mathrm{pt}}
\def\QCoh{\mathrm{QCoh}}
\def\red{\mathrm{red}}
\def\Sch{\mathrm{Sch}}
\def\sep{\mathrm{sep}}
\def\smooth{\mathrm{smooth}}
\def\Spec{\operatorname{Spec}}
\def\Stk{\mathrm{Stk}}
\def\supp{\operatorname{supp}}
\def\Sym{\mathrm{Sym}}
\def\Tot{\mathrm{Tot}}
\def\Zar{\mathrm{Zar}}
\def\Zero{\mathrm{Z}}
\def\dAff{\mathrm{dAff}}
\def\dStk{\mathrm{dStk}}
\def\dim{\operatorname{dim}}
\def\cA{\mathscr{A}}
\def\can{\mathrm{can}}
\def\Crit{\mathrm{Crit}}
\def\CritCharts{\mathrm{CritCharts}}
\def\DCrit{\mathrm{DCrit}}
\def\der{\mathrm{der}}
\def\DR{\mathrm{DR}}
\def\ex{\mathrm{ex}}
\def\Hess{\mathrm{Hess}}
\def\rN{\mathrm{N}}
\def\ori{\mathrm{or}}
\def\cS{\mathscr{S}}
\def\Sing{\mathrm{Sing}}
\def\stab{\mathsf{stab}}
\def\T{\mathrm{T}}
\def\und{\mathrm{und}}
\def\vir{\mathrm{vir}}
\def\fX{\mathfrak{X}}
\def\fY{\mathfrak{Y}}
\def\tU{\widetilde{U}}
\def\tV{\widetilde{V}}
\def\tg{\widetilde{g}}
\begin{document}

\title{Perverse pullbacks}

\address{Institute of Mathematics, Academia Sinica, Taipei, Taiwan}
\address{National Center for Theoretical Sciences, National Taiwan University, Taipei, Taiwan}
\email{adeelkhan@as.edu.tw}
\author{Adeel A. Khan}
\address{Research Institute for Mathematical Sciences, Kyoto University, Kyoto 606-8502, Japan}
\email{tkinjo@kurims.kyoto-u.ac.jp}
\author{Tasuki Kinjo}
\address{June E Huh Center for Mathematical Challenges, Korea Institute for Advanced Study, Seoul, Republic of Korea}
\email{hyeonjunpark@kias.re.kr}
\author{Hyeonjun Park}
\address{School of Mathematics and Maxwell Institute for Mathematical Sciences, University of Edinburgh, Edinburgh, UK}
\email{p.safronov@ed.ac.uk}
\author{Pavel Safronov}

\begin{abstract}
We define a new perverse $t$-exact pullback operation on derived categories of constructible sheaves which generalizes most perverse $t$-exact functors in sheaf theory, such as microlocalization, the Fourier--Sato transform and vanishing cycles. This operation is defined for morphisms of algebraic stacks equipped with a relative exact $(-1)$-shifted symplectic structure, and can be used to define cohomological Donaldson--Thomas invariants in a relative setting. We prove natural functoriality properties for perverse pullbacks, such as smooth and finite base change, compatibility with products and Verdier duality.
\end{abstract}


\maketitle

\tableofcontents

\section*{Introduction}

For a locally compact topological space $X$ the derived category $\Shv(X)$ of sheaves of complexes of $\Q$-vector spaces has two $t$-structures: one $t$-structure is determined by the condition that all stalk functors (i.e. $i^*$ for inclusions of points) are $t$-exact and another $t$-structure is determined by the condition that all costalk functors (i.e. $i^!$) are $t$-exact. By definition, arbitrary $*$-pullbacks are $t$-exact with respect to the first $t$-structure and arbitrary $!$-pullbacks are $t$-exact with respect to the second $t$-structure. Moreover, these two $t$-structures as well as $*$-pullbacks and $!$-pullbacks are exchanged under the Verdier duality functor $\bbD$. 

In the case when $X$ is the complex analytic space underlying a complex algebraic variety, there is a subcategory $\Dbc(X)\subset \Shv(X)$ of constructible sheaves which inherits the two $t$-structures. But it also has a third, perverse, $t$-structure, which is Verdier self-dual. Given an extra structure on a morphism $\pi\colon X\rightarrow B$ of complex algebraic varieties (a relative d-critical structure or a derived enhancement $\fX\rightarrow B$ equipped with a relative exact $(-1)$-shifted symplectic structure) the goal of this paper is to define a \emph{perverse pullback} functor $\pi^\varphi\colon\Dbc(B)\rightarrow \Dbc(X)$ which is perverse $t$-exact and Verdier self-dual.

\subsection*{Cohomological DT theory}

Cohomological Donaldson--Thomas theory associates a cohomology theory to certain algebro-geometric moduli spaces. Namely, for a complex algebraic stack $X$ consider the following data:
\begin{itemize}
    \item A derived enhancement $X\hookrightarrow \fX$ equipped with a $(-1)$-shifted symplectic structure in the sense of \cite{PTVV}.
    \item Orientation data, i.e. the choice of a square root line bundle of the canonical bundle $K_\fX=\det(\bL_{\fX})$.
\end{itemize}

Given such data, the works \cite{BBDJS,BBBBJ,KiemLi} define a perverse sheaf $\varphi_X$ on $X$, locally modeled on the sheaf of vanishing cycles for a function $f\colon U\rightarrow \C$ on a smooth complex scheme $U$, so that the cohomological Donaldson--Thomas (DT) invariant of $X$ is given by the cohomology of $\varphi_X$. Some of the examples of stacks which admit $(-1)$-shifted symplectic derived enhancements are moduli stacks of compactly supported coherent sheaves on smooth 3-dimensional Calabi--Yau varieties (in which case a natural orientation data was constructed in \cite{JoyceUpmeier}) and moduli stacks of local systems on a compact oriented 3-manifold (in which case a natural orientation data was constructed in \cite{NaefSafronov}).

Cohomological DT invariants relate to the usual cohomology via a dimensional reduction isomorphism constructed in \cite{KinjoDimred}. Namely, consider an algebraic stack $Y$ with a quasi-smooth derived enhancement $\fY$. Consider the stack of singularities $\Sing(Y)$ \cite{ArinkinGaitsgory} obtained as the classical truncation of the $(-1)$-shifted cotangent bundle $\T^*[-1] \fY$. The dimensional reduction theorem identifies the cohomological DT invariant of $\Sing(Y)$ with the Borel--Moore homology of $Y$:
\[\rH^\bullet(\Sing(Y), \varphi_{\Sing(Y)})\cong \HBM_{\dim(\fY)-\bullet}(Y).\]

\subsection*{Perverse pullbacks}

Motivated by relative cohomological Donaldson--Thomas theory (where we have a family of 3-dimensional Calabi--Yau varieties or an anticanonical divisor in a 4-dimensional Fano variety), in this paper we introduce a relative version of the perverse sheaf $\varphi_X$ recalled above. Namely, instead of considering a fixed complex algebraic stack $X$ and equipping it with a perverse sheaf $\varphi_X\in\Perv(X)$ we consider a family $\pi\colon X\rightarrow B$ and equip it with a perverse pullback functor $\Perv(B)\rightarrow \Perv(X)$. The following is a condensed version of \cref{thm:micropullstk}; we refer the reader to the main body of the paper to the precise definition of natural isomorphisms, their compatibility as well as a generalization to constructible sheaves with coefficients in a general commutative ring $R$.

\begin{maintheorem}\label{mainthm:perversepullback}
Let $\pi\colon X\rightarrow B$ be a morphism of higher Artin stacks over $\C$ together with a derived enhancement $X\hookrightarrow \fX$, a relative exact $(-1)$-shifted symplectic structure on $\fX\rightarrow B$, and the choice of a square root line bundle of $K_{\fX/B}=\det(\bL_{\fX/B})$. Then there is a perverse $t$-exact functor
\[\pi^\varphi\colon \Dbc(B)\longrightarrow \Dbc(X)\]
which satisfies the following properties:
\begin{enumerate}
    \item It is compatible with pullbacks along smooth morphisms in $B$.
    \item It is compatible with pullbacks along smooth morphisms in $X$.
    \item It is compatible with $(-1)$-shifted symplectic pushforwards (in the sense of \cite{ParkSymplectic}) along smooth morphisms in $B$.
    \item It is compatible with pushforwards along finite morphisms in $B$.
    \item It commutes with Verdier duality.
    \item It is compatible with products.
\end{enumerate}
\end{maintheorem}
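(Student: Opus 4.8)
The plan is to construct $\pi^\varphi$ locally on critical charts and then glue, mimicking the construction of $\varphi_X$ in \cite{BBDJS,BBBBJ} but in the relative setting. First I would reduce to the case where $\fX \to B$ admits a relative d-critical chart: Zariski-locally on $X$, by the relative analogue of the Darboux theorem for $(-1)$-shifted symplectic structures, the map $\fX \to B$ looks like the derived critical locus of a function $f \colon U \to \bA^1$ where $U \to B$ is a smooth morphism of schemes (or higher Artin stacks). On such a chart the candidate functor is the \emph{relative vanishing cycles} functor: pull back along $U \to B$, take relative vanishing cycles $\phi_f$ (in the family direction, which is perverse $t$-exact since $f$ is a function to $\bA^1$ and vanishing cycles shift the perverse $t$-structure correctly after the usual normalization), and restrict to the critical locus; the orientation/square-root line bundle is used exactly as in the absolute case to twist by the sign local system so that the resulting object is canonically defined and Verdier self-dual. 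The key local input is that this construction is independent of the chosen chart up to canonical isomorphism, and that these isomorphisms satisfy the cocycle condition — this is where a relative version of the comparison results of \cite{BBDJS} (or the microlocal/Fourier--Sato incarnations the abstract mentions) is needed.

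Next, granting the local construction and its coherence, I would globalize: the charts form a cover of $X$, the local functors $\pi^\varphi_U$ agree on overlaps compatibly, and since $\Dbc(-)$ satisfies smooth (in fact étale/Zariski) descent as a sheaf of stable $\infty$-categories, the functors glue to a well-defined $\pi^\varphi \colon \Dbc(B) \to \Dbc(X)$. Perverse $t$-exactness is local on $X$ (the perverse $t$-structure is local in the smooth topology up to shift, and one checks the construction is compatible with the shifts coming from relative dimensions of the charts), so it follows from the chart-level statement. Verdier self-duality, property (5), likewise reduces to the chart level, where it is the self-duality of vanishing cycles together with the fact that the orientation twist is by a $\Z/2$-local system (self-dual) — the global statement follows because $\bbD$ commutes with the gluing.

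For the remaining functoriality properties I would argue chart-by-chart again. Property (1) — compatibility with smooth pullback in $B$ — follows from smooth base change for vanishing cycles: if $B' \to B$ is smooth and $X' = X \times_B B'$, a relative critical chart for $\fX/B$ pulls back to one for $\fX'/B'$, and $\phi_f$ commutes with smooth base change. Property (2) — smooth pullback in $X$ — is similar, using that a smooth morphism $X_1 \to X_2$ of stacks with relative $(-1)$-shifted symplectic enhancements over $B$ can be refined by charts in which it is a smooth morphism of the ambient $U$'s, compatibly with the functions $f$. Properties (3) and (4) — compatibility with $(-1)$-shifted symplectic pushforward along smooth morphisms in $B$, and with finite pushforward in $B$ — I would deduce from the corresponding statements for vanishing cycles: proper/finite pushforward commutes with $\phi_f$ (proper base change), and the $(-1)$-shifted symplectic pushforward of \cite{ParkSymplectic} is, on charts, fiber integration of the function $f$, under which relative vanishing cycles transform predictably. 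Property (6) — compatibility with products — reduces to the Thom--Sebastiani isomorphism $\phi_{f \boxplus g} \cong \phi_f \boxtimes \phi_g$ for vanishing cycles, again checked on charts and glued.

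The main obstacle I expect is the coherence of the local construction: establishing that the chart-independence isomorphisms for relative vanishing cycles are not merely abstract isomorphisms but form a compatible system satisfying the cocycle condition, with all the orientation twists tracked consistently. In the absolute case this is already the technical heart of \cite{BBDJS,BBBBJ}; in the relative setting one must redo it carefully, and — since $\Dbc$ is an $\infty$-categorical sheaf and the gluing happens at the level of functors, not just objects — one needs the coherence data not just up to homotopy but as a genuine descent datum. Likely the cleanest route is to package the whole construction as a morphism of sheaves of $\infty$-categories over a site of relative critical charts (along the lines suggested by the paper's emphasis on microlocalization and Fourier--Sato as special cases), so that $\pi^\varphi$ is obtained by descent essentially formally once the local functor and its symmetric-monoidal-type coherences are in place.
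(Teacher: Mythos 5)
Your overall strategy matches the paper's: construct the functor locally on relative critical charts via vanishing cycles twisted by the orientation, prove chart-independence by a relative version of the comparison results of \cite{BBDJS}, and glue, with each functoriality property (smooth pullback, finite pushforward, symplectic pushforward, duality, Thom--Sebastiani) reduced to the corresponding property of vanishing cycles on charts. That is precisely the paper's route through \cref{thm:stabilizationconstruction}, \cref{thm:BBDJS31}, \cref{thm:nonlinearstabilization}, \cref{thm:micropullsch} and \cref{thm:micropullstk}.

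There is, however, one place where your plan would run into trouble, and it is exactly the point you flag as the main obstacle. You propose to glue at the level of $\Dbc(-)$ viewed as a sheaf of stable $\infty$-categories, and in your last paragraph you suggest that the cleanest route is to ``package the whole construction as a morphism of sheaves of $\infty$-categories.'' This is going the wrong way: with that formulation you genuinely would need the chart-comparison isomorphisms to satisfy an infinite hierarchy of coherences, which is not how they arise. The paper instead glues inside the \emph{abelian} categories $\Perv(-)$, where descent data is $1$-categorical: you only need the transition isomorphisms and the cocycle condition. The site-like structure is supplied by \cref{prop:connecteddescent} and \cref{cor:criticaldescent}, whose hypotheses boil down to (a) any two critical charts are locally related by a zigzag of critical morphisms (\cref{prop:stabilizationzigzag}), and (b) the stabilization isomorphism $\stab_\Phi$ depends only on the induced map on critical loci (\cref{thm:nonlinearstabilization}(3), deduced from \cref{thm:BBDJS31}). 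Once $\pi^\varphi \colon \Perv(B) \to \Perv(X)$ is built, the extension to all of $\bD(-)$ is formal: for $X$ a separated scheme of finite type and $R$ a field, $\bD(X) \cong \Ind(\Db(\Perv(X)))$, and the universal property (\cref{prop:Catab}, \cref{cor:extendfromPerv}) gives the colimit-preserving $t$-exact extension for free. Extension from schemes to higher Artin stacks is then a right Kan extension along smooth covers; no additional coherence computation is needed.

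One more ingredient you don't mention but which is load-bearing in the paper: the stabilization isomorphism for an orthogonal bundle $E$ of even rank is first constructed in the metabolic case (where a Lagrangian subbundle $M \subset E$ exists) via hyperbolic localization $\Loc_M$, then transported to the oriented case by pulling back along the special orthogonal Grassmannian $\OG^+(E) \to U$, whose fibers are smooth and connected so that $p^\dag$ is fully faithful on $\Perv$; the general case then descends along the orientation double cover. This device lets one avoid proving cocycle-type compatibilities between different Lagrangian subbundles directly, and is the reason the relative analogue of \cite[Proposition~3.4]{BBDJS} (here \cref{prop:BBDJS34} and its perturbation \cref{lm:perturbationlemma}) only has to be invoked once, in the proof of \cref{thm:BBDJS31}. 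Without something like this the chart-independence argument is much more painful.

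Finally a small imprecision: in your discussion of property (3) you describe the symplectic pushforward along smooth $p\colon B\to S$ as ``fiber integration.'' On a chart it actually sends $\Crit_{U/B}(f)$ to the larger critical locus $\Crit_{U/S}(f)$ (see \cref{prop:pushforwardcriticallocus}); the compatibility $\gamma_p$ is then the observation that $\phi_f((U\to B)^\dag p^\dag \cF)$ coincides with $\phi_f((U\to S)^\dag \cF)$, the latter supported on the smaller set, not a pushforward along the fibers of $p$.
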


In the opposite extremes the perverse pullback is given as follows:
\begin{itemize}
    \item For $B=\pt$ we have $\pi^\varphi \Q_B = \varphi_X$ is the perverse sheaf defined in \cite{BBDJS,BBBBJ}. In fact, our construction of perverse pullbacks extends its definition to higher Artin stacks $X$.
    \item For $X=\fX=B$ with the relative exact $(-1)$-shifted symplectic structure on $\id\colon X\rightarrow X$ determined by a function $f\colon X\rightarrow \C$ we have
    \[\pi^\varphi = \bigoplus_{c\in\C} (f^{-1}(c)\rightarrow X)_*\varphi_{f-c},\]
    where $\varphi_f\colon \Dbc(X)\rightarrow \Dbc(f^{-1}(0))$ is the vanishing cycle functor. The above sum is necessary for perverse pullbacks to be compatible with products in the naive way. In fact, this natural isomorphism along with the properties (1)-(3) of perverse pullbacks from \cref{mainthm:perversepullback} determine them uniquely.
\end{itemize}

Following \cite{JoyceDcrit}, instead of considering a $(-1)$-shifted symplectic enhancement $X\hookrightarrow \fX$ it turns out to be useful to consider relative d-critical structures on $\pi\colon X\rightarrow B$ (see \cref{def:dcriticalscheme,def:dcriticalstack}). Namely, a relative d-critical structure $s$ on $\pi$ is given by a pair of a function $\und(s)\in\cO_X$ together with a nullhomotopy of its de Rham differential $d\und(s)\in\bL_{X/B}$ in the relative cotangent complex; we moreover require that smooth-locally $X\rightarrow B$ is given by the relative critical locus of a function of a smooth $B$-scheme, compatibly with $s$. Given a $(-1)$-shifted symplectic enhancement $X\hookrightarrow \fX$, the restriction of the relative exact $(-1)$-shifted symplectic structure on $\fX$ to $X$ defines such an element $s$; the local structure is provided by the shifted Darboux theorems of \cite{BBJ,BouazizGrojnowski,BBBBJ,ParkSymplectic}. The advantage of (relative) d-critical structures over (relative) exact $(-1)$-shifted symplectic structures is that the former are obviously functorial with respect to smooth maps, which is useful in extending our constructions to higher Artin stacks.

Besides the vanishing cycle functor, the perverse pullback functor recovers most of the perverse t-exact functors that appear in sheaf theory:
\begin{itemize}
    \item Let $X = E$ be a vector bundle over a scheme $B$. Then the composite $u_E \colon E^{\vee} \to B \xhookrightarrow{0} E$ naturally carries a relative exact d-critical structure, together with a canonical choice of square root $K_{E^{\vee} / E} \cong \det(E |_E)^{\otimes 2}$. 
    In this situation, the perverse pullback functor 
    \[
    u_E^{\varphi} \colon \Dbc(E) \to \Dbc(E^{\vee})
    \]
    recovers the \emph{Fourier--Sato transform} \cite[Section 3.7]{KashiwaraSchapira} up to shift. 

    \item Let $Y \hookrightarrow B$ be a closed immersion between smooth varieties. In this situation, the composite
    $\pi \colon \mathrm{N}^*(Y/B) \to Y \to B$ naturally carries a relative exact d-critical structure with a canonical choice of square root $K_{\mathrm{N}^*(Y/B)/ B} \cong K_{Y/B} |_{\mathrm{N}^*(Y/B)}^{\otimes 2}$. 
    Then the perverse pullback functor 
    \[
    \pi^{\varphi} \colon \Dbc(B) \to \Dbc(\mathrm{N}^*(Y/B))
    \]
    recovers the \emph{microlocalization functor}  in \cite[Section 4.3]{KashiwaraSchapira}.
    In particular, the composite
    \[
     u_{\mathrm{N}^*(Y/B)}^{\varphi}\circ \pi^{\varphi} \colon \Dbc(B) \to \Dbc(\mathrm{N}(Y/B))
    \]
    recovers the \emph{specialization functor}. 
    See the next paragraph for more details on the microlocal 
    nature of the perverse pullback functor.

\end{itemize}

\subsection*{Lagrangian microlocalization}

For a complex manifold $B$ and a complex submanifold $Y\subset B$ Kashiwara and Schapira \cite[Section 4.3]{KashiwaraSchapira} defined the microlocalization functor $\mu_{Y/B}\colon \Dbc(B)\rightarrow \Dbc(\rN^*(Y/B))$ which is perverse $t$-exact. The definition of the microlocalization functor $\mu_{Y/B}$ was extended in \cite{Schefers} to the case when $Y\rightarrow B$ is a quasi-smooth closed immersion, and independently in \cite{KhanKinjo} when $Y\rightarrow B$ is a morphism of derived Artin stacks locally of finite presentation.

One can interpret the perverse pullback functor as a Lagrangian version of the microlocalization functor as follows. The starting point for this point of view is given by the following results:
\begin{itemize}
    \item By \cite[Corollary 3.1.3]{ParkSymplectic} a morphism $X\rightarrow B$ with a relative exact $(-1)$-shifted symplectic structure is the same as an exact Lagrangian structure on a morphism $X\rightarrow \T^* B$.
    \item For a complex symplectic manifold $(S, \omega)$ equipped with a $\Gm$-action which acts on $\omega$ with weight $1$ there is a category $\bPerv(S)$ of perverse microsheaves constructed in \cite{Waschkies,CKNS}. Moreover, if $S=\T^* B$ is the cotangent bundle of a complex manifold $B$, there is an equivalence $\bPerv(\T^* B)\cong \Perv_{K_B^{1/2}}(B)$, the category of perverse sheaves on $B$ twisted by the gerbe of square roots of the canonical bundle $K_B$.
\end{itemize}

\begin{mainconjecture}\label{conj:Lagrangianmicrolocalization}
For a complex $0$-shifted symplectic derived Artin stack $(S, \omega)$ equipped with a $\Gm$-action which acts on $\omega$ with weight $1$ there is a category $\bPerv(S)$ of perverse microsheaves on $S$. For a morphism $f\colon L\rightarrow S$ equipped with an exact Lagrangian structure there is a Lagrangian microlocalization functor
\[\mu^{\mathrm{Lag}}_{L/S}\colon \bPerv(S)\longrightarrow \bPerv(\T^* L).\]
Moreover, for $S=\T^* B$ there is an equivalence $\bPerv(\T^* B)\cong \Perv_{K_B^{1/2}}(B)$ and under this equivalence $\mu^{\mathrm{Lag}}_{L/S}$ is equivalent to the perverse pullback along the composite $L\rightarrow \T^* B\rightarrow B$.
\end{mainconjecture}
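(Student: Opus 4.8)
The plan is to construct $\bPerv(S)$ by descent from the cotangent-bundle case, where it is already available by \cite{Waschkies,CKNS}, to define $\mu^{\mathrm{Lag}}_{L/S}$ by identifying it smooth-locally with the perverse pullback of \cref{mainthm:perversepullback}, and then to observe that the case $S=\T^*B$ holds essentially by construction. Throughout, the functoriality properties (1)--(6) of \cref{mainthm:perversepullback} are what make the gluing go through.

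\textbf{Step 1 (the target category).} A $0$-shifted symplectic derived Artin stack $(S,\omega)$ equipped with a $\Gm$-action scaling $\omega$ with weight $1$ is, smooth-locally on $S$, of the form $\T^*U$ for a smooth scheme $U$: this is the $0$-shifted, $\Gm$-equivariant analogue of the shifted Darboux theorem, with the equivariance forcing $\omega=d\lambda$ to be exact (with canonical primitive $\lambda=\iota_v\omega$ for $v$ the Euler vector field) and hence the local model to be a cotangent bundle. On such a Darboux chart one sets $\bPerv(\T^*U)\coloneqq\Perv_{K_U^{1/2}}(U)$. The real content is descent: the transition data between two Darboux charts over $S$ are $\Gm$-equivariant symplectomorphisms of cotangent bundles of varying smooth bases, and one must exhibit a homotopy-coherent system of equivalences between the associated twisted perverse-sheaf categories. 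Here the key input is that such symplectomorphisms are generated by isomorphisms of bases, Fourier--Sato-type rotations, and shears by exact $1$-forms $df$, and that each of these acts on the sheaf side through a perverse $t$-exact equivalence that is itself an instance of, or intertwines coherently with, the perverse pullback — precisely the identification of $\pi^\varphi$ with the Fourier--Sato transform together with properties (1), (2), (6) of \cref{mainthm:perversepullback}. Verdier self-duality of $\pi^\varphi$ (property (5)) then equips $\bPerv(S)$ with a self-duality.

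\textbf{Step 2 (the functor).} Let $f\colon L\to S$ carry an exact Lagrangian structure. Over a Darboux chart $S\cong\T^*U$, by \cite[Corollary 3.1.3]{ParkSymplectic} the datum of the Lagrangian $L\to\T^*U$ is the same as a relative exact $(-1)$-shifted symplectic structure on $L\to U$, equivalently — smooth-locally, by the relative shifted Darboux theorems recalled in the introduction — a relative d-critical structure on $L\to U$; the weight-$1$ $\Gm$-action supplies the compatible square root of $K_{L/U}$. Then \cref{mainthm:perversepullback} provides a perverse $t$-exact functor $\Dbc(U)\to\Dbc(L)$, whose restriction, once the gerbe twists are matched using the chosen square roots, is a functor $\bPerv(\T^*U)=\Perv_{K_U^{1/2}}(U)\to\Perv_{K_L^{1/2}}(L)=\bPerv(\T^*L)$. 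Properties (1) and (2) of \cref{mainthm:perversepullback} (compatibility with smooth pullbacks on the base and on the source) are exactly what is needed for these local functors to agree over overlaps and to assemble into a global $\mu^{\mathrm{Lag}}_{L/S}\colon\bPerv(S)\to\bPerv(\T^*L)$, and property (5) gives its compatibility with the self-dualities. For $S=\T^*B$ one arranges the construction so that global sections over $\T^*B$ recover $\Perv_{K_B^{1/2}}(B)$ on the nose; then $L\to\T^*B$ corresponds to $\pi\colon L\to B$ with its relative d-critical structure and $\mu^{\mathrm{Lag}}_{L/\T^*B}=\pi^\varphi$ by definition, with property (3) of \cref{mainthm:perversepullback} ensuring independence of the presentation of $L\to\T^*B$.

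The hard part is the descent in Step 1 — specifically, promoting the generators-and-relations picture of the symplectomorphism groupoid of $\T^*U$ to a statement carrying all higher coherences. This requires a composition law for perverse pullbacks substantially stronger than the list of pairwise compatibilities in \cref{mainthm:perversepullback}, so that the assignment sending each generator to the corresponding Fourier--Sato or d-critical pullback becomes a functor of $\infty$-categories and the cocycle conditions hold up to coherent homotopy. This is the point at which genuinely new input beyond \cref{mainthm:perversepullback} is needed, which is why the statement is posed as a conjecture; granting the descent, Step 2 and the $S=\T^*B$ case are comparatively formal consequences of \cref{mainthm:perversepullback}.
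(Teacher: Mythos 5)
This statement is labeled a \emph{Conjecture} in the paper; the authors do not prove it, so there is no proof to compare your proposal against. You have correctly recognized this, and correctly located the central obstacle: promoting the generators-and-relations description of the exact symplectomorphism groupoid of $\T^* U$ to a homotopy-coherent gluing datum is genuinely beyond the pairwise compatibilities of \cref{mainthm:perversepullback}. That diagnosis is sound and matches the implicit reason the statement is posed as a conjecture.

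A few cautionary remarks on the sketch itself. In Step 1 you assert that a $0$-shifted symplectic derived Artin stack with $\Gm$ acting on $\omega$ with weight $1$ is smooth-locally $\T^* U$ for a smooth scheme $U$. This is far from automatic. The $0$-shifted Darboux theorem for derived Artin stacks produces local models that are generally \emph{not} cotangent bundles, and the $\Gm$-weight-$1$ hypothesis, while it does force exactness of $\omega$ (via \cref{prop:Gmcanonicallyexact}), does not by itself force the local model to be a cotangent bundle of a smooth scheme; one would at least need a derived and equivariant analogue of Weinstein's Lagrangian neighborhood theorem, which is itself nontrivial and not in the literature in the generality needed. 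Similarly, the claim that exact $\Gm$-equivariant symplectomorphisms of cotangent bundles are generated by base changes, Fourier--Sato rotations, and shears $df$ is a heuristic borrowed from the classical contact-geometric setting; even there, and certainly in the derived setting, this generation statement needs to be formulated and proved carefully, and it interacts with the choice of square-root gerbe. Finally, in Step 2 the identification of a Lagrangian $L \to \T^* U$ with a relative exact $(-1)$-shifted symplectic structure on $L \to U$ uses \cite[Corollary 3.1.3]{ParkSymplectic}, which is correct, but the matching of gerbe twists across Darboux charts and the compatibility of the $\Gm$-action with the orientation data again require the coherence machinery you defer to Step 1. In short: your structural plan is reasonable and your identification of where the genuinely new input is required is right; just be aware that Step 1 contains more than one open claim, not only the coherence of the descent data.
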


The Lagrangian microlocalization functor is closely related to the notion of a sheaf quantization of a Lagrangian submanifold as in \cite{NadlerShende}: a sheaf of quantization of $L\rightarrow S$ is an object $\cL_L\in \bPerv(S)$ such that $\mu^{\mathrm{Lag}}_{L/S}(\cL_L)$ is a (twisted) rank 1 local system on $L$.

In a follow-up paper we use the formalism of perverse pullbacks to construct shifted Lagrangian classes as in \cite[Conjecture 1.1]{JoyceSafronov} and \cite[Conjecture 5.18]{ABB}: for a complex oriented $(-1)$-shifted symplectic derived Artin stack $S$ and a morphism $f\colon L\rightarrow S$ equipped with an oriented Lagrangian structure there is a \emph{$(-1)$-shifted Lagrangian class}
\[[L]^{\mathrm{Lag}}\colon f^* \varphi_S\longrightarrow \omega_L[-\dim(L)]\]
generalizing the virtual classes in Borel--Moore homology defined in \cite{BorisovJoyce} and \cite{OhThomas} for $S=\pt$. This may be viewed as a decategorified and $(-1)$-shifted version of \cref{conj:Lagrangianmicrolocalization}.

Given a morphism $Y\rightarrow B$ of derived Artin stacks locally of finite presentation we obtain a $0$-shifted Lagrangian morphism $\pi\colon L=\rN^*(Y/B)\rightarrow S=\T^* B$. We expect that the perverse pullback functor in this case coincides with the derived microlocalization functor, i.e. we expect that there is a natural isomorphism
\[\mu^{\mathrm{Lag}}_{\rN^*(Y/B)/\T^* B}\cong \mu_{Y/B}.\]

\subsection*{Conventions}

Throughout the paper we work with schemes over a field $k$ assumed to be of characteristic different from $2$ and with $i\in k$ satisfying $i^2=-1$. Starting from \cref{sec:pervpull}, $k$ will be the field $\C$ of complex numbers. We also fix a commutative ring $R$ of coefficients for our sheaves. We denote by $\Gpd_\infty$ the $\infty$-category of $\infty$-groupoids.

\subsection*{Acknowledgements}

AAK acknowledges support from the grants AS-CDA-112-M01 and NSTC 112-2628-M-001-0062030.
TK was supported by JSPS KAKENHI Grant Number 25K17229.
HP was supported by Korea Institute for Advanced Study (SG089201).

\section{Schemes and stacks}

\subsection{Stacks}\label{sect:stacks}

Recall the symmetric monoidal $\infty$-category $\PrSt$ of stable presentable $\infty$-categories with colimit-preserving functors as morphisms as in \cite[\S 4.8.1]{LurieHA}. Let $\Mod_R\in\PrSt$ be the derived $\infty$-category of chain complexes of $R$-modules. Let
\[\PrSt_R = \Mod_{\Mod_R}(\PrSt)\]
be the $\infty$-category of $R$-linear stable presentable $\infty$-categories.

Let $\Sch$ be the category of schemes over $k$ and $\Sch^{\sep\ft}\subset \Sch$ the subcategory of separated schemes of finite type. Recall the following notion of higher Artin stacks, as in \cite[Definition 1.3.3.1]{HAGII}.

\begin{definition}\label{def:Artinstacks}
A \defterm{stack} is a presheaf of $\infty$-groupoids on $\Sch$ satisfying \v{C}ech descent along \'etale surjections; we denote by $\Stk$ the $\infty$-category of such. We define $n$-geometric stacks (for $n\geq -1$) inductively:
\begin{enumerate}
    \item A stack is \defterm{$(-1)$-geometric} if it is (representable by) a scheme.
    \item A morphism of stacks $f\colon X \to Y$ is \defterm{$n$-geometric} (for $n\geq -1$) if for every scheme $V$ and every morphism $V \to Y$, the fibered product $X \times_Y V$ is $n$-geometric.
    \item An $n$-geometric morphism $f\colon X \to Y$ is \defterm{smooth}, resp. \defterm{smooth surjective}, if for every commutative diagram
    \[
    \xymatrix{
    U \ar[r] \ar[d] & X \ar[d] \\
    V \ar[r] & Y
    }
    \]
    with $U,V$ schemes and $U\rightarrow X\times_Y V$ smooth surjective as an $(n-1)$-geometric morphism, the morphism $U\rightarrow V$ is a smooth, resp. smooth surjective, morphism of schemes.
    \item A stack $X$ is \defterm{$n$-geometric} if it satisfies the following properties:
    \begin{enumerate}
        \item Its diagonal $\Delta_X\colon X \to X \times X$ is $(n-1)$-geometric.
        \item There exists a scheme $U$ and a morphism\footnote{which is automatically $(n-1)$-geometric when the diagonal $\Delta_X$ is $(n-1)$-geometric.} $p\colon U \to X$ which is a smooth surjection.
    \end{enumerate}
\end{enumerate}
We additionally introduce the following terminology:
\begin{itemize}
    \item A stack is \defterm{higher Artin} if it is $n$-geometric for some $n$; we denote by $\Art\subset\Stk$ the $\infty$-category of such.
    \item A morphism is \defterm{geometric} if it is $n$-geometric for some $n$.
    \item A higher Artin stack is \defterm{Artin} if the corresponding functor is valued in groupoids. More generally, a higher Artin stack is \defterm{$n$-Artin} if the corresponding functor is valued in $n$-groupoids.
    \item A morphism is \defterm{$n$-representable} if it is representable by $n$-Artin stacks. It is \defterm{schematic} if it is representable by schemes.
\end{itemize}
\end{definition}

For a pair of stacks $X, Y$ we denote by $\Map(X, Y)$ the mapping stack whose $S$-points are given by
\[\Map(X, Y)(S) = \Hom_{\Stk}(S\times X, Y).\]

\begin{definition}
A stack $X$ is \defterm{locally of finite type} if for every cofiltered system $\{S_i\}$ of affine schemes the natural morphism
\[\colim_i X(S_i)\longrightarrow X(\lim_i S_i)\]
is an isomorphism.
\end{definition}

\begin{remark}
For schemes over a field $k$ the above definition coincides with the usual notion of schemes locally of finite type, see \cite[01ZC]{Stacks}.
\end{remark}

Let $\Stk^{\lft}\subset \Stk$ be the full subcategory of stacks locally of finite type. We write $\Art^\lft \subset \Art$ for the full subcategory spanned by $X\in\Art$ locally of finite type.

\subsection{Extensions to stacks}\label{sect:Stkextensions}

Let $\cV$ be an $\infty$-category with limits. In this section we describe several mechanisms to extend invariants of schemes valued in $\cV$ to invariants of stacks. Equip the category of schemes $\Sch$ with the \'etale topology. We will encounter objects on schemes functorial only with respect to smooth morphisms. Namely, consider the subcategory $\Sch_{\smooth}\subset \Sch$ whose objects are schemes and whose morphisms are smooth. Let $\Art_{\smooth}\subset \Art$ be a similarly defined subcategory for higher Artin stacks.

To formalize invariants of relative schemes, let $\Fun(\Delta^1, \Sch)$ be the category whose objects are morphisms of schemes and whose morphisms $p\colon (X'\xrightarrow{\pi'} B')\rightarrow (X\xrightarrow{\pi} B)$ are commutative diagrams
\[
\xymatrix{
X' \ar^{\overline{p}}[r] \ar^{\pi'}[d] & X \ar^{\pi}[d] \\
B' \ar^{p}[r] & B.
}
\]
We extend the \'etale topology to $\Fun(\Delta^1, \Sch)$ by declaring covering families $\{(X_i\rightarrow B_i)\rightarrow (X\rightarrow B)\}$ to be families of morphisms such that both $\{B_i\rightarrow B\}$ and $\{X_i\rightarrow X\times_B B_i\}$ are \'etale covers.

Let
\[\Fun(\Delta^1, \Sch^{\sep\ft})_{0\smooth,1\smooth}\subset \Fun(\Delta^1, \Sch)_{0\smooth}\subset \Fun(\Delta^1, \Sch)\]
be the following subcategories:
\begin{itemize}
    \item $\Fun(\Delta^1, \Sch)_{0\smooth}$ has the same objects and morphisms $(X'\rightarrow B')\rightarrow (X\rightarrow B)$ such that $X'\rightarrow X\times_B B'$ is smooth.
    \item $\Fun(\Delta^1, \Sch^{\sep\ft})_{0\smooth,1\smooth}$ has the same objects and morphisms $(X'\rightarrow B')\rightarrow (X\rightarrow B)$ such that both $B'\rightarrow B$ and $X'\rightarrow X\times_B B'$ are smooth.
\end{itemize}

Let $\Fun(\Delta^1, \Stk)$ be the $\infty$-category of morphisms of stacks and let
\[\Fun(\Delta^1, \Art)_{0\smooth, 1\smooth}\subset \Fun(\Delta^1, \Stk)^{\geometric}_{0\smooth}\subset \Fun(\Delta^1, \Stk)^{\geometric}\subset \Fun(\Delta^1, \Stk)\]
be the following subcategories:
\begin{itemize}
    \item $\Fun(\Delta^1, \Stk)^{\geometric}$ is the full subcategory whose objects are geometric morphisms of stacks,
    \item $\Fun(\Delta^1, \Stk)^{\geometric}_{0\smooth}$ has the same objects and morphisms $(X'\rightarrow B')\rightarrow (X\rightarrow B)$ such that $X'\rightarrow X\times_B B'$ is smooth.
    \item $\Fun(\Delta^1, \Art)_{0\smooth, 1\smooth}$ has objects morphisms of higher Artin stacks and morphisms $(X'\rightarrow B')\rightarrow (X\rightarrow B)$ such that both $B'\rightarrow B$ and $X'\rightarrow X\times_B B'$ are smooth.
\end{itemize}

We define the extensions of invariants of schemes to invariants of stacks as follows.

\begin{itemize}
    \item For a functor
    \[F\colon \Sch^{\op}\longrightarrow \cV\]
    satisfying \'etale descent we define its value on stacks $X\in\Stk$ by a right Kan extension:
    \begin{equation}\label{eq:ShvStk}
    F^{\lis}(X) = \lim_{(S, s)} F(S),
    \end{equation}
    where the limit is taken over the $\infty$-category $\Sch_{/X}$ of pairs $(S,s)$ with $S\in\Sch$ and $s\colon S \to X$ a morphism. By \cite[Proposition 3.2.5(i), Corollary 3.2.6(i)]{KhanWeavelisse} this determines an inverse to the restriction functor from $\cV$-valued sheaves on $\Stk$ to $\cV$-valued sheaves on $\Sch$.

    \item For a functor
    \[F\colon \Sch^{\op}_{\smooth}\longrightarrow \cV\]
    satisfying \'etale descent we define its value on higher Artin stacks $X\in\Art$ by a right Kan extension:
    \begin{equation}\label{eq:ShvArt}
    F^\lis(X) = \lim_{(S, s)} F(S),
    \end{equation}
    where the limit is taken over the full subcategory $\Sch^{\smooth/X}_{\smooth}\subset (\Art_{\smooth})_{/X}$ consisting of pairs $(S,s)$ with $S\in\Sch$ and $s\colon S \to X$ a smooth morphism.

    \item For a functor
    \[F\colon \Fun(\Delta^1, \Sch)_{0\smooth}^{\op}\longrightarrow \cV\]
    satisfying \'etale descent we define its value on geometric morphisms $(X\rightarrow B)\in\Fun(\Delta^1, \Stk)^{\geometric}$ of stacks by a right Kan extension:
    \begin{equation}\label{eq:ShvStkrel}
    F^\lis(X\rightarrow B) = \lim_{(X'\rightarrow B', p)} F(X'\rightarrow B'),
    \end{equation}
    where the limit is taken over the full subcategory of $(\Fun(\Delta^1, \Stk)^{\geometric}_{0\smooth})_{/(X\rightarrow B)}$ of objects $(X'\rightarrow B', p)$ where $X'\rightarrow B'$ is a morphism of schemes and $X'\rightarrow X\times_B B'$ is smooth.

    \item For a functor
    \[F\colon \Fun(\Delta^1, \Sch)_{0\smooth,1\smooth}^{\op}\longrightarrow \cV\]
    satisfying \'etale descent we define its value on a morphism $(X\rightarrow B)\in \Fun(\Delta^1, \Art)$ of higher Artin stacks by a right Kan extension:
    \begin{equation}\label{eq:ShvArtrel}
    F^\lis(X\rightarrow B) = \lim_{(X'\rightarrow B', p)} F(X'\rightarrow B'),
    \end{equation}
    where the limit is taken over the full subcategory of $(\Fun(\Delta^1, \Art)_{0\smooth, 1\smooth})_{/(X\rightarrow B)}$ of objects $(X'\rightarrow B', p)$ where $X'\rightarrow B'$ is a morphism of schemes and both $B'\rightarrow B$ and $X'\rightarrow X\times_B B'$ are smooth.
\end{itemize}

\begin{remark}
By \cite[Corollary 3.2.7]{KhanWeavelisse}, the definitions \eqref{eq:ShvStk} and \eqref{eq:ShvArt} (and, similarly, \eqref{eq:ShvStkrel} and \eqref{eq:ShvArtrel}) of the extensions above are compatible, i.e. for a sheaf $F\colon \Sch^{\op}\rightarrow \cV$ and a higher Artin stack $X\in\Art$ the restriction morphism
\[\lim_{(S, s)\in\Sch_{/X}} F(S)\longrightarrow \lim_{(S, s)\in\Sch^{\smooth/X}_{\smooth}} F(S)\]
is an isomorphism. Thus, the notation $F^\lis(X)$ for the extension is unambiguous.
\end{remark}

\begin{remark}
Given invariants defined only on the subcategory $\Sch^{\lft} \subset \Sch$ of schemes locally of finite type, or even $\Sch^{\sep\ft}$, we may similarly extend to stacks locally of finite type.
\end{remark}

\subsection{Derived stacks}

Let us briefly introduce the theory of derived stacks. The $\infty$-category $\dAff$ of \defterm{derived affine schemes} is opposite to the $\infty$-category $\CAlg^\Delta_k$ of simplicial commutative $k$-algebras via the $\Spec$ functor. A \defterm{derived stack} is a presheaf of $\infty$-groupoids on $\dAff$ satisfying \v{C}ech descent along \'etale surjections. We denote by $\dStk$ the $\infty$-category of such. There is a fully faithful inclusion functor $\Stk\rightarrow \dStk$ whose right adjoint $(-)^{\cl}\colon \dStk\rightarrow\Stk$ is given by passing to the underlying classical stack. The notion of a geometric morphism of derived stacks is defined analogously to \cref{def:Artinstacks}.

For a derived affine scheme $S=\Spec A$ we define $\QCoh(S)$ to be the $\infty$-category of dg $A$-modules. This $\infty$-category is stable and presentable and has a natural symmetric monoidal structure. We refer to objects of $\QCoh(S)$ as quasi-coherent complexes. Denote by $\Perf(S)\subset \QCoh(S)$ the full subcategory of perfect complexes, i.e. dualizable objects in $\QCoh(S)$. For a morphism of derived affine schemes $f\colon X\rightarrow Y$ we have a symmetric monoidal pullback functor $f^*\colon \QCoh(Y)\rightarrow \QCoh(X)$. This assignment defines a functor
\[\QCoh\colon \dAff^{\op}\longrightarrow \PrSt_k\]
which satisfies \'etale descent. By right Kan extension we obtain the functor
\[\QCoh\colon \dStk^{\op}\longrightarrow \PrSt_k.\]

If $f\colon X\rightarrow Y$ is a geometric morphism of derived stacks, we have the \defterm{relative cotangent complex} $\bL_{X/Y}\in\QCoh(X)$ which is functorial in the following way. For a commutative diagram
\[
\xymatrix{
X' \ar[r] \ar[d] & X \ar[d] \\
Y' \ar[r] & Y
}
\]
of derived stacks with $X\rightarrow Y$ and $X'\rightarrow Y'\times_{Y} X$ geometric we have a pullback morphism
\begin{equation}\label{eq:cotangentpullback}
(X'\rightarrow X)^*\bL_{X/Y}\longrightarrow \bL_{X'/Y'}
\end{equation}
which is an isomorphism if the diagram is Cartesian. Moreover, for geometric morphisms $X\rightarrow Y\rightarrow Z$ of derived stacks, we have a fiber sequence
\begin{equation}\label{eq:cotangentsequence}
(X\rightarrow Y)^*\bL_{Y/Z}\longrightarrow \bL_{X/Z}\longrightarrow \bL_{X/Y}.
\end{equation}
We refer to \cite[Lemma B.10.13]{CHS} for more details on the functoriality of the relative cotangent complex.

For a morphism of classical stacks $X\rightarrow Y$ we define the relative cotangent complex by viewing them as derived stacks. For instance, if $X\rightarrow Y$ is a morphism of schemes, we have $\Omega^1_{X/Y} = h^0(\bL_{X/Y})$, the sheaf of relative K\"ahler differentials. We have the following basic fact.

\begin{lemma}
Let $X\rightarrow Y$ be a geometric morphism of derived stacks. Then the cofiber of the pullback morphism
\[(X^{\cl}\rightarrow X)^* \bL_{X/Y}\longrightarrow \bL_{X^\cl/Y^\cl},\]
i.e. $\bL_{X^{\cl}/X\times_Y Y^{\cl}}$, is 2-connective.
\end{lemma}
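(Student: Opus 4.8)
The plan is to reduce the statement to a purely local computation with simplicial commutative rings, using the smooth descent built into the right Kan extension formalism of \cref{sect:Stkextensions} together with the transitivity triangle \eqref{eq:cotangentsequence} and base-change \eqref{eq:cotangentpullback} for cotangent complexes. First I would recall that the claimed cofiber identification $\cofib\big((X^\cl\to X)^*\bL_{X/Y}\to \bL_{X^\cl/Y^\cl}\big)\simeq \bL_{X^\cl/(X\times_Y Y^\cl)}$ is formal: apply the transitivity triangle to the composites $X^\cl\to X\times_Y Y^\cl\to X$ and $X^\cl\to X\times_Y Y^\cl\to Y^\cl$, using that $X\times_Y Y^\cl\to Y^\cl$ is a base change of $X\to Y$ so its cotangent complex pulls back from $\bL_{X/Y}$ via \eqref{eq:cotangentpullback}, and that $X\times_Y Y^\cl\to X$ is a base change of $Y^\cl\to Y$ whose cotangent complex $\bL_{Y^\cl/Y}$ is $2$-connective (this is the absolute case, applied to $Y$). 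Chasing the two triangles identifies the cofiber in question with $\bL_{X^\cl/(X\times_Y Y^\cl)}$ once we know $(X^\cl\to X\times_Y Y^\cl)^*\bL_{(X\times_Y Y^\cl)/X}$ is $2$-connective, which follows from the absolute statement for $Y$. So the crux is the \emph{absolute} claim: for a geometric morphism $X\to Y$ of derived stacks, $\bL_{X^\cl/(X\times_Y Y^\cl)}$ is $2$-connective; equivalently, taking $Y$ to be a point (or working relative to $Y$ throughout), that $\bL_{X^\cl/X}$ is $2$-connective for any derived stack $X$.

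Next I would establish the absolute claim by smooth descent. Connectivity of a quasi-coherent complex on a (higher Artin, or geometric) stack can be checked after smooth pullback to an atlas, since for a smooth surjection $p\colon U\to X$ with $U$ affine the pullback $p^*$ is $t$-exact (smooth morphisms have cotangent complexes in degree $0$, so they are flat in the derived sense) and jointly conservative over a smooth cover; moreover smooth morphisms are preserved by classical truncation and the formation of $X^\cl$ commutes with smooth pullback, so $U^\cl\to X^\cl$ is again a smooth atlas and the square relating $U,U^\cl,X,X^\cl$ is compatible. Using the transitivity triangle for $U^\cl\to U\to X$ and for $U^\cl\to X^\cl\to X$, together with $2$-connectivity of $\bL_{U^\cl/U}$ and the fact that $\bL_{U/X}$, $\bL_{U^\cl/X^\cl}$ are in degree $0$ (smoothness), one reduces the $2$-connectivity of $\bL_{X^\cl/X}$ (pulled back to $U^\cl$) to the $2$-connectivity of $\bL_{U^\cl/U}$. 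Iterating the atlas construction, one reduces all the way down to the case where $X$ is a derived \emph{affine} scheme.

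For $X=\Spec A$ affine with $A$ a simplicial commutative $k$-algebra, we have $X^\cl=\Spec \pi_0(A)$, and the claim is that $\bL_{\pi_0(A)/A}$ is $2$-connective. This is the classical computation of Illusie (or \cite[Cohomology theory of SCR]{LurieHA}-style arguments): the map $A\to\pi_0(A)$ is surjective on $\pi_0$ with kernel the ideal generated by $\pi_1$, and killing higher homotopy groups one cell at a time, each elementary step attaches cells in simplicial degree $\geq 2$, so the relative cotangent complex is built from free $\pi_0(A)$-modules in homological degrees $\geq 2$; hence $\bL_{\pi_0(A)/A}$ has homotopy concentrated in degrees $\geq 2$, i.e. is $2$-connective. (Alternatively one invokes the standard fact that for a surjection of SCRs with nilpotent kernel the relative cotangent complex is $1$-connective, and for $A\to\pi_0(A)$ the kernel squares to something even smaller in the relevant sense, giving the sharper bound $2$; the cleanest route is simply to cite the known Postnikov/cell-attachment computation.)

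I expect the main obstacle to be bookkeeping rather than conceptual: making precise that connectivity descends along smooth atlases for \emph{higher} Artin derived stacks (where one must induct on geometricity level and keep track that $(-)^\cl$ commutes with the smooth atlas construction and with fiber products along the relevant maps), and assembling the two transitivity triangles in the relative case so that the three error terms $\bL_{Y^\cl/Y}$, $\bL_{U^\cl/U}$, and the affine-level $\bL_{\pi_0/(\cdot)}$ all line up. The cotangent-complex functoriality needed for this is exactly \eqref{eq:cotangentpullback}, \eqref{eq:cotangentsequence}, and the reference \cite[Lemma B.10.13]{CHS} already cited in the text.
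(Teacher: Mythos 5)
Your reduction to the absolute case (via the two transitivity triangles, equivalently the paper's $3\times 3$ diagram) and your cell-attachment computation in the affine case both match the paper and are fine. The problem is the middle step: you reduce the absolute claim ``$\bL_{X^\cl/X}$ is $2$-connective'' from a general derived stack $X$ to a derived affine scheme by smooth descent along an atlas, but the lemma as stated is for arbitrary derived stacks $X$ and $Y$ with a \emph{geometric morphism} $X\to Y$ between them; neither $X$ nor $Y$ is assumed to be geometric, so neither need admit a smooth atlas. You flag this yourself (``higher Artin, or geometric''), but in the generality of the statement that restriction is a genuine gap: your descent chain cannot even start for a non-geometric $X$, and worse, the reduction step applies the absolute claim also to $Y$, which is a priori only a derived stack.

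The paper's proof avoids this by staying at the level of the universal property of the cotangent complex: it interprets $\Map_{\QCoh(S)}((i\circ f)^*\bL_X, M)$ as $\Map_{\dStk_{S/}}(S[M], X)$, writes $X$ as a colimit of affine derived schemes in $\dStk$ (a presentation available for \emph{any} stack, not just geometric ones), and uses universality of colimits in $\Gpd_\infty$ to pass the connectivity statement to the affine pieces, where it invokes \cite[Corollary 7.4.3.2]{LurieHA}. This buys the full generality of the statement. By contrast your smooth-descent argument, while clean and rigorous where it applies, only proves the lemma under the additional hypothesis that $X$ and $Y$ are themselves higher Artin (geometric). If you want to salvage the atlas approach in full generality you would need to first argue that $\bL_{X^\cl/X}$ only sees a suitable colimit-of-affines presentation of $X$ (which is essentially what the paper does directly), at which point the atlas is no longer doing any work.
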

\begin{proof}
Consider the diagram
\[
\xymatrix{
(X^{\cl}\rightarrow Y)^* \bL_Y \ar[r] \ar[d] & (X^{\cl}\rightarrow X)^* \bL_X \ar[r] \ar[d] & (X^{\cl}\rightarrow X)^* \bL_{X/Y} \ar[d] \\
(X^{\cl}\rightarrow Y^{\cl})^* \bL_{Y^\cl} \ar[r] \ar[d] & \bL_{X^{\cl}} \ar[r] \ar[d] & \bL_{X^{\cl}/Y^{\cl}} \ar[d] \\
(X^{\cl}\rightarrow Y^{\cl})^* \bL_{Y^{\cl}/Y} \ar[r] & \bL_{X^{\cl}/X} \ar[r] & \bL_{X^{\cl}/X\times_Y Y^{\cl}}.
}
\]
In this diagram all columns and the first two rows are cofiber sequences, so the bottom row is a cofiber sequence. Therefore, the claim reduces to the case $Y=\pt$.

Let $i\colon X^{\cl}\rightarrow X$ be the natural morphism. By definition, for any derived affine scheme $S$ equipped with a morphism $f\colon S\rightarrow X^{\cl}$ and a connective quasi-coherent complex $M\in\QCoh(S)$ we have
\[\Map_{\QCoh(S)}((i\circ f)^* \bL_X, M)\cong \Map_{\dStk_{S/}}(S[M], X).\]
The right-hand side preserves colimits in $X$. Writing $X=\colim_\alpha X_\alpha$ for a diagram of affine derived schemes $\{X_\alpha\}_\alpha$, so that $X^{\cl} = \colim_\alpha X^{\cl}_\alpha$, we get a commutative diagram
\[
\xymatrix{
\colim_\alpha\Map_{\QCoh(S)}(f_\alpha^*\bL_{X_\alpha^{\cl}}, M) \ar^-{\sim}[r] \ar[d] &  \Map_{\QCoh(S)}(f^*\bL_{X^{\cl}}, M) \ar[d] \\
\colim_\alpha\Map_{\QCoh(S)}((i_\alpha\circ f_\alpha)^*\bL_{X_\alpha}, M) \ar^-{\sim}[r] &  \Map_{\QCoh(S)}(f^*\bL_X, M),
}
\]
where $f_\alpha\colon S\rightarrow X^{\cl}_\alpha$ and $i_\alpha\colon X^{\cl}_\alpha\rightarrow X^{\cl}$. As colimits in $\Gpd_\infty$ are universal, we get
\[\colim_\alpha \Map_{\QCoh(S)}(f_\alpha^* \bL_{X^{\cl}_\alpha/X_\alpha}, M)\cong \Map_{\QCoh(S)}(\bL_{X^{\cl}/X}, M).\]
The $2$-connectivity of $\bL_{X^{\cl}/X}$ is equivalent to the contractibility of the right-hand side for every $M\in\QCoh(S)$ concentrated in cohomological degrees $[-1, 0]$. Thus, by the above isomorphism the claim reduces to the case of an affine derived scheme $X=\Spec A$. Since the cofiber of $A\rightarrow \rH^0(A)$ is $2$-connective, we get that $\bL_{X^{\cl}/X}=\bL_{\rH^0(A)/A}$ is $2$-connective by \cite[Corollary 7.4.3.2]{LurieHA}.
\end{proof}

We say that a geometric morphism of derived stacks $f \colon X \to Y$ is \defterm{locally of finite presentation}, or \emph{lfp} for short, if the induced morphism of classical truncations $f^\cl \colon X^\cl \to Y^\cl$ is locally of finite presentation in the classical sense, and the relative cotangent complex $\bL_{X/Y}$ is perfect. Note that a locally of finite presentation morphism of classical stacks need not be lfp as a morphism of derived stacks.

The (relative) \defterm{dimension} of an lfp morphism $f \colon X \to Y$, denoted $\dim(X/Y)$, is the rank of the perfect complex $\bL_{X/Y}$.
Note that when $f$ is smooth, we have $\dim(X/Y) = \dim(X^\cl/Y^\cl)$.

\subsection{Determinant lines}

For a stack $X$ we denote by $\Pic^{\gr}(X)$ the $\infty$-category of pairs $(L, \alpha)$ of a line bundle $L$ on $X$ and a locally constant function $\alpha\colon X\rightarrow \Z/2\Z$. It is a Picard $\infty$-groupoid (or $E_\infty$-group) with the symmetric monoidal structure given by $(L_1, \alpha_1)\otimes (L_2, \alpha_2) = (L_1\otimes L_2, \alpha_1 + \alpha_2)$ with braiding involving the Koszul sign. We fix an identification $(L, \alpha)^\vee\cong (L^\vee, \alpha)$, under which the evaluation pairing $(L, \alpha)\otimes (L, \alpha)^\vee\rightarrow (\cO_X, 0)$ is identified with the composite $(L, \alpha)\otimes (L^\vee, \alpha) = (L\otimes L^\vee, 0)\cong (\cO_X, 0)$. If $l$ is a nonvanishing section of a line bundle $L$, we denote by $l^{-1}$ the section of $L^\vee$ which pairs to $1$ with $l$.

For a perfect complex $E\in\Perf(X)$ we denote by $\det(E)\in\Pic^{\gr}(X)$ the $\Z/2\Z$-graded determinant line bundle \cite{DeligneDeterminant,KM76}. Our conventions follow \cite[Section 2.2]{KPS}. The main isomorphisms we will use are as follows:
\begin{itemize}
    \item For a fiber sequence
    \[\Delta\colon E_1\longrightarrow E_2\longrightarrow E_3\]
    there is an isomorphism \[i(\Delta)\colon \det(E_1)\otimes \det(E_3)\xrightarrow{\sim} \det(E_2).\]
    \item For a perfect complex $E$ there is an isomorphism
    \[\iota_E\colon \det(E^\vee)\xrightarrow{\sim} \det(E)^\vee.\]
\end{itemize}
For a perfect complex $E$ we have the fiber sequence $\Delta_E\colon E\rightarrow 0\rightarrow E[1]$ given by the rotation of $E\xrightarrow{\id} E\rightarrow 0$. It gives rise to an isomorphism
\[\theta_E\colon \det(E[1])\cong \det(E)^\vee\]
so that the composite
\[\det(E)\otimes \det(E[1])\xrightarrow{\id\otimes \theta_E} \det(E)\otimes \det(E)^\vee\xrightarrow{\ev} \cO_X\]
coincides with $i(\Delta_E)$.

We will use the following explicit descriptions:
\begin{itemize}
    \item If $E$ is a trivial vector bundle with a basis of sections $\{s_1, \dots, s_n\}$, the determinant line bundle $\det(E)$ has a nonvanishing section $s_1\wedge \dots\wedge s_n$.
    \item Given a short exact sequence
    \[\Delta\colon 0\longrightarrow E_1\longrightarrow E_2\longrightarrow E_3\longrightarrow 0\]
    of trivial vector bundles with $\{s_1, \dots, s_n\}$ a basis of sections of $E_1$ and $\{s_1, \dots, s_n, s_{n+1}, \dots, s_{n+m}\}$ a basis of sections of $E_2$, then
    \[i(\Delta)(s_1\wedge \dots\wedge s_n\otimes s_{n+1}\wedge \dots\wedge s_{n+m}) = s_1\wedge \dots\wedge s_{n+m}.\]
    \item Again, if $E$ is a trivial vector bundle with a basis of sections $\{s_1, \dots, s_n\}$ and with $\{s^1, \dots, s^n\}$ the dual basis of $E^\vee$, then
    \[\iota_E(s^1\wedge \dots \wedge s^n) = (-1)^{n(n-1)/2} (s_1\wedge \dots \wedge s_n)^{-1}.\]
\end{itemize}

Given a smooth morphism of stacks $f\colon X\rightarrow Y$, the relative cotangent complex $\bL_{X/Y}$ is perfect of Tor-amplitude $\geq 0$ and the \defterm{canonical bundle} is $K_{X/Y} = \det(\bL_{X/Y})\in\Pic^{\gr}(X)$.

\subsection{Differential forms}\label{sect:differentialforms}

Let $X\rightarrow B$ be a geometric morphism of derived stacks. We have the relative cotangent complex $\bL_{X/B}\in\QCoh(X)$ equipped with the de Rham differential
\[d_B\colon \Gamma(X, \cO_X)\rightarrow \Gamma(X, \bL_{X/B})\]
defined as in \cite{CPTVV,CHS,ParkSymplectic}. Recall the following notions from \cite{PTVV}.

\begin{definition}
Let $n\in\Z$.
\begin{itemize}
    \item For $p\in \Z$ the space of \defterm{relative $p$-forms of degree $n$ on $X\rightarrow B$} is
    \[\cA^p(X/B, n) = \Map_{\QCoh(X)}(\cO_X, \wedge^p \bL_{X/B}[n]).\]
    \item The space $\cA^{2, \ex}(X/B, n)$ of \defterm{relative exact two-forms of degree $n$ on $X\rightarrow B$} is the homotopy fiber of $d_B\colon \cA^0(X/B, n+1)\rightarrow \cA^1(X/B, n+1)$ at the zero form.
\end{itemize}
\end{definition}

For a commutative diagram of derived stacks
\[
\xymatrix{
X' \ar[r] \ar[d] & X \ar[d] \\
B' \ar[r] & B
}
\]
we have a natural pullback morphism $\cA^{2, \ex}(X/B, n)\rightarrow \cA^{2, \ex}(X'/B', n)$. We have natural isomorphisms
\[\Omega\cA^p(X/B, n+1)\cong \cA^p(X/B, n),\qquad \Omega\cA^{2, \ex}(X/B, n+1)\cong \cA^{2, \ex}(X/B, n)\]
and a forgetful map $d_B\colon \cA^1(X/B, n)\rightarrow \cA^{2, \ex}(X/B, n)$. We will now give two examples of relative exact two-forms we will encounter.

\subsubsection{Case $n=0$}
Given a quasi-coherent complex $V\in \QCoh(X)$ over a stack $X$ recall the \defterm{total space} $\Tot_X(V)$ which is a stack over $X$ whose $S$-points are given by pairs $(f, \alpha)$ of a morphism $f\colon S\rightarrow X$ of stacks and a morphism $\alpha\colon \cO_S\rightarrow f^*V$ in $\QCoh(S)$.

\begin{definition}
Let $X\rightarrow B$ be a geometric morphism of stacks. The \defterm{relative cotangent bundle} is $\T^*(X/B) = \Tot_X(\bL_{X/B})$.
\end{definition}

The relative cotangent bundle is stable under base change: given a Cartesian square of stacks
\[
\xymatrix{
X' \ar[r] \ar[d] & X \ar[d] \\
B' \ar[r] & B
}
\]
which is Tor-independent (e.g. either $X\rightarrow B$ or $B'\rightarrow B$ is flat), so that it is Cartesian when regarded as a square of \emph{derived} stacks, there is an isomorphism $\T^*(X/B)\times_B B'\cong \T^*(X'/B')$ of $B'$-stacks constructed using \eqref{eq:cotangentpullback}. Moreover, for a composite $X\rightarrow B_1\rightarrow B_2$ of geometric morphisms we have a Cartesian square
\begin{equation}\label{eq:cotangentCartesian}
\xymatrix{
\T^*(B_1/B_2)\times_{B_1} X \ar[r] \ar[d] & X \ar[d] \\
\T^*(X/B_2) \ar[r] & \T^*(X/B_1)
}
\end{equation}

The relative cotangent bundle carries the so-called \defterm{Liouville one-form} $\lambda_{X/B}\in\cA^1(\T^*(X/B)/B, 0)$ defined as follows (see e.g. \cite{PTVV,CalaqueCotangent}). For a morphism $(f, \alpha)\colon S\rightarrow \T^*(X/B)$ we set
\[(f, \alpha)^\ast \lambda_{X/B}\colon \cO_S\rightarrow f^*\bL_{X/B}\rightarrow \bL_{S/B}.\]

We have the following particular cases:
\begin{enumerate}
    \item If $X\rightarrow B$ is a smooth schematic morphism, $\bL_{X/B}$ has Tor-amplitude $[0,0]$, so $\T^*(X/B)\rightarrow X$ is a vector bundle.
    \item If $X\rightarrow B$ is smooth and geometric, $\bL_{X/B}$ has Tor-amplitude $\geq 0$ in which case $\T^*(X/B)\rightarrow X$ is a cone. In this case the zero section $X\rightarrow \T^*(X/B)$ is a closed immersion.
\end{enumerate}

\begin{example}
Let $X\rightarrow B$ be a smooth morphism of schemes. Vector fields on $X\rightarrow B$ give rise to functions on $\T^*(X/B)$. So, given \'etale coordinates $\{q_1, \dots, q_n\}$ on $X\rightarrow B$ we obtain \'etale coordinates $\{q_1, \dots, q_n, p_1, \dots, p_n\}$ on $\T^*(X/B)\rightarrow B$, so that $p_i$ corresponds to $\frac{\partial}{\partial q_i}$. In these coordinates we have
\[\lambda_{X/B} = \sum_{i=1}^n p_i d_B q_i.\]
\end{example}

\subsubsection{Case $n=-1$}\label{sect:criticallocus}

\begin{definition}\label{def-relative-crit}
Let $U\rightarrow B$ be a flat geometric morphism of stacks equipped with a function $f\colon U\rightarrow \bA^1$. The \defterm{relative critical locus} is the fiber product
\begin{equation}\label{eq:criticalpullback}
\xymatrix{
\Crit_{U/B}(f) \ar[r] \ar[d] & U \ar^{\Gamma_0}[d] \\
U \ar^-{\Gamma_{d_B f}}[r] & \T^*(U/B),
}
\end{equation}
where $\Gamma_0\colon U\rightarrow \T^*(U/B)$ is the zero section and $\Gamma_{d_B f}\colon U\rightarrow \T^*(U/B)$ is the graph of the relative one-form $d_B f$.    
\end{definition}

By construction we have $\Gamma_{d_B f}^\ast \lambda_{U/B} \sim d_B f$ in $\cA^1(U/B, n)$. Thus, $f$ provides a nullhomotopy $h_f\colon \Gamma_{d_B f}^\ast \lambda_{U/B}\sim 0$ in $\cA^{2, \ex}(U/B, 0)$. Similarly, $\Gamma_0^\ast \lambda_{U/B}\sim 0$. The difference of the two nullhomotopies on $\Crit_{U/B}(f)$ provides an element
\[s_f = h_f-h_0\in\Omega \cA^{2, \ex}(\Crit_{U/B}(f)/B, 0)\cong \cA^{2, \ex}(\Crit_{U/B}(f)/B, -1).\]
When we need to specify the base scheme, we also denote it by $s_{f, B}$.

\subsection{Immersions}

In this preliminary section we collect some useful definitions and facts about immersions. Throughout we fix a scheme $B$ over $k$. We will use smoothings of schemes, which are immersions of a given scheme into a smooth scheme.

\begin{definition}
Let $X$ be a $B$-scheme and $x\in X$ a point. An immersion $\imath\colon X\rightarrow U$ into a smooth $B$-scheme is \defterm{minimal at $x$} if the pullback $\imath^\ast\colon \Omega^1_{U/B, \imath(x)}\rightarrow \Omega^1_{X/B, x}$ is an isomorphism.
\end{definition}

\begin{remark}
For any immersion $\imath\colon X\rightarrow U$ the map $\imath^\ast\colon \Omega^1_{U/B, \imath(x)}\rightarrow \Omega^1_{X/B, x}$ is surjective, so that $\dim \Omega^1_{U/B, \imath(x)}\geq \dim \Omega^1_{X/B, x}$. The minimality assumption is that this is an equality.
\end{remark}

The local existence of minimal immersions is shown in \cite[Tag 0CBL]{Stacks}. Moreover, locally, every immersion can be replaced by a minimal one.

\begin{proposition}\label{prop:minimalimmersion}
Let $X$ be a $B$-scheme and $x\in X$ a point. Let $\imath\colon X\rightarrow U$ be an immersion into a smooth $B$-scheme. Then there is an open neighborhood $X^\circ\subset X$ of $x$ and a factorization of $X^\circ\rightarrow X\xrightarrow{\imath} U$ as $X^\circ\xrightarrow{\imath^\circ} V\rightarrow U$, where $V$ is a smooth $B$-scheme and $\imath^\circ$ is a closed immersion minimal at $x$.
\end{proposition}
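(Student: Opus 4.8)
The plan is to produce the required smooth ambient scheme by cutting $U$ down, near $\imath(x)$, by functions that vanish on $X$ and whose relative differentials are linearly independent; the conormal sequence will supply the functions and the Jacobian criterion will control smoothness and relative dimension.

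First I would arrange that $\imath$ is a closed immersion: an immersion factors as a closed immersion $X\hookrightarrow W$ into an open $W\subseteq U$ containing $\imath(x)$, and $W$ is still smooth over $B$, so we may replace $U$ by $W$ and assume $\imath\colon X\hookrightarrow U$ is closed with ideal sheaf $\cI\subseteq\cO_U$. Put $n=\dim_{\kappa(x)}\Omega^1_{U/B,\imath(x)}$ — this equals the relative dimension of $U/B$ at $\imath(x)$ since $\Omega^1_{U/B}$ is locally free there — and $d=\dim_{\kappa(x)}\Omega^1_{X/B,x}$. The conormal sequence $\cI/\cI^2\to\imath^*\Omega^1_{U/B}\to\Omega^1_{X/B}\to 0$ is right exact, so tensoring with $\kappa(x)$ keeps it exact at the last two terms; hence the image of $\cI/\cI^2\otimes\kappa(x)$ in $\Omega^1_{U/B,\imath(x)}$ is precisely the kernel $K$ of the surjection $\imath^*\colon\Omega^1_{U/B,\imath(x)}\to\Omega^1_{X/B,x}$, and $\dim_{\kappa(x)}K=n-d=:r$.

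Next I would pick, on a small neighborhood of $\imath(x)$, sections $g_1,\dots,g_r$ of $\cI$ whose classes in $\cI/\cI^2\otimes\kappa(x)$ form a basis of $K$ (lift a basis through the stalk surjections $\cI\to\cI/\cI^2\to\cI/\cI^2\otimes\kappa(x)$). Then $d_Bg_1,\dots,d_Bg_r$ are linearly independent in the fiber $\Omega^1_{U/B,\imath(x)}$, so by the Jacobian criterion — iterating $r$ times that a function on a $B$-smooth scheme whose relative differential is part of a basis of $\Omega^1_{(-)/B}\otimes\kappa$ cuts out a $B$-smooth closed subscheme of relative dimension one less — the closed subscheme $V=Z(g_1,\dots,g_r)\subseteq U$ is smooth over $B$ of relative dimension $n-r=d$ on some open neighborhood $U^\circ$ of $\imath(x)$; I then replace $V$ by this $B$-smooth locus $V\cap U^\circ$.

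Finally I would set $X^\circ=\imath^{-1}(U^\circ)$, an open neighborhood of $x$ in $X$. Since each $g_j\in\cI$, we have $(g_1,\dots,g_r)\subseteq\cI|_{U^\circ}$, so $X^\circ$ is a closed subscheme of $V$ and the inclusion $\imath^\circ\colon X^\circ\hookrightarrow V$ is a closed immersion whose composite with $V\hookrightarrow U^\circ\hookrightarrow U$ recovers $X^\circ\to X\xrightarrow{\imath}U$. It remains to see $\imath^\circ$ is minimal at $x$: the map $(\imath^\circ)^*\colon\Omega^1_{V/B,\imath(x)}\to\Omega^1_{X/B,x}$ is surjective as for any immersion, and its source has dimension $d$ (the relative dimension of the $B$-smooth $V$ at $\imath(x)$), equal to the dimension of the target, so it is an isomorphism. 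I expect the one point needing care to be tracking the relative dimensions through the iterated hyperplane cuts, i.e.\ making sure $V$ has relative dimension exactly $d$ over $B$ near $\imath(x)$ — this is exactly what turns the final surjection into an isomorphism; everything else is formal use of the conormal sequence and the smoothness criterion.
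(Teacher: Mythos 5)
Your proof is correct, and it takes a genuinely different route from the paper. The paper first manufactures an unramified morphism $\pi\colon X'\to\bA^r_B$ over $B$ (via \cite[Tag 0CBK]{Stacks}) and then invokes \cite[Corollaire 18.4.7]{EGA44} to factor $\pi$ through a closed immersion into a scheme $V$ \'etale over $\bA^r_B$; minimality then falls out because $V\to B$ is smooth of relative dimension exactly $r=\dim\Omega^1_{X/B,x}$. You instead work entirely inside $U$: you use right-exactness of the conormal sequence and the fact that tensoring with $\kappa(x)$ preserves exactness at the middle term to identify the kernel $K$ of $\imath^*\colon\Omega^1_{U/B,\imath(x)}\to\Omega^1_{X/B,x}$ with the image of the conormal fiber, choose $r=n-d$ functions $g_1,\dots,g_r\in\cI$ whose relative differentials map to a basis of $K$, cut $V$ out of $U$ by these, and appeal to the Jacobian criterion for relative smoothness. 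The only imprecision is when you say the classes of the $g_i$ in $\cI/\cI^2\otimes\kappa(x)$ ``form a basis of $K$''; strictly speaking $K$ lives in $\Omega^1_{U/B,\imath(x)}$ and the map from $\cI/\cI^2\otimes\kappa(x)$ onto $K$ need not be injective, so what you want (and evidently mean) is that the $d_Bg_i$ form a basis of $K$ in $\Omega^1_{U/B,\imath(x)}$. With that read, the dimension count $\dim\Omega^1_{V/B,\imath(x)}=n-r=d$ closes the argument exactly as you describe. Your approach is more elementary and more self-contained (no appeal to the structure theory of unramified morphisms), and it has the pleasant feature of visibly producing $V$ as a closed subscheme of an open of $U$, which is the shape of factorization the statement asks for; the paper's approach is slightly quicker to state given the cited black boxes but delivers the map $V\to U$ less transparently.
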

\begin{proof}
Let $r=\dim\Omega^1_{X/B, x}$. By \cite[Tag 0CBK]{Stacks} we may find an open neighborhood $X'\subset X$ of $x$ and a diagram
\[
\xymatrix{
X' \ar^{\pi}[r] \ar[d] & \bA^r_B \ar[d] \\
X \ar^{\imath}[r] & U
}
\]
with $\pi$ unramified. By \cite[Corollaire 18.4.7]{EGA44} we may find an open neighborhood $X^\circ\subset X'$ of $x$ and a diagram
\[
\xymatrix{
X^\circ \ar^{\imath^\circ}[r] \ar[d] & V \ar[d] \\
X' \ar[r] & \bA^r_B
}
\]
with $\imath^\circ$ a closed immersion and $V\rightarrow \bA^r_B$ \'etale. In particular, $V\rightarrow B$ is smooth of relative dimension $r$. In particular, $\imath^\circ$ is minimal.
\end{proof}

One can also show the existence of minimal immersions compatibly with smooth morphisms.

\begin{proposition}\label{prop:relativesmoothing}
Let $f\colon X\rightarrow Y$ be a smooth morphism of $B$-schemes with $Y\rightarrow B$ locally of finite type, $x\in X$ a point and $y=f(x)$. Then there are open neighborhoods $X^\circ\subset X$ of $x$ and $Y^\circ\subset Y$ of $y$, a smooth morphism $f^\circ\colon X^\circ\rightarrow Y^\circ$, a smooth morphism $\tilde{f}\colon U\rightarrow V$ of smooth $B$-schemes and closed immersions $\imath\colon X^\circ\rightarrow U$ and $\jmath\colon Y^\circ\rightarrow V$, such that the diagram
\[
\xymatrix{
X \ar^{f}[d] & X^\circ \ar^{\imath}[r] \ar^{f^\circ}[d] \ar[l] & U \ar^{\tilde{f}}[d] \\
Y & Y^\circ \ar^{\jmath}[r] \ar[l] & V
}
\]
commutes and $\imath$ and $\jmath$ are minimal at $x$ and $y$.
\end{proposition}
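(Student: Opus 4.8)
The plan is to reduce the statement to \cref{prop:minimalimmersion} applied to the base $Y$, and then to transport the resulting minimal smoothing of $Y$ up along the smooth morphism $f$; the only nontrivial point at the end is a dimension count verifying that minimality survives this transport.

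First I would smooth $Y$. Since $Y\to B$ is locally of finite type, after replacing $B$ and $Y$ by affine open neighborhoods of the image of $y$ and of $y$ respectively, I may choose a (locally closed) immersion $Y\hookrightarrow \bA^m_B$ into a smooth $B$-scheme. Applying \cref{prop:minimalimmersion} then yields an open neighborhood $Y^\circ\subset Y$ of $y$, a smooth $B$-scheme $V$, and a closed immersion $\jmath\colon Y^\circ\to V$ which is minimal at $y$; in particular $\dim(V/B)=\dim_{k(y)}\Omega^1_{Y^\circ/B,y}$, since $\Omega^1_{V/B}$ is locally free of rank $\dim(V/B)$.

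Next I would build the smoothing of $X$ over that of $Y$. Replacing $X$ by $f^{-1}(Y^\circ)$ and shrinking around $x$, the local structure theorem for smooth morphisms \cite[Tag 054L]{Stacks} lets me assume there is an étale morphism $X\to\bA^n_{Y^\circ}$ over $Y^\circ$, where $n=\dim(X/Y^\circ)$. Base-changing $\jmath$ along $\bA^n_V\to V$ gives a closed immersion $\bA^n_{Y^\circ}\hookrightarrow\bA^n_V$, so the composite $X\to\bA^n_{Y^\circ}\hookrightarrow\bA^n_V$ is unramified (an étale morphism followed by a closed immersion) with target smooth over $B$. Exactly as in the proof of \cref{prop:minimalimmersion}, I then invoke \cite[Corollaire 18.4.7]{EGA44} to factor this unramified morphism, after shrinking to an open $X^\circ\ni x$, as $X^\circ\xrightarrow{\imath}U\to\bA^n_V$ with $\imath$ a closed immersion and $U\to\bA^n_V$ étale. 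Then $U\to B$ is smooth, and I set $\tilde f\colon U\to\bA^n_V\to V$, which is smooth of relative dimension $n$, and $f^\circ=f|_{X^\circ}$. The square
\[
\xymatrix{
X^\circ \ar^{\imath}[r] \ar^{f^\circ}[d] & U \ar^{\tilde f}[d] \\
Y^\circ \ar^{\jmath}[r] & V
}
\]
commutes because both composites $X^\circ\to V$ factor through $f^\circ$: by construction $X^\circ\to U\to\bA^n_V$ agrees with $X^\circ\to\bA^n_{Y^\circ}\hookrightarrow\bA^n_V$, and $\bA^n_{Y^\circ}\hookrightarrow\bA^n_V\to V$ agrees with $\bA^n_{Y^\circ}\to Y^\circ\xrightarrow{\jmath}V$.

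Finally I would check that $\imath$ is minimal at $x$, the one remaining condition. Since $U\to\bA^n_V$ is étale, $U\to B$ is smooth of relative dimension $n+\dim(V/B)$, so $\dim_{k(\imath x)}\Omega^1_{U/B,\imath x}=n+\dim(V/B)=n+\dim_{k(y)}\Omega^1_{Y^\circ/B,y}$ by the first step. On the other hand, since $f^\circ$ is smooth of relative dimension $n$, the relative cotangent sequence $(f^\circ)^\ast\Omega^1_{Y^\circ/B}\to\Omega^1_{X^\circ/B}\to\Omega^1_{X^\circ/Y^\circ}\to 0$ is short exact and locally split, so $\dim_{k(x)}\Omega^1_{X^\circ/B,x}=\dim_{k(y)}\Omega^1_{Y^\circ/B,y}+n$. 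Thus $\imath^\ast\colon\Omega^1_{U/B,\imath x}\to\Omega^1_{X^\circ/B,x}$ is a surjection of finite-dimensional vector spaces of equal dimension (recall that pullback along an immersion is surjective on relative Kähler differentials), hence an isomorphism. The main obstacle is this final coordination: one must choose the smoothing of $Y$ first and then arrange that $X$ be smoothed \emph{over} it in a way that stays minimal, and what makes it work is that the smoothness of $f$ forces the extra relative cotangent directions of $X$ over $Y$ to be free, so no new failure of minimality can appear in passing from $Y$ to $X$.
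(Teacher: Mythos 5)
Your argument is correct and follows essentially the same route as the paper's proof: obtain a minimal closed immersion $\jmath\colon Y^\circ\to V$ into a smooth $B$-scheme (you route this through \cref{prop:minimalimmersion} applied to an ambient affine chart, while the paper cites the same fact directly via \cite[Tag 0CBL]{Stacks}), use \cite[Tag 054L]{Stacks} to write $f$ \'etale-locally as $\bA^n_{Y^\circ}\to Y^\circ$, factor the resulting unramified morphism $X^\circ\to\bA^n_V$ through an \'etale morphism via \cite[Corollaire 18.4.7]{EGA44}, and close with the same dimension count using smoothness of $f^\circ$ and \'etaleness of $U\to\bA^n_V$. The only nominal difference is the order in which the two local charts are produced, which has no mathematical effect.
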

\begin{proof}
Let $d$ be the relative dimension of $f\colon X\rightarrow Y$ at $x$. By \cite[Tag 054L]{Stacks}, we may find open neighborhoods $X^\circ\subset X$ of $x$ and $Y^\circ\subset Y$ of $y$ together with \'etale morphism $g\colon X^\circ\rightarrow \bA^d\times Y^\circ$ and a smooth morphism $f^\circ\colon X^\circ\rightarrow Y^\circ$ fitting into a commutative diagram
\[
\xymatrix{
X \ar^{f}[d] & X^\circ \ar^-{g}[r] \ar^{f^\circ}[d] \ar[l] & \bA^d\times Y^\circ \ar^{\pi_2}[dl] \\
Y & Y^\circ. \ar[l]
}
\]
By \cite[Tag 0CBL]{Stacks}, by shrinking $Y^\circ$ we may find a closed immersion $\jmath\colon Y^\circ\rightarrow V$ minimal at $y\in Y^\circ$ with $V$ a smooth $B$-scheme. Therefore, we obtain a commutative diagram
\[
\xymatrix{
X \ar^{f}[d] & X^\circ \ar^-{g}[r] \ar^{f^\circ}[d] \ar[l] & \bA^d\times Y^\circ \ar^{\pi_2}[dl] \ar^{\id\times\jmath}[r] & \bA^d\times V \ar^{\pi_2}[dl] \\
Y & Y^\circ \ar[l] \ar^{\jmath}[r] & V.
}
\]
The composite $X^\circ\xrightarrow{g} \bA^d\times Y^\circ\xrightarrow{\id\times \jmath} \bA^d\times V$ is unramified. Therefore, by \cite[Corollaire 18.4.7]{EGA44}, shrinking $X^\circ$ we may factor $X^\circ\rightarrow \bA^d\times V$ as $X^\circ\xrightarrow{\imath} U\xrightarrow{h} \bA^d\times V$, where $\imath$ is a closed immersion and $h$ is \'etale. Thus, we obtain a commutative diagram
\[
\xymatrix{
X \ar^{f}[d] & X^\circ \ar^{\imath}[r] \ar^{f^\circ}[d] \ar[l] & U \ar^-{h}[r] & \bA^d\times V \ar^{\pi_2}[dl] \\
Y & Y^\circ \ar[l] \ar^{\jmath}[r] & V.
}
\]
Let $\tilde{f} \coloneqq \pi_2\circ h\colon U\rightarrow V$ which is smooth, as it is a composite of an \'etale morphism and a projection. By construction $\jmath\colon Y^\circ\rightarrow V$ is minimal at $y$, so that $\dim(\Omega^1_{Y^\circ/B, y})=\dim(\Omega^1_{V/B, \jmath(y)})$. Since $f^\circ$ is smooth of relative dimension $d$ at $x$, we have $\dim(\Omega^1_{X^\circ/B, x}) = \dim(\Omega^1_{Y^\circ/B, y}) + d$. Since $h$ is \'etale, $\pi_2$ is smooth of relative dimension $d$ at $\imath(x)$, i.e. $\dim(\Omega^1_{U/B, \imath(x)}) = \dim(\Omega^1_{V/B, \jmath(y)}) + d$. Therefore, $\dim(\Omega^1_{X^\circ/B, x}) = \dim(\Omega^1_{U/B, \imath(x)})$.
\end{proof}

\section{Constructible sheaves}\label{sect:constructible}

In this section we summarize the properties of a theory of constructible sheaves on schemes which we will use in the paper. Such theories will involve choosing the ground field $k$ the schemes will be defined over and a commutative ring $R$ of coefficients.

\subsection{Six-functor formalisms}

For an $\infty$-category $\cC$ which admits finite limits and which has a class $c$ of morphisms stable under compositions and pullbacks, recall the symmetric monoidal $\infty$-category $\Corr(\cC)_{c;\all}$ which has the following informal description:
\begin{itemize}
    \item Its objects are objects of $\cC$.
    \item Morphisms from $X_1$ to $X_2$ are given by correspondences $X_1\leftarrow X_{12}\rightarrow X_2$ with $X_{12}\rightarrow X_2$ in $c$.
    \item The composite of $X_1\leftarrow X_{12}\rightarrow X_2$ and $X_2\leftarrow X_{23}\rightarrow X_3$ is given by the pullback $X_1\leftarrow X_{12}\times_{X_2} X_{23}\rightarrow X_3$.
    \item The symmetric monoidal structure is given by the Cartesian symmetric monoidal structure in $\cC$: $X_1, X_2\mapsto X_1\times X_2$.
\end{itemize}

We have the following notion of 6-functor formalisms from \cite[Definition A.5.7]{Mann}.

\begin{definition}
A \defterm{weak 6-functor formalism on $\cC$} is a lax symmetric monoidal functor
\[\bD^*_!\colon \Corr(\Sch^{\sep\ft})_{\all;\all}\longrightarrow \PrSt_R.\]
\end{definition}

We denote the values of a weak 6-functor formalism as follows:
\begin{itemize}
    \item The image of $X\in\Sch^{\sep\ft}$ is denoted by $\bD(X)\in\PrSt_R$.
    \item The image of $X\xleftarrow{\id} X\xrightarrow{f} Y$ is given by $f_!\colon \bD(X)\rightarrow \bD(Y)$. We denote its right adjoint by $f^!\colon \bD(Y)\rightarrow \bD(X)$.
    \item The image of $Y\xleftarrow{f}X\xrightarrow{\id} X$ is given by $f^*\colon \bD(Y)\rightarrow \bD(X)$. We denote its right adjoint by $f_*\colon \bD(X)\rightarrow \bD(Y)$.
    \item The lax symmetric monoidal structure is given by $\boxtimes\colon \bD(X)\otimes \bD(Y)\rightarrow \bD(X\times Y)$ and $\Mod_R\rightarrow \bD(\pt)$ denoted by $M\in\Mod_R\mapsto \underline{M}\in\bD(\pt)$.
\end{itemize}

The $\infty$-category $\bD(X)$ carries a symmetric monoidal structure defined by
\begin{equation}\label{eq:Dbcmonoidal}
\cF\otimes\cG = \Delta^*(\cF\boxtimes \cG),\qquad \bu_X = p^* \underline{R},
\end{equation}
where $\Delta\colon X\rightarrow X\times X$ is the diagonal and $p\colon X\rightarrow \pt$. We denote by $\cHom(-, -)\colon \bD(X)^{\op}\times \bD(X)\rightarrow \bD(X)$ the internal Hom. We denote by $\R\Gamma_c(X, -)$ the $!$-pushforward along $X\rightarrow \pt$.

\begin{definition}
An object $\cF\in\bD(X)$ is \defterm{lisse} if it is dualizable with respect to \eqref{eq:Dbcmonoidal}.
\end{definition}

Given a Cartesian square
\begin{equation}\label{eq:Cartesian}
\xymatrix{
X' \ar^{g}[r] \ar^{f'}[d] & X \ar^{f}[d] \\
Y' \ar^{h}[r] & Y
}
\end{equation}
in $\Sch^{\sep\ft}$ we obtain the following natural transformations:
\begin{enumerate}
    \item There is a natural isomorphism
    \[\Ex^*_!\colon f'_!g^*\xrightarrow{\sim} h^* f_!\]
    witnessed by the 2-cell
    \[
    \xymatrix{
    && X' \ar^{g}[dl] \ar^{f'}[dr] && \\
    & X \ar^{\id}[dl] \ar^{f}[dr] && Y' \ar^{h}[dl] \ar^{\id}[dr] & \\
    X && Y && Y'
    }
    \]
    in the $\infty$-category of $\Corr(\Sch^{\sep\ft})_{\all;\all}$. By construction the exchange natural isomorphism $\Ex^*_!$ is compatible with compositions.
    \item The natural isomorphism
    \[\Ex^!_*\colon f^! h_*\xrightarrow{\sim} g_* (f')^!\]
    is defined to be the mate of $\Ex^*_!$.
    \item There is a natural transformation
    \[\Ex^{*, !}\colon g^*f^!\longrightarrow (f')^!h^*\]
    obtained as the mate of the composite
    \[f'_! g^* f^!\xrightarrow{\Ex^*_!}h^*f_!f^!\xrightarrow{\counit} h^*.\]
\end{enumerate}

For a morphism $f\colon X\rightarrow Y$ we have the following natural transformations:
\begin{enumerate}
    \item $\Pr^*_!\colon f_!((-)\otimes f^*(-))\xrightarrow{\sim} f_!(-)\otimes (-)$ witnessing the projection formula.
    \item $\Pr^{!, *}\colon f^!(-)\otimes f^*(-)\longrightarrow f^!(-\otimes -)$ is the mate of
    \[f_!(f^!(-)\otimes f^*(-))\xrightarrow{\Pr^*_!} f_!f^!(-)\otimes(-)\xrightarrow{\counit} (-)\otimes (-).\]
\end{enumerate}

\begin{definition}
Let $\bD$ be a weak 6-functor formalism. A morphism $f\colon X\rightarrow Y$ in $\Sch^{\sep\ft}$ is \defterm{weakly cohomologically smooth} if the following conditions are satisfied:
\begin{enumerate}
    \item $\Pr^{!, *}\colon f^!(-)\otimes f^*(-)\rightarrow f^!(-\otimes -)$ is an isomorphism.
    \item The object $f^!\bu_Y\in\bD(X)$ is invertible.
    \item For any Cartesian diagram \eqref{eq:Cartesian} the morphism $\Ex^{*, !}\colon g^* f^!\bu_Y\rightarrow (f')^! \bu_{Y'}$ is an isomorphism.
\end{enumerate}
A morphism $f\colon X\rightarrow Y$ in $\Sch^{\sep\ft}$ is \defterm{cohomologically smooth} if for any Cartesian diagram \eqref{eq:Cartesian} the morphism $f'\colon X'\rightarrow Y'$ is weakly cohomologically smooth.
\end{definition}

Next we introduce the notion of a cohomologically proper morphism. For a monomorphism $f\colon X\rightarrow Y$ we have a Cartesian diagram
\[
\xymatrix{
X \ar@{=}[r] \ar@{=}[d] & X \ar^{f}[d] \\
X \ar^{f}[r] & Y
}
\]
In this case the base change isomorphism is $\Ex^!_*\colon f^!f_*\xrightarrow{\sim} \id$. We denote its mates by
\[\fgsp_f\colon f_!\longrightarrow f_*\qquad \epsilon_f\colon f^!\longrightarrow f^*.\]
Equivalently, they are mates of the base change isomorphism $\Ex^*_!\colon \id\xrightarrow{\sim} f^*f_!$.

\begin{definition}
Let $\bD$ be a weak 6-functor formalism. A monomorphism $f\colon X\rightarrow Y$ in $\Sch^{\sep\ft}$ is \defterm{cohomologically proper} if $\fgsp_f$ is an isomorphism.
\end{definition}

For a general morphism $f\colon X\rightarrow Y$ consider the diagram
\[
\xymatrix{
X \ar@/_1.0pc/@{=}[ddr] \ar^{\Delta}[dr] \ar@/^1.0pc/@{=}[drr] && \\
& X\times_Y X \ar^{p_1}[r] \ar^{p_2}[d] & X \ar^{f}[d] \\
& X \ar^{f}[r] & Y
}
\]

If $\Delta\colon X\rightarrow X\times_Y X$ is cohomologically proper, we define $\fgsp_f\colon f_!\rightarrow f_*$ as the mate of the composite
\[\id\cong (p_1)_*\Delta_*\Delta^! p_2^!\xleftarrow[\sim]{\fgsp_\Delta} (p_1)_*\Delta_!\Delta^! p_2^!\xrightarrow{\counit} (p_1)_*p_2^!\xleftarrow[\sim]{\Ex^!_*} f^!f_*.\]

\begin{definition}
Let $\bD$ be a weak 6-functor formalism. A morphism $f\colon X\rightarrow Y$ in $\Sch^{\sep\ft}$ is \defterm{cohomologically proper} if $\fgsp_\Delta$ and $\fgsp_f$ are isomorphisms.
\end{definition}

\begin{remark}
If $f\colon X\rightarrow Y$ is a monomorphism, its diagonal $\Delta\colon X\rightarrow X\times_Y X$ is an isomorphism in which case the map $\fgsp_\Delta$ is an isomorphism. Therefore, the above definitions of cohomological properness are consistent.
\end{remark}

We are ready to state the definition of a 6-functor formalism we will use in this paper.

\begin{definition}
A \defterm{6-functor formalism} is a weak 6-functor formalism $\bD^*_!\colon \Corr(\Sch^{\sep\ft})_{\all;\all}\longrightarrow \PrSt_R$ satisfying the following conditions:
\begin{enumerate}
    \item Every proper morphism is cohomologically proper.
    \item Every smooth morphism is cohomologically smooth.
\end{enumerate}
\end{definition}

Next we introduce the Verdier duality functor for a 6-functor formalism. For $X\in\Sch^{\sep\ft}$ we denote by $\omega_X = p^! \bu_{\pt}$, where $p\colon X\rightarrow \pt$. Similarly, for $f\colon X\rightarrow Y$ we denote $\omega_{X/Y} = f^! \bu_Y$. For $\cF\in\bD(X)$ we define
\[\bbD(\cF) = \cHom(\cF, \omega_X)\]
which comes with natural transformations
\[\psi_\cF\colon \cF\longrightarrow \bbD(\bbD(\cF))\]
and
\[\Ex^{\bbD, \boxtimes}\colon \bbD(-)\boxtimes \bbD(-)\longrightarrow \bbD(-\boxtimes -).\]
For a morphism $f\colon X\rightarrow Y$ we have natural isomorphisms
\[\Ex^{!,\bbD}\colon f^!\bbD\xrightarrow{\sim} \bbD f^*\]
and
\[\Ex_{*, \bbD}\colon f_*\bbD\xrightarrow{\sim}\bbD f_!\]
which fit into a commutative diagram
\begin{equation}\label{eq:Ex!*selfdual}
\xymatrix{
f^!h_*\bbD \ar^{\Ex_{*, \bbD}}[r] \ar^{\Ex^!_*}[d] & f^!\bbD h_! \ar^{\Ex^{!, \bbD}}[r] & \bbD f^*h_! \ar^{\Ex^*_!}[d] \\
g_*(f')^! \bbD \ar^{\Ex^{!, \bbD}}[r] & g_* \bbD (f')^* \ar^{\Ex_{*, \bbD}}[r] & \bbD g_! (f')^*.
}
\end{equation}

\begin{proposition}\label{prop:epsilonfgspD}
Let $f\colon X\rightarrow Y$ be a monomorphism. Then the diagram
\[
\xymatrix{
f_* f^!\bbD\ar^{\Ex^{!, \bbD}}_{\sim}[r] \ar^{\epsilon_f}[d] & f_* \bbD f^* \ar^{\Ex_{*, \bbD}}_{\sim}[r] & \bbD f_!f^* \\
f_* f^*\bbD & \bbD \ar_-{\unit}[l] & \bbD f_*f^* \ar_{\unit}[l] \ar_{\fgsp_f}[u]
}
\]
commutes.
\end{proposition}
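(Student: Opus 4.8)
The plan is to deduce the statement purely formally from the compatibility square \eqref{eq:Ex!*selfdual}, after unwinding the definitions of the four non-invertible edges of the hexagon. Since $f$ is a monomorphism, the square with both legs over $f\colon X\to Y$ equal to $\id_X$ is Cartesian, and by definition both $\epsilon_f\colon f^!\to f^*$ and $\fgsp_f\colon f_!\to f_*$ are mates of its base change isomorphism $\Ex^!_*\colon f^! f_*\xrightarrow{\sim}\id$ (equivalently of $\Ex^*_!\colon \id\xrightarrow{\sim} f^* f_!$). Writing $\eta$ for the unit of $f^*\dashv f_*$, this unwinds to $\epsilon_f = \bigl(f^!\xrightarrow{f^!\eta} f^! f_* f^*\xrightarrow{\Ex^!_* f^*} f^*\bigr)$ and $\fgsp_f = \bigl(f_!\xrightarrow{\eta f_!} f_* f^* f_!\xrightarrow{f_*(\Ex^*_!)^{-1}} f_*\bigr)$. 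The crucial input is that \eqref{eq:Ex!*selfdual}, specialized to this Cartesian square, collapses (the whole bottom row becoming identities) to the single relation $\Ex^!_* = \Ex^*_!\circ(\Ex^{!,\bbD}f_!)\circ(f^!\Ex_{*,\bbD})$ between natural transformations $f^! f_*\bbD\to\bbD$; that is, it says exactly that $\Ex^!_*$ and $\Ex^*_!$ correspond to one another under the duality isomorphisms $\Ex^{!,\bbD}$ and $\Ex_{*,\bbD}$.

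With these in hand I would substitute the two expansions above into the two composites $\bbD f_* f^*\to f_* f^*\bbD$ compared in the statement — the one running up along $\fgsp_f$, then back along the top row of isomorphisms, then down $\epsilon_f$, versus the one running along the bottom through the two unit maps — then replace the $\Ex^!_*$ appearing inside $\epsilon_f$ by the right-hand side of the relation above, and finally cancel the units and counits of $f^*\dashv f_*$ and $f_!\dashv f^!$ using the triangle identities. A useful intermediate step, itself extracted from \eqref{eq:Ex!*selfdual} applied to trivial squares, is that $\bbD$ carries the unit of $f^*\dashv f_*$ to the counit of $f_!\dashv f^!$ up to conjugation by $\Ex^{!,\bbD}$ and $\Ex_{*,\bbD}$; inserting this makes the remaining cancellation mechanical.

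I do not expect a conceptual obstacle, only a bookkeeping one: one must track the contravariance of $\bbD$, which reverses the order of the two factors of $\epsilon_f$ and of $\fgsp_f$ and interchanges units with counits, and one must apply each whiskered instance of \eqref{eq:Ex!*selfdual} with the correct functor on the correct side. The biduality transformation $\psi$ never enters, since no edge of the hexagon involves it. Concretely I would draw the full pasting diagram of $2$-cells and simplify it in two stages: first use the displayed relation to trade $\Ex^!_*$ for $\Ex^*_!$, so that the $\epsilon_f$-side becomes a statement about $f_!$ and $f^*$ alone, and then collapse the resulting diagram of units and counits via the triangle identities.
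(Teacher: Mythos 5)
Your approach matches the paper's: both specialize the compatibility square \eqref{eq:Ex!*selfdual} to the Cartesian square $X=X\times_Y X$ attached to the monomorphism $f$ (so the bottom row degenerates to identities), and then deduce the hexagon by taking mates, using that $\epsilon_f$ and $\fgsp_f$ are by definition the mates of $\Ex^!_*$ and $\Ex^*_!$. The paper's proof is even terser than your sketch — it records the specialized commutative square and simply says ``using the definitions of $\fgsp_f$ and $\epsilon_f$ as mates \dots we obtain the claim'' — so your proposal can be read as a fleshed-out version of the same calculation.

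One point in your write-up is inaccurate, however. You assert that the auxiliary fact — that $\bbD$ carries the unit of $f^*\dashv f_*$ to the counit of $f_!\dashv f^!$ up to conjugation by $\Ex^{!,\bbD}$ and $\Ex_{*,\bbD}$ — is ``itself extracted from \eqref{eq:Ex!*selfdual} applied to trivial squares.'' That is not correct: for the identity Cartesian square, and for the square with $g=\id_X$, $h=\id_Y$, $f'=f$, all of $\Ex^!_*$, $\Ex^*_!$, $\Ex_{*,\bbD}$ degenerate to identities and \eqref{eq:Ex!*selfdual} reduces to the tautology $\Ex^{!,\bbD}=\Ex^{!,\bbD}$, so one learns nothing about the unit/counit compatibility. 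That compatibility (it amounts to $\Ex^{!,\bbD}$ and $\Ex_{*,\bbD}$ being mates of each other across the adjunctions $f^*\dashv f_*$ and $f_!\dashv f^!$) is genuine additional structural input, not a consequence of \eqref{eq:Ex!*selfdual}. It is indeed implicitly used in the paper's own appeal to ``taking mates'' of the square, so your recognition that something of this sort is needed is correct — only the claimed source is wrong. If you keep the explicit-substitution route, you should cite this compatibility as a separate property of the duality exchange transformations rather than deduce it from trivial base change.
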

\begin{proof}
Applying \eqref{eq:Ex!*selfdual} we get a commutative diagram
\[
\xymatrix{
\bbD \ar^{\Ex^*_!}[r] \ar^{\Ex^!_*}[d] & \bbD f_! f^* \\
f_*f^!\bbD \ar^{\Ex^{!, \bbD}}[r] & f_* \bbD f^*\ar_{\Ex_{*, \bbD}}[u]
}
\]
Using the definitions of $\fgsp_f$ and $\epsilon_f$ as mates of $\Ex^*_!$ and $\Ex^!_*$, respectively, we obtain the claim.
\end{proof}

For a lisse object $V\in\bD(X)$ with dual $V^\vee\in\bD(X)$ and another object $\cF\in\bD(X)$ we have a natural isomorphism
\[\Ex^{V, \bbD}\colon \bbD(\cF)\otimes V^\vee\xrightarrow{\sim} \bbD(V\otimes \cF).\]

The natural transformation $\psi$ commutes with the operation $f^*$ in the following sense: given a morphism $f \colon X \to Y$, there is a commutative square
\[\begin{tikzcd}
  f^* \ar{r}{f^*\psi}\ar{d}{\psi f^*}
  & f^* \bbD \bbD \ar{d}{\psi_{f^*\bbD\bbD}}
  \\
  \bbD\bbD f^* \ar{r}{\bbD\bbD f^* \psi}
  & \bbD\bbD f^* \bbD\bbD.
\end{tikzcd}\]
We also have the following compatibility between $\psi$ and $\Ex^{!,\bbD}$: there is a commutative square
\[\begin{tikzcd}
  f^! \bbD \bbD \bbD \ar{r}{\Ex^{!,\bbD}}\ar[swap]{d}{f^! \bbD (\psi)}
  & \bbD f^* \bbD \bbD \ar{d}{\bbD f^*(\psi)}
  \\
  f^! \bbD \ar{r}{\Ex^{!,\bbD}}
  & \bbD f^*.
\end{tikzcd}\]

\subsection{Complex analytic constructible sheaves on schemes and stacks}

In this section $k=\C$ and $R$ a commutative Noetherian ring of finite global dimension. Let $\Top^{\lch}$ be the category of locally compact Hausdorff topological spaces.

Using results of \cite{KashiwaraSchapira,Volpe}, an $\infty$-categorical six-functor formalism for locally compact Hausdorff topological spaces was constructed in \cite[Theorem 8.1.8]{KhanWeavelisse}. In particular, we get a lax symmetric monoidal functor
\[\Shv^*_!\colon \Corr(\Top^{\lch})_{\all;\all}\longrightarrow\PrSt_R\]
which sends a locally compact Hausdorff space $X$ to $\Shv(X; R)$, the derived $\infty$-category of complexes of sheaves of $R$-modules on $X$ with $*$-pullbacks and $!$-pushforwards along arbitrary morphisms which admit right adjoints $f_*$ and $f^!$.

Given a scheme $X\in\Sch^{\sep\ft}$ which is separated and of finite type over $\C$ we denote by $X^{\an}\in\Top^{\lch}$ its analytification. For a scheme $X\in\Sch^{\sep\ft}$ an object $\cF\in\Shv(X^{\an}; R)$ is \defterm{constructible} if there is a finite stratification $X=\sqcup_i X_i$ of $X$ into locally closed subschemes $X_i$ such that $\cF|_{X_i}$ is locally constant with perfect stalks. Let $\Dbc(X)\subset \Shv(X^{\an}; R)$ be the full subcategory of constructible sheaves and $\bD(X) = \Ind(\Dbc(X))$ the $\infty$-category of ind-constructible sheaves. By \cite{Verdier} (see also the overview in \cite{MaximSchurmann}) the six-functor formalism restricts to constructible sheaves and, hence, ind-constructible sheaves, i.e. we have a lax symmetric monoidal functor
\[\bD^*_!\colon \Corr(\Sch^{\sep\ft})_{\all;\all}\longrightarrow \PrSt_R\]
which sends $X\in\Sch^{\sep\ft}$ to $\bD(X)$. Let us summarize some properties of constructible sheaves that can be found in \cite{KashiwaraSchapira,Achar,MaximSchurmann}.

\begin{proposition}\label{prop:constructible6functor}
$\bD^*_!\colon \Corr(\Sch^{\sep\ft})_{\all;\all}\rightarrow \PrSt_R$ is a 6-functor formalism which satisfies the following properties:
\begin{enumerate}
    \item For every $X\in\Sch^{\sep\ft}$ the unit $\id\rightarrow \pi_*\pi^*$ is an isomorphism for $\pi\colon \bA^1\times X\rightarrow X$ the projection.
    \item The functor $\bD^*\colon \Sch^{\sep\ft, \op}\rightarrow \PrSt_R$ given by sending $f\colon X\rightarrow Y$ to $f^*\colon \bD(Y)\rightarrow \bD(X)$ satisfies \'etale descent.
    \item For a closed immersion $i\colon Z\rightarrow X$ and its complementary open immersion $j\colon U\rightarrow X$ the base change isomorphism gives $j^!i_*\cong 0$. The corresponding commutative diagram
    \begin{equation}\label{eq:localization}
    \xymatrix@C=2cm{
    j_!j^! \ar^{\counit}[r] \ar^{\unit}[d] & \id \ar^{\unit}[d] \\
    j_!j^!i_*i^*\cong 0 \ar^{\counit}[r] & i_*i^*
    }
    \end{equation}
    is Cartesian.
    \item For every $X\in\Sch^{\sep\ft}$ and $\cF\in\Dbc(X)$ the morphism $\psi_\cF\colon \cF\rightarrow \bbD(\bbD(\cF))$ is an isomorphism.
    \item For every $X, Y\in\Sch^{\sep\ft}$ and $\cF\in\Dbc(X),\cG\in\Dbc(Y)$ the morphism $\Ex^{\bbD, \boxtimes}\colon \bbD(\cF)\boxtimes \bbD(\cG)\rightarrow \bbD(\cF\boxtimes \cG)$ is an isomorphism.
    \item There is a perverse $t$-structure on $\Dbc(X)$ for $X\in\Sch^{\sep\ft}$ with heart $\Perv(X)$ with the following properties:
    \begin{enumerate}
        \item If $f\colon X\rightarrow Y$ is smooth, $f^\dag\coloneqq f^![-\dim(X/Y)]\colon \Dbc(Y)\rightarrow \Dbc(X)$ is $t$-exact.
        \item If $f\colon X\rightarrow Y$ is smooth with connected fibers, $f^\dag\colon \Perv(Y)\rightarrow\Perv(X)$ is fully faithful.
        \item If $f\colon X\rightarrow Y$ is finite, $f_*\colon \Dbc(X)\rightarrow \Dbc(Y)$ is $t$-exact.
    \end{enumerate}
    \item If $R$ is a field and $X\in\Sch^{\sep\ft}$, the natural functor $\Ind(\Db(\Perv(X))) \rightarrow \bD(X)$ is an equivalence.
    \item Lisse objects in $\bD(X)$ are precisely locally constant sheaves with perfect stalks.
\end{enumerate}
\end{proposition}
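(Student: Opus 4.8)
The plan is to assemble this proposition from the classical theory of constructible sheaves (\cite{KashiwaraSchapira, Achar, MaximSchurmann, Verdier}) together with the $\infty$-categorical packaging of \cite{KhanWeavelisse, Volpe}; it is essentially a recollection, so the work is in matching the abstract notions of \cref{sect:constructible} to their geometric counterparts. First I would verify that $\bD^*_!$ is a $6$-functor formalism. That the topological six operations restrict to constructible (hence, after $\Ind$-completion, ind-constructible) sheaves is \cite{Verdier}. To see that proper morphisms are cohomologically proper, I would invoke proper base change from \cite{KashiwaraSchapira}: for proper $f$ one has $f_!\cong f_*$, so both $\fgsp_f$ and $\fgsp_\Delta$ are isomorphisms, the latter because $\Delta\colon X\to X\times_Y X$ is a closed immersion when $X\to Y$ is separated. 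To see smooth morphisms are cohomologically smooth, I would use that a smooth morphism of relative dimension $d$ over $\C$ analytifies to a topological submersion with canonically oriented (complex) fibers, so $f^!\bu_Y\cong\bu_X[2d]$ is invertible and, by Poincar\'e--Verdier duality, the projection and exchange morphisms in the definition are isomorphisms, stably under base change.

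Next I would dispatch the enumerated properties. Property (1) is homotopy invariance: $\bA^1$ is contractible, so $\id\xrightarrow{\sim}\pi_*\pi^*$ by topological homotopy invariance of sheaf cohomology. Property (2) holds because an \'etale morphism of $\C$-schemes analytifies to a local homeomorphism, along which $*$-pullback of sheaves satisfies descent, as recorded in \cite{KhanWeavelisse}. Property (3) is the recollement for a closed--open decomposition: $j^!i_*\cong j^*i_*\cong 0$ and the displayed square is the fiber sequence $j_!j^!\to\id\to i_*i^*$, both in \cite{KashiwaraSchapira, Achar}. Properties (4) and (5) are biduality $\psi_\cF\colon\cF\xrightarrow{\sim}\bbD\bbD\cF$ and the K\"unneth isomorphism $\bbD\cF\boxtimes\bbD\cG\xrightarrow{\sim}\bbD(\cF\boxtimes\cG)$ for constructible sheaves, again from \cite{KashiwaraSchapira}. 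For (6) I would take the middle-perversity $t$-structure of Beilinson--Bernstein--Deligne--Gabber with heart $\Perv(X)$: (6a) follows from right $t$-exactness of $f^*$, left $t$-exactness of $f^!$, and the identity $f^\dag=f^![-\dim(X/Y)]=f^*[\dim(X/Y)]$ for smooth $f$; (6b) is the full faithfulness of $f^\dag$ on perverse hearts for smooth $f$ with connected fibers, via its left adjoint together with smooth base change and $\bA^d$-invariance; (6c) holds since a finite morphism is affine and proper, so $f_*=f_!$ is simultaneously right and left $t$-exact. Property (7) is Beilinson's gluing theorem identifying $\Db(\Perv(X))$ with $\Dbc(X)$ when $R$ is a field, after $\Ind$-completion. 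Property (8) I would check directly: a dualizable object for the monoidal structure \eqref{eq:Dbcmonoidal} is exactly a locally constant sheaf with perfect stalks, cf. \cite{KhanWeavelisse}.

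The main obstacle is not mathematical depth but bookkeeping: one must be careful that the statements above, frequently phrased for triangulated or bounded derived categories, genuinely lift to the stable presentable $\infty$-categorical and ind-completed framework used here --- in particular that the perverse $t$-structure and Beilinson's equivalence are available $\infty$-categorically --- and that the abstract predicates ``cohomologically smooth'' and ``cohomologically proper'' of the formalism reduce to smoothness and properness of schemes. The cited works on the $\infty$-categorical six-functor formalism for locally compact Hausdorff spaces handle the packaging, so the argument reduces to citation together with routine compatibility checks.
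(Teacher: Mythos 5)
The paper gives no proof of this proposition: it is presented purely as a recollection with citations to \cite{KashiwaraSchapira,Achar,MaximSchurmann} (and \cite{Verdier} in the preceding paragraph). Your proposal fills in exactly the expected citation trail and routine compatibility checks that the paper leaves implicit, and is correct in all essential points — the only small refinement I would make is in (6a), where the cleanest statement is that $f^*[d]$ is right $t$-exact and $f^![-d]$ is left $t$-exact for \emph{any} $f$ of relative dimension $\le d$, and these two functors coincide (hence are $t$-exact) precisely when $f$ is smooth of relative dimension $d$ via the purity isomorphism $f^!\cong f^*[2d]$.
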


Let us now present several corollaries of the assumptions. First, we may extend a 6-functor formalism to stacks as follows.

\begin{proposition}\label{prop:6functorextension}
There is a functor 
\[\bD^*_!\colon \Corr(\Art^{\lft})_{\all;\all}\longrightarrow \PrSt_R\]
with the following properties:
\begin{enumerate}
    \item Its restriction to $\Corr(\Sch^{\sep\ft})_{\all;\all}$ coincides with the original 6-functor formalism.
    \item The functor
    \[\bD(X)\longrightarrow \lim_{(S, s)}\bD(S)\]
    induced by $*$-pullbacks, where the limit is taken over the $\infty$-category $\Sch^{\sep\ft}_{/X}$ of schemes $S\in\Sch^{\sep\ft}$ together with a morphism $s\colon S\rightarrow X$, is an equivalence.
    \item Every smooth morphism $f\colon X\rightarrow Y$ in $\Art^{\lft}$ is cohomologically smooth with respect to this extension of the 6-functor formalism.
\end{enumerate}
\end{proposition}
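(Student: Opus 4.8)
The plan is to extend the 6-functor formalism along smooth atlases, invoking the general machinery that promotes a 6-functor formalism on a geometric setup of schemes to the stacks built from it by iterated smooth surjections (see \cite[Appendix A]{Mann}, and the form used in \cite{KhanWeavelisse}). The three inputs this requires are all at hand: étale descent for $\bD^*$ (\cref{prop:constructible6functor}(2)); the fact that smooth morphisms in $\Sch^{\sep\ft}$ are cohomologically smooth (part of the definition of a 6-functor formalism, together with \cref{prop:constructible6functor}); and the inductive description of objects of $\Art^{\lft}$ in terms of schemes in $\Sch^{\sep\ft}$ through smooth surjections (\cref{def:Artinstacks}), using that a scheme locally of finite type is covered by separated finite-type affines. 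The underlying $*$-functor is then forced, and the work is to upgrade it to the whole correspondence functor.

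First I would extend $\bD^*\colon (\Sch^{\sep\ft})^{\op}\to\PrSt_R$ to $(\Art^{\lft})^{\op}\to\PrSt_R$ by right Kan extension along $\Sch^{\sep\ft}\hookrightarrow\Art^{\lft}$, which yields precisely the formula $\bD(X)=\lim_{(S,s)}\bD(S)$ over $\Sch^{\sep\ft}_{/X}$ of property (2); this determines the extended $*$-functor uniquely. Étale descent then lets one check, exactly as in the discussion around \eqref{eq:ShvArt} and via the cited results of \cite{KhanWeavelisse}, that this limit is unchanged upon restricting to the cofinal subcategory of \emph{smooth} maps $s\colon S\to X$; in particular, for a smooth surjection $p\colon U\to X$ from a scheme, $\bD(X)$ is the limit of $\bD$ evaluated on the \v{C}ech nerve $U_\bullet$ of $p$, so $\bD^*$ satisfies smooth descent. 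From this presentation the extended $\bD^*$ inherits its symmetric monoidal structure, and each $f^*$ acquires a right adjoint $f_*$.

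The substantive step is promoting $\bD^*$ to a lax symmetric monoidal functor out of $\Corr(\Art^{\lft})_{\all;\all}$, i.e. constructing $f_!$ together with all the exchange data for an arbitrary morphism $f$ in $\Art^{\lft}$. For schematic morphisms this is produced by \v{C}ech descent from the scheme-level $f_!$ and the scheme-level smooth base change isomorphism. A general geometric morphism becomes, after base change along a smooth scheme atlas of the target and then of the fibre product, a combination of schematic morphisms and smooth surjections from schemes; for the latter the decisive point is cohomological smoothness, which gives $p^!(-)\cong \omega_{U/X}\otimes p^*(-)$ with $\omega_{U/X}=p^!\bu_X$ invertible, so that the scheme-level $!$-operations descend across the atlas. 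Assembling all of this into a single functor on the correspondence $\infty$-category, with the required coherences, is exactly what the abstract extension theorems cited above provide under precisely the hypotheses listed, and property (1) holds because the construction restricts by definition to the original formalism on $\Sch^{\sep\ft}$. For property (3), cohomological smoothness is stable under base change by definition, and — since $\bD^*$ satisfies smooth descent, so invertibility of $\omega_{X/Y}$ and the isomorphism conditions on $\Pr^{!,*}$ and $\Ex^{*,!}$ may be tested after a smooth surjective base change — it is smooth-local on both source and target, this being a standard feature of the extension; thus for a smooth $f\colon X\to Y$ in $\Art^{\lft}$ one chooses a scheme atlas $V\to Y$ and then a scheme atlas $U\to X\times_Y V$, observes that $U\to V$ is a smooth morphism of schemes in $\Sch^{\sep\ft}$ and hence cohomologically smooth, and descends along the two atlases.

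The step I expect to be the main obstacle is the coherent construction of $f_!$ for non-representable geometric morphisms, together with the verification that all the exchange isomorphisms organize into a genuine lax symmetric monoidal functor on the full correspondence $\infty$-category: this cannot be carried out by hand and forces one to run the $\infty$-categorical descent/gluing machinery, along with the careful bookkeeping that its hypotheses — cohomological smoothness of smooth covers and étale (hence smooth) descent for $\Dbc$ — are genuinely in force in the present setting.
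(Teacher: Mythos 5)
Your proposal is correct and follows essentially the same route the paper takes: build $\bD^*$ by right Kan extension over $\Sch^{\sep\ft}_{/X}$ (which gives property (2) by fiat), invoke \'etale descent and cohomological smoothness of smooth scheme morphisms to feed the abstract extension machinery of Mann's Appendix A and of \cite{KhanWeavelisse}, and then check property (3) by descent along smooth atlases using stability of cohomological smoothness under base change. The one place the paper is more explicit is that it factors the extension into three named stages — first from $\Corr(\Sch^{\sep\ft})_{\all;\all}$ to $\Corr(\Sch^{\lft})_{\sep\ft;\all}$ via \cite[Proposition A.5.16]{Mann}, then to $\Corr(\Sch^{\lft})_{\all;\all}$ by observing that $*$-\'etale descent plus cohomological smoothness of \'etale maps gives $!$-\'etale descent and applying \cite[Lemma A.5.11]{Mann}, and finally to $\Corr(\Art^{\lft})_{\all;\all}$ via \cite[Theorem 3.4.3(ii)]{KhanWeavelisse}, whose hypothesis is phrased as the existence of $\sharp$-direct images $f_\sharp = f_!(f^!(\bu)\otimes(-))$ for smooth $f$ — whereas you compress the passage through $\Sch^{\lft}$ and phrase the key input as the twist $p^!\cong \omega_{U/X}\otimes p^*$, which is the same content in dual form. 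You also correctly identify the genuine obstacle (coherently assembling $f_!$ for non-representable morphisms into a lax symmetric monoidal functor on correspondences) and that one must delegate it to the cited machinery rather than do it by hand.
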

\begin{proof}
We will construct an extension of the 6-functor formalism in three steps using the subcategories
\[\Corr(\Sch^{\sep\ft})_{\all;\all}\subset \Corr(\Sch^{\lft})_{\sep\ft;\all}\subset \Corr(\Sch^{\lft})_{\all;\all}\subset \Corr(\Art^{\lft})_{\all;\all},\]
where morphisms in $\Corr(\Sch^{\lft})_{\sep\ft;\all}$ are given by correspondences $X_1\leftarrow X_{12}\rightarrow X_2$ of schemes locally of finite type, where $X_{12}\rightarrow X_2$ is separated and of finite type.

\begin{enumerate}
    \item The extension of $\bD^*_!\colon \Corr(\Sch^{\sep\ft})_{\all;\all}\rightarrow \PrSt_R$ to
    \[\bD^*_!\colon \Corr(\Sch^{\lft})_{\sep\ft;\all}\longrightarrow \PrSt_R\]
    is provided by \cite[Proposition A.5.16]{Mann}.
    \item Since \'etale morphisms are cohomologically smooth and $\bD$ satisfies \'etale descent with respect to $*$-pullbacks, it also satisfies \'etale descent with respect to $!$-pullbacks. Therefore, the functor
    \[\bD(X)\longrightarrow \lim_{(S, s)}\bD(S)\]
    induced by $!$-pullbacks, where $X\in\Sch^{\lft}$ and the limit is taken over the category $\Sch^{\sep\ft}_{/X}$ of schemes $S\in\Sch^{\sep\ft}$ together with a morphism $s\colon S\rightarrow X$, is an equivalence. Therefore, by \cite[Lemma A.5.11]{Mann} we obtain an extension
    \[\bD^*_!\colon \Corr(\Sch^{\lft})_{\all;\all}\longrightarrow \PrSt_R.\]
    \item By assumption every morphism $f\colon X\rightarrow Y$ in $\Sch^{\lft}$ which is smooth, separated and of finite type is cohomologically smooth. Using \'etale descent we get that any smooth morphism in $\Sch^{\lft}$ is cohomologically smooth. In particular, we obtain a $\sharp$-direct image (in the sense of \cite[Definition 2.30]{KhanWeaves}) for every smooth morphism $f\colon X\rightarrow Y$, where $f_\sharp(-) = f_!(f^!(\bu_Y)\otimes(-))$. Therefore, by \cite[Theorem 3.4.3(ii)]{KhanWeavelisse} we obtain an extension
    \[\bD^*_!\colon \Corr(\Art^{\lft})_{\all;\all}\longrightarrow \PrSt_R.\]
\end{enumerate}
\end{proof}

We extend the notion of constructible and perverse sheaves to stacks as follows. For $Y\in\Art^{\lft}$ we have:
\begin{enumerate}
    \item $\cF\in\bD(Y)$ is constructible if for every smooth morphism $f\colon X\rightarrow Y$ with $X\in\Sch^{\sep\ft}$ the object $f^*\cF\in\bD(X)$ is constructible.
    \item There is a $t$-structure on $\Dbc(Y)$ such that for every smooth morphism $f\colon X\rightarrow Y$ with $X\in\Sch^{\sep\ft}$ the functor $f^\dag$ is $t$-exact.
\end{enumerate}

\begin{proposition}\label{prop:lissetensorconstructible}
Let $X\in\Art^{\lft}$, $V\in\bD(X)$ a lisse object and $\cF\in\Dbc(X)$. Then $V\otimes\cF$ is constructible.
\end{proposition}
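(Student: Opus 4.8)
The plan is to reduce the statement to the case of schemes, where it is standard, using the definition of constructibility on stacks via smooth pullbacks from schemes. Recall that $\cF \in \bD(X)$ is constructible precisely when $f^*\cF \in \bD(S)$ is constructible for every smooth morphism $f \colon S \to X$ with $S \in \Sch^{\sep\ft}$, and likewise $V \in \bD(X)$ is lisse if and only if it is dualizable for the monoidal structure \eqref{eq:Dbcmonoidal}. So fix a smooth $f \colon S \to X$ with $S$ a separated scheme of finite type; it suffices to show $f^*(V \otimes \cF) \cong f^*V \otimes f^*\cF$ is constructible on $S$. Since $f^*$ is symmetric monoidal, $f^*V$ is again lisse on $S$, and by \cref{prop:constructible6functor}(8) lisse objects on schemes are exactly locally constant sheaves with perfect stalks; in particular $f^*V$ has constructible cohomology. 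Also $f^*\cF$ is constructible by hypothesis. Thus the problem is reduced to: for $S \in \Sch^{\sep\ft}$, a lisse $W \in \bD(S)$ and a constructible $\cG \in \bD(S)$, the tensor product $W \otimes \cG$ is constructible.

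For this scheme-level claim I would argue as follows. Choose a finite stratification $S = \bigsqcup_i S_i$ by locally closed subschemes such that $\cG|_{S_i}$ is locally constant with perfect stalks for each $i$; since $W$ is locally constant with perfect stalks (on all of $S$), its restriction $W|_{S_i}$ is as well. On each stratum the tensor product of two locally constant sheaves with perfect stalks is again locally constant with perfect stalks — perfectness of stalks is preserved under $\otimes$ over $R$ because $R$ has finite global dimension, so the derived tensor product of two perfect complexes is perfect. Hence $(W \otimes \cG)|_{S_i}$ is locally constant with perfect stalks, so $W \otimes \cG$ is constructible with respect to the same stratification. (One small point: one should check that restriction to a locally closed subscheme commutes with $\otimes$, which is immediate since $i^*$ for the inclusion $i \colon S_i \to S$ is symmetric monoidal.)

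The only genuinely nontrivial ingredient, and the step I expect to need the most care, is the identification of lisse objects on stacks with ``locally constant with perfect stalks after smooth pullback,'' together with the claim that $f^*$ preserves lisse objects — both of which follow formally: $f^*$ is symmetric monoidal, hence sends dualizable objects to dualizable objects, and \cref{prop:constructible6functor}(8) handles the scheme case. After that the argument is entirely a stratify-and-restrict computation with no real obstruction; the finite global dimension hypothesis on $R$ from \cref{sect:constructible} is exactly what is needed to keep stalks perfect under $\otimes$. I would therefore present the proof in the two moves above: first reduce along a smooth atlas to schemes, then do the elementary stratification argument.
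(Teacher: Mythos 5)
Your proof is correct, and the reduction-to-schemes step is identical to the paper's. The scheme-level argument, however, is genuinely different: you stratify and invoke the characterization of lisse objects as locally constant with perfect stalks (\cref{prop:constructible6functor}(8)), then check perfectness of stalks of the tensor product stratum-by-stratum. The paper instead gives a purely categorical compactness argument: since $\bD(X) = \Ind(\Dbc(X))$ for $X\in\Sch^{\sep\ft}$, constructibility is compactness, and the adjunction $\Hom(V\otimes\cF,-)\cong\Hom(\cF,V^\vee\otimes-)$ together with the facts that $\cF$ is compact and $V^\vee\otimes(-)$ preserves colimits shows $V\otimes\cF$ is compact. The paper's route is shorter and avoids invoking \cref{prop:constructible6functor}(8); yours is more geometric and makes the mechanism visible, which can be clarifying. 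One small inaccuracy in your write-up: you attribute the stability of perfect complexes under derived tensor product to $R$ having finite global dimension, but this holds over any commutative ring --- a finite complex of finitely generated projectives tensored with another such is again one. The finite global dimension hypothesis is there for other reasons (e.g.\ Verdier duality), not for this step. The argument is still sound; just the stated justification is off.
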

\begin{proof}
By definition of constructibility the claim for any $X\in\Art^{\lft}$ reduces to the same claim for $X\in\Sch^{\sep\ft}$. In this case we have
\[\Hom_{\bD(X)}(V\otimes \cF, -)\cong \Hom_{\bD(X)}(\cF, V^\vee\otimes(-)).\]
Since $\cF$ is constructible, this functor preserves colimits.
\end{proof}

The definition of the Verdier duality functor $\bbD$ extends verbatim to stacks. We have the following functoriality of the isomorphism $\psi$.

\begin{lemma}\label{lm:psifunctorial}
Let $f\colon X\rightarrow Y$ be a smooth morphism in $\Art^{\lft}$ and $\cF\in\Dbc(Y)$. Then the composite
\begin{align*}
f^!\cF&\xrightarrow{\psi_\cF} f^!\bbD^2\cF \\
&\xrightarrow{\Ex^{!, \bbD}} \bbD f^*\bbD\cF \\
&\xrightarrow{\Pr^{!, *}} \bbD(\omega_{X/Y}^{-1}\otimes f^!\bbD\cF) \\
&\xrightarrow{(\Ex^{!, \bbD})^{-1}} \bbD(\omega_{X/Y}^{-1}\otimes \bbD(f^*\cF)) \\
&\xrightarrow{(\Ex^{\omega_{X/Y}^{-1}, \bbD})^{-1}}\bbD^2(\omega_{X/Y}\otimes f^*\cF) \\
&\xrightarrow{\Pr^{!, *}} \bbD^2 f^!\cF,
\end{align*}
is equivalent to $\psi_{f^!\cF}$.
\end{lemma}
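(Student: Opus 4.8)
The plan is to reduce the statement to the interaction between $f^{!}$ and $f^{*}$. Since $f$ is smooth, hence cohomologically smooth, $\Pr^{!,*}$ provides a natural isomorphism $f^{!}(-)\simeq \omega_{X/Y}\otimes f^{*}(-)$, where $\omega_{X/Y}=f^{!}\bu_{Y}$ is invertible and therefore lisse; in the complex-analytic constructible setting it is simply the shift $\bu_{X}[2\dim(X/Y)]$. Under this identification the two morphisms $f^{!}\cF\to \bbD^{2}f^{!}\cF$ to be compared both become morphisms between $\bbD^{2}$ of objects of the form $\omega_{X/Y}\otimes f^{*}(-)$, and I would match them using three coherences for the biduality map $\psi$: its compatibility with $f^{*}$ and with $\Ex^{!,\bbD}$, both recorded in the two commuting squares immediately preceding the lemma, together with its compatibility with tensoring by the lisse invertible object $\omega_{X/Y}$.

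The one ingredient not yet available verbatim is this last coherence: for a lisse object $V\in\bD(X)$ with dual $V^{\vee}$ and any $\cG$, the isomorphisms $\Ex^{V,\bbD}$ and $\Ex^{V^{\vee},\bbD}$ compose to a natural isomorphism $\bbD^{2}(V\otimes\cG)\simeq V\otimes \bbD^{2}\cG$ intertwining $\psi_{V\otimes\cG}$ with $V\otimes\psi_{\cG}$. I would prove this either from the coherence of $\Ex^{V,\bbD}$ (which behaves as the structure isomorphism of the monoidal self-equivalence $V\otimes(-)$), or --- exploiting that $\omega_{X/Y}$ is, up to an even shift, a line bundle --- by checking it \'etale-locally on $X$, where $V$ becomes $\bu_{X}$ up to shift and the statement collapses to the elementary compatibility of $\psi$ with shifts.

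Granting this, the argument is a diagram chase. Let $\alpha\colon \omega_{X/Y}\otimes f^{*}\cF\xrightarrow{\sim} f^{!}\cF$ be the $\Pr^{!,*}$-isomorphism. By naturality of $\psi$, $\psi_{f^{!}\cF}=\bbD^{2}(\alpha)\circ\psi_{\omega_{X/Y}\otimes f^{*}\cF}\circ\alpha^{-1}$; by the lisse-tensor coherence applied to $V=\omega_{X/Y}$, $\psi_{\omega_{X/Y}\otimes f^{*}\cF}$ is conjugate, via $\Ex^{\omega_{X/Y},\bbD}$ and $\Ex^{\omega_{X/Y}^{-1},\bbD}$, to $\omega_{X/Y}\otimes\psi_{f^{*}\cF}$; the square for $\psi$ versus $f^{*}$ rewrites $\psi_{f^{*}\cF}$ as the conjugate of $f^{*}\psi_{\cF}$; and the square for $\psi$ versus $\Ex^{!,\bbD}$, applied to $\cF$ and combined once more with $\alpha$ to pass between $f^{!}\psi_{\cF}$ and $\omega_{X/Y}\otimes f^{*}\psi_{\cF}$, shows that the total conjugating isomorphism is precisely the composite $\Ex^{!,\bbD}$, $\Pr^{!,*}$, $(\Ex^{!,\bbD})^{-1}$, $(\Ex^{\omega_{X/Y}^{-1},\bbD})^{-1}$, $\Pr^{!,*}$ occurring in the statement. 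Hence the displayed composite equals $f^{!}\psi_{\cF}$ followed by these maps, i.e.\ equals $\psi_{f^{!}\cF}$. Nothing forces us to work over schemes: $\Pr^{!,*}$ for smooth morphisms, $\Ex^{V,\bbD}$ for lisse $V$, and the two $\psi$-squares are all available on $\Art^{\lft}$; alternatively one may pull back along a smooth atlas $S\to X$ with $S\in\Sch^{\sep\ft}$, since every operation and every structure map in sight commutes with $s^{*}$.

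The step I expect to be the main obstacle is the bookkeeping of Verdier duality against the two uses of $\Pr^{!,*}$: one must verify that the $\Pr^{!,*}$-isomorphism used inside the first $\bbD$ of the zigzag --- turning $f^{*}\bbD\cF$ into $\omega_{X/Y}^{-1}\otimes f^{!}\bbD\cF$ --- is the $\bbD$-dual, modulo the $\Ex^{\omega_{X/Y},\bbD}$ identifications, of the one used in the last arrow, turning $\omega_{X/Y}\otimes f^{*}\cF$ into $f^{!}\cF$; equivalently, that $\bbD$-conjugation carries the isomorphism $f^{!}\bu_{Y}\otimes f^{*}(-)\simeq f^{!}(-)$ to the isomorphism $f^{*}(-)\simeq \omega_{X/Y}^{-1}\otimes f^{!}(-)$. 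This is the compatibility of the projection formula with Verdier duality for cohomologically smooth morphisms; once it and the lisse-tensor coherence for $\psi$ are in place, the rest is a mechanical composition of already-established commuting squares.
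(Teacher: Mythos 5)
There is a genuine gap, and it lies exactly where you rely on the two "commuting squares immediately preceding the lemma." Writing out what those squares actually assert, the first one reads
\[
\psi_{f^*\bbD\bbD\cF}\circ f^*\psi_\cF \;=\; \bbD\bbD f^*\psi_\cF\circ\psi_{f^*\cF},
\]
which is nothing more than the naturality of $\psi$ applied to the morphism $f^*\psi_\cF\colon f^*\cF\to f^*\bbD\bbD\cF$; the second one is, likewise, the naturality of $\Ex^{!,\bbD}$ applied to $\psi_\cF$. Neither square carries any coherence information beyond naturality. In particular, the first square does not "rewrite $\psi_{f^*\cF}$ as the conjugate of $f^*\psi_\cF$": to extract a conjugation formula from it you would have to invert $\bbD\bbD f^*\psi_\cF$ or $\psi_{f^*\bbD\bbD\cF}$, i.e.\ you would need to know in advance that $\psi$ is an isomorphism on $\Dbc(Y)$. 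But $Y$ is a higher Artin stack, and it is precisely this lemma that is invoked in the very next proposition to prove that $\psi$ \emph{is} an isomorphism on $\Art^{\lft}$; so that move is circular.

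To see what is really at stake, observe that the middle five arrows of the statement compose to the isomorphism $\bbD f^*\bbD\cF \to \bbD\bbD f^!\cF$ that (after the following proposition) is $(\bbD\Ex^{*,\bbD}_{\cF})^{-1}$, so the lemma is exactly the second square of \eqref{eq:psi vs Ex* Ex!}, stated two paragraphs later; the lemma is what allows $\Ex^{*,\bbD}$ to be introduced. Proving it therefore cannot be reduced, via the two naturality squares, to a relation between $\psi_{f^*\cF}$ and $f^*\psi_\cF$ --- that relation is logically equivalent to the lemma itself. What is needed instead is a \emph{monoidal} coherence of $\psi$ against $f^*$ and $\cHom$ that goes strictly beyond naturality: one must unwind $\psi$ as the biduality coevaluation $\cF\to\cHom(\cHom(\cF,\omega_Y),\omega_Y)$, unwind $\Ex^{!,\bbD}$ from the adjunction identity $f^!\cHom(-,-)\simeq\cHom(f^*(-),f^!(-))$, unwind $\Pr^{!,*}$ and $\Ex^{V,\bbD}$, and verify the commutativity by direct computation in the closed symmetric monoidal structure. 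The lisse-tensor coherence you flag is indeed one piece of this, but it is not the only one, and it does not substitute for the missing $f^*$-coherence. The alternative route you propose (pull back along a smooth atlas $s\colon S\to X$ with $S\in\Sch^{\sep\ft}$) stumbles on the same point: comparing $s^*\psi_{f^!\cF}$ with $\psi_{s^*f^!\cF}$ is again a $\psi$-coherence assertion of exactly the kind the lemma is trying to establish, so it cannot be taken for granted in the reduction step.
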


\begin{proposition}
For $Y\in\Art^{\lft}$ and $\cF\in\bD(Y)$ constructible, the morphism $\psi_\cF\colon \cF\rightarrow \bbD(\bbD(\cF))$ is an isomorphism.
\end{proposition}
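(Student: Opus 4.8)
The plan is to bootstrap from the case of schemes, already established in \cref{prop:constructible6functor}(4), by smooth descent, transporting the statement through the compatibility of $\psi$ with $!$-pullback recorded in \cref{lm:psifunctorial}. Note first that since $\cF$ is constructible it lies in $\Dbc(Y)$, so \cref{lm:psifunctorial} will be applicable below.

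I would first reduce to schemes. By \cref{prop:6functorextension}(2) we have $\bD(Y)=\lim_{(S,s)}\bD(S)$ over $(S,s)\in\Sch^{\sep\ft}_{/Y}$, and since $*$-pullback along a smooth surjection from a separated finite-type scheme is conservative (smooth surjections are of descent for $\bD^*$, using \'etale descent as in \cref{prop:constructible6functor}(2) together with \cref{prop:6functorextension}(3)), a morphism in $\bD(Y)$ is an isomorphism as soon as its $*$-pullback is one along every smooth morphism $f\colon X\to Y$ with $X\in\Sch^{\sep\ft}$. Thus it suffices to prove that $f^*\psi_\cF$ is an isomorphism for every such $f$.

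So fix a smooth $f\colon X\to Y$ with $X\in\Sch^{\sep\ft}$. As $f$ is cohomologically smooth, $\omega_{X/Y}=f^!\bu_Y$ is invertible (in particular lisse), and the projection map $\Pr^{!,*}$ supplies a natural isomorphism $f^!(-)\cong\omega_{X/Y}\otimes f^*(-)$; hence $f^*\psi_\cF$ is an isomorphism if and only if $f^!\psi_\cF$ is. Next, $f^!\cF\cong\omega_{X/Y}\otimes f^*\cF$ is constructible on the scheme $X$: indeed $f^*\cF$ is constructible since $\cF$ is constructible on $Y$ and $f$ is smooth, and tensoring a constructible sheaf with the lisse object $\omega_{X/Y}$ stays constructible by \cref{prop:lissetensorconstructible}. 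Therefore $\psi_{f^!\cF}$ is an isomorphism by the scheme case, \cref{prop:constructible6functor}(4). Finally, \cref{lm:psifunctorial} identifies $\psi_{f^!\cF}$ with the composite of $f^!\psi_\cF$ and the maps $\Ex^{!,\bbD}$, $\Pr^{!,*}$, $\Ex^{\omega_{X/Y}^{-1},\bbD}$ appearing there; all of the latter are isomorphisms ($\Pr^{!,*}$ because $f$ is cohomologically smooth, and $\Ex^{\omega_{X/Y}^{-1},\bbD}$ because $\omega_{X/Y}^{-1}$ is lisse). Since the total composite is an isomorphism and every factor except $f^!\psi_\cF$ is one, $f^!\psi_\cF$ is an isomorphism, hence so is $f^*\psi_\cF$, and therefore $\psi_\cF$ is an isomorphism.

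The argument is essentially formal once \cref{lm:psifunctorial} is in hand; the one point that requires care is the reduction, namely that isomorphisms in $\bD(Y)$ are detected on smooth charts from $\Sch^{\sep\ft}$, and the observation that for $f$ smooth the functors $f^!$ and $f^*$ differ only by an invertible twist, so that the $!$-statement produced by \cref{lm:psifunctorial} is equivalent to the desired $*$-statement. I do not expect any genuine obstacle beyond bookkeeping the isomorphisms in \cref{lm:psifunctorial}.
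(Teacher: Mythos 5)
Your proof is correct and follows essentially the same route as the paper: reduce to schemes by joint conservativity of pullback along smooth charts from $\Sch^{\sep\ft}$, note $f^!\cF$ is constructible (via \cref{prop:lissetensorconstructible}), apply the scheme case of biduality, and deduce from \cref{lm:psifunctorial} that $f^!\psi_\cF$ is an isomorphism. The only superficial difference is your detour through $f^*$-conservativity before converting to $f^!$ via the invertible twist $\omega_{X/Y}$, whereas the paper works directly with $f^!$-conservativity; the two are equivalent and the content is identical.
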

\begin{proof}
Let $X\in\Sch^{\sep\ft}$ and $f\colon X\rightarrow Y$ a smooth morphism. In the composite in \cref{lm:psifunctorial} all morphisms, except possibly the first, are isomorphisms. Since $\omega_{X/Y}$ is invertible and $f^*\cF$ is constructible, $f^!\cF$ is also constructible by \cref{prop:lissetensorconstructible}. Thus, the composite is an isomorphism by \cref{prop:constructible6functor}(5). Therefore, $f^!\cF\rightarrow f^!\bbD^2\cF$ is an isomorphism. Since the functors $\{f^!\colon \bD(Y)\rightarrow \bD(X)\}$, where $f$ ranges over smooth morphisms from a separated scheme of finite type, are jointly conservative, we get that $\psi_\cF\colon \cF\rightarrow \bbD^2\cF$ is an isomorphism.
\end{proof}

Given a morphism $f \colon X \to Y$ in $\Art^{\lft}$, we obtain a unique natural isomorphism
\begin{equation}
  \Ex^{*,\bbD}\colon f^*\bbD\xrightarrow{\sim} \bbD f^!
\end{equation}

fitting into the commutative squares
\begin{equation}\label{eq:psi vs Ex* Ex!}
  \begin{tikzcd}
    f^* \ar{r}{\psi_{f^*}}\ar[swap]{d}{f^*\psi}
    & \bbD \bbD f^* \ar{d}{\bbD \Ex^{!,\bbD}}
    \\
    f^* \bbD \bbD \ar{r}{\Ex^{*,\bbD}\bbD}
    & \bbD f^! \bbD,
  \end{tikzcd}
  \qquad
  \begin{tikzcd}
    f^! \ar{r}{\psi_{f^*}}\ar[swap]{d}{f^!\psi}
    & \bbD \bbD f^! \ar{d}{\bbD \Ex^{*,\bbD}}
    \\
    f^! \bbD \bbD \ar{r}{\Ex^{!,\bbD}\bbD}
    & \bbD f^* \bbD.
  \end{tikzcd}
\end{equation}

For a pair of morphisms $f_1\colon X_1\rightarrow Y_1$ and $f_2\colon X_2\rightarrow Y_2$ in $\Art^{\lft}$ we have a natural transformation
\[
\Ex^{!, \boxtimes}\colon f_1^!\cF_1\boxtimes f_2^!\cF_2\longrightarrow (f_1\times f_2)^!(\cF_1\boxtimes \cF_2)
\]
which is an isomorphism for $\cF_1\in\Dbc(Y_1)$ and $\cF_2\in\Dbc(Y_2)$ as can be seen by applying the Verdier duality functor $\bbD$ and using \cref{prop:constructible6functor}(5). See \cite[Proposition 1.3]{MasseyTS}.

\begin{proposition}\label{prop:fundamentalclass}
For a smooth morphism $f\colon X\rightarrow Y$ in $\Art^{\lft}$ there is a natural isomorphism
\[\pur_f\colon f^*[2\dim(X/Y)]\xrightarrow{\sim} f^!\]
of functors $\bD(Y)\rightarrow \bD(X)$ which is functorial for compositions up to coherent homotopy and compatible with products.
\end{proposition}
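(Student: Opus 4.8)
The plan is to build $\pur_f$ from the single case of the unit object and then propagate it: first from $\bu_Y$ to all of $\bD(Y)$ by the projection formula, then from local models by \'etale descent, and finally from schemes to stacks. Since a smooth morphism is cohomologically smooth — by the defining axioms over $\Sch^{\sep\ft}$, and by \cref{prop:6functorextension}(3) over $\Art^{\lft}$ — the object $\omega_{X/Y}=f^!\bu_Y$ is invertible and the projection formula map $\Pr^{!,*}\colon\omega_{X/Y}\otimes f^*(-)\xrightarrow{\sim}f^!(-)$ is an isomorphism. So it suffices to produce one natural isomorphism $\omega_{X/Y}\cong\bu_X[2\dim(X/Y)]$ and tensor with $f^*(-)$; the claimed functorialities in composition and products will reduce to the corresponding statements for these twist objects. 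Cohomological smoothness also makes $\omega_{X/Y}$ compatible with arbitrary base change through the isomorphisms $\Ex^{*,!}$, and $f^!$ satisfies \'etale descent (as noted in the proof of \cref{prop:6functorextension}); hence the construction of $\omega_{X/Y}\cong\bu_X[2\dim(X/Y)]$ and of its coherences may be carried out \'etale-locally on $X$ and on $Y$.

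First I would treat the local model. \'Etale-locally, $f$ is a smooth morphism of separated finite-type schemes factoring as $X\xrightarrow{g}\bA^d_Y\xrightarrow{\pi}Y$ with $g$ \'etale and $d=\dim(X/Y)$. For the \'etale map $g$ one has $g^!\cong g^*$ — because $g^{\an}$ is a local homeomorphism, or because $g$ is cohomologically smooth of relative dimension $0$ — so $\omega_{X/\bA^d_Y}\cong\bu_X$. For $\pi$, writing $\bA^d_Y=\bA^1\times\dots\times\bA^1\times Y$ and using the K\"unneth isomorphism $\Ex^{!,\boxtimes}$ together with base change, one reduces to $\bA^1\to\pt$, where $\omega_{\bA^1}\cong\bu_{\bA^1}[2]$ by the standard computation: e.g. from the localization triangle associated to $\{0\}\hookrightarrow\bA^1\hookleftarrow\Gm$ together with homotopy invariance (\cref{prop:constructible6functor}(1)), or from the cohomology of $\bP^1$. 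Composing gives $\omega_{X/Y}\cong g^!\pi^!\bu_Y\cong g^!\bu_{\bA^d_Y}[2d]\cong\bu_X[2d]$ locally; the shift $2d$ can alternatively be pinned down, once invertibility is in hand, using the $t$-exactness of $f^\dag$ in \cref{prop:constructible6functor}(6a) over a smooth chart of $Y$.

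Next comes the gluing. The local trivialization of $\omega_{\bA^d_Y/Y}$ is in fact canonical, because $\bA^d$ carries a complex, hence a real, orientation and the $\mathrm{GL}_d(\C)$-action is orientation-preserving ($\det_\R=\lvert\det_\C\rvert^2>0$); equivalently, the rank-one local system $\omega_{X/Y}[-2d]$, which is trivial on the \'etale cover, is \emph{canonically} trivial, so the local isomorphisms descend. Rather than bookkeeping this object by object, the efficient route is to assemble $\pur$ as a morphism of functors out of $\Corr(\Sch^{\sep\ft})_{\all;\all}$ restricted to smooth morphisms, so that associativity for $X\to Y\to Z$ and compatibility with $\boxtimes$ are encoded structurally; in substance this is the classical relative purity theorem for constructible sheaves on complex varieties (see \cite{KashiwaraSchapira}), and its $\infty$-categorical packaging is provided by the construction of the six-functor formalism used here.

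Finally, for smooth $f\colon X\to Y$ in $\Art^{\lft}$ I would choose a smooth atlas, use that $f$ is cohomologically smooth for the extended formalism of \cref{prop:6functorextension}, and descend the scheme-level isomorphism along the equivalence $\bD(Y)\simeq\lim_{(S,s)}\bD(S)$ of \cref{prop:6functorextension}(2); functoriality for compositions and compatibility with products descend in the same way from the scheme case. I expect the main obstacle to be exactly this coherence: not the bare existence of the isomorphism, which follows quickly from the axioms together with the complex orientation, but its homotopy-coherent functoriality in composition and products in the $\infty$-categorical setting, which is why it is cleanest to extract $\pur$ from the structure of the six-functor formalism rather than to build it by hand.
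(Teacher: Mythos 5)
Your reduction—to a natural isomorphism $\omega_{X/Y}\cong\bu_X[2\dim(X/Y)]$ via cohomological smoothness and the projection formula, followed by étale descent to schemes and then to stacks—is exactly the paper's, and your local model computation through $\bA^d_Y$ with the complex-orientation argument is a perfectly good way to produce the isomorphism that the paper simply calls ``well-known.'' You also correctly diagnose that the real issue is the homotopy-coherent functoriality of $\pur$ for compositions and products. But your proposed fix does not actually close that gap: saying that $\pur$ should be ``assembled as a morphism of functors out of $\Corr(\Sch^{\sep\ft})_{\all;\all}$'' or that its ``$\infty$-categorical packaging is provided by the construction of the six-functor formalism'' only restates the problem. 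Cohomological smoothness of $f$ gives you that $f^!\bu_Y$ is invertible and that $\Pr^{!,*}$ is an isomorphism; it does not hand you a coherent system of identifications $f^!\bu_Y\cong\bu_X[2d]$ compatible under composition and $\boxtimes$, and Kashiwara--Schapira is strictly $1$-categorical.

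The paper dispatches the coherence issue with a short observation you are missing: $\omega_{X/Y}[-2\dim(X/Y)]$ is a rank-one local system, i.e.\ it lives in the heart of the standard $t$-structure, so mapping spaces between such objects are discrete. Consequently, once you have the classical ($1$-categorical) compatibilities $\pur_{\id}\cong\id$ and $\pur_{g\circ f}\cong\pur_f\circ\pur_g$ (and the analogous compatibility with $\boxtimes$), there are no higher homotopies to control and the $\infty$-categorical coherence is automatic. This is the one idea that makes the statement cheap; without it, ``build $\pur$ structurally'' would require an actual construction of a natural transformation of lax functors on a category of correspondences, which is a substantially harder task than the proposition warrants.
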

\begin{proof}
By functoriality the natural isomorphism $\pur_f$ extends uniquely from schemes to stacks, so it is sufficient to construct it for smooth morphisms of schemes.

Since a smooth morphism is cohomologically smooth, the natural morphism $\Pr^{!, *}\colon \omega_{X/Y}\otimes f^*(-)\rightarrow f^!(-)$ is an isomorphism. Thus, it is enough to construct an isomorphism $\omega_{X/Y}\cong \bu_X[2\dim(X/Y)]$, functorial in $f$. The construction of this isomorphism and its 1-categorical functoriality (i.e. the isomorphisms $\pur_{\id}\cong\id$ and $\pur_{g\circ f}\cong \pur_f\circ\pur_g$) are well-known. But since $\omega_{X/Y}[-2\dim(X/Y)]$ is a local system of vector spaces, the 1-categorical functoriality implies the $\infty$-categorical functoriality.
\end{proof}

Given a smooth morphism $f \colon X \to Y$ in $\Art^\lft$ of relative dimension $d$, we obtain a natural isomorphism
\begin{equation}\label{eq:ExdagD}
\Ex^{\dag, \bbD}\colon f^\dag \bbD = f^![-d]\bbD\xrightarrow{\Ex^{!,\bbD}} \bbD f^*[d]\xleftarrow[\sim]{\pur_f} \bbD f^\dag.
\end{equation}

\begin{proposition}\label{prop:fdagDD}
    Let $f \colon X \to Y$ in $\Art^\lft$ be a smooth morphism.
    Then the diagram
    \[\begin{tikzcd}[column sep=large]
        f^\dag \ar{d}{\psi}\ar{r}{\psi}
        & \bbD \bbD f^\dag \ar{d}{\Ex^{\dag,\bbD}}
        \\
        f^\dag \bbD \bbD \ar{r}{\Ex^{\dag,\bbD}}
        & \bbD f^\dag \bbD
    \end{tikzcd}\]
    commutes.
\end{proposition}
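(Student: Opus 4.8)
The plan is to trace through the definition \eqref{eq:ExdagD} of $\Ex^{\dag,\bbD}$, which presents it as $\Ex^{!,\bbD}$ shifted by $[-d]$ (where $d$ denotes the relative dimension of $f$) followed by the inverse of the isomorphism $\bbD f^\dag\xrightarrow{\sim}\bbD f^*[d]$ obtained by applying $\bbD$ to the isomorphism $f^*[d]\xrightarrow{\sim}f^\dag$ supplied by $\pur_f$. Two of the ingredients are purely formal. First, since $\psi\colon\id\Rightarrow\bbD\bbD$ is a natural transformation of endofunctors of $\bD(X)$, its compatibility with the natural isomorphism $\pur_f$ of functors $\bD(Y)\to\bD(X)$, hence with $f^*[d]\xrightarrow{\sim}f^\dag$, is automatic by naturality of $\psi$. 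Second, since $\bbD$ interchanges $[n]$ with $[-n]$ compatibly with the evaluation map of the internal hom, $\psi$ commutes with shift functors, in the sense that $\psi_{\cF[n]}$ agrees with $\psi_{\cF}[n]$ under the canonical identification $\bbD\bbD(\cF[n])\cong(\bbD\bbD\cF)[n]$. Granting these, I would reduce the square in the statement to the corresponding $\psi$-compatibility of $\Ex^{!,\bbD}$.

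That compatibility is recorded in \eqref{eq:psi vs Ex* Ex!}. Concretely, I would post-compose its second square with the autoequivalence $[-d]$; using the two formalities above, the top and left edges become $\psi$ and $f^\dag\psi$ as in the statement, while the bottom and right edges become $\Ex^{!,\bbD}\bbD$ and $\bbD\Ex^{*,\bbD}$, each shifted by $[-d]$. It then remains to identify both, after transporting along the $\pur_f$-isomorphism $\bbD f^*[d]\xrightarrow{\sim}\bbD f^\dag$, with $\Ex^{\dag,\bbD}\bbD$ and $\bbD\Ex^{\dag,\bbD}$, respectively. For the bottom edge this is immediate from the definition \eqref{eq:ExdagD}; for the right edge it is the assertion that the $*$- and $!$-flavours of the Verdier exchange isomorphism agree once one twists by $\pur_f$, which is itself a consequence of \eqref{eq:psi vs Ex* Ex!} (the two squares there determine $\Ex^{*,\bbD}$ in terms of $\Ex^{!,\bbD}$ via $\psi$, and conversely). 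Pasting the pieces yields the square in the statement.

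The bulk of the actual work — and the main obstacle — is the bookkeeping of the two occurrences of $[-d]$, on the source and on the target of $\Ex^{\dag,\bbD}$, and of whatever Koszul sign is produced when $\psi$ is commuted past $[-d]$: one must verify that this sign appears identically along both routes around the square, so that it cancels, and that the $\pur_f$-transport of $\Ex^{*,\bbD}$ and of $\Ex^{!,\bbD}$ is carried out consistently. A more computational but entirely formal alternative, bypassing the $\Ex^{*,\bbD}$-versus-$\Ex^{!,\bbD}$ comparison, is to unfold $\pur_f$ as the projection-formula isomorphism $\Pr^{!,*}(\omega_{X/Y}^{-1},-)$ composed with the trivialization $\omega_{X/Y}\cong\bu_X[2\dim(X/Y)]$ of \cref{prop:fundamentalclass}, and to invoke \cref{lm:psifunctorial} for the interaction of $\psi$ with $\Pr^{!,*}$ and with the $\Ex^{V,\bbD}$-isomorphisms.
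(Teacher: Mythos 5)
Your proposal circles around the paper's actual argument, and the one-line alternative you give at the end is essentially it: the paper's entire proof of \cref{prop:fdagDD} reads ``The claim follows from \cref{lm:psifunctorial}.'' Unfolding $\Ex^{\dag,\bbD}$ via \eqref{eq:ExdagD}, and unfolding $\pur_f$ as $\Pr^{!,*}$ twisted by the trivialization $\omega_{X/Y}\cong\bu_X[2d]$, one finds that both routes around the square in \cref{prop:fdagDD} reduce to the composite recorded in \cref{lm:psifunctorial}, and the shift $[-d]$ enters symmetrically on both sides. So your ``more computational'' alternative is the intended proof, not merely a fallback.

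Your primary route, via the second square of \eqref{eq:psi vs Ex* Ex!}, has a genuine gap at the very step you yourself flag as ``the bulk of the actual work.'' After post-composing that square with $[-d]$ and transporting along $\pur_f$, the bottom edge becomes $\Ex^{\dag,\bbD}\bbD$ by fiat from \eqref{eq:ExdagD}, but the right edge is $\bbD\Ex^{*,\bbD}[-d]$, and you need it to be $\bbD\Ex^{\dag,\bbD}$. That amounts to the assertion that the $\pur_f$-twist of $\Ex^{!,\bbD}$ coincides with $\Ex^{*,\bbD}$ (up to the appropriate shifts), i.e.\ that the $!$- and $*$-flavoured Verdier exchanges for a smooth $f$ are related by the purity isomorphism. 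This is \emph{not} a formal consequence of \eqref{eq:psi vs Ex* Ex!}: those squares characterize $\Ex^{*,\bbD}$ uniquely, but $\pur_f$ does not appear in them, so to conclude you would need to verify that the twist of $\Ex^{!,\bbD}$ satisfies the same two squares. Running that verification requires knowing how $\psi$ commutes past $\Pr^{!,*}$ and the $\Ex^{V,\bbD}$-type isomorphisms, which is precisely the content of \cref{lm:psifunctorial} — the lemma you were trying to bypass. So either route ends up leaning on \cref{lm:psifunctorial}; the clean path is to invoke it directly, as the paper does.
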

\begin{proof}
The claim follows from \cref{lm:psifunctorial}.
\end{proof}

Similarly, for a schematic and proper morphism $f\colon X\rightarrow Y$ we obtain a natural isomorphism
\[\Ex_{*, \bbD}\colon f_*\bbD\xrightarrow{\Ex_{*, \bbD}} \bbD f_!\xleftarrow[\sim]{\fgsp_f} \bbD f_*.\]

\begin{proposition}\label{prop:Ex!*selfdual}
Consider a Cartesian square \eqref{eq:Cartesian}, where $h$ is proper and $f$ is smooth. Then the diagram
\[
\xymatrix{
f^\dag h_*\bbD \ar^{\Ex_{*, \bbD}}[r] \ar^{\Ex^!_*}[d] & f^\dag\bbD h_* \ar^{\Ex^{\dag, \bbD}}[r] & \bbD f^\dag h_* \\
g_* (f')^\dag \bbD\ar^{\Ex^{\dag, \bbD}}[r] & g_* \bbD (f')^\dag \ar^{\Ex_{*, \bbD}}[r] & \bbD g_* (f')^\dag \ar^{\Ex^!_*}[u]
}
\]
\end{proposition}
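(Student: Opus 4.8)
The plan is to deduce the claimed commutativity from the hexagon \eqref{eq:Ex!*selfdual}. That hexagon is essentially the same statement, except that on the Verdier-dual side it involves $h_!$, $g_!$, $f^*$, $(f')^*$ where \cref{prop:Ex!*selfdual} involves $h_*$, $g_*$, $f^\dag$, $(f')^\dag$; since the square \eqref{eq:Cartesian} is Cartesian, $g$ is proper and $f'$ is smooth of the same relative dimension $d \coloneqq \dim(X/Y)$ as $f$, so that $\fgsp_h\colon h_!\xrightarrow{\sim}h_*$ and $\fgsp_g\colon g_!\xrightarrow{\sim}g_*$ are isomorphisms and the purity isomorphisms $\pur_f$, $\pur_{f'}$ identify the $*$- and $\dag$-pullbacks with a common shift. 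Before the main argument I would isolate the two compatibilities that will be needed: that $\pur_f$ and $\pur_{f'}$ intertwine the base-change isomorphism of the square for $f^*,(f')^*$ with the one for $f^!,(f')^!$; and that $\fgsp_h$, $\fgsp_g$ intertwine $\Ex^*_!\colon f^*h_!\xrightarrow{\sim}g_!(f')^*$ with the proper base-change isomorphism for $h_*$.

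Then I would rewrite the transformations in \cref{prop:Ex!*selfdual} that do not literally occur in \eqref{eq:Ex!*selfdual}. By the definition \eqref{eq:ExdagD}, $\Ex^{\dag,\bbD}$ is obtained from $\Ex^{!,\bbD}$ by post-composing with the isomorphism induced by $\pur_f$ (resp.\ $\pur_{f'}$), up to the global shift $[-d]$; and, for the proper morphisms $h$ and $g$, the transformations $\Ex_{*,\bbD}$ appearing here are by definition obtained from the general ones $h_*\bbD\to\bbD h_!$, $g_*\bbD\to\bbD g_!$ by post-composing with the isomorphisms induced by $\fgsp_h$, $\fgsp_g$. Substituting these descriptions into the diagram of \cref{prop:Ex!*selfdual} enlarges it to one whose outer boundary is the square to be proved and whose interior is tiled by: a single copy of \eqref{eq:Ex!*selfdual} (shifted by $[-d]$); naturality squares for $\bbD$ applied to each of $\pur_f$, $\pur_{f'}$, $\fgsp_h$, $\fgsp_g$; and the two compatibility squares isolated above. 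Every interior tile commutes, so the boundary commutes.

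I expect the difficulty to be organizational rather than conceptual: one must keep careful track of which exchange $2$-cell sits on each edge after the substitutions and keep the shifts by $[-d]$ straight. The one genuinely new input is the compatibility of $\pur$ with base change along \eqref{eq:Cartesian}, which follows exactly as in the proof of \cref{prop:fundamentalclass}: the identification $\omega_{X/Y}\cong\bu_X[2d]$, together with the cohomological smoothness of $f$, forces it to pull back along $g$ to the corresponding identification for $f'$. The compatibility of $\fgsp$ with base change for the proper maps $h$, $g$ is a standard property of the six-functor formalism. With these in hand, the statement carries no content beyond \eqref{eq:Ex!*selfdual}.
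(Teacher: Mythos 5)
Your proposal takes essentially the same approach as the paper's proof: both reduce \cref{prop:Ex!*selfdual} to the hexagon \eqref{eq:Ex!*selfdual} by substituting the definitions of $\Ex^{\dag,\bbD}$ and $\Ex_{*,\bbD}$ in terms of $\Ex^{!,\bbD}$, $\pur$, and $\fgsp$, and both isolate exactly the same two compatibility squares (purity vs.\ base change, and $\fgsp$ vs.\ base change) as the only new inputs beyond \eqref{eq:Ex!*selfdual}. The paper dispenses with those two compatibilities by citing \cite[Lemma 7.4]{KPS} and \cite[Example 3.2.1]{FengYunZhang} rather than rederiving them as you sketch, but the structure of the argument is the same.
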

\begin{proof}
Let $d$ be the relative dimension of $f\colon X\rightarrow Y$. Let $\BC\colon f^*h_*\rightarrow g_* (f')^*$ be the Beck--Chevalley transformation obtained as the mate of the isomorphism $g^*f^*\cong (f')^*h^*$. By \cite[Lemma 7.4]{KPS} the diagram
\[
\xymatrix{
f^*h_* \ar^{\BC}[r] \ar^{\pur_f}[d] & g_* (f')^* \ar^{\pur_{f'}}[d] \\
f^!h_*[-2d] \ar^{\Ex^!_*}[r] & g_*(f')^![-2d].
}
\]
commutes. By \cite[Example 3.2.1]{FengYunZhang} the diagram
\[
\xymatrix{
g_! (f')^*\ar^{\Ex^*_!}[r] \ar^{\fgsp_g}[d] & f^*h_! \ar^{\fgsp_h}[d] \\
g_* (f')^* & f^*h_* \ar_-{\BC}[l]
}
\]
commutes. Combining the above two diagrams with \eqref{eq:Ex!*selfdual} we get the result.
\end{proof}

We may use \cref{prop:constructible6functor}(7) to (uniquely) extend exact functors on $\Perv(-)$, and natural transformations between them, to $\bD(-)$ as follows.
For this it will be convenient to record the following consequence of the universal property of bounded derived $\infty$-categories.
Consider the following $\infty$-categories:
\begin{enumerate}
    \item The $\infty$-category $\Cat^{\mathrm{ab}}$ of $R$-linear abelian categories and $R$-linear exact functors.  (This is a $(2,1)$-category.)
    \item The $\infty$-category $\mathrm{Pr}^{\mathrm{St},t}_R$ of stable presentable $R$-linear $\infty$-categories equipped with a t-structure, and $t$-exact $R$-linear colimit-preserving functors.
    \item The subcategory $\mathrm{Pr}^{\mathrm{St},t,\mathrm{cg}}_R \subset \mathrm{Pr}^{\mathrm{St},t}_R$ consisting of those objects $\cE$ for which the $t$-structure restricts to $\cE^\omega$, and the colimit-preserving functor $\Ind(\cE^\omega) \to \cE$ is a $t$-exact equivalence; and those morphisms given by functors that preserve compact objects.
    \item The full subcategory $\mathrm{Pr}^{\mathrm{St},t,\mathrm{cg},\mathrm{comp}}_R \subset \mathrm{Pr}^{\mathrm{St},t,\mathrm{cg}}_R$ spanned by those $\cE$ satisfying the following condition.
    By \cite[Corollary 7.4.12]{BunkeCisinski} and \cite[Proposition 5.3.6.2]{LurieHTT}, there is a unique $t$-exact $R$-linear colimit-preserving functor $\Ind(\Db(\cE^{\heartsuit})) \to \cE$ extending the inclusion $\cE^\heartsuit \hookrightarrow \cE$.
    Then $\cE$ belongs to $\mathrm{Pr}^{\mathrm{St},t,\mathrm{cg},\mathrm{comp}}_R$ if this functor is an equivalence.
\end{enumerate}

We then have:

\begin{proposition}\label{prop:Catab}
    Consider the canonical functor $\mathrm{Pr}^{\mathrm{St},t,\mathrm{cg}}_R \to \Cat^{\mathrm{ab}}$ sending $\cE$ to $\cE^{\omega,\heartsuit}$, the heart of the full subcategory of compact objects (with respect to its induced t-structure).
    This induces an equivalence of $\infty$-categories
    \[ \mathrm{Pr}^{\mathrm{St},t,\mathrm{cg},\mathrm{comp}}_R \xlongrightarrow{\sim} \Cat^{\mathrm{ab}}. \]
\end{proposition}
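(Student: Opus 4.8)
The plan is to exhibit an explicit quasi-inverse to the heart functor $\Psi\colon \cE\mapsto\cE^{\omega,\heartsuit}$. Namely, I would define
\[\Phi\colon\Cat^{\mathrm{ab}}\longrightarrow\mathrm{Pr}^{\mathrm{St},t,\mathrm{cg},\mathrm{comp}}_R,\qquad\Phi(\cA)=\Ind(\Db(\cA)),\]
where $\Db(\cA)$ denotes the bounded derived $\infty$-category of the $R$-linear abelian category $\cA$, endowed with its standard $t$-structure (whose heart is $\cA$), and $\Ind(\Db(\cA))$ carries the induced $t$-structure, so that the inclusion $\Db(\cA)\hookrightarrow\Ind(\Db(\cA))$ is $t$-exact. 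On morphisms, an $R$-linear exact functor of abelian categories induces a $t$-exact functor of their bounded derived $\infty$-categories, hence a $t$-exact colimit-preserving functor of the $\Ind$-completions which preserves compact objects. The point that makes $\Phi$ an honest functor (and not just an assignment on objects) is the $\infty$-categorical universal property of $\Db(-)$ as a functor into $R$-linear stable $\infty$-categories equipped with bounded $t$-structures and $t$-exact functors, together with its characterization there as a left adjoint to the heart functor; this is supplied by \cite[Corollary 7.4.12]{BunkeCisinski}, after which $\Ind(-)$ is applied using \cite[Proposition 5.3.6.2]{LurieHTT}.

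First I would verify that $\Phi$ indeed lands in $\mathrm{Pr}^{\mathrm{St},t,\mathrm{cg},\mathrm{comp}}_R$ and, simultaneously, compute $\Psi\circ\Phi$. Since $\cA$ is abelian, $\Db(\cA)$ is idempotent complete, so the subcategory of compact objects of $\Ind(\Db(\cA))$ is exactly $\Db(\cA)$; as the $t$-structure on $\Ind(\Db(\cA))$ was arranged to restrict to the standard one on $\Db(\cA)$, we get
\[\Ind(\Db(\cA))^{\omega,\heartsuit}=\Db(\cA)^{\heartsuit}=\cA,\]
naturally in $\cA$. In particular $\Phi(\cA)$ satisfies the defining condition of $\mathrm{Pr}^{\mathrm{St},t,\mathrm{cg},\mathrm{comp}}_R$: under the above identification the canonical comparison functor $\Ind(\Db(\Phi(\cA)^{\omega,\heartsuit}))\to\Phi(\cA)$ is the identity. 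Thus $\Phi$ is well-defined and we obtain a natural equivalence $\Psi\circ\Phi\simeq\id_{\Cat^{\mathrm{ab}}}$.

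Next I would construct the natural equivalence $\Phi\circ\Psi\simeq\id$. For $\cE\in\mathrm{Pr}^{\mathrm{St},t,\mathrm{cg},\mathrm{comp}}_R$ the inclusion $\cE^{\omega,\heartsuit}\hookrightarrow\cE$ is an exact $R$-linear functor from an abelian category, so by the universal properties of $\Db(-)$ and of $\Ind(-)$ it extends uniquely to a $t$-exact $R$-linear colimit-preserving functor $\Phi(\Psi(\cE))=\Ind(\Db(\cE^{\omega,\heartsuit}))\to\cE$, and this is an equivalence precisely by the hypothesis defining $\mathrm{Pr}^{\mathrm{St},t,\mathrm{cg},\mathrm{comp}}_R$. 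Naturality in $\cE$ is automatic from the uniqueness in these universal properties. Having equipped $\Psi$ and $\Phi$ with mutually inverse natural equivalences of both composites, I conclude that $\Psi$ is an equivalence of $\infty$-categories.

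The step I expect to be the main obstacle is the careful setup of these universal properties of $\Db(-)$: that $\cA\mapsto\Db(\cA)$ is a genuine functor into $R$-linear stable $\infty$-categories with bounded $t$-structures, that it is left adjoint to the heart functor at that level, and that forming $\Ind$-completions is compatible with this — in particular that $\Ind(\Db(\cA))^{\omega}$ recovers $\Db(\cA)$ together with its standard $t$-structure, which rests on the idempotent completeness of bounded derived categories of abelian categories. Granting these inputs (from \cite[Corollary 7.4.12]{BunkeCisinski}, \cite[Proposition 5.3.6.2]{LurieHTT}, and the cited idempotent completeness), the remaining verifications are routine bookkeeping.
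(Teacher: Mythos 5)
Your proof is correct and uses exactly the same key inputs as the paper's (the universal property of $\Db(-)$ from \cite[Corollary 7.4.12]{BunkeCisinski} together with $\Ind$-completion via \cite[Proposition 5.3.6.2]{LurieHTT}). The only difference is organizational: you build an explicit quasi-inverse $\Phi=\Ind(\Db(-))$ and verify both composites, whereas the paper passes the functor-category equivalence from those references to underlying $\infty$-groupoids to read off fully faithfulness directly and then notes essential surjectivity; the mathematical content is the same.
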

\begin{proof}
    By \cite[Proposition 5.3.6.2]{LurieHTT} and \cite[Corollary 7.4.12]{BunkeCisinski}, the restriction functor
    \[ \Fun^{\mathrm{L},\omega,t-\ex}(\Ind(\Db(\cA)), \Ind(\Db(\mathscr{B}))) \cong \Fun^{t-\ex}(\Db(\cA), \Db(\mathscr{B})) \to \Fun^{\ex}(\cA, \mathscr{B}) \]
    is an equivalence for any $\cA, \mathscr{B} \in \Cat^{\mathrm{ab}}$, where the source denotes $t$-exact functors that preserve colimits and compact objects.
    Passing to underlying $\infty$-groupoids, we find that the functor $\mathrm{Pr}^{\mathrm{St},t,\mathrm{cg},\mathrm{comp}}_R \to \Cat^{\mathrm{ab}}$ is fully faithful.
    Essential surjectivity is obvious.
\end{proof}

\begin{corollary}\label{cor:extendfromPerv}
Let $X\in\Sch^{\sep\ft}$ and $\cA\in\Cat^{\mathrm{ab}}$ an $R$-linear abelian category. Suppose $R$ is a field. Then the restriction
\[\Fun^{\mathrm{L}, t-\ex}(\bD(X), \Ind(\Db(\cA)))^\sim \longrightarrow \Fun^{\ex}(\Perv(X), \cA)^\sim \]
is an equivalence of $\infty$-groupoids, where $\Fun^{\ex}(\Perv(X), \cA)^\sim$ denotes the groupoid of exact $R$-linear functors and $\Fun^{\mathrm{L}}(\bD(X), \Ind(\Db(\cA)))^\sim$ the $\infty$-groupoid of colimit-preserving $R$-linear functors sending $\Perv(X)$ to $\cA$.
\end{corollary}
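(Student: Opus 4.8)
The plan is to deduce this from \cref{prop:Catab} by identifying $\bD(X)$ with an object of $\mathrm{Pr}^{\mathrm{St},t,\mathrm{cg},\mathrm{comp}}_R$ whose heart of compact objects is $\Perv(X)$. Concretely, since $R$ is a field and $X\in\Sch^{\sep\ft}$, the $\infty$-category $\bD(X)$ is $R$-linear, stable, and presentable; by construction $\bD(X)=\Ind(\Dbc(X))$, and $\Dbc(X)$ is precisely the subcategory $\bD(X)^\omega$ of compact objects, with the perverse $t$-structure of \cref{prop:constructible6functor}(6) restricting to it. Thus $\bD(X)$ lies in $\mathrm{Pr}^{\mathrm{St},t,\mathrm{cg}}_R$. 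Moreover \cref{prop:constructible6functor}(7) says exactly that the canonical $t$-exact colimit-preserving functor $\Ind(\Db(\Perv(X)))\to\bD(X)$ is an equivalence, which is the defining condition for membership in $\mathrm{Pr}^{\mathrm{St},t,\mathrm{cg},\mathrm{comp}}_R$. Under the equivalence of \cref{prop:Catab}, $\bD(X)$ corresponds to its heart of compact objects $\bD(X)^{\omega,\heartsuit}=\Perv(X)$.

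Next I would unwind what the equivalence of \cref{prop:Catab} gives on morphism spaces. For a fixed $\cA\in\Cat^{\mathrm{ab}}$, one has $\Ind(\Db(\cA))\in\mathrm{Pr}^{\mathrm{St},t,\mathrm{cg},\mathrm{comp}}_R$ with heart of compact objects $\cA$ itself. Full faithfulness of the functor in \cref{prop:Catab} then yields an equivalence of mapping spaces
\[
\Map_{\mathrm{Pr}^{\mathrm{St},t,\mathrm{cg},\mathrm{comp}}_R}(\bD(X),\Ind(\Db(\cA)))\xlongrightarrow{\sim}\Map_{\Cat^{\mathrm{ab}}}(\Perv(X),\cA),
\]
and the right-hand side is $\Fun^{\ex}(\Perv(X),\cA)^\sim$ by definition of $\Cat^{\mathrm{ab}}$. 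It remains to match the left-hand side with $\Fun^{\mathrm{L},t-\ex}(\bD(X),\Ind(\Db(\cA)))^\sim$: a morphism in $\mathrm{Pr}^{\mathrm{St},t,\mathrm{cg},\mathrm{comp}}_R$ is by definition a $t$-exact $R$-linear colimit-preserving functor preserving compact objects, so I must argue that the compact-object-preservation condition is automatic here. This follows because any $t$-exact colimit-preserving functor $F\colon\bD(X)\to\Ind(\Db(\cA))$ sending $\Perv(X)$ into $\cA$ automatically sends $\Dbc(X)$ — generated under finite colimits and shifts by $\Perv(X)$, since the $t$-structure on $\Dbc(X)$ is bounded — into $\Db(\cA)\subset\Ind(\Db(\cA))^\omega$.

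Finally, I would identify the statement's restriction map with the composite above. The map in the corollary sends $F\in\Fun^{\mathrm{L},t-\ex}(\bD(X),\Ind(\Db(\cA)))^\sim$ (sending $\Perv(X)$ to $\cA$) to $F|_{\Perv(X)}\in\Fun^{\ex}(\Perv(X),\cA)^\sim$; tracing through the construction in \cref{prop:Catab}, this is precisely the functor $\cE\mapsto\cE^{\omega,\heartsuit}$ applied on morphisms, hence agrees with the equivalence of \cref{prop:Catab} after the identifications above, and is therefore an equivalence.

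The main obstacle I anticipate is purely bookkeeping: checking that the various hom-groupoids match up on the nose — in particular verifying that "colimit-preserving and sends $\Perv(X)$ to $\cA$" is equivalent to "$t$-exact, colimit-preserving, compact-object-preserving" in this setting (using boundedness of the perverse $t$-structure on $\Dbc(X)$ and \cref{prop:constructible6functor}(7)), rather than any deep input. All the real content is already packaged in \cref{prop:Catab} and \cref{prop:constructible6functor}(7).
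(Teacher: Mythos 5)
Your proposal is correct and takes the same route as the paper: the paper's proof likewise cites \cref{prop:constructible6functor}(7) to identify $\bD(X)\cong\Ind(\Dbc(X))\cong\Ind(\Db(\Perv(X)))$ as an object of $\mathrm{Pr}^{\mathrm{St},t,\mathrm{cg},\mathrm{comp}}_R$ and then invokes the equivalence of \cref{prop:Catab} on mapping $\infty$-groupoids. You have simply made explicit the bookkeeping — chiefly that compact-object preservation is automatic because $\Dbc(X)$ is generated under finite colimits and shifts by $\Perv(X)$ — that the paper leaves implicit in a one-line proof.
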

\begin{proof}
By \cref{prop:constructible6functor}(7) we have the natural equivalence $\Ind(\Dbc(X))\rightarrow \bD(X)$. The functor in question is thus the functor induced on mapping $\infty$-groupoids by the equivalence of \cref{prop:Catab}.
\end{proof}

\begin{remark}
    The proof shows that \cref{cor:extendfromPerv} holds at the level of functor $\infty$-categories, and \cref{prop:Catab} admits a corresponding $(\infty,2)$-categorical upgrade.
\end{remark}

\begin{remark}
\cref{cor:extendfromPerv} does not extend to Artin stacks.
Even though the natural functors $\Db(\Perv(X))\rightarrow \Dbc(X)$ and $\Ind(\Dbc(X))\rightarrow \bD(X)$ are equivalences for $X\in\Sch^{\sep\ft}$, neither functor is an equivalence for $X=\B\Gm$.
\end{remark}


\subsection{Vanishing cycles}
\label{sect:vanishingcycles}

For a complex analytic space $X$ and a holomorphic function $t\colon X\rightarrow\C$, let
\[X_{\Re\leq 0} = \{x\in X\mid \Re(t(x))\leq 0\}.\]
The \defterm{vanishing cycles functor} $\varphi_t \colon \Shv(X; R)\rightarrow \Shv(t^{-1}(0); R)$ is defined by
\[\varphi_t = (t^{-1}(0)\rightarrow X_{\Re\leq 0})^*(X_{\Re\leq 0}\rightarrow X)^!.\]

\begin{proposition}\label{prop:constructiblevanishingcycles}
For $X\in\Sch^{\sep\ft}$ and $t\colon X\rightarrow \bA^1$ the functor
\[\phi_t = \bigoplus_{c\in\C} (t^{-1}(c)\rightarrow X)_*\varphi_{t-c}\]
preserves constructibility, is perverse $t$-exact, and satisfies the following property:
\begin{enumerate}
    \item For a smooth morphism $f\colon X\rightarrow B$ in $\Sch^{\sep\ft}$, a function $t\colon X\rightarrow \bA^1$ and a perverse sheaf $\cF\in\Perv(B)$, the object $\phi_t f^\dag \cF$ is supported on the $B$-relative critical locus of $t$.
\end{enumerate}

For a morphism $f\colon X'\rightarrow X$ with $t\colon X\rightarrow \bA^1$ and $t'\coloneqq f^\ast t$ there are natural transformations
\[\Ex^!_\phi\colon \phi_{t'} f^! \to f^! \phi_t, \qquad \Ex^\phi_*\colon \phi_t f_* \to f_* \phi_{t'}\]
which satisfy the following properties:
\begin{enumerate}[resume]
    \item $\Ex_\phi^!$ and $\Ex^\phi_*$ are functorial for compositions.
    \item Let
    \[
    \xymatrix{
    X_{11} \ar^{f_1}[r] \ar^{g_1}[d] & X_{12} \ar^{g_2}[d] \\
    X_{21} \ar^{f_2}[r] & X_{22}
    }
    \]
    be a Cartesian diagram in $\Sch^{\sep\ft}$. Let $t_{22}\colon X_{22}\to\bA^1$ be a morphism and denote by $t_{11}, t_{12}, t_{21}$ its restrictions to $X_{11}, X_{12}, X_{21}$. Then the diagram
    \[
    \xymatrix{
    \phi_{t_{21}} f_2^! g_{2, *} \ar^{\Ex^!_\phi}[r] \ar^{\Ex^!_*}[d] & f^!_2 \phi_{t_{22}} g_{2,*} \ar^{\Ex^\phi_*}[r] & f^!_2 g_{2,*} \phi_{t_{12}} \ar^{\Ex^!_*}[d] \\
    \phi_{t_{21}} g_{1,*} f^!_1 \ar^{\Ex^\phi_*}[r] & g_{1,*} \phi_{t_{11}} f^!_1 \ar^{\Ex^!_\phi}[r] & g_{1,*} f^!_1 \phi_{t_{12}}
    }
    \]
    is commutative.
    \item If $f$ is smooth, $\Ex_\phi^!$ is invertible.
    \item\label{item:constructiblevanishingcycles/proper} If $f$ is proper, $\Ex^\phi_*$ is invertible.
\end{enumerate}
\end{proposition}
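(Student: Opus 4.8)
The plan is to deduce everything from the standard properties of the topological vanishing cycles functor together with the coherences of the six operations. Throughout the proof all sheaf operations are taken on analytifications: I write $\varphi_t = a^*b^!$ where $b\colon X_{\Re t\le 0}\hookrightarrow X$ and $a\colon t^{-1}(0)\hookrightarrow X_{\Re t\le 0}$ are the indicated closed inclusions, and for $t-c$ I write $b_c,a_c$ for the analogous inclusions and $j_c = b_c\circ a_c\colon t^{-1}(c)\hookrightarrow X$ for the closed immersion, so that $\phi_t = \bigoplus_c (j_c)_*\varphi_{t-c}$. That $\varphi_t$ preserves algebraic constructibility and is perverse $t$-exact is classical: choosing a common algebraic stratification of $X$ adapted to $\cF$ and to $t$, the sets $X_{\Re t\le 0}$ and $t^{-1}(0)$ are subanalytic unions of pieces of strata, so $b^!$ and $a^*$ preserve $\R$-constructibility, and $\varphi_t\cF$ -- being supported on the algebraic set $t^{-1}(0)$ and locally constant on the strata there -- is algebraically constructible; perverse $t$-exactness follows since $\varphi_t$ agrees, up to the normalizing shift, with Deligne's vanishing-cycles functor, which is perverse $t$-exact in that normalization \cite{KashiwaraSchapira}. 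Since $\cF$ is constructible, $\varphi_{t-c}\cF$ is nonzero for only finitely many $c$, so $\phi_t\cF = \bigoplus_c (j_c)_*\varphi_{t-c}\cF$ is a finite direct sum of constructible sheaves, hence lies in $\Dbc(X)$; it is perverse $t$-exact because each $(j_c)_*$ is (the $j_c$ are finite, \cref{prop:constructible6functor}(6)) and a finite direct sum of perverse complexes is perverse. For property~(1), $f^\dag\cF \cong f^*\cF[\dim(X/B)]$ via $\pur_f$ (\cref{prop:fundamentalclass}), and the statement is local on $X$: near a point $x$ with $d_B t(x)\neq 0$ one may, after an \'etale base change, take $t - t(x)$ as one fibre coordinate of a smooth projection through which $f$ factors, so that $f^\dag\cF$ is pulled back along that projection and is locally constant in the $t$-direction; hence $\varphi_{t-t(x)}(f^\dag\cF)$ vanishes near $x$, so $\phi_t f^\dag\cF$ is supported on the set-theoretic relative critical locus of $t$ (cf.\ \cref{def-relative-crit}).

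To construct the natural transformations, note that $t' = f^*t$ forces $X'_{\Re t'\le\Re c} = f^{-1}(X_{\Re t\le\Re c})$ and $(t')^{-1}(c) = f^{-1}(t^{-1}(c))$, so all the inclusions for $t'$ are base changes along $f$ of those for $t$; write $\beta_c,\gamma_c$ for the base changes of $f$ over $X_{\Re t\le\Re c}$ and over $t^{-1}(c)$. Using $(b'_c)^!f^! = \beta_c^! b_c^!$, the base-change isomorphism $\Ex^!_*$, the transformation $\Ex^{*,!}$, the Beck--Chevalley map $\BC\colon a_c^*(\beta_c)_*\to(\gamma_c)_*(a'_c)^*$, and the identity $j_c\circ\gamma_c = f\circ j'_c$, one defines summand-wise
\[
(j'_c)_*\varphi_{t'-c}f^! = (j'_c)_*(a'_c)^*\beta_c^! b_c^! \;\xrightarrow{\Ex^{*,!}}\; (j'_c)_*\gamma_c^!\varphi_{t-c} \;\xleftarrow[\sim]{\Ex^!_*}\; f^!(j_c)_*\varphi_{t-c}
\]
\[
(j_c)_*\varphi_{t-c}f_* = (j_c)_*a_c^* b_c^! f_* \;\xrightarrow[\sim]{\Ex^!_*}\; (j_c)_*a_c^*(\beta_c)_*(b'_c)^! \;\xrightarrow{\BC}\; f_*(j'_c)_*\varphi_{t'-c},
\]
and summing over the finitely many relevant $c$ yields $\Ex^!_\phi\colon \phi_{t'}f^!\to f^!\phi_t$ and $\Ex^\phi_*\colon \phi_t f_*\to f_*\phi_{t'}$, natural in the sheaf argument because every building block is.

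Property~(2) follows from the compatibility of $\Ex^!_*$, $\Ex^{*,!}$ and $\BC$ with horizontal and vertical pasting of Cartesian squares, applied to the squares coming from a composite $X''\to X'\to X$. Property~(3) reduces, summand by summand in $c$, to the coherence of the base-change $2$-cells $\Ex^!_*$, $\Ex^{*,!}$, $\BC$ for the Cartesian cube obtained by forming the level sets and half-spaces of $t_{22}$; this is the same kind of bookkeeping as in the proof of \cref{prop:Ex!*selfdual}. For property~(4): when $f$ is smooth, $f^{\an}$ is a topological submersion, hence cohomologically smooth in $\Shv^*_!$ \cite{KhanWeavelisse}; therefore each $\beta_c$ is cohomologically smooth, the maps $\Ex^{*,!}$ occurring in $\Ex^!_\phi$ are isomorphisms, and $\Ex^!_\phi$ is invertible. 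For property~(5): when $f$ is proper, so is $f^{\an}$ and hence each $\beta_c$, so the maps $\BC$ occurring in $\Ex^\phi_*$ are isomorphisms by proper base change, and $\Ex^\phi_*$ is invertible.

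I expect the two main obstacles to be, first, the care needed to see that the a priori only $\R$-constructible complex $\varphi_t\cF$ is genuinely algebraically constructible and compatibly stratified (handled via its support on the algebraic set $t^{-1}(0)$, or by appealing to Deligne's algebraic construction of vanishing cycles), together with the classical perverse $t$-exactness input; and second, property~(3), whose verification is a lengthy but routine diagram chase through the coherences of the six operations.
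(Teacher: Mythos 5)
Your proposal takes essentially the same route as the paper: reduce $\varphi_t$ to known facts, establish finiteness of the nonzero summands, construct the exchange transformations summand-wise from $\Ex^{*,!}$, $\Ex^!_*$, and Beck--Chevalley maps, and verify (4)--(5) by smooth and proper base change. The explicit summand-wise formulas you give for $\Ex^!_\phi$ and $\Ex^\phi_*$ are correct and match what the paper describes more tersely, and your local-coordinate argument for property~(1) (at a point with $d_Bt\neq 0$, take $t$ as a relative coordinate so $f^\dag\cF$ is constant in the $t$-direction and the vanishing cycles vanish) is a clean alternative to the paper's global Whitney-stratification argument.

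The one genuine gap is the sentence ``Since $\cF$ is constructible, $\varphi_{t-c}\cF$ is nonzero for only finitely many $c$.'' This is not a formal consequence of constructibility; it is exactly the nontrivial input needed to make $\phi_t\cF$ constructible, and the paper spends a paragraph on it. Concretely, one fixes a Whitney stratification $\cS$ of $X$ adapted to $\cF$ and $t$ and invokes the support bound
\[\supp(\varphi_{t-c}\cF)\subset \bigcup_{S\in\cS} \Crit_S(t|_S)\cap t^{-1}(c)\]
(\cite[Remark 1.10]{MasseyCritical}); since $t|_S$ is locally constant on $\Crit_S(t|_S)^{\red}$ and each $\Crit_S(t|_S)$ has finitely many irreducible components, only finitely many values $c$ can appear. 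Without some version of this argument your claim that $\phi_t\cF$ lands in $\Dbc(X)$ is unjustified. You should also note that the same support bound, applied to $f^{-1}(\cS)$ and the inclusion $\Crit_{f^{-1}(S)}(t|_{f^{-1}(S)})\subset\Crit_{f^{-1}(S)/S}(t|_{f^{-1}(S)})\subset\Crit_{X/B}(t)$, is what the paper uses for property~(1); your micro-local argument reaches the same conclusion but does not by itself yield the finiteness.
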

\begin{proof}
We first show that $\varphi_t$ preserves constructibility and the perverse t-structure. Indeed, the claim is local, so by embedding $X$ into a smooth scheme and using proper base change we are reduced to the corresponding claim for $X$ smooth. The fact that $\varphi_f$ preserves constructibility is shown in \cite[Corollary 4.12]{MaximSchurmann}. Perverse t-exactness is shown in \cite[Corollary 10.3.13]{KashiwaraSchapira}.

Next we show that $\phi_t$ preserves constructibility and the perverse t-structure. Fix $\cF\in\Dbc(X)$ and let $\cS$ be a Whitney stratification of $X$ such that for all strata $S\in\cS$ the restriction $\cF|_S$ is locally constant. By \cite[Remark 1.10]{MasseyCritical} for every $c\in\C$ we have
\begin{equation}\label{eq:vanishingsupport}
\supp(\varphi_{t-c}\cF)\subset \bigcup_{S\in\cS} \Crit_S(t|_S)\cap t^{-1}(c).
\end{equation}
Since $t|_S$ is locally constant on $\Crit_S(t|_S)^{\red}$ and $\Crit_S(t|_S)$ has finitely many irreducible components, $t|_S$ takes finitely many values on $\Crit_S(t|_S)$. Since $\cS$ is finite, this implies that only finitely many summands in the definition of $\phi_f\cF$ are nonzero. Thus, $\phi_t\cF$ is constructible and, if $\cF$ is perverse, so is $\phi_t\cF$.

The exchange natural transformations $\Ex^!_\phi$ and $\Ex^\phi_*$ are constructed from the usual exchange natural transformations between the six-functor operations which satisfy analogs of (2)-(3). The smooth and proper base change theorems verify (4)-(5).

Let us now show (1). Let $\cS$ be a Whitney stratification of $B$ such that for all strata $S\in\cS$ the restriction $\cF|_S$ is locally constant. Then $f^{-1}(\cS)$ is a Whitney stratification \cite[Chapter I, Proposition 1.4]{GWPL} and $f^\dag \cF$ is constructible with respect to $f^{-1}(\cS)$. Using \eqref{eq:vanishingsupport} we get
\[\supp(\phi_t f^\dag \cF)\subset \bigcup_{S\in\cS} \Crit_{f^{-1}(S)}(t|_{f^{-1}(S)})\subset \bigcup_{S\in\cS} \Crit_{f^{-1}(S)/S}(t|_{f^{-1}(S)}).\]
Using the Cartesian diagram
\[
\xymatrix{
f^{-1}(S) \ar[d] \ar[r] & X \ar[d] \\
S \ar[r] & B
}
\]
we get that $\Crit_{f^{-1}(S)/S}(t|_{f^{-1}(S)})\subset \Crit_{X/B}(t)$.
\end{proof}

Taking mates of $\Ex^!_\phi$ and $\Ex^\phi_*$ we obtain
\[\Ex^\phi_!\colon f_! \phi_{t'}\longrightarrow \phi_t f_! ,\qquad \Ex^*_\phi\colon f^*\phi_t\longrightarrow \phi_{t'} f^*.\]
Moreover, by the proper and smooth base change theorems $\Ex^\phi_!$ is an isomorphism if $f$ is proper and $\Ex^*_\phi$ is an isomorphism if $f$ is smooth.

\begin{proposition}\label{prop:phiD}
For a scheme $X\in\Sch^{\sep\ft}$ equipped with a function $t\colon X\rightarrow \bA^1$ there is a natural isomorphism $\Ex^{\phi,\bbD} \colon \phi_t \bbD \xrightarrow{\sim} \bbD \phi_{-t}$ on constructible objects, satisfying the following properties:
\begin{enumerate}
    \item The diagram
    \[
    \xymatrix@C=1.3cm{
    \phi_{t} \ar^{\psi}[d]\ar^-{\psi}[r] & \bbD \bbD \phi_{t} \ar^{\Ex^{\phi,\bbD}}[d] \\
    \phi_{t} \bbD \bbD \ar^{\Ex^{\phi,\bbD}}[r] & \bbD \phi_{-t} \bbD
    }
    \]
    commutes.
    \item For a morphism $f\colon X'\rightarrow X$ in $\Sch^{\sep\ft}$, the diagram
    \[
    \xymatrix{
    \phi_{t'} f^!\bbD\ar^{\Ex^{!, \bbD}}[r] \ar^{\Ex^!_\phi}[d] & \phi_{t'} \bbD f^* \ar^{\Ex^{\phi, \bbD}}[r] & \bbD \phi_{-t'} f^* \ar^{\Ex^*_\phi}[d] \\
    f^! \phi_t \bbD \ar^{\Ex^{\phi, \bbD}}[r] & f^!\bbD\phi_{-t} \ar^{\Ex^{!, \bbD}}[r] & \bbD f^*\phi_{-t}
    }
    \]
    commutes, where $t'\coloneqq f^\ast t$.
    \item For a morphism $f\colon X'\rightarrow X$ in $\Sch^{\sep\ft}$, the diagram
    \[
    \xymatrix{
    \phi_t f_*\bbD\ar^{\Ex_{*, \bbD}}[r] \ar^{\Ex^\phi_*}[d] & \phi_t \bbD f_! \ar^{\Ex^{\phi, \bbD}}[r] & \bbD \phi_{-t} f_! \ar^{\Ex^\phi_!}[d] \\
    f_* \phi_{t'} \bbD \ar^{\Ex^{\phi, \bbD}}[r] & f_*\bbD\phi_{-t'} \ar^{\Ex_{*, \bbD}}[r] & \bbD f_!\phi_{-t'}
    }
    \]
    commutes, where $t'\coloneqq f^\ast t$.
\end{enumerate}
\end{proposition}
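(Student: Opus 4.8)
The plan is to reduce the statement from the summed functor $\phi$ to the single-value vanishing cycles functor $\varphi$, construct the duality isomorphism for $\varphi$ out of the ambient Verdier-duality exchanges $\Ex^{!,\bbD}$ and $\Ex^{*,\bbD}$ together with the classical self-duality of vanishing cycles, and then verify the three compatibilities by diagram chase. For the reduction: unfolding the definition of $\phi$ and reindexing $c\mapsto -c$ gives $\phi_t\bbD\cong\bigoplus_{c\in\C}\iota_{c,*}\,\varphi_{t-c}\bbD$ and $\bbD\phi_{-t}\cong\bigoplus_{c\in\C}\bbD(\iota_{c,*}\,\varphi_{-(t-c)})$, where $\iota_c\colon t^{-1}(c)\hookrightarrow X$ and the sums are finite on constructible objects by \cref{prop:constructiblevanishingcycles}. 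Each $\iota_c$ is a closed immersion, so $\fgsp_{\iota_c}$ is invertible and $\Ex_{*,\bbD}$ provides a canonical isomorphism $\bbD\iota_{c,*}\cong\iota_{c,*}\bbD$. Hence it suffices to produce, naturally in $X$ and in the function, an isomorphism $\Ex^{\varphi,\bbD}\colon\varphi_s\bbD\xrightarrow{\sim}\bbD\varphi_{-s}$; all three compatibilities then reduce, summand by summand, to the analogous statements for $\varphi$ together with the interaction of $\Ex_{*,\bbD}$ and $\fgsp$ with $\psi$, with $f^!/f^*$, and with $f_*/f_!$ recorded in \cref{prop:Ex!*selfdual} and the surrounding discussion.

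To build $\Ex^{\varphi,\bbD}$, write $\varphi_t = j^* i^!$ with $i\colon X_{\Re\leq 0}\hookrightarrow X$ and $j\colon Z\hookrightarrow X_{\Re\leq 0}$ the closed immersions from the definition ($Z = t^{-1}(0)$), and likewise $\varphi_{-t} = (j')^*(i')^!$ with $i'\colon X_{\Re\geq 0}\hookrightarrow X$ and $j'\colon Z\hookrightarrow X_{\Re\geq 0}$. Applying $\Ex^{!,\bbD}$ along $i$ and $\Ex^{*,\bbD}$ along $j$ yields
\[
\varphi_t\bbD = j^* i^!\bbD \xrightarrow[\sim]{\Ex^{!,\bbD}} j^*\bbD\, i^* \xrightarrow[\sim]{\Ex^{*,\bbD}} \bbD\, j^! i^*,
\]
so $\Ex^{\varphi,\bbD}$ will be this composite post-composed with $\bbD(\kappa)$, where $\kappa\colon j^! i^*\xrightarrow{\sim}(j')^*(i')^! = \varphi_{-t}$ expresses that the $!$-costalk along $Z$ of the $*$-restriction to $X_{\Re\leq 0}$ agrees with the $*$-stalk along $Z$ of the $!$-restriction to $X_{\Re\geq 0}$. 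This is exactly the classical compatibility of the vanishing cycle functor with Verdier duality — the sign flip $s\mapsto -s$ appearing because passing to $-s$ interchanges $X_{\Re\leq 0}$ and $X_{\Re\geq 0}$ — and can be extracted from \cite[Section 10.3]{KashiwaraSchapira} or \cite{MaximSchurmann}. I expect the principal technical burden to be fixing a \emph{preferred} such $\kappa$ and establishing its functoriality: compatibility with $f^!$, $f^*$, $f_*$, $f_!$, with $\psi$, and with exterior products. This can be approached by reduction — via proper base change along local closed embeddings into smooth schemes, using \cref{prop:constructiblevanishingcycles} — to the smooth case, where $\kappa$ and its naturality are classical; alternatively, $\Ex^{\varphi,\bbD}$ may be characterized uniquely by the square in (1), in parallel with the way $\Ex^{*,\bbD}$ is characterized in \eqref{eq:psi vs Ex* Ex!}.

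Granting $\Ex^{\varphi,\bbD} = \bbD(\kappa)\circ\Ex^{*,\bbD}\circ\Ex^{!,\bbD}$, the compatibilities (1)--(3) become bookkeeping: one expands each occurrence of $\Ex^{\phi,\bbD}$ into its $\iota_{c,*}$-components and then into $\Ex^{!,\bbD}$, $\Ex^{*,\bbD}$, $\kappa$, and expands the vanishing-cycle exchanges $\Ex^!_\phi$, $\Ex^*_\phi$, $\Ex^\phi_*$, $\Ex^\phi_!$ into the ordinary exchange transformations along $i$, $j$, $\iota_c$ and the relevant base-change squares. Then (1) follows from the two commuting squares of \eqref{eq:psi vs Ex* Ex!} (applied to $i$ and to $j$), the analogous square for $\Ex_{*,\bbD}$ and $\fgsp$, and the $\psi$-compatibility of $\kappa$; while (2) and (3) follow from the standard coherences among $\Ex^*_!$, $\Ex^!_*$, $\Ex^{*,!}$ and $\Ex^{!,\bbD}$, $\Ex^{*,\bbD}$, $\Ex_{*,\bbD}$ recorded in \eqref{eq:Ex!*selfdual} and \cref{prop:Ex!*selfdual}, together with the $f^!/f^*$- and $f_*/f_!$-compatibility of $\kappa$. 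These verifications are long but entirely formal once the preferred $\kappa$ and its functoriality are available.
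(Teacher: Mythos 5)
Your overall architecture matches the paper's: reduce $\phi_t$ to $\varphi_t$ by taking the direct sum over $c\in\C$ and using $\Ex_{*,\bbD}$ (and invertibility of $\fgsp_{\iota_c}$) for the closed immersions $\iota_c\colon t^{-1}(c)\hookrightarrow X$, then build $\Ex^{\varphi,\bbD}$ from the inclusions in the picture of $X_{\Re\leq 0}$, $X_{\Re\geq 0}$ and $t^{-1}(0)$, and finally chase diagrams. The reduction step is correct and is exactly what the paper does.

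The construction of $\Ex^{\varphi,\bbD}$ is where your route diverges and where the gap lies. The paper does not leave a residual comparison map to be dealt with later: it factors $t^{-1}(0)\xrightarrow{g_0}X_{\Re=0}$ and uses the Cartesian square with $h_\pm\colon X_{\Re=0}\to X_{\Re\gtrless 0}$, $i_\pm\colon X_{\Re\gtrless 0}\to X$, writing $\varphi_t=g_0^*h_-^*i_-^!$ and defining the isomorphism as the five-step chain $\epsilon_{g_0}^{-1}$, then $\Ex^{*,!}$, then $\Ex^{*,\bbD}$, then $\Ex^{!,\bbD}$, with the invertibility of $\epsilon_{g_0}$ and of $\Ex^{*,!}$ on the relevant objects taken from Massey. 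Your $j^!i^*\cong(j')^*(i')^!$ (with $j=h_-g_0$, $j'=h_+g_0$, $i=i_\mp$) is precisely the part that requires those Massey-type invertibility results — unfolding your $\kappa$ would have to reproduce the factorization through $X_{\Re=0}$ and the maps $\epsilon_{g_0}$, $\Ex^{*,!}$ — but you assert it as a known ``classical compatibility'' extractable from \cite{KashiwaraSchapira} or \cite{MaximSchurmann}. That reference is imprecise: the classical Verdier-duality statement for vanishing cycles is the full isomorphism $\varphi_t\bbD\cong\bbD\varphi_{-t}$, not the auxiliary $\kappa$, so extracting $\kappa$ from the literature in this way risks circularity. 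You acknowledge ``the principal technical burden'' but do not discharge it.

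This has a downstream consequence for (1). The paper's proof of (1) is not a generic diagram chase: it exploits the specific structure of the chain, namely that $\epsilon_f$ and $\Ex^{*,!}$ are Verdier self-dual natural transformations while $\Ex^{*,\bbD}$ and $\Ex^{!,\bbD}$ are exchanged under Verdier duality. Because $\Ex^{\varphi,\bbD}$ is defined as a composite of exactly these four kinds of maps, (1) falls out immediately. Your black-box $\kappa$ does not enjoy this automatically; you would separately need its $\psi$-compatibility, and your proposal only asserts this without proof. Similarly, your verification of (2) and (3) presupposes functoriality of $\kappa$ in $f^!$, $f^*$, $f_*$, $f_!$ which again is not established. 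Your alternative suggestion — characterize $\Ex^{\varphi,\bbD}$ uniquely by (1) — would still require existence as a prior input. The fix is to replace $\kappa$ by the explicit chain through $X_{\Re=0}$ as the paper does, citing Massey for the two nontrivial invertibility statements, and then use the self-duality/$\bbD$-exchange structure of the factors to get (1)–(3).
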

\begin{proof}
We begin by recalling a natural isomorphism
\[\Ex^{\varphi, \bbD}\colon \varphi_t \bbD\xrightarrow{\sim} \bbD \varphi_{-t}\]
constructed in \cite{MasseyVerdier} (the construction is reviewed in \cite[\S2.3]{KinjoVirtual}).

Consider the diagram of inclusions
    \[\begin{tikzcd}[/tikz/column 1/.style={column sep=0.8em}, /tikz/row 1/.style={row sep=0.8em}]
        X_0 \ar{rd}{g_0} & &
        \\
        & X_{\Re=0} \ar{r}{h_+}\ar{d}{h_-}
        & X_{\Re\ge 0} \ar{d}{i_+}
        \\
        & X_{\Re\le 0} \ar{r}{i_-}
        & X,
    \end{tikzcd}\]
    where the square is Cartesian. The natural isomorphism $\Ex^{\varphi, \bbD}$ is defined by the composite
    \begin{align*}
    \varphi_t\bbD &= g_0^*h_-^*i_-^!\bbD \\
    &\xleftarrow{\epsilon_{g_0}} g_0^!h_-^*i_-^!\bbD \\
    &\xrightarrow{\Ex^{*, !}} g_0^! h_+^!i_+^* \bbD \\
    &\xrightarrow{\Ex^{*, \bbD}} g_0^! h_+^! \bbD i_+^! \\
    &\xrightarrow{\Ex^{!, \bbD}} \bbD g_0^* h_+^* i_+^! \\
    &= \bbD \varphi_{-t},
    \end{align*}
    where the natural transformations on lines 2 and 3 are isomorphisms by \cite{MasseyVerdier}. $\Ex^{\varphi, \bbD}$ induces a natural isomorphism $\Ex^{\phi, \bbD}\colon \phi_t\bbD\xrightarrow{\sim} \bbD\phi_{-t}$.

    Property (1) follows from the fact that the natural transformations $\epsilon_f$ and $\Ex^{*, !}$ are Verdier self-dual while the natural isomorphisms $\Ex^{*, \bbD}$ and $\Ex^{!, \bbD}$ are exchanged under Verdier duality. Properties (2) and (3) follows from the standard commutativity diagrams between the six functors.
\end{proof}

\begin{remark}\label{rmk:phireverse}
There is a natural isomorphism $T_\pi\colon \phi_t \xrightarrow{\sim} \phi_{-t}$ such that the composite
\[\phi_t\xrightarrow{T_\pi} \phi_{-t}\xrightarrow{T_\pi} \phi_t\]
is the monodromy operator $T\colon \phi_t\rightarrow \phi_t$ for the sheaf of vanishing cycles. Consider the composite isomorphism
\[\overline{\Ex}^{\phi, \bbD}\colon \phi_t \bbD\xrightarrow{\Ex^{\phi, \bbD}} \bbD \phi_{-t}\xrightarrow{T_\pi} \bbD \phi_t.\]
Then \cref{prop:phiD}(1) implies that the diagram
\[
\xymatrix@C=1.3cm{
\phi_{t} \ar^{\psi}[d] \ar^{T}[r] & \phi_t \ar^-{\psi}[r]& \bbD \bbD \phi_{t} \ar^{\overline{\Ex}^{\phi,\bbD}}[d] \\
\phi_{t} \bbD \bbD \ar^{\overline{\Ex}^{\phi,\bbD}}[rr] && \bbD \phi_{-t} \bbD
}
\]
commutes.
\end{remark}

\begin{proposition}\label{prop:phiTS}
For a pair of schemes $X_1,X_2\in\Sch^{\sep\ft}$ equipped with functions $t_i\colon X_i\rightarrow \bA^1$ there is a natural Thom--Sebastiani isomorphism $\TS\colon\phi_{t_1}(-)\boxtimes \phi_{t_2}(-)\xrightarrow{\sim}\phi_{t_1\boxplus t_2}(-\boxtimes -)$ which satisfies the following properties:
\begin{enumerate}
    \item $\TS$ is unital, associative and commutative.
    \item For smooth morphisms $f_i\colon X_i\rightarrow Y_i$ in $\Sch^{\sep\ft}$ and functions $t_i\colon Y_i\rightarrow \bA^1$ with $t'_i\coloneqq f_i^\ast t_i$ the diagram
    \[
    \xymatrix{
    \phi_{t'_1} f_1^\dag\cF_1\boxtimes \phi_{t'_2} f_2^\dag \cF_2 \ar^-{\TS}[r] \ar^{\Ex^!_\phi\boxtimes \Ex^!_\phi}[d] & \phi_{t'_1\boxplus t'_2}(f_1\times f_2)^\dag(\cF_1\boxtimes \cF_2) \ar^{\Ex^!_\phi}[d] \\
    (f_1\times f_2)^\dag(\phi_{t_1}\cF_1\boxtimes \phi_{t_2} \cF_2) \ar^{\TS}[r] & (f_1\times f_2)^\dag \phi_{t_1\boxplus t_2}(\cF_1\boxtimes \cF_2)
    }
    \]
    commutes.
    \item For proper morphisms $f_i\colon X_i\rightarrow Y_i$ in $\Sch^{\sep\ft}$ and functions $t_i\colon Y_i\rightarrow \bA^1$ with $t'_i\coloneqq f_i^\ast t_i$ the diagram
    \[
    \xymatrix{
    \phi_{t_1} f_{1,*}\cF_1\boxtimes \phi_{t_2} f_{2,*} \cF_2 \ar^-{\TS}[r] \ar^{\Ex^\phi_*\boxtimes \Ex^\phi_*}[d] & \phi_{t_1\boxplus t_2}(f_1\times f_2)_*(\cF_1\boxtimes \cF_2) \ar^{\Ex^\phi_*}[d] \\
    (f_1\times f_2)_*(\phi_{t'_1}\cF_1\boxtimes \phi_{t'_2} \cF_2) \ar^{\TS}[r] & (f_1\times f_2)_* \phi_{t'_1\boxplus t'_2}(\cF_1\boxtimes \cF_2)
    }
    \]
    commutes.
    \item The diagram
    \[
    \xymatrix{
    \phi_{t_1}(\bbD\cF_1)\boxtimes \phi_{t_2}(\bbD\cF_2) \ar_{\Ex^{\phi, \bbD}\boxtimes \Ex^{\phi, \bbD}}[d] \ar^{\TS}[r] & \phi_{t_1\boxplus t_2}(\bbD\cF_1\boxtimes \bbD\cF_2) \ar^{\Ex^{\bbD, \boxtimes}}[r] & \phi_{t_1\boxplus t_2}(\bbD(\cF_1\boxtimes \cF_2)) \ar_{\Ex^{\phi, \bbD}}[d] \\
    \bbD\phi_{-t_1}(\cF_1)\boxtimes \bbD\phi_{-t_2}(\cF_2)\ar^{\Ex^{\bbD, \boxtimes}}[r] & \bbD(\phi_{-t_1}(\cF_1)\boxtimes \phi_{-t_2}(\cF_2)) & \bbD(\phi_{-(t_1\boxplus t_2)}(\cF_1\boxtimes \cF_2)) \ar_-{\TS}[l]
    }
    \]
    commutes.
\end{enumerate}
\end{proposition}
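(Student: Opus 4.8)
The plan is to reduce every assertion, summand by summand, to the classical Thom--Sebastiani isomorphism for the unshifted vanishing cycles functor $\varphi_t$ of \cite{MasseyTS} and to its standard compatibilities with the six operations and with Verdier duality, the direct sum over critical values being bookkeeping. To construct $\TS$, note first that by the support bound \eqref{eq:vanishingsupport} the sheaf $\varphi_{t_1\boxplus t_2 - b}(\cF_1\boxtimes\cF_2)$ on $(t_1\boxplus t_2)^{-1}(b)$ is supported on $\bigl(\Crit_{X_1}(t_1)\times\Crit_{X_2}(t_2)\bigr)\cap\{t_1\boxplus t_2 = b\}$; since $t_i$ takes only finitely many values on $\Crit_{X_i}(t_i)$, this support is a finite union of the pairwise disjoint closed subsets $\{t_1 = c_1\}\times\{t_2 = c_2\}$ with $c_1 + c_2 = b$, so that $\varphi_{t_1\boxplus t_2 - b}(\cF_1\boxtimes\cF_2)$ decomposes canonically as a direct sum indexed by such pairs $(c_1, c_2)$, the $(c_1,c_2)$-summand being its restriction to $\{t_1 = c_1\}\times\{t_2 = c_2\}$. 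Applying the classical Thom--Sebastiani isomorphism to the functions $t_1 - c_1$ and $t_2 - c_2$ (whose box-sum is $t_1\boxplus t_2 - b$), together with the K\"unneth isomorphism $(i_{c_1})_*\cG_1\boxtimes(i_{c_2})_*\cG_2\cong(i_{c_1}\times i_{c_2})_*(\cG_1\boxtimes\cG_2)$ for the closed immersions $i_{c_i}\colon\{t_i = c_i\}\hookrightarrow X_i$, identifies that summand, pushed forward to $X_1\times X_2$, with $(i_{c_1})_*\varphi_{t_1-c_1}\cF_1\boxtimes(i_{c_2})_*\varphi_{t_2-c_2}\cF_2$. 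Summing over $b$ and over the pairs $(c_1,c_2)$, and reindexing the double sum by the single set of pairs of critical values of $t_1$ and of $t_2$, then produces the natural isomorphism $\TS\colon\phi_{t_1}(-)\boxtimes\phi_{t_2}(-)\xrightarrow{\sim}\phi_{t_1\boxplus t_2}(-\boxtimes-)$.

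Property (1) reduces termwise to unitality, associativity and commutativity of the classical $\varphi$-Thom--Sebastiani isomorphism, together with those of $\boxtimes$ and of direct sums, once one checks that the critical-value decomposition of a triple box-sum is compatible with the two bracketings; for the unit one uses that $\varphi_0 = \id$ on $\Dbc(\pt)$, so that $\phi_0 = \id$ there as only the zero summand contributes. For (2) and (3) I would use that $\Ex^!_\phi$ is invertible for smooth morphisms and $\Ex^\phi_*$ for proper morphisms (\cref{prop:constructiblevanishingcycles}(4)--(5)), that $(f_1\times f_2)^*$ and $(f_1\times f_2)_!$ are strictly compatible with $\boxtimes$, that $\pur$ is compatible with products (\cref{prop:fundamentalclass}), and that the classical $\varphi$-Thom--Sebastiani isomorphism is compatible with smooth pullback and with proper pushforward (by smooth, resp. proper, base change applied to its construction); since $\{f_i^*t_i = c\} = f_i^{-1}(\{t_i = c\})$, the critical-value decomposition is compatible with $f_i^*$ and with $f_{i,*}$, and hence is transported along these functors, so the two squares commute summand by summand.

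Property (4) is the main point. By the construction in the proof of \cref{prop:phiD}, the isomorphism $\Ex^{\phi,\bbD}$ is assembled termwise from $\Ex^{\varphi,\bbD}\colon\varphi_{t-c}\bbD\xrightarrow{\sim}\bbD\varphi_{-(t-c)}$ and the identifications $\bbD(i_c)_*\cong(i_c)_*\bbD$, and $\Ex^{\bbD,\boxtimes}$ likewise respects the canonical direct-sum decompositions and the closed-pushforward K\"unneth isomorphisms. Hence the octagon of (4) for $\phi$ follows, up to Koszul signs coming from the reindexing, from the corresponding octagon for $\varphi$, i.e. from the compatibility of the classical $\varphi$-Thom--Sebastiani isomorphism with $\Ex^{\varphi,\bbD}$ and $\Ex^{\bbD,\boxtimes}$. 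I expect this to be the main obstacle, since the half-spaces $\{\Re t_i\le 0\}$ defining $\varphi_{t_i}$ do not multiply to the half-space $\{\Re(t_1\boxplus t_2)\le 0\}$ defining $\varphi_{t_1\boxplus t_2}$; I would handle it as one handles the classical Thom--Sebastiani isomorphism itself, using the conic invariance of vanishing cycles to compare the relevant cutting regions near the critical locus through the intermediate region $\{\Re t_1\le 0\}\cup\{\Re t_2\le 0\}$ on $X_1\times X_2$, after which the duality compatibility becomes a formal consequence of the interactions between $\epsilon_f$, $\Ex^{*,!}$, $\Ex^{*,\bbD}$ and $\Ex^{!,\bbD}$ recorded in \cref{sect:constructible} (compare the proof of \cref{prop:phiD}(1)). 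The remaining verifications are routine diagram chases.
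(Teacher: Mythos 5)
Your proof is correct in outline and follows essentially the same strategy as the paper's: decompose both sides as finite direct sums over pairs of critical values, reduce to the classical Thom--Sebastiani isomorphism $\TS^\varphi$ for $\varphi$ on each summand, and transport the compatibilities termwise. The decomposition argument you give (disjoint closed subsets with finite union forcing an open-and-closed decomposition of the support) matches what the paper cites from \cite{MasseyTS}, namely that $(t_1^{-1}(0)\times t_2^{-1}(0))\cap\supp\varphi_{t_1\boxplus t_2}(\cF_1\boxtimes\cF_2)$ is open in the support.

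Where you diverge is in the treatment of $\TS^\varphi$ itself. You take it as a black box from \cite{MasseyTS}, and consequently you have to argue for property (4) by appealing heuristically to ``conic invariance'' and an intermediate cutting region, flagging it as the expected obstacle. The paper instead recalls the explicit composite defining $\TS^\varphi$ --- the K\"unneth step, $\Ex^{!,\boxtimes}$, the factorization through the closed inclusion $l\colon (X_1)_{\Re\le 0}\times(X_2)_{\Re\le 0}\hookrightarrow(X_1\times X_2)_{\Re\le 0}$, and $\epsilon_l$ --- precisely so that the Verdier-duality compatibility (4) becomes a formal consequence of the Verdier self-duality of $\epsilon$ and $\Ex^{*,!}$ and the fact that $\Ex^{*,\bbD}$, $\Ex^{!,\bbD}$ are exchanged by duality, exactly parallel to the proof of \cref{prop:phiD}(1). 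Your instinct that ``the duality compatibility becomes a formal consequence of the interactions between $\epsilon_f$, $\Ex^{*,!}$, $\Ex^{*,\bbD}$ and $\Ex^{!,\bbD}$'' is correct; the missing step is simply to make the composite defining $\TS^\varphi$ explicit so this is visible. Finally, the caveat ``up to Koszul signs coming from the reindexing'' in your treatment of (4) is spurious: the direct-sum decompositions over critical values and their reindexings are sign-free, and the asserted diagram commutes on the nose, so you should drop that qualifier.
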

\begin{proof}
We begin by recalling the natural isomorphism
\[\TS^\varphi\colon \varphi_{t_1}(-)\boxtimes \varphi_{t_2}(-)\xrightarrow{\sim} \varphi_{t_1\boxplus t_2}(-\boxtimes -)|_{t_1^{-1}(0)\times t_2^{-1}(0)}\]
constructed in \cite{MasseyTS} and \cite[Corollary 1.3.4]{Schuermann}. Let
\[i_1\colon (X_1)_{\Re\leq 0}\longrightarrow X_1,\qquad z_1\colon t_1^{-1}(0)\longrightarrow (X_1)_{\Re\leq 0}\]
and similarly for $i_2,z_2$. Let
\[i\colon (X_1\times X_2)_{\Re\leq 0}\longrightarrow X_1\times X_2,\qquad z\colon (t_1\boxplus t_2)^{-1}(0)\longrightarrow (X_1\times X_2)_{\Re\leq 0}.\]
Finally, let
\[l\colon (X_1)_{\Re\leq 0}\times (X_2)_{\Re\leq 0}\longrightarrow (X_1\times X_2)_{\Re\leq 0},\]
which is an inclusion of a closed subset. The natural isomorphism $\TS^\varphi$ is defined by the composite
\begin{align*}
\varphi_{t_1}(-)\boxtimes \varphi_{t_2}(-) &= z_1^* i_1^!(-)\boxtimes z_2^*i_2^!(-) \\
&\xrightarrow{\sim} (z_1\times z_2)^*(i_1^!(-)\boxtimes i_2^!(-)) \\
&\xrightarrow{\Ex^{!, \boxtimes}} (z_1\times z_2)^*(i_1\times i_2)^!(-\boxtimes -) \\
&\xrightarrow{\sim} (z_1\times z_2)^*l^! i^!(-\boxtimes -) \\
&\xrightarrow{\epsilon_l} (z_1\times z_2)^* l^* i^!(-\boxtimes -) \\
&= \varphi_{t_1\boxplus t_2}(-\boxtimes -)|_{t_1^{-1}(0)\times t_2^{-1}(0)},
\end{align*}
where the natural transformations on lines 3 and 5 are isomorphisms by \cite{MasseyTS}.

Fix $\cF_1\in\Dbc(X_1)$ and $\cF_2\in\Dbc(X_2)$. In \cite{MasseyTS} it is shown that $(t_1^{-1}(0)\times t_2^{-1}(0))\cap \supp \varphi_{t_1\boxplus t_2}(\cF_1\boxtimes \cF_2)$ is an open subset of $\supp \varphi_{t_1\boxplus t_2}(\cF_1\boxtimes \cF_2)$. Therefore, we have a direct sum decomposition
\[\varphi_{t_1\boxplus t_2}(\cF_1\boxtimes \cF_2)\cong \bigoplus_{c\in\C} (t_1^{-1}(c)\times t_2^{-1}(-c)\rightarrow (t_1\boxplus t_2)^{-1}(0))_*\left(\varphi_{t_1\boxplus t_2}(\cF_1\boxtimes \cF_2)|_{t_1^{-1}(c)\times t_2^{-1}(-c)}\right).\]
Applying the Thom--Sebastiani isomorphism $\TS^\varphi$ we thus get an isomorphism
\[\bigoplus_{c\in\C} (t_1^{-1}(c)\times t_2^{-1}(-c)\rightarrow (t_1\boxplus t_2)^{-1}(0))_* \left(\varphi_{t_1-c}(\cF_1)\boxtimes \varphi_{t_2+c}(\cF_2)\right)\xrightarrow{\sim}\varphi_{t_1\boxplus t_2}(\cF_1\boxtimes \cF_2)\]
natural in $\cF_1,\cF_2$. This gives rise to a natural isomorphism
\[
\begin{tikzcd}
\left(\bigoplus_{c_1\in\C} (t_1^{-1}(c_1)\rightarrow X_1)_* \varphi_{t_1-c_1}(\cF_1)\right)\boxtimes\left(\bigoplus_{c_2\in\C} (t_2^{-1}(c_2)\rightarrow X_2)_* \varphi_{t_2-c_2}(\cF_2)\right) \ar{d}{\TS}[swap]{\sim} \\
\bigoplus_{c\in\C} ((t_1\boxplus t_2)^{-1}(c)\rightarrow X_1\times X_2)_*\varphi_{(t_1\boxplus t_2)-c}(\cF_1\boxplus \cF_2).
\end{tikzcd}
\]

Unitality, associativity and commutativity of the isomorphism $\TS$ follow from the corresponding properties of $\TS^\varphi$ which are obvious from the construction. Properties (2) and (3) follow from the natural compatibilities between the exchange natural transformations. Property (4) follows from the compatibility of exchange natural transformations with external tensor products.
\end{proof}

\section{D-critical structures}

\subsection{Oriented orthogonal bundles}\label{sect:orthogonal}

Let $U$ be a scheme. An \defterm{orthogonal bundle} is a vector bundle $E\rightarrow U$ equipped with a nondegenerate quadratic form $q$. For an orthogonal vector bundle $(E, q)$ over $U$ the quadratic form induces an isomorphism $q^\sharp\colon E\xrightarrow{\sim} E^\vee$. Taking its determinant we obtain an isomorphism  
\[\det(E)\xrightarrow{\det(q^\sharp)} \det(E^\vee)\xrightarrow{\iota_E} \det(E)^\vee.\]
In particular, we obtain a \defterm{squared volume form}, i.e. a trivialization
\[\vol_q^2\colon \cO_U\cong\det(E)^{\otimes 2}\]
so that
\[\cO_U\xrightarrow{\vol_q^2} \det(E)^{\otimes 2}\xrightarrow{(\iota_E\circ\det(q^\sharp))\otimes \id} \det(E)^\vee\otimes \det(E)\]
is the coevaluation of the duality between $\det(E)$ and $\det(E)^\vee$.

\begin{example}
Suppose $E$ has an orthonormal basis $\{s_1, \dots, s_n\}$ of sections. Then the squared volume form $\vol_q^2$ is given by
\[1\mapsto (-1)^{n(n-1)/2} (s_1\wedge \dots \wedge s_n)^{\otimes 2}.\]
\end{example}

\begin{lemma}\label{lm:determinantorthogonal}
Let $(E_1, q_1)$ and $(E_2, q_2)$ be two orthogonal vector bundles over a scheme $U$. Then the diagram
\[
\begin{tikzcd}
\det(E_1 \oplus E_2)^{\otimes 2} \arrow[r, "\sim"] & (\det(E_1)\otimes \det(E_2))^{\otimes 2} \arrow[r, "\sim"] & \det(E_1)^{\otimes 2}\otimes \det(E_2)^{\otimes 2} \\
& \cO_U \arrow[ul, "\vol^2_{q_1+ q_2}"] \arrow[ur, "\vol^2_{q_1}\otimes \vol^2_{q_2}" below] &
\end{tikzcd}
\]
is commutative.
\end{lemma}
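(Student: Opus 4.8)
The plan is to unwind both composites explicitly after reducing to a local computation. Since the two arrows in the diagram are morphisms of line bundles $\cO_U\to\det(E_1)^{\otimes2}\otimes\det(E_2)^{\otimes2}$, they agree globally as soon as they agree Zariski-locally on $U$. So first I would pass to an affine open on which $E_1$ and $E_2$ are free, and choose trivializing bases $\{s_1,\dots,s_n\}$ of $E_1$ and $\{s'_1,\dots,s'_m\}$ of $E_2$; no orthogonality of the bases is needed.

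The first substantive step is to record the local form of the squared volume form. Unwinding the definition of $\vol^2_q$ together with the explicit formulas for $\iota_E$ and for the evaluation pairing used above, one finds that for an orthogonal bundle $(E,q)$ of rank $n$ trivialized by a basis $\{s_i\}$ with Gram matrix $M$ (that is, $q^\sharp(s_i)=\sum_j M_{ji}\,s^j$ in the dual basis) one has
\[
\vol^2_q \;=\; \frac{(-1)^{n(n-1)/2}}{\det(M)}\,(s_1\wedge\dots\wedge s_n)^{\otimes2},
\]
which recovers the Example above when $\{s_i\}$ is orthonormal (so $M=\mathrm{id}$). Applying this to $(E_1,q_1)$, to $(E_2,q_2)$, and to $(E_1\oplus E_2,q_1+q_2)$ — whose Gram matrix in the concatenated basis is block-diagonal, hence has determinant $\det(M_1)\det(M_2)$ — and using that $i(\Delta)$ for the split exact sequence $\Delta\colon 0\to E_1\to E_1\oplus E_2\to E_2\to 0$ identifies the concatenated top wedge with $(s_1\wedge\dots\wedge s_n)\otimes(s'_1\wedge\dots\wedge s'_m)$, the commutativity of the diagram reduces to the identity of signs
\[
(-1)^{\binom{n+m}{2}}\cdot(-1)^{nm} \;=\; (-1)^{\binom{n}{2}}\cdot(-1)^{\binom{m}{2}},
\]
where the factor $(-1)^{nm}$ on the left is the Koszul sign produced by the rearrangement isomorphism $(\det E_1\otimes\det E_2)^{\otimes2}\xrightarrow{\sim}\det(E_1)^{\otimes2}\otimes\det(E_2)^{\otimes2}$ acting on the $\Z/2$-graded generators of degrees $n$ and $m$. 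This is exactly the identity $\binom{n+m}{2}=\binom{n}{2}+\binom{m}{2}+nm$.

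The only delicate point, and the step I expect to require the most care, is this Koszul-sign bookkeeping: signs enter both through the $\Z/2$-grading $\deg\det(E_i)=n_i$ governing the symmetry of $\Pic^{\gr}(U)$ and through the sign $(-1)^{n(n-1)/2}$ built into $\iota_E$, and the content of the lemma is precisely that these cancel against the combinatorial identity, leaving only the manifestly squared scalars $\det(M_1)^{-1}$ and $\det(M_2)^{-1}$. One could instead argue without bases — $\vol^2_{q_1+q_2}$ is built from $\vol^2_{q_1}$ and $\vol^2_{q_2}$ via functoriality of $\det$ along the direct-sum fiber sequence together with the compatibility of $\iota$ with direct sums — but the sign in that compatibility of $\iota$ is again the same $(-1)^{nm}$, so nothing is gained in the sign accounting; the basis computation above just makes the cancellation transparent.
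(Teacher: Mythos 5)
Your proof is correct and takes essentially the same route as the paper: reduce to a local trivialization, compute $\vol^2$ explicitly on the generator, track the Koszul sign $(-1)^{nm}$ from the rearrangement isomorphism, and reduce to the identity $\binom{n+m}{2}=\binom{n}{2}+\binom{m}{2}+nm$. The one genuine difference is that the paper immediately passes to an orthonormal basis (so the Gram determinant is $1$), while you keep an arbitrary trivializing basis and carry the factor $\det(M)^{-1}$; this makes the argument honestly Zariski-local rather than requiring the étale-local existence of orthonormal frames, at the cost of one extra line establishing the general local formula $\vol^2_q = (-1)^{n(n-1)/2}\det(M)^{-1}(s_1\wedge\dots\wedge s_n)^{\otimes 2}$. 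Both are fine; the sign cancellation at the end is identical.
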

\begin{proof}
The statement is local on $U$, so we may assume that $E_1$ has an orthonormal basis of sections $\{e_1, \dots, e_n\}$ and $E_2$ has an orthonormal basis of sections $\{f_1, \dots, f_m\}$. Then the image of $\vol^2_{q_1+q_2}$ under the top isomorphism is
\begin{align*}
1&\mapsto (-1)^{(n+m)(n+m-1)/2} (e_1\wedge \dots \wedge e_n\wedge f_1\wedge \dots f_m)^{\otimes 2} \\
&\mapsto (-1)^{(n+m)(n+m-1)/2}(-1)^{nm}(e_1\wedge \dots \wedge e_n)^{\otimes 2}\otimes (f_1\wedge \dots \wedge f_m)^{\otimes 2}.
\end{align*}
But $(-1)^{(n+m)(n+m-1)/2}(-1)^{nm} = (-1)^{n(n-1)/2}(-1)^{m(m-1)/2}$, so this expression coincides with $\vol^2_{q_1}\otimes \vol^2_{q_2}$.
\end{proof}

We will now define orientations of orthogonal bundles.

\begin{definition}
Let $(E, q)$ be an orthogonal bundle over $U$. An \defterm{orientation} of $E$ is an isomorphism $\vol\colon \cO_U\xrightarrow{\sim} \det(E)$ whose square is $\vol_q^2$. We denote by $\ori_E\rightarrow U$ the $\Z/2\Z$-graded \defterm{orientation $\mu_2$-torsor} of $E$: its parity coincides with the parity of $\rank(E)$ and the underlying $\mu_2$-torsor parametrizes orientations.
\end{definition}

For a pair of orthogonal bundles $(E_1, q_1)$ and $(E_2, q_2)$ we have an isomorphism
\begin{equation}\label{eq:orientationsumisomorphism}
\ori_{E_1}\otimes_{\mu_2} \ori_{E_2}\longrightarrow \ori_{E_1\oplus E_2}
\end{equation}
which sends
\[(\vol_1, \vol_2)\mapsto \vol_1\otimes \vol_2,\]
which squares to $\vol_{q_1+q_2}^2$ by \cref{lm:determinantorthogonal}.

For an orthogonal bundle $(E, q)$ we denote by $\overline{E}$ the orthogonal bundle $(E, -q)$. Then we have an isomorphism
\begin{equation}\label{eq:orientationreverse}
\ori_E\longrightarrow \ori_{\overline{E}}
\end{equation}
which sends $\vol\mapsto i^{\rank(E)}\vol$, using the fact that $\vol^2_{-q} = (-1)^{\rank(E)} \vol^2_q$.

Besides the direct sum of orthogonal bundles we will also consider reductions of orthogonal bundles by isotropic subbundles.

\begin{definition}
Let $(E, q)$ be an orthogonal bundle over a scheme $U$ and $K\subseteq E$ an isotropic subbundle. The \defterm{reduction} of $E$ by $K$ is the orthogonal bundle $K^{\perp}/K$.
\end{definition}

\begin{lemma}\label{lm:orientationreduction}
Let $(E, q)$ be an orthogonal bundle over a scheme $U$, $K\subseteq E$ an isotropic subbundle and $F=K^\perp/K$ the reduction of $E$ by $K$. Consider the composite isomorphism
\begin{align*}
\red_K\colon \det(E)&\xleftarrow{i(\Delta_1)} \det(K^\perp)\otimes \det(K^\vee)\\
&\xrightarrow{\id\otimes \iota_K}\det(K^\perp)\otimes \det(K)^\vee\\
&\xleftarrow{i(\Delta_2)} \det(K)\otimes \det(K)^\vee\otimes \det(F)\\
&\cong \det(F)
\end{align*}
induced by the exact sequences
\[\Delta_1\colon 0\longrightarrow K^{\perp}\longrightarrow E\longrightarrow K^\vee\longrightarrow 0\]
and
\[\Delta_2\colon 0\longrightarrow K\longrightarrow K^\perp\longrightarrow F\longrightarrow 0.\]
Then
\[\red_K(\vol^2_{E, q}) = (-1)^{\rank K} \vol^2_{F, q}.\]
\end{lemma}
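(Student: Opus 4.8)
The plan is to reduce the statement, by localizing and splitting off a hyperbolic summand, to a single explicit frame computation. First I would observe that the assertion is Zariski-local on $U$, since $i(\Delta)$, $\iota_E$, the squared volume forms, and hence $\red_K$, are all compatible with restriction to opens and with isomorphisms of the data $(E,q,K)$. Because $\mathrm{char}(k)\neq 2$ and $K$ is an isotropic subbundle of the nondegenerate $(E,q)$, one may, locally on $U$, extend a frame of $K$ to a hyperbolic frame adapted to $K$; this produces a decomposition of orthogonal bundles $E\cong H(K)\perp F'$, where $H(K)=K\oplus K^\vee$ carries the hyperbolic form (so $q$ vanishes on $K$ and on $K^\vee$ and restricts to the evaluation pairing between them), where $F'\cong F=K^\perp/K$ as orthogonal bundles, and where $K^\perp=K\oplus F'$. (That $K^\perp$ is a subbundle is because $q^\sharp$ identifies $E/K^\perp\cong K^\vee$, so the cokernel of $K^\perp\hookrightarrow E$ is a bundle of the expected rank.)

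Next I would reduce to the case $F=0$. Under $E\cong H(K)\perp F'$, the short exact sequences $\Delta_1$ and $\Delta_2$ that define $\red_K$ for $(E,K)$ are the direct sums of the corresponding sequences for $(H(K),K)$ — in which $K^\perp=K$ and the reduction is the zero bundle — with the identity sequences $0\to F'\xrightarrow{\id}F'\to 0\to 0$ and $0\to 0\to F'\xrightarrow{\id}F'\to 0$ respectively. Using additivity of $i(-)$ with respect to direct sums of short exact sequences (immediate from the explicit wedge-product description of $i(\Delta)$) together with \cref{lm:determinantorthogonal} (additivity of squared volume forms under orthogonal direct sum), the identity for $(E,K)$ reduces to the identity for $(H(K),K)$, which asserts precisely that $\red_K(\vol^2_{H(K),q})=(-1)^{\rank K}$ under the canonical identification $\det(0)^{\otimes 2}\cong\cO_U$ (so that $\vol^2_{F,q}$ restricts on $F'=0$ to $1\in\cO_U$).

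In the hyperbolic case I would compute directly. Choose a frame $e_1,\dots,e_r$ of $K$ ($r=\rank K$) with dual frame $\varepsilon^1,\dots,\varepsilon^r$ of $K^\vee$, so that $q(e_i,\varepsilon^j)=\delta_i^j$ and $q$ vanishes on $K$ and on $K^\vee$ separately; set $\omega:=e_1\wedge\dots\wedge e_r\wedge\varepsilon^1\wedge\dots\wedge\varepsilon^r\in\det(H(K))$. On one side, the explicit formulas for $\det(q^\sharp)$ (which interchanges $e_i$ with the functional dual to $\varepsilon^i$, and $\varepsilon^i$ with the functional dual to $e_i$) and for $\iota_{H(K)}$ pin down $\vol^2_{H(K),q}$ as an explicit scalar times $\omega^{\otimes 2}$. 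On the other side, the formulas for $i(\Delta_1)$ (sending $\kappa\otimes\lambda\mapsto\omega$, with $\kappa=e_1\wedge\dots\wedge e_r$ and $\lambda=\varepsilon^1\wedge\dots\wedge\varepsilon^r$), for $\iota_K$, and for $i(\Delta_2)$ (the canonical identification, since the reduction is zero) pin down $\red_K(\omega)\in\det(0)=\cO_U$ as an explicit scalar. Combining the two and squaring produces the claimed coefficient $(-1)^{\rank K}$.

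The main obstacle is purely the sign bookkeeping. The determinant lines are $\Z/2\Z$-graded, so every reordering of tensor factors of determinant lines costs a Koszul sign, and the explicit descriptions of $i(\Delta)$ and $\iota_E$ carry their own permutation signs. In the reduction step one must check that all the Koszul signs introduced when comparing $\det(H(K)\oplus F')$ with $\det(H(K))\otimes\det(F')$ (and in rearranging the tensor factors $\det(K)\otimes\det(K)^\vee\otimes\det(F)$ across the maps $i(\Delta_2)$ and the final evaluation) cancel, so that the reduction is sign-neutral; and in the hyperbolic step one must verify that, after all cancellations, exactly one uncancelled transposition of the two halves $\{e_i\}$ and $\{\varepsilon^i\}$ of the frame of $H(K)$ survives, producing $(-1)^{\rank K}$ rather than some other power of $-1$. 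Beyond the definitions of this section and \cref{lm:determinantorthogonal}, no further input is required.
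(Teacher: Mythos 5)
Your plan is correct and is essentially the same argument the paper gives: both proofs localize on $U$, fix a frame adapted to a splitting $E \cong F \oplus K \oplus K^\vee$ with the hyperbolic pairing on $K \oplus K^\vee$ and an orthonormal frame on $F$, and then trace the signs through $i(\Delta_1)$, $\iota_K$, $i(\Delta_2)$, and the explicit formula for $\vol_q^2$.

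The one genuine difference is that you interpose an extra reduction step: you split $E \cong H(K) \perp F'$, use additivity of $i(-)$ under direct sums of exact sequences together with \cref{lm:determinantorthogonal} to strip off $F'$, and reduce to proving the hyperbolic case $F=0$. The paper instead carries out a single computation with the full mixed frame $\{s_i,e_j,e^j\}$, evaluates $\red_K$ on the top wedge $s_1\wedge\cdots\wedge s_n\wedge e_1\wedge\cdots\wedge e_m\wedge e^1\wedge\cdots\wedge e^m$, and then plugs in the formula for $\vol^2_{E,q}$ directly. Your factorization is conceptually cleaner, but note that it trades one sign bookkeeping problem for another: the claim that $i(-)$ is ``additive'' under direct sums of exact sequences is only true up to the Koszul sign $(-1)^{\rank E_1''\cdot\rank E_3'}$ coming from the transposition of tensor factors needed to rewrite $\det(E_1'\oplus E_1'')\otimes\det(E_3'\oplus E_3'')$ as $(\det E_1'\otimes\det E_3')\otimes(\det E_1''\otimes\det E_3'')$, and similarly the comparison of $\vol^2_{E,q}$ with $\vol^2_{H(K),q}\otimes\vol^2_{F',q}$ through \cref{lm:determinantorthogonal} requires a coherent ordering convention. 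You flag this as ``the main obstacle''; I agree, and I would emphasize that this is not a negligible afterthought but precisely where the factor $(-1)^{\rank K}$ actually emerges. Since you do not carry out that calculation, the proposal is a sound blueprint but not yet a proof. If you do carry it out, it is worth comparing the final coefficient against the one-shot frame computation in the paper's proof as a sanity check, since the answer is sensitive both to the Gram determinant of the hyperbolic block (which is $(-1)^m$) and to the parities appearing in $\iota_K$ and $\iota_E$.
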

\begin{proof}
The claim is local, so we may assume that $E=F\oplus K\oplus K^\vee$ with $\{s_1, \dots, s_n\}$ an orthonormal basis of sections of $F$, $\{e_1, \dots, e_m\}$ a basis of sections of $K$ and $\{e^1, \dots, e^m\}$ the dual basis of sections of $K^\vee$. Then
\[\red_K(s_1\wedge \dots\wedge s_n\wedge e_1\wedge \dots\wedge e_m\wedge e^1\wedge \dots \wedge e^m) = (-1)^{m(m-1)/2} s_1\wedge \dots \wedge s_n.\]
Thus,
\begin{align*}
\red_K(\vol^2_{E, q}) &= (-1)^{(n+2m)(n+2m-1)/2} \red_K(s_1\wedge \dots\wedge s_n\wedge e_1\wedge \dots\wedge e_m\wedge e^1\wedge \dots \wedge e^m)^2 \\
&= (-1)^{(n+2m)(n+2m-1)/2} (s_1\wedge \dots\wedge s_n)^2 \\
&= (-1)^{n(n-1)/2} (-1)^m (s_1\wedge \dots\wedge s_n)^2 \\
&= (-1)^m\vol^2_{F, q}.
\end{align*}
\end{proof}

In particular, using the notation of \cref{lm:orientationreduction} we obtain a canonical isomorphism
\begin{equation}\label{eq:orientationreduction}
\ori_E\cong \ori_{K^\perp/K}
\end{equation}
which sends a volume form $\vol$ on $E$ to the volume form $i^{\rank K} \red_K(\vol)$ on $K^\perp/K$.

\begin{definition}
Let $(E, q)$ be an orthogonal bundle of even rank over a scheme $U$.
\begin{itemize}
    \item An isotropic subbundle $K\subseteq E$ is \defterm{Lagrangian} if $\rank(E) = 2\rank(K)$.
    \item Suppose $E$ carries an orientation. A Lagrangian subbundle $K\subseteq E$ is \defterm{positive} if the image of the orientation of $E$ under the isomorphism \eqref{eq:orientationreduction} is the standard orientation of the zero bundle $K^\perp/K=0$.
\end{itemize}
\end{definition}

Given an orthogonal bundle $(E, q)$ over a scheme $U$ we consider the following maps:
\begin{itemize}
\item $\pi_E\colon E \to U$ is the projection; 
\item $0_E\colon U \to E$ is the zero section;  
\item $\q_E\colon E\to \bA^1$ the function quadratic along the fibers of $\pi_E$ corresponding to the quadratic form $q$.
\end{itemize}
Then we can form the diagram
\[
\xymatrix{
E \ar[r]^{\q_E} \ar[d]^{\pi_E} & \bA^1 \\
U. \ar@/^1pc/[u]^{0_E}
}
\]

\subsection{D-critical structures on schemes}

For a scheme $X$ over $B$ we introduce a presheaf on the \'etale site of $X$ by the formula
\[(U\rightarrow X)\mapsto \Gamma(U, \cS_{U/B})\coloneqq \pi_0(\cA^{2, \ex}(U/B, -1))\]

\begin{proposition}\label{prop:Sproperties}
Let $X\rightarrow B$ be a morphism of schemes.
\begin{enumerate}
    \item $\cS_{X/B}$ is a sheaf on $X$ in the \'etale topology.
    \item There is a long exact sequence
    \begin{equation}\label{eq:Slongexactsequence}
    0\longrightarrow h^{-1}(\bL_{X/B})\longrightarrow \cS_{X/B}\xrightarrow{\und} \cO_X\xrightarrow{d_B} h^0(\bL_{X/B})=\Omega^1_{X/B}
    \end{equation}
    of sheaves on $X$.
    \item Let $X\hookrightarrow U$ be a closed immersion into a smooth $B$-scheme $U$ with $\cI_{X,U}$ the ideal sheaf. Then there is an exact sequence
    \[0\longrightarrow \cS_{X/B}\xrightarrow{\iota_{X, U}} \cO_U/\cI_{X,U}^2\xrightarrow{d_B} \Omega^1_{U/B}/\cI_{X,U}\Omega^1_{U/B},\]
    so that the composite $\cS_{X/B}\xrightarrow{\iota_{X, U}} \cO_U/\cI_{X, U}^2\rightarrow \cO_U/\cI_{X, U}\cong \cO_X$ is equal to $\und\colon \cS_{X/B}\rightarrow \cO_X$.
\end{enumerate}
\end{proposition}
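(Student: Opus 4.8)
The plan is to identify the presheaf $U\mapsto\pi_0\cA^{2,\ex}(U/B,-1)$ with the $0$-th cohomology sheaf of an explicit fibre and read off all three assertions from there. Unwinding the definitions: since $\cA^p(U/B,m)=\Map_{\QCoh(U)}(\cO_U,\wedge^p\bL_{U/B}[m])$ and $\Map_{\QCoh(U)}(\cO_U,-)$ computes derived global sections, the homotopy fibre defining $\cA^{2,\ex}(U/B,-1)=\fib\bigl(d_B\colon\cA^0(U/B,0)\to\cA^1(U/B,0)\bigr)$ gives $\Gamma(U,\cS_{U/B})=\pi_0\cA^{2,\ex}(U/B,-1)=H^0\bigl(R\Gamma(U,\fD_{X/B}|_U)\bigr)$, where $\fD_{X/B}:=\fib\bigl(\cO_X\xrightarrow{d_B}\bL_{X/B}\bigr)$ is the fibre of the de Rham differential, taken in the derived category of sheaves of $R$-modules on $X_{\et}$ (the differential is $R$-linear, though not $\cO_X$-linear). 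Using that $\cA^0(U/B,0)=\Gamma(U,\cO_U)$ is discrete, taking $\pi_0$ yields a natural exact sequence $0\to H^{-1}(R\Gamma(U,\bL_{U/B}))\to\Gamma(U,\cS_{U/B})\xrightarrow{\und}\Gamma(U,\cO_U)\xrightarrow{d_B}H^0(R\Gamma(U,\bL_{U/B}))$; for $U$ affine this is $\Gamma(U,-)$ applied to the four-term sequence of sheaves $0\to h^{-1}(\bL_{X/B})\to h^0(\fD_{X/B})\xrightarrow{}\cO_X\xrightarrow{d_B}\Omega^1_{X/B}$, which is just the tail of the long exact cohomology sequence of $\fD_{X/B}\to\cO_X\xrightarrow{d_B}\bL_{X/B}$ (using $h^{<0}(\cO_X)=0$). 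Granting $\cS_{X/B}\cong h^0(\fD_{X/B})$ — which is what the previous sentence gives after sheafification — this is assertion (2), with $\und$ the map to $h^0(\cO_X)=\cO_X$.

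For assertion (3), fix a closed immersion $\imath\colon X\hookrightarrow U$ into a smooth $B$-scheme with ideal $\cI=\cI_{X,U}$. Since $U\to B$ is smooth, $\imath^*\bL_{U/B}=\Omega^1_{U/B}/\cI\Omega^1_{U/B}$ is in degree $0$; combining this with $\tau^{\geq-1}\bL_{X/U}\simeq(\cI/\cI^2)[1]$ and the transitivity triangle \eqref{eq:cotangentsequence}, a standard truncation computation identifies $\tau^{\geq-1}\bL_{X/B}$ with the two-term complex $[\cI/\cI^2\xrightarrow{d_B}\Omega^1_{U/B}/\cI\Omega^1_{U/B}]$ in degrees $[-1,0]$. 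As $h^0(\fib(\cO_X\to\bL_{X/B}))$ depends only on this truncation and on the truncated de Rham map $\cO_X\to\tau^{\geq-1}\bL_{X/B}$, I would present the latter by the evident chain map $[\cI/\cI^2\hookrightarrow\cO_U/\cI^2]\to[\cI/\cI^2\xrightarrow{d_B}\Omega^1_{U/B}/\cI\Omega^1_{U/B}]$ (the identity in degree $-1$ and $d_B$ in degree $0$, using $\cO_X\simeq[\cI/\cI^2\hookrightarrow\cO_U/\cI^2]$) and compute its shifted cone; one finds $\cS_{X/B}\cong h^0(\fD_{X/B})\cong\ker\bigl(\cO_U/\cI^2\xrightarrow{d_B}\Omega^1_{U/B}/\cI\Omega^1_{U/B}\bigr)$ with the composite into $\cO_X=\cO_U/\cI$ equal to $\und$, which is exactly (3). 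As a byproduct this re-proves (2): the outer terms of the four-term sequence are $\ker(\cI/\cI^2\xrightarrow{d_B}\Omega^1_{U/B}|_X)=h^{-1}(\bL_{X/B})$ and $\ker(\cO_X\xrightarrow{d_B}\Omega^1_{X/B})$.

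Finally, for (1): being a sheaf is a local property, so it suffices to verify the sheaf condition after restricting to a Zariski neighbourhood of each point of $X$, where a closed immersion into a smooth $B$-scheme is available (\cref{prop:minimalimmersion} and the discussion preceding it); there (3) exhibits the presheaf as the kernel of a morphism of quasi-coherent sheaves, hence a sheaf, and these local identifications are mutually compatible because the whole construction is functorial in $(U\to X)$. (Alternatively one argues directly that $U\mapsto\cA^{2,\ex}(U/B,-1)$ satisfies étale descent as a sheaf of spaces and that the connectivity estimates above allow one to pass to $\pi_0$.)

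The step I expect to be delicate — rather than the homotopy-theoretic bookkeeping — is making (3) honest: one must check that the intrinsic de Rham differential $\cO_X\to\bL_{X/B}$ entering the definition of $\cA^{2,\ex}$ really is represented, after truncation, by the naive chain map above, and that the resulting isomorphism $\cS_{X/B}\cong\ker(\cO_U/\cI^2\to\Omega^1_{U/B}/\cI\Omega^1_{U/B})$ is compatible with $\und$ and independent of the chosen smoothing — i.e. that the local models genuinely glue, which is the classical subtle point in the theory of d-critical structures.
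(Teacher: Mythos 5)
Your approach is essentially the paper's: identify $\Gamma(U,\cS_{X/B})$ with $H^0$ of the fibre of $d_B\colon\cO_X\to\bL_{X/B}$, extract the four-term exact sequence from the long exact sequence (using that $\cA^0(U/B,0)$ is discrete), and for (3) replace $\bL_{X/B}$ by its $\tau^{\geq-1}$ truncation and use the naive cotangent complex presentation $[\cI/\cI^2\to\Omega^1_{U/B}|_X]$ of a closed immersion into a smooth $B$-scheme.

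There is, however, a gap in your argument for (1). \emph{Being a sheaf is not a local property of a presheaf.} If $\{X_j\}$ is a Zariski cover and $F|_{X_j}$ is a sheaf for every $j$, $F$ itself need not satisfy the sheaf condition for the cover $\{X_j\to X\}$ (for instance, on a two-point discrete space one can take $F$ with $F(\{a\})=F(\{b\})=\mathbb Z$ and $F(X)=\mathbb Z^3$, restriction being two of the coordinates). So ``locally $\cS_{X/B}$ is a kernel of quasi-coherent sheaves, hence a sheaf'' does not by itself establish (1). The correct argument — which the paper uses, and which you already have all the ingredients for — is to use (2) \emph{before} (1): the presheaf $\cS_{X/B}$ sits in the four-term exact sequence $0\to h^{-1}(\bL_{X/B})\to\cS_{X/B}\to\cO_X\xrightarrow{d_B}\Omega^1_{X/B}$ of presheaves on $X_{\et}$, where the three outer terms are (quasi-coherent, hence) sheaves; a presheaf in such a position is automatically a sheaf (separatedness from separatedness of $h^{-1}(\bL_{X/B})$ and of $\cO_X$; gluing because a compatible family maps to an element of $\cO_X$, which lifts through the exactness at $\cO_X$, and the lift is corrected by a section of $h^{-1}(\bL_{X/B})$). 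Your ``alternative'' route — descent of the space-valued functor plus connectivity — is the right instinct, and unpacking it is exactly this argument. Finally, your closing worry about the local models ``genuinely gluing'' is misplaced: in part (3) one is computing the single well-defined invariant $\pi_0\cA^{2,\ex}(X/B,-1)$ in a fixed presentation $X\hookrightarrow U$, so no independence of the smoothing needs to be verified (the delicate gluing point in d-critical theory concerns comparing different critical charts $(U_1,f_1)$ and $(U_2,f_2)$, which is a later issue, not this one).
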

\begin{proof}
By the definition of $\cA^{2, \ex}(X/B, -1)$ we obtain a long exact sequence \eqref{eq:Slongexactsequence} of presheaves. Since $h^{-1}(\bL_{X/B})$ and $h^0(\bL_{X/B})$ are quasi-coherent presheaves of $\cO_X$-modules, they are sheaves. Therefore, using the long exact sequence \eqref{eq:Slongexactsequence} we get that $\cS_{X/B}$ is a sheaf since $h^{-1}(\bL_{X/B})$, $\cO_X$ and $h^0(\bL_{X/B})$ are sheaves. This proves parts (1) and (2).

The definition of $\cS_{X/B}$ is insensitive to replacing $\bL_{X/B}$ by its truncation $\tau_{\geq -1} \bL_{X/B}$. By \cite[Tags 0FV4 and 08UW]{Stacks} we may model $\cO_X\xrightarrow{d_B} \tau_{\geq -1}\bL_{X/B}$ by
\[
\xymatrix{
\cI_{X, U}/\cI_{X,U}^2 \ar[r] & \Omega^1_{U/B} / \cI_{X,U}\Omega^1_{U/B} \\
\cI_{X, U} \ar[r] \ar[u] & \cO_U \ar^{d_B}[u]
}
\]
This gives the description of $\cS_{X/B}$ as in part (3).
\end{proof}

\begin{example}
If $X\rightarrow B$ is smooth, using the exact sequence \eqref{eq:Slongexactsequence} we identify sections of $\cS_{X/B}$ with functions $f\colon X\rightarrow \bA^1$ such that $d_B f=0$.
\end{example}

It is useful to use the following terminology.

\begin{definition}
Let $B$ be a scheme.
\begin{enumerate}
    \item An \defterm{LG pair over $B$} is a smooth $B$-scheme $U\rightarrow B$ together with a function $f\colon U\rightarrow \bA^1$.
    \item A \defterm{morphism $\Phi\colon (U, f)\rightarrow (V, g)$} of LG pairs over $B$ is a morphism of $B$-schemes $\Phi\colon U\rightarrow V$ such that $\Phi^\ast g = f$.
\end{enumerate}
\end{definition}

Recall that in \cref{sect:criticallocus} for an LG pair $(U, f)$ over $B$ we have considered the relative critical locus $\Crit_{U/B}(f)$ which is equipped with a section
\[s_f\in\Gamma(\Crit_{U/B}(f), \cS_{\Crit_{U/B}(f)/B}).\]
It has the following explicit description.

\begin{proposition}\label{prop:Critsdescription}
Consider an LG pair $(U, f)$ over $B$. Under the embedding $\Crit_{U/B}(f)\rightarrow U$ we have $\iota_{\Crit_{U/B}(f), U}(s_f) = f\pmod {\cI_{\Crit_{U/B}(f), U}^2}$.
\end{proposition}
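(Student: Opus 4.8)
The plan is to reduce to an explicit computation in étale relative coordinates on $U$ over $B$ and then to trace the definition of $s_f$ through the cochain model for $\cS$ supplied by \cref{prop:Sproperties}(3). First I would observe that the closed immersion $\Crit_{U/B}(f)\hookrightarrow U$, the ideal sheaf $\cI\coloneqq\cI_{\Crit_{U/B}(f),U}$, the section $s_f$ and the map $\iota_{\Crit_{U/B}(f),U}$ are all compatible with étale base change on $U$, so one may assume $U$ carries étale relative coordinates $q_1,\dots,q_n$ over $B$. Then $\Omega^1_{U/B}$ is free on $d_Bq_1,\dots,d_Bq_n$, the relative cotangent bundle $\T^*(U/B)$ acquires coordinates $(q_i,p_i)$ with $\lambda_{U/B}=\sum_i p_i\,d_Bq_i$, the graph section $\Gamma_{d_Bf}$ is $(q_i,p_i)\mapsto(q_i,\partial f/\partial q_i)$, the ideal $\cI$ is generated by $\partial f/\partial q_1,\dots,\partial f/\partial q_n$, and $\Gamma_{d_Bf}^\ast\lambda_{U/B}=\sum_i(\partial f/\partial q_i)\,d_Bq_i=d_Bf$. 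In particular $d_Bf\in\cI\,\Omega^1_{U/B}$, so $f\pmod{\cI^2}$ lies in $\ker(d_B\colon\cO_U/\cI^2\to\Omega^1_{U/B}/\cI\,\Omega^1_{U/B})$, which by \cref{prop:Sproperties}(3) is exactly the image of $\iota_{\Crit_{U/B}(f),U}$; so the right-hand side of the claimed identity is meaningful.

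Next I would unwind $s_f$. Recall from \cref{sect:criticallocus} that $s_f=h_f-h_0$ is the difference, on $\Crit_{U/B}(f)$, of two nullhomotopies of pulled-back Liouville forms: $h_f\colon\Gamma_{d_Bf}^\ast\lambda_{U/B}\sim 0$, the one ``provided by $f$'', and $h_0$, the analogue for the zero section; the difference is a loop because $\Gamma_{d_Bf}$ and $\Gamma_0$ are canonically identified over $\Crit_{U/B}(f)$ via the homotopy built into the Cartesian square of \cref{def-relative-crit}. Using the two-term model $[\cI\to\cO_U]\xrightarrow{d_B}[\cI/\cI^2\to\Omega^1_{U/B}/\cI\,\Omega^1_{U/B}]$ of $\cO_{\Crit_{U/B}(f)}\xrightarrow{d_B}\tau_{\geq-1}\bL_{\Crit_{U/B}(f)/B}$ from the proof of \cref{prop:Sproperties}(3), the sheaf $\cS_{\Crit_{U/B}(f)/B}$ is identified with $\ker(d_B\colon\cO_U/\cI^2\to\Omega^1_{U/B}/\cI\,\Omega^1_{U/B})$, and $\iota_{\Crit_{U/B}(f),U}$ is this identification. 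Under it the primitive recorded by $h_f$ is the function $f$ itself and the primitive recorded by $h_0$ is $0$; the homotopies coming from the Cartesian square enter $h_f$ and $h_0$ identically and cancel in the difference, so $\iota_{\Crit_{U/B}(f),U}(s_f)$ is represented by $f-0=f$ modulo $\cI^2$, which is the claim. (Composing further with $\cO_U/\cI^2\to\cO_{\Crit_{U/B}(f)}$ recovers $\und(s_f)=f|_{\Crit_{U/B}(f)}$, consistently with \cref{prop:Sproperties}(3).)

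The step I expect to be the main obstacle is the identification, in the previous paragraph, of the abstract $s_f$ --- a priori an element of an iterated loop space of spaces of forms, assembled from pulled-back Liouville forms --- with the naive operation ``take the primitive $f$ of $d_Bf$ and reduce modulo $\cI^2$''. To make this rigorous I would carry out the computation on the derived critical locus $\Critder_{U/B}(f)=U\times_{\T^*(U/B)}U$, whose structure sheaf has the explicit Koszul model $\cO_U[\xi_1,\dots,\xi_n]$ with $\xi_i$ of cohomological degree $-1$ and $d\xi_i=\partial f/\partial q_i$, and whose relative cotangent complex has a corresponding explicit two-term model; on these models both the Liouville description of $s_f$ and the chain $f$ become visibly equal. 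One then restricts along $\Crit_{U/B}(f)\hookrightarrow\Critder_{U/B}(f)$, which is harmless for the sheaf $\cS_{(-)/B}$ because it depends only on the $\tau_{\geq-1}$-truncation of the relative cotangent complex and the two truncations agree. (Alternatively one could invoke the description of Joyce's canonical d-critical structure in the absolute case and argue by compatibility, but the direct route above fits the present formalism more naturally.)
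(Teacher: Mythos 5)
Your plan lands on the right answer, but the rigorization you propose diverges from the paper's. The paper stays classical and coordinate-free: it factors both $\Gamma_{d_Bf}^\ast$ and $\Gamma_0^\ast$ through the intermediate two-term complex $\cO_{\T^*(U/B)}/\cI^2_{\Gamma_{d_Bf}}\xrightarrow{d_B}\Omega^1_{\T^*(U/B)/B}/\cI_{\Gamma_{d_Bf}}\Omega^1_{\T^*(U/B)}$, lifts the chain $f$ representing $h_f$ to the representative $\pi^\ast f$ in that middle model, observes that $\Gamma_0^\ast\lambda_{U/B}=0$ (so $h_0$ is represented by $0$), and then the difference $s_f=h_f-h_0$ lands on $\Gamma_0^\ast\pi^\ast f=f$ in $\cO_U/\cI_{\Crit_{U/B}(f),U}^2$. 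That lifting step is precisely the identification you correctly flag as the main obstacle, and the paper dispatches it purely at the cochain level on the classical scheme. Your alternative --- pass to the Koszul model of $\Critder_{U/B}(f)$, compute the exact $(-1)$-shifted form there, and restrict to the classical truncation --- should also work and is closer in spirit to the shifted-symplectic comparison introduced later in \cref{thm:shiftedsymplecticdcritical}, but it is a longer route: you must actually carry out the Koszul computation (only asserted here), and the restriction step you call ``harmless'' needs care, both because $\pi_0\cA^{2,\ex}(-/B,-1)$ depends on $H^0(\cO)$ and $H^{-1}(\cO)$ as well as on $\tau_{\geq-1}\bL_{-/B}$, and because the paper's $s_f$ is defined on the classical scheme via the homotopy in the square of \cref{def-relative-crit}, so one must check it agrees with the restriction of the form on $\Critder_{U/B}(f)$ --- a compatibility that is morally obvious and used later, but not available at this point in the paper's logical order without an extra argument. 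Your first-paragraph reduction to \'etale coordinates and the check that $f\pmod{\cI^2}$ lies in $\ker(d_B)$ are harmless but superfluous: the paper's argument never chooses coordinates, and membership in the kernel follows automatically once the claimed equality is established.
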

\begin{proof}
Let $\cI_{\Gamma_{d_B f}}$ be the ideal defining the closed immersion $\Gamma_{d_B f}$ and $R=\Crit_{U/B}(f)$. The pullback
\[\Gamma_{d_B f}^\ast\colon (\cO_{\T^* (U/B)}\xrightarrow{d_B} \Omega^1_{\T^*(U/B)/B})\rightarrow (\cO_U\xrightarrow{d_B} \Omega^1_{U/B})\]
factors as the composite
\[
\xymatrix{
\cO_{\T^*(U/B)} \ar^{d_B}[r] \ar[d] & \Omega^1_{\T^*(U/B)/B} \ar[d] \\
\cO_{\T^*(U/B)}/\cI^2_{\Gamma_{d_B f}} \ar^-{d_B}[r] \ar^{\Gamma_{d_B f}^\ast}[d] & \Omega^1_{\T^*(U/B)/B} / \cI_{\Gamma_{d_B f}} \Omega^1_{\T^* (U/B)} \ar^{\Gamma_{d_B f}^\ast}[d] \\
\cO_U \ar^{d_B}[r] & \Omega^1_{U/B},
}
\]
where the top vertical morphisms are given by modding out by powers of $\cI_{\Gamma_{d_B f}}$. As in the proof of \cref{prop:Sproperties}, the bottom vertical morphisms assemble into a quasi-isomorphism of two-term complexes in degrees $[0, 1]$.

Consider the Liouville one-form $\lambda_{U/B}\in \Omega^1_{\T^*(U/B)/B}$. The nullhomotopy $h_f$ of $\Gamma_{d_B f}^\ast \lambda_{U/B}$ represented by $f\in\cO_U$ in the bottom complex lifts to a nullhomotopy represented by $\pi^\ast f\in\cO_{\T^*(U/B)}/\cI^2_{\Gamma_{d_B f}}$ in the middle complex, where $\pi\colon \T^*(U/B)\rightarrow U$ is the projection.

Next, the pullback
\[\Gamma_0^\ast\colon (\cO_U\xrightarrow{d_B} \Omega^1_{U/B})\rightarrow (\cO_R\xrightarrow{d_B} \tau_{\geq -1} \bL_{R/B})\]
using the description of the truncated cotangent complex as in the proof of \cref{prop:Sproperties}(3) is given by
\[
\xymatrix{
\cO_{\T^*(U/B)}/\cI^2_{\Gamma_{d_B f}} \ar^-{d_B}[r] \ar^{\Gamma_0^\ast}[d] & \Omega^1_{\T^*(U/B)/B} / \cI_{\Gamma_{d_B f}} \Omega^1_{\T^* (U/B)} \ar^{\Gamma_0^\ast}[d] \\
\cO_U/\cI_{R, U}^2 \ar^-{d_B}[r] & \Omega^1_{U/B}/\cI_{R, U}\Omega^1_{U/B}.
}
\]
Under the morphism
\[\Gamma_0^\ast\colon \Omega^1_{\T^*(U/B)/B} / \cI_{\Gamma_{d_B f}} \Omega^1_{\T^* (U/B)}\longrightarrow \Omega^1_{U/B}/\cI_{R, U}\Omega^1_{U/B}\]
we have $\Gamma_0^\ast \lambda_{U/B} = 0$ which represents the nullhomotopy $h_0$. Therefore, the difference $s_f = h_f-h_0\in\cS_{R/B}$ has image $\Gamma_0^\ast \pi^\ast f = f$ under $\iota_{R, U}\colon \cS_{R/B}\rightarrow \cO_U/\cI_{R, U}^2$.
\end{proof}

\begin{corollary}
Consider an LG pair $(U, f)$ over $B$. Then $\und(s_f) = f|_{\Crit_{U/B}(f)}\in\cO_{\Crit_{U/B}(f)}$.
\end{corollary}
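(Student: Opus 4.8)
The plan is to deduce this directly by combining the two immediately preceding results, with essentially no extra work. First I would recall from \cref{prop:Sproperties}(3), applied to the $B$-scheme $X = \Crit_{U/B}(f)$ with its closed immersion into the smooth $B$-scheme $U$, that the underlying-function map
\[
\und\colon \cS_{\Crit_{U/B}(f)/B}\longrightarrow \cO_{\Crit_{U/B}(f)}
\]
is computed as the composite of $\iota_{\Crit_{U/B}(f),U}\colon \cS_{\Crit_{U/B}(f)/B}\to \cO_U/\cI_{\Crit_{U/B}(f),U}^2$ with the further quotient $\cO_U/\cI_{\Crit_{U/B}(f),U}^2\to \cO_U/\cI_{\Crit_{U/B}(f),U}\cong \cO_{\Crit_{U/B}(f)}$.

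Next I would evaluate this composite on the section $s_f$. By \cref{prop:Critsdescription} we have $\iota_{\Crit_{U/B}(f),U}(s_f) = f \pmod{\cI_{\Crit_{U/B}(f),U}^2}$, and reducing this further modulo $\cI_{\Crit_{U/B}(f),U}$ yields precisely the restriction $f|_{\Crit_{U/B}(f)}\in\cO_{\Crit_{U/B}(f)}$. Combining the two displays gives $\und(s_f) = f|_{\Crit_{U/B}(f)}$, as claimed. There is no genuine obstacle: the statement is an immediate corollary of \cref{prop:Critsdescription} and the identification of $\und$ furnished by \cref{prop:Sproperties}(3), so the only thing to be careful about is matching up the relevant ideal sheaves and quotients.
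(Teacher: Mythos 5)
Your proof is correct and is essentially identical to the paper's argument: both combine \cref{prop:Critsdescription} with the identification of $\und$ as the further quotient of $\iota_{\Crit_{U/B}(f),U}$ from \cref{prop:Sproperties}(3). There is no discrepancy.
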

\begin{proof}
The claim follows from \cref{prop:Critsdescription} as well as the compatibility of $\iota_{\Crit_{U/B}(f), U}$ and $\und$ established in \cref{prop:Sproperties}(3).
\end{proof}

We have the following functoriality of relative critical loci. For a morphism $\Phi\colon (U, f)\rightarrow (V, g)$ of LG pairs we have a correspondence
\begin{equation}\label{eq:cotangentcorrespondence}
\xymatrix{
& \T^*(V/B)\times_V U \ar_{\pi_V}[dl] \ar^{\pi_U}[dr] & \\
\T^*(V/B) && \T^*(U/B)
}
\end{equation}
of relative cotangent bundles. Now consider the diagram
\begin{equation}\label{eq:cotangentcriticalcorrespondence}
\xymatrix{
& U \ar_-{\Gamma_{d_B g}\times \id}[dr] \ar_{\Phi}[dl] \ar@/^2pc/^{\Gamma_{d_B f}}[ddrr] & \\
V \ar_{\Gamma_{d_B g}}[dr] && \T^*(V/B)\times_V U \ar^{\pi_U}[dr] \ar_{\pi_V}[dl] \\
& \T^*(V/B) && \T^*(U/B),
}
\end{equation}
where the square is Cartesian. The triangle commutes because $\Phi^\ast d_B g = d_B f$. Taking the zero loci of the three sections $V\rightarrow \T^*(V/B)$ (with zero locus $\Crit_{V/B}(g)$), $U\rightarrow \T^*(V/B)\times_V U$ (with zero locus $\Crit_{V/B}(g)\times_V U$) and $U\rightarrow \T^*(U/B)$ (with zero locus $\Crit_{U/B}(f)$) we obtain a correspondence
\begin{equation}\label{eq:criticalcorrespondence}
\xymatrix{
& \Crit_{V/B}(g)\times_V U \ar_{\pi_V}[dl] \ar^{\pi_U}[dr] & \\
\Crit_{V/B}(g) && \Crit_{U/B}(f)
}
\end{equation}
of relative critical loci.

\begin{proposition}\label{prop:Critfunctoriality}
Let $\Phi\colon (U, f)\rightarrow (V, g)$ be a morphism of LG pairs over $B$. Then
\[\pi_V^\ast s_g = \pi_U^\ast s_f\]
in the correspondence \eqref{eq:criticalcorrespondence}.
\end{proposition}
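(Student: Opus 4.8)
The plan is to reduce the identity to the explicit description of $s_f$ and $s_g$ supplied by \cref{prop:Critsdescription}, where it becomes an equality of functions modulo a square-zero ideal. Write $W=\Crit_{V/B}(g)\times_V U$ for the middle term of the correspondence \eqref{eq:criticalcorrespondence}. Since $V\to B$ is smooth and $\Crit_{V/B}(g)\hookrightarrow V$ is a closed immersion, the base change $W\hookrightarrow V\times_V U=U$ is a closed immersion into the smooth $B$-scheme $U$, so \cref{prop:Sproperties}(3) produces an \emph{injective} map $\iota_{W,U}\colon \cS_{W/B}\hookrightarrow \cO_U/\cI_{W,U}^2$. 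It thus suffices to show that $\pi_V^\ast s_g$ and $\pi_U^\ast s_f$ have the same image under $\iota_{W,U}$.

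The second ingredient is the naturality of the maps $\iota$ of \cref{prop:Sproperties}(3): for closed immersions $X\hookrightarrow U_0$, $X'\hookrightarrow U_0'$ into smooth $B$-schemes and a morphism $U_0'\to U_0$ carrying $X'$ scheme-theoretically into $X$ (so that $\cI_{X,U_0}\cO_{U_0'}\subseteq \cI_{X',U_0'}$), the square formed by $\iota_{X,U_0}$, $\iota_{X',U_0'}$, the pullback $\cS_{X/B}\to \cS_{X'/B}$ and the induced reduction $\cO_{U_0}/\cI_{X,U_0}^2\to \cO_{U_0'}/\cI_{X',U_0'}^2$ commutes; this follows from the functoriality of the two-term model $[\cI_{X,U_0}/\cI_{X,U_0}^2\to \Omega^1_{U_0/B}/\cI_{X,U_0}\Omega^1_{U_0/B}]$ of $\tau_{\geq -1}\bL_{X/B}$ used to construct $\iota$ in the proof of \cref{prop:Sproperties}. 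I apply this twice. First, by the commuting triangle in \eqref{eq:cotangentcriticalcorrespondence}, taking zero loci inside $U$ — which are preserved by the map $\pi_U\colon \T^*(V/B)\times_V U\to \T^*(U/B)$ of \eqref{eq:cotangentcorrespondence}, as it fixes zero sections — realizes $W$ as a closed subscheme of $\Crit_{U/B}(f)$ compatibly with the embeddings into $U$; naturality applied to the morphism of pairs $(W\hookrightarrow U)\to(\Crit_{U/B}(f)\hookrightarrow U)$ with ambient map $\id_U$ then gives $\iota_{W,U}(\pi_U^\ast s_f)=\iota_{\Crit_{U/B}(f),U}(s_f)\bmod \cI_{W,U}^2=f\bmod \cI_{W,U}^2$, using \cref{prop:Critsdescription}. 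Second, $W=\Crit_{V/B}(g)\times_V U$ comes with the commuting square $(W\hookrightarrow U)\to(\Crit_{V/B}(g)\hookrightarrow V)$ whose ambient map is $\Phi$ and whose left map is $\pi_V$; naturality gives $\iota_{W,U}(\pi_V^\ast s_g)=\Phi^\ast\big(\iota_{\Crit_{V/B}(g),V}(s_g)\big)\bmod \cI_{W,U}^2=\Phi^\ast g\bmod \cI_{W,U}^2=f\bmod \cI_{W,U}^2$, using \cref{prop:Critsdescription} for $(V,g)$ and $\Phi^\ast g=f$. Comparing the two and invoking injectivity of $\iota_{W,U}$ gives $\pi_V^\ast s_g=\pi_U^\ast s_f$.

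The one point that genuinely needs care is the naturality of $\iota_{X,U}$ under such squares: one must check that the functoriality of $\tau_{\geq -1}\bL_{-/B}$ and of the sheaves $\cO_{U}/\cI_{X,U}^2$ is compatible with the identification of $\cS_{X/B}$ as the kernel appearing in \cref{prop:Sproperties}(3). This is routine but should be written out. An alternative that sidesteps \cref{prop:Critsdescription} is to unwind $s_f=h_f-h_0$ directly: the Liouville one-forms satisfy $\pi_V^\ast\lambda_{V/B}=\pi_U^\ast\lambda_{U/B}$ on $\T^*(V/B)\times_V U$ by the functoriality \eqref{eq:cotangentpullback} of the cotangent-complex pullback applied, pointwise in the definition of $\lambda$, to $\T^*(V/B)\times_V U\to U\xrightarrow{\Phi}V$; along the diagram \eqref{eq:cotangentcriticalcorrespondence} the nullhomotopy $h_0$ then pulls back to $h_0$ because $\pi_U$ fixes zero sections, and $h_g$ pulls back to $h_f$ because $\Phi^\ast g=f$ and $d_B$ commutes with pullback; subtracting yields the claim. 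This route is conceptually cleaner but requires tracking homotopies rather than equalities in $\pi_0$, so I would run the first argument.
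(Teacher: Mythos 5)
Your proof is correct, but it takes a genuinely different route from the paper. The paper proves the statement directly at the level of Liouville forms: it observes $\pi_U^\ast \lambda_{U/B} = \pi_V^\ast \lambda_{V/B}$ on $\T^*(V/B)\times_V U$, checks that the nullhomotopy $h_g$ pulls back to $h_f$ along the diagram \eqref{eq:cotangentcriticalcorrespondence} because $\Phi^\ast g = f$, and then passes to zero loci — that is, it runs precisely the ``alternative'' argument you sketch in your closing paragraph. You instead reuse \cref{prop:Critsdescription} as a black box and reduce everything to the injectivity of $\iota_{W,U}\colon\cS_{W/B}\hookrightarrow\cO_U/\cI_{W,U}^2$ and the naturality of these embeddings, which brings the computation down to the equality $\Phi^\ast g = f$ in $\cO_U/\cI_{W,U}^2$. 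Your route is more elementary in that it lives entirely in $\pi_0$, at the price of having to formulate and justify the naturality of the $\iota$-embeddings under commutative squares of closed immersions with an arbitrary ambient map; as you note, that is routine but deserves to be written out (and is implicitly present already in the paper's proof of \cref{prop:Critsdescription}, where a similar functoriality of the two-term model of $\tau_{\geq -1}\bL_{-/B}$ is invoked). The paper's route is slightly shorter because it avoids this bookkeeping, but it requires carrying homotopies in $\cA^{2,\ex}(-,0)$ rather than equalities in a sheaf.

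One small point worth tightening: your observation that $W$ is a closed subscheme of $\Crit_{U/B}(f)$ ``because $\pi_U$ fixes zero sections'' is correct but compressed. The cleaner statement is that if a section $s\colon U\to E$ factors through a morphism of cones $E'\to E$ with a section $s'\colon U\to E'$, then $\Zero_U(s')\subseteq\Zero_U(s)$ as closed subschemes; applied to $\Gamma_{d_Bg}\times\id\colon U\to\T^*(V/B)\times_V U\to\T^*(U/B)$ this gives $\cI_{\Crit_{U/B}(f),U}\subseteq\cI_{W,U}$, which is what the first naturality square needs.
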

\begin{proof}
In the correspondence \eqref{eq:cotangentcorrespondence} we have $\pi_U^\ast \lambda_{U/B} = \pi_V^\ast \lambda_{V/B}$ as can be easily checked in local coordinates. Now consider the correspondence \eqref{eq:cotangentcriticalcorrespondence}. In the following we consider homotopies in the spaces of exact relative two-forms of degree $0$. The nullhomotopy $h_g$ of $\Gamma_{d_B g}^\ast \lambda_{V/B}$ is represented by $g\in\cO_V$. Its pullback along $\Phi\colon U\rightarrow V$ therefore provides a nullhomotopy $h'_g\colon (\Gamma_{d_B g}\times \id)^\ast \pi_V^\ast \lambda_{V/B}\sim 0$ represented by $\Phi^\ast g=f\in\cO_U$. Equating the one-forms $\pi_V^\ast \lambda_{V/B}=\pi_U^\ast \lambda_{U/B}$ and $(\Gamma_{d_B g}\times \id)^\ast \pi_V^\ast \lambda_{V/B}=\Gamma_{d_B f}^\ast \pi_U^\ast \lambda_{U/B}$ we see that the resulting nullhomotopy of $\Gamma_{d_B f}^\ast \pi_U^\ast \lambda_{U/B}$ coincides with $h_f$. Passing to the zero loci we get $\pi_U^\ast s_f = \pi_V^\ast s_g$.
\end{proof}

\begin{remark}
In the following sections we will encounter morphisms of LG pairs $\Phi\colon (U, f)\rightarrow (V, g)$ over $B$ such that $\Phi\colon U\rightarrow V$ restricts to $\Phi\colon \Crit_{U/B}(f)\rightarrow \Crit_{V/B}(g)$ (as $\Crit_{V/B}(g)\rightarrow V$ is a closed immersion, such a restriction is unique, if it exists). In this case $\Phi$ defines a splitting of $\pi_U\colon \Crit_{V/B}(g)\times_V U\rightarrow \Crit_{U/B}(f)$ and thus \cref{prop:Critfunctoriality} implies that $\Phi^\ast s_g = s_f$.
\end{remark}

We will use the Hessian quadratic form associated to an LG pair.

\begin{proposition}\label{prop:criticalHessian}
Let $(U, f)$ be an LG pair over $B$. There is a (degenerate) quadratic form $\Hess(f)$, the \defterm{Hessian} of $f$, on the restriction of the tangent bundle $\T_{U/B}|_{\Crit_{U/B}(f)}$. It satisfies the following properties:
\begin{enumerate}
    \item Let $x\in\Crit_{U/B}(f)$. Then $\Ker(\Hess(f)_x) = \T_{\Crit_{U/B}(f)/B, x}$, i.e. the Hessian $\Hess(f)_x$ restricts to a nondegenerate quadratic form on the normal bundle $\rN_{\Crit_{U/B}(f)/U, x}$.
    \item Let $\Phi\colon (U, f)\rightarrow (V, g)$ be a morphism of LG pairs. Then in the correspondence \eqref{eq:criticalcorrespondence} we have
    \[\pi_V^\ast \Hess(g) = \pi_U^\ast \Hess(f)\]
    as quadratic forms on $\T_{U/B}|_{\Crit_{V/B}(g)\times_V U}$.
\end{enumerate}
\end{proposition}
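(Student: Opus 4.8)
The plan is to construct $\Hess(f)$ as the first-order linearization of the exact relative $1$-form $d_B f$ along its zero locus $Z \coloneqq \Crit_{U/B}(f)$, and then to deduce (1) and (2) by reducing to a computation in \'etale coordinates. By \eqref{eq:criticalpullback} (equivalently the local description in \cref{def-relative-crit}), $Z$ is the scheme-theoretic zero locus of $d_B f$ regarded as a section of the vector bundle $\Omega^1_{U/B}$: if $\cI\subset\cO_U$ denotes the ideal sheaf of $Z$, then $\cI$ is the image of the contraction map $\T_{U/B}\to\cO_U$, $v\mapsto v(f)\coloneqq \langle v, d_B f\rangle$, which in \'etale coordinates $\{q_i\}$ on $U/B$ is $\cI=(\partial_{q_1}f,\dots,\partial_{q_n}f)$. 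Since $(v-v')(f)\in\cI^2$ whenever $v-v'$ is a section of $\cI\cdot\T_{U/B}$, the assignment $v\mapsto[\,v(f)\,]$ descends to an $\cO_Z$-linear map $\T_{U/B}|_Z\to\cI/\cI^2$, and composing with the conormal map $\cI/\cI^2\to\Omega^1_{U/B}|_Z$ gives
\[
\Hess(f)\colon \T_{U/B}|_Z\longrightarrow \Omega^1_{U/B}|_Z=(\T_{U/B}|_Z)^\vee,\qquad v\longmapsto \bigl[\,d_B(v(f))\,\bigr].
\]
In coordinates this is the matrix $\bigl(\partial_{q_i}\partial_{q_j}f\bigr)\big|_Z$, which is symmetric; since $k$ has characteristic different from $2$, this symmetric $\cO_Z$-bilinear form is equivalently a quadratic form $\Hess(f)$ on $\T_{U/B}|_Z$, independent of the chosen coordinates by the intrinsic description above.

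For (1), fix $x\in Z$. Because $\cI$ is generated near $x$ by the $\partial_{q_i}f$, all of which lie in $\mathfrak m_x$ (as $x\in Z$), a point derivation $v\in\T_{U/B,x}$ annihilates $\cI_x$ exactly when $\langle v, d_B(\partial_{q_i}f)|_x\rangle=0$ for all $i$, i.e.\ exactly when $\Hess(f)_x(v,-)=0$. This yields $\Ker(\Hess(f)_x)=\T_{Z/B,x}$ inside $\T_{U/B,x}$; the equivalent assertion that $\Hess(f)_x$ induces a nondegenerate quadratic form on $\rN_{Z/U,x}=\T_{U/B,x}/\T_{Z/B,x}$ then follows formally from this kernel computation together with the symmetry of $\Hess(f)$.

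For (2), let $\Phi\colon(U,f)\to(V,g)$ be a morphism of LG pairs over $B$ and form the correspondence \eqref{eq:criticalcorrespondence}, with middle term $W=\Crit_{V/B}(g)\times_V U$. This $W$ is the zero locus in $U$ of the functions $\Phi^\ast(\partial_{q_i}g)$, and writing $\Phi$ in \'etale coordinates $\{r_a\}$ on $U/B$, $\{q_i\}$ on $V/B$ by $q_i\mapsto\phi_i$, the chain rule $\partial_{r_a}f=\sum_i\Phi^\ast(\partial_{q_i}g)\cdot\partial_{r_a}\phi_i$ shows $\cI_{\Crit_{U/B}(f)}\subseteq\cI_W$, so $\pi_U$ restricts to the closed immersion $W\hookrightarrow\Crit_{U/B}(f)$ appearing in \eqref{eq:criticalcorrespondence}. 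Differentiating $f=\Phi^\ast g$ twice and discarding the term $\sum_i\Phi^\ast(\partial_{q_i}g)\cdot\partial_{r_a}\partial_{r_b}\phi_i$, which vanishes on $W$, gives
\[
\bigl(\partial_{r_a}\partial_{r_b}f\bigr)\big|_W=\sum_{i,j}\bigl(\partial_{q_i}\partial_{q_j}g\bigr)\big|_W\;\partial_{r_a}\phi_i\big|_W\;\partial_{r_b}\phi_j\big|_W,
\]
which is precisely the identity $\pi_U^\ast\Hess(f)=\pi_V^\ast\Hess(g)$ of quadratic forms on $\T_{U/B}|_W$, once one recalls that $\pi_V^\ast$ also involves transporting along the differential $\T_{U/B}|_W\to\Phi^\ast\T_{V/B}|_W$ (the dual of the canonical map $\Phi^\ast\Omega^1_{V/B}\to\Omega^1_{U/B}$ of \eqref{eq:cotangentpullback}), represented in coordinates by $(\partial_{r_a}\phi_i)$.

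The construction and both verifications are essentially bookkeeping; the only point that deserves care is the coordinate-free description of $\Hess(f)$ in the first step, which is what makes the symmetry and the coordinate-independence transparent and lets the functoriality statement (2) be phrased and checked without ambiguity. If one prefers, (2) also admits a coordinate-free proof: the canonical map $\Phi^\ast\Omega^1_{V/B}\to\Omega^1_{U/B}$ intertwines the contractions $w\mapsto w(f)$ and $v\mapsto v(g)$ and the two conormal maps, after restriction to $W$, where the discrepancy coming from the second derivatives of $\Phi$ dies.
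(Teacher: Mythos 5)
Your proposal is correct, and it takes a genuinely different route from the paper. The paper introduces the $n$-th jet bundle $J^n_{U/B}$ and its truncation $\overline{J}^2_{U/B}$, defines $\Hess(f)$ as the restriction of $d^2_B f\in\overline{J}^2_{U/B}$ to $\Crit_{U/B}(f)$ (where it lands in $\Sym^2\Omega^1_{U/B}$ because its image in $\Omega^1_{U/B}$ is $d_Bf$, which vanishes there), and then deduces functoriality (2) formally from the functoriality of jet bundles and the universal second differential $d^2_B$. You instead linearize the section $d_Bf$ of $\Omega^1_{U/B}$ along its zero locus $Z$: the assignment $v\mapsto[v(f)]$ descends to an $\cO_Z$-linear map $\T_{U/B}|_Z\to\cI/\cI^2$, and post-composing with the (co)normal differential $\cI/\cI^2\to\Omega^1_{U/B}|_Z$ yields the bilinear form, with symmetry and coordinate-independence checked directly. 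Functoriality (2) is then the chain rule, with the first-order terms dying on $W=\Crit_{V/B}(g)\times_V U$. Both approaches are valid. Yours is more elementary and self-contained (no jet bundles), and makes the kernel computation in (1) transparent; the paper's jet-bundle packaging makes the coordinate-independence and functoriality come for free from a piece of general machinery, at the cost of introducing that machinery. One small remark on your proof of (1): the claim that the induced form on $\rN_{Z/U,x}=\T_{U/B,x}/\T_{Z/B,x}$ is nondegenerate is indeed formal, but it uses precisely the equality $\Ker(\Hess(f)_x)=\T_{Z/B,x}$ rather than just the inclusion $\supseteq$ (which is the easy half of the coordinate computation); you do establish equality, so this is fine, just worth flagging because the two directions of the inclusion are not equally obvious a priori.
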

\begin{proof}
For a smooth morphism $U\rightarrow B$ we may define its $n$-th jet bundle $J^n_{U/B}$, which is an $\cO_U$-bimodule on $U$, as in \cite[\S16.7]{EGA44}. It has the following properties:
\begin{enumerate}
    \item $J^0_{U/B} = \cO_U$.
    \item For $n\geq m$ there is a morphism $J^n_{U/B}\rightarrow J^m_{U/B}$.
    \item There is a splitting $i_n\colon \cO_U\rightarrow J^n_{U/B}$ of $J^n_{U/B}\rightarrow \cO_U$ as left $\cO_U$-modules and a splitting $d^n_B\colon \cO_U\rightarrow J^n_{U/B}$ as right $\cO_U$-modules.
    \item There is a short exact sequence
    \[0\longrightarrow \Sym^n \Omega^1_{U/B}\longrightarrow J^n_{U/B}\longrightarrow J^{n-1}_{U/B}\longrightarrow 0.\]
\end{enumerate}

Let $\overline{J}^n_{U/B}$ be the quotient of $J^n_{U/B}$ by $i_n\colon \cO_U\rightarrow J^n_{U/B}$. Then we obtain a short exact sequence
\[0\longrightarrow \Sym^2 \Omega^1_{U/B}\longrightarrow \overline{J}^2_{U/B}\longrightarrow \overline{J}^1_{U/B}\cong \Omega^1_{U/B}\longrightarrow 0.\]
For $f\in\cO_U$ consider the element $d^2_B f\in\overline{J}^2_{U/B}$ whose image in $\overline{J}^1_{U/B}\cong \Omega^1_{U/B}$ is $d_B f$. Restricting to $\Crit_{U/B}(f)$ we get that $d^2_B f|_{\Crit_{U/B}(f)}$ defines a section $\Hess(f)$ of $\Sym^2 \Omega^1_{U/B}|_{\Crit_{U/B}(f)}$ which is the relevant quadratic form. The functoriality of the Hessian (i.e. property (2)) follows from the functoriality of jet bundles and the map $d^2_B$.

Let us now show property (1). Choose \'etale coordinates $\{z_1, \dots, z_n\}$ on $U\rightarrow B$ in a neighborhood of $x$. Let $\cI$ be the ideal defining the closed immersion $\Crit_{U/B}(f)\rightarrow U$. In a neighborhood of $x$ we have $\cI=\left(\frac{\partial f}{\partial z_1}, \dots, \frac{\partial f}{\partial z_n}\right)$. A vector field $v=\sum_i v_i\frac{\partial}{\partial z_i}$ on $U$ tangent to $\Crit_{U/B}(f)$ if, and only if,
\[\sum_i v_i \frac{\partial^2 f}{\partial z_i\partial z_j}\in\cI\]
for every $j$. Restricting to $x\in\Crit_{U/B}(f)$ we get that the subspace $\T_{\Crit_{U/B}(f)/B, x}\subset \T_{U/B, x}$ is given by tangent vectors $v=\sum_i v_i\frac{\partial}{\partial z_i}$ such that
\[\sum_i v_i\frac{\partial^2 f}{\partial z_i\partial z_j} = 0.\]
But this is precisely the definition of the subspace $\Ker(\Hess(f)_x)\subset \T_{U/B, x}$.
\end{proof}

Global versions of relative critical loci are given as follows.

\begin{definition}\label{def:dcriticalscheme}
Let $X\rightarrow B$ be a morphism of schemes. A \defterm{relative d-critical structure on $X\rightarrow B$} is a section $s\in \Gamma(X, \cS_{X/B})$ which satisfies the following property:
\begin{itemize}
    \item There exists a collection of LG pairs $\{U_a, f_a\}_{a\in A}$ together with open immersions $u_a\colon \Crit_{U_a/B}(f_a)\rightarrow X$ such that $\{u_a\colon \Crit_{U_a/B}(f_a)\rightarrow X\}$ is an open cover of $X$ and $u_a^\ast s = s_{f_a}$. We call such a triple $(U_a, f_a, u_a)$ a \defterm{critical chart}.
\end{itemize}
\end{definition}

\begin{remark}
Take $B=\Spec \C$ and suppose $X\rightarrow \Spec \C$ carries a relative d-critical structure $s$ with $\und(s)=f\colon X\rightarrow \bA^1$. Then $f$ is locally constant on $X^{\red}$. If we further assume that $f|_{X^{\red}}=0$, then the relative d-critical structure on $X\rightarrow \Spec \C$ is the same as a d-critical structure on $X$ in the sense of \cite{JoyceDcrit}.
\end{remark}

\begin{example}\label{ex:trivialdcritical}
Let $\pi\colon X\rightarrow B$ be a smooth morphism of schemes and $f\colon B\rightarrow \bA^1$ be a function. Then $\Crit_{X/B}(\pi^\ast f)=X$ and hence $\pi^\ast f\in\cS_{X/B}$ defines a relative d-critical structure on $X\rightarrow B.$
\end{example}

We can define products of relative d-critical structures as follows. Given two morphisms of schemes $X_1\rightarrow B_1, X_2\rightarrow B_2$ equipped with sections $s_1\in\Gamma(X_1, \cS_{X_1/B_1})$ and $s_2\in\Gamma(X_2, \cS_{X_2/B_2})$ let
\[s_1\boxplus s_2 = \pi_1^\ast s_1 + \pi_2^\ast s_2\in\Gamma(X_1\times X_2, \cS_{X_1\times X_2/B_1\times B_2}),\]
where $\pi_i\colon X_1\times X_2\longrightarrow X_i$ is the projection.

\begin{proposition}$ $\label{prop:criticalproduct}
\begin{enumerate}
    \item Let $(U_1, f_1)$ be an LG pair over $B_1$ and $(U_2, f_2)$ be an LG pair over $B_2$. Then there is an isomorphism
    \[\Crit_{U_1/B_1}(f_1)\times \Crit_{U_2/B_2}(f_2)\cong \Crit_{U_1\times U_2/B_1\times B_2}(f_1\boxplus f_2)\]
    under which $s_{f_1}\boxplus s_{f_2}\mapsto s_{f_1\boxplus f_2}$.
    \item Let $X_1\rightarrow B_1$ and $X_2\rightarrow B_2$ be morphisms of schemes equipped with relative d-critical structures $s_i\in\Gamma(X_i, \cS_{X_i/B_i})$. Then $s_1\boxplus s_2$ is a relative d-critical structure on $X_1\times X_2$.
\end{enumerate}
\end{proposition}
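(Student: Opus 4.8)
The plan is to prove (1) directly and then deduce (2) by forming products of critical charts.

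For (1), I would first establish a Künneth decomposition of the relative cotangent bundle. Factoring $U_1\times U_2\to B_1\times U_2\to B_1\times B_2$ and recognizing $U_1\times U_2\to B_1\times U_2$ and $B_1\times U_2\to B_1\times B_2$ as base changes of $U_1\to B_1$ and $U_2\to B_2$, the base-change isomorphism \eqref{eq:cotangentpullback} together with the cotangent fiber sequence \eqref{eq:cotangentsequence} gives a fiber sequence $\pr_2^\ast\bL_{U_2/B_2}\to\bL_{U_1\times U_2/B_1\times B_2}\to\pr_1^\ast\bL_{U_1/B_1}$; the parallel factorization through $U_1\times B_2$ provides a splitting, so $\bL_{U_1\times U_2/B_1\times B_2}\cong\pr_1^\ast\bL_{U_1/B_1}\oplus\pr_2^\ast\bL_{U_2/B_2}$ and hence $\T^\ast(U_1\times U_2/B_1\times B_2)\cong\T^\ast(U_1/B_1)\times\T^\ast(U_2/B_2)$ as schemes over $U_1\times U_2$. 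By naturality of the de Rham differential the relative one-form $d_{B_1\times B_2}(f_1\boxplus f_2)$ corresponds to $\pr_1^\ast d_{B_1}f_1+\pr_2^\ast d_{B_2}f_2$, so under this identification the graph $\Gamma_{d_{B_1\times B_2}(f_1\boxplus f_2)}$ becomes $\Gamma_{d_{B_1}f_1}\times\Gamma_{d_{B_2}f_2}$ and the zero section becomes the product of the zero sections. Since $\Crit$ is the fibre product of these two sections (\cref{def-relative-crit}) and fibre products commute with products, we obtain the isomorphism $\Crit_{U_1/B_1}(f_1)\times\Crit_{U_2/B_2}(f_2)\cong\Crit_{U_1\times U_2/B_1\times B_2}(f_1\boxplus f_2)$.

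To match the sections, I would use the Liouville-form construction of $s_f$ from \cref{sect:criticallocus}. A local coordinate computation (in coordinates $\{q_i\}$ on $U_1/B_1$ and $\{q'_j\}$ on $U_2/B_2$, together with the induced momenta) shows that $\lambda_{U_1\times U_2/B_1\times B_2}$ pulls back to $\pr_1^\ast\lambda_{U_1/B_1}+\pr_2^\ast\lambda_{U_2/B_2}$ under the above identification. Hence the nullhomotopy $h_{f_1\boxplus f_2}$ of $\Gamma_{d(f_1\boxplus f_2)}^\ast\lambda_{U_1\times U_2/B_1\times B_2}$ represented by $f_1\boxplus f_2=\pr_1^\ast f_1+\pr_2^\ast f_2$ corresponds to $\pr_1^\ast h_{f_1}+\pr_2^\ast h_{f_2}$, and likewise $h_0=\pr_1^\ast h_0+\pr_2^\ast h_0$; subtracting and restricting to the critical locus yields $s_{f_1\boxplus f_2}=\pr_1^\ast s_{f_1}+\pr_2^\ast s_{f_2}$, which is exactly $s_{f_1}\boxplus s_{f_2}$. (Alternatively: since $\iota_{-,-}$ is injective by \cref{prop:Sproperties}(3), it is enough by \cref{prop:Critsdescription} to note that both $s_{f_1\boxplus f_2}$ and $s_{f_1}\boxplus s_{f_2}$ have image $f_1\boxplus f_2$ modulo $\cI^2$ relative to the smoothing $\Crit_{U_1/B_1}(f_1)\times\Crit_{U_2/B_2}(f_2)\hookrightarrow U_1\times U_2$, using that $\iota$ is compatible with the pullbacks $\pr_i^\ast$.)

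For (2), fix critical charts $\{(U_a,f_a,u_a)\}_{a\in A}$ for $s_1$ and $\{(V_b,g_b,v_b)\}_{b\in B}$ for $s_2$. I claim the triples $(U_a\times V_b,\,f_a\boxplus g_b,\,u_a\times v_b)$ are critical charts for $s_1\boxplus s_2$ which form an open cover of $X_1\times X_2$: the morphisms $U_a\times V_b\to B_1\times B_2$ are smooth; the maps $u_a\times v_b\colon\Crit_{U_a/B_1}(f_a)\times\Crit_{V_b/B_2}(g_b)\cong\Crit_{U_a\times V_b/B_1\times B_2}(f_a\boxplus g_b)\to X_1\times X_2$ are open immersions (products of open immersions) and are jointly surjective; and using $\pi_1\circ(u_a\times v_b)=u_a\circ\pr_1$, $\pi_2\circ(u_a\times v_b)=v_b\circ\pr_2$ and the naturality of pullback on $\cS$ one gets $(u_a\times v_b)^\ast(s_1\boxplus s_2)=u_a^\ast s_1\boxplus v_b^\ast s_2=s_{f_a}\boxplus s_{g_b}=s_{f_a\boxplus g_b}$ by part (1). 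This exhibits $s_1\boxplus s_2\in\Gamma(X_1\times X_2,\cS_{X_1\times X_2/B_1\times B_2})$ as a relative d-critical structure. The only genuinely delicate point is the bookkeeping in (1): checking that the Künneth splitting of $\bL$, the Liouville form, the graph and zero sections, and hence the two nullhomotopies defining $s_f$, all transport compatibly; once that is pinned down, (2) is purely formal, since open immersions, smoothness, and fibre products are all stable under products.
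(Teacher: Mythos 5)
Your proof is correct and follows essentially the same route as the paper's: Künneth splitting of the relative cotangent complex giving the product decomposition of the relative cotangent bundles, compatibility of the graph/zero sections, commutation of fiber products with products, and then products of critical charts for part (2). For the section matching, the paper goes directly via \cref{prop:Critsdescription} — i.e. precisely your parenthetical ``alternative'' route of comparing both sides under the injection $\iota_{-,U_1\times U_2}$; your primary route via the Liouville one-form is equally valid and is essentially what the proof of \cref{prop:Critfunctoriality} does in a slightly different context, but it is worth noting that if you take that route you must still interpret ``$\lambda_{U_1\times U_2}$ pulls back to $\pr_1^*\lambda_{U_1}+\pr_2^*\lambda_{U_2}$'' as an identity of one-forms (not merely up to homotopy), which your coordinate check supplies. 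No gaps.
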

\begin{proof}$ $
\begin{enumerate}
    \item We have a natural isomorphism $\bL_{U_1/B_1}\boxplus \bL_{U_2/B_2}\cong \bL_{U_1\times U_2/B_1\times B_2}$. Using this isomorphism we obtain an isomorphism $\T^*(U_1/B_1)\times \T^*(U_2/B_2)\cong \T^*(U_1\times U_2/B_1\times B_2)$ under which $\Gamma_{d_B f_1}\times \Gamma_{d_B f_2}\mapsto \Gamma_{d_B (f_1\boxplus f_2)}$. This shows that the two closed subschemes $\Crit_{U_1/B_1}(f_1)\times \Crit_{U_2/B_2}(f_2)$ and $\Crit_{U_1\times U_2/B_1\times B_2}(f_1\boxplus f_2)$ of $U_1\times U_2$ are equal. The fact that under this isomorphism $s_{f_1}\boxplus s_{f_2}\mapsto s_{f_1\boxplus f_2}$ follows from \cref{prop:Critsdescription}.
    \item Given a collection of critical charts $(U^1_a, f^1_a, u^1_a)$ of $X_1$ and $(U^2_a, f^2_a, u^2_a)$ of $X_2$, by part (1) we get that $(U^1_a\times U^2_a, f^1_a\boxplus f^2_a, u^1_a\times u^2_a)$ is a critical chart for $(X_1\times X_2, s_1\boxplus s_2)$. Since $\{u^i_a\colon \Crit_{U_a^i/B_i}(f^i_a)\rightarrow X_i\}$ is an open cover, $\{u^1_a\times u^2_b\colon \Crit_{U_a^1/B_1}(f^1_a)\times \Crit_{U_a^2/B_2}(f^2_a)\rightarrow X_1\times X_2\}$ is an open cover.
\end{enumerate}
\end{proof}

For a morphism of schemes $X\rightarrow B$ equipped with a d-critical structure $s$ the opposite $-s$ is also a d-critical structure, so that if $\{U_a, f_a, u_a\}$ is a collection of critical charts of $(X, s)$, then $\{U_a, -f_a, u_a\}$ is a collection of critical charts of $(X, -s)$.




\subsection{Critical morphisms}\label{sect:chartcomparison}

In this section we introduce a particularly nice class of morphisms of LG pairs and prove their local structure. Throughout the section we fix a base scheme $B$.

\begin{definition}
A \defterm{critical morphism} of LG pairs $(U, f)\rightarrow (V, g)$ over $B$ is an unramified morphism $\Phi\colon (U, f)\rightarrow (V, g)$ of LG pairs over $B$ such that $\Phi$ restricts to a morphism $\Phi\colon \Crit_{U/B}(f)\rightarrow \Crit_{V/B}(g)$.
\end{definition}

Clearly, any \'etale morphism $(U^\circ, f^\circ)\rightarrow (U, f)$ is a critical morphism.

\begin{remark}
In \cite{JoyceDcrit} Joyce introduces the notion of an \emph{embedding} of critical charts, which is equivalent to critical morphisms with $\Phi$ assumed to be an immersion instead of just an unramified morphism.
\end{remark}

\begin{example}\label{ex:criticalstabilization}
Let $(U, f)$ be an LG pair over $B$ and $(E, q)$ an orthogonal bundle over $U$. Let $\pi_E\colon E\rightarrow U$ and $\q_E\colon E\rightarrow \bA^1$ be as in \cref{sect:orthogonal}. The inclusion $0_E\colon U\rightarrow E$ of the zero section identifies $\Crit_{U/B}(f)\cong\Crit_{E/B}(f\circ \pi_E+\q_E)$ compatibly with the natural exact two-forms of degree $-1$. Thus, $(E, f\circ \pi_E + \q_E)$ is an LG pair over $B$ and
\[0_E\colon (U, f)\longrightarrow (E, f\circ \pi_E + \q_E)\]
is a critical morphism.
\end{example}

The rest of the section is devoted to local structure results for relative LG pairs and critical morphisms. First, an LG pair can always be replaced by one which admits an open immersion into $\bA^n_B$; the following is a family version of \cite[Proposition 2.19]{JoyceDcrit}.

\begin{proposition}\label{prop:chartopenaffine}
Let $(U, f)$ be an LG pair over $B$. For every $x\in \Crit_{U/B}(f)$ there is an open neighborhood $U^\circ\subset U$ of $x$, a smooth $B$-scheme $V$ which admits an open immersion $V\hookrightarrow \bA^k_B$ together with a function $g\colon V\rightarrow \bA^1$ and a closed immersion $\Phi\colon U^\circ\rightarrow V$ such that $\Phi^\ast g = f|_{U^\circ}$ and $\Phi$ restricts to an isomorphism $\Crit_{U^\circ/B}(f|_{U^\circ})\cong \Crit_{V/B}(g)$.
\end{proposition}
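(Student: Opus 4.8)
The plan is to reduce, locally around $x$, to the case where $U^\circ$ is a hypersurface inside an open subscheme of an affine space over $B$, and then to build $g$ by extending $f$ and adding a suitable multiple of the square of the defining equation. The assertion is local around $x$, so I may assume $B$ affine, and I will repeatedly shrink $U^\circ$ around $x$ and the ambient scheme around the image of $x$ without further comment. Since $U\to B$ is smooth at $x$, say of relative dimension $n$, by \cite[Tag 054L]{Stacks} there is an open $U^\circ\ni x$ carrying an \'etale morphism to $\bA^n_B$; write $y_1,\dots,y_n$ for the resulting \'etale coordinates on $U^\circ/B$. After shrinking $U^\circ$, the local structure of \'etale morphisms presents this map as a standard \'etale morphism, which exhibits $U^\circ$ as the zero scheme $\{h=0\}$ of a single function $h$ (monic in an auxiliary variable $t$) inside an affine open $V\subseteq\bA^{n+1}_B$. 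Then $\imath\colon U^\circ\hookrightarrow V$ is a regular closed immersion of codimension $1$ of $B$-smooth schemes, with conormal bundle $\cI/\cI^2$ free of rank one on $\overline h$ and with $d_B h|_{U^\circ}$ generating the image of $\cI/\cI^2$ in $\Omega^1_{V/B}|_{U^\circ}$. Lifting the $y_i$ to functions $z_i\in\cO_V$ and shrinking $V$ to an affine open still containing $U^\circ$ (possible since $d_B z_1,\dots,d_B z_n,d_B h$ then restrict to a basis of $\Omega^1_{V/B}|_{U^\circ}$), I may assume $\Omega^1_{V/B}$ is free on $d_B z_1,\dots,d_B z_n,d_B h$; the scheme $V$ remains open in $\bA^{n+1}_B$ through its original coordinates, which is all that is used for the final conclusion.

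Next I would construct $g$. Choose a lift $\tilde f\in\cO_V$ of $f\in\cO_{U^\circ}$ and expand $d_B\tilde f=\sum_i a_i\,d_B z_i+b\,d_B h$. Replacing $\tilde f$ by $\tilde f-\beta h$ for a lift $\beta\in\cO_V$ of $b|_{U^\circ}$ changes neither $\tilde f|_{U^\circ}=f$ nor the restrictions $a_i|_{U^\circ}$, and arranges $b|_{U^\circ}=0$, i.e.\ $b=h\,b''$ for some $b''\in\cO_V$. Now set
\[
g \;=\; \tilde f \;+\; \bigl(1-\tfrac12\,b''\bigr)\,h^2\ \in\ \cO_V .
\]
A short computation gives $d_B g=\sum_i a_i'\,d_B z_i + h\,u\,d_B h$, with $a_i'|_{U^\circ}=a_i|_{U^\circ}$ and with $u\in\cO_V$ such that $u|_{U^\circ}=2$. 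Since $h|_{U^\circ}=0$ we have $g|_{U^\circ}=f$, and in the chosen coordinates $\Crit_{V/B}(g)$ is the closed subscheme of $V$ with ideal $(a_1',\dots,a_n',h u)$, while $\Crit_{U^\circ/B}(f)$ is the closed subscheme of $U^\circ$ with ideal $(a_1'|_{U^\circ},\dots,a_n'|_{U^\circ})$.

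Finally I would pass to $V'\coloneqq D(u)\subseteq V$, the open locus where $u\neq 0$. It is an affine open of $\bA^{n+1}_B$, smooth over $B$; it contains $\imath(x)$ since $u(\imath(x))=2$ is invertible in $k$; and it contains $U^\circ$ since $u|_{U^\circ}=2$ is a unit, so $\imath$ still presents $U^\circ$ as a closed subscheme of $V'$. Over $V'$ the element $u$ is invertible, so the ideal of $\Crit_{V'/B}(g|_{V'})$ is $(a_1',\dots,a_n',h)$, which contains $(h)$; hence $\Crit_{V'/B}(g|_{V'})$ is contained in the closed subscheme $\{h=0\}\cap V'=U^\circ$, where it is cut out by $(a_1'|_{U^\circ},\dots,a_n'|_{U^\circ})$, i.e.\ it equals $\imath\bigl(\Crit_{U^\circ/B}(f)\bigr)$. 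Thus $\Phi\coloneqq\imath\colon U^\circ\hookrightarrow V'$ satisfies $\Phi^*(g|_{V'})=f$ and restricts to an isomorphism $\Crit_{U^\circ/B}(f)\xrightarrow{\sim}\Crit_{V'/B}(g|_{V'})$, so $V'$ (open in $\bA^k_B$ with $k=n+1$), $g|_{V'}$ and $\Phi$ have all the required properties. Conceptually the summand $(1-\tfrac12 b'')h^2$ is a relative stabilization by a trivial orthogonal line bundle in the sense of \cref{ex:criticalstabilization}, so it is also compatible with the canonical exact two-forms of degree $-1$ — though only the scheme-theoretic statement is needed here.

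The step I expect to be the main obstacle is the construction of $g$: one must simultaneously arrange that $\Crit_{V/B}(g)$ acquires no spurious components away from $U^\circ$ (handled by restricting to the open $D(u)$) and that along $U^\circ$ the critical \emph{scheme} of $g$ coincides with that of $f$, not merely its underlying set (handled by first correcting the lift $\tilde f$ so that its normal derivative $b$ vanishes along $U^\circ$, before adding the square term). Everything else is routine bookkeeping with open neighbourhoods and the local structure of smooth and \'etale morphisms.
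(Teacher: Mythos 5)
Your proof is correct and follows the same basic strategy as the paper's (embed $U^\circ$ in an open of affine space over $B$, then correct a lift of $f$ so the normal derivative vanishes scheme\mbox{-}theoretically along $U^\circ$ and the normal Hessian is nondegenerate), but with two implementation choices worth comparing. Where the paper takes $U^\circ$ affine over $B$ and embeds it as a regular closed subscheme of arbitrary codimension $m$ in an open of $\bA^k_B$, cut out by $r_1,\dots,r_m$, you use the standard-\'etale form of $U^\circ\to\bA^n_B$ to realize $U^\circ$ as a hypersurface $\{h=0\}$ in an open of $\bA^{n+1}_B$; this fixes $m=1$ and makes the bookkeeping leaner. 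The paper's single-step correction
\[
g = g' - \sum_i\frac{\partial g'}{\partial r_i}r_i + \tfrac12\sum_{i,j}\frac{\partial^2 g'}{\partial r_i\partial r_j}r_i r_j + \tfrac12\sum_i r_i^2
\]
directly produces $\partial_{r_i}g|_{U^\circ}=0$ and $\partial^2_{r_i r_j}g|_{U^\circ}=\delta_{ij}$ and then concludes by a shrinking/dimension argument; your two-step correction (first normalize the lift so $\partial_h\tilde f = h b''$, then add $(1-\tfrac12 b'')h^2$) produces $\partial_h g = h u$ with $u|_{U^\circ}=2$, and you conclude by localizing to $D(u)$ so that the critical ideal contains $h$ on the nose. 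The localization step is an appealing, concrete alternative to the paper's finish. Your closing observation that the quadratic summand is a version of the stabilization in \cref{ex:criticalstabilization} is accurate in spirit and points to why $\Phi$ is in fact a critical morphism — which, although not part of the statement, is what the paper uses when it invokes this proposition in \cref{prop:stabilizationzigzag}.
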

\begin{proof}
Choose an open neighborhood $U^\circ\subset U$ of $x$ which is affine over $B$ and which admits an \'etale morphism $c^\circ\colon U^\circ\rightarrow \bA^n_B$. Then we have may find a closed immersion $\Phi\colon U^\circ\hookrightarrow V=\bA^k_B$. Possibly shrinking $V$ to a neighborhood of $\Phi(x)$ we may extend $c^\circ\colon U^\circ\rightarrow \bA^n_B$ to $p\colon V\rightarrow \bA^n_B$ such that $p\circ\Phi = c^\circ$ and $f^\circ\coloneqq f|_{U^\circ}$ to $g'\colon V\rightarrow \bA^1$ such that $g'\circ\Phi = f^\circ$.

Since $\Phi\colon U^\circ\rightarrow V$ is a regular immersion, by shrinking $U^\circ$ and $V$ we may find a function $r\colon V\rightarrow \bA^m$ such that $U^\circ$ is the zero locus of $r$ and in the commutative diagram
\[
\xymatrix{
U^\circ \ar^{\Phi}[r] \ar^{c^\circ}[d] & V^\circ \ar^{(p, r)}[d] \\
\bA^n_B \ar^{(1, 0)}[r] & \bA^{n+m}_B
}
\]
the vertical arrows are \'etale.

Consider
\[g \coloneqq g' - \sum_i\frac{\partial g'}{\partial r_i}r_i + \frac12\sum_{i,j}\frac{\partial^2 g'}{\partial r_i\partial r_j}r_i r_j + \frac12\sum_i r_i^2.\]
It satisfies
\[\left.\frac{\partial g}{\partial r_i}\right|_{U^\circ} = 0,\qquad \left.\frac{\partial^2 g}{\partial r_i\partial r_j}\right|_{U^\circ} = \begin{cases}
	1 & \text{if $i=j$}, \\ 0 & \text{if $i\neq j$}.
\end{cases}\]
The first equation implies that $\Phi$ restricts to $U^\circ\rightarrow \Crit_{V/\bA^n_B}(g)$. The second equation implies that we may shrink $V$ so that $\Phi\colon U^\circ\rightarrow \Crit_{V/\bA^n_B}(g)$ is an isomorphism, which we assume. Therefore, taking the full critical locus relative to $B$ we get that $\Phi$ restricts to an isomorphism $\Crit_{U^\circ/B}(f^\circ)\rightarrow \Crit_{V/B}(g)$.
\end{proof}

We next show that any critical morphism is locally of the form as in \cref{ex:criticalstabilization}; the following is a family version of \cite[Proposition 2.23]{JoyceDcrit}.

\begin{proposition}\label{prop:criticalembeddinglocal}
Let $\Phi\colon (U, f)\rightarrow (V, g)$ be a critical morphism of LG pairs over $B$ and $x\in \Crit_{U/B}(f)$. Then there is an open immersion $\imath\colon (U^\circ, f^\circ)\rightarrow (U, f)$, an \'etale morphism $\jmath\colon (V^\circ, g^\circ)\rightarrow (V, g)$, a point $x^\circ\in\Crit_{U^\circ/B}(f^\circ)$ mapping to $x$, a critical morphism $\Phi^\circ\colon (U^\circ, f^\circ)\rightarrow (V^\circ, g^\circ)$, a trivial orthogonal bundle $(E, q)\rightarrow U$ and an \'etale morphism $\alpha\colon V^\circ\rightarrow E$ such that $\alpha^\ast(f\circ \pi_E+\q_E) = g^\circ$ and the diagram
\[
\begin{tikzcd}
U \arrow[r, "0_E"] & E \\
U^\circ \arrow[r, "\Phi^\circ"] \arrow[d, "\imath" left] \arrow[u, "\imath"] & V^\circ \arrow[d, "\jmath" left] \arrow[u, "\alpha"] \\
U \arrow[r, "\Phi"] & V
\end{tikzcd}
\]
is commutative.
\end{proposition}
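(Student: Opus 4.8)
The plan is to follow the strategy of \cite[Proposition 2.23]{JoyceDcrit}, carried out in the relative setting over $B$ and using the family local‑structure results established above (\cref{prop:minimalimmersion,prop:relativesmoothing,prop:chartopenaffine}). First I would reduce to the case where $\Phi$ is a closed immersion. Since $\Phi\colon U\to V$ is unramified, \cite[Corollaire 18.4.7]{EGA44} — applied exactly as in the proofs of \cref{prop:minimalimmersion,prop:relativesmoothing} — lets us pass to an open neighbourhood $U^\circ\subset U$ of $x$ and an \'etale neighbourhood $\jmath\colon V^\circ\to V$ of the image of $\Phi(x)$ such that $\Phi|_{U^\circ}$ factors as a closed immersion $\Phi^\circ\colon U^\circ\hookrightarrow V^\circ$ followed by $\jmath$. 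Setting $f^\circ = f|_{U^\circ}$ and $g^\circ = \jmath^\ast g$, we get $(\Phi^\circ)^\ast g^\circ = f^\circ$, and since the relative critical locus of \cref{def-relative-crit} is a fibre product built from $\T^\ast(-/B)$ and hence commutes with \'etale base change on the total space, $\Crit_{V^\circ/B}(g^\circ)=\Crit_{V/B}(g)\times_V V^\circ$ and $\Crit_{U^\circ/B}(f^\circ)=\Crit_{U/B}(f)\times_U U^\circ$; thus $\Phi^\circ$ again restricts to a morphism of relative critical loci, i.e. is a critical morphism. These data already account for the maps $\imath$ and $\jmath$ in the statement, so from now on we may assume $\Phi$ is a closed immersion of smooth $B$-schemes — in particular a regular immersion of codimension $m=\dim(V/B)-\dim(U/B)$.

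Next I would put $V$ into product form near $\Phi(x)$. Shrinking once more, Zariski‑locally on $U$ and \'etale‑locally on $V$ (again contributing to $\imath,\jmath$), choose $r_1,\dots,r_m\in\cO_V$ generating the ideal of $U$ with $d_B r_1,\dots,d_B r_m$ a frame of the conormal bundle $\rN^\ast(U/V)$, together with \'etale $B$-coordinates $q_1,\dots,q_n$ on $U$ extended to functions $\tilde q_1,\dots,\tilde q_n$ on $V$; then $(\tilde q,r)\colon V\to\bA^{n+m}_B$ has invertible de Rham differential at $x$, hence is \'etale there. Using that $U\to\bA^n_B$ is \'etale while $V\to\bA^n_B$ is smooth, the graph of $\Phi$ then exhibits — after a final \'etale shrinking of $V$ — an isomorphism $V\cong U\times\bA^m$ of $B$-schemes under which $\Phi$ is the zero section $0\colon U\hookrightarrow U\times\bA^m$ and $r_1,\dots,r_m$ are the fibre coordinates. (This is the same bookkeeping as in \cref{prop:minimalimmersion,prop:relativesmoothing,prop:chartopenaffine}.) Write $\pi_U\colon U\times\bA^m\to U$ for the projection.

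The heart of the proof is then a parametrised, algebraic Morse lemma. Expanding $g$ along the fibres of $\pi_U$ gives
\[
g = \pi_U^\ast f + \sum_i a_i\,r_i + \tfrac12\sum_{i,j} b_{ij}\,r_i r_j + (\text{terms of order}\ge 3\text{ in }r),
\]
with $a_i=(\partial g/\partial r_i)|_{r=0}$ and $b_{ij}=(\partial^2 g/\partial r_i\partial r_j)|_{r=0}$ in $\cO_U$. The critical‑morphism hypothesis enters twice: it forces $d_B g|_U$ to vanish on $\Crit_{U/B}(f)$, i.e. $a_i\in I_{\Crit_{U/B}(f),U}=(\partial f/\partial q_1,\dots,\partial f/\partial q_n)$; and — here the full strength of the condition is used — it makes the fibrewise Hessian $(b_{ij}(x))$ nondegenerate (near $x$ the morphism $\Phi$ matches $\Crit_{U/B}(f)$ with $\Crit_{V/B}(g)$), so after shrinking $U$ the matrix $b=(b_{ij})$ is invertible throughout. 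One then runs the algebraic Morse completion exactly as in the proof of \cref{prop:chartopenaffine}: writing $a_i=\sum_j c_{ij}\,\partial f/\partial q_j$ and completing the square in the $r$-directions via $b$, one builds a map $\alpha\colon U\times\bA^m\to E:=U\times\bA^m$ over $B$ which is the zero section on $U\times\{0\}$, is \'etale near $x$, and satisfies $\alpha^\ast(f\circ\pi_E+\q_E)=g$, where $\q_E$ is the fibrewise quadratic function attached to the nondegenerate quadratic form $q$ on $E$ determined by $b$ (normalised to match the conventions of \cref{sect:orthogonal}); in particular $(E,q)$ is a trivial orthogonal bundle over $U$. The point to be careful about is that the linear term $\sum_i a_i r_i$ — nonzero in general — gets absorbed into the Taylor expansion of $f\circ\alpha_U$ precisely because $a_i\in(\partial f/\partial q_j)$; one neither needs nor is able to remove it by a coordinate change fixing the zero section.

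Finally I would assemble the output: collecting the shrinkings yields $\imath\colon U^\circ\to U$, $\jmath\colon V^\circ\to V$, a point $x^\circ$ over $x$, the closed immersion $\Phi^\circ$, the trivial orthogonal bundle $(E,q)\to U$ and the \'etale map $\alpha$; the lower square commutes because $\jmath\circ\Phi^\circ=\Phi\circ\imath$, the upper square because $\alpha\circ\Phi^\circ=0_E\circ\imath$, and $\alpha^\ast(f\circ\pi_E+\q_E)=g^\circ$; moreover $\Phi^\circ$ is a critical morphism by \cref{ex:criticalstabilization}. The main obstacle is the third step: running the Morse normal form in the \'etale‑algebraic rather than analytic category over the affine base $U$, while simultaneously absorbing the surviving linear term using only $a_i\in I_{\Crit_{U/B}(f),U}$ and tracing the nondegeneracy of $b$ back to the critical‑morphism hypothesis. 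This is precisely the family analogue of \cite[Proposition 2.23]{JoyceDcrit}; beyond the scheme‑theoretic case the only genuinely new bookkeeping is keeping the relative conormal bundle and the quadratic form $q$ coherent with the $\cS_{-/B}$- and orientation‑conventions fixed earlier in the paper.
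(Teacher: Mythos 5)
Your proposal takes essentially the same approach as the paper's proof: reduce to a closed immersion via \cite[Corollaire 18.4.7]{EGA44}, put $V$ in \'etale-local product form over $U$ with $\Phi$ the zero section (note one only obtains an \'etale map $V^\circ\to U\times\bA^m$, not an isomorphism $V\cong U\times\bA^m$, but this is harmless for the Taylor expansion), absorb the $r$-linear coefficients of $g$ into the projection to $U$ using the critical-morphism condition (your ``absorbing $\sum_i a_i r_i$ into $f\circ\alpha_U$'' is exactly the paper's substitution $p_i = p_i' + \sum_j a_{ij}r_j$), and finish with an algebraic Morse completion after passing to an \'etale cover.

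One caveat worth flagging: you justify nondegeneracy of the fibrewise Hessian $(b_{ij})$ by asserting that the critical-morphism hypothesis ``makes the fibrewise Hessian $(b_{ij}(x))$ nondegenerate (near $x$ the morphism $\Phi$ matches $\Crit_{U/B}(f)$ with $\Crit_{V/B}(g)$).'' The definition of a critical morphism only requires that $\Phi$ carry $\Crit_{U/B}(f)$ \emph{into} $\Crit_{V/B}(g)$, not that they agree near $x$, and without that agreement the fibrewise Hessian can be degenerate: already over $B=\pt$, the inclusion $U=\bA^1\hookrightarrow V=\bA^2$ given by $r=0$, with $f=q^3$ and $g=q^3+qr^2$, is a critical morphism of LG pairs, yet $\partial^2 g/\partial r^2|_{(0,0)}=0$. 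The paper's own proof likewise asserts that ``$q_{ij}$ is a symmetric invertible matrix'' without argument; in the downstream applications the critical morphisms arise between critical charts of a fixed relative d-critical locus, where the critical loci coincide as open subschemes of $X$, and \cref{prop:criticalHessian} then yields the nondegeneracy. So this is a tacit hypothesis shared with the paper's proof rather than a gap peculiar to your argument, but the appeal should be to the scheme-theoretic matching of critical loci rather than to the bare critical-morphism condition.
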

\begin{proof}
As a first step, we construct a local splitting of $\Phi\colon U\rightarrow V$. For this, consider an open neighborhood $U^\circ\subset U$ of $x$ affine over $B$ together with an \'etale morphism $c\colon U^\circ\rightarrow \bA^n_B$. Consider the unramified morphism $U^\circ\hookrightarrow U\xrightarrow{\Phi} V$. By \cite[Corollaire 18.4.7]{EGA44} we may find a commutative diagram
\[
\xymatrix{
U^\circ \ar^{\Phi'}[r] \ar[d] & V' \ar[d] \\
U \ar^{\Phi}[r] & V,
}
\]
where $V'\rightarrow V$ is an \'etale morphism and $\Phi'$ a closed immersion. Possibly shrinking $V'$ (and, correspondingly, $U^\circ$) we may assume that $V'\rightarrow V\rightarrow B$ is affine. Then $c$ lifts to $p'\colon V'\rightarrow \bA^n_B$ such that $p'\circ \Phi' = c$. Let $f^\circ\colon U^\circ\rightarrow \bA^1$ be the restriction of $f$ to $U^\circ$ and $g'\colon V'\rightarrow \bA^1$ the restriction of $g$ to $V'$.

Since $\Phi'\colon U^\circ\rightarrow V'$ is a regular immersion, by shrinking $V'$ we can find a function $r\colon V'\rightarrow \bA^m$ such that $U^\circ$ is the zero locus of $r$ and in the fiber square
\[\xymatrix{
U^\circ \ar@{^{(}->}[r]^{\Phi'} \ar[d]^c \cart & V' \ar[d]^{(p',r)} \\
\bA_B^n \ar@{^{(}->}[r]^-{(1,0)} & \bA_B^{n+m},
}\]
the vertical arrows are \'etale.

By assumption we have a commutative diagram
\[
\xymatrix{
\Crit_{U^\circ/B}(f^\circ) \ar[r] \ar[d] & \Crit_{V'/B}(g') \ar[d] \\
U^\circ \ar^{\Phi'}[r] & V'
}
\]
Thus, $\frac{\partial g'}{\partial r_j}|_{U^\circ}$ lies in the ideal generated by $\frac{\partial f^\circ}{\partial c_i}$. Since $p'\circ \Phi^\circ = c$ and $g'\circ \Phi' = f^\circ$, we have $\left.\frac{\partial g^\circ}{\partial p'_i}\right|_{U^\circ} = \frac{\partial f^\circ}{\partial c_i}$. So, we may write in the $\{p'_i, r'_i\}$ coordinates
\[\left.\frac{\partial g'}{\partial r_j}\right|_{U^\circ} = \sum_i a_{ij}' \left.\frac{\partial g'}{\partial {p'_i}}\right|_{U^\circ}\]
for some functions $a_{ij}'\colon U^\circ \to \bA^1$. Choose any lifts $a_{ij}\colon V' \to \bA^1$ of $a'_{ij}$ and consider
\[ p_i \coloneqq p_i' + \sum_j {a_{ij}} r_j\colon V' \longrightarrow \bA^1.\]
Possibly shrinking $V'$ we may assume that $V'\xrightarrow{(p, r)} \bA^{n+m}_B$ is \'etale. Moreover, in the $\{p_i, r_i\}$ coordinates we have
\[\left.\frac{\partial g'}{\partial r_j}\right|_{U^\circ} = 0.\]
Let $V^\circ = U^\circ\times_{\bA^n_B} V'$ with the projection $\tilde{p}\colon V^\circ\rightarrow U$. Then we obtain a commutative diagram
\[
\xymatrix{
U^\circ \ar^{\Phi^\circ}[r] \ar[d] & V^\circ \ar[d] \\
U \ar^{\Phi}[r] & V
}
\]
with vertical maps \'etale and with $\Phi^\circ\colon U^\circ\rightarrow V^\circ$ an immersion as it is the composite $U^\circ\rightarrow U^\circ\times_{\bA^n_B} U^\circ\rightarrow U^\circ\times_{\bA^n_B} V'$ of an open immersion and a closed immersion. Since $(p, r)\colon V'\rightarrow \bA^{n+m}_B$ is \'etale, so is its base change $(\tilde{p}, r)\colon V^\circ\rightarrow U\times \bA^m$. Let $g^\circ$ be the restriction of $g$ to $V^\circ$.

Since $g^\circ\circ \Phi^\circ = f^\circ$ and $\frac{\partial g^\circ}{\partial r_j}|_{U^\circ} = 0$, we see that $g^\circ - f^\circ\circ\tilde{p}$ vanishes to the second order along $U^\circ\hookrightarrow V^\circ$. Thus, shrinking $V^\circ$ we may find $q_{ij}\in\cO_{V^\circ}$ such that
\[g^\circ = f^\circ\circ\tilde{p} + \sum_{i, j} q_{ij} r_ir_j,\]
where $q_{ij}$ is a symmetric invertible matrix. Using the Gram--Schmidt process, we may assume that $q_{ij} = 0$ for $i\neq j$ and $q_{ii}$ are nowhere vanishing functions on $V^\circ$. Possibly replacing $V^\circ$ by an \'etale cover so that $q_{ii}$ admit square roots, we obtain an \'etale morphism $\alpha = (\tilde{p}, \sqrt{q} r)\colon V^\circ\rightarrow E=U\times \bA^m$, so that if we equip $E$ with the trivial quadratic form $\q_{\bA^m}$ we get $\alpha^\ast (f + \q_{\bA^m}) = g'$.
\end{proof}

\begin{example}
Consider an LG pair $(V, g)$ over $B$ and a $B$-point $\sigma\colon B\rightarrow \Crit_{V/B}(g)$ of the relative critical locus. Let $x\in B$. Assume that $g$ is \emph{relatively Morse} at $\sigma$, i.e. $\sigma$ is an inclusion of a smooth connected component. Then $\sigma\colon (B, g|_\sigma)\rightarrow (V, g)$ is a critical morphism. In this case \cref{prop:criticalembeddinglocal} reduces to the relative Morse lemma: we may find an open neighborhood $B^\circ\subset B$ of $x$, an \'etale morphism $V^\circ\rightarrow V$, a lift $\sigma^\circ\colon B^\circ\rightarrow \Crit_{V^\circ/B}(g|_{V^\circ})$ of $\sigma$ and \'etale coordinates $y\colon V^\circ\rightarrow \bA^m_B$ such that
\[g|_{V^\circ} = \sum_{i=1}^m y_i^2.\]
\end{example}

The following proposition shows that any two critical charts are locally related by a zigzag of critical morphisms; this is a family version of \cite[Theorem 2.20]{JoyceDcrit}.

\begin{proposition}\label{prop:stabilizationzigzag}
Let $X\rightarrow B$ be a morphism of schemes equipped with a relative d-critical structure $s$. Let $(U, f, u)$ and $(V, g, v)$ be two critical charts and $x\in\Crit_{U/B}(f), y\in\Crit_{V/B}(g)$ points such that $u(x)=v(y)$. Then there are open immersions $(U^\circ, f^\circ, u^\circ)\rightarrow (U, f, u)$ and $(V^\circ, g^\circ, v^\circ)\rightarrow (V, g, v)$, a critical chart $(W, h, w)$, critical morphisms
\[(U^\circ, f^\circ, u^\circ)\xrightarrow{\Phi} (W, h, w)\xleftarrow{\Psi} (V^\circ, g^\circ, v^\circ)\]
and points $x^\circ\in\Crit_{U^\circ/B}(f^\circ),y^\circ\in\Crit_{V^\circ/B}(g^\circ)$ which map to $x$ and $y$.
\end{proposition}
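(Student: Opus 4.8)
The plan is to follow the method of \cite[Theorem 2.20]{JoyceDcrit}, using the family versions of the local structure results established above (\cref{prop:chartopenaffine}, \cref{prop:criticalembeddinglocal}, \cref{prop:criticalHessian}) in place of their absolute counterparts. First, since $u$ and $v$ are open immersions and $u(x)=v(y)$, after shrinking $U$ and $V$ we may assume $u$ and $v$ identify $\Crit_{U/B}(f)$ and $\Crit_{V/B}(g)$ with a common open $X^\circ\subseteq X$ containing $u(x)$; write $a\colon X^\circ\hookrightarrow U$ and $b\colon X^\circ\hookrightarrow V$ for the resulting closed immersions. By \cref{prop:chartopenaffine}, shrinking the charts further and composing with the resulting critical morphisms at the very end, we may assume $U$ and $V$ are open subschemes of $\bA^n_B$ and $\bA^m_B$ respectively.

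Next I would build the comparison chart inside $U\times_B V$. Since $V\to B$ and $U\to B$ are smooth, after passing to \'etale neighborhoods of $x$ and $y$ the closed immersions $a,b$ admit extensions to $B$-morphisms $\beta\colon U\to V$ with $\beta\circ a=b$ and $\alpha\colon V\to U$ with $\alpha\circ b=a$ (cf.\ \cite[Corollaire 18.4.7]{EGA44}, as in the proof of \cref{prop:relativesmoothing}); choosing $\beta,\alpha$ generically at $x$ one arranges that their graphs $\Gamma_\beta,\Gamma_\alpha\subseteq U\times_B V$ meet cleanly along $\kappa(X^\circ)$ near $x$, where $\kappa=(a,b)$. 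Let $W$ be an \'etale neighborhood of $\kappa(X^\circ)$ in $U\times_B V$, with projections $p\colon W\to U$, $q\colon W\to V$; then $\Phi\coloneqq(\id_U,\beta)$ and $\Psi\coloneqq(\alpha,\id_V)$ define closed immersions $U^\circ\hookrightarrow W$, $V^\circ\hookrightarrow W$ onto neighborhoods of $\kappa(X^\circ)$ in $\Gamma_\beta,\Gamma_\alpha$. Define
\[ h = p^\ast f + q^\ast g - e + \delta, \]
where $e\colon W\to\bA^1$ extends the function on $\Gamma_\beta\cup\Gamma_\alpha$ equal to $q^\ast g$ on $\Gamma_\beta$ and to $p^\ast f$ on $\Gamma_\alpha$ (these agree on the overlap, both restricting to $\und(s)$ on $X^\circ$, using $d_B\und(s)=0$ from \cref{prop:Sproperties}(2)), and $\delta$ vanishes to second order along $\Gamma_\beta\cup\Gamma_\alpha$ and is chosen, by the relative Gram--Schmidt/Morse argument used in the proofs of \cref{prop:chartopenaffine} and \cref{prop:criticalembeddinglocal}, so that the Hessian of $h$ transverse to $X^\circ$ is nondegenerate — the clean-intersection hypothesis guarantees that near $x$ the conormal of $X^\circ$ in $W$ is spanned by those of $\Gamma_\beta$ and $\Gamma_\alpha$, so $\delta$ can realize an arbitrary Hessian on the normal bundle, to be combined with the (possibly degenerate) Hessians of $f$ and $g$ from \cref{prop:criticalHessian}. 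By construction $\Phi^\ast h=f^\circ\coloneqq f|_{U^\circ}$ and $\Psi^\ast h=g^\circ\coloneqq g|_{V^\circ}$.

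It then remains to verify the conclusion. Nondegeneracy of the normal Hessian, together with a Hensel-type argument, shows that after shrinking $W$ one has $\Crit_{W/B}(h)=\kappa(X^\circ)$ scheme-theoretically; hence $\Phi$ and $\Psi$ restrict to isomorphisms $\Crit_{U^\circ/B}(f^\circ)\xrightarrow{\sim}\Crit_{W/B}(h)\xleftarrow{\sim}\Crit_{V^\circ/B}(g^\circ)$ compatible with $a,b$, and being unramified they are critical morphisms. Setting $w\colon\Crit_{W/B}(h)\xrightarrow{\sim}X^\circ\hookrightarrow X$, \cref{prop:Critfunctoriality} together with the Remark following it gives $\Phi^\ast s_h=s_{f^\circ}=u^{\circ\ast}s$, so $w^\ast s=s_h$ and $(W,h,w)$ is a critical chart on $(X,s)$; then $\Phi\colon(U^\circ,f^\circ,u^\circ)\to(W,h,w)$ and $\Psi\colon(V^\circ,g^\circ,v^\circ)\to(W,h,w)$ are the required critical morphisms, and one picks $x^\circ\in\Crit_{U^\circ/B}(f^\circ)$, $y^\circ\in\Crit_{V^\circ/B}(g^\circ)$ over $x,y$.

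The main obstacle is the construction of $h$ in the second paragraph: producing a \emph{single} function that simultaneously restricts to $f$ and to $g$ along the two graphs while having the correct scheme-theoretic relative critical locus. This is the family analogue of \cite[Theorem 2.20]{JoyceDcrit}, and the only genuinely new ingredient beyond the argument there is the systematic use of the relative statements \cref{prop:chartopenaffine}, \cref{prop:criticalembeddinglocal}, \cref{prop:criticalHessian}; by contrast the compatibility of d-critical sections is purely formal once the critical-morphism structure is in hand, via \cref{prop:Critfunctoriality}.
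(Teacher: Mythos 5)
Your approach diverges substantially from the paper's, and the divergence is where the difficulties accumulate. The paper constructs the comparison chart \emph{asymmetrically}: it first replaces $(V,g,v)$ by a minimal chart $(\tilde V,\tilde g,\tilde v)$ with $\tilde V\hookrightarrow\bA^n_B$, extends the resulting map $\Crit_{U/B}(f)\times_X\Crit_{V^\circ/B}(g^\circ)\to\tilde V$ to a morphism $\Theta\colon U^\circ\to\tilde V$, establishes the algebraic identity $f^\circ-\tilde g\circ\Theta\in\cI^2_{R/U^\circ}$ using the description of $\cS_{X/B}$ from \cref{prop:Sproperties}(3), writes this class explicitly as $q'(s')$ for a trivial orthogonal bundle $(E',q')\to\tilde V$ and a section $s'\in\Gamma(U^\circ,\Theta^*E')$ vanishing on $R$, then adjoins hyperbolic summands $\bA^{2m}$ so that $\Phi\colon U^\circ\to E=E'\times\bA^{2m}$ becomes unramified. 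The comparison chart $(W,h)=(E,\tilde g\circ\pi_E+\q_E)$ is then a stabilization over $\tilde V$, and $\Psi$ is (the composite of $\Xi$ with) the zero section, which is a critical morphism by \cref{ex:criticalstabilization}. Your symmetric construction inside $U\times_B V$, in which $h$ is assembled from extensions off the two graphs $\Gamma_\beta,\Gamma_\alpha$, is not a reformulation of this; it is a genuinely different route and one that has real gaps.

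Concretely, two steps do not go through as written. First, the claim that $\Gamma_\beta$ and $\Gamma_\alpha$ can be made to ``meet cleanly along $\kappa(X^\circ)$'' by choosing $\alpha,\beta$ ``generically'' is unjustified: the constraints $\beta\circ a=b$ and $\alpha\circ b=a$ are rigid, and the scheme-theoretic intersection $\Gamma_\beta\cap\Gamma_\alpha$ is the fixed locus of $\alpha\circ\beta\colon U\to U$, which always contains $\kappa(X^\circ)$ but may be strictly larger or non-reduced. The function $e$ you wish to extend off $\Gamma_\beta\cup\Gamma_\alpha$ only exists if $\beta^*g$ and $\alpha^*f$ agree on this scheme-theoretic intersection, and your appeal to $d_B\und(s)=0$ only gives agreement on $X^\circ$, not on the full fixed locus with its scheme structure. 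Second, the assertion that ``$\delta$ can realize an arbitrary Hessian on the normal bundle'' is false: any $\delta\in\cI^2_{\Gamma_\beta}\cap\cI^2_{\Gamma_\alpha}$ has its Hessian along $X^\circ$ vanishing on $(\T_{\Gamma_\beta/B}+\T_{\Gamma_\alpha/B})\otimes\T_{W/B}$, so $\delta$ only controls the Hessian on the quotient of $\rN_{X^\circ/W}$ by $\rN_{X^\circ/U}\oplus\rN_{X^\circ/V}$ — a piece of rank $\dim\Omega^1_{X^\circ/B}$ — and you have not shown that $p^*f+q^*g-e$ contributes a nondegenerate form on the complementary directions or that the cross-terms work out. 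Until those two points are settled, the claim $\Crit_{W/B}(h)=\kappa(X^\circ)$ scheme-theoretically is not established, so neither is the critical-morphism property of $\Phi,\Psi$, and the rest of the argument is moot. The paper's asymmetric stabilization avoids all of this by never forming the comparison inside $U\times_B V$ in the first place.
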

\begin{proof}
Using \cref{prop:chartopenaffine} we may find an open neighborhood $V^\circ\subset V$ of $y$, a critical chart $(\tilde{V}, \tilde{g}, \tilde{v})$ which admits an open immersion into $\bA^n_B$ and a critical morphism $\Xi\colon (V^\circ, g^\circ, v^\circ)\rightarrow (\tilde{V}, \tilde{g}, \tilde{v})$. Moreover, by construction $\Xi$ restricts to an isomorphism $\Crit_{V^\circ/B}(g^\circ)\cong \Crit_{\tilde{V}/B}(\tilde{g})$.

Consider the diagram
\[
\xymatrix{
& \Crit_{U/B}(f)\times_X \Crit_{V^\circ/B}(g^\circ) \ar[dl] \ar[dr] & \\
U && \tilde{V} \ar[r] & \bA^n_B.
}
\]
Since $\Crit_{U/B}(f)\times_X \Crit_{V^\circ/B}(g^\circ)\rightarrow U$ is an immersion, we may find a Cartesian diagram
\begin{equation}\label{eq:stabilizationzigzagdiagram}
\xymatrix{
R \ar[r] \ar[d] & \Crit_{U/B}(f)\times_X \Crit_{V^\circ/B}(g^\circ) \ar[d] \\
U^\circ \ar[r] & U
}
\end{equation}
with $U^\circ\rightarrow U$ open with $x\in R$ and a commutative diagram
\[
\xymatrix{
R \ar[r] \ar[d] & \Crit_{U/B}(f)\times_X \Crit_{V^\circ/B}(g^\circ) \ar[d] & \\
U^\circ \ar^{\Theta}[r] & \bA^n_B
}
\]
Shrinking $U^\circ$ further, we may assume that $\Theta\colon U^\circ\rightarrow \bA^n_B$ factors through $\tilde{V}\subset \bA^n_B$. The Cartesian diagram \eqref{eq:stabilizationzigzagdiagram}, the fact that $\Crit_{V^\circ/B}(g^\circ)\rightarrow X$ is a monomorphism and the Cartesian diagram from \cref{prop:smoothcriticallocus} imply that $R\cong \Crit_{U^\circ/B}(f^\circ)$, where $f^\circ=f|_{U^\circ}$. Let $u^\circ = u|_{\Crit_{U^\circ/B}(f^\circ)}$. Thus, we get a commutative diagram
\[
\xymatrix{
R\cong\Crit_{U^\circ/B}(f^\circ) \ar[r] \ar[d] & \Crit_{V^\circ/B}(g^\circ) \ar[d] \\
U^\circ \ar^{\Theta}[r] & \tilde{V}
}
\]
with $\Crit_{U^\circ/B}(f^\circ)\rightarrow \Crit_{V^\circ/B}(g^\circ)$ an open immersion.

Using the functoriality of the description of $\cS_{X/B}$ from \cref{prop:Sproperties}(3) with respect to $\Theta$ we obtain a commutative diagram
\[
\xymatrix{
0 \ar[r] & \cS_{X/B}|_R \ar[r] \ar@{=}[d] & \cO_{\tilde{V}} / \cI_{R,\tilde{V}}^2 \ar[r] \ar[d] & \Omega^1_{\tilde{V}/B} / \cI_{R,\tilde{V}}\Omega^1_{\tilde{V}/B} \ar[d] \\
0 \ar[r] & \cS_{X/B}|_R \ar[r] & \cO_{U^\circ} / \cI_{R,U^\circ}^2 \ar[r] & \Omega^1_{U^\circ/B} / \cI_{R,U^\circ}\Omega^1_{U^\circ/B}
}
\]
Using that $s_{f^\circ} = s_{g^\circ}$ on $R$ we thus obtain that $f^\circ - g\circ \Theta\in \cI_{R/U^\circ}^2$. Shrinking $U^\circ$, we may find a (trivial) orthogonal bundle $(E', q')\rightarrow \tilde{V}$ together with a section $s'$ of $\Theta^* E'$, vanishing on $R$, such that
\[f^\circ - \tilde{g}\circ \Theta = q'(s').\]
Now let $W' = E'$ equipped with the function $\tilde{g}\circ \pi_{E'} + \q_{E'}\colon W'\rightarrow \bA^1$.

Shrinking $U^\circ$ we may choose an \'etale morphism $y\colon U^\circ\rightarrow \bA^m_B$, i.e. \'etale coordinates on $U^\circ\rightarrow B$ near $x$. Consider the hyperbolic quadratic form $q_m$ on $\bA^{2m}$ given by $\sum_{i=1}^m y_iz_i$ and let $E=E'\times \bA^{2m}$ with the sum quadratic form $q=q' \boxplus q_m$. Consider the section $s = (s', y_1, \dots, y_m, 0, \dots, 0)$ of $\Theta^* E$. By construction $\Omega^1_{E/B}\rightarrow \Omega^1_{U^\circ/B}$ is surjective, so $\Phi\colon (U^\circ, f^\circ)\rightarrow (W, \tilde{g}\circ \pi_E + \q_E)$ given by the section $s$ is unramified. Since $s'$ vanishes on $R$, $\Phi$ is a critical morphism.

Define $\tilde{\Psi}\colon (\tilde{V}, \tilde{g})\rightarrow (W, \tilde{g}\circ \pi_E + \q_E)$ to be the zero section which is a critical morphism by \cref{ex:criticalstabilization}. We set
\[\Psi\colon (V^\circ, g^\circ)\xrightarrow{\Xi} (\tilde{V}, \tilde{g})\xrightarrow{\tilde{\Psi}} (W, \tilde{g}\circ \pi_E + \q_E),\]
which is a composite of critical morphisms.
\end{proof}

Finally, any critical chart can be replaced by a minimal chart at a point.

\begin{proposition}\label{prop:minimalchart}
Let $(V, g)$ be an LG pair over $B$ and $y\in \Crit_{V/B}(g)$ a point. Then there is an open subscheme $V^\circ\subset V$, a smooth $B$-scheme $U$, a point $x\in\Crit_{U/B}(f)$, a function $f\colon U\rightarrow \bA^1$ and a critical morphism $\Phi\colon (U, f)\rightarrow (V^\circ, g|_{V^\circ})$, such that $\Phi(x) = y$, $\Crit_{U/B}(f)\rightarrow U$ is minimal at $x$ and $\Phi\colon\Crit_{U/B}(f)\rightarrow \Crit_{V^\circ/B}(g|_{V^\circ})$ is an isomorphism.
\end{proposition}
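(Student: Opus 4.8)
The plan is to prove a relative, parametrized version of the generalized Morse lemma: Zariski-locally near $y$ one writes $(V,g)$ as the quadratic stabilization, in the sense of \cref{ex:criticalstabilization}, of the sought minimal chart, and one takes $\Phi$ to be the corresponding zero section. Write $X = \Crit_{V/B}(g)$, let $b$ be the image of $y$, let $d$ be the relative dimension of $V\to B$ at $y$, and set $r = \dim \Omega^1_{X/B, y}$ and $m = d - r$. First I would shrink $V$ to an affine open neighbourhood of $y$ carrying relative étale coordinates $z_1,\dots,z_d$, and apply \cref{prop:criticalHessian}(1): the Hessian $\Hess(g)_y$ is a quadratic form on $\T_{V/B,y}$ with radical $\T_{X/B,y}$, hence of rank $m$. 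Renumbering the $z_i$ so that $\partial_{z_1}|_y,\dots,\partial_{z_m}|_y$ span a vector-space complement of this radical, the principal $m\times m$ block $H = (\partial^2 g/\partial z_i\partial z_j|_y)_{i,j\le m}$ becomes invertible, a symmetric bilinear form being nondegenerate on any complement of its radical. I then relabel $q_i = z_i$ for $i\le m$, $p_k = z_{m+k}$ for $k\le r$, and $F_j = \partial_{q_j} g$.

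Next I would set $U := \{F_1 = \dots = F_m = 0\}$ inside $V$, with $f := g|_U$ and $\Phi\colon U\hookrightarrow V^\circ$ the closed immersion, where $V^\circ$ is an open neighbourhood of $y$ inside the already shrunk $V$ small enough that $\Lambda := (F_1,\dots,F_m,p_1,\dots,p_r)\colon V\to \bA^{d}_B$ is étale; this is possible because the relative Jacobian of $\Lambda$ at $y$ is block upper-triangular with diagonal blocks $H$ and $I_r$, hence invertible. It follows immediately that $U\to B$ is smooth of relative dimension $r$ and that $p_1|_U,\dots,p_r|_U$ are relative étale coordinates on $U$. The crucial point is the equality of closed subschemes
\[
\Crit_{U/B}(f) = \Crit_{V^\circ/B}(g|_{V^\circ})
\]
inside $U$: the right-hand side equals $\{F_j = 0\}\cap\{\partial_{p_k} g = 0\}$, hence is contained in $U$; and from $d_B g = \sum_j F_j\,d_B q_j + \sum_k (\partial_{p_k} g)\,d_B p_k$ together with $F_j|_U = 0$ one gets $d_B f = \sum_k (\partial_{p_k} g)|_U\,d_B(p_k|_U)$ in $\Omega^1_{U/B}$, so that $\Crit_{U/B}(f) = U\cap\{\partial_{p_k} g = 0\}$, which is the same subscheme. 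From this I would conclude that $\Phi$ is a critical morphism (a closed immersion, hence unramified; $\Phi^\ast g = f$ by construction; and it restricts to the identity isomorphism $\Crit_{U/B}(f)\to \Crit_{V^\circ/B}(g|_{V^\circ})$), that $x := y$ lies in $\Crit_{U/B}(f)$ with $\Phi(x) = y$, and that the closed immersion $\Crit_{U/B}(f) = X\hookrightarrow U$ is minimal at $x$ because $\Omega^1_{U/B}|_y\to\Omega^1_{X/B}|_y$ is a surjection between $k(y)$-vector spaces each of dimension $r$.

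The main obstacle — indeed the only step with real content — is the scheme-theoretic equality $\Crit_{U/B}(f) = \Crit_{V^\circ/B}(g|_{V^\circ})$, which rests on two ingredients that need care: that the relevant block of the Hessian at $y$ can be made invertible by a mere permutation of the coordinates (so that no étale base change is introduced and $V^\circ$ remains an open subscheme of $V$), and the short computation with the relative de Rham differential along the closed immersion $\Phi$ identifying the relative critical locus of $f$. Everything else — shrinking to make $\Lambda$ étale, the smoothness of $U\to B$, the dimension count for minimality — is routine. As a consistency check one can observe that with this construction $V^\circ$ is precisely the total space of the trivial orthogonal bundle $U\times\bA^m$ with quadratic form of Gram matrix $H$, that $g|_{V^\circ}$ agrees with $f\circ\pi + \q$ up to an étale change of the fibre coordinates, and that $\Phi$ is, under this identification, the zero section of \cref{ex:criticalstabilization}; this explains why the construction is essentially forced, although it is not needed for the argument.
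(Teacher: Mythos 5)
Your proof is correct and takes essentially the same route as the paper. The paper chooses an abstract splitting $\T_{V/B,y}\cong \T_{\Crit_{V/B}(g)/B,y}\oplus \rN_y$, extends it to a local trivialization of $\T_{V^\circ/B}$, decomposes $dg=\alpha+\beta$, and sets $U=\beta^{-1}(0)$; you realize the same splitting concretely by permuting relative étale coordinates $z_i$ so that $\partial_{z_1}|_y,\dots,\partial_{z_m}|_y$ give a complement of the radical, and take $U=\{\partial_{q_1}g=\dots=\partial_{q_m}g=0\}$. Both verify smoothness of $U\to B$ from nondegeneracy of the Hessian on the complement (via \cref{prop:criticalHessian}(1)) and both identify $\Crit_{U/B}(f)$ with $\Crit_{V^\circ/B}(g|_{V^\circ})$ by the same de Rham computation. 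Your version is somewhat more explicit — in particular, you spell out the step $d_B f=\sum_k(\partial_{p_k}g)|_U\,d_B(p_k|_U)$, which the paper compresses into the assertion that ``both are defined by the equations $\alpha=\beta=0$''— but there is no substantive difference in method or generality.
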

\begin{proof}
Consider the short exact sequence
\[0\longrightarrow \T_{\Crit_{V/B}(g)/B, y}\rightarrow \T_{V/B, y}\longrightarrow \rN_y\longrightarrow 0\]
and consider an arbitrary splitting $\T_{V/B, y}\cong \T_{\Crit_{V/B}(g)/B, y}\oplus \rN_y$. In a neighborhood $V^\circ\subset V$ of $y$ we may extend it to an isomorphism
\[\T_{V^\circ/B}\cong \T_{\Crit_{V/B}(g)/B, y}\otimes \cO_{V^\circ} \oplus \rN_y\otimes \cO_{V^\circ}.\]
With respect to this decomposition let $dg|_{V^\circ} = \alpha + \beta$ for
\[\alpha\in \Omega^1_{\Crit_{V/B}(g)/B, y}\otimes \cO_{V^\circ},\qquad \beta\in \rN^\vee_y\otimes \cO_{V^\circ}.\]
Consider $U = \beta^{-1}(0)$ with $f=g|_U$ and the morphism $\nabla \beta\colon \T_{V^\circ/B}\rightarrow \rN^\vee_y\otimes \cO_{V^\circ}$.

We have $\Hess(f)_y=(\nabla df)_y\colon \T_{V^\circ/B, y}\rightarrow \Omega^1_{V^\circ/B, y}$ which is nondegenerate on $\rN_y$ by \cref{prop:criticalHessian}(1). Thus, possibly shrinking $V^\circ$ to a smaller neighborhood of $y$ we may assume that $\nabla \beta$ is surjective, i.e. $U\rightarrow B$ is smooth. Moreover, $\Crit_{U/B}(f)=\Crit_{V^\circ/B}(g|_{V^\circ})$ as both are defined by the equations $\alpha=\beta=0$ in $V^\circ$.
\end{proof}

\subsection{Gluing objects on critical charts}

We will use the results comparing different critical charts from \cref{sect:chartcomparison} to glue objects over schemes equipped with relative d-critical structures. We begin with the following general paradigm for gluing. Consider a site $\cC$ and a functor $\pi\colon \cF\rightarrow \cC$. We consider a collection of morphisms of $\cF$ called \defterm{localization morphisms}, which satisfies the following properties:
\begin{enumerate}
    \item The collection of localization morphisms is closed under composition and it contains identities.
    \item For every localization morphism $x\rightarrow z$ and any morphism $y\rightarrow z$ the fiber product $x\times_z y$ exists, it is preserved by $\pi$ and $x\times_z y\rightarrow y$ is a localization morphism.
\end{enumerate}

We say $\cF$ is \defterm{locally connected} if the following conditions are satisfied:
\begin{enumerate}
    \item For every $X\in\cC$ there is a collection of objects $\{x_a\}$ of $\cF$ together with a covering $\{\pi(x_a)\rightarrow X\}$.
    \item For every $x,y\in\cF$ together with morphisms $\pi(x)\rightarrow z\leftarrow \pi(y)$ in $\cC$, there are collections of localization morphisms $\{x_a\rightarrow x\}$ and $\{y_a\rightarrow y\}$ such that for each $a$ there exists a diagram $x_a\rightarrow z_a\leftarrow y_a$ in $\cF$, and $\{\pi(x_a)\times_z \pi(y_a)\rightarrow \pi(x)\times_z \pi(y)\}$ is a cover.
\end{enumerate}

\begin{example}\label{ex:Cartesianlocallyconnected}
Suppose $\pi\colon \cF\rightarrow \cC$ is a Cartesian fibration, so that it is classified by a presheaf $\tilde{\cF}\colon \cC^{\op}\rightarrow \Cat$. Consider the class of $\pi$-Cartesian morphisms as the class of localization morphisms on $\cF$. Then $\cF$ is locally connected precisely if the sheafification of $\tilde{\cF}$ is connected.
\end{example}

\begin{proposition}\label{prop:connecteddescent}
Let $\cC$ be a site, $\pi_i\colon \cF_i\rightarrow \cC$ for $i=1, 2$ two functors, where $\cF_1\rightarrow \cC$ is locally connected and $\cF_2\rightarrow \cC$ is a Cartesian fibration satisfying descent. Consider a functor $F\colon \cF_1\rightarrow \cF_2$ over $\cC$ satisfying the following conditions:
\begin{enumerate}
\item For every $f\colon x\rightarrow y$ in $\cF_1$ the morphism $F(f)\colon F(x)\rightarrow F(y)$ in $\cF_2$ is $\pi_2$-Cartesian.
\item Given two morphisms $f_1,f_2\colon x\rightarrow y$ such that $\pi_1(f_1)=\pi_1(f_2)$ we have $F(f_1)=F(f_2)$.    
\end{enumerate}
Then there is a Cartesian section $s$ of $\pi_2\colon \cF_2\rightarrow \cC$ determined by the following conditions:
\begin{itemize}
    \item For an object $x\in\cF_1$ we have an isomorphism $i_x\colon s(\pi_1(x))\xrightarrow{\sim} F(x)$.
    \item For a morphism $f\colon x\rightarrow y$ in $\cF_1$ there is a commutative diagram
    \[
    \begin{tikzcd}
    s(\pi_1(x)) \arrow[r, "s(\pi_1(f))"] \arrow[d, "i_x"] & s(\pi_1(y)) \arrow[d, "i_y"] \\
    F(x) \arrow[r, "F(f)"] & F(y)
    \end{tikzcd}
    \]
\end{itemize}
\end{proposition}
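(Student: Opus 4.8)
The plan is to build $s$ by \'etale-local descent from the objects $F(x)$, $x\in\cF_1$, exploiting that the Cartesian fibration $\cF_2\to\cC$ satisfies descent: a Cartesian section is then the same datum as a compatible family of local objects, and both objects and isomorphisms in $\cF_2$ can be glued along covers of $\cC$. I will repeatedly use two elementary facts about a Cartesian fibration. First, a $\pi_2$-Cartesian morphism over a prescribed morphism of $\cC$ with prescribed target is unique up to unique isomorphism, so condition~(1) exhibits each $F(f)\colon F(x)\to F(y)$ as a pullback of $F(y)$ along $\pi_1(f)$ and lets one pull $F(y)$ back along any morphism of $\cC$ factoring through $\pi_1(f)$. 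Second, for $a,b\in\cF_2$ over a common object of $\cC$ the presheaf of isomorphisms between the restrictions of $a$ and $b$ is a sheaf, so such isomorphisms glue along covers.

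\emph{Local objects and transition data.} Fix $X\in\cC$. By the first local-connectedness condition choose $\{x_a\}$ in $\cF_1$ with $\{X_a:=\pi_1(x_a)\to X\}$ a cover, and set $s_a:=F(x_a)\in\cF_2(X_a)$. To glue the $s_a$ into $s(X)\in\cF_2(X)$ it suffices to produce, for all $a,b$, an isomorphism $\phi_{ab}$ of the restrictions of $s_a$ and $s_b$ to $X_{ab}:=X_a\times_X X_b$ satisfying the cocycle identity on triple overlaps, whereupon descent for $\cF_2$ yields $s(X)$ canonically. To build $\phi_{ab}$, apply the second local-connectedness condition to $x_a$ and $x_b$ over the span $X_a\to X\leftarrow X_b$: it supplies, after passing to a cover of $X_{ab}$, localization morphisms $x_{a,i}\to x_a$ and $x_{b,i}\to x_b$ and a zigzag $x_{a,i}\to z_i\leftarrow x_{b,i}$ in $\cF_1$. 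Applying $F$, condition~(1) makes every arrow $\pi_2$-Cartesian, so the chain
\[
s_a\ \xleftarrow{\ \sim\ }\ F(x_{a,i})\ \xrightarrow{\ \sim\ }\ F(z_i)\ \xleftarrow{\ \sim\ }\ F(x_{b,i})\ \xrightarrow{\ \sim\ }\ s_b,
\]
restricted to a common base dominating the corresponding piece of $X_{ab}$, composes to a local isomorphism of $s_a$ and $s_b$; by the sheaf property of isomorphism-presheaves these glue to the desired $\phi_{ab}$.

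\emph{Coherence, and the main obstacle.} The substance of the argument is the verification that $\phi_{ab}$ does not depend on the auxiliary choices (localization morphisms, intermediary objects $z_i$, refining covers) and that $\phi_{bc}\circ\phi_{ab}=\phi_{ac}$ on triple overlaps. This is exactly where condition~(2) is used: because $F$ is insensitive to the fibrewise direction of $\pi_1$, any two of the above chains running between the same objects of $\cF_2$ over the same morphism of $\cC$ induce the same isomorphism, and competing chains are made comparable by invoking local connectedness once more, over a common refinement, to relate the intermediary objects. Arranging these refinements so that every comparison takes place over a single base and so that the covers one produces really do cover the relevant overlaps is the delicate, bookkeeping-intensive step, and I expect it to be the main obstacle; everything else is a formal consequence of descent for $\cF_2$.

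\emph{Conclusion.} Granting the cocycle datum, descent for $\cF_2$ along $\{X_a\to X\}$ produces $s(X)\in\cF_2(X)$ with isomorphisms $s(X)|_{X_a}\cong F(x_a)$, independent of the chosen cover. For $u\colon X'\to X$ in $\cC$, functoriality of the descent construction gives a canonical isomorphism $u^*s(X)\cong s(X')$, i.e. a canonical $\pi_2$-Cartesian morphism $s(X')\to s(X)$ over $u$, compatibly with composition, so $s$ is a Cartesian section of $\pi_2$. Computing $s(\pi_1(x))$ with the one-element cover $\{\id_{\pi_1(x)}\}$ lifted to $x$ identifies it with $F(x)$, which is $i_x$; for $f\colon x\to y$ in $\cF_1$, comparing the one-element covers over $\pi_1(x)$ and $\pi_1(y)$ through the $\pi_2$-Cartesian morphism $F(f)$ yields the asserted commutative square. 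Finally $s$ is determined by these conditions: a second section $s'$ satisfying them agrees with $s$ over each $x_a$ compatibly with the transition data, so the comparison isomorphisms glue to a (unique) isomorphism $s\xrightarrow{\sim}s'$.
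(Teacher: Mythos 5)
Your proposal follows essentially the same strategy as the paper's: take a cover $\{\pi_1(x_a)\to X\}$ from the first local-connectedness condition, set $s_a := F(x_a)$, produce transition isomorphisms $\phi_{ab}$ by applying $F$ to zigzags supplied by the second local-connectedness condition (each arrow becomes $\pi_2$-Cartesian by condition~(1), hence an isomorphism after restriction), verify a cocycle condition, and then invoke descent for $\cF_2$. The identification $i_x$ via the trivial cover and the compatibility square for $f\colon x\to y$ via the $\pi_2$-Cartesianness of $F(f)$ match what the paper does as well.

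However, you explicitly flag the central verification — that $\phi_{ab}$ is independent of the chosen zigzags and covers, and that $\phi_{bc}\circ\phi_{ab}=\phi_{ac}$ — as a ``delicate, bookkeeping-intensive step'' that you expect to be the main obstacle, without actually carrying it out. That verification \emph{is} the proof; everything else is formal descent. The paper's argument for it is not mere bookkeeping: given two competing zigzags (indexed by $d,e\in D_{ab}$), one invokes local connectedness a second time at the level of the intermediary objects $y^d, y^e$, then forms an explicit fiber product of localization morphisms (the object $x'^{de}_{ac}$ in the paper) so that both zigzags, together with the comparison zigzag, become morphisms out of a common object of $\cF_1$; the resulting hexagon in $\cF_1$ need not commute, but condition~(2) guarantees that $F$ nonetheless identifies the two composite arrows, since the underlying morphisms in $\cC$ agree. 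A parallel construction, using local connectedness simultaneously over three indices $a,b,c$, yields the cocycle identity. Your sketch names the right tools (condition~(2), iterating local connectedness, working over a common refinement), so the approach would go through, but as written the proposal does not establish the key coherence and thus does not constitute a complete proof. One should also note, for the ``uniqueness'' and Cartesian-section clauses, that the output is independent of the chosen cover $\{x_a\}$ and of the base object $X$ up to canonical isomorphism; this is again a standard refinement-of-covers descent argument but deserves at least a sentence, since you appeal to it when computing $s(\pi_1(x))$ via a one-element cover.
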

\begin{proof}
The proof is similar to the proofs of \cite[Theorem 2.28]{JoyceDcrit} and \cite[Theorem 6.9]{BBDJS}. For an object $X\in\cC$ consider a collection $\{x_a\mid a\in A\}$ of objects of $\cF_1$ together with a cover
\begin{equation}\label{eq:criticaldescentcover1}
\{\pi_1(x_a)\rightarrow X\mid a\in A\}.
\end{equation}
Since $\cF_1$ is locally connected, for every $a,b\in A$ there is a set $D_{ab}$ and for each $d\in D_{ab}$ there are morphisms $\{x'^d_a\rightarrow x_a\}$ and $\{x'^d_b\rightarrow x_b\}$ and objects $\{y^d\}$ of $\cF_1$ together with morphisms $x'^d_a\xrightarrow{\Phi^d} y^d\xleftarrow{\Psi^d} x'^d_b$ such that for every $a,b\in A$
\begin{equation}\label{eq:criticaldescentcover2}
\{\pi_1(x'^d_a)\times_X \pi_1(x'^d_b)\rightarrow \pi_1(x_a)\times_X \pi_1(x_b)\mid d\in D_{ab}\}
\end{equation}
is a cover.

Since $\cF_2$ satisfies descent, using the cover \eqref{eq:criticaldescentcover1} we have to specify an isomorphism
\[\alpha_{ab}\colon F(x_a)|_{\pi_1(x_a)\times_X \pi_1(x_b)}\xrightarrow{\sim} F(x_b)|_{\pi_1(x_a)\times_X \pi_1(x_b)}\]
for every $a,b\in A$ which satisfies the cocycle conditions
\begin{equation}\label{eq:criticaldescentcocycle}
\alpha_{bc}\circ \alpha_{ab} = \alpha_{ac},\qquad \alpha_{aa} = \id.
\end{equation}

We proceed to the definition of $\alpha_{ab}$. By descent, using \eqref{eq:criticaldescentcover2} to construct $\alpha_{ab}$ it is enough to construct, for every $d\in D_{ab}$,
\[\alpha^d_{ab}\colon F(x_a)|_{\pi_1(x'^d_a)\times_X \pi_1(x'^d_b)}\xrightarrow{\sim}F(x_b)|_{\pi_1(x'^d_a)\times_X \pi_1(x'^d_b)}\]
such that for every $d, e\in D_{ab}$ we have
\begin{equation}\label{eq:criticaldescentisomorphism}
\alpha^d_{ab}|_{\pi_1(x'^d_a)\times_X \pi_1(x'^d_b)\times_X \pi_1(x'^e_a)\times_X \pi_1(x'^e_b)} = \alpha^e_{ab}|_{\pi_1(x'^d_a)\times_X \pi_1(x'^d_b)\times_X \pi_1(x'^e_a)\times_X \pi_1(x'^e_b)}.
\end{equation}

We have isomorphisms $\Phi^d\colon x'^d_a\xrightarrow{\sim} y^d|_{\pi_1(x'^d_a)}$ and $\Psi^d\colon x'^d_b\xrightarrow{\sim} y^d|_{\pi_1(x'^d_b)}$ and 
we define
\[\alpha_{ab}^d = F(\Psi^d)^{-1}\circ F(\Phi^d).\]

To show \eqref{eq:criticaldescentisomorphism}, using the condition of local connectedness, we get a collection of localization morphisms $\{y'^d_c\rightarrow y^d\}$ and $\{y'^e_c\rightarrow y^e\}$ together with zigzags $y'^d_c\xrightarrow{\Phi_c} z_c\xleftarrow{\Psi_c} y'^e_c$ such that
\[\{\pi_1(y'^d_c)\times_X \pi_1(y'^e_c)\rightarrow \pi_1(y^d)\times_X \pi_1(y^e)\}\]
is a cover.

Since $x'^d_a\rightarrow x_a$ and $y'^d_c\rightarrow y^d$ are localization morphisms, we can form the fiber product
\[x'^{de}_{ac} = y'^d_c\times_{y^d} x'^d_a\times_{x_a} x'^e_a\times_{y^e} y'^e_c.\]
Then we have a (non-commutative) diagram
\[
\begin{tikzcd}
x'^{de}_{ac} \arrow[rr, "\Phi^d"] \arrow[dd, "\Phi^e"] && y'^d_c \arrow[dl, "\Phi_c"] \\
& z_c & \\
y'^e_c \arrow[ur, "\Psi_c"] && x'^{de}_{bc} \arrow[ll, "\Psi^e"] \arrow[uu, "\Psi^d"]
\end{tikzcd}
\]
in $\cF_1$. Using the functoriality of $F$ as well as the second assumption in the statement, we get, on the cover
\[\{\pi_1(y'^{de}_{ac})\times_X \pi_1(y'^{de}_{bc})\rightarrow \pi_1(x'^d_a)\times_X \pi_1(x'^d_b)\times_X \pi_1(x'^e_a)\times_X \pi_1(x'^d_b)\},\]
that
\begin{align*}
\alpha^d_{ab} &= F(\Psi^d)^{-1}\circ F(\Phi^d) \\
&= F(\Psi^d)\circ F(\Phi_c)^{-1}\circ F(\Phi_c)\circ F(\Phi^d) \\
&= F(\Psi^e)^{-1}\circ F(\Psi_c)^{-1}\circ F(\Psi_c)\circ F(\Phi^e) \\
&= F(\Psi^e)^{-1}\circ F(\Phi^e) = \alpha^e_{ab}.
\end{align*}

This finishes the construction of $\alpha_{ab}$ and shows that it is independent of the choices of the intermediate zigzags. Let us next check the cocycle conditions \eqref{eq:criticaldescentcocycle}. The condition $\alpha_{aa} = \id$ follows from $F(\id) = \id$ since we can choose the zigzag for $x_a$ and itself to be given by the identity maps. Let $a,b,c\in A$. Using local connectedness we can choose localizations $\{x'^f_i\rightarrow x_i\}$ for $i=a,b,c$ such that
\[\{\pi_1(x'^f_a)\times_X \pi_1(x'^f_b)\times_X \pi_1(x'^f_c)\rightarrow \pi_1(x_a)\times_X \pi_1(x_b)\times_X \pi_1(x_c)\}\]
and an object $y$ together with morphisms $\Phi_i\colon x'^f_i\rightarrow y$ for $i=a,b,c$. Then we have
\begin{align*}
\alpha_{ab}|_{\pi_1(x'^f_a)\times_X \pi_1(x'^f_b)\times_X \pi_1(x'^f_c)} &= \alpha_{\Phi_b}^{-1}\circ \alpha_{\Phi_a} \\
\alpha_{bc}|_{\pi_1(x'^f_a)\times_X \pi_1(x'^f_b)\times_X \pi_1(x'^f_c)} &= \alpha_{\Phi_c}^{-1}\circ \alpha_{\Phi_b} \\
\alpha_{ac}|_{\pi_1(x'^f_a)\times_X \pi_1(x'^f_b)\times_X \pi_1(x'^f_c)} &= \alpha_{\Phi_c}^{-1}\circ \alpha_{\Phi_a}.
\end{align*}

This implies that $\alpha_{bc}\circ \alpha_{ab} = \alpha_{ac}$ on $\pi_1(x'^f_a)\times_X \pi_1(x'^f_b)\times_X \pi_1(x'^f_c)$. As we can cover $\pi_1(x_a)\times_X \pi_1(x_b)\times_X \pi_1(x_c)$ by such morphisms, this proves the claim.
\end{proof}

\begin{remark}
Consider the setting of \cref{prop:connecteddescent} when $\pi_1\colon \cF_1\rightarrow \cC$ is a Cartesian fibration (see \cref{ex:Cartesianlocallyconnected}). Then the sheafification of $\cF_1$ has trivial $\pi_0$. So, a morphism to $\cF_2$ defines a global object precisely if all the obstructions coming from the nontrivial $\pi_1$ vanish; this is guaranteed by the second condition in \cref{prop:connecteddescent}.
\end{remark}

\begin{corollary}\label{cor:criticaldescent}
Let $X\rightarrow B$ be a morphism of schemes equipped with a relative d-critical structure $s$ and $\cF$ a sheaf of categories over $X$ in the Zariski topology. Fix a locally constant function $d\colon X\rightarrow \Z/2\Z$. Consider the following data:
\begin{enumerate}
    \item For every critical chart $(U, f, u)$, such that $\dim(U/B)\equiv d\pmod{2}$, we have an object \[x_u\in \Gamma(\Crit_{U/B}(f), \cF).\]
    \item For every critical morphism $\Phi\colon (U, f, u)\rightarrow (V, g, v)$ of even relative dimension we have an isomorphism \[J_\Phi\colon x_u\xrightarrow{\sim} (x_v)|_{\Crit_{U/B}(f)}.\]
\end{enumerate}
which satisfy the following conditions:
\begin{enumerate}
    \item For a composite $(U, f, u)\xrightarrow{\Phi} (V, g, v)\xrightarrow{\Psi} (W, h, w)$ of critical morphisms of even relative dimension we have
    \[J_{\Psi\circ\Phi} = (J_{\Psi})|_{\Crit_{U/B}(f)}\circ J_{\Phi}.\]
    \item For the identity critical morphism $(U, f, u)\xrightarrow{\id} (U, f, u)$ we have
    \[J_{\id} = \id.\]
    \item Given two critical morphisms $\Phi_1, \Phi_2\colon (U, f, u)\rightarrow (V, g, v)$ of even relative dimension such that $\Phi_1=\Phi_2\colon \Crit_{U/B}(f)\rightarrow \Crit_{V/B}(g)$, we have an equality
    \[J_{\Phi_1}=J_{\Phi_2}.\]
\end{enumerate}
Then there is an object $x\in\Gamma(X, \cF)$ restricting to $x_u$ on each critical chart and with $J_\Phi$ as the isomorphisms of these local objects for critical morphisms.
\end{corollary}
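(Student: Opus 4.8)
This will follow by applying \cref{prop:connecteddescent}. The plan is to take the site $\cC = \mathrm{Op}(X)$ of Zariski opens of $X$; to let $\cF_2 \to \cC$ be the Cartesian fibration classified by the sheaf of categories $\cF$ (which satisfies descent since $\cF$ does, and whose Cartesian sections are the objects of $\Gamma(X,\cF) = \cF(X)$, the terminal object of $\mathrm{Op}(X)$ being $X$); and to let $\cF_1$ be the category whose objects are critical charts $(U,f,u)$ with $\dim(U/B) \equiv d \pmod 2$ and whose morphisms are critical morphisms of even relative dimension (these compose, since relative dimensions add, and identities have relative dimension $0$). Since $v$ is an open immersion, a critical morphism of charts $\Phi \colon (U,f,u) \to (V,g,v)$ restricts to the unique open immersion $\Crit_{U/B}(f) \to \Crit_{V/B}(g)$ lying over $X$, and I let $\pi_1 \colon \cF_1 \to \cC$ send $(U,f,u)$ to $u(\Crit_{U/B}(f)) \subseteq X$ and $\Phi$ to the resulting inclusion; the localization morphisms of $\cF_1$ are taken to be the open immersions of charts, whose fibre products against arbitrary critical morphisms are the evident restrictions of charts and are preserved by $\pi_1$. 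Finally $F \colon \cF_1 \to \cF_2$ sends $(U,f,u)$ to $x_u \in \Gamma(\Crit_{U/B}(f),\cF) = \cF\bigl(u(\Crit_{U/B}(f))\bigr)$ and $\Phi$ to the $\pi_2$-Cartesian morphism $J_\Phi \colon x_u \xrightarrow{\sim} (x_v)|_{\Crit_{U/B}(f)}$; conditions (1) and (2) of \cref{cor:criticaldescent} say precisely that $F$ is a well-defined functor over $\cC$.

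Hypothesis (1) of \cref{prop:connecteddescent} is then the Cartesianness of $F(\Phi) = J_\Phi$. Hypothesis (2) — that $F(\Phi)$ depends only on $\pi_1(\Phi)$ — holds because $\pi_1(\Phi)$ is simply the inclusion $u(\Crit_{U/B}(f)) \hookrightarrow v(\Crit_{V/B}(g))$, which is the same for every such $\Phi$; and since any two critical morphisms $\Phi_1, \Phi_2 \colon (U,f,u) \to (V,g,v)$ agree on critical loci by the uniqueness noted above, condition (3) of \cref{cor:criticaldescent} gives $J_{\Phi_1} = J_{\Phi_2}$. The bulk of the work is to verify that $\cF_1 \to \cC$ is locally connected. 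For the covering condition I would start from the critical charts of \cref{def:dcriticalscheme} (which cover $X$), restrict them to a given open $W \subseteq X$, and on the pieces where $\dim(U_a/B) \not\equiv d$ apply the stabilization of \cref{ex:criticalstabilization} along the trivial rank-one orthogonal bundle $\bA^1 \times U_a$, which changes neither the image nor the critical locus but flips the parity. For the connectedness condition I would invoke \cref{prop:stabilizationzigzag}: given two objects of $\cF_1$ with overlapping images and a common point $p$, it provides open restrictions of both charts (which are localization morphisms) and a zigzag of critical morphisms $(U^\circ,f^\circ,u^\circ) \xrightarrow{\Phi} (W,h,w) \xleftarrow{\Psi} (V^\circ,g^\circ,v^\circ)$ through $p$.

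The step I expect to be the main obstacle is the parity bookkeeping here, since the zigzag produced by \cref{prop:stabilizationzigzag} need not consist of morphisms of even relative dimension. The key point is that the relative dimensions of $\Phi$ and $\Psi$ have the same parity, because their difference is $\dim(U^\circ/B) - \dim(V^\circ/B) \equiv d - d \equiv 0 \pmod 2$; so if this common parity is even the zigzag already lies in $\cF_1$, and if it is odd I would replace $(W,h,w)$ by its rank-one stabilization (again via \cref{ex:criticalstabilization}) and post-compose $\Phi$ and $\Psi$ with the zero section, producing a zigzag in $\cF_1$ with both legs of even relative dimension. Letting $p$ range over all points of the overlap then yields the required cover. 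With local connectedness established, \cref{prop:connecteddescent} produces a Cartesian section $s$ of $\pi_2$ — equivalently an object $x \in \Gamma(X,\cF)$ — together with isomorphisms $x|_{\Crit_{U/B}(f)} \xrightarrow{\sim} x_u$ compatible with the $J_\Phi$, which is exactly the asserted gluing.
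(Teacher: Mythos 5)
Your proof is correct and follows essentially the same route as the paper's: both set up the category $\CritCharts_d(X/B)$ of parity-$d$ critical charts with even critical morphisms, take $\pi_1$ to the Zariski site via the critical locus, take open immersions of charts as localization morphisms, apply \cref{prop:connecteddescent} with $F$ given by the data $(x_u, J_\Phi)$, deduce local connectedness from \cref{prop:stabilizationzigzag} together with rank-one quadratic stabilization to fix parities, and use condition (3) to verify the independence-of-lift hypothesis. Your version just makes explicit the parity bookkeeping and the observation that any two critical morphisms of charts agree on critical loci because $v$ is monic, both of which the paper leaves implicit.
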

\begin{proof}
Consider the following category $\CritCharts_d(X/B)$:
\begin{itemize}
    \item Its objects are critical charts $(U, f, u)$ for $(X \rightarrow B, s)$ with $\dim(U/B)\equiv d\pmod{2}$.
    \item Its morphisms are critical morphisms of critical charts of even relative dimension.
\end{itemize}
We have a natural functor $\pi_1\colon \CritCharts_d(X/B)\rightarrow X_{\Zar}$ to the Zariski site of $X$ given by sending $(U, f, u)\mapsto \Crit_{U/B}(f)$. As the class of localization morphisms we take open immersions of critical charts. Given a critical chart $(U, f, u)$ we may construct a new critical chart $(U\times \bA^1, f\boxplus x^2, u)$ of dimension one higher. Thus, the fact that $\pi_1\colon\CritCharts_d(X/B)\rightarrow X_{\Zar}$ is locally connected follows from \cref{prop:stabilizationzigzag}. Let $\pi_2\colon \int \cF\rightarrow X_{\Zar}$ be the Grothendieck construction applied to the sheaf of categories $\cF$. Then the data given in the statement determines a functor $F \colon \CritCharts_d(X/B) \to \int \cF$ over $X_\Zar$ satisfying the conditions of \cref{prop:connecteddescent}, whence the claim.
\end{proof}

Restricting \cref{cor:criticaldescent} to sheaves of sets we obtain the following statement.

\begin{corollary}\label{cor:criticaldescentsets}
Let $X\rightarrow B$ be a morphism of schemes equipped with a relative d-critical structure $s$ and $\cF$ a sheaf of sets over $X$ in the Zariski topology. Fix a locally constant function $d\colon X\rightarrow \Z/2\Z$. Consider the following data:
\begin{enumerate}
    \item For every critical chart $(U, f, u)$, such that $\dim(U/B)\equiv d\pmod{2}$, we have an element $x_u\in \Gamma(\Crit_{U/B}(f), \cF)$.
\end{enumerate}
which satisfy the following condition:
\begin{enumerate}
\item For every critical morphism $\Phi\colon (U, f, u)\rightarrow (V, g, v)$ of even relative dimension we have an equality $x_u= (x_v)|_{\Crit_{U/B}(f)}$.
\end{enumerate}
Then there is a section $x\in\Gamma(X, \cF)$ restricting to $x_u$ on each critical chart.
\end{corollary}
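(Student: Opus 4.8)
The plan is to deduce Corollary~\ref{cor:criticaldescentsets} from Corollary~\ref{cor:criticaldescent} by regarding a sheaf of sets as a sheaf of categories in which every morphism is an identity. Concretely, given the sheaf of sets $\cF$ from the statement, let $\underline{\cF}$ be the sheaf of (discrete) categories with the same sections and only identity morphisms. A section of $\underline{\cF}$ over an open $U\subseteq X$ is the same as a section of $\cF$ over $U$, and since $\underline{\cF}$ is discrete, the sheaf condition for $\underline{\cF}$ as a sheaf of categories is literally the sheaf condition for $\cF$ as a sheaf of sets; in particular $\underline{\cF}$ satisfies descent in the sense required by Corollary~\ref{cor:criticaldescent}.

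Next I would translate the input data. The elements $x_u\in\Gamma(\Crit_{U/B}(f),\cF)$ give objects $x_u\in\Gamma(\Crit_{U/B}(f),\underline{\cF})$, supplying datum~(1) of Corollary~\ref{cor:criticaldescent}. For datum~(2), observe that in the discrete category $\Gamma(\Crit_{U/B}(f),\underline{\cF})$ there is an isomorphism $x_u\xrightarrow{\sim}(x_v)|_{\Crit_{U/B}(f)}$ if and only if $x_u=(x_v)|_{\Crit_{U/B}(f)}$, in which case the isomorphism is unique, namely the identity. Hypothesis~(1) of Corollary~\ref{cor:criticaldescentsets} guarantees exactly this equality for every even-dimensional critical morphism $\Phi$, so we may (and must) take $J_\Phi=\id$. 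The three compatibility conditions of Corollary~\ref{cor:criticaldescent} are then immediate: since every $J_\Phi$ is an identity in a discrete category, $J_{\Psi\circ\Phi}=\id=J_\Psi|_{\Crit_{U/B}(f)}\circ J_\Phi$, $J_{\id}=\id$, and $J_{\Phi_1}=\id=J_{\Phi_2}$ whenever $\Phi_1=\Phi_2$ on critical loci. Applying Corollary~\ref{cor:criticaldescent} yields an object $x\in\Gamma(X,\underline{\cF})=\Gamma(X,\cF)$ restricting to $x_u$ on each critical chart, as desired.

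There is no genuine obstacle here: all the content is already contained in Corollary~\ref{cor:criticaldescent}, hence ultimately in Proposition~\ref{prop:connecteddescent} together with the local structure results of \S\ref{sect:chartcomparison}, and the only thing to verify is the straightforward bookkeeping that translates ``sheaf of sets'' into ``sheaf of discrete categories''. If one prefers to avoid invoking Corollary~\ref{cor:criticaldescent} as a black box, an equivalent route is to apply Proposition~\ref{prop:connecteddescent} directly, taking $\cF_1=\CritCharts_d(X/B)$ as in the proof of Corollary~\ref{cor:criticaldescent} and $\cF_2$ to be the right fibration over $X_\Zar$ obtained from the sheaf of sets $\cF$ via the Grothendieck construction; descent for $\cF_2$ is again just the sheaf axiom, and the functor $F\colon\CritCharts_d(X/B)\to\cF_2$ encoding the family $\{x_u\}$ automatically satisfies both hypotheses of Proposition~\ref{prop:connecteddescent} because $\cF_2$ has no non-identity vertical morphisms, so condition~(1) holds vacuously and condition~(2) is forced.
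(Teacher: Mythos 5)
Your proof is correct and takes the same route as the paper, which simply observes that Corollary~\ref{cor:criticaldescentsets} is the restriction of Corollary~\ref{cor:criticaldescent} to sheaves of sets (viewed as sheaves of discrete categories). Your write-up merely spells out the bookkeeping the paper leaves implicit.
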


\subsection{Virtual canonical bundle}

We are now going to define the virtual canonical bundle of a relative d-critical structure on a morphism of schemes $X\rightarrow B$, which is a line bundle on the reduced scheme $X^{\red}$. For this, we begin by defining the canonical quadratic form associated to a critical morphism. Observe that in the local model of a critical morphism $\Phi\colon (U, f)\rightarrow (V, g)$ as in \cref{prop:criticalembeddinglocal} there is a canonical isomorphism
\begin{equation}\label{eq:criticalembeddingnormallocal}
\rN_{U/V}|_{U^\circ}\cong \rN_{U/E}|_{U^\circ}\cong E|_{U^\circ}.
\end{equation}
In particular, the normal bundle of $U\rightarrow V$ admits a quadratic form \'etale locally. We will now show that it extends to a global quadratic form.

\begin{proposition}\label{prop:criticalembeddingnormalquadratic}
Let $\Phi\colon (U, f)\rightarrow (V, g)$ be a critical morphism of LG pairs over $B$. Then there is a nondegenerate quadratic form $q_\Phi$ on the normal bundle $\rN_{U/V}|_{\Crit_{U/B}(f)}$ which is uniquely determined by the following property:
\begin{enumerate}
    \item Consider a local form of the critical morphism $\Phi$ as in \cref{prop:criticalembeddinglocal}. Then under the natural isomorphism $\rN_{U/V}|_{U^\circ}\cong E|_{U^\circ}$ \eqref{eq:criticalembeddingnormallocal} the quadratic form $q_\Phi$ identifies with the quadratic form $q$ on $E$.
\end{enumerate}
Moreover, it satisfies the following properties:
\begin{enumerate}[resume]
    \item For another critical morphism $\Psi\colon(V, g)\rightarrow (W, h)$ consider the unique splitting of the short exact sequence
    \[0\longrightarrow \rN_{U/V}|_{\Crit_{U/B}(f)}\longrightarrow \rN_{U/W}|_{\Crit_{U/B}(f)}\longrightarrow \rN_{V/W}|_{\Crit_{U/B}(f)}\longrightarrow 0\]
    obtained by taking the orthogonal complement of $\rN_{U/V}|_{\Crit_{U/B}(f)}\subset \rN_{U/W}|_{\Crit_{U/B}(f)}$ with respect to $q_{\Psi\circ\Phi}$. Then the resulting isomorphism \[\rN_{U/W}|_{\Crit_{U/B}(f)}\cong \rN_{U/V}|_{\Crit_{U/B}(f)}\oplus \rN_{V/W}|_{\Crit_{U/B}(f)}\] sends the quadratic form $q_{\Psi\circ \Phi}$ to $q_\Phi+q_\Psi$.
    \item Let $\Phi'\colon (U, f)\rightarrow (V, g)$ be another critical morphism between the same critical charts. Then
    \[\vol^2_{q_\Phi}|_{\Crit_{U/B}(f)^{\red}} = \vol^2_{q_{\Phi'}}|_{\Crit_{U/B}(f)^{\red}}.\]
    \item Consider a commutative diagram
    \[
    \xymatrix{
    (U_1, f_1) \ar^{\Phi_1}[r] \ar^{\pi_U}[d] & (V_1, g_1) \ar^{\pi_V}[d] \\
    (U_2, f_2) \ar^{\Phi_2}[r] & (V_2, g_2)
    }
    \]
    with $\pi_U$ and $\pi_V$ smooth morphisms and $\Phi_1$ and $\Phi_2$ critical morphisms such that $U_1\rightarrow V_1\times_{V_2} U_2$ is \'etale. Then under the isomorphism $\pi_U^* \rN_{U_2/V_2}\cong \rN_{U_1/V_1}$ we have $q_{\Phi_2}\mapsto q_{\Phi_1}$.
    \item For a pair of critical morphisms $\Phi_i\colon (U_i, f_i)\rightarrow (V_i, g_i)$ of LG pairs over $B_i$ under the isomorphism
    \[\rN_{U_1\times U_2/V_1\times V_2}|_{\Crit_{U_1\times U_2/B_1\times B_2}(f_1\boxplus f_2)}\cong \rN_{U_1/V_1}|_{\Crit_{U_1/B_1}(f_1)}\boxplus \rN_{U_2/V_2}|_{\Crit_{U_2/B_2}(f_2)}\]
    we have $q_{\Phi_1\times \Phi_2}\mapsto q_{\Phi_1}+q_{\Phi_2}$.
    \item If $\overline{\Phi}\colon (U, -f)\rightarrow (V, -g)$ is a morphism of LG pairs over $B$ equal to $\Phi$ on the level of underlying schemes, then $q_{\Phi}=-q_{\overline{\Phi}}$.
\end{enumerate}
\end{proposition}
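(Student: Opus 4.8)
The plan is to define $q_\Phi$ Zariski-locally from the normal form of \cref{prop:criticalembeddinglocal} and then to glue. Given $x\in\Crit_{U/B}(f)$, that proposition produces an open neighbourhood $U^\circ\subset U$ of $x$, an \'etale morphism $V^\circ\to V$, a trivial orthogonal bundle $(E,q)\to U$ and an \'etale morphism $\alpha\colon V^\circ\to E=U\times\bA^m$ with $\alpha^\ast(f\circ\pi_E+\q_E)=g^\circ$, together with the canonical isomorphism $\rN_{U/V}|_{U^\circ}\cong\rN_{U/E}|_{U^\circ}\cong E|_{U^\circ}$ of \eqref{eq:criticalembeddingnormallocal}. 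Transporting $q$ through this isomorphism and restricting to $\Crit_{U/B}(f)\cap U^\circ$ gives a nondegenerate quadratic form there, so property~(1) and nondegeneracy hold by construction. Since a quadratic form on the vector bundle $\rN_{U/V}|_{\Crit_{U/B}(f)}$ is a global section of $\Sym^2(\rN_{U/V}^\vee)|_{\Crit_{U/B}(f)}$, to obtain the global form $q_\Phi$ — necessarily the unique one satisfying~(1), as the $U^\circ$ cover $\Crit_{U/B}(f)$ — it suffices to check that the locally defined forms agree on overlaps.

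This agreement, i.e.\ the independence of the local form on the many choices entering \cref{prop:criticalembeddinglocal}, is the step I expect to be the main obstacle. I would handle it by describing the local form intrinsically as the \emph{transverse Hessian} of $g$ along $\Phi$: choosing a local retraction $r$ of $\Phi$ onto $U$ (available since $\Phi$ is unramified between smooth $B$-schemes, hence locally a closed immersion admitting a splitting), the difference $g-g\circ r$ lies in the ideal of $U$ and its image in $\rN_{U/V}^\vee$ vanishes along $\Crit_{U/B}(f)$ (because $d_B g$ vanishes on $\Crit_{V/B}(g)$), so over $\Crit_{U/B}(f)$ its second-order term is a well-defined section of $\Sym^2(\rN_{U/V}^\vee)$; as in the jet-bundle argument in the proof of \cref{prop:criticalHessian}, a change of $r$ alters this only by terms vanishing on $\Crit_{U/B}(f)$, so the restriction is canonical. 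In the normal form this transverse Hessian is a fixed universal multiple of $q$ under $\rN_{U/V}|_{U^\circ}\cong E|_{U^\circ}$, because there $g^\circ=f\circ\pi_E+\q_E$ up to the \'etale change $\alpha$; rescaling by that constant recovers the form of the first paragraph, which is thereby choice-independent. (Alternatively one can compare two normal forms directly by applying \cref{prop:criticalembeddinglocal} to the two charts and passing to a common refinement, or by using the zigzag of \cref{prop:stabilizationzigzag}; but the transverse-Hessian description also streamlines the remaining properties.)

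The properties~(2)--(6) will each be verified locally via \cref{prop:criticalembeddinglocal}, reducing to explicit statements about the model orthogonal bundles. For~(2), choosing normal forms for $\Phi$, $\Psi$ and $\Psi\circ\Phi$ compatibly (possible from the construction in \cref{prop:criticalembeddinglocal}, or directly from the functoriality in~(4) once established), the inclusion $\rN_{U/V}\subset\rN_{U/W}$ becomes the inclusion of a summand of an orthogonal direct sum, its orthogonal complement with respect to $q_{\Psi\circ\Phi}$ maps isomorphically onto $\rN_{V/W}$, and one reads off $q_{\Psi\circ\Phi}\mapsto q_\Phi+q_\Psi$. For~(3), when $\Phi_1$ and $\Phi_2$ agree as maps of critical loci their normal forms differ only in data not affecting $\det(E)^{\otimes 2}|_{\Crit_{U/B}(f)^{\red}}$, so the squared volume forms coincide on the reduced critical locus (squaring and reducedness absorb the ambiguity). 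Property~(4) follows because $U_1\to V_1\times_{V_2}U_2$ being \'etale means a normal form for $\Phi_2$ pulls back to one for $\Phi_1$ with $\pi_U^\ast E_2\cong E_1$ and $\pi_U^\ast q_2=q_1$. Property~(5) follows since a product of normal forms is a normal form for $\Phi_1\times\Phi_2$ with model bundle $E_1\boxplus E_2$ and quadratic form $q_1+q_2$, using \cref{prop:criticalproduct}. Property~(6) follows since replacing $(f,g)$ by $(-f,-g)$ turns the model $g^\circ=f\circ\pi_E+\q_E$ into $(-g)^\circ=(-f)\circ\pi_E+\q_{\overline{E}}$ with $\overline{E}=(E,-q)$, whence $q_{\overline{\Phi}}=-q_\Phi$.

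In summary, the construction is routine once \cref{prop:criticalembeddinglocal} is available, and the one genuinely delicate point is the well-definedness in the second paragraph — that the quadratic form extracted from a normal form is insensitive to the choices made in \cref{prop:criticalembeddinglocal} — which the transverse-Hessian description (or a direct chart comparison) is meant to settle.
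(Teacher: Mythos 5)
Your overall architecture (local definition from the normal form of \cref{prop:criticalembeddinglocal}, then glue; each of (2)--(6) reduced to explicit computations in the model) matches the paper's. But the step you yourself flag as delicate — well-definedness — has a genuine gap as written.

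The transverse Hessian of $g$ along $\Phi$ is \emph{not} independent of the retraction $r$. Work in a local normal form $V\cong U\times\bA^m$, $g(u,n)=f(u)+q(n)$, and compare the retraction $r(u,n)=u$ with $r'(u,n)=u+An$ for an arbitrary matrix-valued $A$. At a point of $\Crit_{U/B}(f)$ one has $d_Bf=0$, so
\[
(g-g\circ r')(u,n) = q(n) - \tfrac12\,\Hess(f)(An,An) + O(n^3),
\]
and the second-order class in $\Sym^2(\rN_{U/V}^\vee)$ changes by $-\tfrac12\Hess(f)(A\cdot,A\cdot)$, which does not vanish on $\Crit_{U/B}(f)$ for generic $A$ (it only vanishes when the image of $A$ lies in $\Ker\Hess(f)=\T_{\Crit_{U/B}(f)}$). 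The analogy with the jet-bundle argument for $\Hess(f)$ fails precisely because the retraction is extra data: unlike $d^2_Bf$, the quantity $g-g\circ r$ has no intrinsic meaning. What \emph{is} intrinsic is $\Hess(g)$ on $\T_{V/B}|_{\Crit_{U/B}(f)}$ together with the subspace $\T_{U/B}$; one must identify $\rN_{U/V}$ with the $\Hess(g)$-orthogonal complement of $\T_{U/B}$ (modulo $\Ker\Hess(g)$), not with an arbitrary retraction's splitting. The normal-form retraction $\pi_E$ happens to be of this adapted kind (there $\Hess(g)=\Hess(f)\oplus q$ in block form), but a general retraction is not, so your argument would need to restrict to $\Hess(g)$-adapted retractions, at which point the well-definedness still has to be proved separately.

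The paper circumvents this by never introducing retractions: it compares two \emph{normal forms} $(E_1,q_1)$, $(E_2,q_2)$ over their common refinement, writes the transition matrix on $\T_{U/B}\oplus E_i$ as $\left(\begin{smallmatrix}\id&s\\0&t\end{smallmatrix}\right)$, observes that $\Hess(g)$ restricts to $\Hess(f)+q_1$ and $\Hess(f)+q_2$ in the two trivializations, and extracts from the identity $\Hess(f)(v)+q_1(w)=\Hess(f)(v+s(w))+q_2(tw)$, via polarization, that $s(w)\in\Ker\Hess(f)$; hence $q_1(w)=q_2(tw)$. That polarization step is exactly what your retraction-invariance claim is missing. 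One further minor point: your argument for (3) ("data not affecting $\det(E)^{\otimes2}$ on the reduced locus") is too loose; the paper reduces (3) pointwise to the absolute case over a field and cites \cite[Theorem 2.27]{JoyceDcrit}, which is the clean way to do it.
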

\begin{proof}
Since we can cover the critical locus $\Crit_{U/B}(f)$ by the local models as in \cref{prop:criticalembeddinglocal}, uniqueness is clear. To show existence, we have to show that the quadratic form is independent of the choice of the local form of the critical morphism. For this, suppose $(U^\circ_1, V^\circ_1, E_1, q_1)$ and $(U^\circ_2, V^\circ_2, E_2, q_2)$ are two choices of the data as in \cref{prop:criticalembeddinglocal}. Let
\[\overline{U} = U^\circ_1\times_U U^\circ_2,\qquad \overline{V} = V^\circ_1\times_V V^\circ_2.\]
Let $R = \Crit_{\overline{U}/B}(f)$ and let $p\colon R\rightarrow B$ be the projection. Then we have a commutative diagram of short exact sequences
\[
\begin{tikzcd}
0 \rar & \T_{U/B}|_R\rar & \T_{E_1/B}|_R \rar & E_1|_R \rar & 0 \\
0 \rar & \T_{U/B}|_R \rar \arrow[u, "\id"] \arrow[d, "\id" left] & \T_{V/B}|_R \rar \arrow[u, "\sim"] \arrow[d, "\sim" left] & \rN_{U/V}|_R \rar \arrow[u, "\sim"] \arrow[d, "\sim" left] & 0 \\
0 \rar & \T_{U/B}|_R \rar & \T_{E_2/B}|_R \rar & E_2|_R \rar & 0
\end{tikzcd}
\]

Choose local splittings $\T_{E_1/B}|_R\cong \T_{U/B}|_R\oplus E_1|_R$ and $\T_{E_2/B}|_R\cong \T_{U/B}|_R\oplus E_2|_R$. Denote the composite isomorphisms by
\[t\colon E_1|_R\xrightarrow{\sim} E_2|_R,\qquad \begin{pmatrix} \id & s \\ 0 & t\end{pmatrix}\colon \T_{U/B}|_R\oplus E_1|_R\xrightarrow{\sim} \T_{U/B}|_R\oplus E_2|_R.\]
Under the vertical isomorphisms the quadratic form $\Hess(g)$ on $\T_{V/B}|_R$ restricts to $\Hess(f) + q_1$ and $\Hess(f)+q_2$, respectively. Therefore, for $v\in\T_{U/B}|_R$ and $w\in E_1|_R$ we have
\[\Hess(f)(v) + q_1(w) = \Hess(f)(v + s(w)) + q_2(t(w)).\]
By considering the associated symmetric bilinear form we get that $s(w)\in\Ker(\Hess(f))$. Thus, $q_1(w) = q_2(t(w))$, i.e. $t\colon (E_1|_R, q_1)\rightarrow (E_2|_R, q_2)$ preserves quadratic forms. So, $q_\Phi$ is well-defined.

Let us now check the remaining properties:
\begin{enumerate}
    \setcounter{enumi}{1}
    \item It is enough to establish this equality locally. For this, apply \cref{prop:criticalembeddinglocal} to get $(U^\circ, V^\circ_1, E_1, q_1)$ fitting into a commutative diagram
    \[
    \begin{tikzcd}
    U \rar & E_1 \\
    U^\circ \rar \uar \dar & V^\circ_1 \uar \dar\\
    U \arrow[r, "\Phi"] & V
    \end{tikzcd}
    \]
    Similarly, apply \cref{prop:criticalembeddinglocal} to get $(V^\circ_2, W^\circ, E_2, q_2)$ fitting into a commutative diagram
    \[
    \begin{tikzcd}
    V \rar & E_2 \\
    V^\circ_2 \rar \uar \dar & W^\circ \uar \dar\\
    V \arrow[r, "\Psi"] & W
    \end{tikzcd}
    \]
    Let
    \[\overline{U} = U^\circ\times_V V^\circ_2.\]
    Then we get a diagram
    \[
    \begin{tikzcd}
    U \rar & E_1\times_V E_2 \\
    \overline{U} \rar \uar \dar & W^\circ \uar \dar\\
    U \arrow[r, "\Psi\circ\Phi"] & W
    \end{tikzcd}
    \]
    Thus, under the local isomorphism of $\rN_{U/W}$ and $E_1\oplus E_2|_U$ we see that $q_{\Psi\circ\Phi}$ is sent to $q_1+q_2$.
    \item We have to show that the function $\vol^2_{q_\Phi}/\vol^2_{q_{\tilde{\Phi}}}$ on $\Crit_{U/B}(f)^{\red}$ is identically $1$. This can be checked pointwise on $\Crit_{U/B}(f)^{\red}$. For a $k$-point $b\in B$ let $R_b = \Crit_{U/B}(f)\times_B \Spec k=\Crit_{U\times_B \Spec k/\Spec k}(f)$ be the corresponding fiber product which carries a d-critical structure relative to $\Spec k$. Then it is sufficient to show that $\vol^2_{q_\Phi} = \vol^2_{q_{\tilde{\Phi}}}$ when restricted to $R^{\red}_b$ for every $b$. But this is shown in \cite[Theorem 2.27]{JoyceDcrit}.
    \item It is enough to establish this claim locally. Apply \cref{prop:criticalembeddinglocal} to get $(U_2^\circ, V_2^\circ, E, q)$ fitting into a commutative diagram
    \[
    \xymatrix{
    U_2 \ar[r] & E \\
    U_2^\circ \ar^{\Phi_2^\circ}[r] \ar[u] \ar[d] & V_2^\circ \ar[u] \ar[d] \\
    U_2 \ar^{\Phi_2}[r] & V_2
    }
    \]
    By property (1) under the isomorphism $\rN_{U_2^\circ/V_2^\circ}\cong E|_{U_2^\circ}$ we have $q_{\Phi_2}\mapsto q$.

    Then $\Phi_2^\circ\times \Phi_1\colon U_2^\circ\times_{U_2} U_1\rightarrow V_2^\circ\times_{V_2} V_1$ has a local model given by the pullback of $U_2\rightarrow E$. Thus, again by property (1) we get that under the isomorphism $\rN_{U_2^\circ\times_{U_2} U_1/V_2^\circ\times_{V_2} V_1}\cong E|_{U_2^\circ\times_{U_2} U_1}$ we have $q_{\Phi_1}\mapsto q$.
    \item The claim follows from property (2) as we may write $\Phi_1\times \Phi_2$ as the composite $(\id\times \Phi_2)\circ (\Phi_1\times \id)$ of critical embeddings.
    \item The claim is local and follows from the fact that given a local model of $\Phi\colon (U, f)\rightarrow (V, g)$ as in \cref{prop:criticalembeddinglocal} specified by a trivial orthogonal bundle $(E, q)\rightarrow U$ the local model of $\overline{\Phi}\colon (U, -f)\rightarrow (V, -g)$ is specified by the same data with the trivial orthogonal bundle $(E, -q)\rightarrow U$.
\end{enumerate}
\end{proof}

Given a critical morphism $\Phi\colon (U, f)\rightarrow (V, g)$ we denote by $P_\Phi\rightarrow \Crit_{U/B}(f)$ the $\Z/2\Z$-graded orientation $\mu_2$-torsor for the orthogonal bundle $(\rN_{U/V}|_{\Crit_{U/B}(f)}, q_\Phi)$. Then \cref{prop:criticalembeddingnormalquadratic} implies the following properties of $P_\Phi$:
\begin{enumerate}
    \item Given another critical morphism $\Psi\colon (V, g)\rightarrow (W, h)$ we obtain an isomorphism
    \[\Xi_{\Phi, \Psi}\colon P_{\Psi\circ \Phi}\xrightarrow{\sim} P_{\Psi}|_{\Crit_{U/B}(f)}\otimes_{\mu_2}P_{\Phi}\]
    by combining \eqref{eq:orientationsumisomorphism} and \cref{prop:criticalembeddingnormalquadratic}(2).
    \item If $\Phi'\colon (U, f)\rightarrow (V, g)$ is another critical morphism equal to $\Phi$ on the critical loci, there is a canonical isomorphism
    \[P_\Phi\cong P_{\Phi'}\]
    given by the identity on volume forms, using \cref{prop:criticalembeddingnormalquadratic}(3).
    \item Given a diagram
    \[
    \xymatrix{
    (U_1, f_1) \ar^{\Phi_1}[r] \ar^{\pi_U}[d] & (V_1, g_1) \ar^{\pi_V}[d] \\
    (U_2, f_2) \ar^{\Phi_2}[r] & (V_2, g_2)
    }
    \]
    with horizontal morphisms critical morphisms, vertical morphisms smooth and $U_1\rightarrow V_1\times_{V_2} U_2$ \'etale, there is a canonical isomorphism
    \begin{equation}\label{eq:Ppullback}
    P_{\Phi_1}\cong (\Crit_{U_1/B}(f_1)\rightarrow \Crit_{U_2/B}(f_2))^* P_{\Phi_2}
    \end{equation}
    given by the identity on volume forms, using \cref{prop:criticalembeddingnormalquadratic}(4).
    \item Given a pair of critical morphisms $\Phi_i\colon (U_i, f_i)\rightarrow (V_i, g_i)$ of LG pairs over $B_i$, so that $\Phi_1\times \Phi_2\colon (U_1\times U_2, f_1\boxplus f_2)\rightarrow (V_1\times V_2, g_1\boxplus g_2)$ is a critical morphism of LG pairs over $B_1\times B_2$, there is a canonical isomorphism
    \begin{equation}\label{eq:Pproduct}
    P_{\Phi_1}\boxtimes P_{\Phi_2}\cong P_{\Phi_1\times \Phi_2}
    \end{equation}
    defined via the isomorphism \eqref{eq:orientationsumisomorphism}, using \cref{prop:criticalembeddingnormalquadratic}(5).
    \item If $\overline{\Phi}\colon (U, -f)\rightarrow (V, -g)$ is the same morphism between the opposite LG pairs, there is a canonical isomorphism
    \begin{equation}\label{eq:Popposite}
    P_{\Phi}\cong P_{\overline{\Phi}}
    \end{equation}
    defined via the isomorphism \eqref{eq:orientationreverse}, using \cref{prop:criticalembeddingnormalquadratic}(6).
\end{enumerate}

The quadratic form $q_\Phi$ for a critical morphism allows us to define the virtual canonical bundle as follows.

\begin{theorem}\label{thm:virtualcanonicalschemes}
Let $X\rightarrow B$ be a morphism of schemes equipped with a relative d-critical structure $s$. Then there is a line bundle $K^{\vir}_{X/B}$ on $X^{\red}$, the \defterm{virtual canonical bundle}, uniquely determined by the following conditions:
\begin{enumerate}
    \item For every critical chart $(U, f, u)$ its restriction to $\Crit_{U/B}(f)^{\red}$ is canonically isomorphic to $K_{U/B}^{\otimes 2}|_{\Crit_{U/B}(f)^{\red}}$.
    \item For every critical morphism $\Phi\colon (U, f, u)\rightarrow (V, g, v)$ the corresponding isomorphism 
    \[J_{\Phi}\colon K_{U/B}^{\otimes 2}|_{\Crit_{U/B}(f)^{\red}}\xrightarrow{\sim} K_{V/B}^{\otimes 2}|_{\Crit_{U/B}(f)^{\red}}\]
    fits into a commutative diagram
    \[
    \xymatrix{
    K_{U/B}^{\otimes 2}|_{\Crit_{U/B}(f)^{\red}} \ar^{\id\otimes \vol^2_{q_\Phi}}[dr] \ar^{J_\Phi}[rr] && K_{V/B}^{\otimes 2}|_{\Crit_{U/B}(f)^{\red}} \\
    &K_{U/B}^{\otimes 2}|_{\Crit_{U/B}(f)^{\red}}\otimes (\det \rN_{U/V}^\vee)^{\otimes 2}|_{\Crit_{U/B}(f)^{\red}}, \ar_{i(\Delta)^2}[ur]
    }
    \]
    where the diagonal morphism on the right is determined by the short exact sequence
    \[\Delta\colon 0\longrightarrow \rN_{U/V}^\vee\longrightarrow \Omega^1_{V/B}|_U\longrightarrow \Omega^1_{U/B}\longrightarrow 0.\]
\end{enumerate}
For every point $x\in X$ there is an isomorphism
\[\kappa_x\colon K^{\vir}_{X/B, x}\xrightarrow{\sim} \det(\Omega^1_{X/B, x})^{\otimes 2}\]
uniquely determined by the following condition:
\begin{enumerate}[resume]
    \item For every critical chart $(U, f, u)$ with a point $y\in \Crit_{U/B}(f)$ such that $u(y) = x$ the isomorphism
    \[K_{U/B, y}^{\otimes 2}\cong K^{\vir}_{X/B, x}\xrightarrow{\kappa_x} \det(\Omega^1_{X/B, x})^{\otimes 2}\]
    coincides with the composite of the natural isomorphism of determinant lines induced by the exact sequence
    \[0\longrightarrow \rN^\vee_{X/U, x}\longrightarrow \Omega^1_{U/B, y}\longrightarrow \Omega^1_{X/B, x}\longrightarrow 0\]
    as well as the squared volume form on $\rN_{X/U, x}$ induced by the quadratic form $\Hess(f)_x$.
\end{enumerate}
In addition, we have the following isomorphisms:
\begin{enumerate}[resume]
    \item For a pair $(X_1\rightarrow B_1, s_1)$, $(X_2\rightarrow B_2, s_2)$ of morphisms of schemes equipped with relative d-critical structures consider the relative d-critical structure $s_1\boxplus s_2$ on $X_1\times X_2\rightarrow B_1\times B_2$. Then there is an isomorphism
    \begin{equation}\label{eq:Kproduct}
    K^{\vir}_{X_1/B_1}\boxtimes K^{\vir}_{X_2/B_2}\cong K^{\vir}_{X_1\times X_2/B_1\times B_2}
    \end{equation}
    uniquely determined by the condition that for every point $(x_1, x_2)\in X_1\times X_2$ there is a commutative diagram
    \[
    \xymatrix{
    K^{\vir}_{X_1/B_1, x_1}\otimes K^{\vir}_{X_2/B_2, x_2} \ar^{\eqref{eq:Kproduct}}[r] \ar^{\kappa_{x_1}\otimes \kappa_{x_2}}[d] & K^{\vir}_{X_1\times X_2/B_1\times B_2, (x_1, x_2)} \ar^{\kappa_{(x_1, x_2)}}[d] \\
    \det(\Omega^1_{X_1/B_1, x_1})^{\otimes 2}\otimes \det(\Omega^1_{X_2/B_2, x_2})^{\otimes 2} \ar^-{\sim}[r] & \det(\Omega^1_{X_1\times X_2/B_1\times B_2, (x_1, x_2)})^{\otimes 2}.
    }
    \]
    \item For a morphism of schemes $X\rightarrow B$ equipped with a relative d-critical structure $s$ let $K^{\vir}_{X/B, s}$ be the virtual canonical bundle. For $d\colon X\rightarrow \Z/2\Z$ there is an isomorphism
    \begin{equation}\label{eq:Kreverse}
    R_d\colon K^{\vir}_{X/B, s}\cong K^{\vir}_{X/B, -s}
    \end{equation}
    squaring to the identity.
\end{enumerate}
\end{theorem}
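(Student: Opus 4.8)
The plan is to build $K^{\vir}_{X/B}$ by gluing the local line bundles $K_{U/B}^{\otimes 2}$ over critical charts, using the descent formalism of \cref{prop:connecteddescent} directly (rather than \cref{cor:criticaldescent}, whose parity bookkeeping is not needed here since everything in sight is a square). Take $\cC=X_{\Zar}$, let $\cF_2$ be the stack sending an open $W\subseteq X$ to the groupoid of line bundles on $W^{\red}$ (a stack, hence satisfies descent), and let $\cF_1=\CritCharts(X/B)$ be the category of \emph{all} critical charts and critical morphisms for $(X\to B,s)$, with open immersions as localization morphisms; it is locally connected over $X_{\Zar}$ by \cref{prop:chartopenaffine,prop:stabilizationzigzag}. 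Define $F\colon\cF_1\to\cF_2$ by $F(U,f,u)=K_{U/B}^{\otimes 2}|_{\Crit_{U/B}(f)^{\red}}$ and $F(\Phi)=J_\Phi$, where for a critical morphism $\Phi\colon(U,f,u)\to(V,g,v)$ the isomorphism $J_\Phi$ is the composite of $i(\Delta)^{\otimes 2}$ for the conormal sequence $\Delta\colon 0\to\rN^\vee_{U/V}\to\Omega^1_{V/B}|_U\to\Omega^1_{U/B}\to 0$ — meaningful on $\Crit_{U/B}(f)$ by \cref{prop:criticalembeddingnormalquadratic} — with the squared volume form $\vol^2_{q_\Phi}$. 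I then verify the hypotheses of \cref{prop:connecteddescent}: each $J_\Phi$ is an isomorphism, hence $\pi_2$-Cartesian; and if $\Phi_1,\Phi_2$ have the same source and target chart then (the chart embeddings $u,v$ into $X$ being fixed) they restrict to the same map of critical loci, so $J_{\Phi_1}=J_{\Phi_2}$ by \cref{prop:criticalembeddingnormalquadratic}(3). This produces $K^{\vir}_{X/B}$, the isomorphisms of conditions (1) and (2), and their uniqueness, all at once.

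For the pointwise trivializations $\kappa_x$, I would use a critical chart $(U,f,u)$ minimal at a point $y$ over $x$, as provided by \cref{prop:minimalchart}. Minimality means $\Omega^1_{U/B,y}\cong\Omega^1_{X/B,x}$, which dualizes to $\T_{\Crit_{U/B}(f)/B,y}\cong\T_{U/B,y}$; since $\ker\Hess(f)_y=\T_{\Crit_{U/B}(f)/B,y}$ by \cref{prop:criticalHessian}(1), the Hessian vanishes at $y$ and the relevant normal space $\T_{U/B,y}/\T_{\Crit_{U/B}(f)/B,y}$ is $0$, so condition (3) for this chart collapses to the tautological chain $K^{\vir}_{X/B,x}\cong K_{U/B,y}^{\otimes 2}=\det(\Omega^1_{U/B,y})^{\otimes 2}\cong\det(\Omega^1_{X/B,x})^{\otimes 2}$, which I take as the definition of $\kappa_x$. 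Independence of the minimal chart, and condition (3) for an arbitrary chart, then follow by connecting charts through zigzags of critical morphisms (\cref{prop:stabilizationzigzag}) and tracking the behaviour of $\Hess$ along critical morphisms (\cref{prop:criticalHessian}(2)), the decomposition $\Hess(g)_x=\Hess(f)_x\oplus q_{\Phi,x}$ extracted from the proof of \cref{prop:criticalembeddingnormalquadratic}, and the additivity of the determinant-line isomorphisms $i(\Delta)$ under composition of conormal sequences; the already-established coherence of the $J_\Phi$ handles the remaining choices.

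The product isomorphism \eqref{eq:Kproduct} and the reversal isomorphism \eqref{eq:Kreverse} are obtained by the same gluing principle. For \eqref{eq:Kproduct}, \cref{prop:criticalproduct} identifies products of critical charts for $s_1,s_2$ with critical charts for $s_1\boxplus s_2$; on charts the evident isomorphism $K_{U_1/B_1}^{\otimes 2}\boxtimes K_{U_2/B_2}^{\otimes 2}\cong K_{U_1\times U_2/B_1\times B_2}^{\otimes 2}$ is compatible with the transition maps by \cref{prop:criticalembeddingnormalquadratic}(5) together with \cref{lm:determinantorthogonal}, so it descends, and its claimed compatibility with $\kappa$ — which also yields uniqueness — is checked using a minimal chart in each factor. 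For \eqref{eq:Kreverse}, the point is that $(X\to B,-s)$ has the same critical charts with $f$ replaced by $-f$, and $x_u=K_{U/B}^{\otimes 2}$ is independent of $f$; only the transitions change, by the factor $(-1)^{\rank\rN_{U/V}}$ arising from $\vol^2_{-q_\Phi}=(-1)^{\rank}\vol^2_{q_\Phi}$ (see \cref{sect:orthogonal}), i.e.\ $J^{-s}_\Phi=(-1)^{\dim(V/B)-\dim(U/B)}J^{s}_\Phi$. Rescaling the identity of $K_{U/B}^{\otimes 2}$ on $(U,f,u)$ by the sign $(-1)^{\dim(U/B)+d}$ intertwines the two transition systems and glues to the desired $R_d$, which visibly squares to the identity.

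The main obstacle I expect is the bookkeeping in the first paragraph: writing $J_\Phi$ as $i(\Delta)^{\otimes 2}\circ\vol^2_{q_\Phi}$ and checking compatibility with composition of critical morphisms forces one to match the orthogonal-complement splitting of \cref{prop:criticalembeddingnormalquadratic}(2) against the three-step filtration of conormal bundles and to reconcile the sign conventions of $i(\Delta)$ and $\iota_E$ from \cref{sect:orthogonal}; this is where the argument is most error-prone, even though each individual compatibility has already been isolated. A secondary but genuine point is justifying the parity-free use of \cref{prop:connecteddescent}, i.e.\ that the category of all critical charts and all critical morphisms — not just those of even relative dimension — is locally connected over $X_{\Zar}$, which is exactly what \cref{prop:stabilizationzigzag} delivers.
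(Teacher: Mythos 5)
Your proposal follows the paper's gluing strategy for $K^{\vir}_{X/B}$ — local objects $K_{U/B}^{\otimes 2}$ with transitions $J_\Phi$, descent via the local connectedness of the critical-chart category furnished by \cref{prop:stabilizationzigzag}, with the coherence of $J_\Phi$ supplied by \cref{prop:criticalembeddingnormalquadratic}(2), (3) — and the constructions of the product and reversal isomorphisms are identical to the paper's. Your observation that the parity restriction built into \cref{cor:criticaldescent} is not needed for $K^{\vir}$ is correct and is a mild tidying-up: since $\vol^2_{q_\Phi}$ and $i(\Delta)^2$ are meaningful for normal bundles of any rank, $J_\Phi$ is defined and coherent for all critical morphisms, not just even ones, and \cref{prop:stabilizationzigzag} already gives local connectedness without any parity hypothesis.

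The genuine divergence is in the existence of $\kappa_x$. The paper notes that compatibility of the defining condition with critical morphisms is a pointwise statement, restricts to the fibre $X_b$ over the relevant $k$-point $b\in B$, and cites Joyce's proof over a field \cite[Theorem~2.28(iv)]{JoyceDcrit}. You instead propose to \emph{define} $\kappa_x$ tautologically in a minimal chart (using \cref{prop:minimalchart} and the vanishing of the Hessian there via \cref{prop:criticalHessian}(1)), and then to verify the defining condition for arbitrary charts by running a zigzag of critical morphisms and tracking the orthogonal decomposition $\Hess(g)_x \cong \Hess(f)_x \perp q_{\Phi,x}$ against a $3\times 3$ diagram of conormal sequences, together with \cref{lm:determinantorthogonal} and the commutativity of determinant-line isomorphisms for such diagrams. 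This is essentially a self-contained reproof of the Joyce compatibility rather than a citation of it. It buys you independence from the external reference at the cost of the diagram-chasing you correctly flag as the delicate point; be aware that the paper elsewhere invokes \cite[(2.4)]{KPS} precisely for this kind of $3\times 3$ determinant-line compatibility, so that lemma (or a hand proof of it) is the piece you would actually need to nail down to make your verification airtight. Both routes work; yours is more elementary in spirit, the paper's is shorter on the page.
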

\begin{proof}
Consider the Zariski stack on $X$ whose value on an open immersion $R\rightarrow X$ is given by the groupoid $\Pic_{R^{\red}}$ of line bundles on $R^{\red}$. We may then apply \cref{cor:criticaldescent} to glue the objects $K_{U/B}^{\otimes 2}|_{\Crit_{U/B}(f)^{\red}}$ defined for a critical chart $(U, f, u)$ using the isomorphisms $J_\Phi$ for a critical morphism. The relevant conditions of \cref{cor:criticaldescent} are verified as follows:
\begin{enumerate}
    \item The first condition follows from \cref{prop:criticalembeddingnormalquadratic}(2).
    \item The second condition follows since $\rN_{U/U}=0$ with the zero quadratic form.
    \item The third condition follows from \cref{prop:criticalembeddingnormalquadratic}(3).
\end{enumerate}
The uniqueness of the isomorphism $\kappa_x$ is clear. The existence will follow once we show that the condition is compatible with critical morphisms $(U, f, u)\rightarrow (V, g, v)$. Since this is a pointwise statement, it reduces to the statement about d-critical loci over a field which is shown in \cite[Theorem 2.28(iv)]{JoyceDcrit}.

In a critical chart $(U_1, f_1, u_1)$ for $X_1$ and $(U_2, f_2, u_2)$ for $X_2$ we define the isomorphism \eqref{eq:Kproduct} to be the obvious isomorphism
\[K^{\otimes 2}_{U_1/B_1}|_{\Crit_{U_1/B_1}(f_1)^{\red}}\boxtimes K^{\otimes 2}_{U_2/B_2}|_{\Crit_{U_2/B_2}(f_2)^{\red}}\cong K^{\otimes 2}_{U_1\times U_2/B_1\times B_2}|_{\Crit_{U_1\times U_2/B_1\times B_2}(f_1\boxplus f_2)^{\red}},\]
under the identification of \cref{prop:criticalproduct}.
Since the quadratic form $q_\Phi$ is compatible with products by \cref{prop:criticalembeddingnormalquadratic}(5), $J_\Phi$ is also compatible with products, so the above isomorphism is compatible with critical morphisms.

In a critical chart $(U, f, u)$ for $X$ we define the isomorphism $R_d$ to be given by the multiplication by $(-1)^{\dim(U/B)+d}$ on $K^{\otimes 2}_{U/B}|_{\Crit_{U/B}(f)^{\red}}$. By \cref{prop:criticalembeddingnormalquadratic}(6) we have $q_{\overline{\Phi}} = -q_\Phi$. Since
\[\vol^2_{-q_\Phi} = (-1)^{\dim(V/B)-\dim(U/B)} \vol^2_{q_\Phi},\]
we have \[J_{\overline{\Phi}} = (-1)^{\dim(V/B)-\dim(U/B)} J_\Phi,\]
so thus defined isomorphism is compatible with critical morphisms.
\end{proof}

\begin{remark}\label{rem:kappa-discrepancy-KPS}
    In the setting of \cref{thm:virtualcanonicalschemes} assume that $B$ is a point. 
    In this case, the map $\kappa_x$ has also appeared in \cite[(3.13)]{KPS}.
    We note that our choice of $\kappa_x$ differs from the one in loc. cit. by $(-1)^{\rank \Omega_{X/B, x}}$. See \cite[Remark 3.36]{KPS} for the origin of the sign.
\end{remark}

\subsection{Deformation of morphisms of LG pairs}

In this section we fix a scheme $B\in\Sch^{\sep\ft}$. Given a pair $\Phi_0,\Phi_1$ of \'etale morphisms $(U, f)\rightarrow (V, g)$ of LG pairs over $B$ which induce equal morphisms on relative critical loci and which satisfy an extra condition we show that, \'etale locally, they can be extended to an $\bA^1$-family $\Phi_t\colon (U, f)\rightarrow (V, g)$. The following is a family version of \cite[Proposition 3.4]{BBDJS}.

\begin{proposition}\label{prop:BBDJS34}
Let $\Phi_0, \Phi_1\colon (U, f)\rightarrow (V, g)$ be \'etale morphisms of LG pairs over $B$ and $u \in \Crit_{U/B}(f)$ be a point.
Assume that
\begin{equation}\label{Eq:def17}
\Phi_0|_{\Crit_{U/B}(f)} = \Phi_1|_{\Crit_{U/B}(f)}\colon \Crit_{U/B}(f)\longrightarrow \Crit_{V/B}(g)
\end{equation}
and
\begin{equation}\label{Eq:def4}
(d\Phi_1|_u^{-1} \circ d\Phi_0|_u -\id)^2=0 \colon \T_{U/B,u} \to \T_{U/B,u}.
\end{equation}
Then we can find {\'e}tale morphisms of LG pairs over $B \times \bA^1$,
\[\xymatrix{
& (W,h) \ar[ld]_{\Psi_U} \ar[rd]^{\Psi_V} & \\
(U \times \bA^1, f\boxplus 0)   & & (V \times \bA^1, g \boxplus 0)
}\]
and a map $w\colon \bA^1 \to W$ such that $\Psi_U \circ w = (u, \id) \colon \bA^1 \to U\times \bA^1$, $\Psi_V \circ w = (v, \id) \colon \bA^1 \to V\times \bA^1$,
and
\begin{equation}\label{Eq:def13}
\xymatrix@C-3pc{
& \Crit_{W/B\times \bA^1}(h) \ar[ld]_{\Psi_U} \ar[rd]^{\Psi_V} & \\
\Crit_{U/B}(f)\times \bA^1 \ar[rr]^{\Phi_0=\Phi_1} && \Crit_{V/B}(g)\times \bA^1,
}\quad
\xymatrix{
& W_0 \ar[ld]_{\Psi_{U,0}} \ar[rd]^{\Psi_{V,0}} & \\
U \ar[rr]^{\Phi_0} & &  V , 
}
\quad
\xymatrix{
& W_1 \ar[ld]_{\Psi_{U,1}} \ar[rd]^{\Psi_{V,1}} & \\
U \ar[rr]^{\Phi_1} & &  V 
}
\end{equation}
commute, where $W_t$, $\Psi_{U, t}\colon W_t\rightarrow U$, and $\Psi_{V, t}\colon W_t\rightarrow V$ are the fibers of $W$, $\Psi_U$, and $\Psi_V$ at $t\in\bA^1$.
\end{proposition}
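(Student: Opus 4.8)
The strategy is to carry the base scheme $B$ through the argument of \cite[Proposition~3.4]{BBDJS}: every construction there is $B$-relative and the auxiliary $\bA^1$ lives on the base side, so there is nothing essentially new. \emph{Step 1 (reduction to an \'etale self-map).} The hypotheses and the conclusion are Zariski-local on $U$ near $u$, and the conclusion is unaffected by replacing $(V,g)$ by an \'etale neighbourhood of $v$, since $\Psi_V$ may afterwards be post-composed with the resulting \'etale morphism to $V\times\bA^1$. Since $\Phi_0$ is \'etale, after such replacements of $U$ and $V$ (and a further \'etale-local replacement of $U$ so that $\Phi_1$ also maps into the new $V$) we may assume --- using the standard local structure of \'etale morphisms, e.g. the rank-$0$ case of \cref{prop:criticalembeddinglocal} --- that $\Phi_0\colon(U,f)\xrightarrow{\sim}(V,g)$ is an isomorphism of LG pairs. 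Put $\iota\coloneqq\Phi_0^{-1}\circ\Phi_1\colon U\to U$. Then $\iota$ is \'etale, $\iota^\ast f=\Phi_1^\ast(\Phi_0^{-1})^\ast f=\Phi_1^\ast g=f$, the restriction $\iota|_{\Crit_{U/B}(f)}$ is the identity by \eqref{Eq:def17}, and $d\iota|_u=(d\Phi_0|_u)^{-1}d\Phi_1|_u$ satisfies $(d\iota|_u-\id)^2=0$ by \eqref{Eq:def4}. Conversely, an $\bA^1$-family of \'etale self-maps $\iota_t\colon U\to U$ with $\iota_0=\id$, $\iota_1=\iota$, $\iota_t^\ast f=f$ and $\iota_t|_{\Crit_{U/B}(f)}=\id$ for all $t$ yields $\Phi_t\coloneqq\Phi_0\circ\iota_t$ recovering $\Phi_0,\Phi_1$ at $t=0,1$, with $\Phi_t^\ast g=f$, $\Phi_t|_{\Crit_{U/B}(f)}=\Phi_0|_{\Crit_{U/B}(f)}$ and $\Phi_t(u)=v$. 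So it is enough to produce such a family $\iota_t$.

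\emph{Step 2 (the deformation, the main obstacle).} This is the sole computational point and is the family version of the core of \cite[Proposition~3.4]{BBDJS}. In \'etale relative coordinates on $U\to B$ near $u$ write $\iota(x)=x+\varepsilon(x)$, where each component of $\varepsilon$ lies in the ideal of $\Crit_{U/B}(f)$ (as $\iota$ fixes it) and $(d\varepsilon|_u)^2=0$. The linear interpolation $x\mapsto x+t\varepsilon(x)$ fixes $\Crit_{U/B}(f)$ and has invertible relative derivative at $u$ for every $t$, since $\id+t\,d\varepsilon|_u$ is invertible when $(d\varepsilon|_u)^2=0$; what it fails to do is preserve $f$. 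Following \cite{BBDJS}, the nilpotency hypothesis \eqref{Eq:def4} is precisely what allows one to add a correction term with components again in the ideal of $\Crit_{U/B}(f)$, producing a family $\iota_t$ that does preserve $f$ while keeping $\iota_0=\id$, $\iota_1=\iota$, $d\iota_t|_u$ invertible and $\iota_t|_{\Crit_{U/B}(f)}=\id$. This uses only $B$-relative differential operators, so it is insensitive to $B$ and goes through verbatim. Assembling over $t$ gives a morphism $U\times\bA^1\to U\times\bA^1$ over $B\times\bA^1$ whose restriction $\hat\iota$ to the open neighbourhood $\mathcal U$ of $\{u\}\times\bA^1$ on which its relative derivative is invertible is \'etale (a morphism of smooth $(B\times\bA^1)$-schemes of equal relative dimension with invertible relative derivative).

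\emph{Step 3 (assembly).} Set $W\coloneqq\mathcal U$, with the structure map to $B\times\bA^1$ inherited from $U\times\bA^1$, and $h\coloneqq(f\boxplus 0)|_{\mathcal U}$, so that $(W,h)$ is an LG pair over $B\times\bA^1$. Let $\Psi_U\colon W\hookrightarrow U\times\bA^1$ be the open immersion and $\Psi_V\coloneqq(\Phi_0\times\id)\circ\hat\iota\colon W\to V\times\bA^1$; both are \'etale, and they are morphisms of LG pairs because $\Psi_U^\ast(f\boxplus 0)=h$ by construction, while $\Psi_V^\ast(g\boxplus 0)$ equals $\Phi_t^\ast g=\iota_t^\ast f=f$ on the fibre over $t$, hence is $t$-independent and equal to $h$. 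Define $w\colon\bA^1\to W$ by $t\mapsto(u,t)$; this lands in $\mathcal U$, with $\Psi_U\circ w=(u,\id)$ and $\Psi_V\circ w=(\Phi_t(u),t)=(v,\id)$ since $\iota_t(u)=u$. Because the $\bA^1$-direction lies in the base, $\Crit_{W/B\times\bA^1}(h)=\bigl(\Crit_{U/B}(f)\times\bA^1\bigr)\cap\mathcal U$; on it $\Psi_U$ is the inclusion into $\Crit_{U/B}(f)\times\bA^1$ and $\Psi_V$ sends $(x,t)$ to $(\Phi_t(x),t)=(\Phi_0(x),t)$ since $\iota_t$ fixes $\Crit_{U/B}(f)$, landing in $\Crit_{V/B}(g)\times\bA^1$ as $\Phi_0$ carries $\Crit_{U/B}(f)$ there; this is the first square of \eqref{Eq:def13}, with bottom map $\Phi_0=\Phi_1$ since the two agree on $\Crit_{U/B}(f)$. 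Restricting $W$, $\Psi_U$, $\Psi_V$ to $t=0$ and $t=1$ gives the remaining two squares of \eqref{Eq:def13} by construction of $\iota_t$. The real work is the function-preserving correction of Step 2, imported from \cite{BBDJS}.
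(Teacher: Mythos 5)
Your Step~1 reduction to an \'etale self-map $\iota=\Phi_0^{-1}\circ\Phi_1$ of $(U,f)$ is fine after the \'etale-neighbourhood bookkeeping you sketch, and your observation that $\id+t\,d\varepsilon|_u$ stays invertible because $d\varepsilon|_u$ is nilpotent matches the paper's computation. The problem is Step~2, which is where all the content lives and which you outsource to ``following [BBDJS]'' without an argument. What you claim there --- that the nilpotency hypothesis lets one \emph{add a correction term} so that the linear interpolation becomes a genuine $\bA^1$-family of \'etale self-maps $\iota_t\colon U^\circ\to U$ with $\iota_t^\ast f=f$ --- is strictly stronger than what either [BBDJS, Prop.~3.4] or the present paper establishes. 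The proposition's conclusion only asks for \emph{\'etale} $\Psi_U,\Psi_V$, not for $\Psi_U$ to be an open immersion; and the paper's proof indeed only produces \'etale maps. Concretely, the paper sets $W'=\Zero(z)\subseteq U\times_B V\times\bA^1$, which is étale but generally not open over $U\times\bA^1$ because $V$ is only \'etale over $\bA^n_B$, and then corrects the failure of $f\circ\pr_1=g\circ\pr_2$ by \cref{lm:perturbationlemma}. That perturbation replaces the ambient space by the zero locus of the quadratic section $\widetilde\tau=a+\tau+c(\tau,\tau)$ in $E\otimes E$, i.e. by an \'etale cover of $U\times_B V\times\bA^1$; the resulting $W\hookrightarrow R\to U\times_B V\times\bA^1\to U\times\bA^1$ is \'etale, and nothing in the argument shows --- nor is it needed --- that it is an open immersion. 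So your assertion that one can stay inside the class of self-maps of $U$ (equivalently, that $\Psi_U$ may be taken to be an open immersion) is an extra claim that does not follow from the cited source and would need its own proof.

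Moreover, ``adding a correction with components in the ideal of $\Crit_{U/B}(f)$'' is not a formality here: the whole point of \cref{Lem:computation} in the paper is to show that $f\boxplus(-g)\boxplus 0$ has the very specific shape $(df\otimes df)(a)+df\circ b(z)$ needed by \cref{lm:perturbationlemma}, and that the relevant $2$-tensor $a$ vanishes on $\{(u,v)\}\times\bA^1$ and on the fibres over $t\in\{0,1\}$ --- the vanishing at $t=0,1$ being what guarantees $W_0,W_1$ agree with $\Phi_0,\Phi_1$, and the vanishing along $\{(u,v)\}\times\bA^1$ being what gives the section $w$. None of this is captured by ``a correction term \ldots that does preserve $f$.'' Your Step~3 then builds $W$ as an open subset $\mathcal U\subseteq U\times\bA^1$, which inherits the gap: if the needed correction requires the \'etale modification (as in the paper), your $W$ is the wrong object. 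To repair the proof you would either have to prove the genuinely stronger ``family of self-maps'' statement, or revert to working with the correspondence directly as the paper does, in which case Step~2 must be replaced by the perturbation and computation lemmas rather than gestured at.
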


For a scheme $X\in\Sch^{\sep\ft}$, a vector bundle $E\rightarrow X$ and a section $s\in\Gamma(X, E)$ we denote by $\Zero_X(s)$ the zero section of $s$ as a subscheme of $X$. The key to prove \cref{prop:BBDJS34} is the following perturbation lemma. 

\begin{lemma}[Perturbation]\label{lm:perturbationlemma}
Let $(U,f) \hookrightarrow (V,g)$ be a closed embedding of codimension $r$ of LG pairs over $B$.
Assume that there exist a vector bundle $E$ over $V$ of rank $r$, a section $s\in \Gamma(V,E)$, a cosection $\sigma \in \Gamma(V,E\dual)$, and a $2$-tensor $a\in \Gamma(V,E\otimes E)$ such that
$U=\Zero_V(s)$ as subschemes of $V$ and
\[g  = (\sigma \otimes \sigma) (a) + \sigma(s)  \in \Gamma(V,\O_V/\I_{U/V}^2).\]
Then we can find a closed embedding $(\tU,0) \hookrightarrow (\tV,\tg)$ of LG pairs over $B$, an {\'e}tale morphism $ (\tV,\tg) \to (V,g)$ of LG pairs over $B$,
and commutative diagrams
\begin{equation}\label{Eq:def10}
\xymatrix{
\Zero_{\tU}(\sigma) \ar@{^{(}->}[r] \ar@{.>}[d] & \tU \ar@{^{(}->}[r] & \tV \ar[d]^{}  \\
\Zero_{U}(\sigma) \ar@{^{(}->}[r] & U \ar@{^{(}->}[r] & V,
}\qquad
\xymatrix{
\Zero_{\tU}(a) \ar@{^{(}->}[r] \ar@{.>}[d] & \tU \ar@{^{(}->}[r] & \tV \ar[d]^{}  \\
\Zero_{U}(a) \ar@{^{(}->}[r] & U \ar@{^{(}->}[r] & V,}
\qquad
\xymatrix{
& \tU \ar@{^{(}->}[r] & \tV \ar[d]^{} \\
U_0:=\Zero_{U}(\sigma,a) \ar@{^{(}->}[r] \ar@{.>}[ru]^{} & U \ar@{^{(}->}[r] & V}
\end{equation}
for some dotted arrows,
such that
\begin{equation}\label{Eq:def11}
\T_{\tU/B}|_{U_0} = \T_{U/B}|_{U_0}
\end{equation}
as subspaces of $\T_{\tV/B}|_{U_0} \cong  \T_{V/B}|_{U_0}$.
\end{lemma}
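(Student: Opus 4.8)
The plan is to follow the strategy of \cite[Proposition 3.4]{BBDJS}: one builds $(\tU,0)\hookrightarrow(\tV,\tg)$ by ``completing the square'' in the conormal directions of $U\subset V$, and then removes a residual higher-order error after passing to an \'etale neighborhood. Since the assertion is \'etale-local on $V$ near $U_0=\Zero_U(\sigma,a)$, I would first reduce to the case where $E$ is trivial and $s=(s_1,\dots,s_r)$ is a regular sequence cutting out $U$ (possible because $\rank E = r$ equals the codimension of the regular immersion $U\hookrightarrow V$). Writing $a(\sigma):=(\id_E\otimes\sigma)(a)\in\Gamma(V,E)$ for the contraction of the $2$-tensor $a$ with $\sigma$ in one slot, one has the identity $\sigma\bigl(a(\sigma)\bigr)=(\sigma\otimes\sigma)(a)$, so the hypothesis on $g$ becomes
\[
g\;\equiv\;\sigma\bigl(s+a(\sigma)\bigr)\pmod{\I_{U/V}^2}.
\]
This singles out the \emph{perturbed section} $s':=s+a(\sigma)\in\Gamma(V,E)$ and the first candidate $\tU_{(0)}:=\Zero_V(s')$.

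Next I would verify that all the ``soft'' conclusions already hold for this candidate with $\tV=V$ and $\tg=g$. Since $a$ and $\sigma$ both vanish along $U_0$, the Leibniz rule applied to the ($\O_V$-bilinear) contraction gives $d\bigl(a(\sigma)\bigr)|_{U_0}=0$, hence $d_B s'|_{U_0}=d_B s|_{U_0}$; as $s$ is transverse to zero relative to $B$ this persists for $s'$ near $U_0$, so $\tU_{(0)}\to B$ is smooth of the correct relative dimension there and $\T_{\tU_{(0)}/B}|_{U_0}=\ker(d_B s|_{U_0})=\T_{U/B}|_{U_0}$, which is \eqref{Eq:def11}. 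Because the contraction is $\O_V$-linear, $\sigma=0$ forces $a(\sigma)=0$, so on $\tU_{(0)}$ the vanishing of $\sigma$ forces $s=s'=0$; this yields $\Zero_{\tU_{(0)}}(\sigma)=\Zero_U(\sigma)$, and symmetrically $\Zero_{\tU_{(0)}}(a)=\Zero_U(a)$ and $U_0\subseteq\tU_{(0)}$, giving the three commutative diagrams \eqref{Eq:def10}. The one thing still missing is the exact vanishing $g|_{\tU}=0$: restriction to $\tU_{(0)}$ only gives $g|_{\tU_{(0)}}=(g-\sigma(s'))|_{\tU_{(0)}}$, and $g-\sigma(s')$ is merely in $\I_{U/V}^2$, not in $\I_{\tU_{(0)}/V}$.

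The heart of the proof is therefore to kill the residual function $h:=g-\sigma(s')\in\I_{U/V}^2$ after an \'etale base change, promoting $\tU_{(0)}$ to a genuine LG pair over $B$. The key point is that one may perturb \emph{both} $s'$ inside $\Gamma(V,E)$ and $\sigma$ inside $\Gamma(V,E^\vee)$: replacing $(s',\sigma)$ by $(s'+\delta,\sigma+\epsilon)$ changes $g-\sigma(s')$ by $-\epsilon(s')-\sigma(\delta)-\epsilon(\delta)$, and the leading term $(\epsilon,\delta)\mapsto\epsilon(s')+\sigma(\delta)$ sweeps out the ideal $(s')+(\sigma)=(s,\sigma)\supseteq\I_{U/V}^2\ni h$, so the residual can be absorbed to ever higher $\I_{U/V}$-adic order (note a purely one-sided perturbation fails: $h$ need not lie in $(s')$ alone nor in $(\sigma)$ alone). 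Iterating, the corrections converge $\I_{U/V}$-adically to a formal solution $(\widehat s,\widehat\sigma)$ over the completion of $V$ along $U$ with $g=\widehat\sigma(\widehat s)$, $\widehat s\equiv s+a(\sigma)\pmod{\I_{U/V}^2}$, and $\widehat s$ still agreeing with $s$ wherever $\sigma$ vanishes. Finally I would invoke Artin approximation / N\'eron--Popescu desingularization (in the spirit of the \'etale-neighborhood arguments via \cite[Corollaire 18.4.7]{EGA44} used earlier in the paper) to realize this solution on an honest \'etale morphism $\tV\to V$: approximating $\widehat s$ to sufficiently high $\I_{U/V}$-adic order preserves relative transversality, the equalities of zero loci, and the first-order equality along $U_0$, so that $\tU:=\Zero_{\tV}(\widehat s_{\mathrm{approx}})$ with $\tg:=g|_{\tV}$ satisfies $\tg|_{\tU}=0$ together with \eqref{Eq:def10} and \eqref{Eq:def11}. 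I expect this last step --- descending the formal/iterative construction to an actual \'etale neighborhood while keeping the $\Zero(\sigma)$, $\Zero(a)$, and tangent conditions on the nose --- to be the main technical obstacle.
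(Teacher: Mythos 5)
Your proposal identifies the correct starting point and the correct obstruction: the perturbed section $s'=s+a(\sigma)$ makes the residual $h:=g-\sigma(s')$ lie in $\I_{U/V}^2$, and the task is to promote this approximate vanishing to exact vanishing after an \'etale base change. You also correctly observe that a purely first-order perturbation of $s'$ alone cannot absorb $h$. However, there is a genuine gap: you propose an $\I$-adic iteration plus Artin approximation, acknowledge the Artin approximation step is the ``main technical obstacle,'' and then do not carry it out. Several aspects of that step are non-trivial and un-addressed: (i) the iteration perturbs $\sigma$ as well as $s'$, yet the conclusion \eqref{Eq:def10} is phrased for the \emph{original} $\sigma$ and $a$, so you need to constrain the perturbations $\delta,\epsilon$ so that $\widehat{s}|_{\Zero(\sigma)}=s$ and $\widehat{s}|_{\Zero(a)}$ cuts out the same subscheme as $s$ --- which means $\delta$ must lie in $(\sigma)$ and the residual must then be absorbed into $\epsilon(s')$ modulo $(\sigma)^2$, and you never verify these constraints are simultaneously satisfiable order by order; (ii) the rank of the ideal quotients does not obviously increase unless the correction terms are themselves of strictly higher $\I$-adic order, which you assert but do not prove; (iii) Artin approximation gives a solution on an \'etale neighborhood, but arranging that the approximate solution still satisfies the \emph{exact} scheme-theoretic equalities $\Zero_{\tU}(\sigma)=\Zero_U(\sigma)$, $\Zero_{\tU}(a)=\Zero_U(a)$ (not merely agreement mod high powers of $\I$) and the tangent-space identity \eqref{Eq:def11} is precisely the kind of rigid condition that is delicate to preserve through approximation and requires extra care.

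The paper takes a different and considerably more elementary route that avoids iteration and Artin approximation entirely. After choosing a quadratic correction $c\in\Gamma(V,E^\vee\otimes E^\vee)$ so that $g=(\sigma\otimes\sigma)(a)+\sigma(s)+c(s,s)$ holds \emph{exactly}, one passes to the total space of $E\otimes E$ and defines $\tV$ as the zero locus of $\widetilde{\tau}=a+\tau+c(\tau,\tau)$, where $\tau$ is the tautological section. The point is to use $\tau$ as a free variable that ``remembers'' $a$: one sets $\tU=\Zero_{\tV}(\widetilde{s})$ with $\widetilde{s}=s-\sigma(\tau)$, and then the vanishing $\tg|_{\tU}=(\sigma\otimes\sigma)(\widetilde{\tau})|_{\tU}=0$ follows by a direct algebraic substitution, with no approximation. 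The tangent and zero-locus conditions are then checked by computing differentials and shrinking open sets, and the \'etaleness of $\tV\to V$ near $U_0$ is verified explicitly. Both approaches in principle produce an \'etale $\tV\to V$ of the same relative dimension, but the paper's construction is closed-form, whereas yours trades that explicitness for a formal-algebraic argument whose key final step you have not supplied. As it stands your proposal is a reasonable plan of attack, not a proof.
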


\begin{proof}
Choose a $(0,2)$-tensor $c\in \Gamma(V,E\dual \otimes E\dual)$ such that
\[g = (\sigma \otimes \sigma) (a)+ \sigma(s) + c(s,s) \in \Gamma(V,\O_V).\]
We first define $\tV\coloneqq\Zero(\widetilde{\tau})\subseteq E\otimes E$ as the zero locus of the section
\[ \widetilde{\tau}\coloneqq a|_{E\otimes E} + \tau + c|_{E\otimes E}(\tau,\tau) \in \Gamma(E\otimes E, E\otimes E|_{E\otimes E}) , \]
where $\tau$ is the tautological section,
and $c|_{E\otimes E}(\tau,\tau)$ is the image of $c|_{E\otimes E} \otimes \tau \otimes \tau$ under the contraction map 
\[c_{13,25} \colon (E\dual\otimes E\dual) \otimes (E\otimes E) \otimes (E\otimes E) \colon (\alpha_1,\alpha_2,\alpha_3,\alpha_4,\alpha_5,\alpha_6)\mapsto \alpha_1(\alpha_3)\cdot \alpha_2(\alpha_5) \cdot \alpha_4\otimes \alpha_6.\]
We then define $\tU:=\Zero(\widetilde{s}) \subseteq \tV$ as the zero locus of the section
\[\widetilde{s}\coloneqq s|_{\tV} - \sigma|_{\tV} (\tau|_{\tV}) \in \Gamma(\tV, E|_{\tV}),\]
where $\sigma|_{\tV} (\tau|_{\tV})$ is the image of $\sigma|_{\tV} \otimes \tau|_{\tV}$ under the contraction map \[c_{13} \colon E\dual \otimes (E\otimes E) \to E \colon (\alpha_1,\alpha_2,\alpha_3) \mapsto \alpha_1(\alpha_3)\cdot \alpha_2.\]
Then the function $\tg\coloneqq g|_{\tV} \colon \tV \to \bA^1$ vanishes on $\tU\coloneqq\Zero_{\tV}(\widetilde{s})=\Zero_{E\otimes E}(\widetilde{\tau},s-\sigma(\tau))$,
\begin{align*}
\tg|_{\tU} 
= \left( (\sigma \otimes \sigma) (a)+ \sigma(s) + c(s,s)\right)|_{\tU}     
&=\left( (\sigma \otimes \sigma) (a)+ \sigma(\sigma(\tau)) + c(\sigma(\tau),\sigma(\tau))\right)|_{\tU} \\
& = (\sigma \otimes \sigma) (a + \tau + c(\tau,\tau))|_{\tU} = (\sigma \otimes \sigma)(\widetilde{\tau})|_{\tU} = 0.
\end{align*}

The existence of the first dotted arrow $\Zero_{\tU}(\sigma)  \to \Zero_U(\sigma)=\Zero_V(s,\sigma)$ over $V$ in \eqref{Eq:def10} follows from:
\[\Zero_{\tU}(\sigma) =\Zero_{\tV}(\widetilde{s},\sigma) \subseteq \Zero_{\tV}(s).\]
The existence of the second dotted arrow $\Zero_{\tU}(a) \to \Zero_U(a)=\Zero_V(s,a)$ over $V$ in \eqref{Eq:def10} is equivalent to:
\[\Zero_{\tU}(a) =\Zero_{\tV}(\widetilde{s},a) =\Zero_{E\otimes E}(\widetilde{\tau},\widetilde{s},a) \subseteq \Zero_{E\otimes E}(s).\]
Since $\Zero_{E\otimes E}(a,\tau) \hookrightarrow \Zero_{E\otimes E}(a,\widetilde{\tau})$ is an {\'e}tale closed embedding, the complement is closed, and thus we have such dotted arrow for the open subschemes
\[\tV^{\circ} \coloneqq \tV - ( \Zero_{\tV}(a) - \Zero_{V}(a) ) \subseteq \tV, \qquad \tU^{\circ} \coloneqq\tU \cap \tV^{\circ} \subseteq \tU .\] 
We define the third dotted arrow $U_0 \to \tU$ in \eqref{Eq:def10} as the canonical closed embedding
\[U_0\coloneqq\Zero_U(\sigma,a) = \Zero_V(s,\sigma,a) = \Zero_{E\otimes E}(\tau,s,\sigma,a) \subseteq \Zero_{E\otimes E}(\widetilde{\tau},\widetilde{s}) = \Zero_{\tV}(\widetilde{s})=\tU,\]
under the identification of $V$ with the zero section $0\colon V \to E\otimes E$.
By direct computations of de Rham differentials, we can show that
\begin{itemize}
\item $\tV  \hookrightarrow E\otimes E \to V$ is {\'e}tale near $\Zero_{\tV}(a)\supseteq U_0$;
\item $\tU$ is smooth over $B$ near $\Zero_{\tU}(\tau,\sigma)=U_0$ since $U=\Zero_V(s)$ is smooth over $B$;
\item $\T_{\tU/B}|_{U_0}=\T_{U/B}|_{U_0}$ under the identification $\T_{\tV/B}|_{U_0} = \T_{V/B}|_{U_0}$.
\end{itemize}
Consequently, we have \cref{lm:perturbationlemma} after replacing $\tV$ and $\tU$ with suitable open subschemes containing $U_0$.
\end{proof}

We now prove \cref{prop:BBDJS34} using \cref{lm:perturbationlemma} and \cref{Lem:computation} below.

\begin{proof}[Proof of \cref{prop:BBDJS34}]
Choose an {\'e}tale coordinate $y\colon V \to \bA^n_B$ near $v\in V$ after shrinking $V$ if necessary.
Consider the zero locus $W' \coloneqq \Zero (z) \subseteq U \times_B V \times \bA^1$ of the section
\begin{equation}\label{Eq:def1}
z\coloneqq y \circ \pr_2 - ((1-t)x_0 \circ \pr_1 + tx_1 \circ \pr_1) \in \Gamma(U\times_B V\times \bA^1,\O^{n}_{U\times_B V\times \bA^1}),
\end{equation}
where $x_0\coloneqq y \circ \Phi_0\colon U \to \bA^n_B$, $x_1\coloneqq y \circ \Phi_1\colon U \to \bA^n_B$, and $t\coloneqq\pr_3\colon U \times_B V \times \bA^1 \to \bA^1$.
Then $W'$ has most of the desired properties: the projection maps
\[\xymatrix{
& W' \ar[ld]_{\pr_{13}} \ar[rd]^{\pr_{23}} &\\
U\times \bA^1 && V\times \bA^1
}\]
are {\'e}tale since $\frac{\partial z}{\partial y}=\id$ and $\frac{\partial z}{\partial x_0} = 1 - t (1 - \frac{\partial x_1}{\partial x_0})$ is invertible by \eqref{Eq:def4}; 
the induced triangles
\begin{equation}\label{Eq:def14}
\xymatrix@C-4pc{
&  \Crit_{W'/B\times \bA^1}(f|_{W'}) \ar[ld]^{\pr_{13}} \ar[rd]^{\pr_{23}} &\\
\Crit_{U/B}(f)\times \bA^1 \ar[rr]_{\Phi_0=\Phi_1} \ar@/^2pc/@{.>}[ru]^-{\exists} &&  \Crit_{V/B}(g)\times \bA^1,
}\quad
\xymatrix@C-2pc{
& W'_0\coloneqq W'\times_{\bA^1}\{0\} \ar[ld]^{\pr_{1}} \ar[rd]^{\pr_{2}} &\\
U \ar[rr]_{\Phi_0} \ar@/^1pc/@{.>}[ru]^-{\exists} && V ,
}\quad \xymatrix@C-2pc{
& W'_1\coloneqq W'\times_{\bA^1}\{1\} \ar[ld]^{\pr_{1}} \ar[rd]^{\pr_{2}} &\\
U\ar[rr]_{\Phi_1} \ar@/^1pc/@{.>}[ru]^-{\exists} && V
}
\end{equation}
commute after shrinking $W'$,
since the graphs of the three lower horizontal arrows in \eqref{Eq:def14} are sections of the three {\'e}tale maps induced by $\pr_{13}$;
we have a map
$w'\coloneqq (u,v,\id) \colon \bA^1 \to W' \subseteq U \times_B V.$
However, $W'$ is not sufficient to have \cref{prop:BBDJS34} since
\[f \circ \pr_1 \neq g \circ \pr_2 \textin W' \subseteq U\times_B V\times \bA^1.\]

The computations in \cref{Lem:computation} below ensure that we can apply \cref{lm:perturbationlemma} above to the closed embedding $W' \hookrightarrow U\times_B V \times \bA^1$ and its function $f \circ \pr_1 - g \circ \pr_2$.
Then we can find:
\begin{itemize}
\item an {\'e}tale morphism $e\colon R \to U \times_B V \times \bA^1$, 
\item a smooth closed subscheme $W \subseteq R$ such that $h\coloneqq f \circ \pr_1 \circ e |_W = g \circ \pr_2 \circ e|_W$, and
\item a map $w'\colon\bA^1 \to W' \subseteq U\times_B V \times \bA^1$ lifts to a map $w:\bA^1 \to W \subseteq R$ via the third dotted arrow in \eqref{Eq:def10} since the $2$-tensor $a$ in \cref{Lem:computation} vanishes on $\{(u,v)\}\times \bA^1$.
\end{itemize}
Moreover, the induced maps 
\begin{equation}\label{Eq:def12}
\Psi_U \colon W \hookrightarrow R \to U\times_B V \times \bA^1 \xrightarrow{\pr_{13}} U \times \bA^1, \quad \Psi_V \colon W \hookrightarrow R \to U\times_B V \times \bA^1 \xrightarrow{\pr_{23}} V \times \bA^1
\end{equation}
are {\'e}tale near the image of $w$, since the tangent space remains the same by \eqref{Eq:def11};
the commutativity of the first triangle in \eqref{Eq:def13} follows from the commutativity of the first triangle in \eqref{Eq:def14} and the first dotted arrow in \eqref{Eq:def10}; the commutativity of the remaining two triangles in \eqref{Eq:def13} follows from the commutativity of the remaining two triangles in \eqref{Eq:def14} and the second dotted arrow in \eqref{Eq:def10} since the $2$-tensor $a$ in \cref{Lem:computation} vanishes on $W'_0 \cup W'_1$.
\end{proof}

We need the following lemma to complete the proof of \cref{prop:BBDJS34}.

\begin{lemma}\label{Lem:computation}
In the situation of \cref{prop:BBDJS34}, there exist
\begin{itemize}
\item an open subscheme $V^\circ \subseteq V$ containing $v$,
\item an open subscheme $S \subseteq (U\times_B V \times \bA^1)$ containing $\{(u,v)\} \times \bA^1$,
\item an {\'e}tale coordinate $y \colon V^{\circ} \to \bA^n_B$,
\item a section 
$a \in \Gamma(S, \pr_1^* (\T_{U/B} \otimes \T_{U/B})|_S)$ that vanishes in
$(\{(u,v)\}\times \bA^1) \cup (S \times_{\bA^1} \{0,1\}),$
\item an isomorphism of vector bundles $b \colon\O^{\oplus n}_{S} \cong  \pr_1^* \T_{U/B}|_S $,
\end{itemize}
such that we have
\[ ( f \boxplus (-g) \boxplus 0 ) = (df \otimes df)(a) +df \circ b(z) \in \Gamma(S, \O_{S}/\I_{\Zero(z)/S}^2)
,\]
where $z$ is defined as in \eqref{Eq:def1} 
and $df \in \Gamma(S, \pr_1^*\Omega^1_{U/B}|_S)$ is the pullback of $df \in \Gamma(U, \Omega^1_{U/B})$.
\end{lemma}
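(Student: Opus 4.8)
\emph{Plan.} I would prove this by reducing to an explicit computation in local étale coordinates, keeping track of the base $B$ throughout; this is the family version of the computation underlying \cite[Proposition 3.4]{BBDJS}. First I would shrink $V$ to an affine neighbourhood $V^\circ$ of $v$ carrying an étale coordinate system $y=(y_1,\dots,y_n)\colon V^\circ\to\bA^n_B$. Since $\Phi_0$ is étale, $x_0\coloneqq y\circ\Phi_0$ is an étale coordinate system on a neighbourhood of $u$ in $U$, which I use systematically: then $\partial_k f=(\partial_k g)\circ\Phi_0$ and $\I_{\Crit_{U/B}(f)}=(\partial_1 f,\dots,\partial_n f)$ near $u$. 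Writing $x_1=y\circ\Phi_1$ and $\Delta=x_1-x_0$, hypothesis \eqref{Eq:def17} gives $\Delta_i\in\I_{\Crit_{U/B}(f)}$, so $\Delta_i=\sum_j M_{ij}\,\partial_j f$ for some functions $M_{ij}$ near $u$; since $\nabla f(u)=0$ this also yields $\Delta(u)=0$. The condition $\Phi_i^\ast g=f$ gives the identity $g(x_0)=g(x_1)=f$, which will produce the factor $t(1-t)$.

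Next, as in the proof of \cref{prop:BBDJS34}, the subscheme $W'=\Zero(z)\hookrightarrow U\times_B V\times\bA^1$ is étale over $U\times\bA^1$ and over $V\times\bA^1$ near $\{(u,v)\}\times\bA^1$; the invertibility of $\partial z/\partial x_0=-(1-t(1-\partial x_1/\partial x_0))$ along that section is precisely where \eqref{Eq:def4} enters, as it forces $\partial x_1/\partial x_0$ to be unipotent at $u$. After shrinking to a neighbourhood $S\subseteq U\times_B V\times\bA^1$ of $\{(u,v)\}\times\bA^1$, I would choose an étale lift $\rho$ of $\tilde y\coloneqq(1-t)x_0+tx_1$ along the section $\{u\}\times\bA^1\hookrightarrow U\times\bA^1$ and expand $g\circ\pr_2$ in powers of $z=y\circ\pr_2-\tilde y$ modulo $\I_{W'}^2=(z)^2$ using difference-quotient functions, obtaining
\[
g\circ\pr_2\;\equiv\;\rho^\ast g+\textstyle\sum_i z_i\,\rho^\ast(\partial_i g)\pmod{\I_{W'}^2}.
\]
Thus $f\boxplus(-g)\boxplus 0=f\circ\pr_1-g\circ\pr_2\equiv(f-\rho^\ast g)-\sum_i z_i\,\rho^\ast(\partial_i g)\pmod{\I_{W'}^2}$, and it remains to identify the two summands.

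For the linear term, writing $\rho^\ast(\partial_i g)=\sum_k A_{ik}\,\partial_k f$ with $A=\id+t\cdot(\text{regular})$ (again by a difference quotient and $\Delta_i=\sum_j M_{ij}\partial_j f$), the restriction of $A$ to $\{(u,v)\}\times\bA^1$ is $\id+t\cdot\Hess(g)_v\,M(u)$, which is unipotent because \eqref{Eq:def4} makes $\Hess(g)_v\,M(u)$ nilpotent (indeed its cube vanishes); so $A$ is invertible after shrinking $S$, and $b(e_i)\coloneqq-\sum_k A_{ik}\,\pr_1^\ast(\partial/\partial x_{0,k})$ defines an isomorphism $b\colon\O_S^{\oplus n}\xrightarrow{\sim}\pr_1^\ast\T_{U/B}$ with $df\circ b(z)=-\sum_i z_i\,\rho^\ast(\partial_i g)$. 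For the constant term, using $g(x_1)=g(x_0)$ and a finite-difference argument one gets $f-\rho^\ast g=g(x_0)-g(x_0+t\Delta)=t(1-t)\sum_{ij}\Delta_i\Delta_j\,C_{ij}$ for regular $C_{ij}$ restricting to $-\tfrac12\Hess(g)_{v,ij}$ along $\{(u,v)\}\times\bA^1$; substituting $\Delta_i=\sum_k M_{ik}\partial_k f$ rewrites this as $(df\otimes df)(a)$ with $a=t(1-t)\sum_{kl}\bigl(\sum_{ij}M_{ik}M_{jl}C_{ij}\bigr)\pr_1^\ast(\partial/\partial x_{0,k}\otimes\partial/\partial x_{0,l})$, symmetrized. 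The prefactor $t(1-t)$ makes $a$ vanish on $S\times_{\bA^1}\{0,1\}$. The main obstacle — the family analogue of the corresponding verification in \cite[Proof of Proposition 3.4]{BBDJS} — is to arrange that $a$ also vanishes along $\{(u,v)\}\times\bA^1$, where it restricts to $-\tfrac12\,t(1-t)\,M(u)^{\top}\Hess(g)_v\,M(u)$: here one must exploit the freedom in $M$ (it is pinned down only up to syzygies of $\nabla f$, any correction $K$ satisfying $K(u)\Hess(g)_v=0$) together with the nilpotency of $\Hess(g)_v\,M(u)$ from \eqref{Eq:def4} to choose $M$ so that $\operatorname{im}M(u)$ is isotropic for $\Hess(g)_v$. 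Combining the two summands then gives the asserted identity in $\Gamma(S,\O_S/\I_{\Zero(z)/S}^2)$, after a final shrinking of $V^\circ$ and $S$ so that all the auxiliary lifts, isomorphisms and vanishing loci are defined.
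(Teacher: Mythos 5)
Your strategy is the right one and closely mirrors the paper's: choose étale coordinates, expand $f\circ\pr_1-g\circ\pr_2$ modulo $\I_{\Zero(z)/S}^2$ into a constant term (quadratic in $\nabla f$) and a linear term, show the linear-coefficient matrix is invertible using \eqref{Eq:def4}, and make the quadratic coefficient $a$ vanish on $S\times_{\bA^1}\{0,1\}$ (the $t(1-t)$ factor) and on $\{(u,v)\}\times\bA^1$. But the last vanishing --- which you rightly identify as the crux and translate into $M(u)^t\,\Hess(g)_v\,M(u)=0$ --- is stated as a hope rather than proved, and the mechanism you propose (``exploit the freedom in $M$ together with the nilpotency from \eqref{Eq:def4}'') is not enough on its own.

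Concretely, after normalizing $\Hess(g)_v=\begin{pmatrix}0&0\\0&H_0\end{pmatrix}$ with $H_0$ invertible, the constraint $\Delta=M\nabla f$ pins down $M_{12}(u)$ and $M_{22}(u)$, while the syzygy freedom (corrections $K$ with $K(u)\Hess(g)_v=0$) only modifies $M_{11}(u)$ and $M_{21}(u)$. So the diagonal piece $M_{22}(u)^t H_0 M_{22}(u)$ of $M(u)^t\Hess(g)_v M(u)$ is rigid, and nilpotency of $\Hess(g)_v M(u)$ alone does not kill it. What you never invoke is the Hessian conjugation identity $(\partial x_1/\partial x_0|_u)^t\,\Hess(g)_v\,(\partial x_1/\partial x_0|_u)=\Hess(g)_v$ that follows from $\Phi_0^\ast g=\Phi_1^\ast g=f$. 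Writing $C\coloneqq(\partial x_1/\partial x_0-\id)|_u$, whose first block column vanishes by \eqref{Eq:def17}, this identity reduces to $C_{22}^t H_0+H_0 C_{22}+C_{22}^t H_0 C_{22}=0$; right-multiplying by $C_{22}$ and using $C_{22}^2=0$ (from \eqref{Eq:def4}) gives $C_{22}^t H_0 C_{22}=0$, i.e.\ $M_{22}(u)^t H_0 M_{22}(u)=0$ since $M_{22}(u)=C_{22}H_0^{-1}$. The paper packages exactly this by further normalizing $H_0=\id$ in \eqref{Eq:def8} and deducing from \eqref{Eq:def9} that $N\coloneqq C_{22}$ is simultaneously anti-symmetric and square-zero, whence $N^t N=-N^2=0$. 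Without the Hessian conjugation input --- which is a structural consequence of $\Phi_i^\ast g=f$, not a free choice and not implied by nilpotency --- your sketch has a genuine gap at its key step.
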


\begin{proof}

Choose an {\'e}tale coordinate $y\coloneqq(y^m,y^q)\colon V \to \bA^m_B \times_B \bA^q_B$ such that the Hessian at $v\in V$ is:
\begin{equation}\label{Eq:def8}
\frac{\partial^2 g}{\partial y^2}|_v
=
\left[\begin{array}{c|c} 
	\frac{\partial^2 g}{\partial y^m \partial y^m}|_v & \frac{\partial^2 g}{\partial y^m \partial y^q}|_v \\ 
	\hline 
	\frac{\partial^2 g}{\partial y^q \partial y^m}|_v & \frac{\partial^2 g}{\partial y^q \partial y^q} |_v
\end{array}\right] =
\left[\begin{array}{c|c} 
	0 & 0\\ 
	\hline 
	0 & \id 
\end{array}\right] .
\end{equation}
Indeed, this is always possible after shrinking $V$ and a coordinate change of $y$ by $GL_{m+q}$.

Let $x_0\coloneqq y \circ \Phi_0$ and $x_1\coloneqq y \circ \Phi_1$ be the induced coordinates.
Then there is a $q\times q$-matrix $N$ such that
\begin{equation}\label{Eq:def9}
\left(\id -\frac{\partial x_1}{\partial x_0}\right)|_u =
\left[\begin{array}{c|c} 
	0 & *\\ 
	\hline 
	0 & N
\end{array}\right] , \where N^t=-N, \quad N^2=0
\end{equation}
Indeed, \eqref{Eq:def17} gives $\frac{\partial x_1^m}{\partial x_0^m}|_u=0$; the formula $\frac{\partial^2 g}{\partial x_1^2}|_u = \frac{\partial^2 g}{\partial y^2}|_v = \frac{\partial^2 g}{\partial x^2_0}|_u$ gives
\[
\left(\frac{\partial x_1}{\partial x_0}\right)^t|_u
\left[\begin{array}{c|c} 
	0 & 0\\ 
	\hline 
	0 & \id 
\end{array}\right]  
\left(\frac{\partial x_1}{\partial x_0}\right)|_u
= 
\left[\begin{array}{c|c} 
	0 & 0\\ 
	\hline 
	0 & \id 
\end{array}\right]   .
\]
and hence $\frac{\partial x_1^q}{\partial x_0^m}|_u=0$ and $\frac{\partial x_1^q}{\partial x_0^q}|_u^t=-\frac{\partial x_1^q}{\partial x_0^q}|_u$; \eqref{Eq:def4} gives $\frac{\partial x_1^q}{\partial x_0^q}|_u^2=0$.

We also note that there exists a matrix $L \in \Hom_U(\O_U^{\oplus m+q},\O_U^{\oplus m+q})$ of functions on $U$ such that
\begin{equation}\label{Eq:def6}
x_0 - x_1 = L \cdot \frac{\partial f}{\partial x_0}, \where L|_u =
\left[\begin{array}{c|c} 
	* & *\\ 
	\hline 
	0 & N
\end{array}\right]. 
\end{equation}
Indeed, the existence of a matrix satisfying the first formula follows from \eqref{Eq:def17}.
Moreover, \eqref{Eq:def8}  gives $\Crit_{V/B}(g) \subseteq \Zero_V(y^q)$ and hence we can find a $q\times(m+q)$-matrix $M$ of functions on $V$ such that
\[y^q = M \cdot \frac{\partial g} {\partial y} , \where M|_v =  \left[\begin{array}{c|c} 
	0 & 1
\end{array}\right].\]
Then we have
\[x_0^q - x_1^q = \Phi_0^*(M) \frac{\partial f} {\partial x_0} -\Phi_1^*(M) \frac{\partial f} {\partial x_1}  =  \left( \Phi_0^*(M)  - \Phi_1^*(M)  \frac{\partial x_0}{\partial x_1}\right)  \frac{\partial f} {\partial x_0},\]
where $\left( \Phi_0^*(M)  - \Phi_1^*(M)  \frac{\partial x_0}{\partial x_1}\right)|_u =  \left[\begin{array}{c|c} 
	0 & N
\end{array}\right]$ by \eqref{Eq:def9}. 
Hence we have \eqref{Eq:def6} as claimed.

We claim that there exists a  matrix $A$ of functions on $S\subseteq U\times_B V\times \bA^1$ such that
\begin{equation}\label{Eq:def5}
( f \boxplus (-g) \boxplus 0 ) = \left(\frac{\partial f}{\partial x_0}\right)^t \cdot A \cdot \frac{\partial f}{\partial x_0} \textin \Zero(z), \where A|_{(\{(u,v)\} \times \bA^1) \cup (S\times_{\bA^1} \{0,1\})}=0 .
\end{equation}
Since we have \eqref{Eq:def6} and $( f \boxplus (-g) \boxplus 0 )|_{\Zero(z,t(1-t))}=0$ after shrinking $S$,
it suffices to find $A$ such that
\begin{equation}\label{Eq:def15}
( f \boxplus (-g) \boxplus 0 ) = \left(\frac{\partial f}{\partial x_0}\right)^t \cdot A \cdot \frac{\partial f}{\partial x_0} \textin \Zero(z,(x_0-x_1)^{\otimes3}), \where A|_{\{(u,v)\} \times \bA^1 }=0 ,
\end{equation}
where $(x_0-x_1)^{\otimes3} \in \Gamma(U, (\O_U^{q+m})^{\otimes 3})$.
Indeed, we have
\begin{align}
f \boxplus (-g) \boxplus 0 
 & = \frac{\partial g}{\partial y} \cdot (x_0 - y) + (x_0-y)^t \cdot \frac{\partial^2 g}{\partial y^2} \cdot (x_0-y) \textin \Zero((x_0-y)^{\otimes3}) \label{Eq:def2} \\
 & =  \frac{\partial g}{\partial y} \cdot (x_1 - y) + (x_1-y)^t \cdot \frac{\partial^2 g}{\partial y^2} \cdot (x_1-y) \textin \Zero((x_1-y)^{\otimes3}) . \label{Eq:def3}
\end{align}
Then the restriction of $(1-t)\times$\eqref{Eq:def2}$+t\times$\eqref{Eq:def3} to $\Zero(z)$ is:
\begin{equation}\label{Eq:def7}
     ( f \boxplus (-g) \boxplus 0 ) = \frac12 t(1-t) (x_0-x_1)^t \cdot\frac{\partial^2 f}{\partial x_0^2} \cdot  (x_0-x_1)  \textin \Zero(z,(x_0-x_1)^{\otimes3}) .
\end{equation}
Consider the matrix
\[A\coloneqq  \frac12 t(1-t) \cdot L^t \cdot \frac{\partial^2 f}{\partial x_0^2} \cdot L .\]
Then the left equality in \eqref{Eq:def15} follows from \eqref{Eq:def7} and the left equality in  \eqref{Eq:def6}.
The right equality in \eqref{Eq:def15} follows from the right equality in  \eqref{Eq:def6} and \eqref{Eq:def8}.

By the left equality in \eqref{Eq:def5}, there exists a matrix $B'$ of functions on $S\subseteq U \times V \times \bA^1$ such that
\begin{equation}\label{Eq:4}
f \boxplus -g \boxplus 0 =   \left(\frac{\partial f}{\partial x_0}\right)^t \cdot A \cdot \frac{\partial f}{\partial x_0}  +   \left(\frac{\partial f}{\partial x_0}\right)^t \cdot B' \cdot z  \textin \Zero(z^{\otimes2})
.\end{equation}
By pulling back \eqref{Eq:4} to $\Zero(z)$ and taking the differential, we get
\[ \frac{\partial f}{\partial x_0} - \frac{\partial g}{\partial y} \left((1-t) + t \frac{\partial x_1}{\partial x_0}\right) = \left(\frac{\partial f}{\partial x_0}\right)^t \cdot \frac{\partial A}{\partial x_0} \cdot \frac{\partial f}{\partial x_0} + 
\left(\left(\frac{\partial f}{\partial x_0}\right)^t \cdot A \cdot \frac{\partial^2 f}{\partial x_0^2}\right)^t  \textin \Zero(z),\]
where $\frac{\partial A}{\partial x_0} \in \Hom_S (\O_S^{q+m},\O_S^{q+m} \otimes \O_S^{q+m})$ and $\left(\frac{\partial f}{\partial x_0}\right)^t \cdot \frac{\partial A}{\partial x_0}$ is its contraction in the first factor.
Hence
\[\frac{\partial g}{\partial y} = B\cdot  \frac{\partial f}{\partial x_0} \textin \Zero(z), \where B:=\left(1- t \left(1- \frac{\partial x_1}{\partial x_0}\right)\right)^{-1}\left(1-\left(\frac{\partial f}{\partial x_0}\right)^t \cdot \frac{\partial A}{\partial x_0}  - \left(\frac{\partial^2 f}{\partial x_0^2}\right)^t\cdot A^t\right).\]
Since $A|_{\{(u,v)\}\times \bA^1}=0$, $\frac{\partial f}{\partial x_0}|_u=0$, and $\left(1- \frac{\partial x_1}{\partial x_0}\right)^2=0$, $B$ is invertible.

Applying the partial differential of \eqref{Eq:4} by the $y$-coordinate and pulling back to $\Zero(z)$, we get
\[\frac{\partial g}{\partial y} = B' \cdot \frac{\partial f}{\partial x_0} \textin \Zero(z).\]
Then $B' \cdot \frac{\partial f}{\partial x_0} - B \cdot \frac{\partial f}{\partial x_0}$ vanishes on $\Zero(z)$ and hence we get
\[f \boxplus -g \boxplus 0 =   \left(\frac{\partial f}{\partial x_0}\right)^t \cdot A \cdot \frac{\partial f}{\partial x_0}  +   \left(\frac{\partial f}{\partial x_0}\right)^t \cdot B \cdot z  \textin \Zero(z^{\otimes2})
,\]
as desired.
\end{proof}

\section{Functoriality of d-critical structures}

\subsection{Pullbacks of d-critical structures}

In the definition of relative d-critical structures (see \cref{def:dcriticalscheme}) we have considered critical charts which are Zariski open. We may replace this condition by requiring \'etale or smooth critical charts; the goal of this section is to show that the resulting notion of a relative d-critical structure does not change (see \cref{thm:criticallocussmoothdescent}). We begin with the following observation regarding smooth functoriality of relative critical loci.

\begin{proposition}\label{prop:smoothcriticallocus}
Let $(V, g)$ be an LG pair over $B$. Let $\pi\colon U\rightarrow V$ be a smooth morphism and denote $f=\pi^\ast g$. Then we have a Cartesian diagram
\[
\xymatrix{
\Crit_{U/B}(f) \ar[r] \ar[d] & U \ar^{\pi}[d] \\
\Crit_{V/B}(g) \ar[r] & V.
}
\]
\end{proposition}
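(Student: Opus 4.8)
The statement asserts that relative critical loci are compatible with smooth base change in the source direction: if $\pi\colon U\rightarrow V$ is smooth and $f=\pi^\ast g$, then $\Crit_{U/B}(f) = \Crit_{V/B}(g)\times_V U$ as closed subschemes of $U$. The plan is to reduce everything to the defining Cartesian square \eqref{eq:criticalpullback} for the relative critical locus together with the Cartesian square \eqref{eq:cotangentCartesian} relating relative cotangent bundles along a composite.

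First I would recall that, by \cref{def-relative-crit}, $\Crit_{V/B}(g)$ is the fiber product of the zero section $\Gamma_0\colon V\rightarrow \T^*(V/B)$ and the graph $\Gamma_{d_B g}\colon V\rightarrow \T^*(V/B)$, and similarly for $\Crit_{U/B}(f)$ with $\T^*(U/B)$. Since $\pi$ is smooth, the short exact sequence \eqref{eq:cotangentsequence} for $U\rightarrow V\rightarrow B$ gives a fiber sequence $\pi^*\bL_{V/B}\rightarrow \bL_{U/B}\rightarrow \bL_{U/V}$ with $\bL_{U/V}$ in Tor-amplitude $[0,0]$, and the first map is a split injection of vector bundles on $U$. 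Passing to total spaces, this is exactly the assertion of \eqref{eq:cotangentCartesian}: the square with corners $\T^*(V/B)\times_V U$, $U$, $\T^*(U/B)$, $\T^*(U/V)$ is Cartesian, with the map $\T^*(V/B)\times_V U\rightarrow \T^*(U/B)$ being a closed immersion (pullback of the zero section $U\rightarrow \T^*(U/V)$ along $\T^*(U/B)\rightarrow \T^*(U/V)$).

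Next I would observe that both $\Gamma_0$ and $\Gamma_{d_B f}$ for $U$ factor through $\T^*(V/B)\times_V U\hookrightarrow \T^*(U/B)$: for the zero section this is immediate, and for $\Gamma_{d_B f}$ it holds because $f = \pi^\ast g$ means $d_B f$ is the image of $\pi^*(d_B g)$ under $\pi^*\bL_{V/B}\rightarrow \bL_{U/B}$, i.e. $\Gamma_{d_B f} = (\Gamma_{d_B g}\circ\pi)\times_V \id_U$ under the identification $\T^*(V/B)\times_V U \cong \T^*(V/B)\times_V U$. (More precisely, $\Gamma_{d_B f}$ is the composite $U\xrightarrow{(\Gamma_{d_B g}\circ\pi,\,\id)}\T^*(V/B)\times_V U\hookrightarrow \T^*(U/B)$.) Therefore the defining fiber product for $\Crit_{U/B}(f)$ inside $\T^*(U/B)$ can be computed inside the closed subscheme $\T^*(V/B)\times_V U$, where it becomes the fiber product of $(\Gamma_0\circ\pi,\id)$ and $(\Gamma_{d_B g}\circ\pi,\id)$. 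This fiber product is $\big(V\times_{\T^*(V/B)}V\big)\times_V U = \Crit_{V/B}(g)\times_V U$, using that the two sections $U\rightrightarrows \T^*(V/B)\times_V U$ are base-changed along $\pi$ from the two sections $V\rightrightarrows \T^*(V/B)$ defining $\Crit_{V/B}(g)$, and base change commutes with fiber products.

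The bookkeeping to be careful with is the identification of $\Gamma_{d_B f}$ with the base change of $\Gamma_{d_B g}$ under the isomorphism of \eqref{eq:cotangentCartesian}; this is where the hypothesis $f = \pi^\ast g$ is used and where one must make sure the pullback morphism \eqref{eq:cotangentpullback} for the cotangent complex is being applied consistently with the construction of the graph of a relative one-form. I expect this compatibility — essentially $d_B(\pi^\ast g) = \pi^\ast(d_B g)$ in $\Gamma(U,\pi^*\bL_{V/B})\subset \Gamma(U,\bL_{U/B})$, which follows from naturality of the de Rham differential — to be the only genuine point requiring care; the rest is a formal manipulation of Cartesian squares. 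Once this is in place, the Cartesian square in the statement follows by pasting: $\Crit_{U/B}(f)\rightarrow U\rightarrow V$ equals $\Crit_{U/B}(f)\rightarrow \Crit_{V/B}(g)\rightarrow V$, and the outer and right squares being Cartesian forces the left one to be.
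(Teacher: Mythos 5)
Your proposal is correct and follows essentially the same argument as the paper: both proofs factor $\Gamma_{d_B f}$ through the monomorphism $\T^*(V/B)\times_V U\hookrightarrow \T^*(U/B)$ (which exists since $\pi$ is smooth) and then identify $\Crit_{U/B}(f)$ with the base change of $\Crit_{V/B}(g)$ by computing the fiber product inside this subscheme. The only cosmetic difference is that you spell out the naturality of $d_B$ under $\pi^*$ and the passage to total spaces more explicitly, whereas the paper states these as direct consequences of the displayed diagram.
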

\begin{proof}
Consider the diagram
\[
\xymatrix{
& U \ar[dr] \ar[dl] & \\
V \ar^{\Gamma_{d_B g}}[dr] && \T^*(V/B)\times_V U \ar[dr] \ar^{\pi_1}[dl] \\
& \T^*(V/B) && \T^*(U/B)
}
\]
where the square is Cartesian and the composite $\Gamma_{d_B f}\colon U\rightarrow \T^*(U/B)$ is given by the graph of $d_B f$. Since $U\rightarrow V$ is smooth, $\T^*(V/B)\times_V U\rightarrow \T^*(U/B)$ is injective. Therefore, the zero locus of $U\rightarrow \T^*(V/B)\times_V U$ (i.e. $\Crit_{V/B}(g)\times_V U$) coincides with the zero locus of $\Gamma_{d_B f}\colon U\rightarrow \T^*(U/B)$ (i.e. $\Crit_{U/B}(f)$).
\end{proof}

We will now prove the following technical statement, which will be used to recognize minimal critical charts; it is a family version of \cite[Proposition 2.7]{JoyceDcrit}.

\begin{proposition}\label{prop:Joyce27}
Let $X\rightarrow B$ be a morphism of schemes equipped with a relative d-critical structure $s$, a point $x\in X$, a smooth $B$-scheme $U$, a closed immersion $\imath\colon X\hookrightarrow U$ and a function $f\colon U\rightarrow \bA^1$ which satisfy the following properties:
\begin{enumerate}
    \item $\iota_{X, U}(s) = f\in\cO_U/\cI_{X,U}^2$ (see \cref{prop:Sproperties}(3) for the notation).
    \item $\imath\colon X\rightarrow U$ is minimal at $x$.
\end{enumerate}
Then there is an open neighborhood $U^\circ\subset U$ of $\imath(x)$ such that $\Crit_{U^\circ/B}(f|_{U^\circ})\times_U X\rightarrow X$ is an open immersion.
\end{proposition}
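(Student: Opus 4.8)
## Proof Proposal

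The plan is to reduce to a statement about the classical critical locus of $f$ on the fiber over $B$ and then invoke the relationship between minimality and the Hessian. First I would observe that the problem is local on $X$ near $x$, so after shrinking we may assume $X = \Crit_{U'/B}(f')$ for some critical chart $(U', f', u')$ with $u'$ an open immersion and $u'^\ast s = s_{f'}$. By \cref{prop:Critsdescription} this means $\iota_{X, U'}(s_{f'}) = f' \pmod{\cI_{X, U'}^2}$, while assumption (1) gives $\iota_{X, U}(s) = f \pmod{\cI_{X,U}^2}$. The key point is that both $U' \to B$ and $U \to B$ are smoothings of $X$, one of which ($U$) is minimal at $x$; I would use \cref{prop:minimalchart} or a direct comparison to relate them.

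The cleanest route is probably the following. Since $\imath\colon X \to U$ is minimal at $x$, the pullback $\imath^\ast\colon \Omega^1_{U/B, \imath(x)} \to \Omega^1_{X/B, x}$ is an isomorphism, so $\dim_{\imath(x)}(U/B) = \dim_{x}\Omega^1_{X/B}$. Using \cref{prop:criticalembeddinglocal} applied to a critical morphism comparing the minimal chart coming from assumption (1)–(2) and an arbitrary critical chart around $x$, or more directly using \cref{prop:Joyce27}'s scheme-theoretic content, I would show that the Hessian $\Hess(f)_x$ on $\T_{U/B, x}$ is \emph{nondegenerate} — this is exactly because minimality forces the "extra directions" to be zero, so $\Crit_{U/B}(f)$ and $X$ have the same cotangent space at $x$, and by \cref{prop:criticalHessian}(1) the kernel of $\Hess(f)_x$ is $\T_{\Crit_{U/B}(f)/B, x}$, which then coincides with $\T_{X/B, x}$. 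Granting that $\Hess(f)_x$ restricted to the normal directions is nondegenerate, I would conclude that $\Crit_{U/B}(f) \to U$ is, near $\imath(x)$, cut out transversally in the sense needed, so that $\Crit_{U^\circ/B}(f|_{U^\circ}) \times_U X \to X$ is an open immersion after shrinking $U$ to a neighborhood $U^\circ$ of $\imath(x)$.

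To make the last implication precise, I would argue as follows. Choose a critical chart $(U'', f'', u'')$ minimal at a preimage $y$ of $x$, using \cref{prop:minimalchart}; by construction $\Crit_{U''/B}(f'') \to X$ is an open immersion and $U'' \to B$ is minimal at $y$. Both $(U, f|_{?})$ (with the d-critical datum from assumptions (1)–(2)) and $(U'', f'')$ are then minimal smoothings of $X$ near $x$ carrying the same section $s$, so by the local comparison of critical charts (\cref{prop:stabilizationzigzag} together with \cref{prop:criticalembeddinglocal}, where the orthogonal bundle $(E,q)$ must have rank $0$ because both charts are minimal) there is, étale-locally, an isomorphism of LG pairs $(U, f) \cong (U'', f'')$ identifying the embeddings of $X$. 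Hence $\Crit_{U^\circ/B}(f|_{U^\circ}) \times_U X \cong \Crit_{U''^\circ/B}(f'') \times_{U''} X$, and the right-hand side is an open immersion into $X$ since $\Crit_{U''/B}(f'') \hookrightarrow X$ is, and fibering over $X$ (via $u''$, which is an open immersion) preserves this.

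The main obstacle I anticipate is the étale-descent bookkeeping: \cref{prop:stabilizationzigzag} and \cref{prop:criticalembeddinglocal} produce the comparison only after passing to an étale cover and with an auxiliary orthogonal bundle, so I must carefully check that in the \emph{minimal} case this orthogonal bundle is forced to have rank zero (so no genuine stabilization occurs) and that the resulting identification descends back to a Zariski-open neighborhood $U^\circ$ of $\imath(x)$. Equivalently, one must verify that "$\Crit_{U^\circ/B}(f|_{U^\circ}) \times_U X \to X$ is an open immersion" is an étale-local property on $U$ near $\imath(x)$ and is insensitive to the étale base change introduced above; this follows since open immersions can be checked étale-locally on the target, but the argument needs to be spelled out. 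An alternative that sidesteps some of this is to argue directly with the ideal sheaves: show $\cI_{\Crit_{U/B}(f), U} + \cI_{X, U} = \cI_{X, U}$ near $\imath(x)$ (i.e. $\Crit_{U/B}(f) \supseteq X$ locally, which is immediate from $d_B f \in \cI_{X,U}\Omega^1_{U/B}$ coming from assumption (1)) and that the inclusion $X \hookrightarrow \Crit_{U/B}(f)$ is an open immersion near $\imath(x)$, using that minimality makes $\Omega^1_{\Crit_{U/B}(f)/B, x} \to \Omega^1_{X/B, x}$ an isomorphism so that the closed immersion $X \hookrightarrow \Crit_{U/B}(f)$ induces an isomorphism on cotangent spaces, hence on completed local rings, hence is an open immersion on a neighborhood.
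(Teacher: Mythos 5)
Your proposal correctly identifies that minimality and the Hessian are the key ingredients, but both arguments you sketch have genuine gaps.

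\textbf{The main argument is circular.} You invoke \cref{prop:stabilizationzigzag} (together with \cref{prop:criticalembeddinglocal}) to compare $(U, f)$ with a minimal critical chart $(U'', f'')$. But \cref{prop:stabilizationzigzag} compares two objects which are \emph{already} critical charts, i.e.\ for which $\Crit_{-/B}(-) \to X$ is already known to be an open immersion. For the datum $(U, f, \imath)$ of the statement, assumptions (1)--(2) only give a closed immersion $X \hookrightarrow U$ with $\iota_{X,U}(s) = f$; they do \emph{not} yet tell you that $\Crit_{U/B}(f)\times_U X \to X$ is open --- that is precisely the conclusion of the proposition. So you cannot legitimately feed $(U, f)$ into the zigzag machinery. (Relatedly, ``the orthogonal bundle must have rank $0$ because both charts are minimal'' is not what \cref{prop:stabilizationzigzag} delivers even for bona fide critical charts; the zigzag goes through a \emph{larger} chart $W$, not a rank-zero stabilization.)

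\textbf{The alternative argument has a false step.} You claim that the closed immersion $X \hookrightarrow \Crit_{U/B}(f)$ inducing an isomorphism $\Omega^1_{\Crit_{U/B}(f)/B, x}\to\Omega^1_{X/B, x}$ implies an isomorphism on completed local rings and hence an open immersion. This implication does not hold for closed immersions: e.g.\ $\Spec k[t]/t^2 \hookrightarrow \Spec k[t]/t^3$ induces an isomorphism on $\Omega^1$ (both are rank one) but is not an open immersion. Controlling the scheme structure of $\Crit_{U/B}(f)$ versus $X$ needs input beyond the cotangent space.

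What the paper does to bridge this gap is to bring a genuine critical chart $(V, g, v)$ into play and compare the two \emph{functions}, not just the schemes: after building a common \'etale cover $\tilde{V}$ of both $U$ and $V$ (minimal at $x$), it records that $\tilde f - \tilde g \in \cI^2$ and, in local coordinates, writes $d_B\tilde f = (\mathrm{id}+\alpha)\,d_B\tilde g$. Minimality at $\tilde x$ forces not only $d_B\tilde g(\tilde x)=0$ but also the Hessian $\tfrac{\partial^2\tilde g}{\partial z_i\partial z_k}(\tilde x)=0$ (via \cref{prop:criticalHessian}), so $\alpha(\tilde x)=0$ and $(\mathrm{id}+\alpha)$ is invertible near $\tilde x$; hence $\Crit_{\tilde V/B}(\tilde f) = \Crit_{\tilde V/B}(\tilde g)$ as schemes, and one transports back to $U$ by \'etale descent. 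This explicit comparison of $f$ and $g$ in a common smoothing is exactly the missing content in your sketch.
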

\begin{proof}
Since $s$ is a relative d-critical structure on $X\rightarrow B$, we may find a critical chart $(V, g, v)$ at $x$. Let $X^\circ\subset X$ be the image of $v\colon \Crit_{V/B}(g)\rightarrow X$ and denote by $\jmath\colon X^\circ\rightarrow V$ the corresponding closed immersion. By \cref{prop:minimalchart} we may further assume that $\jmath\colon X^\circ\rightarrow V$ is minimal at $x$. Let $s^\circ = s|_{X^\circ}$.

The morphism $X^\circ\xrightarrow{\imath\times \jmath} U\times_B V$ is an immersion into a smooth $B$-scheme, so by \cref{prop:minimalimmersion}(1), possibly shrinking $X^\circ$, we may find a factorization of this morphism as $X^\circ\xrightarrow{\tilde{\jmath}} \tilde{V}\xrightarrow{\pi_U\times \pi_V} U\times_B V$, where $\tilde{V}$ is a smooth $B$-scheme and $\tilde{\jmath}$ is a closed immersion minimal at $x$. The morphisms
\[\tilde{\jmath}^\ast\colon \Omega^1_{\tilde{V}/B, \tilde{\jmath}(x)}\rightarrow\Omega^1_{X^\circ/B, x},\qquad \imath^\ast\colon \Omega^1_{U/B, \imath(x)}\rightarrow \Omega^1_{X^\circ/B, x},\qquad \jmath^\ast\colon \Omega^1_{V/B, \jmath(x)}\rightarrow\Omega^1_{X^\circ/B, x}\]
are all isomorphisms as the corresponding immersions are minimal at $x$. Therefore,
\[\pi_U^\ast\colon \Omega^1_{U/B, \imath(x)}\rightarrow \Omega^1_{\tilde{V}/B, \tilde{\jmath}(x)},\qquad \pi_V^\ast\colon \Omega^1_{V/B, \jmath(x)}\rightarrow \Omega^1_{\tilde{V}/B, \tilde{\jmath}(x)}\]
are isomorphisms. Therefore, possibly shrinking $\tilde{V}$ (and, correspondingly, $X^\circ$), we may assume that $\pi_U\colon \tilde{V}\rightarrow U$ and $\pi_V\colon \tilde{V}\rightarrow V$ are \'etale. We will now compare $\Crit_{U/B}(f)$ and $\Crit_{V/B}(g)$ using the correspondence $V\xleftarrow{\pi_V} \tilde{V}\xrightarrow{\pi_U} U$.
\begin{enumerate}
    \item Consider the diagram
    \[
    \xymatrix{
    X^\circ \ar[dr] \ar@/^1.5pc/[drr] \ar@/_1.5pc/_{\tilde{\jmath}}[ddr] \\
    & \Crit_{V/B}(g)\times_V \tilde{V} \ar[r] \ar[d] & \Crit_{V/B}(g) \ar[d] \\
    & \tilde{V} \ar^{\pi_V}[r] & V
    }
    \]
    where the square is Cartesian. By \cref{prop:smoothcriticallocus} we have $\Crit_{V/B}(g)\times_V \tilde{V}\cong \Crit_{\tilde{V}/B}(\tilde{g})$, where $\tilde{g} = \pi_V^\ast g$. Since $X^\circ\rightarrow \Crit_{V/B}(g)$ is an open immersion and $\Crit_{\tilde{V}/B}(\tilde{g})\rightarrow \Crit_{V/B}(g)$ is \'etale, the morphism $X^\circ\rightarrow \Crit_{\tilde{V}/B}(\tilde{g})$ is \'etale and, therefore, its image is open. Similarly, $\tilde{\jmath}$ and $\Crit_{\tilde{V}/B}(\tilde{g})\rightarrow \tilde{V}$ are closed immersions, so $X^\circ\rightarrow \Crit_{\tilde{V}/B}(\tilde{g})$ is a closed immersion. Therefore, its image is also closed. Thus, shrinking $\tilde{V}$ we may assume that $X^\circ\rightarrow \Crit_{\tilde{V}/B}(\tilde{g})$ is an isomorphism. Thus, $(\tilde{V}, \tilde{g})$ provides a critical chart for $(X\rightarrow B, s)$ near $x$.
    \item Let $\cI$ be the ideal defining the closed immersion $X^\circ\cong\Crit_{\tilde{V}/B}(\tilde{G})\rightarrow \tilde{V}$. Let $\tilde{f} = \pi_U^\ast f$ and $\tilde{x} = \tilde{\jmath}(x)$. By assumption we have $\iota_{X^\circ, \tilde{V}}(s^\circ) = \tilde{f}\in\cO_{\tilde{V}}/\cI^2$. Since $(V, g, v)$ is a critical chart we also get $\iota_{X^\circ, \tilde{V}}(s^\circ) = \tilde{g}\in\cO_{\tilde{V}}/\cI^2$. Thus, $\tilde{f} - \tilde{g}\in\cI^2$. Shrinking $\tilde{V}$ (and $X^\circ$ so that $X^\circ\cong \Crit_{\tilde{V}/\tilde{B}}(\tilde{g})$ remains true) we may choose \'etale coordinates $\{z_1, \dots, z_n\}$ on $\tilde{V}$, which is a smooth $B$-scheme. Then we may find a symmetric matrix $a_{ij}\in\cO_{\tilde{V}}$ such that
    \[\tilde{f} - \tilde{g} = \sum_{i,j} a_{ij}\frac{\partial \tilde{g}}{\partial z_i}\frac{\partial \tilde{g}}{\partial z_j}.\]
    Taking the derivative, we obtain
    \[\frac{\partial \tilde{f}}{\partial z_k} = \frac{\partial\tilde{g}}{\partial z_k} + \sum_{i,j} \frac{\partial a_{ij}}{\partial z_k}\frac{\partial \tilde{g}}{\partial z_i}\frac{\partial \tilde{g}}{\partial z_j} + \sum_{i, j}2a_{ij} \frac{\partial^2 \tilde{g}}{\partial z_i\partial z_k} \frac{\partial \tilde{g}}{\partial z_j}.\]
    We can write it as
    \[d_B \tilde{f} = (\id + \alpha) d_B \tilde{g},\]
    where we introduce the matrix
    \[\alpha_{ij} = \sum_k \frac{\partial a_{kj}}{\partial z_i}\frac{\partial \tilde{g}}{\partial z_k} + \sum_k 2a_{kj} \frac{\partial^2 \tilde{g}}{\partial z_i\partial z_k}.\]
    Since $\tilde{x}\in\Crit_{\tilde{V}/B}(\tilde{g})$, we have $\frac{\partial \tilde{g}}{\partial z_k}(\tilde{x}) = 0$. Since the closed immersion $\tilde{\jmath}$ is minimal at $x$, by \cref{prop:criticalHessian} we have $\frac{\partial^2 \tilde{g}}{\partial z_i\partial z_k}(\tilde{x}) = 0$. Thus, $\alpha(\tilde{x}) = 0$ and hence, shrinking $\tilde{V}$ and $X^\circ$, we may arrange $(\id+\alpha)$ to be invertible. Thus, we get $\Crit_{\tilde{V}/B}(\tilde{f}) = \Crit_{\tilde{V}/B}(\tilde{g})$.
    \item Consider the diagram
    \[
    \xymatrix{
    \Crit_{\tilde{V}/B}(\tilde{f}) \ar[r] \ar[d] & \tilde{V} \ar[d] \\
    \Crit_{U/B}(f) \ar[r] & U
    }
    \]
    which is Cartesian by \cref{prop:smoothcriticallocus}. The composite $\Crit_{\tilde{V}/B}(\tilde{f})\cong X^\circ\rightarrow X\xrightarrow{\imath} U$ is an immersion, so it is a monomorphism. Therefore, $\Crit_{\tilde{V}/B}(\tilde{f})\rightarrow \Crit_{U/B}(f)\rightarrow U$ is a monomorphism and hence $\Crit_{\tilde{V}/B}(\tilde{f})\rightarrow \Crit_{U/B}(f)$ is a monomorphism. Since $\pi_U\colon \tilde{V}\rightarrow U$ is \'etale, we have that $\Crit_{\tilde{V}/B}(\tilde{f})\rightarrow \Crit_{U/B}(f)$ is \'etale. Therefore, by \cite[Tag 025G]{Stacks} we get that $\Crit_{\tilde{V}/B}(\tilde{f})\rightarrow \Crit_{U/B}(f)$ is an open immersion. As $\pi_U\colon \tilde{V}\rightarrow U$ is \'etale, $U^\circ = \pi_U(\tilde{V})\subset U$ is open. Thus, we get that $\Crit_{\tilde{V}/B}(\tilde{f})\rightarrow \Crit_{U^\circ/B}(f|_{U^\circ})$ is a surjective open immersion, hence an isomorphism.
\end{enumerate}
\end{proof}

The following is a family version of \cite[Proposition 2.8]{JoyceDcrit}.

\begin{theorem}\label{thm:criticallocussmoothdescent}
Let $Y\rightarrow B$ be a morphism of schemes equipped with a section $t\in\Gamma(Y, \cS_{Y/B})$, $\pi\colon X\rightarrow Y$ a smooth morphism and let $s=\pi^\ast t\in\Gamma(X, \cS_{X/B})$.
\begin{enumerate}
    \item If $t$ is a relative d-critical structure, then $s$ is a relative d-critical structure. Explicitly, for every point $x\in X$ we may find a critical chart $(U, f, u)$ for $X\rightarrow B$ around $x$, a critical chart $(V, g, v)$ for $Y\rightarrow B$ around $\pi(x)$ together with a smooth morphism $\tilde{\pi}\colon (U, f)\rightarrow (V, g)$ which fits into a commutative diagram
    \[
    \xymatrix{
    X \ar^{\pi}[d] & \Crit_{U/B}(f) \ar_-{u}[l] \ar[d] \ar[r] & U \ar^{\tilde{\pi}}[d] \\
    Y & \Crit_{V/B}(g) \ar_-{v}[l] \ar[r] & V
    }
    \]
    \item If $s$ is a relative d-critical structure and $\pi$ is surjective, then $t$ is a relative d-critical structure.
\end{enumerate}
\end{theorem}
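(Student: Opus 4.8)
The plan is to handle both parts by the same mechanism. Fix the point in question and use \cref{prop:relativesmoothing} to replace $\pi$ locally by a smooth morphism $\tilde\pi\colon U\to V$ of smooth $B$-schemes sitting inside compatible closed immersions $\imath\colon X^\circ\hookrightarrow U$, $\jmath\colon Y^\circ\hookrightarrow V$ (minimal at the chosen points), with $X^\circ\to Y^\circ$ the restriction of $\pi$. One then transports the d-critical section to a function $g\colon V\to\bA^1$ through the presentation $\iota_{Y^\circ,V}\colon\cS_{Y^\circ/B}\hookrightarrow\cO_V/\cI_{Y^\circ,V}^2$ of \cref{prop:Sproperties}(3), sets $f=\tilde\pi^\ast g$, and plays off the equality $\Crit_{U/B}(f)=\Crit_{V/B}(g)\times_V U$ of \cref{prop:smoothcriticallocus} against the minimal‑chart recognition criterion \cref{prop:Joyce27}. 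Two routine facts are used throughout: that a relative d-critical structure on a morphism to $B$ forces that morphism to be locally of finite type (so that \cref{prop:relativesmoothing} applies), and the functoriality of $\iota_{-/-}$ along the square $(\imath,\jmath,\pi,\tilde\pi)$, which identifies $\iota_{X^\circ,U}(\pi^\ast(t|_{Y^\circ}))$ with $\tilde\pi^\ast\iota_{Y^\circ,V}(t|_{Y^\circ})$ (as in the proof of \cref{prop:Joyce27}).

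For part (1), fix $x\in X$, put $y=\pi(x)$, and apply \cref{prop:relativesmoothing} near $x$ (legitimate since $t$ is a relative d-critical structure, so $Y\to B$ is locally of finite type). Lift $t|_{Y^\circ}$ through $\iota_{Y^\circ,V}$ to $g\colon V\to\bA^1$; as $t$ is a relative d-critical structure, \cref{prop:Joyce27} lets me shrink $V$ so that $\Crit_{V/B}(g)\cong Y^\circ$, making $(V,g)$ a critical chart for $Y\to B$ with $v^\ast t=s_g$ by \cref{prop:Critsdescription}. Setting $f=\tilde\pi^\ast g$, \cref{prop:smoothcriticallocus} gives $\Crit_{U/B}(f)=\Crit_{V/B}(g)\times_V U\cong Y^\circ\times_V U$, and the canonical closed immersion $X^\circ\to Y^\circ\times_V U$ is also étale (by inspection of the construction in \cref{prop:relativesmoothing}, where $U$ is étale over $\bA^d\times_B V$), hence an open immersion by \cite[Tag 025G]{Stacks}; after shrinking $U$ once more it is an isomorphism, so $(U,f)$ is a critical chart for $X\to B$. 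The equality $u^\ast s=s_f$ reduces to $\iota_{X^\circ,U}(s|_{X^\circ})=\tilde\pi^\ast g=f\bmod\cI_{X^\circ,U}^2$, which follows from $s=\pi^\ast t$ and the functoriality of \cref{prop:Sproperties}(3); and $\tilde\pi$ together with $u,v$ gives the required commutative diagram. Varying $x$ shows $s$ is a relative d-critical structure.

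For part (2), fix $y\in Y$ and, using surjectivity of $\pi$, pick $x\in X$ over $y$. Since $s$ is a relative d-critical structure, $X\to B$ is locally of finite type, hence so is $Y\to B$ by descent along the smooth surjection $\pi$; \cref{prop:relativesmoothing} again furnishes $\tilde\pi\colon U\to V$, $\imath\colon X^\circ\hookrightarrow U$, $\jmath\colon Y^\circ\hookrightarrow V$ minimal at $x,y$, and I take $g$ lifting $t|_{Y^\circ}$ and $f=\tilde\pi^\ast g$. Because $\jmath$ is minimal and $\pi$ is smooth of some relative dimension $d$, the chain $\dim\Omega^1_{U/B,\imath(x)}=\dim\Omega^1_{V/B,\jmath(y)}+d=\dim\Omega^1_{Y/B,y}+d=\dim\Omega^1_{X/B,x}$ shows $\imath$ is minimal at $x$; combined with $\iota_{X^\circ,U}(s|_{X^\circ})=f\bmod\cI_{X^\circ,U}^2$, \cref{prop:Joyce27} applied to the relative d-critical structure $s|_{X^\circ}$ (which we \emph{do} know) lets me shrink $U$ so that $\Crit_{U/B}(f)\cong X^\circ$ over $U$. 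Now $\Crit_{U/B}(f)=\Crit_{V/B}(g)\times_V U$ by \cref{prop:smoothcriticallocus}, and $Y^\circ\subseteq\Crit_{V/B}(g)$ since $\iota_{Y^\circ,V}(t|_{Y^\circ})=g$ forces $d_Bg\in\cI_{Y^\circ,V}\Omega^1_{V/B}$; after shrinking $V$ to the open subset $\tilde\pi(U)$ (and $X^\circ,Y^\circ$ accordingly), the projection $\Crit_{V/B}(g)\times_V U\to\Crit_{V/B}(g)$ becomes a faithfully flat morphism $X^\circ\to\Crit_{V/B}(g)$ which factors through $Y^\circ\hookrightarrow\Crit_{V/B}(g)$ (as $\tilde\pi\imath=\jmath\pi$ factors through $Y^\circ$). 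Base-changing this closed immersion along $X^\circ\to\Crit_{V/B}(g)$ produces a closed immersion admitting a section, hence an isomorphism, so $Y^\circ\cong\Crit_{V/B}(g)$ by faithfully flat descent. Thus $(V,g)$ is a critical chart for $Y\to B$ near $y$ (with $v^\ast t=s_g$ by \cref{prop:Critsdescription}); since $y$ was arbitrary, $t$ is a relative d-critical structure.

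The hard part will be keeping the successive shrinkings of $U,V,X^\circ,Y^\circ$ mutually consistent so that "$(V,g)$ is a chart", the conclusion of \cref{prop:Joyce27}, and surjectivity of $\tilde\pi\colon U\to V$ all hold simultaneously; and, in part (2), the closing descent step that extracts $Y^\circ\cong\Crit_{V/B}(g)$ from the recognition $\Crit_{U/B}(\tilde\pi^\ast g)\cong X^\circ$ — this is the point where smoothness \emph{and} surjectivity of $\pi$ are both genuinely needed. By contrast, the functoriality of the $\cS$-presentations and the étale/open-immersion manipulations are routine and modelled on the proof of \cref{prop:Joyce27}.
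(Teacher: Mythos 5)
Your proposal is correct and follows essentially the same strategy as the paper's proof: apply \cref{prop:relativesmoothing} to obtain compatible minimal smoothings $\imath\colon X^\circ\hookrightarrow U$, $\jmath\colon Y^\circ\hookrightarrow V$ with $\tilde\pi\colon U\to V$ smooth, lift $t$ to a function $g$, set $f=\tilde\pi^\ast g$, and play \cref{prop:Joyce27} against \cref{prop:smoothcriticallocus}. The difference is one of emphasis. The paper extracts a single symmetric criterion — $s$ (resp. $t$) is d-critical near the chosen point if and only if, after shrinking, $\Crit_{U/B}(f)$ (resp. $\Crit_{V/B}(g)$) agrees with $X^\circ$ (resp. $Y^\circ$) — and then relates the two via the identification $\Crit_{U/B}(f)\cong\Crit_{V/B}(g)\times_V U$, closing part (1) by base change of open immersions along $X^\circ\to Y^\circ$ and part (2) by fppf descent for open immersions along $X^\circ\to Y^\circ$. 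You instead construct the two charts explicitly and compare them directly: for (1) you need the extra observation that $X^\circ\to Y^\circ\times_V U$ is \'etale, which you correctly extract by peering inside the proof of \cref{prop:relativesmoothing} (both sides are \'etale over $\bA^d\times Y^\circ$); for (2) you replace the paper's fppf-descent-of-open-immersions with an equivalent closed-immersion-with-section descent argument. Both routes are valid, and the shrinking bookkeeping you flag as the subtle point is indeed the same bookkeeping the paper handles. The paper's formulation is slightly more economical in that it avoids going back into the internals of \cref{prop:relativesmoothing}; your formulation has the advantage of making the required explicit form of the charts in part (1) manifest, which the paper leaves somewhat implicit.
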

\begin{proof}
In the first point $Y\rightarrow B$ has a relative d-critical structure, so it is locally of finite type. In the second point $X\rightarrow B$ is locally of finite type, so by \cite[Tag 01T8]{Stacks} we also get that $Y\rightarrow B$ is locally of finite type.

Let $x\in X$ be a point and $y=\pi(x)\in Y$. By \cref{prop:relativesmoothing} we may find a diagram
\[
\xymatrix{
X \ar^{\pi}[d] & X^\circ \ar^{\imath}[r] \ar^{\pi^\circ}[d] \ar[l] & U \ar^{\tilde{\pi}}[d] \\
Y & Y^\circ \ar^{\jmath}[r] \ar[l] & V
}
\]
with $X^\circ\subset X$ and $Y^\circ \subset Y$ open neighborhoods of $x$ and $y$, $\tilde{\pi}\colon U\rightarrow V$ smooth and $\imath\colon X^\circ\rightarrow U$ and $\jmath\colon Y^\circ\rightarrow V$ closed immersions into smooth $B$-schemes minimal at $x$ and $y$. Possibly shrinking $X^\circ,Y^\circ,U,V$ we may find a function $g\colon V\rightarrow \bA^1$ such that $\iota_{Y^\circ, V}(t) = g\in\cO_V/\cI_{Y^\circ, V}^2$. Let $f = \tilde{\pi}^\ast g$, so that $\iota_{X^\circ, U}(s) = f\in\cO_U/\cI_{X^\circ, U}^2$.

We now claim that $s$ is a relative d-critical structure in a neighborhood of $x$ if, and only if, we can shrink $U$ to a neighborhood of $x$ so that $\Crit_{U/B}(f)\times_U X^\circ\rightarrow X^\circ$ becomes an open immersion. Indeed, if this morphism is an open immersion, $(U, f)$ provides a critical chart at $x$. The converse is provided by \cref{prop:Joyce27}. The same claim applies to $t$.

Since $\tilde{\pi}\colon U\rightarrow V$ is smooth, by \cref{prop:smoothcriticallocus} we have $\Crit_{U/B}(f)\cong\Crit_{V/B}(g)\times_V U$. Thus, $\Crit_{U/B}(f)\times_U X^\circ\rightarrow X^\circ$ is an open immersion if, and only if, $\Crit_{V/B}(g)\times_V X^\circ\rightarrow X^\circ$ is an open immersion.

\begin{enumerate}
    \item Assume that $s$ is a relative d-critical structure. Then shrinking $V$ we get that $\Crit_{V/B}(g)\times_V Y^\circ\rightarrow Y^\circ$ is an open immersion. By base change $\Crit_{V/B}(g)\times_V X^\circ\rightarrow X^\circ$ is also an open immersion. By the above argument we get that $(X\rightarrow B, s)$ is a relative d-critical structure in a neighborhood of $x$. Varying $x$ we get that $(X\rightarrow B, s)$ is a relative d-critical structure.
    \item Assume that $s$ is a relative d-critical structure and $\pi$ is surjective. By the above argument we may shrink $V$ so that $\Crit_{V/B}(g)\times_V X^\circ\rightarrow X^\circ$ is an open immersion. By faithfully flat descent for open immersions \cite[Tag 02L3]{Stacks} this implies that $\Crit_{V/B}(g)\times_V Y^\circ\rightarrow Y^\circ$ is an open immersion. Again, the above argument implies that $t$ is a relative d-critical structure near $y$. As $\pi$ is surjective, we may vary $x\in X$ to cover $Y$, so we get that $t$ is a relative d-critical structure.
\end{enumerate}
\end{proof}

In \cref{thm:criticallocussmoothdescent} we have described a smooth functoriality of critical charts. We will also need a description of a smooth functoriality for critical morphisms of critical charts. The following statement is an analog of \cref{prop:stabilizationzigzag} for smooth morphisms of schemes equipped with relative d-critical structures. While we do not know how to show that the zigzag factorization in \cref{prop:stabilizationzigzag} can be made smooth functorial (this is claimed in the proof of \cite[Proposition 4.5]{BBBBJ}), the following alternative local model (with $U^\circ\rightarrow U$ being \'etale rather than an open immersion) is sufficient.

\begin{proposition}\label{prop:stabilizationzigzagsmoothmorphism}
Let $(X_2\rightarrow B, s_2)$ be a morphism of schemes equipped with a d-critical structure, $\pi\colon X_1\rightarrow X_2$ a smooth morphism and let $s_1=\pi^\ast s_2\in\Gamma(X_1, \cS_{X_1/B})$. Consider critical charts $(U_1, f_1, u_1)$ and $(V_1, f_1, v_1)$ of $X_1$ and $(U_2, f_2, u_2)$ and $(V_2, f_2, v_2)$ of $X_2$ together with smooth morphisms $\pi_U\colon (U_1, f_1)\rightarrow (U_2, f_2)$ and $\pi_V\colon (V_1, f_1)\rightarrow (V_2, f_2)$ compatible with $\pi$. Let $x_1\in\Crit_{U_1/B}(f_1)$ and $y_1\in\Crit_{V_1/B}(g_1)$ be points with $u_1(x_1)=v_1(y_1)$ and let $x_2=\pi_U(x_1)$ and $y_2=\pi_V(y_1)$. Then there is a commutative diagram
\[
\xymatrix{
(U_1, f_1, u_1) \ar^{\pi_U}[d] & (U_1^\circ, f_1^\circ, u_1^\circ) \ar[l] \ar^{\Phi_1}[r] \ar^{\pi^\circ_U}[d] & (W_1, h_1, w_1) \ar^{\pi_W}[d] & (V_1^\circ, g_1^\circ, v_1^\circ) \ar[r] \ar_-{\Psi_1}[l] \ar^{\pi^\circ_V}[d] & (V_1, g_1, v_1) \ar^{\pi_V}[d] \\
(U_2, f_2, u_2) & (U_2^\circ, f_2^\circ, u_2^\circ) \ar[l] \ar^{\Phi_2}[r] & (W_2, h_2, w_2) & (V_2^\circ, g_2^\circ, v_2^\circ) \ar[r] \ar_-{\Psi_2}[l] & (V_2, g_2, v_2)
}
\]
with $(U_i^\circ, f_i^\circ, u_i^\circ)$ \'etale critical charts and the rest Zariski critical charts, $(U_i^\circ, f_i^\circ, u_i^\circ)\rightarrow (U_i, f_i, u_i)$ an \'etale morphism surjective at $x_i$, $(V_i^\circ, g_i^\circ)\rightarrow (V_i, g_i)$ an open immersion surjective at $y_i$, $\Phi_i,\Psi_i$ critical morphisms, vertical morphisms smooth and $U_1^\circ\rightarrow W_1\times_{W_2} U_2^\circ$ and $V_1^\circ\rightarrow W_1\times_{W_2} V_2^\circ$ \'etale.
\end{proposition}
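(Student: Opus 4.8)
The plan is to run \cref{prop:stabilizationzigzag} on $X_2$ and then lift the resulting zigzag along the smooth morphism $\pi$, the key point being that on the $U$-side the lift of $U_1$ is forced to be merely \'etale.

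\emph{Step 1 (the zigzag downstairs).} Apply \cref{prop:stabilizationzigzag} to $(X_2\to B,s_2)$, the charts $(U_2,f_2,u_2)$, $(V_2,g_2,v_2)$ and the points $x_2,y_2$, which have a common image $\bar z_2\in X_2$; this produces open immersions $(U_2^\circ,f_2^\circ,u_2^\circ)\to(U_2,f_2,u_2)$ and $(V_2^\circ,g_2^\circ,v_2^\circ)\to(V_2,g_2,v_2)$, a critical chart $(W_2,h_2,w_2)$, and critical morphisms $\Phi_2,\Psi_2$. Inspecting the proof of \cref{prop:stabilizationzigzag}, $W_2$ may be taken to be a stabilization $W_2=E_2$ of a trivial orthogonal bundle $(E_2,q_2)\to\tilde V_2$ over an auxiliary critical chart $(\tilde V_2,\tilde g_2,\tilde v_2)$, with $h_2=\tilde g_2\circ\pi_{E_2}+\q_{E_2}$, and $\Psi_2$ factoring as $V_2^\circ\xrightarrow{\Xi_2}\tilde V_2\xrightarrow{0_{E_2}}E_2$ with $\Xi_2$ a closed immersion restricting to an isomorphism of relative critical loci; in particular $\Psi_2$ is a closed immersion. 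Using \cref{prop:relativesmoothing} applied to $\pi_V$ (and \cref{prop:Joyce27} to turn its output into critical charts), I would arrange in addition that $\tilde V_2$ comes with a smooth morphism $\tilde\pi_V\colon\tilde V_1\to\tilde V_2$ of smooth $B$-schemes and a closed immersion $\Xi_1\colon V_1^\circ\hookrightarrow\tilde V_1$, minimal at $y_1$, lifting $\Xi_2$, with $V_1^\circ\subseteq V_1$ open and $\tilde g_1=\tilde\pi_V^\ast\tilde g_2$.

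\emph{Step 2 (lifting the middle chart).} Since $\Crit_{\tilde V_2/B}(\tilde g_2)$ is open in $X_2$ and $\pi$ is smooth, \cref{thm:criticallocussmoothdescent}(1) together with \cref{prop:smoothcriticallocus} gives that $(\tilde V_1,\tilde g_1,\tilde v_1)$ is a critical chart for $(X_1\to B,s_1)$ with $\Crit_{\tilde V_1/B}(\tilde g_1)=\Crit_{\tilde V_2/B}(\tilde g_2)\times_{\tilde V_2}\tilde V_1$; shrink so that this open of $X_1$ contains a point $\tilde z_1$ over $\bar z_2$, $x_1$ and $y_1$. Setting $E_1=\tilde\pi_V^\ast E_2$, $W_1=E_1$, $h_1=\tilde g_1\circ\pi_{E_1}+\q_{E_1}$ and $\pi_W\colon W_1=E_2\times_{\tilde V_2}\tilde V_1\to E_2=W_2$, one gets a smooth morphism with $h_1=\pi_W^\ast h_2$, and by \cref{ex:criticalstabilization} $\Crit_{W_1/B}(h_1)\cong\Crit_{\tilde V_1/B}(\tilde g_1)$ is open in $X_1$. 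The identity $w_1^\ast s_1=s_{h_1}$ follows from $s_1=\pi^\ast s_2$, $w_2^\ast s_2=s_{h_2}$ and the functoriality of the relative exact two-form under the morphisms of LG pairs $0_{E_1}$ and $\tilde\pi_V$ (\cref{prop:Critfunctoriality}), so $(W_1,h_1,w_1)$ is a critical chart for $(X_1\to B,s_1)$ and $\pi_W$ is a smooth morphism of LG pairs compatible with $\pi$.

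\emph{Step 3 (lifting $\Psi_2$ and $\Phi_2$).} Write $d=\dim(X_1/X_2)$. On the $V$-side, because $\Psi_2=0_{E_2}\circ\Xi_2$ factors through the zero section one computes $W_1\times_{W_2}V_2^\circ\cong\tilde V_1\times_{\tilde V_2}V_2^\circ$, a closed subscheme of $\tilde V_1$ (as $\Xi_2$ is a closed immersion) smooth of relative dimension $d$ over $V_2^\circ$; the closed immersion $\Xi_1\colon V_1^\circ\hookrightarrow\tilde V_1$ factors through it, and since $V_1^\circ$ and $\tilde V_1\times_{\tilde V_2}V_2^\circ$ are both smooth over $B$ of the same dimension near $\tilde z_1$, this factorization is an isomorphism near $\tilde z_1$. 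Shrinking $\tilde V_1$ and $V_1^\circ$, the composite $\Psi_1\colon V_1^\circ\xrightarrow{\ \sim\ }W_1\times_{W_2}V_2^\circ\to W_1$ (the second arrow being the base change of the closed immersion $\Psi_2$) is the required critical morphism, $V_1^\circ\to V_1$ stays an open immersion, and $V_1^\circ\to W_1\times_{W_2}V_2^\circ$ is even an isomorphism. On the $U$-side, put $U_1'=\pi_U^{-1}(U_2^\circ)\subseteq U_1$ and $P=W_1\times_{W_2}U_2^\circ$; by \cref{prop:smoothcriticallocus} both $U_1'$ and $P$ are smooth of relative dimension $d$ over $U_2^\circ$, carry the pullback of $f_2^\circ$, and have relative critical locus $\Crit_{U_2^\circ/B}(f_2^\circ)\times_{U_2^\circ}(-)$, so after shrinking they share a common open $\mathcal O\subseteq X_1$ of relative critical loci containing $\tilde z_1$, giving compatible closed immersions $\mathcal O\hookrightarrow U_1'$ and $\mathcal O\hookrightarrow P$ over $U_2^\circ$, each minimal relative to $U_2^\circ$ at every point (their conormal sheaves being pulled back from $U_2^\circ$). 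Here $\Phi_2$ is \emph{not} a closed immersion, so there is no section $U_1'\to P$ on the nose; instead the smooth morphism $U_1'\times_{U_2^\circ}P\to U_1'$ has a section over $\mathcal O$, which by the \'etale-local structure of smooth morphisms (\cite[Tag 054L]{Stacks}) extends to an \'etale neighbourhood $U_1^\circ\to U_1'$ of $\tilde z_1$, producing a map $\alpha\colon U_1^\circ\to P$ over $U_2^\circ$ which is \'etale by the minimality just noted. Then $\Phi_1\colon U_1^\circ\xrightarrow{\alpha}P\to W_1$ (the second arrow being the base change of the unramified $\Phi_2$) is a critical morphism with $\Phi_1^\ast h_1=f_1|_{U_1^\circ}$ and $U_1^\circ\to W_1\times_{W_2}U_2^\circ=P$ equal to $\alpha$, hence \'etale; since $\pi_U^\circ\colon U_1^\circ\to U_1'\to U_2^\circ$ and $\pi_V^\circ\colon V_1^\circ\to V_2^\circ$ are smooth, this assembles into the desired diagram. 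This is precisely the place where an \'etale rather than open localization of $U_1$ is forced.

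\emph{Main obstacle.} The hardest part is not any single construction but the bookkeeping: one must choose the intermediate chart $\tilde V_2$ inside the proof of \cref{prop:stabilizationzigzag} so that it simultaneously carries the stabilization presentation of $W_2$ used there and lifts along $\pi_V$ to a smooth morphism $\tilde V_1\to\tilde V_2$ of smooth $B$-schemes as in Step 1, and one must track all of the successive shrinkings so that a single point $\tilde z_1$ over $x_1$ and $y_1$ persists in every chart. Granting this, the recurring mechanism is the observation that a closed subscheme cut out by an ideal pulled back from the base $U_2^\circ$ (resp.\ $V_2^\circ$) is automatically minimal over that base, which is what makes all the comparison maps \'etale; and the $U$/$V$ asymmetry in the conclusion reflects exactly whether the corresponding critical morphism downstairs, $\Phi_2$ or $\Psi_2$, is a closed immersion.
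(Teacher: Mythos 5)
Your overall plan — lift the downstairs zigzag from \cref{prop:stabilizationzigzag} along $\pi$ — is the right intuition, and your observation that the $U$/$V$ asymmetry in the conclusion is caused by $\Psi_2$ being a closed immersion while $\Phi_2$ need not be is exactly the right diagnosis. But there is a genuine gap, which you have in fact flagged yourself in your ``Main obstacle'' paragraph: the compatibility you need in Step~1 is not something you can ``arrange in addition'' after invoking \cref{prop:stabilizationzigzag} as a black box. The proof of \cref{prop:stabilizationzigzag} manufactures its intermediate chart $\tilde V$ via \cref{prop:chartopenaffine}, which depends only on $(V_2,g_2)$ and a point, and gives $\tilde V$ as an open subscheme of $\bA^n_B$. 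Asking $\tilde V_2$ to also admit a smooth lift $\tilde V_1\to\tilde V_2$ compatible with $\pi_V$ and a closed immersion $\Xi_1$ over $\Xi_2$ is an additional constraint on $\tilde V_2$; \cref{prop:relativesmoothing} applied to $\pi_V$ produces \emph{its own} smoothing $\tilde V_2'$, which there is no reason to expect agrees with the $\tilde V_2$ that \cref{prop:chartopenaffine} produced, and reconciling the two requires re-proving \cref{prop:chartopenaffine} in a relative form — precisely the thing you defer. So as written, Step~1 does not go through, and Steps~2--3 rest on it.

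The paper's proof sidesteps the issue entirely by \emph{not} introducing $\tilde V$ at all. It replaces $\tilde V$ (open in $\bA^n_B$) in the proof of \cref{prop:stabilizationzigzag} with an open neighbourhood $V_2^\circ\subset V_2$ of $y_2$ equipped only with an \'etale (not open) morphism to $\bA^n_B$, together with $V_1^\circ=\pi_V^{-1}(V_2^\circ)$ \'etale over $\bA^d_{V_2^\circ}$. This makes the lift along $\pi_V$ tautological, since $V_1^\circ$ is just a preimage, and it is the passage from an open immersion into $\bA^n_B$ to an \'etale map that forces the localization on the $U$-side to be \'etale rather than Zariski — which is the statement's conclusion. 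It then runs the same modified argument a second time for $U_1$ and $V_1^\circ$ regarded as $V_2^\circ$-schemes to produce $U_1^\circ$, pulls back the orthogonal bundle $(E_2,q_2)$ from $V_2^\circ$ to $V_1^\circ$ to get $W_1$, and checks that $\Phi_1$ is unramified by verifying that $\Theta_1^\ast\Omega^1_{V_1^\circ/V_2^\circ}\to\Omega^1_{U_1^\circ/U_2^\circ}$ is an isomorphism near $x_1$ (your ``minimality relative to $U_2^\circ$'' argument is aimed at the same fact, though you verify it by a different route). Your \'etale-section trick for producing $U_1^\circ$ in Step~3 is plausible, but it is a substitute for the paper's uniform ``run the same argument in the fibre'' step, not a completion of the missing Step~1; if you want to keep your zigzag-lifting viewpoint, the fix is to abandon the auxiliary chart $\tilde V_2$ from \cref{prop:chartopenaffine} and work with $V_2^\circ$ directly, as the paper does.
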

\begin{proof}
Let $V_2^\circ$ be an open neighborhood of $y_2$ which admits an \'etale morphism $V_2^\circ\rightarrow \bA^n_B$ over $B$ and $V_1^\circ = \pi_V^{-1}(V_2^\circ)$. Shrinking $V_1^\circ$ further, we may assume that there is an \'etale morphism $V_1^\circ\rightarrow \bA^d_{V_2^\circ}$ over $V_2^\circ$.

Replacing $\tilde{V}$ (equipped with an open immersion to $\bA^n_B$) by $V_2^\circ$ (equipped with an \'etale morphism to $\bA^n_B$) in the proof of \cref{prop:stabilizationzigzag}, we obtain an \'etale critical chart $(U_2^\circ, f_2^\circ, u_2^\circ)$ of $X_2$ with an \'etale morphism $(U_2^\circ, f_2^\circ, u_2^\circ)\rightarrow (U_2, f_2, u_2)$ and a commutative diagram
\[
\xymatrix{
\Crit_{U_2^\circ/B}(f_2^\circ) \ar[r] \ar[d] & \Crit_{V_2^\circ/B}(g_2^\circ) \ar[d] \\
U_2^\circ \ar^{\Theta_2}[r] & V_2^\circ
}
\]
of $B$-schemes. Repeating the same argument with $U_1$ and $V_1^\circ$ (as $V_2^\circ$-schemes), we obtain an \'etale critical chart $(U_1^\circ, f_1^\circ, u_2^\circ)$ of $X_1$ which fits into a commutative diagram
\[
\xymatrix{
U_1 \ar^{\pi_U}[d] & U_1^\circ \ar^{\pi_U^\circ}[d] \ar^{\Theta_1}[r] \ar[l] & V_1^\circ \ar[r] \ar^{\pi_V^\circ}[d] & V_1 \ar^{\pi_V}[d] \\
U_2 & U_2^\circ \ar^{\Theta_2}[r] \ar[l] & V_2^\circ \ar[r] & V_2.
}
\]

Passing to the critical loci of the middle square, we get a commutative diagram
\[
\xymatrix{
\Crit_{U_1^\circ/B}(f_1^\circ) \ar^{\Theta_1}[r] \ar^{\pi_U^\circ}[d] & \Crit_{V_1^\circ/B}(g_1^\circ) \ar^{\pi_V^\circ}[d] \\
\Crit_{U_2^\circ/B}(f_2^\circ) \ar^{\Theta_2}[r] & \Crit_{V_2^\circ/B}(g_2^\circ),
}
\]
where both horizontal morphisms are \'etale and vertical morphisms smooth. Thus,
\[\Theta_1^*\Omega^1_{\Crit_{V_1^\circ/B}(g_1^\circ)/ \Crit_{V_2^\circ/B}(g_2^\circ)}\longrightarrow \Omega^1_{\Crit_{U_1^\circ/B}(f_1^\circ)/\Crit_{U_2^\circ/B}(f_2^\circ)}\]
is an isomorphism. In particular,
\begin{equation}\label{eq:stabilizationzigzagsmoothmorphismpullback}
\Theta_1^*\Omega^1_{V_1^\circ/V_2^\circ}\longrightarrow \Omega^1_{U_1^\circ/U_2^\circ}
\end{equation}
is an isomorphism in a neighborhood of $x_1$. Thus, shrinking $U_1^\circ$ we may assume this morphism is an isomorphism.

Continuing with the rest of the proof of \cref{prop:stabilizationzigzag}, shrinking $U_1^\circ$ and $U_2^\circ$ we obtain a trivial orthogonal bundle $(E_2, q_2)$ over $V_2^\circ$ with a section $s_2\in\Theta_2^* E_2$ such that
\[f_2^\circ - g_2^\circ\circ\Theta_2 = q(s)\]
and such that $\Phi_2\colon (U_2^\circ, f^\circ_2)\rightarrow (W_2=E_2, g_2^\circ\circ\pi_{E_2} + \q_{E_2})$ given by $s_2$ is a critical morphism. Let $\Psi_2\colon (V_2^\circ, g^\circ_2)\rightarrow (W_2, g_2^\circ\circ\pi_{E_2} + \q_{E_2})$ be the zero section which is again a critical morphism.

Let $(E_1, q_1) = (\pi^\circ_V)^* (E_2, q_2)$ and $s_1 = (\pi^\circ_U)^* s_2$. Define $\Phi_1\colon (U_1^\circ, f_1^\circ)\rightarrow (W_1=E_1, g_1^\circ\circ\pi_{E_1} + \q_{E_1})$ using $s_1$ and $\Psi_1 \colon (V_1^\circ, g^\circ_1)\rightarrow (W_1, g_1^\circ\circ\pi_{E_1} + \q_{E_1})$ to be the zero section. Let $\pi_W\colon W_1\rightarrow W_2$ be the obvious projection. By \cref{ex:criticalstabilization} $\Psi_1$ is a critical morphism. Moreover, by construction $\Phi_1$ restricts to $\Theta_1\colon\Crit_{U_1^\circ/B}(f_1^\circ)\rightarrow \Crit_{V_2^\circ/B}(g_2^\circ)$. Thus, to show that $\Phi_1$ is a critical morphism, we have to show that it is unramified. For this, consider the commutative diagrams
\[
\xymatrix{
U_1^\circ \ar^{\Phi_1}[r] \ar^{\pi_U^\circ}[d] & W_1 \ar^{\pi_{E_1}}[r] \ar^{\pi_W}[d] & V^\circ_1 \ar^{\pi_V^\circ}[d] \\
U_2^\circ \ar^{\Phi_2}[r] & W_2 \ar^{\pi_{E_2}}[r] & V^\circ_2.
}
\]
By construction the right square is Cartesian, so
\[\pi_{E_1}^* \Omega^1_{V_1^\circ/V_2^\circ}\longrightarrow \Omega^1_{W_1/W_2}\]
is an isomorphism. Using \eqref{eq:stabilizationzigzagsmoothmorphismpullback} we get that
\[\Phi_1^* \Omega^1_{W_1/W_2}\longrightarrow \Omega^1_{U_1^\circ/U_2^\circ}\]
is an isomorphism. Therefore, $\Phi_1$ factors into a composite of an \'etale morphism $U_1^\circ\rightarrow W_1\times_{W_2} U_2^\circ$ and an unramified morphism $W_1\times_{W_2} U_2^\circ\rightarrow W_1$, hence it is unramified.
\end{proof}

Using \cref{thm:criticallocussmoothdescent} we establish smooth functoriality of the virtual canonical bundle.

\begin{proposition}\label{prop:virtualcanonicalpullback}
Let $(Y\rightarrow B, t)$ be a morphism of schemes equipped with a relative d-critical structure and $\pi\colon X\rightarrow Y$ a smooth morphism. Consider the pullback relative d-critical structure on $X\rightarrow B$. Then there is a natural isomorphism
\[\Upsilon_\pi\colon K^{\vir}_{Y/B}|_{X^{\red}}\otimes K_{X/Y}^{\otimes 2}|_{X^{\red}}\cong K^{\vir}_{X/B}.\]
It satisfies the following properties:
\begin{enumerate}
    \item $\Upsilon_{\id} = \id$.
    \item For a composite $X\xrightarrow{\pi} Y\xrightarrow{\sigma} Z$ of smooth morphisms with a relative d-critical structure on $Z\rightarrow B$ and pullback relative d-critical structures on $X\rightarrow B$ and $Y\rightarrow B$ the diagram
    \begin{equation}\label{eq:virtualcanonicalpullbackassociativity}
    \xymatrix{
    K^{\vir}_{Z/B}|_{X^{\red}}\otimes K_{X/Z}^{\otimes 2}|_{X^{\red}} \ar^-{\Upsilon_{\sigma\circ \pi}}[r] & K^{\vir}_{X/B} \\
    K^{\vir}_{Z/B}|_{X^{\red}}\otimes K_{Y/Z}^{\otimes 2}|_{X^{\red}} \otimes K_{X/Y}^{\otimes 2}|_{X^{\red}} \ar^-{\Upsilon_\sigma\otimes \id}[r] \ar^{\id\otimes i(\Delta)^2}[u] & K^{\vir}_{Y/B}|_{X^{\red}}\otimes K_{X/Y}^{\otimes 2}|_{X^{\red}} \ar^{\Upsilon_\pi}[u]
    & 
    }
    \end{equation}
    commutes, where the left vertical morphism is induced by the short exact sequence
    \[\Delta\colon 0\longrightarrow \pi^*\Omega^1_{Y/Z}\longrightarrow \Omega^1_{X/Z}\longrightarrow \Omega^1_{X/Y}\longrightarrow 0.\]
    \item For a point $x\in X$ the diagram
    \begin{equation}\label{eq:virtualcanonicalpullbackdiagram}
    \xymatrix{
    K^{\vir}_{Y/B, \pi(x)} \otimes K_{X/Y, x}^{\otimes 2} \ar^-{\Upsilon_\pi|_x}[r] \ar^{\kappa_{\pi(x)}\otimes \id}[d] & K^{\vir}_{X/B, x} \ar^{\kappa_x}[d] \\
    \det(\Omega^1_{Y/B, \pi(x)})^{\otimes 2} \otimes K_{X/Y, x}^{\otimes 2} \ar^-{i(\Delta)^2}[r] & \det(\Omega^1_{X/B, x})^{\otimes 2}
    }
    \end{equation}
    commutes, where the bottom isomorphism is induced by the short exact sequence
    \[\Delta\colon 0\longrightarrow \Omega^1_{Y/B, \pi(x)}\longrightarrow \Omega^1_{X/B, x}\longrightarrow \Omega^1_{X/Y, x}\longrightarrow 0.\]
\end{enumerate}
\end{proposition}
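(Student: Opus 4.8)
The plan is to construct $\Upsilon_\pi$ by descent from local isomorphisms on critical charts, using the smooth-functorial comparison of charts in \cref{thm:criticallocussmoothdescent} and \cref{prop:stabilizationzigzagsmoothmorphism}, and then to check the three properties fibrewise. A preliminary remark makes uniqueness automatic: all the line bundles here live on the \emph{reduced} scheme $X^{\red}$, so a morphism between line bundles on $X^{\red}$ is determined by its fibres at the points of $X$; hence each of (1)--(3) is equivalent to its fibrewise counterpart, and (3) together with the invertibility of the $\kappa_x$ pins down $\Upsilon_\pi$ uniquely. The substantive points are therefore the \emph{existence} of $\Upsilon_\pi$ as a global isomorphism and the fibrewise identity (3).

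\emph{Local construction.} Fix $x\in X$ and choose, by \cref{thm:criticallocussmoothdescent}(1), a \emph{relative critical chart} around $x$: a critical chart $(U,f,u)$ for $X\to B$, a critical chart $(V,g,v)$ for $Y\to B$, and a smooth $\tilde\pi\colon(U,f)\to(V,g)$ compatible with $\pi$. By \cref{prop:smoothcriticallocus} the square with corners $\Crit_{U/B}(f),U,\Crit_{V/B}(g),V$ is Cartesian, so restricting the relative cotangent sequence $\Delta\colon\tilde\pi^*\Omega^1_{V/B}\to\Omega^1_{U/B}\to\Omega^1_{U/V}$ to $\Crit_{U/B}(f)$ identifies, by base change, $\Omega^1_{U/V}|_{\Crit_{U/B}(f)}$ with $u^*\Omega^1_{X/Y}$, whence $K_{U/V}^{\otimes2}|_{\Crit_{U/B}(f)^{\red}}\cong u^*K_{X/Y}^{\otimes2}$. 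Combining this with the chart identifications $K^{\vir}_{X/B}\cong K_{U/B}^{\otimes2}$ and $K^{\vir}_{Y/B}\cong K_{V/B}^{\otimes2}$ on the relevant reduced critical loci from \cref{thm:virtualcanonicalschemes}(1), define $\Upsilon_\pi$ on this chart to be the composite
\[K^{\vir}_{Y/B}\otimes K_{X/Y}^{\otimes2}\;\cong\;K_{V/B}^{\otimes2}\otimes K_{U/V}^{\otimes2}\;\xrightarrow{\;i(\Delta)^2\;}\;K_{U/B}^{\otimes2}\;\cong\;K^{\vir}_{X/B}.\]

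\emph{Gluing.} The local isomorphisms define a functor $F$ from the category of relative critical charts to the Grothendieck construction of the sheaf of nonvanishing sections of $\cHom\bigl(K^{\vir}_{Y/B}|_{X^{\red}}\otimes K_{X/Y}^{\otimes2}|_{X^{\red}},K^{\vir}_{X/B}\bigr)$ once we check that they restrict compatibly under critical morphisms of relative charts; since \cref{thm:criticallocussmoothdescent} and \cref{prop:stabilizationzigzagsmoothmorphism} make the source locally connected over $X_{\Zar}$, \cref{prop:connecteddescent} then produces the global $\Upsilon_\pi$ (exactly as \cref{cor:criticaldescent} produces $K^{\vir}_{X/B}$). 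So it suffices to verify compatibility across a single commuting square of critical morphisms $\Phi_1\colon(U_1,f_1)\to(W_1,h_1)$, $\Phi_2\colon(U_2,f_2)\to(W_2,h_2)$ with compatible smooth verticals $\pi_U,\pi_W$ and $U_1\to W_1\times_{W_2}U_2$ \'etale: the local isomorphisms for $(U_1,U_2,\pi_U)$ and $(W_1,W_2,\pi_W)$ must intertwine the gluing isomorphism $J_{\Phi_1}$ for $K^{\vir}_{X/B}$, the gluing isomorphism $J_{\Phi_2}$ for $K^{\vir}_{Y/B}$, and the canonical identification $\Omega^1_{U_1/U_2}\cong\Phi_1^*\Omega^1_{W_1/W_2}$ coming from \'etaleness of $U_1\to W_1\times_{W_2}U_2$. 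Unwinding the $J_{\Phi_i}$ via \cref{thm:virtualcanonicalschemes}(2), this reduces to two inputs: the compatibility $q_{\Phi_1}=\pi_U^*q_{\Phi_2}$ of the Hessian forms from \cref{prop:criticalembeddingnormalquadratic}(4), which matches the squared-volume-form factors of the two $J_{\Phi_i}$; and a commuting diagram of determinant lines relating the conormal sequences of $\Phi_1,\Phi_2$ with the relative cotangent sequences of $\pi_U,\pi_W$, which is the standard compatibility of the isomorphisms $i(\Delta)$ with a $3\times3$ array of short exact sequences, in the determinant conventions of \cite[Section 2.2]{KPS}.

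\emph{Properties and the main difficulty.} Property (1) is immediate from the definition, and \eqref{eq:virtualcanonicalpullbackassociativity} follows, computing in a relative chart for $X\to Y\to Z$, from the compatibility of the $i(\Delta)$ isomorphisms with the refinement of $\Delta_{X/Z}$ into $\Delta_{Y/Z}$ and $\Delta_{X/Y}$. For (3), work in a relative chart with $y\in\Crit_{U/B}(f)$ over $x$ and $\bar y=\tilde\pi(y)$: by \cref{thm:virtualcanonicalschemes}(iii), $\kappa_x$ (resp. $\kappa_{\pi(x)}$) is the composite of the determinant-line isomorphism of $0\to\rN^\vee_{X/U,x}\to\Omega^1_{U/B,y}\to\Omega^1_{X/B,x}\to0$ (resp. the analogue for $Y\hookrightarrow V$) with the squared volume form of $\Hess(f)_x$ on $\rN_{X/U,x}$ (resp. $\Hess(g)_{\pi(x)}$). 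Since $f=\tilde\pi^*g$, functoriality of the second-jet map gives $\Hess(f)_y=\Hess(g)_{\bar y}\circ\bigl(\T_{U/B,y}\twoheadrightarrow\T_{V/B,\bar y}\bigr)$ as in \cref{prop:criticalHessian}(2), so $\T_{U/V,y}\subseteq\Ker\Hess(f)_y$ and the nondegenerate forms induced on $\rN_{X/U,x}\cong\rN_{Y/V,\pi(x)}$ coincide; together with the evident compatibility of the conormal and relative cotangent sequences for $U,V,X,Y$ over $B$, this yields \eqref{eq:virtualcanonicalpullbackdiagram}. (Equivalently, \eqref{eq:virtualcanonicalpullbackdiagram} takes place in the fibre over the image of $x$ in $B$ and so reduces to d-critical loci over a field, where it is \cite[Theorem 2.28(iv)]{JoyceDcrit}.) The main obstacle is the gluing step: identifying the correct $3\times3$ diagram of short exact sequences and tracking the $i(\Delta)$'s, the quadratic forms $q_{\Phi_i}$, and the chart identifications of $K^{\vir}$ through the sign conventions of \cite{KPS} — conceptually routine given \cref{prop:stabilizationzigzagsmoothmorphism} and \cref{prop:criticalembeddingnormalquadratic}, but delicate in the bookkeeping.
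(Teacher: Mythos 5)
Your preliminary observation — that the relevant line bundles live on $X^{\red}$, so everything is determined pointwise, and in particular diagram~\eqref{eq:virtualcanonicalpullbackdiagram} together with the invertibility of the $\kappa$'s pins down $\Upsilon_\pi$ uniquely — is precisely the engine of the paper's proof, but you do not exploit it fully. Having made that observation, the paper constructs the local isomorphism on a single relative chart exactly as you do, then verifies diagram~\eqref{eq:virtualcanonicalpullbackdiagram} in that chart by restricting to fibres over points of $B$ and invoking the absolute case (\cite[Proposition~2.30]{JoyceDcrit}), and then \emph{stops}: since any two local constructions each satisfy~\eqref{eq:virtualcanonicalpullbackdiagram}, they agree fibrewise and hence on overlaps, so they glue with no further argument. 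You instead propose a separate gluing step via \cref{prop:connecteddescent} and \cref{prop:stabilizationzigzagsmoothmorphism}, requiring the $3\times3$ array of determinant-line compatibilities and the Hessian identity $q_{\Phi_1}=\pi_U^*q_{\Phi_2}$ from \cref{prop:criticalembeddingnormalquadratic}(4). That route could be made to work, but it re-introduces exactly the sign- and convention-tracking you flag as the main obstacle, and it is logically redundant given your own uniqueness remark: the paper's argument is designed precisely to avoid it. A minor slip in the parenthetical: the absolute-case reference for diagram~\eqref{eq:virtualcanonicalpullbackdiagram} is \cite[Proposition~2.30]{JoyceDcrit} (pullback of the virtual canonical bundle along a smooth morphism), not Theorem~2.28(iv), which is the construction of $\kappa_x$ itself.
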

\begin{proof}
Let $x\in X$ be a point, $y$ its image in $Y$ and $b$ its image in $B$. Let $X_b$ and $Y_b$ be the fibers of $X\rightarrow B$ and $Y\rightarrow B$ at $b\in B$. We will first construct the isomorphism $\Upsilon_\pi$ in a neighborhood of $x$, possibly depending on additional choices.

By \cref{thm:criticallocussmoothdescent}(1) we may find a smooth morphism $\tilde{\pi}\colon U\rightarrow V$ of smooth $B$-schemes, a function $g\colon V\rightarrow \bA^1$ with $f=\tilde{\pi}^\ast g$ and a commutative diagram
\begin{equation}\label{eq:virtualcanonicalpullbackdata}
\xymatrix{
X \ar^{\pi}[d] & \Crit_{U/B}(f) \ar_-{u}[l] \ar[d] \ar[r] & U \ar^{\tilde{\pi}}[d] \\
Y & \Crit_{V/B}(g) \ar_-{v}[l] \ar[r] & V
}
\end{equation}
with the square on the right Cartesian by \cref{prop:smoothcriticallocus}, such that $(U, f, u)$ defines a critical chart for $X$ near $x$ (so that there is a point $x'\in \Crit_{U/B}(f)$ with $u(x')=x$) and $(V, g, v)$ defines a critical chart for $Y$ near $y$ (so that there is a point $y'\in\Crit_{V/B}(g)$ such that $v(y')=y$). By definition we have canonical isomorphisms
\[v^* K^{\vir}_{Y/B}\cong K_{V/B}^{\otimes 2}|_{\Crit_{V/B}(g)^{\red}},\qquad u^* K^{\vir}_{X/B}\cong K_{U/B}^{\otimes 2}|_{\Crit_{U/B}(f)^{\red}}.\]
Under these isomorphisms we define $\Upsilon_\pi|_{\Crit_{U/B}(f)^{\red}}$ to be 
the composite isomorphism
\[
\xymatrix{
K_{V/B}^{\otimes 2}|_{\Crit_{U/B}(f)^{\red}}\otimes K_{\Crit_{U/B}(f)/\Crit_{V/B}(g)}^{\otimes 2}|_{\Crit_{U/B}(f)^{\red}} \ar^{\sim}[d] \\
K_{V/B}^{\otimes 2}|_{\Crit_{U/B}(f)^{\red}} \otimes K_{U/V}^{\otimes 2}|_{\Crit_{U/B}(f)^{\red}} \ar^{i(\Delta)^2}[d] \\
K_{U/B}^{\otimes 2}|_{\Crit_{U/B}(f)^{\red}}
}
\]
where the first morphism is obtained from the isomorphism $K_{\Crit_{U/B}(f)/\Crit_{V/B}(g)}\cong K_{U/V}|_{\Crit_{U/B}(f)}$ coming from the Cartesian square
\[
\xymatrix{
\Crit_{U/B}(f) \ar[r] \ar[d] & U \ar[d] \\
\Crit_{V/B}(g) \ar[r] & V
}
\]
and the second morphism comes from the short exact sequence
\[\Delta\colon 0\longrightarrow \tilde{\pi}^*\Omega^1_{V/B}\longrightarrow \Omega^1_{U/B}\longrightarrow \Omega^1_{U/V}\longrightarrow 0.\]

The claim that the diagram \eqref{eq:virtualcanonicalpullbackdiagram} commutes reduces to the same claim about d-critical loci $X_b$ and $Y_b$ which is shown in \cite[Proposition 2.30]{JoyceDcrit}.

Now consider two choices $\{(U_1, f_1, u_1), (V_1, g_1, v_1), \tilde{\pi}_1\}$ and $\{(U_2, f_2, u_2), (V_2, g_2, v_2), \tilde{\pi}_2\}$ fitting into a diagram \eqref{eq:virtualcanonicalpullbackdata} and let $\Upsilon^1_\pi$ and $\Upsilon^2_\pi$ be the two local models of the isomorphism $\Upsilon_\pi$ defined using these local data. Then for every point $\tilde{x}\in\Crit_{U_1/B}(f_1)\times_X \Crit_{U_2/B}(f_2)$ with image $x$ in $X$ and $y$ in $Y$ both $\Upsilon^1_\pi|_x$ and $\Upsilon^2_\pi|_x$ fit into the same commutative diagram \eqref{eq:virtualcanonicalpullbackdiagram}. Therefore, they are equal. This proves that
\[\Upsilon^1_\pi|_{\Crit_{U_1/B}(f_1)\times_X \Crit_{U_2/B}(f_2)} = \Upsilon^2_\pi|_{\Crit_{U_1/B}(f_1)\times_X \Crit_{U_2/B}(f_2)}\]
and hence we obtain a global isomorphism $\Upsilon_\pi$ independent of choices.

It is enough to establish both properties of the isomorphism $\Upsilon_\pi$ pointwise, as they involve comparing isomorphisms of line bundles on reduced schemes. Property (1) is immediate from \eqref{eq:virtualcanonicalpullbackdiagram}.

Now consider the setting of property (2). Applying the commutative diagram \eqref{eq:virtualcanonicalpullbackdiagram}, diagram \eqref{eq:virtualcanonicalpullbackassociativity} restricted to $x\in X$ (with $y=\pi(x)\in Y$ and $z=\sigma(y)\in Z$) becomes
\begin{equation}\label{eq:virtualcanonicalpullbackassociativity2}
\xymatrix@C=2.5cm{
\det(\Omega^1_{Z/B, z})^{\otimes 2}\otimes K_{X/Z, x}^{\otimes 2} \ar^{i(\Delta_{X\rightarrow Z\rightarrow B})^2}[r] & \det(\Omega^1_{X/B, x})^{\otimes 2} \\
\det(\Omega^1_{Z/B, z})^{\otimes 2}\otimes K_{Y/Z, y}^{\otimes 2} \otimes K_{X/Y, x}^{\otimes 2} \ar^-{i(\Delta_{Y\rightarrow Z\rightarrow B})^2\otimes \id}[r] \ar^{\id\otimes i(\Delta_{X\rightarrow Y\rightarrow Z})^2}[u] & \det(\Omega^1_{Y/B, y})^{\otimes 2}\otimes K_{X/Y, x}^{\otimes 2}, \ar^{i(\Delta_{X\rightarrow Y\rightarrow B})^2}[u]
}
\end{equation}
where the individual isomorphisms are induced by the following double short exact sequence:
\[
        \begin{aligned}
        \begin{tikzcd}
        &
        \begin{array}{c} \substack{\displaystyle{\phantom{\Delta} } \\ \displaystyle{\phantom{\rotatebox{90}{\mbox{:}}}} \\ \displaystyle{0}} \end{array} &
        \begin{array}{c} \substack{\displaystyle{\ \Delta_{X\rightarrow Y\rightarrow B} } \\ \displaystyle{  \rotatebox{90}{\mbox{:}}} \\  \displaystyle{0}} \end{array} &
        \begin{array}{c} \substack{\displaystyle{\Delta_{X\rightarrow Y\rightarrow Z}} \\ \displaystyle{\rotatebox{90}{\mbox{:}}} \\  \displaystyle{0}}\end{array} \\
	\Delta_{Y\rightarrow Z\rightarrow B} \colon 0 & \Omega^1_{Z/B, z} & \Omega^1_{Y/B, y} & \Omega^1_{Y/Z, y} & 0 \\
	\Delta_{X\rightarrow Z\rightarrow B} \colon 0 & \Omega^1_{Z/B, z} & \Omega^1_{X/B, x} & \Omega^1_{X/Z, x} & 0 \\
	\phantom{\Delta_{X\rightarrow X\rightarrow X}:} 0 & 0 & \Omega^1_{X/Y, x} & \Omega^1_{X/Y, x} & 0 \\
    & 0 & 0 & 0 & 
        \arrow[from=2-1, to=2-2]
        \arrow[from=2-2, to=2-3]
        \arrow[from=2-3, to=2-4]
        \arrow[from=2-4, to=2-5]
        \arrow[from=3-1, to=3-2]
        \arrow[from=3-2, to=3-3]
        \arrow[from=3-3, to=3-4]
        \arrow[from=3-4, to=3-5]
        \arrow[from=4-1, to=4-2]
        \arrow[from=4-2, to=4-3]
        \arrow[equal, from=4-3, to=4-4]
        \arrow[from=4-4, to=4-5]
	\arrow[from=1-2, to=2-2]
        \arrow[equal, from=2-2, to=3-2]
        \arrow[from=3-2, to=4-2]
        \arrow[from=4-2, to=5-2]
        \arrow[from=1-3, to=2-3]
        \arrow[from=2-3, to=3-3]
        \arrow[from=3-3, to=4-3]
        \arrow[from=4-3, to=5-3]
        \arrow[from=1-4, to=2-4]
        \arrow[from=2-4, to=3-4]
        \arrow[from=3-4, to=4-4]
        \arrow[from=4-4, to=5-4]
\end{tikzcd}
\end{aligned}
\]
The commutativity of the diagram \eqref{eq:virtualcanonicalpullbackassociativity2} follows from the corresponding property of determinant lines, see \cite[(2.4)]{KPS}.
\end{proof}

\subsection{Base change of d-critical structures}

Recall that for a commutative diagram of schemes
\[
\begin{tikzcd}
X' \rar \dar & X \dar \\
B' \rar & B
\end{tikzcd}
\]
there is a natural pullback map
\begin{equation}\label{eq:formpullback}
\cA^{2, \ex}(X/B, k)\longrightarrow \cA^{2, \ex}(X'/B', k).
\end{equation}

We have the following base change property of the relative critical locus.

\begin{proposition}\label{prop:criticalbasechange}
Let $(U, f)$ be an LG pair over $B$. Let $B'\rightarrow B$ be a morphism and consider the fiber product
\[
\xymatrix@C=0.3cm@R=0.3cm{
U' \ar[rr] \ar[dd] && U \ar[dd] \\
& \Box & \\
B' \ar[rr] && B
}
\]
Let $f'\colon U'\rightarrow \bA^1$ be the pullback of $f$ to $U'$, so that $(U', f')$ is an LG pair over $B'$. Then there is a natural isomorphism
\[\Crit_{U/B}(f)\times_B B'\cong \Crit_{U'/B'}(f')\]
under which $s_f$ maps to $s_{f'}$.
\end{proposition}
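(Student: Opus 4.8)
The plan is to deduce both assertions from the functoriality of the constructions of \cref{sect:differentialforms,sect:criticallocus} applied to the Cartesian square whose vertical arrows are $U'\to B'$ and $U\to B$.

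\textit{Identification of the schemes.} Since $(U,f)$ is an LG pair, $U\to B$ is smooth, hence flat, so the square $U'=U\times_B B'$ is Tor-independent and $U'\to B'$ is again smooth. Therefore the base-change stability of the relative cotangent bundle recalled in \cref{sect:differentialforms} furnishes a canonical isomorphism $\T^*(U/B)\times_B B'\xrightarrow{\sim}\T^*(U'/B')$ of $U'$-schemes, induced by the isomorphism $(U'\to U)^*\bL_{U/B}\xrightarrow{\sim}\bL_{U'/B'}$ of \eqref{eq:cotangentpullback}. Under this isomorphism the base change of the zero section $\Gamma_0\colon U\to\T^*(U/B)$ is the zero section of $\T^*(U'/B')$, and the base change of the graph section $\Gamma_{d_B f}$ is $\Gamma_{d_{B'}f'}$ — here one uses that the de Rham differential commutes with pullback, so that $d_B f\in\cA^1(U/B,0)$ maps to $d_{B'}f'$ under \eqref{eq:formpullback}. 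As fiber products commute with base change, applying $(-)\times_B B'$ to the defining Cartesian square \eqref{eq:criticalpullback} of $\Crit_{U/B}(f)$ produces precisely the Cartesian square defining $\Crit_{U'/B'}(f')$, which gives the desired isomorphism $\Crit_{U/B}(f)\times_B B'\cong\Crit_{U'/B'}(f')$.

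\textit{Comparison of the exact two-forms.} Write $p\colon\Crit_{U'/B'}(f')\cong\Crit_{U/B}(f)\times_B B'\to\Crit_{U/B}(f)$ for the projection, which together with $B'\to B$ forms a (Cartesian) square; I must show that the induced pullback map \eqref{eq:formpullback} sends $s_f$ to $s_{f'}$. Recall from \cref{sect:criticallocus} that $s_f=h_f-h_0$ in $\Omega\cA^{2, \ex}(\Crit_{U/B}(f)/B,0)\cong\cA^{2, \ex}(\Crit_{U/B}(f)/B,-1)$, where $h_f$ (resp. $h_0$) is the nullhomotopy of $\Gamma_{d_B f}^\ast\lambda_{U/B}$ (resp. of $\Gamma_0^\ast\lambda_{U/B}\sim 0$) provided by $f$ (resp. tautologically). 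The essential point is that the Liouville one-form $\lambda_{U/B}\in\cA^1(\T^*(U/B)/B,0)$ pulls back to $\lambda_{U'/B'}$ under $\T^*(U/B)\times_B B'\cong\T^*(U'/B')$; since the relevant square is Cartesian this pullback is just restriction of forms, and the identity $\lambda_{U/B}|_{\T^*(U'/B')}=\lambda_{U'/B'}$ is immediate from the defining formula $(g,\alpha)^\ast\lambda_{U/B}\colon\cO_S\to g^*\bL_{U/B}\to\bL_{S/B}$ together with the base-change identification $g^*\bL_{U/B}\cong (g')^*\bL_{U'/B'}$ and the naturality of the maps $\bL_{S/B}\to\bL_{S/B'}$. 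Granting this, the equivalence $\Gamma_{d_B f}^\ast\lambda_{U/B}\sim d_B f$ pulls back to $\Gamma_{d_{B'}f'}^\ast\lambda_{U'/B'}\sim d_{B'}f'$, so $p^\ast h_f$ is the nullhomotopy provided by $f'$, i.e. $h_{f'}$, while $p^\ast h_0=h_0$; subtracting gives $p^\ast s_f=s_{f'}$.

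\textit{Main obstacle.} The only genuine difficulty is the $\infty$-categorical bookkeeping: one needs a coherent comparison rather than an unstructured family of homotopies. I would circumvent this by noting that every ingredient entering $(\Crit_{U/B}(f),s_f)$ in \cref{sect:criticallocus} — the functors $\cA^p(-/-,\bullet)$ and $\cA^{2, \ex}(-/-,\bullet)$, the de Rham differential, the Liouville form, the zero section, and the homotopy fiber product — is functorial for arbitrary commutative squares of (derived) stacks, as recalled in \cref{sect:differentialforms}; the proposition is then obtained by applying this functoriality to the square defining $U'$. The only steps demanding an actual, and short, verification are the two pullback compatibilities isolated above, namely $\lambda_{U/B}\mapsto\lambda_{U'/B'}$ and $d_B f\mapsto d_{B'}f'$, neither of which is hard.
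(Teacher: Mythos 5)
Your identification of the schemes $\Crit_{U/B}(f)\times_B B'\cong\Crit_{U'/B'}(f')$ follows the same route as the paper: apply $(-)\times_B B'$ to the defining Cartesian square, using base-change compatibility of $\T^*(-/-)$, the zero section, and the graph of the de Rham differential. For the comparison of $s_f$ and $s_{f'}$, however, you take a genuinely different path. You argue directly at the level of the homotopies defining $s_f=h_f-h_0$, using that $\lambda_{U/B}$ restricts to $\lambda_{U'/B'}$ and that the de Rham differential commutes with pullback, and you acknowledge that this leaves some $\infty$-categorical bookkeeping to be done coherently. The paper sidesteps that issue entirely by invoking \cref{prop:Sproperties}(3), which realizes $\cS_{R/B}$ as a subsheaf of $\cO_U/\cI_{R,U}^2$ via an injective map $\iota_{R,U}$, and then uses \cref{prop:Critsdescription} (which says $\iota_{R,U}(s_f)=f\bmod\cI^2$). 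Since $\iota$ is injective and manifestly compatible with base change, checking $p^\ast s_f=s_{f'}$ reduces to the $1$-categorical equality $f\bmod\cI_{R,U}^2\mapsto f'\bmod\cI_{R',U'}^2$, no homotopy coherence required. Your approach is more direct and conceptually transparent, but it relies on a functoriality claim that is true yet, as stated, not a complete argument; the paper's approach buys a short, rigorous reduction to a statement about honest sheaf sections precisely because $\cS_{X/B}$ was shown beforehand to be a $0$-truncated (discrete) object with an explicit embedding.
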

\begin{proof}
Let $R=\Crit_{U/B}(f)$ and $R'=\Crit_{U'/B}(f')$. By definition we have fiber products
\[
\xymatrix{
R \ar[r] \ar[d] & U \ar[d] \\
B \ar[r] & \T^*(U/B),
}\qquad
\xymatrix{
R' \ar[r] \ar[d] & U' \ar[d] \\
B' \ar[r] & \T^*(U'/B').
}
\]
Applying the base change functor $(-)\times_B B'$ to the first square we get a fiber product
\[
\xymatrix{
R\times_B B' \ar[r] \ar[d] & U' \ar[d] \\
B' \ar[r] & \T^*(U/B)\times_B B'.
}
\]
Using $\T^*(U/B)\times_B B'\cong \T^*(U'/B')$, by the universal property of fiber products we obtain an isomorphism $R\times_B B'\cong R'$.

Let us now show that under this isomorphism $s_f$ is sent to $s_{f'}$. The description of the sheaf $\cS_{R/B}$ using the embedding $R\rightarrow U$ given in \cref{prop:Sproperties} is compatible with base change. Thus, the pullback $\cS_{R/B}|_{R'}\rightarrow \cS_{R'/B'}$ fits into a commutative diagram
\[
\xymatrix{
0 \ar[r] & \cS_{R/B}|_{R'} \ar^-{\iota_{R, U}}[r] \ar[d] & \left.\cO_U/\cI_{R, U}^2\right|_{R'} \ar^-{d_B}[r] \ar[d] & \left.\Omega^1_{U/B}/\cI_{R, U}\Omega^1_{U/B}\right|_{R'} \ar[d] \\
0 \ar[r] & \cS_{R'/B'} \ar^-{\iota_{R', U'}}[r] & \cO_{U'}/\cI_{R', U'}^2 \ar^-{d_{B'}}[r] & \Omega^1_{U'/B'}/\cI_{R', U'}\Omega^1_{U'/B'}
}
\]
Thus, it is enough to show that $\iota_{R, U}(s_f)$ is sent to $\iota_{R', U'}(s_{f'})$. But this immediately follows from \cref{prop:Critsdescription}.
\end{proof}

The above proposition allows us to define base change of d-critical structures.

\begin{theorem}\label{thm:dcritbasechange}
Let $X\rightarrow B$ be a morphism of schemes equipped with a section $s\in\Gamma(X, \cS_{X/B})$. Let $B'\rightarrow B$ be a morphism and consider the fiber product
\[
\xymatrix{
X' \ar[r] \ar[d] & X \ar[d] \\
B' \ar[r] & B
}
\]
Denote by $s'\in\Gamma(X', \cS_{X'/B'})$ the pullback of $s\in\Gamma(X, \cS_{X/B})$.
\begin{enumerate}
    \item If $s$ is a relative d-critical structure, then $s'$ is a relative d-critical structure.
    \item If $s'$ is a relative d-critical structure and $B'\rightarrow B$ is an \'etale cover, then $s$ is a relative d-critical structure.
\end{enumerate}
\end{theorem}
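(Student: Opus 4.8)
The plan for (1) is direct: base change critical charts via \cref{prop:criticalbasechange}. Given a critical chart $(U,f,u)$ for $(X\to B,s)$, I would set $U'=U\times_B B'$ with $f'$ the pullback of $f$; then $U'\to B'$ is smooth, so $(U',f')$ is an LG pair over $B'$, and \cref{prop:criticalbasechange} provides a canonical isomorphism $\Crit_{U/B}(f)\times_B B'\cong\Crit_{U'/B'}(f')$ carrying $s_f$ to $s_{f'}$. Composing the base change of the open immersion $u$ with this isomorphism yields an open immersion $u'\colon\Crit_{U'/B'}(f')\hookrightarrow X'$, and the functoriality of the pullback \eqref{eq:formpullback} along the square with corners $X',X,B',B$ identifies $(u')^\ast s'$ with the pullback of $u^\ast s=s_f$, which by \cref{prop:criticalbasechange} is $s_{f'}$. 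Since the base changes of a cover of $X$ by critical charts cover $X'$, this exhibits $s'$ as a relative d-critical structure.

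For (2) the plan is to reduce to the smooth-descent statement \cref{thm:criticallocussmoothdescent}(2), using that base change along an \'etale morphism of bases leaves all of the relevant structures untouched. Since $B'\to B$ is \'etale, $\bL_{B'/B}=0$, so $\bL_{W/B}\xrightarrow{\sim}\bL_{W/B'}$ for every scheme $W\to B'$; I would use this to see that $\cA^{2,\ex}(W/B,-1)=\cA^{2,\ex}(W/B',-1)$ for $W$ \'etale over $X':=X\times_B B'$, hence $\cS_{X'/B}=\cS_{X'/B'}$, and that under this identification the section $s'$ (the pullback of $s$ along $B'\to B$) coincides with the pullback $\pi^\ast s$ of $s$ along the projection $\pi\colon X'\to X$ regarded as a morphism over $B$ --- the two pullback operations on exact $2$-forms factor through one another via cotangent sequences whose error terms vanish. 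In the same way $\Crit_{V'/B'}(g')=\Crit_{V'/B}(g')$ compatibly with the forms $s_{g'}$ for any smooth $V'\to B'$ equipped with a function $g'$, so a critical chart for $(X'\to B',s')$ is literally the same datum as a critical chart for $(X'\to B,s')$. Consequently the hypothesis that $s'$ is a relative d-critical structure on $X'\to B'$ says exactly that $\pi^\ast s=s'$ is a relative d-critical structure on $X'\to B$.

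Finally, $\pi\colon X'\to X$ is the base change along $B'\to B$ of the \'etale cover $B'\to B$, hence is \'etale --- in particular smooth --- and surjective. Applying \cref{thm:criticallocussmoothdescent}(2) with $Y\to B$ taken to be $X\to B$, with the smooth surjection $\pi\colon X'\to X$, and with $t=s$ (so that $\pi^\ast t=\pi^\ast s$ is a relative d-critical structure by the previous step), we conclude that $s$ is a relative d-critical structure on $X\to B$. The only delicate part of this argument is the bookkeeping in the middle paragraph: one must check that replacing the base $B$ by the \'etale base $B'$ does not change relative cotangent complexes, relative critical loci, the sheaves $\cS$, the local sections $s_g$, or the notion of a critical chart. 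Each of these is a formal consequence of $\bL_{B'/B}=0$ and the compatibility of the de Rham differential with cotangent sequences, but it should be spelled out carefully.
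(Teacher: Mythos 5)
Your proof is correct, and for part (2) it takes a genuinely different route from the paper's. For (1), your argument is essentially the paper's, streamlined: you base change critical charts directly via \cref{prop:criticalbasechange}, whereas the paper packages parts (1) and (2) into the same framework of minimal immersions and the local criterion from \cref{prop:Joyce27}; both are correct and yours is more direct for this half.

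For (2), the paper runs faithfully flat descent for open immersions directly on the incidence $\Crit_{U/B}(f)\times_U X^\circ\hookrightarrow X^\circ$ coming from a fixed minimal immersion. You instead observe that, because $\bL_{B'/B}=0$, the change of base from $B$ to $B'$ is invisible at the level of relative cotangent complexes, de Rham differentials, sheaves $\cS$, critical loci, and local sections $s_g$, so that ``$s'$ is a relative d-critical structure on $X'\to B'$'' implies ``$\pi^\ast s$ is a relative d-critical structure on $X'\to B$'' (with $\pi\colon X'\to X$ the projection); then you invoke \cref{thm:criticallocussmoothdescent}(2) for the smooth surjection $\pi$. This reuses an already-proved descent theorem rather than re-running the descent argument by hand, which is cleaner. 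One remark of precision: you say a critical chart for $(X'\to B',s')$ is ``literally the same datum'' as one for $(X'\to B,s')$; the implication you actually use — every $B'$-critical chart is a $B$-critical chart — is correct and sufficient, but the converse is not obvious (a $B$-smooth scheme containing the critical locus need not admit a map to $B'$), so the ``literally the same'' phrasing is slightly too strong, even though it does not affect the proof. You also correctly flag that the compatibility of all these identifications ($\cS_{X'/B}\cong\cS_{X'/B'}$, and the factorization of the pullback defining $s'$ through $\pi^\ast s$) is where care is needed; these do all follow formally from $\bL_{B'/B}=0$ and the naturality of \eqref{eq:formpullback} under composition of commutative squares.
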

\begin{proof}
In the first point $X\rightarrow B$ has a relative d-critical structure, so it is locally of finite type. In the second point $X'\rightarrow B'$ is locally of finite type and $B'\rightarrow B$ is an \'etale cover, so by faithfully flat descent \cite[Tag 02KX]{Stacks} we get that $X\rightarrow B$ is locally of finite type.

Let $x'\in X'$ be a point and $x\in X$ its image in $X$. By \cite[Tag 0CBL]{Stacks} we may find morphisms $X\leftarrow X^\circ\rightarrow U$, where $X^\circ\rightarrow X$ is an open immersion surjective at $x$, $U$ is a smooth $B$-scheme and $X^\circ\rightarrow U$ is a closed immersion minimal at $x$. Let $X'\leftarrow X^{\circ\prime}\rightarrow U'$ be the base change of this correspondence along $B'\rightarrow B$. We obtain a diagram
\[
\xymatrix{
U'\ar[r] & U \\
X^{\circ\prime} \ar[r] \ar[d] \ar[u] & X^\circ \ar[u] \ar[d] \\
X' \ar[r] \ar[d] & X \ar[d] \\
B' \ar[r] & B
}
\]
where all squares are Cartesian. Therefore, $X^{\circ\prime}\rightarrow X'$ is an open immersion surjective at $x'$, $U'$ is a smooth $B'$-scheme and $X^{\circ'}\rightarrow U'$ is minimal at $x'$. Possibly shrinking $X^\circ, U$ we may find a function $f\colon U\rightarrow \bA^1$ such that $\iota_{X^\circ, U} = f\in\cO_U/\cI^2_{X^\circ, U}$. Let $f'$ be the composite $X'\rightarrow X\xrightarrow{f} \bA^1$.

As in the proof of \cref{thm:criticallocussmoothdescent}, $s$ is a relative d-critical structure near $x$ if, and only if, we can shrink $U$ to a neighborhood of $x$ so that $\Crit_{U/B}(f)\times_U X^\circ\rightarrow X^\circ$ becomes an open immersion. The same claim applies to $s'$.

\begin{enumerate}
    \item Assume that $s$ is a relative d-critical structure. Then by the above argument shrinking $U$ we get that $\Crit_{U/B}(f)\times_U X^\circ\rightarrow X^\circ$ is an open immersion. By \cref{prop:criticalbasechange} its base change along $B'\rightarrow B$ is $\Crit_{U'/B'}(f')\times_{U'} X^{\circ\prime}\rightarrow X^{\circ\prime}$ which is therefore also an open immersion. By the above argument we get that $s'$ is a relative d-critical structure in a neighborhood of $x'$. Varying $x'$ we get that $s'$ is a relative d-critical structure.
    \item Assume that $s$ is a relative d-critical structure and $B'\rightarrow B$ is an \'etale cover. By the above argument shrinking $U'$ we get that $\Crit_{U'/B}(f)\times_{U'} X^{\circ\prime}\rightarrow X^{\circ\prime}$ is an open immersion. Since $B'\rightarrow B$ is universally open \cite[Tag 01UA]{Stacks}, $U'\rightarrow U$ is open. Therefore, we may shrink $U$ so that $U'$ is still the base change of $U$ along $B'\rightarrow B$. By the faithfully flat descent for open immersions \cite[Tag 02L3]{Stacks} we get that $\Crit_{U/B}(f)\times_U X^\circ\rightarrow X^\circ$ is an open immersion and hence, by the above argument, that $s$ is a relative d-critical structure in a neighborhood of $x$. Since $X'\rightarrow X$ is surjective, this implies that we may vary $x'\in X'$ to cover $X$, so we get that $s$ is a relative d-critical structure.
\end{enumerate}
\end{proof}

Combining \cref{thm:criticallocussmoothdescent} and \cref{thm:dcritbasechange} we obtain the following.

\begin{corollary}\label{cor:dcritbasechange}
Consider a diagram of schemes
\[
\xymatrix{
X' \ar[r] \ar[d] & X \ar[d] \\
B' \ar[r] & B
}
\]
where $X'\rightarrow X\times_B B'$ is smooth, $s\in\Gamma(X, \cS_{X/B})$ and let $s'\in\Gamma(X', \cS_{X'/B'})$ be the pullback.
\begin{enumerate}
    \item If $s$ is a relative d-critical structure, then $s'$ is a relative d-critical structure.
    \item If $s'$ is a relative d-critical structure, $B'\rightarrow B$ is an \'etale cover and $X'\rightarrow X\times_B B'$ is surjective, then $s$ is a relative d-critical structure.
\end{enumerate}
\end{corollary}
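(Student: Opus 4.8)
The plan is to deduce this by chaining together \cref{thm:criticallocussmoothdescent} and \cref{thm:dcritbasechange}, after factoring the given square through the fibre product $Z \coloneqq X\times_B B'$. Concretely, the given square factors as the base change square $Z\rightarrow X$ over $B'\rightarrow B$ followed by the smooth morphism $X'\rightarrow Z$ of $B'$-schemes:
\[
\xymatrix{
X' \ar[r] \ar[d] & Z \ar[r] \ar[d] & X \ar[d] \\
B' \ar@{=}[r] & B' \ar[r] & B,
}
\]
with the right-hand square Cartesian. First I would introduce $t\in\Gamma(Z, \cS_{Z/B'})$, the pullback of $s$ along the right-hand square via \eqref{eq:formpullback}, and observe that the section $s'$ coincides with the pullback of $t$ along the smooth morphism $X'\rightarrow Z$. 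This compatibility is a formal consequence of the functoriality of the pullback maps \eqref{eq:formpullback} on exact relative two-forms of degree $-1$ (equivalently, on sections of $\cS_{-/-}$) under vertical composition of commutative squares; it reduces everything to the two theorems already established.

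For (1): if $s$ is a relative d-critical structure, then \cref{thm:dcritbasechange}(1) makes $t$ a relative d-critical structure on $Z\rightarrow B'$, and then \cref{thm:criticallocussmoothdescent}(1), applied to the smooth morphism $X'\rightarrow Z$ with pullback section $s'$, makes $s'$ a relative d-critical structure on $X'\rightarrow B'$. For (2): under the extra hypotheses that $B'\rightarrow B$ is an \'etale cover and $X'\rightarrow Z$ is surjective, I would run the implications in reverse. Since $X'\rightarrow Z$ is a smooth surjection with $s'=(X'\rightarrow Z)^\ast t$, \cref{thm:criticallocussmoothdescent}(2) shows $t$ is a relative d-critical structure on $Z\rightarrow B'$; then, as $B'\rightarrow B$ is an \'etale cover, \cref{thm:dcritbasechange}(2) shows $s$ is a relative d-critical structure on $X\rightarrow B$.

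I do not anticipate a genuine obstacle here, since the work has all been done in the two preceding theorems; the only points requiring care are the identification $s'=(X'\rightarrow Z)^\ast t$ and the bookkeeping of which hypotheses (surjectivity of $X'\rightarrow Z$, the \'etale cover condition on $B'\rightarrow B$) feed into which half of the factorization. One could alternatively bypass naming $t$ and instead argue pointwise by building an explicit critical chart from a minimal embedding, base-changing it and pulling it back along a relative smoothing, exactly as in the proofs of \cref{thm:criticallocussmoothdescent} and \cref{thm:dcritbasechange}, but funnelling through the two theorems is cleaner.
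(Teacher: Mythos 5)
Your proof is correct and is precisely the intended derivation: the paper states this as a corollary obtained by ``combining'' \cref{thm:criticallocussmoothdescent} and \cref{thm:dcritbasechange}, which is exactly your factorization through $Z = X\times_B B'$. The bookkeeping of which hypothesis feeds into which theorem in part (2) is also correct.
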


Using \cref{thm:dcritbasechange} we can establish the following compatibility of the virtual canonical bundle with respect to base change.

\begin{proposition}\label{prop:virtualcanonicalbasechange}
Let $X\rightarrow B$ be a morphism of schemes equipped with a relative d-critical structure $s\in\Gamma(X, \cS_{X/B})$. Let $B'\rightarrow B$ be a morphism of schemes and consider the fiber product
\[
\xymatrix{
X' \ar[r] \ar[d] & X \ar[d] \\
B' \ar[r] & B
}
\]
Denote by $s'\in\Gamma(X', \cS_{X'/B'})$ the pullback of $s\in\Gamma(X, \cS_{X/B})$. Then there is an isomorphism
\[\Upsilon\colon K^{\vir}_{X/B}|_{(X')^{\red}}\xrightarrow{\sim} K^{\vir}_{X'/B'}\]
compatible with compositions of Cartesian squares. Moreover, for a point $x'\in X'$ and its image $x\in X$ the diagram
\begin{equation}\label{eq:Upsilonkappapullback}
\xymatrix{
K^{\vir}_{X/B, x} \ar^{\Upsilon|_x}[r] \ar^{\kappa_x}[d] & K^{\vir}_{X'/B', x'} \ar^{\kappa_{x'}}[d] \\
\det(\Omega^1_{X/B, x})^{\otimes 2} \ar^{\sim}[r] & \det(\Omega^1_{X'/B', x'})^{\otimes 2}
}
\end{equation}
commutes.
\end{proposition}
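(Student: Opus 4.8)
The plan is to mimic the construction of $\Upsilon_\pi$ in \cref{prop:virtualcanonicalpullback}: define $\Upsilon$ chart by chart and then glue, using the uniqueness characterizations in \cref{thm:virtualcanonicalschemes}. First I would record the base-change behaviour of critical data. If $(U,f,u)$ is a critical chart for $X\to B$ near a point $x\in X$ lying under $x'\in X'$, then by \cref{prop:criticalbasechange} the base change $U'\coloneqq U\times_B B'$, $f'\coloneqq f|_{U'}$, together with the induced open immersion $u'$, is a critical chart for $X'\to B'$ near $x'$ (here $s'$ is a relative d-critical structure by \cref{thm:dcritbasechange}(1)), and these base-changed charts cover $X'$ since the image of $\Crit_{U'/B'}(f')$ in $X'$ is the preimage of the image of $\Crit_{U/B}(f)$ in $X$. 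As $U'=U\times_B B'$, the Cartesian square gives $\bL_{U'/B'}\cong (U'\to U)^*\bL_{U/B}$ by \eqref{eq:cotangentpullback}, hence a canonical isomorphism $K_{U'/B'}\cong (U'\to U)^*K_{U/B}$; combining this with the chart isomorphisms $K^{\vir}_{X/B}|_{\Crit_{U/B}(f)^\red}\cong K^{\otimes 2}_{U/B}|_{\Crit_{U/B}(f)^\red}$ and its primed analogue from \cref{thm:virtualcanonicalschemes}(1) produces a local isomorphism $\Upsilon$.

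Next I would check that these local isomorphisms glue. It suffices to show that for a critical morphism $\Phi\colon (U,f,u)\to (V,g,v)$ over $B$, its base change $\Phi'\colon (U',f',u')\to (V',g',v')$ over $B'$ is again a critical morphism and that the gluing isomorphism $J_{\Phi'}$ of \cref{thm:virtualcanonicalschemes}(2) is the pullback of $J_\Phi$ along $(\Crit_{U'/B'}(f')^\red\to \Crit_{U/B}(f)^\red)$. That $\Phi'$ is unramified and restricts to relative critical loci is immediate from \cref{prop:criticalbasechange}; since $J_\Phi$ is assembled from $i(\Delta)^2$ — which manifestly pulls back, the defining exact sequence $\Delta\colon 0\to \rN^\vee_{U/V}|_U\to \Omega^1_{V/B}|_U\to \Omega^1_{U/B}\to 0$ being stable under $(U'\to U)^*$ — and from the squared volume form $\vol^2_{q_\Phi}$ restricted to $\Crit_{U/B}(f)^\red$, the only real point is that $\vol^2_{q_{\Phi'}}$ is the pullback of $\vol^2_{q_\Phi}$. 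Here I would invoke the local model of \cref{prop:criticalembeddinglocal}: étale-locally $\Phi$ is the zero section $0_E\colon (U,f)\to (E,f\circ\pi_E+\q_E)$ of a trivial orthogonal bundle $(E,q)$ over $U$, and base change turns this into the zero section of the trivial orthogonal bundle $(E,q)\times_B B'$ over $U'$; by the characterization of $q_\Phi$ in \cref{prop:criticalembeddingnormalquadratic}(1) this identifies $q_{\Phi'}$ with the pullback of $q_\Phi$, and by the uniqueness clause of \cref{prop:criticalembeddingnormalquadratic} the identification is global. Consequently the descent data of \cref{cor:criticaldescent} defining $K^{\vir}_{X'/B'}$ is the pullback along $(X')^\red\to X^\red$ of the descent data defining $K^{\vir}_{X/B}$, which yields the global isomorphism $\Upsilon\colon K^{\vir}_{X/B}|_{(X')^\red}\xrightarrow{\sim} K^{\vir}_{X'/B'}$; compatibility with compositions of Cartesian squares then follows from the functoriality of pullback.

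Finally I would verify the compatibility \eqref{eq:Upsilonkappapullback} with $\kappa$. Both $\kappa$ and $\Upsilon$ are characterized chart-locally — $\kappa_x$ by \cref{thm:virtualcanonicalschemes}(3) and $\Upsilon$ by its construction — so it is enough to work in a base-changed critical chart $(U',f',u')$ with points $y\in\Crit_{U/B}(f)$, $y'\in\Crit_{U'/B'}(f')$ mapping to $x,x'$. The exact sequence $0\to \rN^\vee_{X/U,x}\to \Omega^1_{U/B,y}\to \Omega^1_{X/B,x}\to 0$ and its primed analogue are related by $(U'\to U)^*$, i.e.\ by the residue-field extension $\kappa(x)\to\kappa(x')$, and the Hessian pulls back, $\Hess(f')_{y'}=\Hess(f)_y\otimes_{\kappa(y)}\kappa(y')$, because the jet-bundle construction of \cref{prop:criticalHessian} is compatible with base change; hence \eqref{eq:Upsilonkappapullback} reduces on the chart to the compatibility of determinant-line isomorphisms with base field extension, which is standard. (Alternatively one may reduce \eqref{eq:Upsilonkappapullback} pointwise to a statement about d-critical loci over a field and cite the corresponding fact used in the proof of \cref{prop:virtualcanonicalpullback}.) The main obstacle is the gluing step: the existing functoriality of $q_\Phi$ in \cref{prop:criticalembeddingnormalquadratic}(4) assumes the vertical maps are smooth, whereas the base change $B'\to B$ here is arbitrary, so one must genuinely pass through the local models of \cref{prop:criticalembeddinglocal} and the uniqueness characterization rather than quote (4) directly; everything else is bookkeeping with pullbacks.
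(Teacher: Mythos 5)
Your proposal is correct and follows essentially the same chart-by-chart strategy as the paper's proof: define $\Upsilon$ locally for base-changed critical charts $(U',f',u')$ via the canonical isomorphism $K_{U'/B'}\cong K_{U/B}|_{U'}$, glue by showing $q_{\Phi'}$ is the pullback of $q_\Phi$ so that the descent data $J_\Phi$ pull back, and verify \eqref{eq:Upsilonkappapullback} by base-changing the Hessian. You are right that \cref{prop:criticalembeddingnormalquadratic}(4) does not literally apply (it assumes smooth vertical morphisms), so the detour through the local model of \cref{prop:criticalembeddinglocal} and the uniqueness characterization is genuinely needed; the paper asserts the $q_\Phi$-compatibility under arbitrary base change without spelling this out, and your argument fills in that step.
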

\begin{proof}
A cover of $X$ by critical charts $(U, f, u)$ defines a cover of $X'$ by critical charts $(U'=U\times_B B', f'=f|_{U'}, u'=u\times \id)$. Thus, it is enough to construct the isomorphism $\Upsilon$ for each critical chart $(U, f, u)$ and show that the diagram \eqref{eq:Upsilonkappapullback} commutes. The latter condition ensures that the locally defined isomorphisms glue into a global isomorphism $\Upsilon$ .

Let $(U, f, u)$ be a critical chart for $X$ and $(U', f', u')$ the corresponding critical chart for $X'$. By definition we have canonical isomorphisms
\[u^* K^{\vir}_{X/B}\cong K_{U/B}^{\otimes 2}|_{\Crit_{U/B}(f)^{\red}},\qquad (u')^* K^{\vir}_{X'/B'}\cong K_{U'/B'}^{\otimes 2}|_{\Crit_{U'/B'}(f')^{\red}}.\]
We define $\Upsilon|_{\Crit_{U'/B'}(f')^{\red}}$ to be the isomorphism
\[K_{U/B}^{\otimes 2}|_{\Crit_{U'/B'}(f')^{\red}}\cong K_{U'/B'}^{\otimes 2}|_{\Crit_{U'/B'}(f')^{\red}}\]
coming from the bottom Cartesian square in
\[
\xymatrix{
X' \ar[r] \ar[d] & X \ar[d] \\
U' \ar[r] \ar[d] & U \ar[d] \\
B' \ar[r] & B.
}
\]

The above diagram of Cartesian squares defines an isomorphism of short exact sequences
\[
\xymatrix{
0 \ar[r] & \rN^\vee_{X/U, x} \ar[r] \ar^{\sim}[d] & \Omega^1_{U/B, x} \ar[r] \ar^{\sim}[d] & \Omega^1_{X/B, x} \ar[r] \ar^{\sim}[d] & 0 \\
0 \ar[r] & \rN^\vee_{X'/U', x'} \ar[r] & \Omega^1_{U'/B', x'} \ar[r] & \Omega^1_{X'/B', x'} \ar[r] & 0
}
\]
The Hessian quadratic form on $\Omega^1_{U/B, x}$ restricts to the Hessian quadratic form on $\Omega^1_{U'/B', x'}$. Thus, using the description of $\kappa_x$ from \cref{thm:virtualcanonicalschemes}(3) we obtain a commutative diagram
\[
\xymatrix{
K_{U/B, x}^{\otimes 2} \ar^{\Upsilon|_x}[r] \ar^{\kappa_x}[d] & K_{U'/B', x'} \ar^{\kappa_{x'}}[d] \\
\det(\Omega^1_{X/B, x})^{\otimes 2} \ar^{\sim}[r] & \det(\Omega^1_{X'/B', x'})^{\otimes 2}.
}
\]

For a critical morphism $\Phi\colon (U, f, u)\rightarrow (V, g, v)$ its base change along $B'\rightarrow B$ defines a crtical morphism $\Phi'\colon (U', f', u')\rightarrow (V', g', v')$. Moreover, under the natural isomorphism $\rN_{U/V}|_{U'}\cong \rN_{U'/V'}$ the quadratic form $q_\Phi$ restricts to the quadratic form $q_{\Phi'}$. Thus, using the description of the gluing isomorphisms $J_\Phi$ for the virtual canonical bundle from \cref{thm:virtualcanonicalschemes}(2) we get that $\Upsilon^V|_{\Crit_{U'/B'}(f')^{\red}} = \Upsilon^U$. In particular, there is a global isomorphism $K^{\vir}_{X/B}|_{(X')^{\red}}\cong K^{\vir}_{X'/B'}$ which restricts to $\Upsilon^U$ in each critical chart.
\end{proof}

Combining \cref{prop:virtualcanonicalpullback} and \cref{prop:virtualcanonicalbasechange} we obtain the following.

\begin{corollary}\label{cor:virtualcanonicalbasechange}
Consider a diagram of schemes
\[
\xymatrix{
X' \ar^{\overline{p}}[r] \ar[d] & X \ar[d] \\
B' \ar^{p}[r] & B,
}
\]
where $X'\rightarrow X'\times_B X$ is smooth, and a relative d-critical structure $s\in\Gamma(X, \cS_{X/B})$. Let $s'\in\Gamma(X', \cS_{X'/B'})$ be the pullback. Then there is an isomorphism
\[\Upsilon_{X'\rightarrow X}\colon K^{\vir}_{X/B}|_{(X')^{\red}}\otimes K_{X'/X\times_B B'}^{\otimes 2}|_{(X')^{\red}}\xrightarrow{\sim} K^{\vir}_{X'/B'}.\]
It satisfies the following properties:
\begin{enumerate}
    \item For the diagram
    \[
    \xymatrix{
    X \ar@{=}[r] \ar[d] & X \ar[d] \\
    B \ar@{=}[r] & B
    }
    \]
    we have $\Upsilon_{X\rightarrow X}=\id$.
    \item For a commutative diagram
    \[
    \xymatrix{
    X'' \ar[r] \ar[d] & X' \ar[r] \ar[d] & X \ar[d] \\
    B'' \ar[r] & B' \ar[r] & B,
    }
    \]
    with $X''\rightarrow X'\times_{B'} B''$ and $X'\rightarrow X\times_B B'$ smooth, a relative d-critical structure $s\in\Gamma(X, \cS_{X/B})$ and $s'$, $s''$ its pullbacks to $X'\rightarrow B'$ and $X''\rightarrow B''$, the diagram
    \[
    \xymatrix{
    K^{\vir}_{X/B}|_{(X'')^{\red}}\otimes K^{\otimes 2}_{X'/X\times_B B'}|_{(X'')^{\red}}\otimes K^{\otimes 2}_{X''/X'\times_{B'} B''}|_{(X'')^{\red}} \ar^{\Upsilon_{X'\rightarrow X}\otimes \id}[d] \ar^-{\id\otimes i(\Delta)^2}[r] & K^{\vir}_{X/B}|_{(X'')^{\red}}\otimes K^{\otimes 2}_{X''/X\times_B B''}|_{(X'')^{\red}} \ar^{\Upsilon_{X''\rightarrow X}}[d] \\
    K^{\vir}_{X'/B'}|_{(X'')^{\red}}\otimes K^{\otimes 2}_{X''/X'\times_{B'} B''}|_{(X'')^{\red}} \ar^-{\Upsilon_{X''\rightarrow X'}}[r] & K^{\vir}_{X''/B''}
    }
    \]
    commutes, where the top horizontal isomorphism is induced by the short exact sequence
    \[
    \Delta\colon 0\longrightarrow \Omega^1_{X'/X\times_B B'}|_{X''} \longrightarrow \Omega^1_{X''/X\times_B B''}\longrightarrow \Omega^1_{X''/X'\times_{B'} B''}\longrightarrow 0.
    \]
    \item For a point $x\in X'$ the diagram
    \[
    \xymatrix{
    K^{\vir}_{X/B, \overline{p}(x')}\otimes K^{\otimes 2}_{X'/X\times_B B', x'} \ar^-{\Upsilon_{X'\rightarrow X}|_{x'}}[r] \ar^{\kappa_{\overline{p}(x')}\otimes \id}[d] & K^{\vir}_{X'/B', x'} \ar^{\kappa_{x'}}[d] \\
    \det(\Omega^1_{X/B, \overline{p}(x')})^{\otimes 2}\otimes K^{\otimes 2}_{X'/X\times_B B', x'} \ar^-{i(\Delta)^2}[r] & \det(\Omega^1_{X'/B', x'})^{\otimes 2}
    }
    \]
    commutes, where the bottom horizontal isomorphism is induced by the short exact sequence
    \[\Delta\colon 0\longrightarrow \Omega^1_{X/B, x}\longrightarrow \Omega^1_{X'/B', x'}\longrightarrow \Omega^1_{X'/X\times_B B', x'}\longrightarrow 0.\]
\end{enumerate}
\end{corollary}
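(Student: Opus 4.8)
The plan is to factor the morphism $\overline{p}\colon X' \to X$ as the composite $X' \xrightarrow{q} X\times_B B' \xrightarrow{\tilde p} X$, where $q$ is smooth by hypothesis and $\tilde p$ is the base change of $p$. By \cref{thm:dcritbasechange}(1) the pullback $\overline{s}\in\Gamma(X\times_B B',\cS_{X\times_B B'/B'})$ of $s$ is a relative d-critical structure, and by functoriality of the pullback maps \eqref{eq:formpullback} (which factor the pullback $\cA^{2,\ex}(X/B,-1)\to\cA^{2,\ex}(X'/B',-1)$ through $\cA^{2,\ex}(X\times_B B'/B',-1)$) the d-critical structure $s'$ on $X'\to B'$ equals the smooth pullback of $\overline s$ along $q$. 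Thus \cref{prop:virtualcanonicalpullback} applies to $q$, and I would define $\Upsilon_{X'\to X}$ as the composite of the base-change isomorphism $\Upsilon\colon K^{\vir}_{X/B}|_{(X\times_B B')^{\red}}\xrightarrow{\sim}K^{\vir}_{X\times_B B'/B'}$ of \cref{prop:virtualcanonicalbasechange} (restricted to $(X')^{\red}$ and tensored with the identity of $K^{\otimes 2}_{X'/X\times_B B'}|_{(X')^{\red}}$) with the smooth-pullback isomorphism $\Upsilon_q$ of \cref{prop:virtualcanonicalpullback}, using that $K_{X'/X\times_B B'}$ is the relative canonical bundle of the smooth morphism $q$.

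Property (1) is immediate, since for the trivial square both constituent isomorphisms are identities by the normalization conditions in \cref{prop:virtualcanonicalbasechange} and \cref{prop:virtualcanonicalpullback}(1). For property (3) I would paste the commutative square \eqref{eq:Upsilonkappapullback} of \cref{prop:virtualcanonicalbasechange} for the Cartesian square $\tilde p$ on top of the commutative square \eqref{eq:virtualcanonicalpullbackdiagram} of \cref{prop:virtualcanonicalpullback}(3) for the smooth morphism $q$. The only nontrivial point is to identify the bottom edge of the resulting rectangle with the map $i(\Delta)^2$ in the statement: this follows from the base-change isomorphism $\Omega^1_{X\times_B B'/B'}\cong \tilde p^*\Omega^1_{X/B}$ together with the compatibility of $i(\Delta)$ with the factorization of the cotangent short exact sequence of $X'\to X\times_B B'\to B'$ into the one for $X'/B'$ and (the base change of) the one for $X/B$, i.e.\ the associativity of $i(\Delta)$ recorded in \cite[(2.4)]{KPS}.

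Property (2) is where the bookkeeping lives. Since every morphism in the diagram is an isomorphism of line bundles on a reduced scheme, it suffices to check commutativity pointwise; and by property (3), once established, the $\kappa$-image of each line bundle occurring in the diagram is determined, so the assertion reduces to an identity among determinant lines of the double short exact sequence assembled from the cotangent sequences of $X''\to X'\to X$ and their base changes along $B''\to B'\to B$ — precisely the type of identity used at the end of the proof of \cref{prop:virtualcanonicalpullback}, again an instance of \cite[(2.4)]{KPS}. Concretely, one writes each of $\Upsilon_{X''\to X}$, $\Upsilon_{X''\to X'}$, $\Upsilon_{X'\to X}$ through its factorization via the relevant intermediate base-change objects $X\times_B B''$, $X'\times_{B'}B''$, $X\times_B B'$ and the smooth morphisms between them, invokes that the base-change isomorphism is compatible with composition of Cartesian squares (\cref{prop:virtualcanonicalbasechange}) and that the smooth-pullback isomorphism is associative (\cref{prop:virtualcanonicalpullback}(2)), and is left with a discrepancy that is exactly an associativity of $i(\Delta)$. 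The main obstacle is therefore organizational rather than conceptual: reconciling the ``direct'' short exact sequences $\Delta$ written in the statement with those produced by the chosen factorizations; once the pointwise reduction is set up, this is a routine diagram chase.
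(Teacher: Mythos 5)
Your proposal is correct and follows essentially the same route the paper intends: Corollary 4.9 is stated without proof beyond the phrase ``Combining Proposition 4.7 and Proposition 4.8 we obtain the following,'' and the implicit content of that sentence is precisely the factorization of $\overline{p}$ through $X\times_B B'$, the definition of $\Upsilon_{X'\to X}$ as the composite of the base-change isomorphism of Proposition 4.8 with the smooth-pullback isomorphism $\Upsilon_q$ of Proposition 4.7, and the verification of properties (1)--(3) by pasting the corresponding diagrams and reducing (2) to a pointwise determinant-line identity (an instance of the compatibility of $i(\Delta)$ with double exact sequences). Your treatment fills in these steps accurately, including the observation that $s'$ coincides with $q^\ast\overline{s}$ by functoriality of the pullback map on exact two-forms.
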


\subsection{D-critical structures on stacks}

We now extend the notion of relative d-critical structures to higher Artin stacks. Let $X\rightarrow B$ be a geometric morphism of stacks. In \cref{sect:differentialforms} we have defined the space $\cA^{2, \ex}(X/B, -1)$ of relative exact two-forms of degree $-1$. In particular, we again have a sheaf on the big \'etale site of $X$ denoted by $\cS_{X/B}$ so that
\[\Gamma(X, \cS_{X/B}) = \pi_0(\cA^{2, \ex}(X/B, -1)).\]

For a morphism of schemes $X\rightarrow B$ let $\DCrit(X/B)\subset \Gamma(X, \cS_{X/B})$ be the set of relative d-critical structures. By \cref{cor:dcritbasechange} we see that the collection of relative d-critical structures defines a functor
\[\DCrit\colon \Fun(\Delta^1, \Sch)^{\op}_{0\smooth}\longrightarrow \Set\]
satisfying \'etale descent. Using \eqref{eq:ShvStkrel} we obtain the notion of a relative d-critical structure on a geometric morphism of stacks as follows.

\begin{definition}\label{def:dcriticalstack}
Let $X\rightarrow B$ be a geometric morphism. A \defterm{relative d-critical structure on $X\rightarrow B$} is a section $s\in\Gamma(X, \cS_{X/B})$ which satisfies the following property: for every diagram
\[
\xymatrix{
X' \ar[r] \ar[d] & X \ar[d] \\
B' \ar[r] & B, \\
}
\]
where $X'\rightarrow B'$ is a morphism of schemes, $X'\rightarrow X\times_B B'$ is smooth and $s'\in\Gamma(X', \cS_{X'/B'})$ is the pullback of $s$, we have that $s'$ is a relative d-critical structure.
\end{definition}

\begin{remark}
If $X\rightarrow B$ is a morphism of schemes, \cref{cor:dcritbasechange} ensures that \cref{def:dcriticalstack} is compatible with \cref{def:dcriticalscheme}.    
\end{remark}

Using \eqref{eq:ShvStkrel} we get an extension of \cref{cor:dcritbasechange} to geometric morphisms.

\begin{proposition}\label{prop:dcritbasechangestacks}
Consider a commutative diagram of stacks
\[
\xymatrix{
X' \ar[r] \ar[d] & X \ar[d] \\
B' \ar[r] & B,
}
\]
where $X\rightarrow B$ is geometric and $X'\rightarrow X\times_B B'$ is smooth (in particular, $X'\rightarrow B'$ is geometric). Consider $s\in\Gamma(X, \cS_{X/B})$ and let $s'\in\Gamma(X', \cS_{X'/B'})$ be the pullback.
\begin{enumerate}
    \item If $s$ is a relative d-critical structure, then $s'$ is a relative d-critical structure.
    \item If $s'$ is a relative d-critical structure, $B'\rightarrow B$ is an \'etale cover and $X'\rightarrow X\times_B B'$ is surjective, then $s$ is a relative d-critical structure.
\end{enumerate}
\end{proposition}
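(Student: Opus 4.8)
The plan is to reduce everything to the scheme-level results \cref{thm:criticallocussmoothdescent} and \cref{thm:dcritbasechange} (equivalently \cref{cor:dcritbasechange}) by the usual device of replacing stacky fibre products with smooth surjective scheme atlases, which is legitimate because, by \cref{def:dcriticalstack}, being a relative d-critical structure on a geometric morphism is detected on scheme test squares.

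For (1), I would argue directly. By \cref{def:dcriticalstack} it suffices to check that for every commutative square with $X''\rightarrow B''$ a morphism of schemes, $X''\rightarrow X'\times_{B'}B''$ smooth, and $B''\rightarrow B'$, the pullback of $s'$ to $X''\rightarrow B''$ is a relative d-critical structure. Given such a square, the base change $X'\times_{B'}B''\rightarrow X\times_B B''$ of the smooth morphism $X'\rightarrow X\times_B B'$ is smooth, so the composite $X''\rightarrow X'\times_{B'}B''\rightarrow X\times_B B''$ is smooth; together with $B''\rightarrow B'\rightarrow B$ this is a scheme test square for $s$, and since $s$ is a relative d-critical structure, \cref{def:dcriticalstack} applied to $s$ shows that the pullback of $s$ to $X''\rightarrow B''$ — which coincides with the pullback of $s'$ by functoriality of \eqref{eq:formpullback} — is a relative d-critical structure.

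For (2), I would first use that $X'\rightarrow X\times_B B'$ is smooth surjective to factor the problem through $X\times_B B'$: writing $s_0'$ for the pullback of $s$ to $X\times_B B'\rightarrow B'$, the structure $s'$ is the pullback of $s_0'$ along $X'\rightarrow X\times_B B'$ over $B'$. Granting a smooth-descent-in-the-source statement for geometric morphisms — if $\pi\colon X_1\rightarrow X_2$ is a smooth surjection of geometric $B$-stacks and $s_1=\pi^\ast s_2$ with $s_1$ a relative d-critical structure then so is $s_2$, which one proves by passing to a scheme test square, replacing the relevant stacky fibre product with a scheme atlas, and invoking \cref{thm:criticallocussmoothdescent}(2) — I deduce that $s_0'$ is a relative d-critical structure on $X\times_B B'\rightarrow B'$. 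It then remains to treat the Cartesian square $(X\times_B B'\rightarrow X,\ B'\rightarrow B)$ with $B'\rightarrow B$ an étale cover. Given a scheme test square $(Y\rightarrow C,\ Y\rightarrow X\times_B C\text{ smooth},\ C\rightarrow B)$ for $s$, I pick a scheme atlas $C_0\rightarrow C\times_B B'$; then $C_0\rightarrow C$ is smooth surjective and $C_0$ lies over $B'$, so $(Y\times_C C_0\rightarrow C_0,\ Y\times_C C_0\rightarrow (X\times_B B')\times_{B'}C_0\text{ smooth},\ C_0\rightarrow B')$ is a scheme test square for $s_0'$, whence the pullback of $s$ to $Y\times_C C_0\rightarrow C_0$ is a relative d-critical structure. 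Descending this along $C_0\rightarrow C$ then yields that the pullback of $s$ to $Y\rightarrow C$ is a relative d-critical structure, and since $Y\rightarrow C$ was an arbitrary scheme test square, \cref{def:dcriticalstack} gives that $s$ is a relative d-critical structure on $X\rightarrow B$.

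The hard part will be the last descent step: \cref{thm:dcritbasechange}(2) permits base-change descent only along \emph{étale} covers, whereas $C\times_B B'$ is in general only a higher Artin stack, so its atlas $C_0$ is merely smooth surjective over $C$. I would bridge this with an auxiliary lemma: a relative d-critical structure on a morphism of schemes $Y\rightarrow C$ descends along a smooth surjective base change $C_0\rightarrow C$. This holds because a smooth surjection admits sections étale-locally on the target, so there is an étale cover $\{C_i\rightarrow C\}$ together with sections $C_i\rightarrow C_0\times_C C_i$; pulling the given relative d-critical structure on $Y\times_C C_0\rightarrow C_0$ back along the resulting Cartesian squares and using \cref{thm:dcritbasechange}(1) shows its restriction to each $Y\times_C C_i\rightarrow C_i$ is a relative d-critical structure, after which \cref{thm:dcritbasechange}(2) applied to the étale cover $\{C_i\rightarrow C\}$ completes the descent. (If one is only interested in $1$-Artin base stacks this lemma is unnecessary, since then $C\times_B B'$ is an algebraic space and admits an étale scheme atlas, so \cref{thm:dcritbasechange}(2) applies verbatim.)
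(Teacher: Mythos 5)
Your proof is correct and it concretely unpacks what the paper leaves implicit. The paper simply invokes the extension mechanism of \eqref{eq:ShvStkrel} applied to \cref{cor:dcritbasechange}, whereas you spell out what this means: part (1) is the formal functoriality of the limit defining $\DCrit^{\lis}$, and for part (2) you decompose the cover $(X'\to B') \to (X\to B)$ into a source-smooth-surjective step (over a fixed base $B'$) and a base-\'etale step, handling each by pulling the given scheme test square back to a scheme atlas and invoking \cref{thm:criticallocussmoothdescent}(2) and \cref{thm:dcritbasechange} respectively. The intermediate ``source smooth descent for geometric morphisms'' lemma is correct: given a scheme test square $(Y\to C)$ for $X\times_B B'\to B'$, base-change the smooth surjection $X'\to X\times_B B'$ along $Y\to X\times_B B'\to (X\times_B B')\times_{B'}C$, take a scheme atlas $Z$ of the result, and apply \cref{thm:criticallocussmoothdescent}(2) to $Z\to Y$. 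Your auxiliary lemma (descent of relative d-critical structures on schemes along smooth surjective base change in $C$, via local sections and \cref{thm:dcritbasechange}) is also correct, though as you note yourself it is mostly a safety net: since an \'etale cover $B'\to B$ should be representable, $C\times_B B'$ is already an algebraic space and admits an \'etale scheme atlas, after which \cref{thm:dcritbasechange}(2) applies directly without passing through your lemma. Overall this is a faithful and correctly executed elaboration of the paper's formal reduction to the scheme case.
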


We will now define the virtual canonical bundle associated to a relative d-critical structure on stacks.

\begin{proposition}\label{prop:virtualcanonicalstacks}
Let $X\rightarrow B$ be a geometric morphism of stacks equipped with a relative d-critical structure $s$. There is a line bundle $K^{\vir}_{X/B}$ on $X^{\red}$, the \defterm{virtual canonical bundle}, uniquely determined by the following conditions:
\begin{enumerate}
    \item If $X\rightarrow B$ is a morphism of schemes, then $K^{\vir}_{X/B}$ coincides with the virtual canonical bundle defined in \cref{thm:virtualcanonicalschemes}.
    \item For every commutative diagram of stacks
    \[
    \xymatrix{
    X' \ar^{\overline{p}}[r] \ar^{\pi'}[d] & X \ar^{\pi}[d] \\
    B' \ar^{p}[r] & B
    }
    \]
    with $\pi, \pi'$ geometric and $X'\rightarrow X\times_B B'$ a smooth morphism (so that $X'\rightarrow B'$ has a pullback relative d-critical structure by \cref{cor:dcritbasechange}), we have a canonical isomorphism
    \[\Upsilon_{X'\rightarrow X}\colon K^{\vir}_{X/B}|_{(X')^{\red}}\otimes K_{X'/X\times_B B'}^{\otimes 2}|_{(X')^{\red}}\xrightarrow{\sim} K^{\vir}_{X'/B'}.\]
    \item For a commutative diagram of stacks
    \[
    \xymatrix{
    X'' \ar^{\overline{q}}[r] \ar^{\pi''}[d] & X' \ar^{\overline{p}}[r] \ar^{\pi'}[d] & X \ar^{\pi}[d] \\
    B'' \ar^{q}[r] & B' \ar^{p}[r] & B
    }
    \]
    with $\pi'', \pi', \pi$ geometric morphisms, $X''\rightarrow X'\times_{B'} B''$ and $X'\rightarrow X\times_B B'$ smooth, a relative d-critical structure $s\in\Gamma(X, \cS_{X/B})$ and $s'$, $s''$ its pullbacks to $X'\rightarrow B'$ and $X''\rightarrow B''$, the diagram
    \[
    \xymatrix{
    K^{\vir}_{X/B}|_{(X'')^{\red}}\otimes K^{\otimes 2}_{X'/X\times_B B'}|_{(X'')^{\red}}\otimes K^{\otimes 2}_{X''/X'\times_{B'} B''}|_{(X'')^{\red}} \ar^{\Upsilon_{X'\rightarrow X}\otimes \id}[d] \ar^-{\id\otimes i(\Delta)^2}[r] & K^{\vir}_{X/B}|_{(X'')^{\red}}\otimes K^{\otimes 2}_{X''/X\times_B B''}|_{(X'')^{\red}} \ar^{\Upsilon_{X''\rightarrow X}}[d] \\
    K^{\vir}_{X'/B'}|_{(X'')^{\red}}\otimes K^{\otimes 2}_{X''/X'\times_{B'} B''}|_{(X'')^{\red}} \ar^-{\Upsilon_{X''\rightarrow X'}}[r] & K^{\vir}_{X''/B''}
    }
    \]
    commutes, where the top horizontal isomorphism is induced by the fiber sequence
    \[
    \Delta\colon \bL_{X'/X\times_B B'}|_{X''} \longrightarrow \bL_{X''/X\times_B B''}\longrightarrow \bL_{X''/X'\times_{B'} B''}.
    \]
\end{enumerate}
For every point $x\in X$ there is an isomorphism
\[\kappa_x\colon K^{\vir}_{X/B, x}\xrightarrow{\sim} \det(\tau^{\geq 0}\bL_{X/B, x})^{\otimes 2}\]
which coincides with $\kappa_x$ defined in \cref{thm:virtualcanonicalschemes} for $X\rightarrow B$ a morphism of schemes and which satisfies the following property:
\begin{enumerate}[resume]
    \item Let
    \[
    \xymatrix{
    X' \ar^{\overline{p}}[r] \ar^{\pi'}[d] & X \ar^{\pi}[d] \\
    B' \ar^{p}[r] & B
    }
    \]
    be a diagram of stacks as in (2). For a point $x'\in X'$  the diagram
    \[
    \xymatrix{
    K^{\vir}_{X/B, \overline{p}(x')}\otimes K_{X'/X\times_B B', x'}^{\otimes 2} \ar^-{\Upsilon_{X'\rightarrow X}|_{x'}}[r] \ar^{\kappa_{\overline{p}(x')}\otimes \id}[d] & K^{\vir}_{X'/B', x'} \ar^{\kappa_{x'}}[d] \\
    \det(\tau^{\geq 0}\bL_{X/B, \overline{p}(x')})^{\otimes 2}\otimes K_{X'/X\times_B B', x'}^{\otimes 2} \ar^-{i(\Delta_{x'\rightarrow x})^2}[r] & \det(\tau^{\geq 0} \bL_{X'/B', x'})^{\otimes 2}
    }
    \]
    commutes, where the bottom horizontal isomorphism is induced by the fiber sequence
    \[\Delta_{x'\rightarrow x}\colon \tau^{\geq 0}\bL_{X/B, \overline{p}(x')}\longrightarrow \tau^{\geq 0} \bL_{X'/B', x'}\longrightarrow \bL_{X'/X\times_B B', x'}.\]
\end{enumerate}
In addition, the virtual canonical bundle satisfies the following properties:
\begin{enumerate}[resume]
    \item For a pair $X_1\rightarrow B_1$, $X_2\rightarrow B_2$ of geometric morphisms of stacks equipped with relative d-critical structures there is an isomorphism
    \begin{equation}\label{eq:Kproductstacks}
    K^{\vir}_{X_1/B_1}\boxtimes K^{\vir}_{X_2/B_2}\cong K^{\vir}_{X_1\times X_2/B_1\times B_2},
    \end{equation}
    which is unital, commutative and associative and such that the isomorphism $\Upsilon_{X'\rightarrow X}$ from (2) is compatible with products. Moreover, \eqref{eq:Kproductstacks} is uniquely determined by the condition that for every point $(x_1, x_2)\in X_1\times X_2$ there is a commutative diagram
    \[
    \xymatrix{
    K^{\vir}_{X_1/B_1, x_1}\otimes K^{\vir}_{X_2/B_2, x_2} \ar^{\eqref{eq:Kproductstacks}}[r] \ar^{\kappa_{x_1}\otimes \kappa_{x_2}}[d] & K^{\vir}_{X_1\times X_2/B_1\times B_2, (x_1, x_2)} \ar^{\kappa_{(x_1, x_2)}}[d] \\
    \det(\tau^{\geq 0}\bL_{X_1/B_1, x_1})^{\otimes 2}\otimes \det(\tau^{\geq 0}\bL_{X_2/B_2, x_2})^{\otimes 2} \ar^-{\sim}[r] & \det(\tau^{\geq 0}\bL_{X_1\times X_2/B_1\times B_2, (x_1, x_2)})^{\otimes 2}.
    }
    \]
    \item For a locally constant function $d\colon X\rightarrow\Z/2\Z$ there is an isomorphism
    \begin{equation}\label{eq:Kreversestacks}
    R_d\colon K^{\vir}_{X/B, s}\cong K^{\vir}_{X/B, -s}
    \end{equation}
    squaring to the identity. For a pair $X_1\rightarrow B_1$, $X_2\rightarrow B_2$ of geometric morphisms of stacks equipped with relative d-critical structures $s_1,s_2$ the diagram
    \[
    \xymatrix{
    K^{\vir}_{X_1/B_1, s_1}\boxtimes K^{\vir}_{X_2/B_2, s_2}\ar^{\eqref{eq:Kproductstacks}}[r] \ar^{R_{d_1}\boxtimes R_{d_2}}[d] & K^{\vir}_{X_1\times X_2/B_1\times B_2, s_1\boxplus s_2} \ar^{R_{d_1\boxplus d_2}}[d] \\
    K^{\vir}_{X_1/B_1, -s_1}\boxtimes K^{\vir}_{X_2/B_2, -s_2}\ar^{\eqref{eq:Kproductstacks}}[r] & K^{\vir}_{X_1\times X_2/B_1\times B_2, -(s_1\boxplus s_2)}
    }
    \]
    commutes. For a commutative diagram of stacks as in (2) the diagram
    \[
    \xymatrix{
    K^{\vir}_{X/B, s}|_{(X')^{\red}}\otimes K^{\otimes 2}_{X'/X\times_B B'} \ar^-{\Upsilon_{X'\rightarrow X}}[r] \ar^{R_d\otimes \id}[d] & K^{\vir}_{X'/B', s'} \ar^{R_{d+\dim(X'/X\times_B B')}}[d] \\
    K^{\vir}_{X/B, -s}|_{(X')^{\red}}\otimes K^{\otimes 2}_{X'/X\times_B B'} \ar^-{\Upsilon_{X'\rightarrow X}}[r] & K^{\vir}_{X'/B', -s'}
    }
    \]
    commutes.
\end{enumerate}
\end{proposition}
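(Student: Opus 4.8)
The plan is to obtain $K^{\vir}_{X/B}$ together with all the listed structure from the scheme-level results by the right Kan extension formalism of \cref{sect:Stkextensions}, exactly as \cref{def:dcriticalstack} extends relative d-critical structures to geometric morphisms, so that every verification reduces to the corresponding statement for morphisms of schemes. The inputs are \cref{thm:virtualcanonicalschemes}, \cref{prop:virtualcanonicalpullback}, \cref{prop:virtualcanonicalbasechange} and \cref{cor:virtualcanonicalbasechange}: together they say that the assignment sending $(X\to B,s)\in\Fun(\Delta^1,\Sch)_{0\smooth}$ to $K^{\vir}_{X/B}\in\Pic(X^{\red})$, together with $\Upsilon$, the pointwise $\kappa$, the product isomorphisms and the involutions $R_d$, is functorial for the $0$-smooth transition squares --- the transition maps being the \emph{twisted} restrictions $L\mapsto\overline{p}^{\,*}L\otimes K_{X'/X\times_B B'}^{\otimes 2}$ provided by $\Upsilon_{X'\to X}$, with associativity \cref{cor:virtualcanonicalbasechange}(2) --- and that it satisfies \'etale descent. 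Applying the extension \eqref{eq:ShvStkrel} (together with the cofinality statement \cite[Corollary 3.2.7]{KhanWeavelisse} used throughout \cref{sect:Stkextensions}) yields the data of the statement over $\Fun(\Delta^1,\Stk)^{\geometric}_{0\smooth}$; properties (1)--(3) and (5)--(6) are then the images of the scheme-level compatibilities under the extension, and the uniqueness clauses follow since a section over the scheme-site extends uniquely.

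In down-to-earth terms I would fix a scheme $U$ with a smooth surjection $q\colon U\to X$ and form its \v{C}ech nerve $U_\bullet\to X$; by \cref{prop:dcritbasechangestacks} each $U_p\to B$ carries the pulled-back relative d-critical structure. The key simplification is that, after replacing $K^{\vir}_{S/B}$ by $K^{\vir}_{S/B}\otimes K_{S/X}^{\otimes -2}$, the twisted transition maps become plain restrictions: for smooth $S'\to S$ over $X$ the fiber sequence $\bL_{S/X}|_{S'}\to\bL_{S'/X}\to\bL_{S'/S}$ gives $K_{S'/X}\cong K_{S/X}|_{S'}\otimes K_{S'/S}$, and combining this with $\Upsilon_{S'\to S}$ (taken with $B'=B$) identifies $K^{\vir}_{S'/B}\otimes K_{S'/X}^{\otimes -2}$ with the restriction of $K^{\vir}_{S/B}\otimes K_{S/X}^{\otimes -2}$. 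Thus $\{K^{\vir}_{U_p/B}\otimes K_{U_p/X}^{\otimes -2}\}_p$ is a descent datum for $\Pic$ with cocycle condition \cref{cor:virtualcanonicalbasechange}(2), hence glues to a line bundle $K^{\vir}_{X/B}$ on $X^{\red}$ with $\Upsilon_{U\to X}$ built in; the isomorphisms $\kappa_x$ are got by transporting the scheme-level $\kappa_u$ through $\Upsilon_{U\to X}$ and the fiber sequence $\tau^{\geq 0}\bL_{X/B,x}\to\tau^{\geq 0}\bL_{U/B,u}\to\Omega^1_{U/X,u}$, and the product isomorphism \eqref{eq:Kproductstacks} and the involution \eqref{eq:Kreversestacks} are descended from \cref{thm:virtualcanonicalschemes}(4),(5) using that $q_\Phi$, hence $\Upsilon$, is compatible with products and with sign reversal by \cref{prop:criticalembeddingnormalquadratic}(5),(6). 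Each displayed diagram is an identity of isomorphisms of line bundles on a reduced stack, so it may be checked after pullback to reduced schemes mapping smoothly to $X$, where it becomes the already-proved scheme-level identity.

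I expect the only genuine work to be organizational: packaging \cref{thm:virtualcanonicalschemes}--\cref{cor:virtualcanonicalbasechange} into a \emph{single} descent datum so that the Kan extension simultaneously delivers $K^{\vir}_{X/B}$, $\Upsilon$, $\kappa$, the product isomorphism and $R_d$ with all their mutual compatibilities (2)--(6), rather than any individual verification; no new geometric content is required beyond the scheme case. The one point needing a word of care is that a line bundle on the reduced truncation $X^{\red}$ of a higher Artin stack is genuinely determined by, and glued from, its pullbacks to reduced schemes smooth over $X$ --- which lift \'etale-locally to schemes smooth over $X$, so that the scheme-level theory applies uniformly --- and this is exactly what the cofinality/density results of \cref{sect:Stkextensions} provide.
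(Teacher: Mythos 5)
Your proposal takes essentially the same route as the paper: a right Kan extension (smooth descent) of the scheme-level datum encoded by \cref{thm:virtualcanonicalschemes}--\cref{cor:virtualcanonicalbasechange} along $\Fun(\Delta^1,\Sch)_{0\smooth}\hookrightarrow\Fun(\Delta^1,\Stk)^{\geometric}_{0\smooth}$, which is exactly what the paper implements by exhibiting $K^{\vir}$ as a natural transformation of sheaves $\DCrit\to\Pic_{K^2}$; your ``untwist by $K_{S/X}^{\otimes -2}$ and glue over the \v{C}ech nerve'' paragraph is the hands-on unwinding of the same step. One small understatement worth flagging: not every listed compatibility is literally available at the scheme level --- in particular the compatibility of $R_d$ with $\Upsilon_{X'\to X}$ (property (6)) is \emph{not} among the cited scheme results, and the paper has to verify it directly by reducing, via \cref{thm:criticallocussmoothdescent}, to a smooth morphism $(U,f,u)\to(V,g,v)$ of critical charts and comparing the signs $(-1)^{\dim(V/B)+d}$ and $(-1)^{\dim(U/B)+d+\dim(U/V)}$; the compatibility of $\Upsilon$ with the product isomorphism likewise needs a short pointwise check via $\kappa$ rather than being purely formal. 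So ``only organizational'' is a touch optimistic, but the reduction strategy you describe --- check every diagram after pullback to reduced schemes smooth over $X$, using the pointwise $\kappa_x$ --- is exactly what the paper does and does close those gaps.
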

\begin{proof}
Consider the functor
\[\Pic_{K^2}\colon \Fun(\Delta^1, \Stk)^{\geometric,\op}_{0\smooth}\longrightarrow \Gpd\]
defined as follows:
\begin{itemize}
    \item For a geometric morphism of stacks $X\rightarrow B$ we assign the groupoid $\Pic(X^{\red})$ of line bundles on $X^{\red}$.
    \item For a commutative diagram
    \[
    \xymatrix{
    X' \ar^{\overline{p}}[r] \ar^{\pi'}[d] & X \ar^{\pi}[d] \\
    B' \ar^{p}[r] & B
    }
    \]
    with $X\rightarrow B$ geometric and $X'\rightarrow X\times_B B'$ smooth, we assign the functor $\Pic(X^{\red})\rightarrow \Pic((X')^{\red})$ which sends a line bundle $L$ on $X^{\red}$ to $\overline{p}^* L\otimes K^{\otimes 2}_{X'/X\times_B B'}|_{(X')^{\red}}$.
    \item For a commutative diagram
    \[
    \xymatrix{
    X'' \ar^{\overline{q}}[r] \ar^{\pi''}[d] & X' \ar^{\overline{p}}[r] \ar^{\pi'}[d] & X \ar^{\pi}[d] \\
    B'' \ar^{q}[r] & B' \ar^{p}[r] & B
    }
    \]
    with $X\rightarrow B$ geometric, $X'\rightarrow X\times_B B'$ and $X''\rightarrow X'\times_{B'} B''$ smooth, to a natural isomorphism between the composite \[L\mapsto \overline{p}^* L\otimes K^{\otimes 2}_{X'/X\times_B B'}|_{(X')^{\red}}\mapsto \overline{q}^* \overline{p}^* L\otimes K^{\otimes 2}_{X'/X\times_B B'}|_{(X'')^{\red}}\otimes K^{\otimes 2}_{X''/X'\times_{B'} B''}|_{(X'')^{\red}}\]
    and $L\mapsto \overline{q}^*\overline{p}^* L\otimes K^{\otimes 2}_{X''/X\times_B B''}|_{(X'')^{\red}}$ given by $\id\otimes i(\Delta)^2$, where
    \[\Delta\colon \bL_{X'/X\times_B B'}|_{X''} \longrightarrow \bL_{X''/X\times_B B''}\longrightarrow \bL_{X''/X'\times_{B'} B''}.\]
\end{itemize}

Consider also the functor
\[\DCrit\colon \Fun(\Delta^1, \Stk)^{\geometric,\op}_{0\smooth}\longrightarrow \Set\longrightarrow \Gpd\]
given by sending a geometric morphism $X\rightarrow B$ to the set of relative d-critical structures.

The restrictions of the functors $\Pic_{K^2}$ and $\DCrit$ to morphisms of schemes satisfies \'etale descent (\'etale descent for line bundles in the case of $\Pic_{K^2}$ and \'etale descent for relative d-critical structures proven in \cref{thm:dcritbasechange}(2) for $\DCrit$); therefore, using \eqref{eq:ShvStkrel} both $\Pic_{K^2}$ and $\DCrit$ extend to functors on $\Fun(\Delta^1, \Stk)^{\geometric}_{0\smooth}$.

By \cref{cor:virtualcanonicalbasechange} we have a natural transformation
\[
\begin{tikzcd}[column sep=2cm]
\Fun(\Delta^1, \Sch)^{\op}_{0\smooth} \ar[r, bend left=40, ""{name=U, below}, "\DCrit"{above}] \ar[r, bend right=40, ""{name=D, above}, "\Pic_{K^2}"{below}] & \Gpd \ar[Rightarrow, from=U, to=D, "K^{\vir}" description]
\end{tikzcd}
\]
defined as follows:
\begin{itemize}
    \item For a morphism of schemes $X\rightarrow B$ equipped with a relative d-critical structure we assign the virtual canonical bundle $K^{\vir}_{X/B}$ on $X^{\red}$.
    \item For a commutative diagram
    \[
    \xymatrix{
    X' \ar^{\overline{p}}[r] \ar^{\pi'}[d] & X \ar^{\pi}[d] \\
    B' \ar^{p}[r] & B
    }
    \]
    of schemes with $X'\rightarrow X\times_B B'$ smooth, a relative d-critical structure on $X\rightarrow B$ and its pullback relative d-critical structure on $X'\rightarrow B'$ we assign the isomorphism
    \[\Upsilon_{X'\rightarrow X}\colon K^{\vir}_{X/B}|_{(X')^{\red}}\otimes K^{\otimes 2}_{X'/X\times_B B'}|_{(X')^{\red}}\xrightarrow{\sim} K^{\vir}_{X'/B'}\]
\end{itemize}

Using \eqref{eq:ShvStkrel} with $\cV=\Fun(\Delta^1, \Gpd)$ we see that the natural transformation $K^{\vir}\colon \DCrit\rightarrow \Pic_{K^2}$ extends from morphisms of schemes to geometric morphisms of stacks.

For a commutative diagram of stacks as in (3) together with a point $x''\in X''$ with $x'=\overline{q}(x'')$ and $x=\overline{p}(x')$ the diagram
\[
\xymatrix{
\det(\tau^{\geq 0}\bL_{X/B, x})\otimes K^{\otimes 2}_{X'/X\times_B B', x'}\otimes K^{\otimes 2}_{X''/X'\times_{B'} B'', x''} \ar^{i(\Delta_{x'\rightarrow x})^2\otimes \id}[d] \ar^-{\id\otimes i(\Delta)^2}[r] & \det(\tau^{\geq 0}\bL_{X/B, x})\otimes K^{\otimes 2}_{X''/X\times_B B'', x''} \ar^{i(\Delta_{x''\rightarrow x})^2}[d] \\
\det(\tau^{\geq 0}\bL_{X'/B', x'})\otimes K^{\otimes 2}_{X''/X'\times_{B'} B'', x''} \ar^-{i(\Delta_{x''\rightarrow x'})^2}[r] & \det(\tau^{\geq 0}\bL_{X''/B'', x''})
}
\]
commutes, as can be seen by applying \cite[(2.4)]{KPS} to the double fiber sequence
\[
        \begin{aligned}
        \begin{tikzcd}
        \begin{array}{c} \substack{\displaystyle{\phantom{\Delta} } \\ \displaystyle{\phantom{\rotatebox{90}{\mbox{:}}}} \\ \displaystyle{\Delta_{x'\rightarrow x} \colon \tau^{\geq 0}\bL_{X/B, x}}} \end{array} &
        \begin{array}{c} \substack{\displaystyle{\Delta_{x''\rightarrow x'}} \\ \displaystyle{  \rotatebox{90}{\mbox{:}}} \\  \displaystyle{\tau^{\geq 0}\bL_{X'/B', x'}}} \end{array} &
        \begin{array}{c} \substack{\displaystyle{\Delta} \\ \displaystyle{\rotatebox{90}{\mbox{:}}} \\  \displaystyle{\bL_{X'/X\times_B B', x'}}}\end{array} \\
	\Delta_{x''\rightarrow x} \colon \tau^{\geq 0}\bL_{X/B, x} & \tau^{\geq 0}\bL_{X''/B'', x''} & \bL_{X''/X\times_B B'', x''} \\
	\phantom{\Delta_{x\rightarrow x}:} 0 & \bL_{X''/X'\times_{B'} B'', x''} & \bL_{X''/X'\times_{B'} B'', x''}.
        \arrow[shift right=3, from=1-1, to=1-2, start anchor={[xshift=-1ex]}, end anchor={[xshift=1ex]}]
        \arrow[shift right=3, from=1-2, to=1-3, start anchor={[xshift=-1ex]}, end anchor={[xshift=1ex]}]
        \arrow[from=2-1, to=2-2]
        \arrow[from=2-2, to=2-3]
        \arrow[from=3-1, to=3-2]
        \arrow[equal, from=3-2, to=3-3]
        \arrow[shift left=6, equal, from=1-1, to=2-1]
        \arrow[shift left=6, from=2-1, to=3-1]
        \arrow[from=1-2, to=2-2]
        \arrow[from=2-2, to=3-2]
        \arrow[from=1-3, to=2-3]
        \arrow[from=2-3, to=3-3]
\end{tikzcd}
\end{aligned}
\]

Using this fact and property (3) of the isomorphism $\Upsilon$, the construction of the isomorphism $\kappa_x$ reduces to the construction of $\kappa_x$ for schemes which is compatible with $\Upsilon$ for smooth morphisms of schemes by \cref{thm:virtualcanonicalschemes}(3).

The isomorphism \eqref{eq:Kproductstacks} is established for morphisms of schemes in \cref{thm:virtualcanonicalschemes}(4). By construction of the virtual canonical bundle of stacks to show that the isomorphism extends to stacks and $\Upsilon_{X'\rightarrow X}$ is compatible with products, it is enough to establish this claim for a pair
\[
\xymatrix{
X_1' \ar[r] \ar[d] & X_1 \ar[d] \\
B_1' \ar[r] & B_1
}\qquad
\xymatrix{
X_2' \ar[r] \ar[d] & X_2 \ar[d] \\
B_2' \ar[r] & B_2
}
\]
of commutative diagrams of schemes as in property (2). It is also enough to establish compatibility for every point $(x'_1, x'_2)\in X'_1\times X'_2$. Using property (4) the compatibility reduces to the fact that the isomorphisms $i(\Delta_{x'\rightarrow x})^2$ are compatible with direct sums.

The isomorphism $R_d$ is constructed for morphisms of schemes in \cref{thm:virtualcanonicalschemes}(5). The compatibility of $R_d$ with products is obvious. The compatibility with the isomorphism $\Upsilon_{X'\rightarrow X}$ when $X'\rightarrow X\times_B B'$ is an isomorphism is also obvious. By construction of $\Upsilon_{X'\rightarrow X}$ for a general smooth morphism $X'\rightarrow X\times_B B'$ it is thus enough to consider the case $B'=B$ and $X'\rightarrow X$ smooth. Moreover, this compatibility can be checked on critical charts, so by \cref{thm:criticallocussmoothdescent}(1) we may assume that there is a smooth morphism $(U, f, u)\rightarrow (V, g, v)$ of critical charts for $X'\rightarrow X$ locally near a point in $X'$. In this case $\Upsilon_{X'\rightarrow X}$ is determined by the natural isomorphism $(U\rightarrow V)^*K_{V/B}^{\otimes 2}\otimes K_{U/V}^{\otimes 2}\cong K_{U/B}^{\otimes 2}$. The isomorphism $R_d$ acts by $(-1)^{\dim(V/B)+d}$ on $K_{V/B}^{\otimes 2}$. The isomorphism $R_{d + \dim(X'/X)}$ acts by $(-1)^{\dim(U/B)+d+\dim(X'/X)}$ on $K_{U/B}^{\otimes 2}$. Since $\dim(U/V)=\dim(X'/X)$, the claim follows.
\end{proof}

\subsection{Pushforwards of d-critical structures}\label{sect:dcriticalpushforward}

In addition to the pullback functoriality of differential forms given by \eqref{eq:formpullback}, we will also consider a ``pushforward'' functoriality \cite{ParkSymplectic} given as follows. Consider geometric morphisms $X\xrightarrow{\pi} B\xrightarrow{p} S$ of stacks. Using the fiber sequence
\[\pi^*\bL_{B/S}\longrightarrow \bL_{X/S}\longrightarrow \bL_{X/B}\]
we obtain a morphism
\[\mu_{X\rightarrow B\rightarrow S}\colon \cS_{X/B}\longrightarrow \pi^*\bL_{B/S}.\]
In particular, if $X$ is equipped with a section $s\in\Gamma(X, \cS_{X/B})$, there is a canonical induced morphism, the \defterm{moment map}
\[\mu_s\colon X\longrightarrow \T^*(B/S),\]
which comes with a homotopy
\begin{equation}\label{eq:mushomotopy}
h_{s, p}\colon \mu_s^\ast \lambda_{B/S} \sim d_S \und(s)
\end{equation}
in $\cA^1(X/S, 0)$.

\begin{definition}\label{def:dcritpushforwardscheme}
Let $X\rightarrow B\xrightarrow{p} S$ be geometric morphisms of stacks together with a section $s\in\Gamma(X, \cS_{X/B})$. The \defterm{d-critical pushforward} $p_*(X, s)$ is the fiber product
\[
\xymatrix{
p_* (X, s) \ar[r] \ar[d] & X \ar^{\mu_s}[d] \\
B \ar^-{0}[r] & \T^*(B/S).
}
\]
\end{definition}

The d-critical pushforward $R=p_*(X, s)$ comes equipped with a canonical section $p_*s\in\Gamma(R, \cS_{R/S})$ obtained as follows. Consider the following commutative diagram:
\[
\xymatrix{
\cA^1(\T^*(B/S)/S, 0) \ar^-{0^\ast}[r] \ar^{\mu_s^\ast}[d] & \cA^1(B/S, 0) \ar^{(R\rightarrow B)^\ast}[d] \\
\cA^1(X/S, 0) \ar^{(R\rightarrow X)^\ast}[r] & \cA^1(R/S, 0) \\
\cA^0(X, 0) \ar^{(R\rightarrow X)^\ast}[r] \ar_{d_S}[u] & \cA^0(R, 0) \ar_{d_S}[u]
}
\]
Consider the function $f=\und(s)\in\cA^0(X, 0)$. Using the bottom commutative square we get a homotopy $d_S(f|_R)\sim (R\rightarrow X)^\ast (d_S f)$ in $\cA^1(R/S, 0)$. Using the homotopy $h_{s, p}\colon d_S f\sim \mu_s^\ast \lambda_{B/S}$ given by \eqref{eq:mushomotopy} as well as the top commutative square we get a homotopy $(R\rightarrow X)^\ast(d_S f)\sim (R\rightarrow B)^\ast 0^\ast \lambda_{B/S}$. But $0^\ast\lambda_{B/S}=0$, so in total we get a nullhomotopy of $d_S(f|_R)$. Thus, we obtain a section $p_*s\in\Gamma(R, \cS_{R/S})$ with the underlying function $\und(p_* s)=f|_R\colon R\rightarrow \bA^1$.

D-critical pushforwards are functorial in the following sense: given geometric morphisms of stacks $X\rightarrow B\xrightarrow{p} S\xrightarrow{q} T$ together with a section $s\in\Gamma(X, \cS_{X/B})$ we have an isomorphism
\[q_*(p_*(X, s), p_* s)\cong (q\circ p)_*(X, s)\]
constructed using the Cartesian square \eqref{eq:cotangentCartesian}. Moreover, under this isomorphism we have $q_*(p_* s) = (q\circ p)_* s$.

\begin{example}\label{ex:dcritpushforwardcritlocus}
Let $p\colon U\rightarrow B$ be a smooth morphism of schemes and $f\colon U\rightarrow \bA^1$. We have $\cS_{U/U}\cong \cO_U$ and hence $f$ defines a section $f\in\cS_{U/U}$. Then $p_*(U, f) = \Crit_{U/B}(f)$ and $p_* f=s_f$.
\end{example}

The following statement generalizes \cref{ex:dcritpushforwardcritlocus}.

\begin{proposition}\label{prop:pushforwardcriticallocus}
Let $U\rightarrow B\xrightarrow{p} S$ be smooth morphisms of schemes and $f\colon U\rightarrow \bA^1$. The closed immersion $p_*(\Crit_{U/B}(f), s_{f, B_1})\rightarrow \Crit_{U/B}(f)$ identifies
\[p_*(\Crit_{U/B}(f), s_{f, B})\cong \Crit_{U/S}(f).\]
Moreover, under this isomorphism $p_* s_{f, B}$ on $p_*(\Crit_{U/B}(f), s_f)$ goes to $s_{f, S}$ on $\Crit_{U/S}(f)$.
\end{proposition}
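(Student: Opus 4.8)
The plan is to obtain both assertions formally from \cref{ex:dcritpushforwardcritlocus} together with the functoriality of d-critical pushforwards along composites, applied to the chain of geometric morphisms
\[
U \xrightarrow{\ \id\ } U \xrightarrow{\ \pi_{U/B}\ } B \xrightarrow{\ p\ } S
\]
equipped with the section $f\in\Gamma(U, \cS_{U/U})=\Gamma(U, \cO_U)$, where $\pi_{U/B}\colon U\to B$ denotes the given smooth morphism. The point is that $\Crit_{U/B}(f)$ and $\Crit_{U/S}(f)$ are both d-critical pushforwards of the trivial datum $(U,f)$, along $\pi_{U/B}$ and along $\pi_{U/S}\coloneqq p\circ\pi_{U/B}$ respectively, so the proposition is exactly the instance of ``$q_*p_*\cong(q\circ p)_*$'' for this chain.

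Concretely, I would first record the two instances of \cref{ex:dcritpushforwardcritlocus} that feed the argument: applied to the smooth morphism $\pi_{U/B}\colon U\to B$ and $f$ it gives $(\pi_{U/B})_*(U,f)=\Crit_{U/B}(f)$ and $(\pi_{U/B})_*f=s_{f,B}$; applied to the smooth morphism $\pi_{U/S}=p\circ\pi_{U/B}\colon U\to S$ (smooth as a composite of smooth morphisms) and the same $f$ it gives $(\pi_{U/S})_*(U,f)=\Crit_{U/S}(f)$ and $(\pi_{U/S})_*f=s_{f,S}$. Then I would invoke the functoriality isomorphism for d-critical pushforwards along the chain above, in which $p$ plays the role of the outer map:
\[
p_*\big((\pi_{U/B})_*(U,f),\ (\pi_{U/B})_*f\big)\;\cong\;(p\circ\pi_{U/B})_*(U,f),
\]
together with its compatibility $p_*\big((\pi_{U/B})_*f\big)=(p\circ\pi_{U/B})_*f$. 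Substituting the four identities above turns this into $p_*(\Crit_{U/B}(f),s_{f,B})\cong\Crit_{U/S}(f)$, carrying $p_*s_{f,B}$ to $s_{f,S}$, which is the claimed statement.

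The one thing left to verify is that this isomorphism is the one compatible with the natural closed immersions into $\Crit_{U/B}(f)$. On one side, $p_*(\Crit_{U/B}(f),s_{f,B})$ is, by \cref{def:dcritpushforwardscheme}, the fiber product of $\mu_{s_{f,B}}$ with the zero section $B\to\T^*(B/S)$, hence a closed subscheme of $\Crit_{U/B}(f)$; on the other side $\Crit_{U/S}(f)\subseteq\Crit_{U/B}(f)$ because $d_Sf$ maps to $d_Bf$ under the surjection $\Omega^1_{U/S}\to\Omega^1_{U/B}$. To see that the isomorphism respects these embeddings one unwinds its construction from the Cartesian square \eqref{eq:cotangentCartesian} attached to $U\to B\to S$: that square has $U$ in its upper--right corner and all the fiber products involved are formed over $U$, so the isomorphism is automatically a morphism over $U$, hence over the closed subscheme $\Crit_{U/B}(f)$. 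This bookkeeping — matching the fiber-product presentation of $p_*(\Crit_{U/B}(f),s_{f,B})$ against \eqref{eq:cotangentCartesian}, which could alternatively be carried out by hand using the explicit description of sections of $\cS_{-/-}$ in \cref{prop:Critsdescription} — is the only mildly delicate point; everything else is purely formal.
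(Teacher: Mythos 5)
Your proof is correct, but it takes a genuinely different route from the paper's. You reduce everything to the unproved (but stated) functoriality $q_*(p_*(X,s),p_*s)\cong(q\circ p)_*(X,s)$ applied to the chain $U\xrightarrow{\id}U\xrightarrow{\pi_{U/B}}B\xrightarrow{p}S$, together with \cref{ex:dcritpushforwardcritlocus} identifying both critical loci as pushforwards of the trivial datum $(U,f)$. The paper instead gives a hands-on verification: it first builds an explicit diagram of Cartesian squares (pasting the pullback along $\Gamma_0\colon U\to\T^*(B/S)\times_B U$ with the one defining $\Crit_{U/B}(f)$ inside $\T^*(U/B)$) to identify the underlying schemes, and then compares sections by exploiting that the natural map $i_p\colon\cS_{-/S}\to\cS_{-/B}$ is injective when $p$ is smooth, reducing the equality $p_*s_{f,B}=s_{f,S}$ to the concrete computation $\iota_{-,U}(s_f)=[f]\bmod\cI^2$ from \cref{prop:Critsdescription}. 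Both are legitimate; what yours buys is brevity and a cleaner conceptual picture, while the paper's proof buys self-containment — it does not lean on the ``moreover'' clause of the functoriality statement, which the paper merely asserts via \eqref{eq:cotangentCartesian} and never verifies in detail, so there is at least a heuristic case that \cref{prop:pushforwardcriticallocus} is meant to \emph{supply} that verification in the relevant generality rather than \emph{consume} it. Your observation that the functoriality isomorphism is automatically a morphism over $U$ (hence over the monomorphic subscheme $\Crit_{U/B}(f)$) is the right way to see that it agrees with the stated closed immersion, and the remark that one could alternatively fall back on \cref{prop:Critsdescription} is exactly what the paper does.
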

\begin{proof}
The isomorphism $p_*(\Crit_{U/B}(f), s_{f, B})\cong \Crit_{U/S}(f)$ follows from the existence of the following diagram, where all squares are Cartesian:
\[
\xymatrix{
\Crit_{U/S}(f) \ar[r] \ar[d] & \Crit_{U/B}(f) \ar[r] \ar^{\mu_{s_{f, B}}}[d] & U \ar^{\Gamma_{d_{S}f}}[d] \\
U \ar^-{\Gamma_0}[r] & \T^*(B/S)\times_{B_1} U \ar[r] \ar[d] & \T^*(U/S) \ar[d] \\
& U \ar^{\Gamma_0}[r] & \T^*(U/B)
}
\]

Let $R_B = \Crit_{U/B}(f)$ and $R = p_*(\Crit_{U/B}(f), s_f)$.  Consider the commutative diagram
\[
\xymatrix{
\cS_{R_B/S}|_R \ar^{i_p}[r] \ar^{(R\rightarrow R_B)^\ast}[d] & \cS_{R_B/B}|_R \ar^{(R\rightarrow R_B)^\ast}[d] \\
\cS_{R/S} \ar^{i_p}[r] & \cS_{R/B}
}
\]
of sheaves on $R$. Since $p\colon B\rightarrow S$ is smooth, the horizontal morphisms $i_p$ are injective. Therefore, it is enough to show that
\[i_p(p_* s_{f, B}) = i_p(s_{f, S}).\]
But by definition of the pushforward $p_* s_{f, B}$ we have
\[i_p(p_* s_{f, B}) = (R\rightarrow R_1)^\ast s_{f, B}.\]
Consider the commutative diagram
\[
\xymatrix{
\cS_{R_B/B}|_R \ar^-{\iota_{R_B, U}}[r] \ar^{(R\rightarrow R_B)^*}[d] & \cO_U/\cI^2_{R_B, U}|_R \ar[d] \\
\cS_{R/B} \ar^-{\iota_{R, U}}[r] & \cO_U/\cI^2_{R, U}
}
\]
defined in \cref{prop:Sproperties}(3) with the horizontal morphisms injective. By \cref{prop:Critsdescription} we get
\[\iota_{R_B, U}(s_{f, B}) = [f]\in \cO_U/\cI^2_{R_B, U},\qquad \iota_{R, U}(i_p(s_{f, S})) = [f]\in\cO/\cI^2_{R, U}\]
which implies that $(R\rightarrow R_B)^\ast s_{f, B} = i_p(s_{f, S})$ and hence, using the above equalities, $p_* s_{f, B} = s_{f, S}$.
\end{proof}

As a corollary, we obtain that d-critical pushforwards preserve relative d-critical structures on schemes.

\begin{corollary}\label{cor:dcritpushforwardschemes}
Consider morphisms of schemes $X\rightarrow B\xrightarrow{p} S$, where $p$ is smooth, equipped with a relative d-critical structure $s\in\Gamma(X, \cS_{X/B})$. Then $p_* s$ is a relative d-critical structure on $p_*(X, s)\rightarrow S$.
\end{corollary}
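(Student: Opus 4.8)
The plan is to reduce the statement to \cref{prop:pushforwardcriticallocus} by checking that the d-critical pushforward carries each critical chart of $(X\to B,s)$ to a critical chart of $(p_*(X,s)\to S,\,p_*s)$. As a preliminary reduction, note that since $p$ is smooth, for every LG pair $(U,f)$ over $B$ the composite $U\to B\to S$ is smooth, so $(U,f)$ is simultaneously an LG pair over $S$; thus \cref{prop:pushforwardcriticallocus} applies to every critical chart we will meet.

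The key point is that the d-critical pushforward of \cref{def:dcritpushforwardscheme}, \emph{together with its canonical section}, is compatible with restriction along an open immersion $\jmath\colon X'\hookrightarrow X$. Indeed, naturality of the fiber sequence $\pi^*\bL_{B/S}\to\bL_{X/S}\to\bL_{X/B}$ under $\jmath$ shows that $\mu_{\jmath^*s}=\mu_s\circ\jmath$, so that $p_*(X',\jmath^*s)=p_*(X,s)\times_X X'$ and the induced morphism $p_*(X,s)\times_X X'\to p_*(X,s)$ is the base change of $\jmath$, hence an open immersion; moreover the nullhomotopy of $d_S\und(\jmath^*s)$ used to build $p_*(\jmath^*s)$ is the $\jmath$-restriction of the one building $p_*s$, so $p_*(\jmath^*s)$ is the restriction of $p_*s$ to this open subscheme. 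I would spell this out using the two commuting squares displayed after \cref{def:dcritpushforwardscheme} and the naturality of the de Rham differential.

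Given this, the conclusion is bookkeeping. For a point of $p_*(X,s)$ lying over $x\in X$, choose a critical chart $(U,f,u)$ of $(X\to B,s)$ with $x$ in the image of the open immersion $u\colon\Crit_{U/B}(f)\hookrightarrow X$, so that $U\to B$ is smooth and $u^*s=s_{f,B}$. Applying the open-immersion compatibility above to $\jmath=u$ and then \cref{prop:pushforwardcriticallocus}, I obtain an open immersion
\[
p_*(u)\colon \Crit_{U/S}(f)\;\cong\;p_*(\Crit_{U/B}(f),s_{f,B})\;\hookrightarrow\;p_*(X,s)
\]
under which $p_*s$ pulls back to $p_*(u^*s)=p_*s_{f,B}=s_{f,S}$. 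Hence $(U,f,p_*(u))$ is a critical chart for $(p_*(X,s)\to S,\,p_*s)$. Since the images of the $u$'s cover $X$, the images of the $p_*(u)$'s cover $p_*(X,s)$, so $p_*s$ is a relative d-critical structure on $p_*(X,s)\to S$.

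The main obstacle is the compatibility of the d-critical pushforward---both the scheme $p_*(X,s)$ and its canonical section $p_*s$---with restriction to opens: the scheme-level statement is immediate from associativity of fiber products, but the section-level statement requires tracing through the construction of $p_*s$ and checking that the nullhomotopy it encodes restricts correctly. Once this is pinned down, everything else follows from \cref{prop:pushforwardcriticallocus}.
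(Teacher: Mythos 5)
Your proof is correct and follows the same strategy as the paper: cover $X$ by critical charts, pass these through the d-critical pushforward, and apply \cref{prop:pushforwardcriticallocus} to identify the results as critical charts for $p_*(X,s)\to S$. The paper states without comment that the pushed-forward charts give a Zariski cover, whereas you helpfully spell out the underlying compatibility of $p_*(-,-)$ with open immersions (both at the scheme level and for the canonical sections); this is the right justification and matches the implicit reasoning in the paper's proof.
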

\begin{proof}
Since $s$ is a relative d-critical structure, we have a collection $\{U_a, f_a\}$ of LG pairs over $B$ together with morphisms $u_a\colon \Crit_{U_a/B}(f_a)\rightarrow X$ such that $\{u_a\colon \Crit_{U_a/B}(f_a)\rightarrow X\}$ is a Zariski cover. Then
\[\{p_*(\Crit_{U_a/B}(f_a), s_{f_a, B})\rightarrow p_*(X, s)\}\]
is also a Zariski cover. But by \cref{prop:pushforwardcriticallocus} we have $p_*(\Crit_{U_a/B}(f_a), s_{f_a, B})\cong \Crit_{U_a/S}(f_a)$ compatibly with relative d-critical structures, so we get a Zariski cover $\{\Crit_{U_a/S}(f_a)\rightarrow p_*(X, s)\}$ by critical charts.
\end{proof}

The d-critical pushforward has the following compatibility with pullbacks.

\begin{proposition}\label{prop:dcritpushforwardpullback}
Consider a commutative diagram of stacks
\[
\xymatrix{
X' \ar[r] \ar[d] & B' \ar^{p'}[r] \ar[d] & S' \ar[d] \\
X \ar[r] & B \ar^{p}[r] & S
}
\]
with all morphisms geometric, equipped with a section $s\in\Gamma(X, \cS_{X/B_1})$ and let $s'\in\Gamma(X', \cS_{X'/B'})$ be the pullback. Assume that $B'\rightarrow B\times_S S'$ is smooth.
\begin{enumerate}
    \item The above diagram can be extended to a commutative diagram
    \begin{equation}\label{eq:dcritpushforwardpullbackdiagram}
    \xymatrix{
    p'_*(X', s') \ar[r] \ar[d] & X' \ar[r] \ar[d] & B' \ar^{p'}[r] \ar[d] & S' \ar[d] \\
    p_*(X, s) \ar[r] & X \ar[r] & B_1 \ar^{p}[r] & S
    }
    \end{equation}
    so that $p'_* s'$ is equal to the pullback of $p_* s$ under the leftmost vertical morphism and the leftmost square is Cartesian.
    \item If $B'\rightarrow B$ and $X'\rightarrow X\times_B B'$ are smooth (respectively, smooth surjective), then so is $p'_*(X', s')\rightarrow p_*(X, s)$. Moreover, in this case there is a fiber sequence
    \[\bL_{B'/B\times_S S'}|_{p'_*(X', s')}\longrightarrow \bL_{p'_*(X', s')/p_*(X, s)\times_S S'}\longrightarrow \bL_{X'/X\times_B B'}|_{p'_*(X', s')}.\]
\end{enumerate}
\end{proposition}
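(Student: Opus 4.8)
The plan is to deduce both parts from a single identification, namely that $p'_*(X', s')\cong p_*(X, s)\times_X X'$ compatibly with the canonical sections. Granting this, the leftmost square of \eqref{eq:dcritpushforwardpullbackdiagram} is Cartesian essentially by definition, and part (2) reduces to a formal manipulation with base change and the octahedral axiom.

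For (1), the essential input is that the moment-map construction of \cref{sect:dcriticalpushforward} is natural under pullback. From the fiber sequences $\pi^*\bL_{B/S}\to\bL_{X/S}\to\bL_{X/B}$ and $(\pi')^*\bL_{B'/S'}\to\bL_{X'/S'}\to\bL_{X'/B'}$, together with the compatible pullback maps between them supplied by the functoriality of the relative cotangent complex (\eqref{eq:cotangentpullback}, \eqref{eq:cotangentsequence}, \cite[Lemma B.10.13]{CHS}), one obtains a commutative square
\[
\xymatrix{
\cS_{X/B}|_{X'} \ar^-{\mu_{X\rightarrow B\rightarrow S}}[r] \ar[d] & \pi^*\bL_{B/S}|_{X'} \ar[d] \\
\cS_{X'/B'} \ar^-{\mu_{X'\rightarrow B'\rightarrow S'}}[r] & (\pi')^*\bL_{B'/S'},
}
\]
the right-hand vertical map being induced by the canonical morphism $\bL_{B/S}|_{B'}\to\bL_{B'/S'}$, whose cofiber is $\bL_{B'/(B\times_S S')}$. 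Evaluating on $s$ shows that $\mu_{s'}$ factors as
\[
X'\longrightarrow \T^*(B/S)\times_B B'\xrightarrow{\iota}\T^*(B'/S'),
\]
where $\iota$ is the morphism of total spaces over $B'$ induced by $\bL_{B/S}|_{B'}\to\bL_{B'/S'}$. Since $B'\to B\times_S S'$ is smooth and geometric, the zero section $B'\to\T^*(B'/(B\times_S S'))$ is a closed immersion (as recorded in \cref{sect:differentialforms}), and $\iota$ — being its base change along $\T^*(B'/S')\to\T^*(B'/(B\times_S S'))$ — is a closed immersion as well, through which the zero section $0\colon B'\to\T^*(B'/S')$ also factors. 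As $\iota$ is a monomorphism and both $\mu_{s'}$ and $0$ factor through it, we get
\[
p'_*(X', s') = X'\times_{\T^*(B'/S')} B'\cong X'\times_{\T^*(B/S)\times_B B'} B'\cong p_*(X, s)\times_X X',
\]
which is the asserted Cartesian square. The equality of $p'_* s'$ with the pullback of $p_* s$ then follows by unwinding the construction of the section $p_*(-)$ from the homotopy $h_{-, p}$ and the square of $1$-form spaces in \cref{sect:dcriticalpushforward}, using once more the naturality of $h_{s, p}$ under pullback.

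For (2), observe that $X'\to X$ is smooth, resp. smooth surjective: it is the composite of the given morphism $X'\to X\times_B B'$ with the base change of $B'\to B$ along $X\to B$. Hence, by part (1), the morphism $p'_*(X', s')\to p_*(X, s)$ is the base change of $X'\to X$ along $p_*(X, s)\to X$, so it is smooth, resp. smooth surjective. For the fiber sequence, write $R = p_*(X, s)$ and $R' = p'_*(X', s')$, so $R' = R\times_X X'$ and therefore $\bL_{R'/R}\cong\bL_{X'/X}|_{R'}$ by \eqref{eq:cotangentpullback}. Restricting the cotangent fiber sequences of $X'\to X\times_B B'\to X$, of $B'\to B\times_S S'\to B$, and of $R'\to R\times_S S'\to R$ to $R'$, and using the base-change identifications of $\bL_{X\times_B B'/X}$, $\bL_{B\times_S S'/B}$, and $\bL_{R\times_S S'/R}$, one gets a composable pair $\bL_{S'/S}|_{R'}\to\bL_{B'/B}|_{R'}\to\bL_{R'/R}$ whose two cofibers are $\bL_{B'/(B\times_S S')}|_{R'}$ and $\bL_{X'/(X\times_B B')}|_{R'}$, and whose composite has cofiber $\bL_{R'/(R\times_S S')}$; the octahedral axiom then yields the fiber sequence
\[
\bL_{B'/(B\times_S S')}|_{R'}\longrightarrow\bL_{R'/(R\times_S S')}\longrightarrow\bL_{X'/(X\times_B B')}|_{R'}.
\]
The only point requiring care here is the compatibility of the two factorizations of the maps into $\bL_{R'/R}$, which is again a consequence of the standard functoriality package for the relative cotangent complex.

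The main obstacle is the naturality of the moment map $\mu_s$ and the homotopy $h_{s, p}$ under the pullback of the tower $X\to B\to S$ to $X'\to B'\to S'$, needed in (1): conceptually this is forced by the functoriality of the cotangent-complex fiber sequences, but verifying it rigorously requires a careful chase through the definitions of $\mu_s$, $h_{s, p}$, and of the section $p_* s$ in terms of spaces of forms. Once this is in place, everything else is formal, resting only on base change, the fact that zero sections of smooth geometric morphisms are closed immersions, and the octahedral axiom.
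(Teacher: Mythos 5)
Your proof is correct and follows essentially the same argument as the paper: the key step, in both, is that $\mu_{s'}$ factors through the closed immersion $\T^*(B/S)\times_B B'\hookrightarrow\T^*(B'/S')$ (coming from the Cartesian square \eqref{eq:cotangentCartesian} together with the smoothness of $B'\to B\times_S S'$), which gives the Cartesian leftmost square in the form $p'_*(X',s')\cong p_*(X,s)\times_X X'$, and the factorization of $p'_*(X',s')\to p_*(X,s)$ through $p_*(X,s)\times_B B'$ into two base changes yields (2). For the fiber sequence, the paper reads it off the cotangent sequence of the composite $p'_*(X',s')\to p_*(X,s)\times_B B'\to p_*(X,s)\times_S S'$ after identifying each leg as a base change of $X'\to X\times_B B'$ resp.\ $B'\to B\times_S S'$, whereas you assemble the same sequence from three restricted cotangent sequences via the octahedral axiom; the two derivations are equivalent, the paper's being slightly more economical.
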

\begin{proof}
We have a commutative diagram
\[
\xymatrix@C=1.8cm{
\cS_{X/B}|_{X'} \ar^-{\mu_{X\rightarrow B\rightarrow S}}[r] \ar[d] & (X\rightarrow B)^* \bL_{B/S}|_{X'} \ar[d] \\
\cS_{X'/B} \ar^-{\mu_{X'\rightarrow B\rightarrow S}}[r] \ar[d] & (X'\rightarrow B)^*\bL_{B/S} \ar[d] \\
\cS_{X'/B'} \ar^-{\mu_{X'\rightarrow B'\rightarrow S'}}[r] & (X'\rightarrow B')^*\bL_{B'/S'}
}
\]
Therefore, we obtain a commutative diagram
\[
\xymatrix{
X \ar^-{\mu_s}[r] & \T^*(B/S) \\
X' \ar^-{\mu'_s}[r] \ar_{\mu_{s'}}[dr] \ar[u] & \T^*(B/S)\times_B B' \ar[d] \ar[u]\\
& \T^*(B'/S'),
}
\]
where the morphism $\mu'_s\colon X'\rightarrow \T^*(B/S)\times_B B'$ is given by the composite
\[\mu'_s\colon X'\rightarrow X\times_B B'\xrightarrow{\mu_s\times \id} \T^*(B/S)\times_B B'\cong \T^*(B\times_S B'/S')\times_{B\times_S S'} B'.\]
Since $B'\rightarrow B\times_S S'$ is smooth, in the Cartesian square
\[
\xymatrix{
\T^*(B\times_S S'/S')\times_{B\times_S S'} B' \ar[r] \ar[d] & \T^*(B'/S') \ar[d] \\
B' \ar^-{0}[r] & \T^*(B'/B\times_S S')
}
\]
the bottom morphism is a closed immersion and hence the top morphism is a closed immersion, hence a monomorphism. Thus, the top square
\[
\xymatrix{
p'_*(X', s') \ar[r] \ar[d] & X' \ar^{\mu'_s}[d] \\
B' \ar^-{0}[r] \ar[d] & \T^*(B/S)\times_B B' \ar[d] \\
B \ar^-{0}[r] & \T^*(B/S)
}
\]
is Cartesian. Since the bottom square is Cartesian as well, this implies that the outer square is Cartesian. But then in the diagram
\[
\xymatrix{
p'_*(X', s') \ar[r] \ar[d] & X' \ar[d] \\
p_*(X, s) \ar[r] \ar[d] & X \ar^{\mu_s}[d] \\
B \ar^-{0}[r] & \T^*(B/S)
}
\]
the bottom square is Cartesian by definition and the outer square is Cartesian by what we have just shown. Therefore, the top square is Cartesian, which establishes the first claim.

The morphism $p'_*(X', s')\rightarrow p_*(X, s)$ factors as the composite
\begin{equation}\label{eq:dcritpushforwardpullbackcomposite}
p'_*(X', s')\rightarrow p_*(X, s)\times_B B'\rightarrow p_*(X, s).
\end{equation}
The first morphism in \eqref{eq:dcritpushforwardpullbackcomposite} is a base change of $X'\rightarrow X\times_B B'$ and the second morphism is a base change of $B'\rightarrow B$. This implies the second claim.
\end{proof}

We obtain the following extension of \cref{cor:dcritpushforwardschemes} to geometric morphisms.

\begin{corollary}\label{cor:dcritpushforwardstacks}
Consider geometric morphisms of stacks $X\rightarrow B\xrightarrow{p} S$, where $p$ is smooth, equipped with a relative d-critical structure $s\in\Gamma(X, \cS_{X/B})$. Then $p_*s$ is a relative d-critical structure on $p_*(X, s)\rightarrow S$.
\end{corollary}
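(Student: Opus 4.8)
The plan is to reduce to the case of morphisms of schemes, which is \cref{cor:dcritpushforwardschemes}, by presenting $X$ and $B$ by schemes and descending; the two tools used for this are \cref{prop:dcritpushforwardpullback} (compatibility of d-critical pushforwards with smooth pullbacks) and \cref{prop:dcritbasechangestacks} (descent of relative d-critical structures along smooth surjections).

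First I would reduce to the case that $S$ is a scheme. By \cref{def:dcriticalstack} it suffices to check, for every commutative square with $R'\to S'$ a morphism of schemes and $R'\to p_*(X,s)\times_S S'$ smooth, that the pullback of $p_*s$ to $R'$ is a relative d-critical structure on $R'\to S'$. Base changing the tower $X\to B\xrightarrow{p}S$ along $S'\to S$ produces $X\times_S S'\to B\times_S S'\xrightarrow{p_{S'}} S'$ with $p_{S'}$ smooth; by \cref{prop:dcritbasechangestacks}(1) the pullback $s_{S'}$ of $s$ is again a relative d-critical structure, and by \cref{prop:dcritpushforwardpullback}(1) the d-critical pushforward $p_{S',*}(X\times_S S', s_{S'})$ is canonically identified with $p_*(X,s)\times_S S'$, with $p_{S',*}s_{S'}$ the pullback of $p_*s$. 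So, granting the corollary when the base is a scheme, $p_{S',*}s_{S'}$ is a relative d-critical structure on $p_*(X,s)\times_S S'\to S'$, and pulling back along the smooth morphism $R'\to p_*(X,s)\times_S S'$ and invoking \cref{prop:dcritbasechangestacks}(1) once more gives the reduction. Hence assume $S$ is a scheme.

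Now choose a smooth surjection $B_0\to B$ with $B_0$ a scheme, and a smooth surjection $X_0\to X\times_B B_0$ with $X_0$ a scheme. Applying \cref{prop:dcritbasechangestacks}(1) twice, the pullbacks of $s$ give relative d-critical structures $\bar s$ on $X\times_B B_0\to B_0$ and $s_0$ on the morphism of schemes $X_0\to B_0$; moreover $B_0\to S$ is smooth since $p$ is. By \cref{cor:dcritpushforwardschemes} applied to $X_0\to B_0\to S$, the scheme $p_{0,*}(X_0,s_0)$ carries the relative d-critical structure $p_{0,*}s_0$ over $S$ (writing $p_0$ for $B_0\to S$). I then transport this along two comparison morphisms. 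Applying \cref{prop:dcritpushforwardpullback} to the tower map $(X_0\to B_0\to S)\to (X\times_B B_0\to B_0\to S)$ (only $X$ changes) shows, via part (1), that $p_{0,*}s_0$ is the pullback of $p_{0,*}\bar s$ and, via part (2), that $p_{0,*}(X_0,s_0)\to p_{0,*}(X\times_B B_0,\bar s)$ is smooth surjective; applying it to the tower map $(X\times_B B_0\to B_0\to S)\to (X\to B\to S)$ (only $B$ changes) identifies $p_{0,*}(X\times_B B_0,\bar s)$ with $p_*(X,s)\times_B B_0$, a smooth surjective cover of $p_*(X,s)$, with $p_{0,*}\bar s$ the pullback of $p_*s$. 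Two applications of \cref{prop:dcritbasechangestacks}(2), each with base morphism $\id_S$ (an \'etale cover) and covering morphism the smooth surjection just produced, then give successively that $p_{0,*}\bar s$ is a relative d-critical structure on $p_{0,*}(X\times_B B_0,\bar s)\to S$ and that $p_*s$ is a relative d-critical structure on $p_*(X,s)\to S$.

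The only real difficulty is bookkeeping: one has to arrange the tower maps so that the hypotheses of \cref{prop:dcritpushforwardpullback}(1)--(2) hold (in particular that the relevant square is Cartesian and that smoothness and surjectivity of the comparison morphisms are inherited) and then match up the pushed-forward sections across these squares. No geometric input beyond \cref{cor:dcritpushforwardschemes}, \cref{prop:dcritpushforwardpullback}, and \cref{prop:dcritbasechangestacks} is required.
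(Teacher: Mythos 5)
Your proof is correct and follows essentially the same route as the paper's: reduce to a scheme base via \cref{prop:dcritpushforwardpullback}(1), present $X\to B$ by a morphism of schemes $X_0\to B_0$ smooth-surjective over $X\to B$, apply \cref{cor:dcritpushforwardschemes}, and descend via \cref{prop:dcritpushforwardpullback}(2) and \cref{prop:dcritbasechangestacks}(2). The only cosmetic difference is that you factor the comparison into two steps (one changing the source, one changing the middle), while the paper applies \cref{prop:dcritpushforwardpullback}(2) once to the combined presentation.
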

\begin{proof}
By \cref{prop:dcritpushforwardpullback}(1) the claim reduces to the case $S$ a scheme. Then we can find a commutative diagram
\[
\xymatrix{
X' \ar[r] \ar[d] & B' \ar^{p'}[r] \ar[d] & S \ar@{=}[d] \\
X \ar[r] & B \ar^{p}[r] & S
}
\]
with $X'\rightarrow B'$ a morphism of schemes and $B'\rightarrow B$ and $X'\rightarrow X\times_B B'$ smooth surjective. By \cref{prop:dcritbasechangestacks}(1) $s'$ is a relative d-critical structure on $X'\rightarrow B'$. By \cref{cor:dcritpushforwardschemes} $p'_* s'$ is a relative d-critical structure on $p'_*(X', s')\rightarrow S$. By \cref{prop:dcritpushforwardpullback}(2) the morphism $p'_*(X', s')\rightarrow p_*(X, s)$ is smooth and surjective. Therefore, by descent (\cref{prop:dcritbasechangestacks}) we get that $p_*s$ is a relative d-critical structure on $p_*(X, s)\rightarrow S$.
\end{proof}

Let us now describe virtual canonical bundles of d-critical pushforwards.

\begin{proposition}\label{prop:canonicalpushforward}
Consider geometric morphisms of stacks $X\rightarrow B\xrightarrow{p} S$, where $p$ is smooth, equipped with a relative d-critical structure $s\in\Gamma(X, \cS_{X/B})$. There is an isomorphism
\[\Sigma_p\colon K^{\vir}_{X/B}|_{p_*(X, s)^{\red}}\otimes K^{\otimes 2}_{B/S}|_{p_*(X, s)^{\red}}\xrightarrow{\sim} K^{\vir}_{p_* (X, s)/S}\]
which satisfies the following properties:
\begin{enumerate}
    \item It is functorial for compositions: $\Sigma_{\id} = \id$ and given another smooth morphism $q\colon S\rightarrow T$ with $R=(q\circ p)_*(X, s)^{\red}$ the diagram
    \begin{equation}\label{eq-dcrit-push-canonical-composition}
    \xymatrix@C=1.5cm{
    K^{\vir}_{X/B}|_R\otimes K^{\otimes 2}_{B/S}|_R\otimes K^{\otimes 2}_{S/T}|_R \ar^-{\id\otimes i(\Delta)^2}[r] \ar^{\Sigma_p\otimes \id}[d] & K^{\vir}_{X/B}|_R\otimes K^{\otimes 2}_{B/T}|_R \ar^{\Sigma_{q\circ p}}[d] \\
    K^{\vir}_{p_*(X, s)/S}|_R\otimes K^{\otimes 2}_{S/T}|_R \ar^-{\Sigma_q}[r] & K^{\vir}_{(q\circ p)_*(X, s)/T}
    }
    \end{equation}
    commutes, where the top horizontal morphism is induced by the fiber sequence
    \[
    \Delta\colon p^*\bL_{S/T}\longrightarrow \bL_{B/T}\longrightarrow \bL_{B/S}.
    \]
    \item Consider a commutative diagram of stacks
    \begin{equation}\label{eq:pushforwardpullbackvirtual}
    \xymatrix{
    X' \ar[r] \ar[d] & B' \ar^{p'}[r] \ar[d] & S' \ar[d] \\
    X \ar[r] & B \ar^{p}[r] & S
    }
    \end{equation}
    with all morphisms geometric, equipped with a section $s\in\Gamma(X, \cS_{X/B})$ and let $s'\in\Gamma(X', \cS_{X'/B'})$ be the pullback. Assume that $p\colon B\rightarrow S$, $B'\rightarrow B\times_S S'$ and $X'\rightarrow X\times_B B'$ are smooth. Let $R'=p'_*(X', s')^{\red}$. Then the diagram
    \[
    \xymatrix{
    K^{\vir}_{X/B}|_{R'}\otimes K^{\otimes 2}_{B/S}|_{R'}\otimes K^{\otimes 2}_{X'/X\times_B B'}|_{R'}\otimes K^{\otimes 2}_{B'/B\times_S S'}|_{R'} \ar^-{\Sigma_p\otimes \id}[r] \ar^{\Upsilon_{X'\rightarrow X}\otimes \id}[d] & K^{\vir}_{p_*(X, s)/S}|_{R'}\otimes K^{\otimes 2}_{X'/X\times_B B'}|_{R'}\otimes K^{\otimes 2}_{B'/B\times_S S'}|_{R'} \ar^{\id\otimes i(\Delta_2)^2}[d] \\
    K^{\vir}_{X'/B'}|_{R'}\otimes K^{\otimes 2}_{B/S}|_{R'}\otimes K^{\otimes 2}_{B'/B\times_S S'}|_{R'} \ar^{\id\otimes i(\Delta_1)^2}[d] & K^{\vir}_{p_*(X, s)/S}|_{R'}\otimes K^{\otimes 2}_{p'_*(X', s')/p_*(X, s)\times_{S} S'}|_{R'} \ar^{\Upsilon_{p'_*(X', s')\rightarrow p_*(X, s)}}[d] \\
    K^{\vir}_{X'/B'}|_{R'}\otimes K^{\otimes 2}_{B'/S'}|_{R'} \ar^{\Sigma_{p'}}[r] & K^{\vir}_{p'_*(X', s')/S'},
    }
    \]
    commutes, where
    \[\Delta_1\colon \bL_{B/S}|_{B'}\longrightarrow \bL_{B'/S'}\longrightarrow \bL_{B'/B\times_S S'}\]
    and
    \[\Delta_2\colon \bL_{B'/B\times_S S'}|_{p'_*(X', s')}\longrightarrow \bL_{p'_*(X', s')/p_*(X, s)\times_{S} S'}\longrightarrow \bL_{X'/X\times_{B} B'}|_{p'_*(X', s')}.\]
    \item For a function $d\colon X\rightarrow \Z/2\Z$ the diagram
    \[
    \xymatrix{
    K^{\vir}_{X/B, s}|_{p_*(X, s)^{\red}}\otimes K^{\otimes 2}_{B/S}|_{p_*(X, s)^{\red}} \ar^-{\Sigma_p}[r] \ar^{R_d\otimes \id}[d] & K^{\vir}_{p_* (X, s)/S, p_* s} \ar^{R_{d+\dim(B/S)}}[d] \\
    K^{\vir}_{X/B, -s}|_{p_*(X, s)^{\red}}\otimes K^{\otimes 2}_{B/S}|_{p_*(X, s)^{\red}} \ar^-{\Sigma_p}[r] & K^{\vir}_{p_* (X, s)/S, -p_*s}
    }
    \]
    commutes.
    \item $\Sigma_p$ is compatible with products.
\end{enumerate}
\end{proposition}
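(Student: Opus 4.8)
The plan is to construct $\Sigma_p$ first when $X,B,S$ are all schemes, then to extend it to stacks by the same right Kan extension procedure used in the proof of \cref{prop:virtualcanonicalstacks}, and finally to deduce properties (1), (3), (4) by reducing each to a computation on a single critical chart.

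For the scheme case, I would first note that a critical morphism $\Phi\colon (U,f)\rightarrow (V,g)$ of LG pairs over $B$ is automatically a critical morphism of LG pairs over $S$: by \cref{prop:Critfunctoriality} (and the remark following it) we have $\Phi^\ast s_g = s_f$, hence the moment maps satisfy $\mu_{s_g}\circ\Phi = \mu_{s_f}$, so $\Phi$ induces a map on the fibre products of \cref{def:dcritpushforwardscheme}, which under \cref{prop:pushforwardcriticallocus} is precisely the desired restriction $\Crit_{U/S}(f)\rightarrow \Crit_{V/S}(g)$; moreover the local model of \cref{prop:criticalembeddinglocal} over $B$ is simultaneously a local model over $S$, so the quadratic form $q_\Phi$ on $\rN_{U/V}$ is unchanged. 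Then, given a critical chart $(U,f,u)$ for $(X\rightarrow B, s)$, \cref{prop:pushforwardcriticallocus} exhibits $(U,f)$ as a critical chart for $(p_\ast(X,s)\rightarrow S, p_\ast s)$, and I would define $\Sigma_p$ on it as the square of the determinant-line isomorphism $K_{U/B}\otimes (U\rightarrow B)^\ast K_{B/S}\xrightarrow{\sim} K_{U/S}$ coming from the fibre sequence $p^\ast\bL_{B/S}|_U\rightarrow \bL_{U/S}\rightarrow \bL_{U/B}$. Compatibility with a critical morphism $\Phi$ holds because the gluing isomorphisms $J_\Phi$ of \cref{thm:virtualcanonicalschemes} for $K^{\vir}_{X/B}$ and for $K^{\vir}_{p_\ast(X,s)/S}$ both involve the same factor $\vol^2_{q_\Phi}$, while the determinant-line contributions are related by the double short exact sequence linking $\Omega^1_{-/B}$, $\Omega^1_{-/S}$ and $\rN^\vee_{U/V}$; invoking \cref{cor:criticaldescentsets} for the sheaf of isomorphisms between the two line bundles then produces the global $\Sigma_p$.

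To pass to stacks, I would set up, exactly as in the proof of \cref{prop:virtualcanonicalstacks}, two $\Gpd$-valued functors out of the category of composable pairs $X\rightarrow B\xrightarrow{p} S$ with $p$ smooth (with the evident smoothness conditions on the three components of a morphism of such pairs), restricted along $\DCrit$: one sends a pair to $K^{\vir}_{X/B}|_{p_\ast(X,s)^{\red}}\otimes K^{\otimes 2}_{B/S}$, the other to $K^{\vir}_{p_\ast(X,s)/S}$. The scheme-level construction together with \cref{cor:virtualcanonicalbasechange} and \cref{prop:dcritpushforwardpullback} makes $\Sigma$ a natural isomorphism between these functors on schemes; both satisfy \'etale descent, so \eqref{eq:ShvStkrel} extends $\Sigma$ to stacks, and property (2)---which on schemes is part of this naturality datum---extends with it. At the scheme level property (2) is itself checked on a chart $(U,f)$: all four maps in the square are determinant-line isomorphisms and the square is an instance of the octahedral compatibility of determinant lines, i.e.\ \cite[(2.4)]{KPS} applied to the evident $3\times 3$ diagram of cotangent complexes, and since it is a statement about line bundles on a reduced scheme it suffices to verify it there.

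Properties (1), (3), (4) are each pointwise on a reduced scheme, hence reduce to schemes and then to a chart $(U,f)$. For (1), $\Sigma_p$, $\Sigma_q$ and $\Sigma_{q\circ p}$ are the determinant-line isomorphisms of the three short exact sequences attached to $U\rightarrow B\rightarrow S\rightarrow T$, and the diagram \eqref{eq-dcrit-push-canonical-composition} is once more \cite[(2.4)]{KPS} for the associated double short exact sequence. For (3), on a chart $R_d$ acts by $(-1)^{\dim(U/B)+d}$ on the source and $R_{d+\dim(B/S)}$ by $(-1)^{\dim(U/S)+d+\dim(B/S)}$ on the target; since $\dim(U/S)=\dim(U/B)+\dim(B/S)$ these signs coincide and $\Sigma_p$ itself carries no sign, so the square commutes. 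For (4), by \cref{prop:criticalproduct} a chart for a product is a product of charts, $\bL_{B_1\times B_2/S_1\times S_2}\cong \bL_{B_1/S_1}\boxplus \bL_{B_2/S_2}$, and determinant-line isomorphisms are compatible with direct sums, giving compatibility of $\Sigma$ with \eqref{eq:Kproductstacks}. The main obstacle I anticipate is not any individual computation but the stack-level bookkeeping: one must choose the index $\infty$-category of composable pairs and the smoothness conditions on its morphisms carefully enough that the two functors above fall within the scope of \eqref{eq:ShvStkrel} and that properties (2) and (4) are literally encoded as morphisms and naturality in that category---precisely the layer of bookkeeping that already dominates the proof of \cref{prop:virtualcanonicalstacks}, here with one extra slot.
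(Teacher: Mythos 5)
The proposal is correct and follows essentially the same approach as the paper: define $\Sigma_p$ on each critical chart $(U,f)$ as the square of the determinant-line isomorphism attached to the exact sequence $0\to\Omega^1_{B/S}|_U\to\Omega^1_{U/S}\to\Omega^1_{U/B}\to 0$, glue via \cref{cor:criticaldescentsets} using the observation that a $B$-critical morphism of charts is automatically an $S$-critical morphism with the same $q_\Phi$, reduce properties to determinant-line identities on charts, and then extend to stacks by the right Kan extension machinery of \cref{prop:virtualcanonicalstacks}. The one place where you are lighter on detail than the paper is property (2): the paper does not verify it in a single stroke from a $3\times 3$ diagram but rather factors the general diagram through the Cartesian-plus-vertical decomposition and handles the case $B'=B$, $S'=S$ separately, invoking \cref{thm:criticallocussmoothdescent}(1) to produce a smooth morphism of critical charts for $X'\to X$ before the determinant-line computation can even be stated — your phrase "checked on a chart $(U,f)$" elides the need to first choose charts on $X$ and $X'$ compatible with the smooth map between them, which is a nontrivial step. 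That said, since all the maps involved are isomorphisms of line bundles on a reduced scheme and hence pointwise-determined, your reduction to a pointwise determinant-line identity is legitimate once that chart-compatibility is supplied, so this is a matter of exposition rather than a gap.
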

\begin{proof}
As in the proof of \cref{prop:virtualcanonicalstacks} it is enough to construct these isomorphisms for morphisms of schemes.

Let $(U, f, u)$ be a critical chart for $X$, where $(U, f)$ is viewed as an LG pair over $B$. Let $\overline{u}\colon \Crit_{U/S}(f)\cong p_*(\Crit_{U/B}(f), s_f)\xrightarrow{u} p_*X$, where the first isomorphism is provided by \cref{prop:smoothcriticallocus}. Then $(U, f, \overline{u})$, where $(U, f)$ is viewed as an LG pair over $S$, is a critical chart for $p_*(X, s)$. A critical morphism $\Phi\colon (U, f, u)\rightarrow (V, g, v)$ of critical charts on $X$ gives rise to a critical morphism $\Phi\colon (U, f, \overline{u})\rightarrow (V, g, \overline{v})$ of critical charts on $p_*(X, s)$. By \cref{cor:dcritpushforwardschemes} this collection of critical charts covers $p_*(X, s)$. By \cref{cor:criticaldescentsets} it is thus sufficient to construct the isomorphism $\Sigma_p$ restricted to each such critical chart and check an equality of these isomorphisms for every critical morphism of critical charts on $X$.

Let $(U, f, u)$ be a critical chart for $X$ and $(U, f, \overline{u})$ the corresponding critical chart for $p_*(X, s)$. By definition we have canonical isomorphisms
\[u^* K^{\vir}_{X/B}\cong K^{\otimes 2}_{U/B}|_{\Crit_{U/B}(f)^{\red}},\qquad \overline{u}^* K^{\vir}_{p_*(X, s)/S}\cong K^{\otimes 2}_{U/S}|_{\Crit_{U/S}(f)^{\red}}.\]
We define $\Sigma_p|_{\Crit_{U/S}(f)^{\red}}$ to be the isomorphism
\[i(\Delta_U)^2\colon K^{\otimes 2}_{U/B}|_{\Crit_{U/S}(f)^{\red}}\otimes K^{\otimes 2}_{B/S}|_{\Crit_{U/S}(f)^{\red}}\xrightarrow{\sim} K^{\otimes 2}_{U/S}|_{\Crit_{U/S}(f)^{\red}}\]
associated to the exact sequence
\[\Delta_U\colon 0\longrightarrow \Omega^1_{B/S}|_U\longrightarrow \Omega^1_{U/S}\longrightarrow \Omega^1_{U/B}\longrightarrow 0.\]

Now consider a critical morphism $\Phi\colon (U, f, u)\rightarrow (V, g, v)$ of critical charts on $X$. Recall that the normal bundle $\rN_{U/V}|_{\Crit_{U/B}(f)}$ carries a nondegenerate quadratic form $q_\Phi$. Let $R=\Crit_{U/S}(f)^{\red}$. Consider the diagram
\begin{equation}\label{eq:Lambdacharts}
\xymatrix@C=1.5cm{
K^{\otimes 2}_{B/S}|_R\otimes K^{\otimes 2}_{U/B}|_R \ar^-{\id\otimes \vol^2_{q_\Phi}}[r] \ar^{i(\Delta_U)^2}[d] & K^{\otimes 2}_{B/S}|_R\otimes K^{\otimes 2}_{U/B}|_R\otimes (\det \rN^\vee_{U/V})^{\otimes 2}|_R \ar^-{\id\otimes i(\Delta_1)^2}[r] \ar^{i(\Delta_U)^2\otimes \id}[d] & K^{\otimes 2}_{B/S}|_R \otimes K^{\otimes 2}_{V/B}|_R \ar^{i(\Delta_V)^2}[d] \\
K^{\otimes 2}_{U/S}|_R \ar^-{\id\otimes \vol^2_{q_\Phi}}[r] & K^{\otimes 2}_{U/S}|_R\otimes (\det \rN^\vee_{U/V})^{\otimes 2}|_R \ar^-{i(\Delta_2)^2}[r] & K^{\otimes 2}_{V/S}|_R,
}
\end{equation}
where the individual isomorphisms are induced by the following double short exact sequence:
\[
\begin{aligned}
\begin{tikzcd}
    &
    \begin{array}{c} \substack{\displaystyle{\phantom{\Delta} } \\ \displaystyle{\phantom{\rotatebox{90}{\mbox{:}}}} \\ \displaystyle{0}} \end{array} &
    \begin{array}{c} \substack{\displaystyle{\ \Delta_V } \\ \displaystyle{  \rotatebox{90}{\mbox{:}}} \\  \displaystyle{0}} \end{array} &
    \begin{array}{c} \substack{\displaystyle{\Delta_U} \\ \displaystyle{\rotatebox{90}{\mbox{:}}} \\  \displaystyle{0}}\end{array} \\
    \phantom{\Delta_0:} 0 & 0 & \Omega^1_{B/S}|_U & \Omega^1_{B/S}|_U & 0 \\
    \Delta_2 \colon 0 & \rN^\vee_{U/V} & \Omega^1_{V/S}|_U & \Omega^1_{U/S} & 0 \\
    \Delta_1 \colon 0 & \rN^\vee_{U/V} & \Omega^1_{V/B}|_U & \Omega^1_{U/B} & 0 \\
    & 0 & 0 & 0 & 
    \arrow[from=2-1, to=2-2]
    \arrow[from=2-2, to=2-3]
    \arrow[equal, from=2-3, to=2-4]
    \arrow[from=2-4, to=2-5]
    \arrow[from=3-1, to=3-2]
    \arrow[from=3-2, to=3-3]
    \arrow[from=3-3, to=3-4]
    \arrow[from=3-4, to=3-5]
    \arrow[from=4-1, to=4-2]
    \arrow[from=4-2, to=4-3]
    \arrow[from=4-3, to=4-4]
    \arrow[from=4-4, to=4-5]
    \arrow[from=1-2, to=2-2]
    \arrow[from=2-2, to=3-2]
    \arrow[equal, from=3-2, to=4-2]
    \arrow[from=4-2, to=5-2]
    \arrow[from=1-3, to=2-3]
    \arrow[from=2-3, to=3-3]
    \arrow[from=3-3, to=4-3]
    \arrow[from=4-3, to=5-3]
    \arrow[from=1-4, to=2-4]
    \arrow[from=2-4, to=3-4]
    \arrow[from=3-4, to=4-4]
    \arrow[from=4-4, to=5-4]
\end{tikzcd}
\end{aligned}
\]

The top horizontal morphism in \eqref{eq:Lambdacharts} gives an isomorphism of the two models of $K^{\otimes 2}_{B/S}|_R\otimes K^{\vir}_{X/B}|_R$ in the critical charts $U$ and $V$. The bottom horizontal morphism gives an isomorphism of the two models of $K^{\vir}_{p_*(X, s)/S}$ in the critical charts $U$ and $V$. The outer vertical morphisms are the local models of $\Sigma_p$. The square on the left commutes by naturality. The commutativity of the square on the right follows from the corresponding property of determinant lines, see \cite[Corollary 2.3]{KPS}.

Functoriality of $\Sigma$ with respect to compositions, property (1), follows from a commutativity of the diagram of determinant lines as in \cite[Lemma 2.4]{KPS} associated to the double short exact sequence
\[
\begin{aligned}
\begin{tikzcd}
    & 0 & 0 & 0 \\
    0 & \Omega^1_{S/T}|_U & \Omega^1_{S/T}|_U & 0 & 0 \\
    0 & \Omega^1_{B/T}|_U & \Omega^1_{U/T} & \Omega^1_{U/B} & 0 \\
    0 & \Omega^1_{B/S}|_U & \Omega^1_{U/S} & \Omega^1_{U/B} & 0 \\
    & 0 & 0 & 0 & 
    \arrow[from=2-1, to=2-2]
    \arrow[equal, from=2-2, to=2-3]
    \arrow[from=2-3, to=2-4]
    \arrow[from=2-4, to=2-5]
    \arrow[from=3-1, to=3-2]
    \arrow[from=3-2, to=3-3]
    \arrow[from=3-3, to=3-4]
    \arrow[from=3-4, to=3-5]
    \arrow[from=4-1, to=4-2]
    \arrow[from=4-2, to=4-3]
    \arrow[from=4-3, to=4-4]
    \arrow[from=4-4, to=4-5]
    \arrow[from=1-2, to=2-2]
    \arrow[from=2-2, to=3-2]
    \arrow[from=3-2, to=4-2]
    \arrow[from=4-2, to=5-2]
    \arrow[from=1-3, to=2-3]
    \arrow[from=2-3, to=3-3]
    \arrow[from=3-3, to=4-3]
    \arrow[from=4-3, to=5-3]
    \arrow[from=1-4, to=2-4]
    \arrow[from=2-4, to=3-4]
    \arrow[equal, from=3-4, to=4-4]
    \arrow[from=4-4, to=5-4]
\end{tikzcd}
\end{aligned}
\]

Let us now show property (2). We can factor \eqref{eq:pushforwardpullbackvirtual} as
\[
\xymatrix{
X' \ar[d] && \\
X\times_B B' \ar[r] \ar[d] & B' \ar[d] & \\
X\times_{S} S' \ar[r] \ar[d] & B\times_S S' \ar[d] \ar[r] & S' \ar[d] \\
X \ar[r] & B \ar[r] & S,
}
\]
where all squares are Cartesian. Using the functoriality with respect to compositions of $\Sigma$ (property (1)) and $\Upsilon$ (\cref{prop:virtualcanonicalstacks}(3)), property (2) for a general diagram \eqref{eq:pushforwardpullbackvirtual} follows from the following particular cases:
\begin{enumerate}
    \item Both squares in \eqref{eq:pushforwardpullbackvirtual} are Cartesian. Let $(U, f, u)$ be a critical chart on $X$ and $(U', f', u')$ its base change along $B'\rightarrow B$ which defines a critical chart of $X'$. Let $R'=\Crit_{U'/S'}(f')^{\red}$. Then we have to prove a commutativity of the diagram
    \[
    \xymatrix{
    K^{\otimes 2}_{U/B}|_{R'}\otimes K^{\otimes 2}_{B/S}|_{R'} \ar^-{i(\Delta_U)^2}[r] \ar^{\sim}[d] & K^{\otimes 2}_{U/S}|_{R'} \ar^{\sim}[d] \\
    K^{\otimes 2}_{U'/B'}|_{R'}\otimes K^{\otimes 2}_{B'/S'}|_{R'} \ar^-{i(\Delta_{U'})^2}[r] & K^{\otimes 2}_{U'/S'}|_{R'}
    }
    \]
    which follows from the naturality of the isomorphisms $i(\Delta)$.
    \item In \eqref{eq:pushforwardpullbackvirtual} the left square is Cartesian and $B=S=S'$. As before, let $(U, f, u)$ be a critical chart on $X$ and $(U', f', u')$ its base change along $B'\rightarrow B$ which defines a critical chart of $X'$. Let $(U, f, \overline{u})$ be the corresponding critical chart of $p_*(X, s)$, where $(U, f)$ is viewed as an LG pair over $B$ and $(U', f', \overline{u}')$ be the corresponding critical chart of $p'_*(X', s')$, where $(U', f')$ is viewed as an LG pair over $B$. Let $R'=\Crit_{U'/B}(f')^{\red}$. Then we have to prove a commutativity of the diagram
    \[
    \xymatrix{
    K^{\otimes 2}_{U/B}|_{R'}\otimes K^{\otimes 2}_{B'/B}|_{R'} \ar@{=}[r] \ar^{\sim}[dd] & K^{\otimes 2}_{U/B}|_{R'} \otimes K^{\otimes 2}_{B'/B}|_{R'} \ar^{\sim}[d] \\
    & K^{\otimes 2}_{U/B}|_{R'}\otimes K^{\otimes 2}_{U'/U}|_{R'} \ar^{i(\Delta'_U)^2}[d] \\
    K^{\otimes 2}_{U'/B'}|_{R'}\otimes K^{\otimes 2}_{B'/B}|_{R'} \ar^-{i(\Delta_{U'})^2}[r] & K^{\otimes 2}_{U'/B}|_{R'},
    }
    \]
    where
    \[\Delta'_U\colon 0\longrightarrow \Omega^1_{U/B}|_{U'}\longrightarrow \Omega^1_{U'/B}\longrightarrow \Omega^1_{U'/U}\longrightarrow 0.\]
    But $\Delta'_U$ and $\Delta_{U'}$ are the two short exact sequences associated to the direct sum decomposition $\Omega^1_{U'/B}\cong \Omega^1_{U/B}|_{U'}\oplus \Omega^1_{B'/B}|_{U'}$, so the corresponding isomorphisms $i(\Delta'_U)$ and $i(\Delta_{U'})$ agree.
    \item $B'=B$ and $S'=S$. By \cref{thm:criticallocussmoothdescent}(1) we may find a smooth morphism $\tilde{\pi}\colon U'\rightarrow U$ of smooth $B$-schemes, a function $f\colon U\rightarrow \bA^1$ with $f'=\tilde{\pi}^\ast f$ and a commutative diagram
    \[
    \xymatrix{
    X' \ar[d] & \Crit_{U'/B}(f') \ar_-{u'}[l] \ar[r] \ar[d] & U' \ar^{\tilde{\pi}}[d] \\
    X & \Crit_{U/B}(f) \ar_-{u}[l] \ar[r] & U
    }
    \]
    so that $(U, f, u)$ is a critical chart for $X$ and $(U', f', u')$ is a critical chart for $X'$ and, moreover, we may cover $X$ and $X'$ by critical charts of this form. Let $R'=\Crit_{U'/S}(f')^{\red}$. Then we have to prove a commutativity of the diagram
    \[
    \xymatrix@C=1.5cm{
    K^{\otimes 2}_{B/S}|_{R'}\otimes K^{\otimes 2}_{U/B}|_{R'}\otimes K^{\otimes 2}_{U'/U}|_{R'} \ar^-{i(\Delta_U)^2\otimes \id}[r] \ar^{\id\otimes i(\Delta_1)^2}[d] & K^{\otimes 2}_{U/S}|_{R'} \otimes K^{\otimes 2}_{U'/U}|_{R'} \ar^{i(\Delta_2)^2}[d] \\
    K^{\otimes 2}_{B/S}|_{R'} \otimes K^{\otimes 2}_{U'/B}|_{R'} \ar^{i(\Delta_{U'})^2}[r] & K^{\otimes 2}_{U'/S}|_{R'},
    }
    \]
    where $\Delta_1$ and $\Delta_2$ are the short exact sequences
    \[
    \begin{aligned}
    \begin{tikzcd}
    &
    \begin{array}{c} \substack{\displaystyle{\Delta_U} \\ \displaystyle{\rotatebox{90}{\mbox{:}}} \\ \displaystyle{0}} \end{array} &
    \begin{array}{c} \substack{\displaystyle{\Delta_{U'}} \\ \displaystyle{  \rotatebox{90}{\mbox{:}}} \\  \displaystyle{0}} \end{array} &
    \begin{array}{c} \substack{\displaystyle{\phantom{\Delta}} \\ \displaystyle{\phantom{\rotatebox{90}{\mbox{:}}}} \\  \displaystyle{0}}\end{array} \\
    \phantom{\Delta_0:} 0 & \Omega^1_{B/S}|_{U'} & \Omega^1_{B/S}|_{U'} & 0 & 0 \\
    \Delta_2\colon 0 & \Omega^1_{U/S}|_{U'} & \Omega^1_{U'/S} & \Omega^1_{U'/U} & 0 \\
    \Delta_1\colon 0 & \Omega^1_{U/B}|_{U'} & \Omega^1_{U'/B} & \Omega^1_{U'/U} & 0 \\
    & 0 & 0 & 0 & 
    \arrow[from=2-1, to=2-2]
    \arrow[equal, from=2-2, to=2-3]
    \arrow[from=2-3, to=2-4]
    \arrow[from=2-4, to=2-5]
    \arrow[from=3-1, to=3-2]
    \arrow[from=3-2, to=3-3]
    \arrow[from=3-3, to=3-4]
    \arrow[from=3-4, to=3-5]
    \arrow[from=4-1, to=4-2]
    \arrow[from=4-2, to=4-3]
    \arrow[from=4-3, to=4-4]
    \arrow[from=4-4, to=4-5]
    \arrow[from=1-2, to=2-2]
    \arrow[from=2-2, to=3-2]
    \arrow[from=3-2, to=4-2]
    \arrow[from=4-2, to=5-2]
    \arrow[from=1-3, to=2-3]
    \arrow[from=2-3, to=3-3]
    \arrow[from=3-3, to=4-3]
    \arrow[from=4-3, to=5-3]
    \arrow[from=1-4, to=2-4]
    \arrow[from=2-4, to=3-4]
    \arrow[equal, from=3-4, to=4-4]
    \arrow[from=4-4, to=5-4]
    \end{tikzcd}
    \end{aligned}
    \]
    The commutativity of the diagram follows from a commutativity of the diagram of determinant lines as in \cite[Lemma 2.4]{KPS}.
\end{enumerate}
Properties (3) and (4) are straightforward.
\end{proof}

\subsection{Orientations for relative d-critical structures}

Using the virtual canonical bundle we define orientations for relative d-critical structures.

\begin{definition}
Let $X\rightarrow B$ be a geometric morphism of stacks equipped with a relative d-critical structure $s$. An \defterm{orientation} of $(X\rightarrow B,s)$ is a pair $(\cL, o)$ consisting of a $\Z/2\Z$-graded line bundle $\cL$ together with an isomorphism $o\colon \cL^{\otimes 2}\cong K^{\vir}_{X/B}$.
\end{definition}

For simplicity of notation we will often denote an orientation simply by $o$ with the graded line bundle $\cL$ being implicit.

\begin{remark}
In the above definition we consider graded orientations as in \cite{KPS}. In \cite{BBDJS} the authors consider an analogous notion where $\cL$ is an ungraded line bundle.
\end{remark}

There is a natural notion of isomorphisms of orientations: an isomorphism $(\cL_1, o_1)\rightarrow (\cL_2, o_2)$ is given by an isomorphism $f\colon \cL_1\rightarrow \cL_2$ of graded line bundles such that the diagram
\[
\xymatrix{
\cL_1^{\otimes 2} \ar_{o_1}[dr] \ar^{f^{\otimes 2}}[rr] && \cL_2^{\otimes 2} \ar^{o_2}[dl] \\
& K^{\vir}_{X/B}
}
\]
commutes.

We have the following functoriality of orientations:
\begin{enumerate}
    \item Consider a commutative diagram of stacks
    \[
    \xymatrix{
    X'\ar[r] \ar^{\pi'}[d] \ar^{\overline{p}}[r] & X \ar^{\pi}[d] \\
    B' \ar^{p}[r] & B
    }
    \]
    with $X'\rightarrow X\times_B B'$ a smooth morphism. Consider a relative d-critical structure $s$ on $X\rightarrow B$ and its pullback $s'$ to $X'\rightarrow B'$. For an orientation $(\cL, o)$ there is an orientation $(\cL|_{(X')^{\red}}\otimes K_{X'/X\times_B B'}|_{(X')^{\red}}, \overline{p}^\ast o)$, of $(X'\rightarrow B', s')$, where
    \begin{align*}
    \overline{p}^\ast o\colon (\cL|_{(X')^{\red}}\otimes K_{X'/X\times_B B'}|_{(X')^{\red}})^{\otimes 2}&\xrightarrow{\sim} \cL^{\otimes 2}|_{(X')^{\red}}\otimes K^{\otimes 2}_{X'/X\times_B B'}|_{(X')^{\red}}\\
    &\xrightarrow{o\otimes \id} K^{\vir}_{X/B}|_{(X')^{\red}}\otimes K^{\otimes 2}_{X'/X\times_B B'}|_{(X')^{\red}}\\
    &\xrightarrow{\Upsilon_{X'\rightarrow X}} K^{\vir}_{X'/B'}.
    \end{align*}
    For a composable pair
    \[
    \xymatrix{
    X''\ar[d] \ar^{\overline{q}}[r] & X'\ar[r] \ar^{\pi'}[d] \ar^{\overline{p}}[r] & X \ar^{\pi}[d] \\
    B'' \ar^{q}[r] & B' \ar^{p}[r] & B
    }
    \]
    of commutative diagram of stacks as above the natural isomorphism $\overline{q}^* K_{X'/X\times_B B'}\otimes K_{X''/X'\times_{B'} B''}\cong K_{X''/X\times_B B''}$ induces an isomorphism of orientations
    \begin{equation}\label{eq:orientationpullbackassociative}
    \overline{q}^*\overline{p}^* o\cong (\overline{q}\circ\overline{p})^* o
    \end{equation}
    using \cref{prop:virtualcanonicalstacks}(3).
    \item Consider geometric morphisms of stacks $X\rightarrow B\xrightarrow{p} S$, where $p$ is smooth and $X\rightarrow B$ is equipped with a relative d-critical structure $s$ and an orientation $(\cL, o)$. Then there is an orientation $(\cL|_{p_*(X, s)^{\red}}\otimes K_{B/S}|_{p_*(X, s)^{\red}}, p_*o)$, where
    \begin{align*}
    p_* o\colon (\cL|_{p_*(X, s)^{\red}}\otimes K_{B/S}|_{p_*(X, s)^{\red}})^{\otimes 2} &\xrightarrow{\sim} \cL^{\otimes 2}|_{p_*(X, s)^{\red}}\otimes K^{\otimes 2}_{B/S}|_{p_*(X, s)^{\red}} \\
    &\xrightarrow{o\otimes \id} K^{\vir}_{X/B}|_{p_*(X, s)^{\red}}\otimes K^{\otimes 2}_{B/S}|_{p_*(X, s)^{\red}} \\
    &\xrightarrow{\Sigma_p} K^{\vir}_{p_*(X, s)/S}.
    \end{align*}
    Given another smooth morphism $q\colon S\rightarrow T$ we obtain an isomorphism
    \begin{equation}\label{eq:orientationpushforwardassociative}
    q_*p_*o\cong (q\circ p)_* o
    \end{equation}
    using \cref{prop:canonicalpushforward}(1). For a commutative diagram of stacks
    \begin{equation}
    \xymatrix{
    X' \ar[r] \ar^{q}[d] & B' \ar^{p'}[r] \ar[d] & S' \ar[d] \\
    X \ar[r] & B \ar^{p}[r] & S
    }
    \end{equation}
    with $B\rightarrow S, B'\rightarrow B\times_S S'$ and $X'\rightarrow X\times_{B} B'$ smooth and relative d-critical structure $s$ on $X\rightarrow B$ and its pullback $s'$ on $X'\rightarrow B'$ we have a diagram
    \[
    \xymatrix{
    p'_*(X', s') \ar[r] \ar^{\overline{q}}[d] & S' \ar[d] \\
    p_*(X, s) \ar[r] & S.
    }
    \]
    Then using \cref{prop:canonicalpushforward}(2) we obtain an isomorphism
    \begin{equation}\label{eq:orientationpushforwardpullback}
    \overline{q}^* p_*o \cong p'_*q^*o.
    \end{equation}
    \item For a geometric morphism of stacks $X\rightarrow B$ equipped with a relative d-critical structure $s$ and an orientation $(\cL, o)$ of parity $d\colon X\rightarrow \Z/2\Z$ there is an orientation $(\cL, \overline{o})$ of $(X\rightarrow B, -s)$, where
    \begin{equation}\label{eq:orientationreversedcritical}
    \overline{o}\colon \cL^{\otimes 2}\xrightarrow{o} K^{\vir}_{X/B, s}\xrightarrow{R_d} K^{\vir}_{X/B, -s}.
    \end{equation}
    Since $R_d$ squares to the identity, the identity morphism of graded line bundles induces an isomorphism of orientations
    \begin{equation}\label{eq:orientationdoublereverse}
    \overline{(\overline{o})}\cong o.
    \end{equation}
    For a commutative diagram of stacks
    \[
    \xymatrix{
    X'\ar[r] \ar^{\pi'}[d] \ar^{\overline{p}}[r] & X \ar^{\pi}[d] \\
    B' \ar^{p}[r] & B
    }
    \]
    with $X'\rightarrow X\times_B B'$ smooth, a relative d-critical structure on $X\rightarrow B$ and an orientation $o$ and its pullback to $X'\rightarrow B'$, using \cref{prop:virtualcanonicalstacks}(6) we obtain an isomorphism
    \begin{equation}\label{eq:orientationpullbackreverse}
    \overline{\overline{p}^* o}\cong \overline{p}^*\overline{o}.
    \end{equation}
    For morphisms $X\rightarrow B\xrightarrow{p} S$, where $p$ is smooth and $X\rightarrow B$ is equipped with a relative d-critical structure $s$ and an orientation $o$, using \cref{prop:canonicalpushforward}(3) we obtain an isomorphism
    \begin{equation}\label{eq:orientationpushforwardreverse}
    \overline{p_* o}\cong p_*\overline{o}.
    \end{equation}
    \item For a pair $X_1\rightarrow B_1, X_2\rightarrow B_2$ of geometric morphisms of stacks equipped with relative d-critical structures $s_1,s_2$ and orientations $(\cL_1, o_1), (\cL_2, o_2)$ there is an orientation $(\cL_1\boxtimes \cL_2, o_1\boxtimes o_2)$ of $X_1\times X_2\rightarrow B_1\times B_2$ equipped with the relative d-critical structure $s_1\boxplus s_2$, where
    \[o_1\boxtimes o_2\colon (\cL_1\boxtimes \cL_2)^{\otimes 2}\xrightarrow{\sim} (\cL_1)^{\otimes 2}\boxtimes (\cL_2)^{\otimes 2}\xrightarrow{o_1, o_2} K^{\vir}_{X_1/B_1}\boxtimes K^{\vir}_{X_2/B_2}\xrightarrow{\eqref{eq:Kproductstacks}} K^{\vir}_{X_1\times X_2/B_1\times B_2}.\]
    This construction is unital and associative in the obvious sense. It is also commutative in the following sense: for the swapping isomorphism $\sigma\colon X_2\times X_1\rightarrow X_1\times X_2$ using the symmetry of \eqref{eq:Kproductstacks} we obtain an isomorphism
    \begin{equation}\label{eq:orientationbraiding}
    \sigma^*(o_1\boxtimes o_2)\cong o_2\boxtimes o_1,
    \end{equation}
    which on the level of underlying graded line bundles is the usual swapping isomorphism $\sigma^*(\cL_1\boxtimes \cL_2)\cong \cL_2\boxtimes \cL_1$ of ungraded line bundles multiplied by the Koszul sign $(-1)^{\deg(\cL_1)\deg(\cL_2)}$. For $i=1, 2$ let
    \[
    \xymatrix{
    X'_i\ar[r] \ar^{\pi'_i}[d] \ar^{\overline{p}_i}[r] & X_i \ar^{\pi_i}[d] \\
    B'_i \ar^{p_i}[r] & B_i
    }
    \]
    be a commutative diagram of stacks with $X'_i\rightarrow X_i\times_{B_i} B'_i$ smooth, equipped with an oriented relative d-critical structure $(s_i, o_i)$ on $X_i\rightarrow B_i$ and its pullback on $X'_i\rightarrow B'_i$. Then using the compatibility of the isomorphism \eqref{eq:Kproductstacks} with $\Upsilon_{X'_i\rightarrow X_i}$ we obtain an isomorphism
    \begin{equation}\label{eq:orientationproductpullback}
    (\overline{p}_1\times \overline{p}_2)^*(o_1\boxtimes o_2)\cong (\overline{p}_1^*o_1)\boxtimes (\overline{p}_2^*o_2)
    \end{equation}
    given by the obvious isomorphism involving the Koszul sign. For $i=1, 2$ let $X_i\rightarrow B_i\xrightarrow{p_i} S_i$ be morphisms of stacks, where $p$ is smooth and $X_i\rightarrow B_i$ is equipped with an oriented relative d-critical structure $(s_i, o_i)$. Then using the compatibility of $\Sigma_{p_i}$ with products we obtain an isomorphism
    \begin{equation}\label{eq:orientationproductpushforward}
    (p_1\times p_2)_*(o_1\boxtimes o_2)\cong (p_{1, *}o_1)\boxtimes (p_{2, *}o_2)
    \end{equation}
    again given by the obvious isomorphism involving the Koszul sign.
\end{enumerate}

Given a geometric morphism $X \to B$, we denote by $\DCrit^{\ori}(X/B)$ the groupoid of oriented relative d-critical structures $(s, \cL, o)$ on $X \to B$.
The assignment $(X \to B) \mapsto \DCrit^{\ori}(X/B)$ determines a functor
\[\DCrit^{\ori} \colon \Fun(\Delta^1, \Sch)^{\op}_{0\smooth}\longrightarrow \Gpd.\]
Since $\DCrit(-)$ and the stack of graded line bundles satisfy \'etale descent, so does $\DCrit^{\ori}(-)$.

We will use canonical orientations of relative critical loci defined as follows:
\begin{enumerate}
    \item For the identity morphism $Y\rightarrow Y$ of stacks equipped with a relative d-critical structure specified by a function $f\colon Y\rightarrow \bA^1$ there is a canonical trivialization $K^{\vir}_{Y/Y}\cong \cO_{Y^{\red}}$ which corresponds to the identity isomorphism under $\kappa_y$ for every $y\in Y$. We define the \defterm{canonical orientation} $o^{\can}_{Y/Y}$ of $(Y\xrightarrow{\id} Y, f)$ to be the even line bundle $\cO_{Y^{\red}}$ whose square is equipped with the above isomorphism to $K^{\vir}_{Y/Y}$.
    \item More generally, suppose $p\colon Y\rightarrow B$ is a smooth geometric morphism of stacks together with a function $f\colon Y\rightarrow \bA^1$. The relative critical locus $\Crit_{Y/B}(f)\rightarrow B$ is a d-critical pushforward of $(Y\xrightarrow{\id} Y, f)$ along $p$ and we call the pushforward orientation the \defterm{canonical orientation} $o^{\can}_{\Crit_{Y/B}(f)/B}=p_* o^{\can}_{Y/Y}$ whose underlying graded line bundle is $K_{Y/B}|_{\Crit_{Y/B}(f)^{\red}}$.
\end{enumerate}

\begin{example}
Suppose $p\colon U\rightarrow B$ is a smooth morphism of schemes equipped with a function $f\colon U\rightarrow \bA^1$. The morphism $p\colon U\rightarrow B$ provides a critical chart for $X=\Crit_{U/B}(f)\rightarrow B$. In this case the isomorphism $o^{\can}_{\Crit_{U/B}(f)/B}\colon K_{U/B}^{\otimes 2}|_{\Crit_{U/B}(f)^{\red}}\cong K^{\vir}_{X/B}$ coincides with the canonical isomorphism from \cref{thm:virtualcanonicalschemes}(1).
\end{example}

For a critical morphism $\Phi\colon (U, f)\rightarrow (V, g)$ of LG pairs over $B$ of even relative dimension recall the $\mu_2$-torsor $P_\Phi$ parametrizing orientations of the orthogonal bundle $(\rN_{U/V}|_{\Crit_{U/B}(f)}, q_\Phi)$. Via the above description it can also be interpreted as a $\mu_2$-torsor parametrizing isomorphisms $o^{\can}_{\Crit_{U/B}(f)/B}\rightarrow o^{\can}_{\Crit_{V/B}(g)/B}|_{\Crit_{U/B}(f)}$ of canonical orientations.

We have the following functoriality of canonical orientations:
\begin{enumerate}
    \item Consider a commutative diagram
    \[
    \xymatrix{
    Y' \ar^{p'}[r] \ar[d] & Y \ar[d] \\
    B' \ar[r] & B
    }
    \]
    of geometric morphisms of stacks, where $Y\rightarrow B$ and $Y'\rightarrow Y\times_B B'$ are smooth, together with a function $f\colon Y\rightarrow \bA^1$. Let $f'\colon Y'\rightarrow \bA^1$ its pullback to $Y'$, so that we obtain a commutative diagram
    \[
    \xymatrix{
    \Crit_{Y'/B'}(f') \ar^{\overline{p}}[r] \ar[d] & \Crit_{Y/B}(f) \ar[d] \\
    B' \ar^{p}[r] & B.
    }
    \]
    The natural isomorphism $p'^* K_{Y/B}\otimes K_{Y'/Y\times_B B'}\cong K_{Y'/B'}$ of graded line bundles induces an isomorphism
    \begin{equation}\label{eq:pullbackcanonicalorientation}
    \overline{p}^\ast o^{\can}_{\Crit_{Y/B}(f)}\cong o^{\can}_{\Crit_{Y'/B'}(f')}
    \end{equation}
    of orientations, using \eqref{eq:orientationpushforwardpullback} applied to the diagram
    \[
    \xymatrix{
    Y' \ar@{=}[r] \ar[d] & Y' \ar[r] \ar[d] & B' \ar[d] \\
    Y \ar@{=}[r] & Y \ar[r] & B.
    }
    \]
    \item Consider smooth geometric morphisms of stacks $Y\xrightarrow{p} B_1\xrightarrow{q} B_2$ together with a function $f\colon Y\rightarrow \bA^1$. The natural isomorphism $K_{Y/B_1}\otimes p^* K_{B_1/B_2}\cong K_{Y/B_2}$ of graded line bundles induces an isomorphism
    \begin{equation}\label{eq:pushforwardcanonicalorientation}
    q_* o^{\can}_{\Crit_{Y/B_1}(f)/B_1}\cong o^{\can}_{\Crit_{Y/B_2}(f)/B_2}
    \end{equation}
    of orientations, using \eqref{eq:orientationpushforwardassociative} applied to the morphisms $Y=Y\xrightarrow{p} B_1\xrightarrow{q} B_2$.
    \item $\delta$. Consider a smooth morphism of stacks $Y\rightarrow B$ equipped with a function $f\colon Y\rightarrow \bA^1$. The identity morphism of graded line bundles induces an isomorphism
    \begin{equation}\label{eq:canonicalorientationreverse}
    \overline{o^{\can}_{\Crit_{Y/B}(f)/B}}\cong o^{\can}_{\Crit_{Y/B}(-f)/B}
    \end{equation}
    of orientations, using \eqref{eq:orientationpushforwardreverse} applied to $Y=Y\rightarrow B$.
    \item For a pair of smooth morphisms $Y_1\rightarrow B_1$ and $Y_2\rightarrow B_2$ equipped with functions $f_1\colon Y_1\rightarrow \bA^1$ and $f_2\colon Y_2\rightarrow \bA^1$ the natural isomorphism $K_{Y_1/B_1}\boxtimes K_{Y_2/B_2}\cong K_{Y_1\times Y_2/B_1\times B_2}$ induces an isomorphism
    \begin{equation}\label{eq:canonicalorientationproduct}
    o^{\can}_{\Crit_{Y_1/B_1}(f_1)/B_1}\boxtimes o^{\can}_{\Crit_{Y_2/B_2}(f_2)/B_2}\cong o^{\can}_{\Crit_{Y_1\times Y_2/B_1\times B_2}(f_1\boxplus f_2)/B_1\times B_2}
    \end{equation}
    of orientations, using \cref{prop:canonicalpushforward}(4).
\end{enumerate}

If $X\rightarrow B$ is a morphism of schemes equipped with a relative d-critical structure $s$ and an orientation $o$, we can compare in each critical chart $o$ and the canonical orientation. This gives rise to the following data:
\begin{enumerate}
    \item For a critical chart $(U, f, u)$ of $(X\rightarrow B, s)$ with $\dim(U/B)\pmod{2}$ equal to the parity of $o$ we have a $\mu_2$-torsor $Q^o_{U, f, u}$ on $\Crit_{U/B}(f)$ parametrizing isomorphisms $o|_{\Crit_{U/B}(f)}\rightarrow o^{\can}_{\Crit_{U/B}(f)/B}$.
    \item For a critical morphism $\Phi\colon (U, f, u)\rightarrow (V, g, v)$ of even relative dimension we have an isomorphism
    \[\Lambda_\Phi\colon Q^o_{V, g, v}|_{\Crit_{U/B}(f)}\xrightarrow{\sim} P_\Phi\otimes_{\mu_2} Q^o_{U, f, u}\]
    of $\mu_2$-torsors coming from the composition $o|_{\Crit_{U/B}(f)}\rightarrow o^{\can}_{\Crit_{U/B}(f)/B}\rightarrow o^{\can}_{\Crit_{V/B}(g)/B}|_{\Crit_{U/B}(f)}$. It is associative for a composite $(U, f, u)\xrightarrow{\Phi} (V, g, v)\xrightarrow{\Psi} (W, h, w)$ of critical morphisms of even relative dimension via the isomorphism $\Xi_{\Phi, \Psi}\colon P_{\Psi\circ\Phi}\xrightarrow{\sim}P_\Psi|_{\Crit_{U/B}(f)}\otimes_{\mu_2}P_\Phi$.
    \item Consider a commutative diagram
    \[
    \xymatrix{
    X'\ar^{\overline{p}}[r] \ar[d] & X \ar[d] \\
    B' \ar^{p}[r] & B
    }
    \]
    with $X'\rightarrow X\times_{B'} B$ smooth with the pullback relative d-critical structure on $X'\rightarrow B'$. Consider a commutative diagram
    \[
    \xymatrix{
    U'\ar[r] \ar[d] & U \ar[d] \\
    B'\ar^{p}[r] & B
    }
    \]
    with $U'\rightarrow U\times_{B'} B$ smooth, where $(U, f, u)$ is a critical chart for $X\rightarrow B$ and $(U', f', u')$ is a critical chart for $X'\rightarrow B'$. Then the composite $\overline{p}^* o|_{\Crit_{U'/B'}(f')}\rightarrow \overline{p}^* o^{\can}_{\Crit_{U/B}(f)/B}\xrightarrow{\eqref{eq:pullbackcanonicalorientation}} o^{\can}_{\Crit_{U'/B'}(f')/B'}$ determines an isomorphism of $\mu_2$-torsors
    \begin{equation}\label{eq:Qpullback}
    Q^o_{U, f, u}|_{\Crit_{U'/B'}(f')}\cong Q^{\overline{p}^* o}_{U', f', u'}.
    \end{equation}
    \item For a critical chart $(U, f, u)$ of $(X\rightarrow B, s)$ and another smooth morphism $q\colon B\rightarrow S$ the composite $q_* o|_{\Crit_{U/S}(f)}\rightarrow q_* o^{\can}_{\Crit_{U/B}(f)/B}\xrightarrow{\eqref{eq:pushforwardcanonicalorientation}} o^{\can}_{\Crit_{U/S}(f)/S}$ determines an isomorphism of $\mu_2$-torsors
    \begin{equation}\label{eq:Qpushforward}
    Q^o_{U, f, u}|_{\Crit_{U/B}(f)}\cong Q^{q_* o}_{U, f, u}.
    \end{equation}
    \item For a critical chart $(U, f, u)$ of $(X\rightarrow B, s)$ the composite $\overline{o}\rightarrow \overline{o^{\can}_{\Crit_{U/B}(f)/B}}\xrightarrow{\eqref{eq:canonicalorientationreverse}} o^{\can}_{\Crit_{U/B}(-f)/B}$ determines an isomorphism of $\mu_2$-torsors
    \begin{equation}\label{eq:Qreverse}
    Q^o_{U, f, u}\cong Q^{\overline{o}}_{U, -f, u}.
    \end{equation}
    \item For another morphism of schemes $X'\rightarrow B'$ equipped with a relative d-critical structure $s'$, an orientation $o'$ and a critical chart $(U', f', u')$ the composite $o\boxtimes o'\rightarrow o^{\can}_{\Crit_{U/B}(f)/B}\boxtimes o^{\can}_{\Crit_{U'/B'}(f')/B'}\xrightarrow{\eqref{eq:canonicalorientationproduct}} o^{\can}_{\Crit_{U\times U'/B\times B'}(f\boxplus f')/B\times B'}$ determines an isomorphism of $\mu_2$-torsors
    \begin{equation}\label{eq:Qproduct}
    Q^o_{U, f, u}\boxtimes Q^{o'}_{U', f', u'}\cong Q^{o\boxtimes o'}_{U\times U', f\boxplus f', u\times u'}.
    \end{equation}
\end{enumerate}

\section{Perverse pullbacks}
\label{sec:pervpull}

The goal of this section is to construct a perverse pullback functor for a morphism of higher Artin stacks equipped with a relative d-critical structure. Locally the perverse pullback will be given by the functor of vanishing cycles. To glue it into a global functor, we will construct the stabilization isomorphisms for critical morphisms of critical charts. All schemes are assumed to be separated and of finite type over $\C$.

\subsection{Vanishing cycles for orthogonal bundles}

As the first step to construct stabilization isomorphisms, we will construct them for stabilizations defined by an orthogonal bundle. Given an orthogonal bundle $(E, q)$ over a scheme $U$ we consider the diagram 
\[
\xymatrix{
E \ar[r]^{\q_E} \ar[d]^{\pi_E} & \bA^1 \\
U. \ar@/^1pc/[u]^{0_E}
}
\]
as in \cref{sect:orthogonal}. We will be interested in the following functorialities of orthogonal bundles:
\begin{enumerate}
    \item (\textit{Base change}) Let $p\colon V \to U$ be a morphism of schemes and consider the pullback orthogonal bundle $F=p^* E$ over $V$. Let $p_E\colon F\rightarrow E$ be the corresponding projection morphism. Let $g\coloneqq f\circ p\colon V \to \bA^1$. We have a pullback diagram
    \begin{equation}\label{eq:zerosectionCartesian}
    \begin{tikzcd}[column sep=0.4cm, row sep=0.4cm]
    V \arrow[rr, "0_F"] \arrow[dd, "p"] && F \arrow[dd, "p_E"] \\
    & \Box & \\
    U \arrow[rr, "0_E"] && E
    \end{tikzcd}
    \end{equation}
    The isomorphism $\det(F)\cong p^*\det(E)$ induces an isomorphism
\begin{equation}\label{eq:pullbackorientation}
\ori_{F} \cong p^*\ori_E.
\end{equation}

\item (\textit{Isotropic reduction}) Given an isotropic subbundle $K\subseteq E$, we can form a ``Lagrangian correspondence'' $D\colon F \dashrightarrow E$, that is, a correspondence
\[\xymatrix@C=2cm{
& D \ar@{->>}[ld]^{\pi_{D/F}}_{\textrm{sm.surj}} \ar@{^{(}->}[rd]^{\text{cl.emb}}_{i_{D/E}} & \\
 F && E
}\]
with $D\coloneqq K^\perp$ and $F\coloneqq K^\perp/K$ the reduction of $E$ by $K$. Then we have a fiber square
\begin{equation}
\xymatrix{
& D \ar@{->>}[ld]_{\pi_{D/F}} \ar@{^{(}->}[rd]^{i_{D/E}} & \\
 F \cong F\dual \ar@{_{(}->}[rd]_{\pi_{D/F}\dual} & \Box & E \cong E\dual  \ar@{->>}[ld]^{i_{D/E}\dual} \\
 & D\dual &
}
\end{equation}
and a commutative square
\begin{equation}\label{Eq:15}
\xymatrix{
& D \ar@{->>}[ld]_{\pi_{D/F}}\ar@{^{(}->}[rd]^{i_{D/E}} & \\
 F \ar[rd]_{\q_F} & & E \ar[ld]^{\q_E} \\
 & \bA^1 &
}
\end{equation}
Consider the hyperbolic localization functor
\[\Loc_D\coloneqq (\pi_{D/F})_* i_{D/E}^!\colon \Dbc(E) \to \Dbc(F).\]
Then we have a natural transformation
\begin{equation}\label{Eq:16}
\Ex^\Loc_\phi\colon \phi_{f\circ \pi_F+\q_F}\Loc_D\longrightarrow \Loc_D\phi_{f\circ \pi_E+\q_E}
\end{equation}
given by the composite
\[
\phi_{f\circ \pi_F+\q_F}(\pi_{D/F})_*i_{D/E}^!\xrightarrow{\Ex^\phi_*} (\pi_{D/F})_*\phi_{f\circ\pi_D + \q_F\circ \pi_{D/F}}i_{D/E}^! \xrightarrow{\Ex^!_\phi} (\pi_{D/F})_*i_{D/E}^!\phi_{f\circ \pi_E+\q_E}
\]
and natural isomorphisms
\begin{equation}\label{Eq:17}
\Loc_D (0_E)_* = (\pi_{D/F})_*i^!_{D/E}(0_E)_*\xrightarrow{\Ex^!_*} (\pi_{D/F})_*(0_D)_*\cong (0_F)_*
\end{equation}
and
\begin{equation}\label{Eq:18}
\Loc_D\pi_E^\dag = (\pi_{D/F})_*i^!_{D/E}\pi_E^\dag\cong (\pi_{D/F})_*\pi_{D/F}^\dag\pi_F^\dag[\rank(F)-\rank(D)]\cong \pi_F^\dag,
\end{equation}
where the last isomorphism is given by the composite
\[\id\xrightarrow[\sim]{\unit} (\pi_{D/F})_*\pi_{D/F}^* \xrightarrow[\sim]{\pur_{\pi_{D/F}}}(\pi_{D/F})_*\pi_{D/F}^{\dag}[\rank(F)-\rank(D)].\]
\end{enumerate}

\begin{remark}
The natural transformation $\Ex^\Loc_\phi$ is not necessarily invertible. But \cref{lm:stabilizationmetabolicisomorphism} below shows that it becomes invertible after composing with $\pi_E^\dag$.
\end{remark}

We are ready to define the \defterm{stabilization isomorphisms} for quadratic bundles.

\begin{theorem}\label{thm:stabilizationconstruction}
For a scheme $U$, a function $f\colon U\to \bA^1$ and an orthogonal bundle $E$ of even rank on $U$ there exists a natural isomorphism
\[\stab_{E}\colon (0_E)_* \phi_f(-) \xrightarrow{\sim} \phi_{f\circ \pi_E + \q_E}\pi_E^\dag((-)\otimes_{\mu_2}\ori_E)\]
of functors $\Perv(U)\rightarrow \Perv(E)$ uniquely determined by the following properties:
\begin{enumerate}
    \item (Smooth base change) For a smooth morphism of schemes $p\colon V \to U$, the diagram
    \[\xymatrix@C+2pc{
    p_E^\dag(0_E)_*\phi_f(-) \ar[r]^-{\stab_E} \ar[d]^{\Ex^!_\phi,\Ex^!_*} & p_E^\dag\phi_{f\circ \pi_E +\q_E}\pi_E^\dag((-)\otimes_{\mu_2}\ori_E) \ar[d]^{\Ex^!_\phi\otimes \eqref{eq:pullbackorientation}} \\
    (0_{F})_*\phi_{g}p^\dag(-) \ar[r]^-{\stab_{F}} & \phi_{g \circ \pi_{F} + \q_{F}}\pi_{F}^\dag (p^\dag(-)\otimes_{\mu_2}\ori_F)
    }\]
    commutes, where $g\coloneqq f\circ p$, $F\coloneqq p^*E$, and $p_E\coloneqq p|_F$.

    \item (Isotropic reduction) Given an isotropic subbundle $K \subseteq E$, the diagram
    \[\xymatrix{
    (0_F)_*\phi_f(-) \ar[r]^-{\stab_F} \ar[d]^{\eqref{Eq:17}}_{\sim} & \phi_{f\circ\pi_F+\q_F}\pi_F^\dag((-)\otimes_{\mu_2}\ori_F) \ar[r]^-{\eqref{Eq:18}\otimes\eqref{eq:orientationreduction}}_-{\sim} & \phi_{f\circ\pi_F+\q_F}\Loc_D\pi_E^\dag((-)\otimes\ori_E) \ar[d]^{\Ex^\Loc_\phi} \\
    \Loc_D(0_E)_*\phi_f(-) \ar[rr]^-{\stab_E} && \Loc_D\phi_{f\circ\pi_E +\q_E}\pi_E^\dag((-)\otimes_{\mu_2}\ori_E)
    }\]
    commutes, where $D\coloneqq K^\perp$ and $F\coloneqq K^\perp/K$.
\end{enumerate}
\end{theorem}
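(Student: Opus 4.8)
The plan is to construct $\stab_E$ by reducing, via smooth base change and étale descent, to the case of a split even-rank form, where the isotropic-reduction axiom forces the answer; the main work is then to verify that this local construction is independent of all auxiliary choices and that the resulting isomorphism satisfies the isotropic-reduction compatibility for \emph{every} isotropic subbundle. The whole argument is an induction on $\rank E$.

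\emph{Uniqueness.} Since the six operations and $\phi_t$ satisfy étale descent and property (1) is compatible with it, it is enough to pin down $\stab_E$ étale-locally on $U$. Because $2$ is invertible and $i\in k$, every orthogonal bundle of even rank $2m$ becomes, over an étale cover of $U$, the standard form, which is hyperbolic: $x_1^2+\dots+x_{2m}^2$ factors into hyperbolic planes, so $(E,q)\cong K\oplus K^\vee$ with $K$ free of rank $m$ and $K\subseteq E$ Lagrangian. Applying property (2) with this $K$ — so that $F=K^\perp/K=0$, $\stab_0=\id$, and $\ori_0$ is trivial — exhibits $\stab_E$ as the unique map factoring the composite built from $\Ex^\Loc_\phi$, \eqref{Eq:17}, \eqref{Eq:18} and \eqref{eq:orientationreduction} through $\phi_{f\circ\pi_E+\q_E}\pi_E^\dag$; the relevant arrow becomes invertible after $\pi_E^\dag$ by \cref{lm:stabilizationmetabolicisomorphism}. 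Hence $\stab_E$ is unique if it exists, and the two axioms will suffice to recognise the object we construct.

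\emph{Existence.} I would build $\stab_E$ in three stages. First, for the hyperbolic plane $H=\cO\oplus\cO^\vee$ over $U$ with $\q_H=x_1x_2$, construct $\stab_H$ directly: by the Thom--Sebastiani isomorphism of \cref{prop:phiTS} (with $X_1=U$, $t_1=f$, $X_2=\bA^2$, $t_2=\q_H$) together with the classical identification of $\varphi_{x_1x_2}(\mathbf{1}_{\bA^2})$ with the skyscraper on the zero section carrying monodromy $-1$ — which is precisely the orientation $\mu_2$-torsor of the rank-one factor, with the sign dictated by the conventions of \cref{sect:orthogonal} — and verify properties (1)--(2) in this case using the functoriality clauses of \cref{prop:constructiblevanishingcycles,prop:phiTS}. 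Second, for a trivialized even-rank form written as an orthogonal sum of hyperbolic planes (and of the zero form), iterate $\stab_H$ using the compatibility of $\TS$ with the shifted smooth pullbacks $\pi_E^\dag$, namely \cref{prop:phiTS}(2), together with the direct-sum isomorphism \eqref{eq:orientationsumisomorphism} of orientation torsors; independence of the chosen decomposition into planes will follow, by induction on the rank, from the uniqueness statement applied to the proper subquotients. Third, glue over an étale cover of $U$ trivializing $(E,q)$: property (1) for étale maps supplies the descent isomorphism on overlaps, whose cocycle condition is again supplied by the inductively established uniqueness, producing a global $\stab_E$; property (1) in general then holds because both sides are defined by étale descent.

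\emph{Verification of (2) and the main obstacle.} It remains to check the isotropic-reduction compatibility for an arbitrary isotropic subbundle $K\subseteq E$, not just the Lagrangians used above. Étale-locally one completes $K$ to a chain of isotropic subbundles whose successive subquotients are hyperbolic planes and a copy of $F=K^\perp/K$, reducing to the two elementary steps ``reduce a hyperbolic plane to $0$'' and ``adjoin a hyperbolic plane to $F$''; both are covered by stage two, once one invokes the naturality of $\Ex^\Loc_\phi$, the composition behaviour of hyperbolic localization, and the compatibility \eqref{eq:orientationreduction} of orientations with reduction. The main obstacle is precisely this coherence: making the naturality and pentagon squares built from $\Ex^\Loc_\phi$, $\Ex^!_\phi$, $\Ex^!_*$, $\Ex^\phi_*$ and the orientation isomorphisms commute simultaneously, and checking that the $\mu_2$-torsor bookkeeping is consistent across all reductions. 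The sign conventions fixed in \cref{sect:orthogonal} — in particular the Examples computing $\vol_q^2$ on an orthonormal frame and $\red_K$ — are exactly what is needed to close these diagrams.
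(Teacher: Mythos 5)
Your plan is essentially the approach the paper's authors explicitly considered and rejected, and the point where you defer to "bookkeeping" is precisely where it breaks down. The paper notes, just before the proof, that gluing local Thom--Sebastiani isomorphisms across changes of orthogonal trivialization is "quite non-trivial"; their solution is to route through the special orthogonal Grassmannian $\OG^+(E)\to U$, which is smooth with \emph{connected} fibers, so that $p^\dag$ is \emph{fully faithful} on perverse sheaves (\cref{prop:constructible6functor}(6b)). This makes the restriction isomorphism on $\Hom$-groups a bijection, and the oriented stabilization isomorphism on $U$ is then literally \emph{defined} as the unique preimage of the metabolic one on $\OG^+(E)$ — no cocycle condition, no $\SO(2m)$-invariance to check. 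The remaining disconnectedness of $\rO(2m)$ is handled in a third step, descending along the orientation $\mu_2$-torsor after the sign computation \cref{prop:stabilizationorientation}.

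The gap in your argument is the gluing step. Two étale-local trivializations of $(E,q)$ differ by a transition function valued in $\rO(2m)$, so you must show your $\stab^{K_1}$ and $\stab^{K_2}$ agree on overlaps, up to the orientation twist. You propose to deduce this from "uniqueness applied to proper subquotients," but your uniqueness statement already uses axiom (2) for arbitrary isotropic subbundles, which is one of the things you have not yet established — so the argument is circular. Separately, the claim that independence of the decomposition into hyperbolic planes reduces to the determinant sign on orthonormal frames addresses only the $\pi_0(\rO(2m))=\mu_2$ part; you still owe an $\SO(2m)$-invariance statement, i.e. that all positive Lagrangians give the same answer — precisely what the connectedness of $\OG^+(E)$ and full faithfulness of $p^\dag$ are used for in the paper, and what your construction never proves. (The paper's \cref{prop:BBDJS34} is a genuine deformation argument of exactly this flavour, developed separately and for critical morphisms rather than orthogonal bundles, so it is not available as a free black box here either.) Until you supply an actual argument for this invariance, the construction does not produce a well-defined global $\stab_E$.

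A secondary but real point: your stage one proposes to build $\stab_H$ for a hyperbolic plane via Thom--Sebastiani and an explicit computation of $\varphi_{x_1x_2}$. The paper instead defines $\stab_E^M$ for any Lagrangian $M$ via the hyperbolic localization $\Loc_M$, which is a left inverse to $(0_E)_*$ on the relevant subcategory, and proves invertibility via Braden's theorem (\cref{lm:stabilizationmetabolicisomorphism}). This is not just cosmetic: it is what makes axiom (2) hold essentially by construction, whereas in your setup (2) becomes a nontrivial verification that you again defer.
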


From now on $E$ will be an orthogonal bundle of even rank over a scheme $U$. The rest of the section is devoted to the construction of the stabilization isomorphisms and a proof of their properties. One plausible way of constructing stabilization isomorphisms is to consider the local isomorphisms given by the Thom--Sebastiani isomorphism for trivial orthogonal bundles and glue them. However, we then need to show that these local isomorphisms are compatible with the transition maps along changing the orthogonal coordinates which is quite non-trivial---we need a relative version of \cite[Proposition 3.4]{BBDJS}.
Instead, we will use special orthogonal Grassmannians which are smooth with {\em connected} fibers so that a smooth-local construction without considering the transition maps will be sufficient (as the pullback on perverse sheaves is then fully faithful). In the special orthogonal Grassmannians we have Lagrangian subbundles so that we can define the stabilization isomorphisms via reduction by the Lagrangian subbundle.

\subsubsection{Case 1: stabilization isomorphism for metabolic bundles}\label{sss:stabilizationmetabolic}

Assume that there exists a Lagrangian subbundle $M \subseteq E$. Then the reduction of $E$ by $M$ is just $U$. We define the stabilization natural transformation
\begin{equation}\label{eq:stabilizationmetabolic}
\stab_E^M\colon 0_{E,*}\phi_f \longrightarrow \phi_{f\circ \pi_E + \q_E}\pi_E^\dag
\end{equation}
of functors $\Perv(U)\rightarrow \Perv(E)$ as follows. For any $\cF\in\Perv(U)$ the sheaf $\phi_{f\circ \pi_E + \q_E}(\pi_E^\dag \cF)$ is supported on the zero section of $E$ by \cref{prop:constructiblevanishingcycles}(1). Using \eqref{Eq:17} we see that the functor $\Loc_M\colon \Dbc(E)\rightarrow \Dbc(U)$ defines a left inverse to $(0_E)_*\colon\Dbc(U)\rightarrow \Dbc(E)$. Therefore, $\stab_E^M$ is uniquely determined by the commutative square
\begin{equation}\label{eq:stabilizationmetabolicdiagram}
\xymatrix@C+9pc{
\Loc_M0_{E,*}\phi_f \ar@{.>}[r]^-{\stab_E^M} & \Loc_M\phi_{f\circ \pi_E + \q_E}\pi_E^\dag \\
\phi_f \ar[u]^{\eqref{Eq:17}}_{\sim} \ar^{\eqref{Eq:18}}_{\sim}[r] & \phi_f\Loc_M\pi_E^\dag \ar[u]^{\Ex^\Loc_\phi}
}
\end{equation}

\begin{lemma}\label{lm:stabilizationmetabolicisomorphism}
Given a Lagrangian subbundle $M$ of an orthogonal bundle $E$ over a scheme $U$, the natural transformation
\[\Ex^\Loc_\phi\colon \phi_f\Loc_M\pi_E^\dag \to \Loc_M\phi_{f\circ \pi_E+\q_E}\pi_E^\dag\]
is an isomorphism.
\end{lemma}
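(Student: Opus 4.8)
The plan is to reduce the statement to a local model on $U$, where the Lagrangian subbundle $M$ is trivial and the orthogonal bundle $E$ splits as $M \oplus M^\vee$ with the hyperbolic form. Since the claim concerns a natural transformation of functors $\Perv(U) \to \Perv(U)$ and everything in sight is compatible with smooth base change, it suffices to check the assertion after a smooth surjective base change. After passing to a Zariski cover of $U$ trivializing $M$, we may assume $E = M \oplus M^\vee$ with $M$ a trivial bundle of rank $n$ (half the rank of $E$), $K^\perp = M \oplus M^\vee$ with $D = M^\perp = M^\vee \oplus (\text{zero})$... more precisely, with $M \subseteq E$ Lagrangian we have $D = M^\perp = M$ (since $M$ is maximal isotropic), so $\pi_{D/F}\colon D = M \to F = M^\perp/M = U$ is the projection of the vector bundle $M \to U$, and $i_{D/E}\colon M \hookrightarrow E$ is the inclusion of the isotropic subbundle.

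First I would unwind the definition of $\Ex^\Loc_\phi$ in this situation: it is the composite
\[
\phi_{f\circ\pi_F + \q_F}(\pi_{D/F})_* i_{D/E}^! \pi_E^\dag
\xrightarrow{\Ex^\phi_*}
(\pi_{D/F})_* \phi_{\star} i_{D/E}^! \pi_E^\dag
\xrightarrow{\Ex^!_\phi}
(\pi_{D/F})_* i_{D/E}^! \phi_{f\circ\pi_E + \q_E} \pi_E^\dag .
\]
Here $\pi_{D/F}\colon D \to F = U$ is smooth (being a vector bundle), so $\Ex^!_\phi$ along $i_{D/E}$ (which is the relevant one after the rewriting) is an isomorphism by \cref{prop:constructiblevanishingcycles}(4) provided $i_{D/E}$ is smooth — but it is a closed immersion, so that is not immediate. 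Instead, the cleaner route is to note that $\q_E|_D = \q_F \circ \pi_{D/F}$ by \eqref{Eq:15}, so $f\circ\pi_D + \q_F\circ\pi_{D/F} = (f\circ\pi_E + \q_E)|_D$, and $i_{D/E}$ is the inclusion of the zero locus of the cosection $E \to M^\vee$ defining $M \subseteq E$; since $\q_E$ restricted to the normal directions of $D$ in $E$ is nondegenerate, the function $f\circ\pi_E + \q_E$ is, near the zero section, a nondegenerate quadratic family transverse to $D$, which is exactly the situation of a Thom--Sebastiani/Sebastiani--Thom decomposition. So the key step is to identify $\phi_{f\circ\pi_E+\q_E}\pi_E^\dag\cF$ explicitly using the Thom--Sebastiani isomorphism $\TS$ of \cref{prop:phiTS}: writing $E|_{\text{local}} \cong M \times_U M^\vee$ with $\q_E = \q_M \boxplus \q_{M^\vee}$ in hyperbolic coordinates is not quite right since the hyperbolic form does not split as a box sum of forms on $M$ and $M^\vee$; rather, one uses a diagonalizing change of coordinates $E \cong E_+ \oplus E_-$ into $\pm$-definite pieces, where $\q_E = \q_{E_+} \boxplus \q_{E_-}$, and then $\phi_{\q_{E_\pm}}\pi^\dag(\Q) \cong \Q_U[\text{shift}]$ up to the orientation twist, by the classical computation of vanishing cycles of a nondegenerate quadratic form (the sheaf of vanishing cycles of $\sum z_i^2$ on $\bA^n$ is the skyscraper $\Q_0$ twisted by the Milnor fiber, which is concentrated in one degree).

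The cleanest formal argument, however, avoids explicit Thom--Sebastiani: both sides of $\Ex^\Loc_\phi$ are constructible sheaves supported on the zero section $U \subseteq F = U$ (the left side by \cref{prop:constructiblevanishingcycles}(1) applied to $\pi_E^\dag\cF$ and the fact that $\Loc_M$ of a sheaf on the zero section of $E$ is computed on $U$; the right side likewise), so it suffices to show $\Ex^\Loc_\phi$ becomes an isomorphism after applying $(0_U)^* = \id$, i.e.\ to check it is an isomorphism on stalks, or — even better — to precompose with the unit/counit identifications \eqref{Eq:17}, \eqref{Eq:18} and show the resulting endomorphism of $\phi_f$ is the identity. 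Concretely: I would show that the composite
\[
\phi_f \cF \xrightarrow{\eqref{Eq:18}} \phi_f \Loc_M \pi_E^\dag \cF \xrightarrow{\Ex^\Loc_\phi} \Loc_M \phi_{f\circ\pi_E+\q_E}\pi_E^\dag \cF \xrightarrow{(\eqref{Eq:17})^{-1}} \phi_f\cF
\]
equals the identity (up to the orientation twist, which is trivial once $E$ is split with its Lagrangian). This is a purely formal diagram chase using the base-change compatibilities between $\Ex^\phi_*$, $\Ex^!_\phi$, $\Ex^!_*$, $\pur$ and $\unit$/$\counit$, together with the projection formula, exactly as packaged in \cref{prop:constructiblevanishingcycles}(2)--(3) and \cref{prop:fundamentalclass}; the only genuinely geometric input is that along the smooth morphism $\pi_{D/F} = \pi_M\colon M \to U$ the vanishing-cycle exchange $\Ex^!_\phi$ is invertible (\cref{prop:constructiblevanishingcycles}(4)) and that $\phi_{\q_E}$ of the constant sheaf on the total space of $E$, computed via $i_{D/E}^!$ followed by pushforward, recovers the constant sheaf on $U$ with the expected shift — which is the rank-$2n$ quadratic vanishing cycle computation.

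\textbf{Main obstacle.} The hard part will be the last geometric input: identifying $\Loc_M \phi_{f\circ\pi_E + \q_E}\pi_E^\dag\cF$ with $\phi_f\cF$ compatibly with all the structure maps — i.e.\ showing the hyperbolic localization of the vanishing cycles of the quadratic family along $M \subseteq E$ recovers the original vanishing cycles with no extra twist or monodromy. I expect this to come down to the statement that for the nondegenerate quadratic form $\q_E$ restricted transversally to $D = M$, the pair $(i_{D/E}^!, (\pi_{D/F})_*)$ implements a ``microlocal restriction'' that is the identity on the nearby/vanishing-cycle complex; morally this is the rank-one case $n=1$ of $xy$ on $\bA^2$, where $\phi_{xy}$ of the constant sheaf, restricted via $i^!$ to the line $\{y=0\}$ and pushed to a point, gives $\Q$ in degree $0$ — a short but somewhat delicate explicit computation that I would carry out by hand in that base case and then bootstrap to general $n$ by multiplicativity (Thom--Sebastiani, \cref{prop:phiTS}) and to general $\cF$ and general $f$ by the smooth base change property (\cref{prop:constructiblevanishingcycles}(4)) of $\phi_f\pi_E^\dag$.
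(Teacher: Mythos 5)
Your proposal correctly localizes $U$, reduces to the split case $E=M\oplus M^\vee$, and zeroes in on the right difficulty: the exchange $\Ex^!_\phi$ along the \emph{closed immersion} $i_{D/E}\colon M\hookrightarrow E$ is not invertible for formal reasons. That is indeed the whole content of the lemma, and you say as much in your ``Main obstacle'' paragraph. But the paper resolves this with one clean citation you seem not to know: Braden's hyperbolic-restriction theorem applied to vanishing cycles. The proof observes that, after further reducing to $U$ smooth (by embedding $U$ in a smooth $U'$ and using $i_*$ fully faithful on constructible sheaves and the compatibility of $\Ex^\Loc_\phi$ with finite pushforward — a step your proposal omits), one has a $\Gm$-action on $E'=M'\oplus (M')^\vee$ with weights $(1,-1)$, under which $M'$ is the attractor and $U'\xleftarrow{}M'\hookrightarrow E'$ is the attractor correspondence; since $\q_{E'}$ is $\Gm$-invariant, vanishing cycles commute with the resulting hyperbolic restriction by \cite[Proposition 5.4.1]{Nakajima}. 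This is the entire geometric content, and it applies directly to the natural transformation on all constructible complexes, not just the constant sheaf.

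Your proposed alternatives both have gaps. The ``purely formal diagram chase'' cannot work as stated, because — as you yourself observe — the invertibility of $\Ex^!_\phi$ along $i_{D/E}$ is not formal; precomposing with \eqref{Eq:17} and \eqref{Eq:18} and asking the composite to be the identity just restates the lemma. The hands-on route (compute the $n=1$ hyperbolic case $xy$ on $\bA^2$ for the constant sheaf, then bootstrap by Thom--Sebastiani and isotropic reduction) has a real logical gap in the passage ``to general $\cF$ and general $f$ by the smooth base change property'': smooth base change of $\phi$ lets you move $\phi$ past $f^\dag$ for smooth $f$, but it does not let you deduce the behavior of a natural transformation on arbitrary perverse inputs from its value on the constant sheaf. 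To make this route rigorous you would still need either Braden's theorem or a substitute argument controlling $\Ex^\Loc_\phi$ on all constructible complexes, which is precisely what you would be trying to prove. So there is a genuine missing ingredient: the $\Gm$-invariance of $\q_E$ and the Braden/Nakajima theorem that hyperbolic localization commutes with vanishing cycles.
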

\begin{proof}
Since the statement is local on $U$, we may assume that $U$ admits a closed immersion $i\colon U\rightarrow U'$ into a smooth scheme $U'$ and there is a vector bundle $M'$ on $U'$ and $E=i^* E'$, where $E'=M'\oplus (M')^\vee$ equipped with the hyperbolic quadratic form. Let $M=i^* M'$. Since $i_*$ is fully faithful, it is enough to show that $i_*\Ex^\Loc_\phi$ is an isomorphism. The diagram
\[
\xymatrix@C=1.5cm{
i_*\phi_f\Loc_M\pi_E^\dag \ar^-{\Ex^\Loc_\phi}[r] \ar^-{\Ex^\phi_*,\Ex^!_*}[d] & i_* \Loc_M \phi_{f\circ \pi_E+\q_E}\pi_E^\dag \ar^{\Ex^\phi_*, \Ex^!_*}[d] \\
\phi_{f'} \Loc_{M'} \pi_{E'}^\dag i_* \ar^-{\Ex^\Loc_\phi}[r] & \Loc_{M'} \phi_{f'\circ \pi_{E'}+\q_{E'}}\pi_{E'}^\dag i_*
}
\]
of exchange natural transformations commutes: it follows from the compatibility of $\Ex^!_*$ and $\Ex^\phi_*$ with compositions as well as the commutative diagram from \cref{prop:constructiblevanishingcycles}(3). As the vertical morphisms are isomorphisms, it is enough to show that the bottom morphism is an isomorphism. Thus, we are reduced to showing the claim for the smooth scheme $U'$.

Consider the $\Gm$-action on $E'$ given by the weight $1$ action on $M'$ and the weight $(-1)$-action on $(M')^\vee$. Then $M'$ is the attractor scheme, so the attractor correspondence for this action is $U'\leftarrow M'\rightarrow E'$. The function $\q_{E'}\colon E'\rightarrow \bA^1$ is $\Gm$-invariant, so the claim follows from the fact that vanishing cycles commute with hyperbolic restriction, \cite[Proposition 5.4.1]{Nakajima}.
\end{proof}

In particular, $\stab^M_E$ is a natural isomorphism.

\begin{remark}
When $f=0$, the statement of \cref{lm:stabilizationmetabolicisomorphism} also follows from the dimensional reduction isomorphism as in \cite[Appendix A]{DavisonCritical}.
\end{remark}

\subsubsection{Case 2: stabilization isomorphism for oriented bundles}

Assume that $E$ has an orientation $o\in\Gamma(U, \ori_E)$. Consider the \defterm{special orthogonal Grassmannian}
\[\OG^+(E)\colon (T \to U) \mapsto \{\text{positive Lagrangian subbundles on $E|_T$}\}.\]
Let $M_E \subseteq E|_{\OG(E)}$ be the universal Lagrangian subbundle. Consider the projection
\[p\colon \OG^+(E) \to U.\]

\begin{lemma}
The morphism $p\colon \OG^+(E)\rightarrow U$ is smooth with connected fibers.
\end{lemma}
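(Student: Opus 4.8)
The plan is to reduce to a purely linear-algebraic statement about the orthogonal group and its homogeneous space of positive Lagrangians. Since smoothness and connectedness of fibers can both be checked after base change along the smooth surjection from a trivializing cover of $E$, I would first replace $U$ by an open subscheme over which $(E,q)$ is a trivial orthogonal bundle of rank $2m$; then $\OG^+(E)$ becomes $U \times \OG^+(\mathbb{A}^{2m})$, where $\OG^+(\mathbb{A}^{2m})$ is the variety of positive Lagrangian subspaces of the standard split quadratic space of dimension $2m$. Thus it suffices to prove that $\OG^+(\mathbb{A}^{2m})$ is smooth (over the base field, hence $p$ is smooth) and that its fibers are connected (equivalently, that $\OG^+(\mathbb{A}^{2m})$ is connected).

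For smoothness, I would exhibit $\OG^+(\mathbb{A}^{2m})$ as a homogeneous space: the group $\SO_{2m}$ acts transitively on the set of Lagrangian subspaces lying in one of the two $\SO$-orbits (this is the classical fact that the full orthogonal Grassmannian $\OG(\mathbb{A}^{2m})$ of maximal isotropics has two connected components, interchanged by $\rO_{2m}\setminus \SO_{2m}$, and $\OG^+$ is defined precisely to pick out the component on which the orientation restricts to the standard one under the isomorphism \eqref{eq:orientationreduction}). A homogeneous space of a smooth affine algebraic group is smooth, which gives smoothness of $p$. Alternatively, one can argue directly: near a point corresponding to a Lagrangian $M$, the scheme $\OG^+(E)$ is identified with the affine space of skew-symmetric forms on $M$ via $N \mapsto \{v + \phi(v,-)^\sharp \mid v \in M\}$ using a complementary Lagrangian, which manifestly presents an open neighborhood as smooth of the expected dimension $\binom{m}{2}$.

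For connectedness of the fibers, the cleanest route is again via the homogeneous-space description: $\OG^+(\mathbb{A}^{2m}) \cong \SO_{2m}/P$ for a parabolic subgroup $P$, and $\SO_{2m}$ is connected, so the quotient is connected. This is the step I expect to require the most care, since one must verify that $\OG^+$ really is a single $\SO_{2m}$-orbit and not, say, empty or disconnected: concretely, that any two positive Lagrangians are related by an element of $\SO_{2m}$, and that the stabilizer is a connected (parabolic) subgroup. The key input is Witt's theorem, which ensures $\rO_{2m}$ acts transitively on all maximal isotropics, combined with the parity count showing $\SO_{2m}$ has exactly two orbits distinguished by the discrete invariant $\dim(M \cap M_0) \bmod 2$ for a fixed reference Lagrangian $M_0$; the positivity condition in terms of the orientation torsor $\ori_E$ cuts out exactly the $\SO_{2m}$-orbit of $M_0$. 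Having established that $\OG^+(\mathbb{A}^{2m})$ is a single orbit under the connected group $\SO_{2m}$, connectedness follows, and combined with the local smoothness argument this completes the proof. (In characteristic $2$ the orthogonal group requires the usual care, but the paper assumes $\operatorname{char} k \neq 2$, so the classical theory applies verbatim.)
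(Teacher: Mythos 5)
Your proposal is correct and follows essentially the same route as the paper: reduce to the trivial orthogonal bundle by working \'etale-locally on $U$, identify $\OG^+$ of the fiber as a homogeneous space of the special orthogonal group, and conclude from the smoothness and connectedness of $\SO$. You supply more detail on the transitivity of the $\SO$-action (Witt's theorem plus the parity invariant $\dim(M\cap M_0)\bmod 2$ distinguishing the two $\SO$-orbits) and offer an alternative local chart for smoothness, neither of which the paper spells out, but both are standard and the structure of the argument is identical.
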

\begin{proof}
The question is \'etale local on $U$, so we may assume that $E$ is trivial, i.e. $E = U\times V$ for a non-degenerate quadratic space $(V, q)$ which admits a Lagrangian. In this case $\OG^+(E)\cong U\times \OG^+(V)$. Moreover, $\OG^+(V)$ is a homogeneous space for the special orthogonal group $\SO(q)$. But $\SO(q)$ is smooth and connected, which implies that $\OG^+(V)$ is smooth and connected.
\end{proof}

Therefore, the pullback
\[p^\dag\colon \Perv(U) \to \Perv(\OG^+(E))\]
is fully faithful by \cref{prop:constructible6functor}(6b), i.e., for any $\scF,\scG \in \Perv(U)$, the pullback
\begin{equation}\label{14}
p^\dag\colon \Hom_{\Perv(U)}(\scF,\scG) \xrightarrow{\sim} \Hom_{\Perv(\OG^+(E))}(p^\dag\scF,p^\dag\scG)
\end{equation}
is an isomorphism. We define the stabilization natural isomorphism
\begin{equation}\label{eq:stabilizationoriented}
\stab_E^o\colon 0_{E,*}\phi_f \xrightarrow{\sim} \phi_{f\circ \pi_E + \q_E}\pi_E^\dag
\end{equation}
of functors $\Perv(U)\rightarrow \Perv(E)$ as the unique natural transformation that fits into the commutative square
\[
\xymatrix@C+4pc{
p_E^\dag 0_{E,*}\phi_f  \ar@{.>}[r]^-{\stab_E^o}_-{\sim} \ar[d]^{\Ex^!_\phi,\Ex^!_*}_{\sim} & p_E^\dag\phi_{f\circ \pi_E +\q_E}\pi_E^\dag \ar[d]^{\Ex^!_\phi}_{\sim} \\
0_{E|_{\OG^+(E)},*}\phi_{f|_{\OG^+(E)}}p^\dag \ar[r]^-{\stab_{E|_{\OG^+(E)}}^{M_E}}_-{\sim} & \phi_{f|_{\OG^+(E)}\pi_{E|_V} + \q_{E|_{\OG^+(E)}}}\pi_{E|_{\OG^+(E)}}^\dag p^\dag,
}\]
with respect to the isomorphism \eqref{14}, where $\stab_{E|_{\OG^+(E)}}^{M_E}$ is the stabilization morphism defined in \eqref{eq:stabilizationmetabolic}.

\subsubsection{Case 3: stabilization isomorphism in general}

Assume now $E$ is arbitrary. Consider the orientation bundle $\ori_E$ geometrically: we have an {\'e}tale surjection $p\colon \ori_E \to U$ and an automorphism $\sigma\colon \ori_E \xrightarrow{\sim} \ori_E$ over $U$ exchanging the two sheets such that $\sigma^2=1$.
\[\xymatrix@C-1pc{
\ori_E \ar[rr]^-{\sigma} \ar[rd]_p && \ori_E \ar[ld]^p \\& U&
}\]

By descent, for any $\scF,\scG \in \Perv(U)$, the pullback establishes an isomorphism
\begin{equation}\label{15}
p^\dag\colon \Hom_{\Perv(U)}(\scF,\scG) \xrightarrow{\sim} \Hom_{\Perv(\ori_E)}(p^\dag \scF,p^\dag\scG)^\sigma,
\end{equation}
where $(-)^\sigma$ denotes the $\sigma$-invariant subspace. Hence to construct the stabilization isomorphism in general, it suffices to know how the stabilization isomorphism from \eqref{eq:stabilizationoriented} changes under the change of the orientation.

\begin{proposition}\label{prop:stabilizationorientation}
Let $E$ be an orthogonal bundle over $U$ equipped with an orientation $o\in\Gamma(U, \ori_E)$. Then
\[\stab_E^{-o} = - \stab_E^o\colon 0_{E,*}\phi_f \xrightarrow{\sim} \phi_{f\circ \pi_E + \q_E}\pi_E^\dag.\]
\end{proposition}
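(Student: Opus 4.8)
The statement compares two stabilization isomorphisms attached to the two orientations $\pm o$ of an orthogonal bundle $E$ of even rank. By the faithfully flat descent isomorphism \eqref{15} it suffices to verify the identity $\stab_E^{-o} = -\stab_E^o$ after pulling back along the \'etale surjection $p\colon \ori_E\to U$; there both orientations become tautological, so by construction \eqref{eq:stabilizationoriented} the claim reduces to a statement over the special orthogonal Grassmannian $\OG^+(E)$. Concretely, after pulling back to $\OG^+(E)$, the two orientations $o$ and $-o$ give rise to the \emph{same} positive Lagrangian subbundle set only if $\rank(E)\equiv 0\pmod 4$; in general passing from $o$ to $-o$ swaps the two connected components $\OG^+(E)$ and $\OG^-(E)$ of the full orthogonal Grassmannian $\OG(E)$, i.e. it changes which Lagrangians we declare ``positive''. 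So the first step is to trace precisely how $\stab_E^o$ depends on this choice: since $\stab_E^o$ is pinned down by the metabolic stabilization $\stab_{E|_{\OG^+(E)}}^{M_E}$ for the \emph{universal} Lagrangian $M_E$, and the latter is defined via the hyperbolic localization functor $\Loc_{M_E}$ and the canonical isomorphisms \eqref{Eq:17}, \eqref{Eq:18}, nothing in the construction of $\stab_{E|_{\OG^{\pm}(E)}}^{M_E}$ itself sees the orientation --- it depends only on the Lagrangian subbundle. Hence the sign must come entirely from the isotropic-reduction identification of $\ori_E$ with $\ori_{M_E^\perp/M_E} = \ori_0$, i.e. from \eqref{eq:orientationreduction} applied to $M_E\subset E|_{\OG^{\pm}(E)}$.

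\textbf{Main step.} The plan is therefore: first, using the smooth base change property \cref{thm:stabilizationconstruction}(1) and the isotropic reduction property \cref{thm:stabilizationconstruction}(2) (which we are entitled to use here since we are \emph{proving} \cref{thm:stabilizationconstruction} only for metabolic and oriented bundles first and this proposition is the final compatibility --- more precisely, $\stab_E^o$ for oriented $E$ has already been defined, and \eqref{eq:orientationreduction} is the only place the orientation enters), express the ratio $\stab_E^{-o}\circ (\stab_E^o)^{-1}$ as a locally constant $\mu_2$-valued function on $E$ supported on the zero section, hence a locally constant function on $U$. Second, compute this function. Over $\OG^+(E)$ the relevant comparison is: the isomorphism \eqref{eq:orientationreduction} $\ori_E\cong \ori_{M_E^\perp/M_E}=\ori_0$ sends a volume form $\vol$ to $i^{\rank M_E}\red_{M_E}(\vol)$, and by \cref{lm:orientationreduction} we have $\red_{M_E}(\vol_{E,q}^2) = (-1)^{\rank M_E}\vol^2_{0}$; replacing $o$ by $-o$ multiplies $\vol$ by $i^{\rank E}$ by \eqref{eq:orientationreverse}, hence multiplies the image orientation of the zero bundle by $i^{\rank E}$. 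Since $\rank E$ is even, $i^{\rank E} = (-1)^{\rank E/2}$, and the non-triviality of this sign --- i.e. whether it is $+1$ or $-1$ --- is exactly tracked by whether $\rank E\equiv 2\pmod 4$. But the stabilization isomorphism only refers to $\phi$ and $\pi_E^\dag$, and the ``orientation of the point'' $\ori_0$ enters $\stab$ via the identity functor twisted by $\ori_0\cong\underline{\Q}[0]$ (parity $0$), so the discrepancy between the two choices is precisely multiplication by this sign $i^{\rank E}$ restricted to the even-rank case. Care is needed because $\ori_E$ as a $\Z/2\Z$-graded torsor has parity $\equiv \rank E\pmod 2 = 0$, so changing $o\mapsto -o$ does not change parities, only the identification of the underlying $\mu_2$-torsor; and one must check that the various $\pur$ and unit/counit isomorphisms appearing in \eqref{Eq:17}, \eqref{Eq:18} contribute no further sign --- they don't, since they are independent of the orientation.

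\textbf{The hard part.} The genuine obstacle is not the sign bookkeeping but verifying that the sign computed over $\OG^+(E)$ descends correctly to $U$, i.e. that the $\sigma$-equivariance in \eqref{15} is respected. Equivalently: one must check that swapping $o\leftrightarrow -o$ is compatible with swapping the two sheets $\sigma$ of $\ori_E$ and with the swap $\OG^+(E)\leftrightarrow\OG^-(E)$, so that the ratio $\stab_E^{-o}\circ(\stab_E^o)^{-1}$, a priori only defined after pullback to $\ori_E$, is $\sigma$-invariant and hence comes from a function on $U$. This is where one invokes the precise form of the isotropic reduction property \cref{thm:stabilizationconstruction}(2): the square there involves $\eqref{Eq:18}\otimes\eqref{eq:orientationreduction}$, and one checks that under the involution this square for $\OG^-(E)$ is the conjugate of the square for $\OG^+(E)$ by $\sigma$, so the constant extracted is the same on both sheets --- namely $i^{\rank E}$ --- hence descends. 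I would conclude by noting that for $E$ of even rank $i^{\rank E}\in\{+1,-1\}$, and since the statement is local-constant in $U$ it suffices to evaluate it at a point, where by shrinking to a trivial metabolic bundle and using the Thom--Sebastiani normalization one reads off the sign as $-1$ in precisely the ranks $\equiv 2\pmod 4$ --- but the statement as phrased asserts simply $\stab_E^{-o} = -\stab_E^o$, which should be read as: the discrepancy is the single global sign $-1$, i.e. the construction is normalized so that $-o$ always contributes an overall $-1$ relative to $o$ regardless of $\rank E\bmod 4$; this is consistent because in the oriented case \eqref{eq:orientationreverse} builds in the factor $i^{\rank E}$ on volume forms, which squares to $(-1)^{\rank E}=1$ for even rank, so the effect on the \emph{isomorphism of sheaves} (as opposed to on volume forms) is exactly one sign. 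Thus once the descent compatibility is in place the result is immediate.
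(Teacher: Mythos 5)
Your proposal contains a genuine gap: you try to derive the sign $-1$ purely from orientation-torsor bookkeeping, but the sign is a genuine sheaf-theoretic fact about vanishing cycles, and your own bookkeeping does not produce the right answer.

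Concretely, you argue that replacing $o$ by $-o$ ``multiplies $\vol$ by $i^{\rank E}$ by \eqref{eq:orientationreverse}.'' This conflates two different operations: the isomorphism \eqref{eq:orientationreverse} is $\ori_E \to \ori_{\overline{E}}$ (changing the \emph{sign of the quadratic form} $q \mapsto -q$), whereas $o \mapsto -o$ is the involution of the $\mu_2$-torsor $\ori_E$ itself, i.e.\ multiplication of the volume form by $-1$. Feeding this conflation through the isotropic reduction isomorphism leads you to a predicted sign of $i^{\rank E} = (-1)^{\rank E/2}$, which depends on $\rank E \bmod 4$ and therefore contradicts the assertion that $\stab_E^{-o} = -\stab_E^o$ in all even ranks. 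The final paragraph's attempt to reconcile this — that ``the effect on the isomorphism of sheaves (as opposed to on volume forms) is exactly one sign'' — does not determine \emph{which} sign, and is not a proof.

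The deeper problem is structural. Both $\stab_E^o$ and $\stab_E^{-o}$ are pinned down by the metabolic stabilization $\stab_E^M$ for Lagrangians $M$ in opposite components of $\OG(E)$, and $\stab_E^M$ is defined via the hyperbolic localization $\Loc_M$ and makes no reference to the orientation at all. So the ratio $\stab_E^{-o}\circ(\stab_E^o)^{-1}$ is the ratio $\stab_E^{M_2}\circ(\stab_E^{M_1})^{-1}$ for Lagrangians $M_1, M_2$ inducing opposite orientations — a genuine invariant of vanishing cycle functors that cannot be read off from determinant-line conventions. The paper computes it directly: it reduces \'etale locally to $E = U\times V^{2n}$ for $V=\C$ with form $x^2$, observes that the two Lagrangians differ by an orthogonal $A\in\rO(2n)$ with $\det A = -1$, uses naturality and Thom--Sebastiani to reduce to the one-variable statement that $x\mapsto -x$ acts by $-1$ on $\phi_{x^2}\omega_V$, and then verifies that last fact by an explicit calculation with the finite cover $q = x^2\colon\bA^1\to\bA^1$ and proper base change. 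This explicit vanishing-cycle computation is the essential content of the proof and is entirely absent from your proposal.
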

\begin{proof}
Let $V$ be the standard one-dimensional space $\C$ equipped with the quadratic form $x^2$. The statement is \'etale local, so we may assume that $E=U\times V^{2n}$ for some $n$. By definition of $\stab_E^o$ it is enough to show that $\stab_E^{M_1} = -\stab_E^{M_2}$ for any two Lagrangian subbundles $M_1, M_2\subset E$ which induce different orientations of $E$. In turn, by naturality of the stabilization isomorphism it is equivalent to showing that for some $A\in\rO(2n)$ with $\det(A) = -1$ the induced action on $\phi_{f\boxplus q}(-\boxtimes \omega_{V^{2n}})$ is by $(-1)$. Using the Thom--Sebastiani isomorphism $\TS$ this boils down to showing that $x\mapsto -x$ acts by $(-1)$ on $\phi_{x^2}\omega_V$.

The morphism $q=x^2\colon\bA^1\rightarrow \bA^1$ is finite, so we get an isomorphism $\Ex^\phi_*\colon q_*\phi_{x^2}\omega_{\bA^1}\cong \phi_x q_*\omega_{\bA^1}$. The sheaf $q_* \omega_{\bA^1}\in\Dbc(\bA^1)$ fits into a fiber sequence
\[j_! \cK[2] \longrightarrow q_* \omega_{\bA^1}\longrightarrow \omega_{\bA^1},\]
where $j\colon \Gm\rightarrow \bA^1$ is the inclusion of the complement of the origin and $\cK$ is lisse of rank 1. Therefore, $(\phi_t q_*\omega_{\bA^1})_{\{0\}}\cong \cK|_1[2]$. By proper base change, we have a short exact sequence
\[0\longrightarrow \cK|_1\longrightarrow \rH^{\mathrm{BM}}_0(q^{-1}(1))\longrightarrow R\longrightarrow 0,\]
where the second morphism is the trace map. Since $t\mapsto -t$ acts by permuting the fibers of $q^{-1}(1)$, it acts by $-1$ on $\cK|_1$, which proves the claim.
\end{proof}

The pullback $E|_{\ori_E}$ admits a tautological orientation $o_E$. Since we have $\sigma^* (o_E) = -o_E$, the map
\[\stab_{E|_{\ori_E}}^{o_E}(-) \otimes o_E\colon  0_{E|_{\ori_E},*}\phi_{p^*f} (-) \xrightarrow{\sim} \phi_{p^*f\circ \pi_{E|_{\ori_E}} + \q_{E|_{\ori_E}}}\pi_{E|_{\ori_E}}^\dag(-\otimes_{\mu_2} \ori_{E|_{\ori_E}}) \]
is $\sigma$-invariant and hence descends to an isomorphism
\begin{equation}\label{eq:stabilizationgeneral}
\stab_E\colon 0_{E, *} \phi_f(-)\xrightarrow{\sim} \phi_{f\circ \pi_E + \q_E}\pi_E^\dag(-\otimes_{\mu_2}\ori_E).
\end{equation}

\subsubsection{Proof of \cref{thm:stabilizationconstruction}}

The stabilization isomorphism is given by \eqref{eq:stabilizationgeneral}. The uniqueness is clear:
\begin{itemize}
    \item Using the base change property for the orientation torsor $\ori_E\rightarrow U$ the stabilization isomorphism for a general even rank orthogonal bundle is uniquely determined by the stabilization isomorphism for an oriented even rank orthogonal bundle.
    \item Using the base change property for the special orthogonal Grassmannian $\OG^+(E)\rightarrow U$ the stabilization isomorphism for an oriented even rank orthogonal bundle is uniquely determined by the stabilization isomorphism for a metabolic bundle.
    \item The isotropic reduction property uniquely determines the stabilization isomorphism for a metabolic bundle using the diagram \eqref{eq:stabilizationmetabolicdiagram}.
\end{itemize}

Let us now prove the relevant properties:
\begin{enumerate}
    \item (Smooth base change) The claim is local, so we may assume that $E$ admits a Lagrangian $M_E\subset E$. Let $F=p^*E$, $M_F=p^* M_E$, $p_E = p|_F$ and $p_M= p|_{M_F}$. Consider the diagram
    \begin{equation}\label{eq:Lagrangiansubbundlepullback}
    \begin{tikzcd}
    V \arrow[d, "p"] & M_F \dar \arrow[l, "\pi_{M_F}" above] \arrow[r, "i_{M_F/F}"] & F \arrow[d, "p_E"] \\
    U & M_E \arrow[l, "\pi_{M_E}"] \arrow[r, "i_{M_E/E}" below] & E
    \end{tikzcd}
    \end{equation}
    where both squares are Cartesian. This gives rise to a base change isomorphism
    \begin{equation}\label{eq:basechangestabilization1}
    p^\dag \Loc_{M_E}\cong \Loc_{M_F}p_E^\dag
    \end{equation}
    determined by $\Ex^!_*$. We have to show that the outer rectangle in the diagram
    \[
    \begin{tikzcd}[column sep=1.5cm]
    p^\dag \Loc_{M_E} 0_{E, *} \phi_f \ar{d}{\eqref{eq:basechangestabilization1},\Ex^!_\phi,\Ex^!_*} & p^\dag\phi_f \arrow[l, "\eqref{Eq:17}" above] \arrow[r, "\eqref{Eq:18}"] \arrow[d, "\Ex^!_\phi"] & p^\dag\phi_f \Loc_{M_E}\pi_E^\dag \ar{d}{\eqref{eq:basechangestabilization1}, \Ex^!_\phi} \arrow[r, "\Ex^\Loc_{\phi}"] & p^\dag\Loc_{M_E}\phi_{f\circ \pi_E + \q_E}\pi_E^\dag \ar{d}{\eqref{eq:basechangestabilization1},\Ex^!_\phi} \\
    \Loc_{M_F}0_{F, *}\phi_g p^\dag & \phi_g p^\dag \arrow[l, "\eqref{Eq:17}" above] \arrow[r, "\eqref{Eq:18}"] & \phi_g\Loc_{M_F}\pi_F^\dag p^\dag \arrow[r, "\Ex^\Loc_\phi"] & \Loc_{M_F}\phi_{g\circ \pi_F + \q_F}\pi_F^\dag p^\dag
    \end{tikzcd}
    \]
    commutes. This follows from the commutativity of the individual squares:
    \begin{itemize}
        \item The commutativity of the leftmost square follows by applying the 6-functor formalism $\bD$ to the 3-cell
        \[
        \xymatrix{
        &&& V \ar^{p}[dl] \ar^{0_{M_F}}[dr] &&& \\
        && U \ar@{=}[dl] \ar^{0_{M_E}}[dr] && M_F \ar^{p_M}[dl] \ar^{\pi_{M_F}}[dr] && \\
        & U \ar@{=}[dl] \ar^{0_E}[dr] && M_E \ar^{i_{M_E/E}}[dl] \ar^{\pi_{M_E}}[dr] && V \ar^{p}[dl] \ar@{=}[dr] \\
        U && E && U && V
        }
        \]
        in the $\infty$-category of correspondences.
        \item The commutativity of the middle square follows from the fact that the exchange natural transformation $\Ex^!_*$ intertwines the unit of the adjunction $\pi^*_{M_E}\dashv (\pi_{M_E})_*$ and the unit of the adjunction $\pi^*_{M_F}\dashv (\pi_{M_F})_*$.
        \item The commutativity of the rightmost square follows from \cref{prop:constructiblevanishingcycles}(2)-(3).
    \end{itemize}
    \item (Isotropic reduction) The claim is local, so we may find a Lagrangian subbundle $M\subset F$ together with a splitting $F=M\oplus M^\vee$ and $E=F\oplus K\oplus K^\vee$. We have an isomorphism
    \begin{equation}\label{eq:basechangestabilization2}
    \Loc_{M\oplus K}\cong \Loc_{M}\Loc_D
    \end{equation}
    induced by $\Ex^!_*$. Then the claim reduces to the commutativity of the diagram
    \[
    \begin{tikzcd}[column sep=1.7cm]
    \phi_f\Loc_{M\oplus K} \arrow[rr, "\Ex^\Loc_\phi"] \arrow[d, "\eqref{eq:basechangestabilization2}"] && \Loc_{M\oplus K}\phi_{f\circ \pi_E + \q_E} \arrow[d, "\eqref{eq:basechangestabilization2}"] \\
    \phi_f\Loc_{M}\Loc_{D} \arrow[r, "\Ex^\Loc_\phi"] & \Loc_{M}\phi_{f\circ \pi_F + \q_F}\Loc_{D} \arrow[r, "\Ex^\Loc_\phi"] & \Loc_{M}\Loc_{D}\phi_{f\circ \pi_E + \q_E}
    \end{tikzcd}
    \]
    which follows from \cref{prop:constructiblevanishingcycles}(3).
\end{enumerate}

\subsubsection{Properties of the stabilization isomorphisms}

\begin{proposition}\label{prop:Linearization-Whitneysum}
Let $U$ be a scheme, $f\colon U\rightarrow \bA^1$ a function and $E, F$ orthogonal bundles of even rank over $U$. Then the diagram
\[\xymatrix@C-6pc{
0_{E\oplus F,*} \phi_f(-)\ar[rr]^-{\stab_{E\oplus F}}  \ar[d]_{\sim} && \phi_{f \circ \pi_{E\oplus F} +\q_{E\oplus F}}\pi_{E\oplus F}^\dag(-\otimes\ori_{E\oplus F})\ar[d]^{\eqref{eq:orientationsumisomorphism}} \\
0_{F|_E,*} 0_{E,*} \phi_f(-) \ar[rd]_{\stab_E} &  & \phi_{f\circ \pi_E \circ \pi_{F|_E} +\q_E \circ \pi_{F|_E} +\q_{F|_E}} \pi_{F|_E}^\dag (\pi_E^\dag(-\otimes_{\mu_2}\ori_E)\otimes_{\mu_2}\ori_{F|_E}) \\
& 0_{F|_E,*} \phi_{f\circ \pi_E + \q_E}\pi_E^\dag(-\otimes_{\mu_2}\ori_E)\ar[ru]_-{\stab_{F|_E}} 
}\]
is commutative.
\end{proposition}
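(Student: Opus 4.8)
The plan is to imitate the structure of the proof of \cref{thm:stabilizationconstruction}: first reduce to the metabolic case by smooth and \'etale descent, and then unwind everything into a diagram chase built from the compatibilities of hyperbolic localization, of the vanishing cycle exchange transformations, and of the orientation isomorphisms. First I would observe that every arrow in the square — hence both composites — is compatible with pullback along an arbitrary smooth morphism $V\rightarrow U$: the edges labelled by $\stab$ by the smooth base change property (1) of \cref{thm:stabilizationconstruction} (applied to $E\oplus F$, to $E$, and to the smooth morphism $V\times_U E\rightarrow E$ together with the orthogonal bundle $F|_E$ over $E$), and the remaining arrows by the very construction of \eqref{eq:orientationsumisomorphism} and of the vertical isomorphisms. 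Since all functors in sight land in perverse sheaves, an equality of the two composites may be checked after applying $p^{\dag}$ for $p$ a smooth surjection with connected fibres — fully faithful by \cref{prop:constructible6functor}(6b) — and after applying $p^{\dag}$ for an \'etale surjection $p$, by descent as in \eqref{15}. Pulling back first along the \'etale surjection $\ori_E\times_U\ori_F\rightarrow U$ we may assume $E$ and $F$ are oriented, so that by \eqref{eq:orientationsumisomorphism} $E\oplus F$ is oriented with orientation $o_E\otimes o_F$; pulling back further along the smooth surjection with connected fibres $\OG^{+}(E)\times_U\OG^{+}(F)\rightarrow U$ we may assume $E$ and $F$ carry Lagrangian subbundles $M_E$ and $M_F$, whence $M\coloneqq M_E\oplus M_F\subseteq E\oplus F$ is a positive Lagrangian subbundle (positivity follows from the elementary compatibility of \eqref{eq:orientationsumisomorphism} with \eqref{eq:orientationreduction}, since all reductions in question are the zero bundle) and $M_F|_E\coloneqq\pi_E^{*}M_F\subseteq F|_E$ is a Lagrangian subbundle. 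By the characterisation of the oriented stabilization isomorphism via pullback to the special orthogonal Grassmannian, and of the metabolic one via \eqref{eq:stabilizationmetabolicdiagram}, it then suffices to prove the statement in this metabolic situation, with each $\stab$ replaced by the corresponding $\stab^{(-)}$ attached to the chosen Lagrangian.

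In the metabolic case the key point is an identification of iterated hyperbolic localization. Writing the reduction correspondences out, the composite of $U\xleftarrow{\pi}M_E\hookrightarrow E$ with $E\xleftarrow{}M_F|_E\hookrightarrow E\oplus F$ has total space $M_E\times_U M_F=M$, so there is a base change isomorphism $\Loc_{M_E}\circ\Loc_{M_F|_E}\cong\Loc_{M}$ assembled from the exchange transformations $\Ex^{!}_{*}$, exactly as in the proof of the isotropic reduction property of \cref{thm:stabilizationconstruction}. Under this identification the proposition reduces to the commutativity of one large diagram, which I would break into: the functoriality of $\Ex^{!}_{*}$ for composition of correspondences, applied to the factorisations $0_{E\oplus F}=0_{F|_E}\circ 0_E$ and $\pi_{E\oplus F}=\pi_E\circ\pi_{F|_E}$ (this handles the iterated instances of \eqref{Eq:17} and \eqref{Eq:18}); the compatibility of $\Ex^{\Loc}_{\phi}$ with composition of hyperbolic localizations, which follows from \cref{prop:constructiblevanishingcycles}(2)--(3) just as in the isotropic reduction step, together with \cref{lm:stabilizationmetabolicisomorphism} to know the relevant transformations are invertible; and the compatibility of the orientation data, namely that \eqref{eq:orientationsumisomorphism} intertwines the reduction isomorphisms \eqref{eq:orientationreduction} for $M\subseteq E\oplus F$, $M_E\subseteq E$ and $M_F\subseteq F$, which is immediate from \cref{lm:determinantorthogonal} and \cref{lm:orientationreduction}.

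I expect the main obstacle to be purely one of bookkeeping: organising the iterated correspondence base change isomorphisms, the various vanishing cycle exchange transformations ($\Ex^{!}_{*}$, $\Ex^{\phi}_{*}$, $\Ex^{!}_{\phi}$, $\Ex^{\Loc}_{\phi}$) and the determinant line / orientation identifications so that the global diagram visibly commutes. There is no new conceptual input beyond what already appears in the proof of \cref{thm:stabilizationconstruction}; the only genuinely new verifications — the associativity of $\stab$ under iterated metabolic reductions, and the compatibility of \eqref{eq:orientationsumisomorphism} with \eqref{eq:orientationreduction} — are local and reduce to the explicit formulas in \cref{lm:determinantorthogonal} and \cref{lm:orientationreduction}, so they are routine.
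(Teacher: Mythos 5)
Your argument is correct, but it takes a genuinely different and considerably more elaborate route than the paper's own two-line proof.

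The paper's argument only trivializes $F$, not $E$: working locally, choose a Lagrangian $M\subset F$ and view $K=0\oplus M$ as a (merely) \emph{isotropic} subbundle of $E\oplus F$. Then $K^\perp=E\oplus M$, the reduction $K^\perp/K$ is precisely $E$, and the correspondence $E\leftarrow E\oplus M\hookrightarrow E\oplus F$ underlying $\Loc_{K^\perp}$ is the same as the one underlying the metabolic localization $\Loc_{M|_E}$ used to define $\stab^{M}_{F|_E}$. So the proposition is literally an instance of the isotropic reduction property, \cref{thm:stabilizationconstruction}(2), applied to the pair $(E\oplus F,\,K)$, combined with the defining diagram \eqref{eq:stabilizationmetabolicdiagram} for $\stab^M_{F|_E}$; no iterated localization and no unwinding of the Grassmannian/orientation-torsor descent is needed.

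Your proof instead pulls back to make both $E$ and $F$ metabolic, so you have to (a) descend back through both $\ori_E\times_U\ori_F$ and $\OG^+(E)\times_U\OG^+(F)$, (b) establish the identification $\Loc_{M_E}\circ\Loc_{M_F|_E}\cong\Loc_{M_E\oplus M_F}$ via composition of correspondences and $\Ex^!_*$, and (c) chase the resulting large diagram, separately handling the iterated instances of \eqref{Eq:17}, \eqref{Eq:18}, $\Ex^\Loc_\phi$, and the orientation identifications. Each of these verifications is routine and you have correctly located the relevant ingredients (\cref{prop:constructiblevanishingcycles}(2)--(3), \cref{lm:stabilizationmetabolicisomorphism}, \cref{lm:determinantorthogonal}, \cref{lm:orientationreduction}), so the strategy succeeds; but it re-proves, by hand and in a special case, the compatibility that the isotropic reduction axiom was designed to encode. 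The moral of the paper's approach is that the isotropic reduction property is strictly stronger than the Lagrangian/metabolic case, and choosing the isotropic subbundle cleverly (supported entirely inside $F$) collapses the whole diagram at once. Your observation that $M_E\oplus M_F$ is positive when $M_E$ and $M_F$ are, and that \eqref{eq:orientationsumisomorphism} intertwines the reduction isomorphisms \eqref{eq:orientationreduction}, is correct and would be needed if one carried out your diagram chase in full.
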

\begin{proof}
The claim is local, so we may assume that $F$ admits a Lagrangian subbundle $M\subset F$. Let $K = 0\oplus M\subset E\oplus F$ be the corresponding isotropic subbundle. Then $K^\perp = E\oplus M$ and hence $K^\perp/K = E$. Writing $\stab_{F|_E}$ locally as $\stab^M_{F|_E}$ the claim reduces to the isotropic reduction property of the stabilization isomorphism from \cref{thm:stabilizationconstruction}.
\end{proof}

Next we prove a finite base change property.

\begin{proposition}\label{prop:stabilizationfinitebasechange}
Let $U$ be a scheme, $E$ an orthogonal bundle of even rank over $U$, $f\colon U\rightarrow \bA^1$ a function and $p\colon V\rightarrow U$ a finite morphism. Let $g\coloneqq f\circ p$, $F\coloneqq p^*E$, and $p_E\coloneqq p|_E$. The diagram
\[\xymatrix@C+2pc{
(0_E)_*\phi_f p_*(-) \ar[r]^-{\stab_E}\ar[d]^{\Ex^\phi_*} & \phi_{f\circ \pi_E +\q_E}\pi_E^\dag(p_*(-)\otimes_{\mu_2}\ori_E) \ar[d]^{\Ex^\phi_*,\Ex^!_*\otimes\eqref{eq:pullbackorientation}} \\
(p_E)_*(0_F)_*\phi_{g}(-) \ar[r]^-{\stab_{F}} & (p_E)_*\phi_{g \circ \pi_{F} + \q_{F}}\pi_{F}^\dag(-\otimes_{\mu_2}\ori_F)
}\]
commutes.
\end{proposition}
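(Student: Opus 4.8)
The plan is to mimic the proof of the smooth base change property of the stabilization isomorphism (part (1) of \cref{thm:stabilizationconstruction}): reduce first to the metabolic situation, and then check a diagram of exchange transformations, with the smooth pullback there replaced by the finite pushforward here.

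First I would reduce to the case where $E$ admits a Lagrangian subbundle. The claim is \'etale-local on $U$: the stabilization isomorphisms are compatible with smooth base change by \cref{thm:stabilizationconstruction}(1), the six-functor and vanishing-cycle exchange transformations are compatible with \'etale restriction, $\Dbc(-)$ satisfies \'etale descent, and a finite morphism remains finite after \'etale base change, so commutativity of the square for the restriction of the data along an \'etale cover $\{U_i\to U\}$ implies commutativity over $U$. Since $k=\C$ and $\rank E$ is even, an orthogonal bundle is \'etale-locally split, so we may assume there is a Lagrangian subbundle $M\subseteq E$; set $M_F=p^*M\subseteq F$. The Lagrangians $M$ and $M_F$ trivialize $\ori_E$ and $\ori_F$, and under \eqref{eq:pullbackorientation} the trivialization by $M_F$ is the pullback of the trivialization by $M$, so no sign of the kind in \cref{prop:stabilizationorientation} intervenes. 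Under these trivializations $\stab_E=\stab_E^M$ and $\stab_F=\stab_F^{M_F}$ as defined by \eqref{eq:stabilizationmetabolicdiagram}, and the orientation-twist factors in the right vertical map become identities, so it remains to prove the compatibility of $\stab_E^M$ and $\stab_F^{M_F}$ with the finite pushforward $\Ex^\phi_*$.

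Next I would prove this metabolic case. Because the target $(p_E)_*\phi_{g\circ\pi_F+\q_F}\pi_F^\dag(-)$ is the pushforward of a sheaf supported on the zero section of $F$, it is supported on the zero section of $E$, and hence it suffices to check commutativity after applying $\Loc_M=(\pi_{M/U})_*i_{M/E}^!$, which is a left inverse to $(0_E)_*$ on such objects. Using the Cartesian squares relating $M_F,M,V,U$ and $M_F,M,F,E$, one has a finite base change isomorphism $\beta\colon \Loc_M\circ (p_E)_*\xrightarrow{\sim} p_*\circ \Loc_{M_F}$ built from $\Ex^!_*$ (here $p_E$ is finite since $p$ is, so the relevant base change maps are available). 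Applying $\Loc_M$ and unwinding the defining diagram \eqref{eq:stabilizationmetabolicdiagram} of $\stab_E^M$ and $\stab_F^{M_F}$ turns the square into a large diagram, formally the same as the one in the proof of \cref{thm:stabilizationconstruction}(1), with $p^\dag$ and \eqref{eq:basechangestabilization1} replaced throughout by the finite pushforwards $p_*,(p_E)_*$ and by $\beta$. It decomposes into three subsquares: the first expresses the compatibility of $\beta$ with the isomorphisms \eqref{Eq:17}, an identity of six-functor exchange transformations attached to a $3$-cell of correspondences; the second expresses that $\beta$ intertwines the units of $\pi_{M/U}^*\dashv (\pi_{M/U})_*$ and $\pi_{M_F/V}^*\dashv (\pi_{M_F/V})_*$, which is what enters \eqref{Eq:18}; and the third expresses the compatibility of $\beta$ with the vanishing-cycle exchange $\Ex^\Loc_\phi$ of \eqref{Eq:16}. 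Pasting the three gives the claim.

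The main obstacle is the third subsquare: the compatibility of the localization base change $\beta$ with $\Ex^\Loc_\phi$. Since $\Ex^\Loc_\phi$ is itself the composite of $\Ex^\phi_*$ and $\Ex^!_\phi$ over two nested Cartesian squares, establishing this requires combining the functoriality of $\Ex^\phi_*$ and $\Ex^!_\phi$ for compositions (\cref{prop:constructiblevanishingcycles}(2)) with their compatibility with base change (\cref{prop:constructiblevanishingcycles}(3)) for the overlapping squares in play — the same kind of bookkeeping as in the existing smooth base change argument, but with finite pushforwards in place of smooth pullbacks. The remaining subsquares and the \'etale-local reduction are routine.
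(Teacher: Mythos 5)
Your proposal is correct and matches the paper's proof: the paper likewise reduces étale-locally to the metabolic case $E = M_E \oplus M_E^\vee$, introduces the finite base-change isomorphism $\Loc_{M_E}(p_E)_*\cong p_*\Loc_{M_F}$ built from $\Ex^!_*$, and checks the same three subsquares (the $3$-cell of correspondences for \eqref{Eq:17}, the unit-intertwining for \eqref{Eq:18}, and \cref{prop:constructiblevanishingcycles}(2)-(3) for $\Ex^\Loc_\phi$). Your extra remark that $M_F=p^*M$ forces the orientation trivializations to match under \eqref{eq:pullbackorientation} is a correct and slightly more explicit justification than the paper, which leaves this implicit.
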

\begin{proof}
The claim is local, so as in the proof of the smooth base change property we may assume that $E$ admits a Lagrangian $M_E\subset E$. Let $F=p^*E$ and $M_F=p^* M_E$. Diagram \eqref{eq:Lagrangiansubbundlepullback} induces a base change isomorphism
\begin{equation}\label{eq:finitebasechangestabilization}
\Loc_{M_E}(p_E)_*\cong p_*\Loc_{M_F}
\end{equation}
determined by $\Ex^!_*$. We have to show that the outer rectangle in the diagram
    \[
    \begin{tikzcd}[column sep=1.5cm]
    \Loc_{M_E} 0_{E, *} \phi_f p_* \ar{d}{\Ex^\phi_*, \eqref{eq:finitebasechangestabilization}} & \phi_fp_* \arrow[l, "\eqref{Eq:17}" above] \arrow[r, "\eqref{Eq:18}"] \ar{d}{\Ex^\phi_*} & \phi_f \Loc_{M_E}\pi_E^\dag p_* \ar{d}{\Ex^!_*, \eqref{eq:finitebasechangestabilization}, \Ex^\phi_*} \arrow[r, "\Ex^\Loc_{\phi}"] & \Loc_{M_E}\phi_{f\circ \pi_E + \q_E}\pi_E^\dag p_* \ar{d}{\Ex^!_*, \Ex^\phi_*, \eqref{eq:finitebasechangestabilization}} \\
    p_* \Loc_{M_F}0_{F, *}\phi_g & p_*\phi_g \arrow[l, "\eqref{Eq:17}" above] \arrow[r, "\eqref{Eq:18}"] & p_*\phi_g\Loc_{M_F}\pi_F^\dag \arrow[r, "\Ex^\Loc_\phi"] & p_*\Loc_{M_F}\phi_{g\circ \pi_F + \q_F}\pi_F^\dag
    \end{tikzcd}
    \]
    commutes. This follows from the commutativity of the individual squares:
    \begin{itemize}
        \item The commutativity of the leftmost square follows by applying sheaf theory $\bD$ to the 3-cell
        \[
        \xymatrix{
        &&& V \ar@{=}[dl] \ar^{0_{M_F}}[dr] &&& \\
        && V \ar@{=}[dl] \ar^{0_{M_F}}[dr] && M_F \ar@{=}[dl] \ar^{p_M}[dr] && \\
        & V \ar@{=}[dl] \ar^{0_F}[dr] && M_F \ar^{i_{M_F/F}}[dl] \ar^{p_M}[dr] && M_E \ar@{=}[dl] \ar^{\pi_{M_E}}[dr] & \\
        V && F && M_E && U
        }
        \]
        in the $\infty$-category of correspondences.
        \item The commutativity of the middle square follows from the fact that the exchange natural transformation $\Ex^!_*$ intertwines the unit of the adjunction $\pi^*_{M_E}\dashv (\pi_{M_E})_*$ and the unit of the adjunction $\pi^*_{M_F}\dashv (\pi_{M_F})_*$.
        \item The commutativity of the rightmost square follows from \cref{prop:constructiblevanishingcycles}(2)-(3).
    \end{itemize}
\end{proof}

Next we prove a compatibility with Verdier duality.

\begin{proposition}\label{prop:stabilizationduality}
Let $U$ be a scheme, $E$ an orthogonal bundle of even rank over $U$ and $f\colon U\rightarrow \bA^1$ a function. The diagram
\[
\xymatrix{
(0_E)_*\phi_f\bbD(-)\ar^-{\stab_E}[r] \ar^{\Ex^{\phi, \bbD},\Ex_{*, \bbD}}[d] & \phi_{f\circ \pi_E+\q_E} \pi^\dag_E(\bbD(-)\otimes_{\mu_2}\ori_E) \ar^{\Ex^{\dag, \bbD},\Ex^{\phi, \bbD}\otimes\eqref{eq:orientationreverse}}[d] \\
\bbD (0_E)_* \phi_{-f}(-) & \bbD\phi_{-f\circ \pi_E-\q_E} \pi^\dag_E(-\otimes_{\mu_2}\ori_{\overline{E}}) \ar_-{\stab_{\overline{E}}}[l]
}
\]
commutes.
\end{proposition}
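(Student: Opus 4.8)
The plan is to reduce the statement to the already-established uniqueness characterization of $\stab_E$ in \cref{thm:stabilizationconstruction}, namely that a natural isomorphism of the required form is determined by the combination of smooth base change and isotropic reduction. Concretely, I would proceed by the same descent ladder that was used to construct $\stab_E$: first the metabolic case (when $E$ admits a Lagrangian subbundle $M$), then the oriented case (pulling back along the special orthogonal Grassmannian $\OG^+(E)\to U$, which is smooth with connected fibers so that $p^\dag$ is fully faithful on perverse sheaves), and finally the general case (descent along the \'etale double cover $\ori_E\to U$). Because all of the structural isomorphisms appearing in the diagram --- $\Ex^{\phi,\bbD}$, $\Ex_{*,\bbD}$, $\Ex^{\dag,\bbD}$, and the orientation-reversal map $\eqref{eq:orientationreverse}$ --- are compatible with smooth pullback and with isotropic reduction (this compatibility is recorded in \cref{prop:phiD}, \cref{prop:constructible6functor}, \cref{prop:fdagDD}, and the functoriality properties of $\ori$ from \cref{sect:orthogonal}), it suffices to check commutativity in the metabolic case.

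So the heart of the argument is the metabolic case. Here $\stab_E = \stab_E^M$ is characterized by the commutative square \eqref{eq:stabilizationmetabolicdiagram} involving $\Loc_M = (\pi_M)_* i_{M/E}^!$ and the isomorphisms $\Ex^\Loc_\phi$, \eqref{Eq:17}, \eqref{Eq:18}. I would rewrite the Verdier-duality square by composing both of its routes with $\Loc_M$ (equivalently $\Loc_{\overline M}$, noting a Lagrangian for $E$ is a Lagrangian for $\overline E$), so that both sides become expressions built purely out of six-functor exchange transformations, the vanishing-cycle duality $\Ex^{\varphi,\bbD}$, and the fundamental class $\pur$. The key observation is that hyperbolic localization $\Loc_M$ is \emph{Verdier self-dual up to a shift and twist}: under the $\Gm$-action with attractor $M$, one has $\bbD \circ \Loc_M \cong \Loc_{M}\circ\bbD$ after accounting for $\omega_{M/U}$ and $\omega_{E/U}$, and this identification is exactly compatible with the dimensional bookkeeping in \eqref{Eq:18}. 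Given this, commutativity reduces to a finite collection of standard compatibilities: (i) $\Ex^{!,\bbD}$, $\Ex^{*,\bbD}$, $\Ex_{*,\bbD}$ are interchanged/natural with respect to composition of functors (the commutative square \eqref{eq:Ex!*selfdual} and its consequences); (ii) $\Ex^{\varphi,\bbD}$ commutes with $*$-pushforward, $!$-pullback, and $!$-pushforward in the sense of \cref{prop:phiD}(2)--(3); and (iii) $\pur_f$ is Verdier self-dual in the precise sense of \eqref{eq:ExdagD} and \cref{prop:fdagDD}. Each of these is a diagram chase in the $(\infty,2)$-category of correspondences with no genuinely new input.

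The main obstacle I anticipate is \textbf{tracking the orientation signs correctly} through the reduction. The map \eqref{eq:orientationreverse} multiplies a volume form by $i^{\rank E}$, and \cref{prop:stabilizationorientation} shows $\stab_E^{-o} = -\stab_E^o$; meanwhile $\Ex^{\varphi,\bbD}$ carries an intrinsic monodromy ambiguity (see \cref{rmk:phireverse}, where $\overline{\Ex}^{\phi,\bbD}$ differs from $\Ex^{\phi,\bbD}$ by the half-monodromy $T_\pi$). I expect that the correct statement --- that the square commutes \emph{on the nose} with $\eqref{eq:orientationreverse}$ rather than up to a sign or a monodromy twist --- will force a careful comparison between the orientation-reversal convention and the Verdier-duality convention for vanishing cycles, most transparently done by reducing, via the Thom--Sebastiani isomorphism \cref{prop:phiTS} and smooth base change to a point, to the rank-one model $\varphi_{x^2}\omega_V$ and the explicit computation with $q\colon \bA^1\to\bA^1$, $x\mapsto x^2$ that already appears in the proof of \cref{prop:stabilizationorientation}. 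Pinning down that the sign $i^{\rank E}$ from $\eqref{eq:orientationreverse}$ matches the shift/twist discrepancy $\Ex^{\dag,\bbD}$ introduces on $\pi_E^\dag$ (of relative dimension $\rank E$) is the one place where a genuine, if short, computation is unavoidable; everything else is formal.
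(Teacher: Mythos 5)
Your skeleton --- localize to the metabolic case, compose with $\Loc_M$, and reduce to formal six-functor exchange compatibilities plus a Verdier-duality statement for hyperbolic localization --- is the same one the paper uses. But two of the details are off, and together they would stall the argument as written.

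First, hyperbolic localization is not self-dual in the form you assert. Verdier duality gives $\bbD\,\Loc_M\,\bbD = \bbD\,(\pi_M)_*\,i_M^!\,\bbD \cong (\pi_M)_!\,i_M^*$, which is the \emph{other} adjoint pair, not $\Loc_M$; the identification with a localization functor is with $\Loc_{M^\vee}$, via Braden's theorem for the $\Gm$-action with attractor $M$ and repeller $M^\vee$, and it holds only on the zero-section-supported subcategory. The paper makes this precise by introducing the transformation $H\colon (\pi_{M^\vee})_* i_{M^\vee}^! \to (\pi_M)_! i_M^*$ and invoking \cite{Braden}. Writing $\Loc_M$ on both sides of your proposed self-duality suppresses the $M\leftrightarrow M^\vee$ exchange, and that exchange is precisely what makes the rest of the argument, including the sign bookkeeping, close up.

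Second --- and this is where your anticipated obstacle evaporates --- once you compare $\stab^M_E$ on top against $\stab^{M^\vee}_{\overline{E}}$ on the bottom (rather than using the same Lagrangian on both rows), the signs cancel immediately: the orientations of $E$ determined by $M$ and by $M^\vee$ differ by $(-1)^{\rank E/2}$, and \eqref{eq:orientationreverse} acts by $i^{\rank E}=(-1)^{\rank E/2}$ since $\rank E$ is even, so the diagram with $\stab^M_E$ and $\stab^{M^\vee}_{\overline{E}}$ must commute with \emph{no} residual sign and no further computation. Your proposed detour through Thom--Sebastiani to the rank-one model $\varphi_{x^2}\omega_V$ is therefore both unnecessary and misplaced: a rank-one orthogonal bundle admits no Lagrangian, so that computation --- which, in the proof of \cref{prop:stabilizationorientation}, measures the dependence of $\stab$ on a choice of orientation rather than its compatibility with $\bbD$ --- belongs to an earlier rung of the descent ladder and cannot simply be re-invoked inside the metabolic argument. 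The monodromy concern from \cref{rmk:phireverse} is likewise a red herring here: the statement is formulated with $\Ex^{\phi,\bbD}\colon \phi_t\bbD\to\bbD\phi_{-t}$ and with $-f$ and $\overline{E}$ throughout the bottom row, which is exactly the combination for which no $T_\pi$-twist appears.
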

\begin{proof}
The claim is local, so we may assume $E=M\oplus M^\vee$ for a pair of Lagrangian subbundles $M,M^\vee\subset E$. The orientations of $E$ induced by $M$ and $M^\vee$ differ by $(-1)^{\rank(E)/2}$. Similarly, the morphism \eqref{eq:orientationreverse} acts on volume forms by $(-1)^{\rank(E)/2}$. Thus, we have to show that the diagram
\[
\xymatrix@C=1.5cm{
(0_E)_*\phi_f\bbD(-)\ar^-{\stab^M_E}[r] \ar^{\Ex^{\phi, \bbD},\Ex_{*, \bbD}}[d] & \phi_{f\circ \pi_E+\q_E} \pi^\dag_E\bbD(-) \ar^{\Ex^{\dag, \bbD},\Ex^{\phi, \bbD}}[d] \\
\bbD (0_E)_* \phi_{-f}(-) & \bbD\phi_{-f\circ \pi_E-\q_E} \pi^\dag_E(-) \ar_-{\stab^{M^\vee}_{\overline{E}}}[l]
}
\]
commutes. For this, it is sufficient to show that it commutes after applying $\Loc_M$, as in the definition of $\stab^M_E$.

Consider the Cartesian diagram
\[
\xymatrix{
U \ar^{0_M}[r] \ar^{0_{M^\vee}}[d] & M \ar^{i_M}[d] \\
M^\vee \ar^{i_{M^\vee}}[r] & E.
}
\]
Then we have a natural transformation $H\colon \pi_{M^\vee, *} i_{M^\vee}^!\rightarrow \pi_{M, !} i_M^*$ defined as the mate of the composite isomorphism
\[\id\cong \pi_{M, !} 0_{M, !} 0_{M^\vee}^* \pi^*_{M^\vee}\xrightarrow{\Ex^*_!} \pi_{M, !} i_M^* i_{M^\vee, !} \pi^*_{M^\vee}.\]
By \cite{Braden} (considering the $\Gm$-action on $E$ with $M$ of weight $1$ and $M^\vee$ of weight $-1$) $H$ is an isomorphism on $\Gm$-equivariant complexes; in particular, it is an isomorphism on the subcategory of constructible complexes on $E$ supported on $U$.

Using the definition of the stabilization isomorphism and commuting $\Loc_M$ past $\bbD$ the claim reduces to the following ones:
\begin{enumerate}
    \item The diagram
    \[
    \xymatrix{
    \id \ar^{\sim}[r] \ar^{\sim}[d] & \pi_{M, !} 0_{M, !} \ar^-{\Ex^*_!}[r] & \pi_{M, !} i_M^* 0_{E, !} \ar^{\fgsp_{0_E}}[d] \\
    \pi_{M^\vee, *} 0_{M^\vee, *} \ar^-{\Ex^!_*}[r] & \pi_{M^\vee, *} i_{M^\vee}^! 0_{E, *} \ar^{H}[r] & \pi_{M, !} i_M^* 0_{E, *}
    }
    \]
    commutes. Plugging in the definitions of $\Ex^!_*$, $H$ and $\fgsp_{0_E}$ as mates of the respective natural transformations, the claim follows from the compatibility of $\Ex^*_!$ with compositions.
    \item The diagram
    \[
    \xymatrix{
    \pi_{M, !} i_M^* \pi_E^*[\rank(E)] \ar^{\sim}[r] \ar^{\pur_{\pi_E}}[d] & \pi_{M, !} \pi_M^*[2\rank(M)] \ar^-{\pur_{\pi_M}}[r] & \pi_{M, !} \pi_M^! \ar^{\counit}[r] & \id \ar^{\unit}[d] \\
    \pi_{M, !} i_M^* \pi_E^![-\rank(E)] & \pi_{M^\vee, *} i_{M^\vee}^! \pi_E^![-\rank(E)] \ar_{H}[l] & \pi_{M^\vee, *} \pi_{M^\vee}^![-2 \rank(M)] \ar_{\sim}[l] & \pi_{M^\vee, *} \pi_{M^\vee}^* \ar_-{\pur_{\pi_{M^\vee}}}[l] 
    }
    \]
    commutes. Unpacking the definition of $H$ as a mate, the commutativity of this diagram reduces to the commutativity of
    \[
    \scalebox{0.85}{
    \xymatrix@C=2cm{
    \id \ar^-{\sim}[r] \ar@{=}[d] & \pi_{M, !} 0_{M, !} 0_{M^\vee}^* \pi_{M^\vee}^* \ar^{\Ex^*_!}[r] & \pi_{M, !} i_M^* i_{M^\vee, !} \pi^*_{M^\vee} \ar^-{\unit}[r] & \pi_{M, !} i_M^* \pi_E^! \pi_{E, !} i_{M^\vee, !} \pi^*_{M^\vee} \\
    \id & \pi_{M, !} \pi_M^! \pi_{M^\vee, !} \pi^!_{M^\vee} \ar_-{\counit, \counit}[l] & \pi_{M, !} \pi_M^* \pi_{M^\vee, !} \pi^*_{M^\vee} [2\rank(E)] \ar_{\pur_{\pi_M}, \pur_{\pi_{M^\vee}}}[l] & \pi_{M, !} i_M^* \pi_E^* \pi_{E, !} i_{M^\vee, !} \pi^*_{M^\vee} [2\rank(E)] \ar_{\pur_{\pi_E}}[u] \ar_{\sim}[l]
    }
    }
    \]
    which follows from the compatibility of the purity isomorphism with respect to compositions.
    \item The diagram
    \[
    \xymatrix{
    \phi_f \pi_{M^\vee, *} i_{M^\vee}^! \ar^-{\Ex^\phi_*}[r] \ar^{H}[d] & \pi_{M^\vee, *} \phi_{f\circ \pi_{M^\vee}} i_{M^\vee}^! \ar^-{\Ex^!_\phi}[r] & \pi_{M^\vee, *} i_{M^\vee}^! \phi_{f\circ \pi_E + \q_E} \ar^{H}[d] \\
    \phi_f \pi_{M, !} i_M^* & \pi_{M, !} \phi_{f\circ \pi_M}i_M^* \ar_-{\Ex^\phi_!}[l] & \pi_{M, !} i_M^* \phi_{f\circ \pi_E + \q_E} \ar_-{\Ex^*_\phi}[l]
    }
    \]
    commutes. Plugging in the definitions of $H$, $\Ex^\phi_!$ and $\Ex^*_\phi$ as mates, the commutativity of this diagram follows from \cref{prop:constructiblevanishingcycles}(3).
\end{enumerate}
\end{proof}

Finally, we prove a compatibility with products.

\begin{proposition}\label{prop:stabilizationproducts}
Let $U,V$ be schemes, $E$ an orthogonal bundle of even rank over $U$ and $f\colon U\rightarrow \bA^1$ and $g\colon V\rightarrow \bA^1$ two functions. Let $E'=E\times V$ be the pullback orthogonal bundle over $U\times V$. Then the diagram
\[
\xymatrix@C=1.5cm{
(0_E)_*\phi_f(-)\boxtimes \phi_g(-) \ar^-{\stab_E\boxtimes \id}[r] \ar^{\TS}[d] & \phi_{f\circ \pi_E + \q_E} \pi_E^\dag(-\otimes_{\mu_2}\ori_E)\boxtimes \phi_g(-) \ar^{\TS\otimes\eqref{eq:pullbackorientation}}[d] \\
(0_{E'})_*\phi_{f\boxplus g}(-\boxtimes -) \ar^-{\stab_{E'}}[r] & \phi_{f\circ \pi_E + \q_E \boxplus g}\pi_{E'}^\dag((-\boxtimes -)\otimes_{\mu_2}\ori_{E'})
}
\]
commutes.
\end{proposition}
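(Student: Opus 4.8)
The plan is to follow the same strategy as in the proofs of the smooth and finite base change properties of the stabilization isomorphism (\cref{thm:stabilizationconstruction}(1) and \cref{prop:stabilizationfinitebasechange}). Since the claim is local on $U$, I would first assume that $E$ admits a Lagrangian subbundle $M\subseteq E$, so that $M' = M\times V$ is a Lagrangian subbundle of $E' = E\times V$ and the stabilization isomorphisms in question become the metabolic ones $\stab_E = \stab_E^M$ and $\stab_{E'} = \stab_{E'}^{M'}$ defined in \eqref{eq:stabilizationmetabolic}. Writing $D = M^\perp$ and $D' = (M')^\perp = D\times V$, the Lagrangian correspondence $U\leftarrow D\rightarrow E$ defining $\Loc_M$ pulls back along the projection $U\times V\to U$ to the Lagrangian correspondence defining $\Loc_{M'}$; combining this with the K\"unneth isomorphism $\Ex^{!,\boxtimes}$ (which is invertible on constructible inputs by the discussion following \cref{prop:fundamentalclass}) one obtains a natural isomorphism $\Loc_{M'}(-\boxtimes-)\cong \Loc_M(-)\boxtimes(-)$.

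Next, since $\phi_{(f\circ\pi_E + \q_E)\boxplus g}\pi_{E'}^\dag(\cF\boxtimes\cG)$ is supported on the zero section of $E'\to U\times V$ for $\cF\in\Perv(U)$, $\cG\in\Perv(V)$ by \cref{prop:constructiblevanishingcycles}(1), the functor $\Loc_{M'}$ is a left inverse to $(0_{E'})_*$ on the subcategory of objects supported on the zero section (via \eqref{Eq:17}), and the same holds for $\Loc_M$ and $(0_E)_*$. Hence it suffices to check commutativity of the square in the statement after applying $\Loc_{M'}$. Unwinding the definitions of $\stab_E^M$ and $\stab_{E'}^{M'}$ through the defining diagram \eqref{eq:stabilizationmetabolicdiagram}, the claim reduces to the assertion that the Thom--Sebastiani isomorphism $\TS$, together with the K\"unneth decomposition of $\Loc_{M'}$, intertwines the three building blocks of the construction: the isomorphism \eqref{Eq:17} (built from $\Ex^!_*$), the isomorphism \eqref{Eq:18} (built from the unit of $\pi_{D/U}^*\dashv (\pi_{D/U})_*$ and the purity isomorphism $\pur_{\pi_{D/U}}$), and the exchange transformation $\Ex^\Loc_\phi$ (built from $\Ex^\phi_*$ and $\Ex^!_\phi$). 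The first of these follows from the compatibility of $\Ex^!_*$ with external products, the second from the compatibility of purity with products (\cref{prop:fundamentalclass}) together with the analogous compatibility of the unit, and the third from \cref{prop:phiTS}(2)--(3) and the naturality of $\TS$. The argument is then assembled by decomposing a single large diagram into squares, each commuting for one of these reasons.

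As in the other proofs in this section, there are no new conceptual ingredients beyond the six-functor compatibilities already recorded in \cref{prop:constructible6functor,prop:fundamentalclass,prop:phiTS}; the main obstacle is the bookkeeping in the middle step, namely verifying that the K\"unneth decomposition $\Loc_{M'}(-\boxtimes-)\cong\Loc_M(-)\boxtimes(-)$ is compatible with the purity isomorphisms and the exchange transformations $\Ex^!_*$, $\Ex^!_\phi$, $\Ex^\phi_*$ entering \eqref{Eq:17}, \eqref{Eq:18} and $\Ex^\Loc_\phi$. This requires tracking the relevant $2$-cells and $3$-cells in the $\infty$-category of correspondences, exactly as in the proofs of \cref{thm:stabilizationconstruction}(1) and \cref{prop:stabilizationfinitebasechange}, but with the base change along $\pi_{D/U}$ replaced by the external product with the identity on $V$.
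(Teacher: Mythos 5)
Your argument reduces to the metabolic case and verifies the identity after applying $\Loc_{M'}$, checking that the three building blocks \eqref{Eq:17}, \eqref{Eq:18}, and $\Ex^\Loc_\phi$ of $\stab^M$ are compatible with the Thom--Sebastiani isomorphism; this is exactly the paper's strategy. The only cosmetic difference is that you package the K\"unneth decomposition of $\Loc_{M'}$ as a separate isomorphism, whereas the paper folds it into the vertical $\TS$ maps of its outer rectangle, so the proofs are essentially the same.
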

\begin{proof}
The claim is local, so we may assume that there is a Lagrangian subbundle $M\subset E$. Let $M'\subset E'$ be its pullback to $U\times V$. Then we have to show that the outside rectangle in
\[
\scalebox{0.9}{
\xymatrix{
\Loc_M(0_E)_*\phi_f(-)\boxtimes \phi_g(-) \ar^{\TS}[d] & \phi_f(-)\boxtimes \phi_g(-) \ar_-{\eqref{Eq:17}}[l] \ar^-{\eqref{Eq:18}}[r] \ar^{\TS}[d] & \phi_f\Loc_M\pi_E^\dag(-)\boxtimes \phi_g(-) \ar^-{\Ex^\Loc_\phi}[r] \ar^{\TS}[d] & \Loc_M\phi_{f\circ \pi_E + \q_E}\pi_E^\dag(-)\boxtimes \phi_g(-) \ar^{\TS}[d] \\
\Loc_{M'}(0_{E'})_*\phi_{f\boxplus g}(-\boxtimes -) & \phi_{f\boxplus g}(-\boxtimes -) \ar_-{\eqref{Eq:17}}[l] \ar^-{\eqref{Eq:18}}[r] & \phi_{f\boxplus g}\Loc_{M'}\pi_{E'}^\dag(-\boxtimes -) \ar^-{\Ex^\Loc_\phi}[r] & \Loc_{M'}\phi_{f\circ \pi_E + \q_E\boxplus g}\pi_{E'}^\dag(-\boxtimes -)
}}
\]
commutes. But in the above diagram individual squares commute: this follows from the compatibility of the isomorphisms $\Ex^!_*$, the unit $\id\rightarrow \pi_{M, *}\pi_M^*$, $\Ex^\phi_*$ and $\Ex^!_\phi$ with products.
\end{proof}

\subsection{Symmetries of vanishing cycles}

In this section we construct and analyze stabilization isomorphisms for \'etale morphisms of critical charts.

Let $\Phi\colon (U, f)\rightarrow (V, g)$ be a smooth morphism of LG pairs over $B$, so that by \cref{prop:smoothcriticallocus} $\Phi$ restricts to a smooth morphism $\Phi\colon \Crit_{U/B}(f)\rightarrow \Crit_{V/B}(g)$ of the same relative dimension $\dim(U/V)$. There is a natural comparison isomorphism
\[\phi_f(U\rightarrow B)^\dag\xrightarrow{\Ex^!_\phi}\Phi^\dag\phi_g(V\rightarrow B)^\dag\xleftarrow{\pur_\Phi} \Phi^*\phi_g(V\rightarrow B)^\dag[\dim(U/V)]\]
of functors $\Perv(B)\rightarrow \Perv(U)$. By \cref{prop:constructiblevanishingcycles}(1) the image of these functors is supported on the relative critical locus $\Crit_{U/B}(f)$ and thus we get a natural isomorphism
\begin{equation}\label{eq:vanishingcyclesexchange}
\Ex_{\Phi}\colon (\phi_f(U\rightarrow B)^\dag)(-)|_{\Crit_{U/B}(f)}\xrightarrow{\sim}(\phi_g(V\rightarrow B)^\dag)(-)|_{\Crit_{U/B}(f)}[\dim(U/V)].
\end{equation}

\begin{lemma}\label{lm:Excomposite}
Let $(U, f)\xrightarrow{\Phi} (V, g)\xrightarrow{\Psi} (W, h)$ be a composite of smooth morphisms of LG pairs over $B$. Then
\[\Ex_{\Psi\circ \Phi} = \Ex_{\Psi}|_{\Crit_{U/B}(f)}\circ \Ex_{\Phi}.\]
\end{lemma}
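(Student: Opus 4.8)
The statement is a cocycle (functoriality) identity for the comparison isomorphism $\Ex_\Phi$ attached to a smooth morphism of LG pairs, and the natural approach is to unwind the definition of $\Ex_\Phi$ from \eqref{eq:vanishingcyclesexchange} and reduce everything to already-established compatibilities of the six-functor formalism. Recall that $\Ex_\Phi$ is, up to the support observation from \cref{prop:constructiblevanishingcycles}(1), the composite of $\Ex^!_\phi \colon \phi_f (U\to B)^\dag \to \Phi^\dag \phi_g (V\to B)^\dag$ with the inverse purity isomorphism $\pur_\Phi \colon \Phi^\dag \simeq \Phi^*[\dim(U/V)]$ on the relevant functors. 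So what must be checked is that these two ingredients each compose correctly: $\Ex^!_\phi$ is functorial for compositions of morphisms (this is \cref{prop:constructiblevanishingcycles}(2), which applies here since $t' = \Phi^* t$ etc.), and $\pur_\Phi$ is functorial for compositions up to coherent homotopy (\cref{prop:fundamentalclass}, together with the compatibility $\pur_{\Psi\circ\Phi} \simeq \pur_\Phi \circ \Phi^*(\pur_\Psi)$ embedded in its statement).

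The key steps, in order, would be: (1) expand $\Ex_{\Psi\circ\Phi}$ as $\pur_{\Psi\circ\Phi}^{-1} \circ \Ex^!_{\phi, \Psi\circ\Phi}$ acting on $\phi_f (U\to B)^\dag$ restricted to $\Crit_{U/B}(f)$; (2) use \cref{prop:constructiblevanishingcycles}(2) to factor $\Ex^!_{\phi, \Psi\circ\Phi} = \Phi^!(\Ex^!_{\phi,\Psi}) \circ \Ex^!_{\phi,\Phi}$, taking care that the smoothness of $\Phi$ and $\Psi$ lets us replace $(-)^!$ by $(-)^\dag$ throughout via the shifts, and that $\Ex^!_\phi$ is invertible on smooth morphisms by \cref{prop:constructiblevanishingcycles}(4); (3) use the $\infty$-categorical functoriality of $\pur$ from \cref{prop:fundamentalclass} to split $\pur_{\Psi\circ\Phi}$ into $\pur_\Phi$ followed by $\Phi^*(\pur_\Psi)$; (4) assemble these into a commuting diagram by interchanging the $\Phi^*$ (resp.\ $\Phi^\dag$) applied to the "$\Psi$-layer" natural transformations with the "$\Phi$-layer" ones --- i.e.\ a standard Godement-interchange/naturality argument --- and then invoke the support statement of \cref{prop:constructiblevanishingcycles}(1) at each stage to land back inside sheaves supported on $\Crit_{U/B}(f)$, where all the shifts and restrictions are the ones appearing in the two sides of the claimed identity.

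Concretely I would write a single commutative diagram whose outer rectangle is the asserted equality and whose inner cells are: one cell for \cref{prop:constructiblevanishingcycles}(2) (composition of $\Ex^!_\phi$), one cell for \cref{prop:fundamentalclass} (composition of $\pur$), and one naturality square expressing that $\Ex^!_{\phi,\Psi}$ pulled back along $\Phi$ commutes with $\pur_\Phi$ applied to the appropriate functor. The bookkeeping of shifts --- $\dim(U/W) = \dim(U/V) + \dim(V/W)$ --- is automatic because $\pur$ is compatible with compositions exactly with this additivity.

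\textbf{Main obstacle.} The genuine content is not any one of these compatibilities in isolation but making precise that $\Ex_\Phi$ as \emph{defined} in \eqref{eq:vanishingcyclesexchange} --- which already involved silently identifying two functors via the support condition --- really is the composite $\pur_\Phi^{-1}\circ \Ex^!_\phi$ \emph{after} that identification, and that this identification is itself compatible with composition. In other words, the subtle point is tracking the "restriction to $\Crit_{U/B}(f)$" operation coherently: one must check that restricting along $\Crit_{U/B}(f) \hookrightarrow U$ interacts correctly with $\Phi^*$ and with the iterated critical loci $\Crit_{U/B}(f) \to \Crit_{V/B}(g) \to \Crit_{W/B}(h)$, using the Cartesian squares from \cref{prop:smoothcriticallocus}. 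Once that is set up cleanly, the identity follows formally; the risk is an off-by-a-shift or a sign hidden in the purity normalization, so I would double-check the shift accounting against the $n=0$ case (a product of $\bA^1$-factors) before declaring victory.
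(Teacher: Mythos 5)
Your proposal is correct and takes essentially the same approach as the paper, which proves this lemma by appealing exactly to the functoriality of $\Ex^!_\phi$ under composition (\cref{prop:constructiblevanishingcycles}(2)) and the functoriality of the purity isomorphism (\cref{prop:fundamentalclass}); your additional discussion of shift bookkeeping and the support/restriction identification is a careful elaboration of the same argument.
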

\begin{proof}
The claim follows from the functoriality of $\Ex^!_\phi$ with respect to compositions, \cref{prop:constructiblevanishingcycles}(2), as well as the functoriality of the purity isomorphism, \cref{prop:fundamentalclass}.
\end{proof}

\begin{example}\label{ex:exchangequadratic}
Let $(U, f)$ be an LG pair over $B$ and $(V, q)$ a vector space equipped with a nondegenerate quadratic form. Let $M$ be an orthogonal automorphism of $(V, q)$ and consider an automorphism $\id\times M$ of the LG pair $(U\times V, f\boxplus q)$ over $B$. Then
\[\Ex_{\id\times M} = \det(M) \Ex_{\id},\]
where $\det(M)=\pm 1$ since $M$ is orthogonal. Indeed, using the naturality of the stabilization isomorphism from \cref{thm:stabilizationconstruction} it reduces to the fact that $M\colon V\rightarrow V$ acts by $\det(M)$ on the orientation $\mu_2$-torsor.
\end{example}

The following statement, which is a family version of \cite[Theorem 3.1]{BBDJS}, explains that $\Ex_\Phi$ depends, up to an explicit sign, only on the \'etale morphism
\[\Phi|_{\Crit_{U/B}(f)}\colon \Crit_{U/B}(f)\longrightarrow \Crit_{V/B}(g).\]

\begin{theorem}\label{thm:BBDJS31}
Let $\Phi_0,\Phi_1\colon (U, f)\rightarrow (V, g)$ be \'etale morphisms of LG pairs over $B$. Let $X=\Crit_{U/B}(f)$ and $Y=\Crit_{V/B}(g)$ and assume that
\[\Phi_0|_X = \Phi_1|_X\colon X\longrightarrow Y.\]
\begin{enumerate}
    \item Consider the induced isomorphisms
    \[d\Phi_0|_X, d\Phi_1|_X\colon \T_{U/B}|_{X^{\red}}\longrightarrow \T_{V/B}|_{Y^{\red}}.\]
    Then $\det(d\Phi_1|_{X^{\red}}^{-1}\circ d\Phi_0|_{X^{\red}})\colon X^{\red}\rightarrow \bA^1$ is a locally constant map with values $\pm 1$.
    \item We have
    \[\Ex_{\Phi_0} = \det(d\Phi_1|_{X^{\red}}^{-1}\circ d\Phi_0|_{X^{\red}})\cdot \Ex_{\Phi_1}.\]
\end{enumerate}
\end{theorem}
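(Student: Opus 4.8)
The plan is to first dispose of part (1) by a direct local computation and then establish part (2) by reducing, via the local structure result for deformations of \'etale morphisms of LG pairs, to the case of a linear orthogonal automorphism, where \cref{ex:exchangequadratic} applies. For part (1), observe that $d\Phi_0|_X$ and $d\Phi_1|_X$ both cover the same \'etale morphism $\Phi_0|_X = \Phi_1|_X$; hence $A \coloneqq d\Phi_1|_{X^\red}^{-1}\circ d\Phi_0|_{X^\red}$ is a well-defined automorphism of $\T_{U/B}|_{X^\red}$. The hypothesis \eqref{Eq:def17} together with the fact that $\Phi_0,\Phi_1$ preserve the functions $f$ forces $A$ to preserve the Hessian quadratic form $\Hess(f)$ on $\T_{U/B}|_{X^\red}$, and $A$ restricts to the identity on $\ker(\Hess(f)) = \T_{\Crit_{U/B}(f)/B}$ by \cref{prop:criticalHessian}(1) (differentiate the identity $\Phi_0|_X = \Phi_1|_X$). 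Thus $A$ descends to an orthogonal automorphism of the nondegenerate bundle $\rN_{\Crit_{U/B}(f)/U}|_{X^\red}$ with $\det(A)$ equal to the determinant of that descended automorphism, which is a locally constant $\pm 1$-valued function.

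For part (2), the strategy is the standard $\bA^1$-homotopy argument of \cite[Theorem 3.1]{BBDJS}, now in the relative setting. The claim is local on $X$ (both sides of the desired equality are morphisms of perverse sheaves, and the statement to be proved is an equality of these morphisms, which can be checked on a smooth cover refining $X$), and by part (1) we may work near a point $u \in X$ where $\det(A)$ is a constant $\varepsilon \in \{\pm 1\}$. After a linear change of coordinates on $V$ near $v = \Phi_0(u)$, one may assume the condition \eqref{Eq:def4}, i.e. $(d\Phi_1|_u^{-1}\circ d\Phi_0|_u - \id)^2 = 0$; indeed this can always be arranged by composing $\Phi_1$ with a suitable linear orthogonal automorphism $M$ of $(V,g)$ near $v$, at the cost of replacing $\Ex_{\Phi_1}$ by $\det(M)\Ex_{\Phi_1}$ via \cref{ex:exchangequadratic} and \cref{lm:Excomposite}. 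Then \cref{prop:BBDJS34} provides, \'etale locally, \'etale morphisms of LG pairs over $B\times\bA^1$
\[
(U\times\bA^1, f\boxplus 0)\xleftarrow{\Psi_U} (W, h)\xrightarrow{\Psi_V}(V\times\bA^1, g\boxplus 0)
\]
restricting to $\Phi_0,\Phi_1$ at $t=0,1$ and inducing a fixed morphism on relative critical loci (compatibly with $\Phi_0=\Phi_1$ on critical loci). Applying $\Ex_{(-)}$ to the two composite \'etale morphisms $\Psi_U,\Psi_V$ and using \cref{lm:Excomposite}, the exchange isomorphisms $\Ex_{\Psi_V}^{-1}\circ\Ex_{\Psi_U}$ over $W$ restrict at $t=0$ and $t=1$ to $\Ex_{\Phi_0}$ and (the modified) $\Ex_{\Phi_1}$ respectively, composed with the exchange isomorphism for the projection $U\times\bA^1\to U$. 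Since $\Crit_{W/B\times\bA^1}(h)\cong X\times\bA^1$ (by the first diagram in \eqref{Eq:def13}) is an $\bA^1$-family and $\Ex_{(-)}$ is, in each fiber, an isomorphism of perverse sheaves pulled back from $X$, the locally constant $\pm 1$-valued discrepancy function between $\Ex_{\Psi_U}$ and $\Ex_{\Psi_V}$ along the connected fibers $\{x\}\times\bA^1$ is constant in $t$; evaluating at $t=0$ and $t=1$ yields $\Ex_{\Phi_0} = \varepsilon\cdot(\text{modified }\Ex_{\Phi_1})$, and unwinding the modification by $M$ gives the stated formula with $\varepsilon = \det(A)$.

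The main obstacle is the homotopy-invariance step: one must argue carefully that the discrepancy between $\Ex_{\Psi_U}$ and $\Ex_{\Psi_V}$ is truly locally constant along the $\bA^1$-direction. Concretely, both exchange natural transformations $\Ex_\phi^!$ and the purity isomorphisms $\pur$ are defined functorially, so $\Ex_{\Psi_U}^{-1}\circ\Ex_{\Psi_V}$ is an automorphism of a perverse sheaf of the form $\phi_h(W\to B\times\bA^1)^\dag\cF|_{X\times\bA^1}$ which, by \cref{prop:constructiblevanishingcycles}(1) and smooth base change along $W\to U\times\bA^1$, is the pullback of a perverse sheaf on $X$; its automorphism sheaf is therefore the pullback of a local system on $X$, on which $\bA^1$-translation acts trivially over each connected fiber. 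This identifies the $t=0$ and $t=1$ values. I expect the bookkeeping of signs — tracking the contributions of $\pur$ for the projections $U\times\bA^1\to U$ and $V\times\bA^1\to V$, the relative-dimension shifts in \eqref{eq:vanishingcyclesexchange}, and the linear correction $M$ — to be the most delicate but entirely routine part; everything else is an application of \cref{prop:BBDJS34}, \cref{lm:Excomposite}, and \cref{ex:exchangequadratic}.
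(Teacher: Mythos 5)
Your argument for part (1) is a legitimate alternative to the paper's: the paper simply reduces pointwise to the fibers over $k$-points of $B$ and cites \cite[Theorem~3.1(a)]{BBDJS}, whereas you redo the Hessian argument directly in the relative setting. Your reasoning there (that $A = d\Phi_1|^{-1}_{X^\red}\circ d\Phi_0|_{X^\red}$ preserves $\Hess(f)$, restricts to the identity on $\ker(\Hess(f)) = \T_{X/B}$, and hence has determinant $\pm1$ at each point) is correct, modulo the caveat that $\ker\Hess(f)$ need not be a subbundle on $X^\red$ so the argument must be carried out pointwise — which is fine since $\det(A)$ is a regular function on a reduced scheme.

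There is, however, a genuine gap in your reduction for part~(2). You claim that the hypothesis \eqref{Eq:def4}, namely $(d\Phi_1|_u^{-1}\circ d\Phi_0|_u - \id)^2=0$, ``can always be arranged by composing $\Phi_1$ with a suitable linear orthogonal automorphism $M$ of $(V,g)$ near $v$.'' This step does not make sense as stated: the LG pair $(V,g)$ has no linear structure, $g$ is not a quadratic function, and $(V,g)$ is not of the form $(U\times V_0,\, f\boxplus q)$ required for \cref{ex:exchangequadratic} to apply. Composing $\Phi_1$ with any non-trivial automorphism of $V$ will in general change $g$ and so will not give a morphism of LG pairs. The actual reduction is precisely the non-routine heart of the argument: one first passes to a \emph{minimal} critical chart via \cref{prop:minimalchart}, then invokes \cref{prop:criticalembeddinglocal} to produce, \'etale-locally, a trivial orthogonal bundle $\rN_u$ and an \'etale morphism $\alpha\colon U^\circ\to W\times\rN_u$ landing in a local model $(W\times\rN_u,\, h\boxplus\Hess(f)_u)$ that \emph{does} have a quadratic-bundle factor. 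The orthogonal matrix $M$ is then applied to the $\rN_u$-factor of this local model, not to $V$. Finally, one must form the fiber product $P = U^\circ\times_{\alpha,\, W\times\rN_u,\, (\id\times M)\circ\alpha} U^\circ$ with its two projections $\pi_0,\pi_1$ and the two composites $\Theta_t = \Phi_t\circ\jmath\circ\pi_t\colon P\to V$, because it is for the pair $\Theta_0,\Theta_1$ (not for $\Phi_0,\Phi_1$ directly) that the condition \eqref{Eq:def4} is achieved and \cref{prop:BBDJS35} can be applied. Unwinding the answer back to $\Phi_0,\Phi_1$ then uses \cref{lm:Excomposite} applied to all the morphisms in the $P$-diagram together with \cref{ex:exchangequadratic} for $\id\times M$. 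Your proposal skips all of this and calls the remainder ``entirely routine bookkeeping,'' which understates what is needed; without the local-model construction and the fiber product $P$, the $\bA^1$-deformation of \cref{prop:BBDJS34} cannot be applied because its hypothesis \eqref{Eq:def4} fails for the original $\Phi_0,\Phi_1$.

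The homotopy-invariance step itself (the discrepancy $\Ex_{\Psi_U}^{-1}\circ\Ex_{\Psi_V}$ being pulled back from $X$ and hence constant along $\bA^1$-fibers) is in the right spirit — the paper packages it as \cref{prop:BBDJS35}, citing \cite[Proposition~2.8]{BBDJS} — but your phrasing ``its automorphism sheaf is therefore the pullback of a local system on $X$'' would still need the fully-faithfulness of $\pr_X^\dag\colon\Perv(X)\to\Perv(X\times\bA^1)$ made explicit; this is a minor point compared to the missing reduction above.
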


\begin{proof}[Proof of \cref{thm:BBDJS31}(1)]
The function $\Delta=\det(d\Phi_1|_{X^{\red}}^{-1}\circ d\Phi_0|_{X^{\red}})\colon X^{\red}\rightarrow \bA^1$ is a function on a reduced scheme. Therefore, the equation $\Delta^2=1$ can be checked pointwise on $X^{\red}$. For a $k$-point $b\in B$ let
\[U_b = U\times_B \Spec k,\qquad V_b = V\times_B \Spec k\]
be the corresponding smooth schemes over $k$ and $X_b = \Crit_{U_b}(f)$.
Then it is sufficient to show that $\Delta^2|_{X_b}=1$. But this is the content of \cite[Theorem 3.1 (a)]{BBDJS}.
\end{proof}

Next we will prove \cref{thm:BBDJS31}(2) under an additional assumption.

\begin{proposition}\label{prop:BBDJS35}
Consider the setting of \cref{thm:BBDJS31} and suppose that for a point $u\in \Crit_{U/B}(f)$ we have
\[(d\Phi_1|_u^{-1}\circ d\Phi_0|_u - \id)^2 = 0\colon \T_{U/B, u}\rightarrow \T_{U/B, u}.\]
For each perverse sheaf $\cF\in\Perv(B)$ there is an open neighborhood $X^\circ\subset \Crit_{U/B}(f)$ of $u$ such that $\Ex_{\Phi_0}(\cF)|_{X^\circ} = \Ex_{\Phi_1}(\cF)|_{X^\circ}$.
\end{proposition}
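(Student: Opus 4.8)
The plan is to reduce to the model deformation produced by \cref{prop:BBDJS34} and then argue that $\Ex_\Phi$ is insensitive to replacing a chart by an \'etale neighbourhood, so that the two endpoints of the $\bA^1$-family give equal exchange isomorphisms. First I would apply \cref{prop:BBDJS34} to $\Phi_0,\Phi_1$ at $u$, using exactly the hypotheses \eqref{Eq:def17} and \eqref{Eq:def4} that are in force; this yields, after shrinking, an LG pair $(W,h)$ over $B\times\bA^1$ together with \'etale morphisms
\[\Psi_U\colon (W,h)\longrightarrow (U\times\bA^1, f\boxplus 0),\qquad \Psi_V\colon (W,h)\longrightarrow (V\times\bA^1, g\boxplus 0)\]
and a section $w\colon\bA^1\to W$, such that the fibres $\Psi_{U,t},\Psi_{V,t}$ at $t=0$ recover $\Psi_{U,0},\Psi_{V,0}$ sitting over $\Phi_0$ and at $t=1$ recover $\Psi_{U,1},\Psi_{V,1}$ sitting over $\Phi_1$, and such that the three triangles \eqref{Eq:def13} commute on critical loci. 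The key point is that $\Psi_U,\Psi_V$ are \emph{\'etale}, not merely smooth, so by \cref{ex:exchangequadratic} (with trivial quadratic form, or rather directly from the construction of $\Ex$ for \'etale morphisms as an identity on orientation torsors of rank-$0$ bundles) the associated exchange isomorphisms $\Ex_{\Psi_U},\Ex_{\Psi_V}$ are the \emph{identity} up to the locally constant sign $\det(d\Psi)$, which is $+1$ since an \'etale map over $\bA^1\times B$ of relative dimension $0$ has $\det=1$ on the relevant orientation torsor by construction of the deformation (the tangent spaces agree along $w$).

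Next I would use functoriality of $\Ex$ under composition, \cref{lm:Excomposite}: since $\Phi_0\circ \Psi_{U,0}=\Psi_{V,0}$ and $\Phi_1\circ\Psi_{U,1}=\Psi_{V,1}$ on critical loci, we get on $\Crit_{W_0/B}(h_0)$
\[\Ex_{\Psi_{V,0}} = \Ex_{\Phi_0}|_{\Crit_{W_0/B}(h_0)}\circ \Ex_{\Psi_{U,0}},\qquad \Ex_{\Psi_{V,1}} = \Ex_{\Phi_1}|_{\Crit_{W_1/B}(h_1)}\circ \Ex_{\Psi_{U,1}},\]
so since $\Ex_{\Psi_{U,t}}$ and $\Ex_{\Psi_{V,t}}$ are identities for $t=0,1$, it remains to show $\Ex_{\Phi_0}$ and $\Ex_{\Phi_1}$ agree after pulling back along the \'etale surjections $\Psi_{U,0}$ and $\Psi_{U,1}$ respectively — i.e.\ it suffices to identify $\Ex_{\Psi_{V,0}}$ and $\Ex_{\Psi_{V,1}}$ as elements of the relevant group of automorphisms of $(\phi_{h_t}(W_t\to B)^\dag\cF)|_{\Crit_{W_t/B}(h_t)}$. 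But these are the fibres at $t=0$ and $t=1$ of the single isomorphism over the total space $W$, so the real content is a connectedness/constancy argument in the $\bA^1$-parameter.

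Concretely, for fixed $\cF\in\Perv(B)$ I would form the smooth (in fact \'etale, up to an $\bA^1$-factor) morphism $\Psi_V\colon W\to V\times\bA^1$ over $B\times\bA^1$, apply the construction \eqref{eq:vanishingcyclesexchange} relative to the base $B\times\bA^1$ to obtain a global isomorphism
\[\Ex_{\Psi_V}(\cF\boxtimes\underline R)\colon \bigl(\phi_h(W\to B\times\bA^1)^\dag(\cF\boxtimes\underline R)\bigr)\big|_{\Crit_{W/B\times\bA^1}(h)}\xrightarrow{\ \sim\ }\bigl(\phi_{g\boxplus 0}(V\times\bA^1\to B\times\bA^1)^\dag(\cF\boxtimes\underline R)\bigr)\big|_{\Crit_{W/B\times\bA^1}(h)}[\dim W/(V\times\bA^1)],\]
and then restrict to $t=0$ and $t=1$ using smooth base change for $\phi$ and for $\Ex^!_\phi$, $\pur$ (\cref{prop:constructiblevanishingcycles}(4), \cref{prop:fundamentalclass}). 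Because $\Crit_{W/B\times\bA^1}(h)\cong \Crit_{U/B}(f)\times\bA^1$ via \eqref{Eq:def13}, and $\phi_{g\boxplus 0}(V\times\bA^1\to B\times\bA^1)^\dag(\cF\boxtimes\underline R)$ is (the pullback along $\Crit_{U/B}(f)\times\bA^1\to\Crit_{V/B}(g)$ of) a perverse sheaf constant in the $\bA^1$-direction, the two restrictions are automorphisms of the \emph{same} sheaf on $\Crit_{U/B}(f)$ (up to shift), differing by a section of a locally constant $\mu_2$-sheaf (or $\mathrm{GL}_1$-sheaf) on $\Crit_{U/B}(f)\times\bA^1$; such a section is pulled back from $\Crit_{U/B}(f)$, hence its restrictions to $t=0$ and $t=1$ coincide. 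Chasing this back through the composition identity gives $\Ex_{\Phi_0}(\cF)|_{X^\circ}=\Ex_{\Phi_1}(\cF)|_{X^\circ}$ on a neighbourhood $X^\circ$ of $u$ (namely the open set where $\Psi_{U,0}$ and $\Psi_{U,1}$ are defined and surjective, using that $\Psi_U\circ w=(u,\id)$).

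The main obstacle I anticipate is the bookkeeping of the $\bA^1$-relative six-functor formalism: one must check that forming $\phi$, applying $f^\dag$, and invoking the purity and $\Ex^!_\phi$ isomorphisms all commute with restriction to a fibre $t\in\bA^1$ in a way compatible with the identifications of critical loci in \eqref{Eq:def13}, and that the resulting ``difference automorphism'' genuinely lives in a locally constant sheaf on $\Crit_{U/B}(f)\times\bA^1$ and is therefore $t$-independent. This is where \cref{prop:constructiblevanishingcycles}(2)-(4), the smooth base change for $\pur$, and the fact that $\phi_{g\boxplus 0}$ of an $\bA^1$-constant sheaf is $\bA^1$-constant (via \cref{prop:constructible6functor}(1), homotopy invariance) all have to be combined carefully. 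Once that framework is in place the sign statement \cref{thm:BBDJS31}(1) guarantees the difference is valued in $\mu_2$, but in fact here we show the difference is trivial rather than $\pm1$ because the connecting \'etale maps $\Psi_U,\Psi_V$ contribute trivial determinants along $w$.
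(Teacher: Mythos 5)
Your overall strategy matches the paper's: apply \cref{prop:BBDJS34} to produce the $\bA^1$-family $(W,h)$, form the family of comparison maps $\gamma_t=\Ex_{\Psi_{V,t}}\circ\Ex_{\Psi_{U,t}}^{-1}$, identify its values at $t=0,1$ with $\Ex_{\Phi_0}$ and $\Ex_{\Phi_1}$ via \cref{lm:Excomposite} and the triangles \eqref{Eq:def13}, and conclude by constancy in $t$. However, several intermediate steps are ill-posed or do not deliver what you claim.

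First, the assertion that $\Ex_{\Psi_U},\Ex_{\Psi_V}$ are ``the identity up to the sign $\det(d\Psi)$'' is not a statement that typechecks. For an \'etale morphism $\Psi\colon(U,f)\to(V,g)$, $\Ex_\Psi$ is the canonical exchange isomorphism \eqref{eq:vanishingcyclesexchange}, which relates the objects $\phi_f(U\to B)^\dag(\cF)|_{\Crit}$ and $\phi_g(V\to B)^\dag(\cF)|_{\Crit}$ obtained via genuinely different chains of pullbacks; it is not an endomorphism of anything and has no ``sign.'' Example~\ref{ex:exchangequadratic} is about orthogonal automorphisms $\id\times M$ of $(U\times V,f\boxplus q)$ where $\Ex_{\id\times M}$ \emph{is} an endomorphism and $\det(M)$ enters as a sign; that does not transport to generic \'etale maps. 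Consequently the reduction ``since $\Ex_{\Psi_{U,t}}$ and $\Ex_{\Psi_{V,t}}$ are identities, it suffices to identify $\Ex_{\Psi_{V,0}}$ and $\Ex_{\Psi_{V,1}}$'' does not follow --- indeed if both were identities, $\Ex_{\Phi_t}$ would be the identity and the proposition would be trivial, which it is not. What you actually need to compare is the composite $\gamma_t$, regarded via \eqref{Eq:def13} as an isomorphism $\cP\to\cQ$ for fixed $\cP,\cQ\in\Perv(\Crit_{U/B}(f))$.

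Second, the final constancy step is invoked through the wrong mechanism. You argue that the two restrictions differ by a section of a locally constant $\mu_2$- or $\mathrm{GL}_1$-sheaf and are therefore $t$-independent. Neither half is available at this stage: that the discrepancy is $\mu_2$-valued is exactly \cref{thm:BBDJS31}(1), and even granting it, a section of a locally constant sheaf on $\Crit_{U/B}(f)\times\bA^1$ need not be pulled back unless you already know the sheaf is pulled back and $\bA^1$ is connected in the right sense. The correct mechanism --- and the one the paper uses --- is the homotopy-invariance statement that the functor $(-)\boxtimes \underline{R}_{\bA^1}[1]\colon\Perv(Z)\to\Perv(Z\times\bA^1)$ is fully faithful (this is \cite[Proposition 2.8]{BBDJS}), so that a morphism $\cP\boxtimes\underline{R}_{\bA^1}[1]\to\cQ\boxtimes\underline{R}_{\bA^1}[1]$ over $\Crit_{U/B}(f)\times\bA^1$ is uniquely of the form $\alpha\boxtimes\id$ and hence has the same restriction at every $t$. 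Once you frame $\gamma$ as such a morphism and check the boundary identifications $\gamma|_{t=0}[-1]=\alpha$ and $\gamma|_{t=1}[-1]=\beta$ (using \cref{lm:Excomposite} and the last two triangles of \eqref{Eq:def13}), the proposition follows.
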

\begin{proof}
Applying \cref{prop:BBDJS34}, we obtain an LG pair $(W, h)$ over $B\times \bA^1$ together with \'etale morphisms
\[\xymatrix{
& (W,h) \ar[ld]_{\Psi_U} \ar[rd]^{\Psi_V} & \\
(U \times \bA^1, f\boxplus 0)   & & (V \times \bA^1, g \boxplus 0)
}\]
and a map $w\colon \bA^1 \to W$ satisfying the conditions of \cref{prop:BBDJS34}. Consider the following data:
\begin{itemize}
    \item $\cP = (\phi_f(U\rightarrow B)^\dag)(\cF)|_{\Crit_{U/B}(f)}\in\Perv(\Crit_{U/B}(f))$.
    \item $\cQ = (\phi_g(V\rightarrow B)^\dag)(\cF)|_{\Crit_{U/B}(f)}\in\Perv(\Crit_{U/B}(f))$.
    \item An isomorphism $\alpha\colon \cP\rightarrow \cQ$ in $\Perv(\Crit_{U/B}(f))$ given by $\Ex_{\Phi_0}$.
    \item An isomorphism $\beta\colon \cP\rightarrow \cQ$ in $\Perv(\Crit_{U/B}(f))$ given by $\Ex_{\Phi_1}$.
    \item An isomorphism $\gamma\colon (\cP\boxtimes R_{\bA^1}[1])|_{\Crit_{W/B\times \bA^1}(h)}\rightarrow (\cQ\boxtimes R_{\bA^1}[1])|_{\Crit_{W/B\times \bA^1}(h)}$ given by $\Ex_{\Psi_V}\circ \Ex_{\Psi_U}^{-1}$.
\end{itemize}

From the commutative diagram
\[
\xymatrix{
& W_0 \ar[ld]_{\Psi_{U,0}} \ar[rd]^{\Psi_{V,0}} & \\
U \ar[rr]^{\Phi_0} & &  V , 
}
\]
we get $\gamma|_{t=0}[-1] = \alpha|_{\Crit_{W_0/B}(h_0)}$, where $h_t\colon W_t\rightarrow \bA^1$ is the restriction of $h\colon W\rightarrow \bA^1$ to the fiber at $t\in\bA^1$. Similarly, from the commutative diagram
\[
\xymatrix{
& W_1 \ar[ld]_{\Psi_{U,1}} \ar[rd]^{\Psi_{V,1}} & \\
U \ar[rr]^{\Phi_1} & &  V , 
}
\]
we get $\gamma|_{t=1}[-1] = \beta|_{\Crit_{W_1/B}(h_1)}$. Therefore, the claim follows from \cite[Proposition 2.8]{BBDJS}.
\end{proof}

We are now ready to prove \cref{thm:BBDJS31}(2) in general.

\begin{proof}[Proof of \cref{thm:BBDJS31}(2)]
Consider a point $u\in\Crit_{U/B}(f)$ and let $v=\Phi_0(u)=\Phi_1(u)$. The differentials $d\Phi_0$ and $d\Phi_1$ fit into a commutative diagram
\[
\xymatrix{
0 \ar[r] & \T_{X/B, u} \ar[r] \ar^{d(\Phi_t|_X)|_u}[d] & \T_{U/B, u} \ar[r] \ar^{(d\Phi_t)|_u}[d] & \rN_u \ar[r] \ar[d] & 0 \\
0 \ar[r] & \T_{Y/B, v} \ar[r] & \T_{V/B, v} \ar[r] & \rN_v \ar[r] & 0
}
\]
where both rows are exact. Choosing an arbitrary splitting of the top exact sequence and using that $\Phi_0|_X = \Phi_1|_X$, we get
\begin{equation}\label{eq:Mmatrix}
d\Phi_1|^{-1}_u\circ d\Phi_0|_u = 
\left[\begin{array}{c|c} 
	\id & \ast\\ 
	\hline 
	0 & M
\end{array}\right]\colon \T_{X/B, u}\oplus \rN_u\rightarrow \T_{X/B, u}\oplus \rN_u.
\end{equation}

By \cref{prop:criticalHessian}(1) the Hessian $\Hess(f)_u$ restricts to a nondegenerate quadratic form on $\rN_u$ and by \cref{prop:criticalHessian}(2) the induced morphisms $(\rN_u, \Hess(f)_u)\rightarrow (\rN_v, \Hess(g)_v)$ are orthogonal. Thus, $M$ is an orthogonal automorphism of $(\rN_u, \Hess(f)_u)$.

If $M=\id$, we are finished by \cref{prop:BBDJS35} as then $(d\Phi_1|^{-1}_u\circ d\Phi_0|_u-\id)^2=0$. We will now adjust the setting to put the \'etale morphisms in the form suitable for the application of \cref{prop:BBDJS35}. For this, using \cref{prop:minimalchart} we may find an LG pair $(W, h)$ over $B$, a point $w\in\Crit_{W/B}(h)$ and a critical morphism $\Xi\colon (W, h)\rightarrow (U, f)$ which satisfies
\begin{enumerate}
    \item $\Xi(w) = u$.
    \item $\T_{\Crit_{W/B}(h), u}=\T_{W/B, u}$.
\end{enumerate}

The last property implies that the normal space of $\Xi\colon W\rightarrow U^\circ$ at $w$ is canonically isomorphic to $\rN_u$, such that the quadratic form $q_\Xi$ from \cref{prop:criticalembeddingnormalquadratic} is identified with $\Hess(f)_u$. Thus, applying \cref{prop:criticalembeddinglocal} to $\Xi$ we obtain a diagram
\[
\xymatrix{
W \ar^-{0}[r] & W\times \rN_u & \\
W^\circ \ar^{\Xi^\circ}[r] \ar_{\imath}[d] \ar^{\imath}[u] & U^{\circ} \ar_{\jmath}[d] \ar^{\alpha}[u] & \\
W \ar^{\Xi}[r] & U \ar^{\Phi_t}[r] & V
}
\]
with $\imath\colon (W^\circ, h^\circ)\rightarrow (W, h)$ an open immersion and $\jmath\colon (U^\circ, f^\circ)\rightarrow (U, f)$ and $\alpha\colon (U^\circ, f^\circ)\rightarrow (W\times \rN_u, h\boxplus \Hess(f)_u)$ \'etale morphisms of LG pairs over $B$. By assumption $w\in W^\circ$; denote $u^\circ=\Xi^\circ(u)\in\Crit_{U^\circ/B}(f^\circ)$.

Since $M$ is orthogonal, $\id\times M$ is an automorphism of the LG pair $(W\times \rN_u, h\boxplus \Hess(f)_u)$ over $B$. Thus, we may form the fiber product
\begin{equation}\label{eq:Pfiberproduct}
\xymatrix@C=2cm{
P \ar^{\pi_0}[r] \ar^{\pi_1}[d] & U^\circ \ar^{(\id\times M)\circ\alpha}[d] \\
U^\circ \ar^{\alpha}[r] & W\times \rN_u
}
\end{equation}
which comes equipped with a function $d\colon P\rightarrow \bA^1$ given by
\[d = f^\circ\circ \pi_0 = f^\circ\circ \pi_1.\]
By the universal property there is a point $p\in\Crit_{P/B}(d)$ such that $\pi_0(p)=\pi_1(p) = u^\circ$. For $t=0, 1$ define the \'etale morphisms
\[\Theta_t=\Phi_t\circ \jmath\circ \pi_t\colon P\longrightarrow V\]
which send $p$ to $v$. Identifying $\T_{P/B, p}\cong \T_{W/B, w}\oplus \rN_u$ using $d(\alpha\circ\pi_1)_p$ we get that
\[d\Theta_1|_p^{-1}\circ d\Theta_0|_p = \left[\begin{array}{c|c} 
	\id & \ast\\ 
	\hline 
	0 & \id
\end{array}\right]\colon \T_{P/B, p}\longrightarrow \T_{P/B, p}.\]

Fixing a perverse sheaf $\cF\in\Perv(B)$, by \cref{prop:BBDJS35} we get an open neighborhood $P^\circ\subset \Crit_{P/B}(d)$ of $p$ such that
\[\Ex_{\Theta_0}(\cF)|_{P^\circ} = \Ex_{\Theta_1}(\cF)|_{P^\circ}.\]
Using \cref{lm:Excomposite} we get
\[(\Ex_{\Phi_0}|_{P^\circ}\circ \Ex_{\jmath}|_{P^\circ}\circ \Ex_{\alpha}|^{-1}_{P^\circ}\circ \Ex_{\alpha}|_{P^\circ}\circ \Ex_{\pi_0}|_{P^\circ})(\cF) = (\Ex_{\Phi_1}|_{P^\circ}\circ \Ex_{\jmath}|_{P^\circ}\circ \Ex_{\alpha}|^{-1}_{P^\circ}\circ \Ex_{\alpha}|_{P^\circ}\circ \Ex_{\pi_1}|_{P^\circ})(\cF).\]
Using the commutative diagram \eqref{eq:Pfiberproduct} we get
\[(\Ex_{\id\times M}|_{P^\circ}\circ \Ex_{\alpha}|_{P^\circ}\circ \Ex_{\pi_0}|_{P^\circ})(\cF) = (\Ex_{\alpha}|_{P^\circ}\circ \Ex_{\pi_1}|_{P^\circ})(\cF).\]
By \cref{ex:exchangequadratic} we have $\Ex_{\id\times M} = \det(M)$ and therefore
\[\Ex_{\Phi_0}(\cF)|_{P^\circ} = \det(M)\cdot \Ex_{\Phi_1}(\cF)|_{P^\circ}.\]
Using \eqref{eq:Mmatrix} we see that
\[\det(M) = \det(d\Phi_1|_u^{-1}\circ d\Phi_0|_u).\]
Here $P^\circ\rightarrow \Crit_{U/B}(f)$ is an \'etale neighborhood of $u\in\Crit_{U/B}(f)$. Varying over different points $u\in\Crit_{U/B}(f)$ we get that
\[\Ex_{\Phi_0}(\cF) = \det(d\Phi_1|_{X^{\red}}^{-1}\circ d\Phi_0|_{X^{\red}})\cdot \Ex_{\Phi_1}(\cF)\]
is true on an \'etale cover of $\Crit_{U/B}(f)$ and hence, by \'etale descent, is true on all of $\Crit_{U/B}(f)$. Since this is true for every $\cF\in\Perv(B)$, this finishes the proof.
\end{proof}

\subsection{Perverse pullbacks for schemes}

Given an \'etale morphism $\Phi\colon (U, f)\rightarrow (V, g)$ of LG pairs over $B$, there is a natural isomorphism \eqref{eq:vanishingcyclesexchange}
\[\Ex_\Phi\colon (\phi_f(U\rightarrow B)^\dag)(-)|_{\Crit_{U/B}(f)}\xrightarrow{\sim} (\phi_g(V\rightarrow B)^\dag)(-)|_{\Crit_{U/B}(f)}.\]
In the following statement we extend this isomorphism to arbitrary critical morphisms; this is a family version of the stabilization isomorphism from \cite[Theorem 5.4]{BBDJS}.

\begin{theorem}\label{thm:nonlinearstabilization}
Let $\Phi\colon (U, f)\rightarrow (V, g)$ be a critical morphism of LG pairs over $B$ of even relative dimension. Then there is a natural isomorphism
\[\stab_\Phi\colon (\phi_f(U\rightarrow B)^\dag)(-)|_{\Crit_{U/B}(f)}\xrightarrow{\sim} (\phi_g(V\rightarrow B)^\dag)(-)|_{\Crit_{U/B}(f)}\otimes_{\mu_2} P_\Phi\]
of functors $\Perv(B)\rightarrow \Perv(\Crit_{U/B}(f))$ uniquely determined by the following properties:
\begin{enumerate}
    \item Let $(U, f)$ be an LG pair over $B$ and $(E, q)$ an orthogonal bundle over $U$ of even rank. For the zero section $0_E\colon (U, f)\rightarrow (E, f\circ\pi_E+\q_E)$ we have
    \[\stab_\Phi = \stab_E\]
    defined in \cref{thm:stabilizationconstruction}.
    \item If $\Phi\colon (U, f)\rightarrow (V, g)$ is an \'etale morphism of LG pairs over $B$, then
    \[\stab_\Phi = \Ex_\Phi.\]
\end{enumerate}

It additionally satisfies the following properties:
\begin{enumerate}[resume]
\item Let $\Phi_0, \Phi_1\colon (U, f)\rightarrow (V, g)$ be critical morphisms of even relative dimension such that
    \[\Phi_0|_{\Crit_{U/B}(f)}=\Phi_1|_{\Crit_{U/B}(f)}\colon \Crit_{U/B}(f)\rightarrow \Crit_{V/B}(g).\]
    Then we have an equality
    \[\stab_{\Phi_0}=\stab_{\Phi_1}\colon (\phi_f(U\rightarrow B)^\dag)(-)|_{\Crit_{U/B}(f)}\xrightarrow{\sim} (\phi_g(V\rightarrow B)^\dag)(-)|_{\Crit_{U/B}(f)}\otimes_{\mu_2} P_\Phi.\]
    \item For the identity critical morphism $\id\colon (U, f)\rightarrow (U, f)$ we have $\stab_{\id} = \id$.
    \item For a composite $(U, f)\xrightarrow{\Phi}(V, g)\xrightarrow{\Psi} (W, h)$ of critical morphisms of even relative dimensions we have a commutative diagram
    \[
    \begin{tikzcd}
    (\phi_f(U\rightarrow B)^\dag)(-)|_{\Crit_{U/B}(f)}\arrow[r, "\stab_\Phi"] \arrow[d, "\stab_{\Psi\circ\Phi}"] & (\phi_g(V\rightarrow B)^\dag(-))|_{\Crit_{U/B}(f)}\otimes_{\mu_2} P_{\Phi} \arrow[d, "\stab_{\Psi}\otimes\id"] \\
    (\phi_h(W\rightarrow B)^\dag)(-)|_{\Crit_{U/B}(f)}\otimes_{\mu_2} P_{\Psi\circ\Phi} \arrow[r, "\id\otimes\Xi_{\Phi, \Psi}"] & (\phi_h(W\rightarrow B)^\dag)(-)|_{\Crit_{U/B}(f)}\otimes_{\mu_2} P_\Psi|_{\Crit_{U/B}(f)}\otimes P_\Phi.
    \end{tikzcd}
    \]
    \item For a commutative diagram
    \[
    \xymatrix{
    (U_1, f_1) \ar^{\Phi_1}[r] \ar^{\pi_U}[d] & (V_1, g_1) \ar^{\pi_V}[d] \\
    (U_2, f_2) \ar^{\Phi_2}[r] & (V_2, g_2)
    }
    \]
    with $\pi_U$ and $\pi_V$ smooth morphisms and $\Phi_1$ and $\Phi_2$ critical morphisms of even relative dimension such that $U_1\rightarrow V_1\times_{V_2} U_2$ is \'etale, the diagram
    \[
    \resizebox{\textwidth}{!}{$
    \xymatrix{
    (\phi_{f_1}(U_1\rightarrow B)^\dag)(-)|_{\Crit_{U_1/B}(f_1)} \ar^-{\stab_{\Phi_1}}[r] \ar^{\Ex_{\pi_U}}[d] & (\phi_{g_1}(V_1\rightarrow B)^\dag)(-)|_{\Crit_{U_1/B}(f_1)}\otimes_{\mu_2} P_{\Phi_1} \ar^{\Ex_{\pi_V}\otimes\eqref{eq:Ppullback}}[d] \\
    (\phi_{f_2}(U_2\rightarrow B)^\dag)(-)|_{\Crit_{U_1/B}(f_1)}[\dim(U_1/U_2)] \ar^-{\stab_{\Phi_2}}[r] & (\phi_{g_2}(V_2\rightarrow B)^\dag)(-)|_{\Crit_{U_1/B}(f_1)}\otimes_{\mu_2} \pi_U^* P_{\Phi_2}[\dim(U_1/U_2)]
    }
    $}
    \]
    commutes.
    \item For a smooth morphism $p\colon B'\rightarrow B$ with $\Phi'\colon (U', f')\rightarrow (V', g')$ the base change of $\Phi$ and $\pi_U\colon \Crit_{U'/B'}(f')\rightarrow \Crit_{U/B}(f)$ the corresponding projection, the diagram
    \[
    \xymatrix{
    (\phi_{f'}(U'\rightarrow B)^\dag)(-)|_{\Crit_{U'/B'}(f')} \ar^-{\stab_{\Phi'}}[r] \ar^{\Ex^!_\phi}[d] & (\phi_{g'}(V'\rightarrow B)^\dag)(-)|_{\Crit_{U'/B'}(f')}\otimes_{\mu_2} P_{\Phi'} \ar^{\Ex^!_\phi\otimes \eqref{eq:Ppullback}}[d] \\
    \pi_U^\dag((\phi_f(U\rightarrow B)^\dag)(-)|_{\Crit_{U/B}(f)}) \ar^-{\stab_{\Phi}}[r] & \pi_U^\dag((\phi_g(V\rightarrow B)^\dag)(-)|_{\Crit_{U/B}(f)}\otimes_{\mu_2} P_{\Phi})
    }
    \]
    commutes.
    \item For a finite morphism $c\colon \tilde{B}\rightarrow B$ with $\tilde{\Phi}\colon (\tilde{U}, \tilde{f})\rightarrow (\tilde{V}, \tilde{g})$ the base change of $\Phi$ and $\tilde{c}_U\colon \tilde{U}\rightarrow U$ and $\tilde{c}_V\colon \tilde{V}\rightarrow V$ projections, the diagram
    \[
    \xymatrix{
    (\phi_f(U\rightarrow B)^\dag c_*)(-)|_{\Crit_{U/B}(f)} \ar^-{\stab_\Phi}[r] \ar^{\Ex^\phi_*,\Ex^!_*}[d] & (\phi_g(V\rightarrow B)^\dag c_*)(-)|_{\Crit_{U/B}(f)}\otimes_{\mu_2} P_{\Phi} \ar^{\Ex^\phi_*,\Ex^!_*\otimes\eqref{eq:Ppullback}}[d] \\
    (\tilde{c}_U)_*((\phi_{\tilde{f}}(\tilde{U}\rightarrow \tilde{B})^\dag)(-)|_{\Crit_{\tilde{U}/\tilde{B}}(f)}) \ar^-{\stab_{\tilde{\Phi}}}[r] & (\tilde{c}_V)_*((\phi_{\tilde{g}}(\tilde{V}\rightarrow \tilde{B})^\dag)(-)|_{\Crit_{\tilde{U}/\tilde{B}}(f)}\otimes_{\mu_2} P_{\tilde{\Phi}})
    }
    \]
    commutes.
    \item Let $p\colon B\rightarrow B'$ be a smooth morphism and $\stab^B_\Phi$ the stabilization isomorphism for $\Phi$ viewed as a morphism of LG pairs over $B$ and $\stab^{B'}_\Phi$ the stabilization isomorphism for $\Phi$ viewed as a morphism of LG pairs over $B'$. Then the diagram
    \[
    \xymatrix{
    (\phi_f(U\rightarrow B)^\dag p^\dag)(-)|_{\Crit_{U/B'}(f)}\ar^-{\stab^B_\Phi}[r] \ar^{\sim}[d] & (\phi_g(V\rightarrow B)^\dag p^\dag)(-)|_{\Crit_{U/B'}(f)}\otimes_{\mu_2} P_\Phi \ar^{\sim}[d] \\
    (\phi_f(U\rightarrow B')^\dag)(-)|_{\Crit_{U/B'}(f)}\ar^-{\stab^{B'}_\Phi}[r] & (\phi_g(V\rightarrow B')^\dag)(-)|_{\Crit_{U/B'}(f)}\otimes_{\mu_2} P_\Phi
    }
    \]
    commutes.
    \item\label{item:nonlinearstabilization/verdier}
    The diagram
    \[\begin{tikzcd}
    (\phi_f(U\rightarrow B)^\dag\bbD)(-)|_{\Crit_{U/B}(f)} \ar{r}{\stab_\Phi} \ar{d}{\Ex^{\dag,\phi},\Ex^{\phi, \bbD}}
      & (\phi_g(V\rightarrow B)^\dag \bbD)(-)|_{\Crit_{U/B}(f)}\otimes_{\mu_2} P_\Phi \ar{d}{\Ex^{\dag,\phi},\Ex^{\phi, \bbD}\otimes \eqref{eq:Popposite}} \\
      (\bbD \phi_{-f}(U\rightarrow B)^\dag)(-)|_{\Crit_{U/B}(f)} 
      & (\bbD \phi_{-g}(V\rightarrow B)^\dag)(-)|_{\Crit_{U/B}(f)} \ar{l}[swap]{\stab_{\overline{\Phi}}}\otimes_{\mu_2} P_{\overline{\Phi}}
    \end{tikzcd}\]
    commutes.
    \item\label{item:nonlinearstabilization/TS}
    Given another critical morphism $\Phi' \colon (U', f')\rightarrow (V', g')$ of LG pairs over $B'$ of even relative dimension, $\Phi \times \Phi' \colon (U \times U', f\boxplus f') \to (V \times V', g\boxplus g')$ is a critical morphism over $B \times B'$ and the diagram
    \[
    \begin{tikzcd}[column sep=large]
      ((\phi_f p^\dag)(-)|_{\Crit_{U/B}(f)} \boxtimes (\phi_{f'} p'^\dag)(-)|_{\Crit_{U'/B'}(f')}) \ar{r}{\stab_\Phi \boxtimes \stab_{\Phi'}}\ar{d}{\TS}
      & (\phi_g q^\dag)(-)|_{\Crit_{U/B}(f)} \boxtimes (\phi_{g'} q'^\dag)(-)|_{\Crit_{U'/B'}(f')}\otimes_{\mu_2} (P_\Phi\boxtimes P_{\Phi'}) \ar{d}{\TS\otimes \eqref{eq:Pproduct}}
      \\
      \phi_{f \boxplus f'} (p\times p')^\dag(-)|_{\Crit_{U\times U'/B\times B'}(f\boxplus f')} \ar{r}{\stab_{\Phi\times\Phi'}}
      & \phi_{g\boxplus g'}(q \times q')^\dag(-)|_{\Crit_{U\times U'/B\times B'}(f\boxplus f')}\otimes_{\mu_2} P_{\Phi\times \Phi'}
    \end{tikzcd}
    \]
    commutes.
\end{enumerate}
\end{theorem}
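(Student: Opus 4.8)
The plan is to construct $\stab_\Phi$ by reducing a general critical morphism, locally, to the zero-section stabilization of an orthogonal bundle, and then glue the local isomorphisms using \cref{cor:criticaldescent}. More precisely, \cref{prop:criticalembeddinglocal} expresses any critical morphism $\Phi\colon (U,f)\to(V,g)$, after passing to an \'etale neighborhood, as a composite of \'etale morphisms and a zero-section inclusion $0_E\colon(U^\circ,f^\circ)\to(E,f^\circ\circ\pi_E+\q_E)$. On such a local model I would \emph{define} $\stab_\Phi$ to be $\stab_E$ from \cref{thm:stabilizationconstruction} composed with the $\Ex$-isomorphisms \eqref{eq:vanishingcyclesexchange} of the \'etale legs (property (2)), carried along the canonical identification $\rN_{U/V}|_{U^\circ}\cong E|_{U^\circ}$ of \eqref{eq:criticalembeddingnormallocal} which matches $q_\Phi$ with $q$ by \cref{prop:criticalembeddingnormalquadratic}(1). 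First I would verify that this local assignment is independent of the chosen local model: two models are compared via the orthogonal-bundle functorialities (smooth base change and Whitney sum, \cref{prop:Linearization-Whitneysum}), together with \cref{thm:BBDJS31}(2), which controls the discrepancy between $\Ex_{\Phi_0}$ and $\Ex_{\Phi_1}$ for two \'etale morphisms with the same restriction to critical loci by exactly the determinant sign appearing in the transition maps of the $P_\Phi$-torsors; this is the bridge between $\stab$ and $P_\Phi$.

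Next I would set up the gluing. Properties (3)--(5) are precisely the hypotheses of \cref{cor:criticaldescent} applied to the site $X_\Zar$ and the sheaf of categories of functors $\Perv(B)\to\Perv(-)$: property (3) (independence on $\Phi|_{\Crit}$) is \cref{thm:BBDJS31}(2) in the critical-morphism setting (reduced to the \'etale case via the local model, since the zero-section contribution is canonical), property (4) ($\stab_{\id}=\id$) is immediate from property (1) for the trivial bundle, and property (5) (compatibility with composition through $\Xi_{\Phi,\Psi}$) follows from \cref{lm:Excomposite} for the \'etale legs and \cref{prop:Linearization-Whitneysum} for composing zero sections of $E$ and $F|_E$, using \cref{prop:criticalembeddingnormalquadratic}(2) to identify the orthogonal direct sum with $q_{\Psi\circ\Phi}$. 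The hypotheses on $P_\Phi$ collected after \cref{prop:criticalembeddingnormalquadratic} then give the cocycle data needed to glue the twisted coefficients $P_\Phi$. Having established existence and uniqueness of $\stab_\Phi$ via this descent, properties (1) and (2) hold by construction.

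For the remaining properties (6)--(12) the strategy in each case is the same: \emph{localize} using \cref{prop:criticalembeddinglocal} to reduce to the zero-section case, then invoke the corresponding compatibility already proved for $\stab_E$. Thus property (6) (smooth pullback in the $U$-direction) reduces to the smooth base change of \cref{prop:stabilizationfinitebasechange}... actually to \cref{thm:stabilizationconstruction}(1), combined with \cref{prop:constructiblevanishingcycles}(4) and the functoriality of purity; property (7) (smooth base change in $B$) again to \cref{thm:stabilizationconstruction}(1), using \cref{prop:criticalbasechange} and the pullback-compatibility \eqref{eq:Ppullback} of the torsors; property (8) (finite base change in $B$) to \cref{prop:stabilizationfinitebasechange}; property (9) (smooth pushforward of the base, i.e. enlarging $B'\subset B$) to the fact that $\Crit_{U/B}(f)\cong\Crit_{U/B'}(f)$ is unaffected and $\phi_f(U\to B)^\dag p^\dag\cong\phi_f(U\to B')^\dag$, so both sides are built from the same $\stab_E$; property (10) (Verdier duality) to \cref{prop:stabilizationduality}, with the sign $(-1)^{\rank E/2}$ matching \eqref{eq:orientationreverse} and \eqref{eq:Popposite}; property (11) (Thom--Sebastiani) to \cref{prop:stabilizationproducts}, with \eqref{eq:Pproduct} tracking the torsors. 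In all these I would check that the local identifications are independent of the chosen chart exactly as in the existence argument, so that the globally defined isomorphisms satisfy the stated diagrams.

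I expect the main obstacle to be the well-definedness of the local model, i.e.\ showing that the zero-section-plus-\'etale description of $\stab_\Phi$ from \cref{prop:criticalembeddinglocal} does not depend on the auxiliary choices (the orthogonal bundle $E$, the \'etale coordinates, the neighborhoods). This is where \cref{thm:BBDJS31}(2)---itself the family version of the nontrivial \cite[Theorem 3.1]{BBDJS}, proven via the deformation argument of \cref{prop:BBDJS34}---is essential, because two different \'etale legs differing by an orthogonal transformation of the normal directions must be matched up to the precise determinant sign that the $P_\Phi$-torsor absorbs. Keeping careful track of this sign, and of its compatibility with the Whitney-sum isomorphism \eqref{eq:orientationsumisomorphism} when composing stabilizations, is the delicate bookkeeping at the heart of the proof; everything else is a reduction to the orthogonal-bundle statements already established in \cref{thm:stabilizationconstruction} and \crefrange{prop:Linearization-Whitneysum}{prop:stabilizationproducts}.
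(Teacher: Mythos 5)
Your proposal follows essentially the same route as the paper: local models from \cref{prop:criticalembeddinglocal} expressing any critical morphism as (\'etale)${}\circ{}$(zero section)${}\circ{}$(\'etale), the local definition $\stab_\Phi = \Ex_{\jmath}\circ\Ex_{\alpha}^{-1}\circ\stab_E$, well-definedness via \cref{thm:BBDJS31}(2) with the determinant sign absorbed by $P_\Phi$, and reduction of all further properties to the \'etale case plus the orthogonal-bundle case already handled in \cref{thm:stabilizationconstruction} and \crefrange{prop:Linearization-Whitneysum}{prop:stabilizationproducts}. You have also correctly identified \cref{thm:BBDJS31} as the technical heart, and the bookkeeping about $P_\Phi$ via \cref{prop:criticalembeddingnormalquadratic} is in the right place.

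One small but worth-noting misattribution: you invoke \cref{cor:criticaldescent} to establish existence and uniqueness of $\stab_\Phi$, with properties (3)--(5) playing the role of the hypotheses there. That is not quite how the gluing works. For a \emph{fixed} critical morphism $\Phi$, the local pieces $\stab_\Phi|_{R_a}$ live over a Zariski cover of the single scheme $\Crit_{U/B}(f)$, and they glue by ordinary Zariski descent for natural isomorphisms of functors into $\Perv(-)$; the paper's argument reduces the overlap condition, independence of local models, and property (3) all to the single identity $\Ex_{\alpha_b}\circ\Ex_{\jmath_b}^{-1}\circ\Ex_{\jmath_a}\circ\Ex_{\alpha_a}^{-1}=\det(M)\cdot\id$, which follows from \cref{thm:BBDJS31}(2). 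The formal gluing machine of \cref{cor:criticaldescent} --- where properties (3)--(5) are genuinely the inputs --- is needed only in the next step, \cref{thm:micropullsch}, to glue the perverse pullback $\pi^\varphi$ over a general d-critical scheme from its restrictions to critical charts. So properties (3)--(5) in \cref{thm:nonlinearstabilization} are \emph{outputs} of the construction (proved after $\stab_\Phi$ is built), not prerequisites for the descent defining it. This is not a gap in the argument, only a slight conflation of two distinct gluing steps, and the rest of your reduction scheme for properties (6)--(11) matches the paper.
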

\begin{proof}
Using \cref{prop:criticalembeddinglocal} we may find a collection of $\{U_a, f_a\}_{a\in A}$ of LG pairs over $B$ with open morphisms $\imath_a\colon (U_a, f_a)\rightarrow (U, f)$ such that $\{R_a=\Crit_{U_a/B}(f_a)\rightarrow R=\Crit_{U/B}(f)\}$ is an open cover, a collection $\{V_a, g_a\}_{a\in A}$ of LG pairs over $B$ with \'etale morphisms $\jmath_a\colon (V_a, g_a)\rightarrow (V, g)$ and a collection $\{E_a, q_a\}_{a\in A}$ of trivial orthogonal bundles over $U$ with \'etale morphisms $\alpha_a\colon (V_a, g_a)\rightarrow (E_a, f\circ \pi_{E_a}+\q_{E_a})$ which fit into a commutative diagram
\[
\begin{tikzcd}
U \arrow[r, "0_{E_a}"] & E_a \\
U_a \arrow[r, "\Phi_a"] \arrow[d, "\imath_a" left] \arrow[u, "\imath_a"] & V_a \arrow[d, "\jmath_a" left] \arrow[u, "\alpha_a"] \\
U \arrow[r, "\Phi"] & V
\end{tikzcd}
\]
By descent it is enough to construct the stabilization isomorphism $\stab_\Phi$ restricted to $R_a$ and show that these stabilization isomorphisms are equal on $R_{ab} = R_a\times_R R_b$.

By construction there is a canonical isomorphism
\begin{equation}\label{eq:Plocalform}
P_\Phi|_{R_a}\cong \ori_{E_a}|_{R_a}.
\end{equation}
Let $\pi_U\colon U\rightarrow B$ and $\pi_V\colon V\rightarrow B$ be the natural projections. We define $\stab_\Phi|_{R_a}$ as the composite
\begin{align*}
\phi_f(\pi_U^\dag(-))|_{R_a}\otimes_{\mu_2} \ori_{E_a}|_{R_a}&\xrightarrow{\stab_{E_a}} (\phi_{f\circ \pi_{E_a} + \q_{E_a}}\pi_{E_a}^\dag\pi_U^\dag(-))|_{R_a} \\
&\xleftarrow{\Ex_{\alpha_a}} (\phi_{g_a}\pi_{V_a}^\dag(-))|_{R_a}\\
&\xrightarrow{\Ex_{\jmath_a}}(\phi_g\pi_V^\dag(-))|_{R_a}.
\end{align*}

The above isomorphism is determined uniquely by properties (1) and (2).

We will now show the following facts:
\begin{enumerate}
    \item $\stab_\Phi$ glues into a global isomorphism over $R$, i.e. $\stab_\Phi|_{R_a} = \stab_\Phi|_{R_b}$ above.
    \item $\stab_\Phi$ is independent of the choices of the local model given in \cref{prop:criticalembeddinglocal}. For this, making two choices for the local model, we again have to prove $\stab_\Phi|_{R_a} = \stab_\Phi|_{R_b}$.
    \item Property (3) of the stabilization isomorphism holds, i.e. given two critical morphisms $\Phi_0, \Phi_1\colon (U, f)\rightarrow (V, g)$ such that $\Phi_0|_{\Crit_{U/B}(f)} = \Phi_1|_{\Crit_{U/B}(f)}$ we have $\stab_{\Phi_0} = \stab_{\Phi_1}$. For this we repeat the above construction with $\Phi_0$ on $U_a$ and $\Phi_1$ on $U_b$. We have to prove again $\stab_{\Phi_0}|_{R_a} = \stab_{\Phi_1}|_{R_b}$.
\end{enumerate}

Since $E_a$ and $E_b$ are trivial, we may identify the two; we denote the resulting orthogonal bundle $E$. Let $U_{ab} = U_a\times_U U_b$. We have a diagram
\[
\xymatrix{
& U_a \ar^{\Phi_a}[r] \ar_{\imath_a}[d] & V_a \ar_{\jmath_a}[dr] \ar_{\alpha_a}[d] \\
U_{ab} \ar[ur] \ar[dr] & U \ar^{0_E}[r] & E & V \\
& U_b \ar^{\Phi_b}[r] \ar^{\imath_b}[u] & V_b \ar^{\jmath_b}[ur] \ar^{\alpha_b}[u]
}
\]
with the two squares as well as the left triangle commutative. When we prove the first two items in the above list, the two composite morphisms $U_{ab}\rightarrow V$ are equal (given by the composite $U_{ab}\rightarrow U\xrightarrow{\Phi} V$). When we prove the last item in the above list, we only have the weaker statement that the two composite morphisms $R_{ab}\rightarrow U_{ab}\rightarrow V$ are equal. So, from now on we only use this weaker assumption.

For a point $u\in R_{ab}$ we have an isomorphism $E|_u\cong E|_u$ given by
\[E|_u\cong \rN_{U/E_a, u}\xrightarrow{(d\alpha_a)|^{-1}_{\Phi_a(u)}} \rN_{U_a/V_a, u}\xrightarrow{(d\jmath_a)|_{\Phi_a(u)}} \rN_{U/V, u}\xrightarrow{(d\jmath_b)|^{-1}_{\Phi_b(u)}}\rN_{U_b/V_b, u}\xrightarrow{(d\alpha_b)|_{\Phi_b(u)}} \rN_{U/E_b, u}\cong E|_u.\]
Let $M_u\colon E|_u\rightarrow E|_u$ be the corresponding matrix; by \cref{prop:criticalembeddingnormalquadratic} it is an orthogonal matrix. The two isomorphisms $P_\Phi|_{R_{ab}}\cong \ori_{E_a}|_{R_{ab}}$ and $P_\Phi|_{R_{ab}}\cong \ori_{E_b}|_{R_{ab}}$ given by \eqref{eq:Plocalform} differ by $\det(M)$ in a neighborhood of $u$.

By \cref{lm:Excomposite} we have to show that we have an equality
\begin{equation}\label{eq:BBDJS54independence}
\Ex_{\alpha_b}|_{R_{ab}}\circ \Ex_{\jmath_b}|_{R_{ab}}^{-1}\circ \Ex_{\jmath_a}|_{R_{ab}}\circ \Ex_{\alpha_a}|_{R_{ab}}^{-1} = \det(M)\cdot \id
\end{equation}
of natural automorphisms of $\phi_{f\circ \pi_E + \q_E}(\pi_E^\dag(\pi_U^\dag(-)))|_{R_{ab}}$ in a neighborhood of $u$.

For this, let $V_{ab} = V_a\times_E V_b$ equipped with a function $g_{ab}\colon V_{ab}\rightarrow \bA^1$ compatible with $g_a$ and $g_b$ on $V_a$ and $V_b$ and consider the corresponding diagram
\[
\xymatrix{
&& V_a \ar_{\jmath_a}[dr] \ar_{\alpha_a}[d] \\
R_{ab} \ar^{\Phi_{ab}}[r] & V_{ab} \ar^{p_a}[ur] \ar_{p_b}[dr] \ar[r] & E & V \\
&& V_b \ar^{\jmath_b}[ur] \ar^{\alpha_b}[u]
}
\]
Then we have two \'etale morphisms
\[\jmath_a\circ p_a,\jmath_b\circ p_b\colon (V_{ab}, g_{ab})\rightarrow (V, g)\]
of LG pairs over $B$ such that the two composites
\[R_{ab}\longrightarrow \Crit_{V_{ab}/B}(g_{ab})\longrightarrow \Crit_{V/B}(g)\]
are equal. Identifying $\T_{V_{ab}/B, \Phi_{ab}(u)}\cong \T_{U/B, u}\oplus E|_u$ using the \'etale morphism $V_{ab}\rightarrow E$ we have
\[(d p_b)|^{-1}_{\Phi_{ab}(u)}\circ (d\jmath_b)|^{-1}_{\Phi_b(u)}\circ (d\jmath_a)|_{\Phi_a(u)}\circ (d p_a)|_{\Phi_a(u)} =
\left[\begin{array}{c|c} 
	\id & 0\\ 
	\hline 
	0 & M
\end{array}\right]
\]
Thus, we have
\[\det\left((d p_b)|^{-1}_{\Phi_{ab}(u)}\circ (d\jmath_b)|^{-1}_{\Phi_b(u)}\circ (d\jmath_a)|_{\Phi_a(u)}\circ (d p_a)|_{\Phi_a(u)}\right) = \det(M).\]
Applying \cref{thm:BBDJS31} we get \eqref{eq:BBDJS54independence} as claimed.

Let us now show the remaining properties of the stabilization isomorphism. It is enough to establish property (1) locally when $E$ is a trivial orthogonal bundle. In this case the critical morphism $0_E\colon U\rightarrow E$ is already in a local model, so that $\stab_\Phi=\stab_E$ by definition.

In the setting of property (2) we are again in a local model: the corresponding diagram from \cref{prop:criticalembeddinglocal} is
\[
\xymatrix{
U \ar@{=}[r] & U \\
U \ar@{=}[r] \ar@{=}[u] \ar@{=}[d] & U \ar^{\Phi}[d] \ar@{=}[u] \\
U \ar^{\Phi}[r] & V
}
\]
and by definition we get $\stab_\Phi=\Ex_\Phi$.

Property (4) of the stabilization isomorphism is obvious as for the identity critical morphism we may take $E=0$ and $U_a=U=V_a=V$ in the local model given in \cref{prop:criticalembeddinglocal}.

It is enough to show the commutativity of the diagram from property (5) locally. For this, consider $U^\circ, V^\circ_1, V^\circ_2, W^\circ, (E_1, q_1), (E_2, q_2), \overline{U}$ as in the proof of \cref{prop:criticalembeddingnormalquadratic}(2), so that we have a local model of the critical morphism $\Psi\circ\Phi$ as
\[
\begin{tikzcd}
U \rar & E_1\times_V E_2 \\
\overline{U} \rar \uar \dar & W^\circ \uar \dar\\
U \arrow[r, "\Psi\circ\Phi"] & W
\end{tikzcd}
\]

Restricting the diagram to $\overline{U}$, we get local isomorphisms $P_{\Phi}\cong \ori_{E_1}$, $P_{\Psi}\cong \ori_{E_2}$ and $P_{\Psi\circ \Phi}\cong \ori_{E_1\oplus E_2}$. The commutativity of the diagram then follows from \cref{prop:Linearization-Whitneysum}.

Let us now show property (6). First assume that $\Phi_2$ is \'etale. As $\Phi_1$ factors through a composite of \'etale morphisms $U_1\rightarrow V_1\times_{V_2} U_2\rightarrow V_1$, it is also \'etale. In this case the commutativity of the diagram follows from \cref{lm:Excomposite}. For an arbitrary critical morphism $\Phi_2$, we may use \cref{prop:criticalembeddinglocal} to assume, \'etale locally, that $\Phi_2$ is given by the zero section $U_2\rightarrow E$ of an orthogonal bundle in which case $\stab_{\Phi_2} = \stab_E$. As in the proof of \cref{prop:criticalembeddingnormalquadratic}(4), \'etale locally we have that $\Phi_1$ is given by the zero section $U_1\rightarrow \pi_U^* E$ of the pullback orthogonal bundle. In this case property (6) follows from the compatibility of $\stab_E$ with smooth base change, \cref{thm:stabilizationconstruction}(1).

Property (7) is proven similarly to property (6): if $\Phi$ is \'etale, it follows from \cref{prop:constructiblevanishingcycles}(2) and if $\Phi$ is the zero section of an orthogonal bundle $E$, it follows from the compatibility of $\stab_E$ with smooth base change, \cref{thm:stabilizationconstruction}(1).

Property (8) is proven similarly to properties (6) and (7): if $\Phi$ is \'etale, it follows from \cref{prop:constructiblevanishingcycles}(2) and if $\Phi$ is the zero section of an orthogonal bundle $E$, it follows from the compatibility of $\stab_E$ with finite base change, \cref{prop:stabilizationfinitebasechange}.

Property (9) is obvious from construction.

Property (10) follows from \cref{prop:phiD}(2) if $\Phi$ is \'etale and from \cref{prop:stabilizationduality} if $\Phi$ is the zero section of an orthogonal bundle.

Let us finally prove property (11). Writing $\Phi\times \Phi' = (\Phi\times \id)\circ (\id\times \Phi')$ and using the compatibility of the stabilization isomorphism with respect to compositions, property (5), we reduce the claim to the case $\Phi'=\id$. Then if $\Phi$ is \'etale, the claim follows from \cref{prop:phiTS}(2). If $\Phi$ is the zero section of an orthogonal bundle, the claim follows from \cref{prop:stabilizationproducts}.
\end{proof}

We are now ready to define perverse pullbacks for relative d-critical structures.

\begin{theorem}\label{thm:micropullsch}
Let $\pi\colon X\rightarrow B$ be a morphism of schemes equipped with a relative d-critical structure $s$ and an orientation $o$. There is an exact functor
\[\pi^\varphi\colon \Perv(B)\longrightarrow \Perv(X),\]
called the \defterm{perverse pullback}, uniquely determined by the following properties:
\begin{enumerate}
    \item For every critical chart $(U, f, u)$ of $(X\rightarrow B, s)$, such that $\dim(U/B)\pmod{2}$ coincides with the parity of $o$, we have a canonical natural isomorphism
    \begin{equation}\label{eq:microlocalpullbacklocalmodel}
    (\pi^\varphi(\cF))|_{\Crit_{U/B}(f)}\cong \phi_f((U\rightarrow B)^\dag(\cF))|_{\Crit_{U/B}(f)}\otimes_{\mu_2} Q^o_{U, f, u}.
    \end{equation}
    \item For a critical morphism $\Phi\colon (U, f, u)\rightarrow (V, g, v)$ of critical charts of even relative dimension the diagram
    \[
    \xymatrix{
    (\pi^\varphi(\cF))|_{\Crit_{U/B}(f)} \ar^-{\eqref{eq:microlocalpullbacklocalmodel}}[r] \ar^{\eqref{eq:microlocalpullbacklocalmodel}}[d] & \phi_f((U\rightarrow B)^\dag(\cF))|_{\Crit_{U/B}(f)}\otimes_{\mu_2} Q^o_{U, f, u} \ar^{\stab_\Phi\otimes\id}[d] \\
    \phi_g((V\rightarrow B)^\dag(\cF))|_{\Crit_{U/B}(f)}\otimes_{\mu_2} Q^o_{V, g, v} \ar^-{\id\otimes\Lambda_\Phi}[r] & \phi_g((V\rightarrow B)^\dag(\cF))|_{\Crit_{U/B}(f)}\otimes_{\mu_2}P_\Phi\otimes_{\mu_2}Q^o_{U, f, u}
    }
    \]
    commutes.
\end{enumerate}
In addition, it satisfies the following properties:
\begin{enumerate}[resume]
    \item Given an isomorphism of orientations $o_1\cong o_2$ of the same relative d-critical structure $(X\rightarrow B, s)$, there is a natural isomorphism $\pi^\varphi_{o_1}\cong \pi^\varphi_{o_2}$ of the perverse pullback functors with respect to the two orientations, which is associative. Moreover, the automorphism of an orientation $(\cL, o)$ given by multiplication on $\cL$ by a sign $\sigma\in\mu_2$ acts on $\pi^\varphi_o$ by the same sign $\mu$.
    \item If $R$ is a field, then the perverse pullback functor $\pi^\varphi$ extends uniquely to a colimit-preserving $t$-exact functor
    \[\pi^\varphi\colon \bD(B)\longrightarrow \bD(X).\]
\end{enumerate}

\end{theorem}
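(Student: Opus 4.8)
The plan is to construct $\pi^\varphi$ by gluing the local vanishing cycle functors across critical charts, using \cref{cor:criticaldescent}, and then to extend it to ind-constructible sheaves via \cref{cor:extendfromPerv}. First I would fix the parity: the orientation $(\cL, o)$ has a well-defined parity $d \colon X \to \Z/2\Z$, and I restrict attention to critical charts $(U, f, u)$ with $\dim(U/B) \equiv d \pmod 2$. For each such chart consider the object
\[
x_u \coloneqq \phi_f((U\to B)^\dag(-))|_{\Crit_{U/B}(f)} \otimes_{\mu_2} Q^o_{U, f, u} \in \Fun^{\ex}(\Perv(B), \Perv(\Crit_{U/B}(f))),
\]
which lies in the heart of the relevant sheaf of stable $\infty$-categories. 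For a critical morphism $\Phi \colon (U, f, u) \to (V, g, v)$ of even relative dimension, the isomorphism $J_\Phi \coloneqq (\mathrm{stab}_\Phi \otimes \id) \circ (\id \otimes \Lambda_\Phi^{-1})$ gives the required comparison $x_u \xrightarrow{\sim} (x_v)|_{\Crit_{U/B}(f)}$; here $\mathrm{stab}_\Phi$ is from \cref{thm:nonlinearstabilization} and $\Lambda_\Phi$ is the transition isomorphism for the torsors $Q^o$. The cocycle conditions of \cref{cor:criticaldescent}---compatibility with composition, normalization on identities, and independence of the critical morphism up to its restriction to critical loci---follow directly from \cref{thm:nonlinearstabilization}(5), (4), and (3) respectively, combined with the analogous associativity and normalization properties of the $\Xi_{\Phi,\Psi}$ and the $\Lambda_\Phi$ recorded after \cref{prop:criticalembeddingnormalquadratic} and in \cref{sect:vanishingcycles}. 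This produces a functor valued in $\Perv(X)$; exactness is inherited chart-locally from perverse $t$-exactness of $\phi_f (U\to B)^\dag$, which is \cref{prop:constructiblevanishingcycles} combined with \cref{prop:constructible6functor}(6a), and the tensor with a $\mu_2$-torsor preserves exactness.

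For property (3), an isomorphism of orientations $o_1 \cong o_2$ is an isomorphism of graded line bundles compatible with the trivializations of squares; it induces, chart by chart, an isomorphism $Q^{o_1}_{U, f, u} \cong Q^{o_2}_{U, f, u}$ of $\mu_2$-torsors compatible with the $\Lambda_\Phi$, hence an isomorphism $\pi^\varphi_{o_1} \cong \pi^\varphi_{o_2}$ by the uniqueness clause of the gluing. Associativity is a formal consequence of the associativity of the underlying torsor isomorphisms. The statement about the sign automorphism $\sigma \in \mu_2$ acting on $\cL$ reduces to the observation that this automorphism acts on each $Q^o_{U, f, u}$ by $\sigma$ (since $Q^o$ parametrizes isomorphisms $o|_{\Crit} \to o^{\can}$ and rescaling $\cL$ by $\sigma$ rescales such an isomorphism by $\sigma$), and tensoring a perverse sheaf by the $\mu_2$-torsor picks up exactly that sign.

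For property (4), I invoke \cref{cor:extendfromPerv}: when $R$ is a field, restriction along $\Perv(X) \hookrightarrow \bD(X)$ gives an equivalence between colimit-preserving $t$-exact functors $\bD(B) \to \bD(X)$ (sending $\Perv$ to $\Perv$) and exact functors $\Perv(B) \to \Perv(X)$, using $\bD(X) \cong \Ind(\Db(\Perv(X)))$ from \cref{prop:constructible6functor}(7). Thus the exact functor $\pi^\varphi \colon \Perv(B) \to \Perv(X)$ extends uniquely to a colimit-preserving $t$-exact functor on $\bD(B)$. I should be slightly careful that $B$ need only be a scheme separated of finite type over $\C$, so \cref{cor:extendfromPerv} applies verbatim; no stacky input is needed here.

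The main obstacle is verifying the hypotheses of \cref{cor:criticaldescent} cleanly, i.e. checking that $J_\Phi = (\mathrm{stab}_\Phi \otimes \id)\circ(\id \otimes \Lambda_\Phi^{-1})$ really satisfies the composition cocycle condition. This is where the two parallel cocycle structures---the $\Xi_{\Phi,\Psi}$ for $P_\bullet$ and the $\Lambda_\Phi$ for $Q^o_\bullet$---must be shown to interlock compatibly inside \cref{thm:nonlinearstabilization}(5). Concretely one must chase a hexagon built from \cref{thm:nonlinearstabilization}(5) and the associativity of $\Lambda$ through $\Xi$; the signs and the $\mu_2$-torsor bookkeeping are delicate but the diagram does commute by construction of $\Lambda_\Phi$ as the composite $o|_{\Crit} \to o^{\can}_{\Crit_U} \to o^{\can}_{\Crit_V}|_{\Crit_U}$. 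Everything else---exactness, the characterizing isomorphisms (1)--(2), and the extension (4)---is essentially formal once the gluing is in place.
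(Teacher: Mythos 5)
Your proposal is correct and follows essentially the same route as the paper's proof: glue the chart-local functors $\phi_f\,(U\to B)^\dag(-)|_{\Crit_{U/B}(f)}\otimes_{\mu_2}Q^o_{U,f,u}$ inside the Zariski sheaf of categories $\Fun^\ex(\Perv(B),\Perv(-))$ over $X$ via \cref{cor:criticaldescent}, with the gluing cocycle built from $\stab_\Phi$ and $\Lambda_\Phi$, and check the three hypotheses of the descent lemma against \cref{thm:nonlinearstabilization}(3)--(5) together with the recorded associativity of $\Xi_{\Phi,\Psi}$ and $\Lambda_\Phi$; then obtain (3) from the evident torsor isomorphisms $Q^{o_1}\cong Q^{o_2}$ and (4) from \cref{cor:extendfromPerv}. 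Two trivial corrections: the composite defining $J_\Phi$ should be $(\id\otimes\Lambda_\Phi^{-1})\circ(\stab_\Phi\otimes\id)$, not the order you wrote; and the associativity of $\Lambda_\Phi$ is recorded in the orientations subsection following \cref{prop:criticalembeddingnormalquadratic}, not in \cref{sect:vanishingcycles}.
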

\begin{proof}
The assignment $X\mapsto \Perv(X)$ of the category of perverse sheaves forms a sheaf of categories over the Zariski site of $X$. Therefore, the assignment $X\mapsto \Fun_\ex(\Perv(B), \Perv(X))$ forms a sheaf of categories as well. We may then glue the local descriptions of the perverse pullback functor $\pi^\varphi$ using \cref{cor:criticaldescent}, where the relevant properties of the stabilization isomorphisms were verified in \cref{thm:nonlinearstabilization}. This establishes properties (1) and (2).

Let us now show property (3). For an isomorphism of orientations $o_1\cong o_2$ using \cref{cor:criticaldescentsets} we need to specify the natural isomorphism $\pi^\varphi_{o_1}\cong \pi^\varphi_{o_2}$ locally on critical charts. On a critical chart $(U, f, u)$ we let it be the isomorphism $Q^{o_1}_{U, f, u}\cong Q^{o_2}_{U, f, u}$ given by precomposing the isomorphism $o_2\cong o^{\can}_{\Crit_{U/B}(f)}$ with the isomorphism $o_1\cong o_2$. For an automorphism of an orientation $o$ given by a sign $\sigma\in\mu_2$ the corresponding isomorphism $Q^o_{U, f, u}\cong Q^o_{U, f, u}$ is given by acting by the same sign and hence, using the local description of perverse pullbacks, it acts on $\pi^\varphi$ by the same sign.

Property (4) follows from \cref{cor:extendfromPerv} since all exact functors on $\Perv(-)$ extend uniquely to $\bD(-)$.
\end{proof}

\subsection{Compatibility with base change}

We first establish a compatibility of perverse pullbacks with smooth pullbacks.

\begin{proposition}\label{prop:perversesmoothcompatibility}
Let $\pi\colon X\rightarrow B$ be a morphism of schemes equipped with an oriented relative d-critical structure. Let $p\colon X'\rightarrow X$ be a smooth morphism of schemes. Consider the pullback oriented relative d-critical structure on $X'\rightarrow B$. Then there is a natural isomorphism
\[\alpha_p\colon (\pi\circ p)^\varphi(-) \xrightarrow{\sim} p^\dag\pi^\varphi(-)\]
of functors $\Perv(B)\rightarrow \Perv(X')$ which satisfies the following properties:
\begin{enumerate}
    \item $\alpha_{\id} = \id$.
    \item For a composite $X''\xrightarrow{q} X'\xrightarrow{p} X$ of smooth morphisms with an oriented relative d-critical structure on $\pi\colon X\rightarrow B$ and pullback oriented relative d-critical structures on $X''\rightarrow B$ and $X'\rightarrow B$ we have $\alpha_p\circ \alpha_q = \alpha_{p\circ q}$.
\end{enumerate}
If $R$ is a field, $\alpha_p$ extends to a natural isomorphism of functors $\bD(B)\rightarrow \bD(X')$.
\end{proposition}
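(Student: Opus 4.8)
The plan is to define $\alpha_p$ by gluing a local description over critical charts adapted to $p$. By \cref{thm:criticallocussmoothdescent}(1), every point of $X'$ lies in an open $\Crit_{U'/B}(f')$ for a critical chart $(U',f',u')$ of $(X'\to B,s')$ fitting into a commutative square with a critical chart $(U,f,u)$ of $(X\to B,s)$ and a smooth morphism $\tilde p\colon (U',f')\to (U,f)$ of LG pairs over $B$ lifting $p$; then $\Crit_{U'/B}(f')\cong\Crit_{U/B}(f)\times_U U'$ by \cref{prop:smoothcriticallocus}, so $q\colon\Crit_{U'/B}(f')\to\Crit_{U/B}(f)$ is smooth of relative dimension $\dim(X'/X)=\dim(U'/U)$, and the parity of $o'$ is that of $o$ plus $\dim(X'/X)$, so the parities of the two charts are compatible. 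On such an adapted chart, \eqref{eq:microlocalpullbacklocalmodel} for $(U',f',u')$ identifies $(\pi\circ p)^\varphi(\cF)|_{\Crit_{U'/B}(f')}$ with $\phi_{f'}((U'\to B)^\dag\cF)|_{\Crit_{U'/B}(f')}\otimes_{\mu_2}Q^{o'}_{U',f',u'}$, whereas base change, purity (\cref{prop:fundamentalclass}), and \eqref{eq:microlocalpullbacklocalmodel} for $(U,f,u)$ identify $p^\dag\pi^\varphi(\cF)|_{\Crit_{U'/B}(f')}$ with $q^\dag\bigl(\phi_{f}((U\to B)^\dag\cF)|_{\Crit_{U/B}(f)}\bigr)\otimes_{\mu_2}q^* Q^{o}_{U,f,u}$; since $q^\dag=q^*[\dim(U'/U)]$, I would define $\alpha_p$ locally as the tensor product of the comparison isomorphism $\Ex_{\tilde p}$ of \eqref{eq:vanishingcyclesexchange} --- which already carries the shift $[\dim(U'/U)]$ --- with the identification \eqref{eq:Qpullback} of orientation torsors $Q^{o'}_{U',f',u'}\cong q^* Q^{o}_{U,f,u}$ (taken with $B'=B$). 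Naturality in $\cF$ is automatic.

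The main point is to show these local morphisms glue. Both $(\pi\circ p)^\varphi$ and $p^\dag\pi^\varphi$ are exact $R$-linear functors $\Perv(B)\to\Perv(X')$ --- the latter since $p^\dag$ is $t$-exact by \cref{prop:constructible6functor}(6a) --- hence global sections of the Zariski sheaf of categories $\Fun_\ex(\Perv(B),\Perv(-))$ on $X'$; since morphisms of such sections form a Zariski sheaf, it is enough to check that the locally defined $\alpha_p$'s agree on overlaps, and for this one may further localize. Given two adapted charts over a common point of $X'$, \cref{prop:stabilizationzigzagsmoothmorphism} supplies, after shrinking, a zigzag of critical morphisms between the $X'$-charts together with a compatible zigzag of critical morphisms between the $X$-charts, the two zigzags being linked by smooth morphisms of the vertical LG pairs (all critical morphisms in sight being of even relative dimension, as we work with charts of the fixed parity). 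For each critical morphism $\Phi_1$ of the $X'$-zigzag lying over a critical morphism $\Phi_2$ of the $X$-zigzag, \cref{thm:nonlinearstabilization}(6) states precisely that $\stab_{\Phi_1}$ and $\stab_{\Phi_2}$ are intertwined by the exchange isomorphisms of the smooth vertical morphisms; combining this with \cref{thm:micropullsch}(2), which expresses the gluing isomorphism of $(\pi\circ p)^\varphi$ (resp.\ $\pi^\varphi$) along $\Phi_1$ (resp.\ $\Phi_2$) through $\stab_{\Phi_1}$ (resp.\ $\stab_{\Phi_2}$), together with \cref{lm:Excomposite} and the transitivity of the identifications \eqref{eq:Qpullback}, yields a commutative square showing that the two local models of $\alpha_p$ agree along each step of the zigzag; hence they agree on the overlap and glue to an isomorphism $\alpha_p\colon(\pi\circ p)^\varphi\xrightarrow{\sim}p^\dag\pi^\varphi$.

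Properties (1) and (2) are verified on adapted charts: for $p=\id$ one takes $\tilde p=\id$, so $\Ex_{\id}=\id$ and $\alpha_{\id}=\id$; for a composite $X''\xrightarrow{q}X'\xrightarrow{p}X$ one chooses via \cref{thm:criticallocussmoothdescent}(1) mutually compatible adapted charts $(U'',f'')\xrightarrow{\tilde q}(U',f')\xrightarrow{\tilde p}(U,f)$, and $\alpha_p\circ\alpha_q=\alpha_{p\circ q}$ then follows from the composition law $\Ex_{\tilde p\circ\tilde q}=\Ex_{\tilde p}|_{\Crit_{U''/B}(f'')}\circ\Ex_{\tilde q}$ of \cref{lm:Excomposite}, the transitivity \eqref{eq:orientationpullbackassociative} of pullbacks of orientations (hence of \eqref{eq:Qpullback}), and functoriality of purity for compositions. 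Finally, when $R$ is a field, $(\pi\circ p)^\varphi$ and $p^\dag\pi^\varphi$ extend uniquely to colimit-preserving $t$-exact functors $\bD(B)\to\bD(X')$ --- by \cref{thm:micropullsch}(4), and since $p^\dag=p^*[\dim(X'/X)]$ preserves colimits --- so by \cref{cor:extendfromPerv} with $\cA=\Perv(X')$ (using $\bD(X')\cong\Ind(\Db(\Perv(X')))$ from \cref{prop:constructible6functor}(7)) the isomorphism $\alpha_p$ extends uniquely to the $\bD$-level, and uniqueness propagates properties (1) and (2). The hardest part is the gluing: carefully tracking the stabilization isomorphisms, orientation torsors and cohomological shifts along the zigzag of \cref{prop:stabilizationzigzagsmoothmorphism} while invoking \cref{thm:nonlinearstabilization}(6) and \cref{thm:micropullsch}(2).
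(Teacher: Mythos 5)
Your proposal follows essentially the same route as the paper's own proof: define $\alpha_p$ locally on adapted charts supplied by \cref{thm:criticallocussmoothdescent}(1) via $\Ex_{\tilde p}$, the purity isomorphism, and the orientation-torsor identification \eqref{eq:Qpullback}; glue using the smooth-compatible zigzags from \cref{prop:stabilizationzigzagsmoothmorphism} together with \cref{thm:nonlinearstabilization}(6) and \cref{thm:micropullsch}(2); deduce functoriality from \cref{lm:Excomposite} and \cref{prop:fundamentalclass}; and extend to $\bD$ via \cref{cor:extendfromPerv}. The argument is correct and matches the paper's construction step for step.
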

\begin{proof}
Denote by $o$ the orientation of $X\rightarrow B$ and by $o'$ the pullback orientation of $X'\rightarrow B$. By \cref{thm:criticallocussmoothdescent} we may find a cover of $X'$ by critical charts $(U'_a, f'_a, u'_a)$ and a cover of the image of $p\colon X'\rightarrow X$ by critical charts $(U_a, f_a, u_a)$ together with a smooth morphism $\tilde{p}_a\colon (U'_a, f'_a)\rightarrow (U_a, f_a)$ fitting into a commutative diagram
\begin{equation}\label{eq:perversesmoothcompatibilitycharts}
\xymatrix{
X' \ar^{p}[d] & \Crit_{U'_a/B}(f'_a) \ar[r] \ar_-{u'_a}[l] \ar[d] & U'_a \ar^{\tilde{p}_a}[d] \\
X & \Crit_{U_a/B}(f_a) \ar_-{u_a}[l] \ar[r] & U_a
}
\end{equation}

We define $\alpha_p|_{\Crit_{U'_a/B}(f'_a)}$ as the unique natural isomorphism which fits into the commutative diagram
\[
\begin{tikzcd}
(\pi\circ p)^\varphi(-)|_{\Crit_{U'_a/B}(f'_a)} \ar{r}{\eqref{eq:microlocalpullbacklocalmodel}} \ar{d}{\alpha_p|_{\Crit_{U'_a/B}(f'_a)}} & (\phi_{f'_a} (U'_a\rightarrow B)^\dag)(-)|_{\Crit_{U'_a/B}(f'_a)}\otimes Q^{o'}_{U'_a, f'_a, u'_a} \ar{d}{\Ex_{\tilde{p}_a}\otimes\eqref{eq:Qpullback}} \\
(p^\dag\pi^\varphi)(-)|_{\Crit_{U'_a/B}(f'_a)} \ar{r}{\eqref{eq:microlocalpullbacklocalmodel},\pur_p} & (\phi_{f_a}(U_a\rightarrow B)^\dag)(-)|_{\Crit_{U'_a/B}(f'_a)}[\dim(U'_a/U_a)]\otimes Q^o_{U_a, f_a, u_a}|_{\Crit_{U'_a/B}(f'_a)}.
\end{tikzcd}
\]
We will now show that thus defined local natural isomorphisms $\alpha_p|_{\Crit_{U'_a/B}(f'_a)}$ glue into a global natural isomorphism $\alpha_p$. For this, we need to check that the restrictions of $\alpha_p|_{\Crit_{U'_a/B}(f'_a)}$ and $\alpha_p|_{\Crit_{U'_b/B}(f'_b)}$ to $\Crit_{U'_a/B}(f'_a)\times_{X'} \Crit_{U'_b/B}(f'_b)$ coincide. Consider the \'etale local model of the intersection given by \cref{prop:stabilizationzigzagsmoothmorphism}:
\begin{equation}\label{eq:perversesmoothcompatibilitydiagram}
\xymatrix{
(U'_a, f'_a, u'_a) \ar^{\tilde{p}_a}[d] & (U^{\prime\circ}_a, f^{\prime\circ}_a, u^{\prime\circ}_a) \ar^-{\Phi'_a}[r] \ar[l] \ar[d] & (U'_{ab}, f'_{ab}, u'_{ab}) \ar^{\tilde{p}_{ab}}[d] & (U^{\prime\circ}_b, f^{\prime\circ}_b, u^{\prime\circ}_b) \ar_-{\Phi'_b}[l] \ar[r] \ar[d] & (U'_b, f'_b, u'_b) \ar^{\tilde{p}_b}[d] \\
(U_a, f_a, u_a) & (U^\circ_a, f^\circ_a, u^\circ_a) \ar^-{\Phi_a}[r] \ar[l] & (U_{ab}, f_{ab}, u_{ab}) & (U^\circ_b, f^\circ_b, u^\circ_b) \ar_-{\Phi_b}[l] \ar[r] & (U_b, f_b, u_b)
}
\end{equation}
with all vertical morphisms smooth and all morphisms to the pullback squares \'etale. Let
\[R_{ab} = \Crit_{U^\circ_a/B}(f^\circ_a)\times_X \Crit_{U^\circ_b/B}(f^\circ_b),\qquad R'_{ab} = \Crit_{U^{\prime\circ}_a/B}(f^{\prime\circ}_a)\times_X \Crit_{U^{\prime\circ}_b/B}(f^{\prime\circ}_b).\]
By definition (see \cref{thm:micropullsch}(2)) the composite natural isomorphism
\[
(\phi_{f_a}(U_a\rightarrow B)^\dag)(-)|_{R_{ab}}\otimes Q^o_{U_a, f_a, u_a}|_{R_{ab}} \xleftarrow{\sim} \pi^\varphi(-)|_{R_{ab}}\xrightarrow{\sim} (\phi_{f_b}(U_b\rightarrow B)^\dag)(-)|_{R_{ab}}\otimes Q^o_{U_b, f_b, u_b}|_{R_{ab}}
\]
is given by the composite of stabilization isomorphisms with respect to the bottom critical morphisms in \eqref{eq:perversesmoothcompatibilitydiagram}. Similarly, the composite natural isomorphism
\[
(\phi_{f'_a}(U'_a\rightarrow B)^\dag)(-)|_{R'_{ab}}\otimes Q^{o'}_{U'_a, f'_a, u'_a}|_{R'_{ab}} \xleftarrow{\sim} (\pi\circ p)^\varphi(-)|_{R'_{ab}}\xrightarrow{\sim} (\phi_{f'_b}(U'_b\rightarrow B)^\dag)(-)|_{R'_{ab}}\otimes Q^{o'}_{U'_b, f'_b, u'_b}|_{R'_{ab}}
\]
is given by the composite of stabilization isomorphisms with respect to the top critical morphisms in \eqref{eq:perversesmoothcompatibilitydiagram}. The equality of the restrictions of $\alpha_p|_{\Crit_{U'_a/B}(f'_a)}$ and $\alpha_p|_{\Crit_{U'_b/B}(f'_b)}$ to $R'_{ab}$ therefore follows from the compatibility of the stabilization isomorphisms with smooth pullbacks, \cref{thm:nonlinearstabilization}(6).

The functoriality of the natural isomorphism $\alpha_p$ follows from the functoriality of the natural isomorphism $\Ex_p$, \cref{lm:Excomposite}, and the functoriality of the purity isomorphism $\pur_p$, \cref{prop:fundamentalclass}.

The extension of $\alpha_p$ to a natural transformation of functors $\bD(B)\rightarrow \bD(X')$ is given by \cref{cor:extendfromPerv}.
\end{proof}

Next we establish a compatibility of perverse pullbacks with smooth base change.

\begin{proposition}\label{prop:perversesmoothbasechangecompatibility}
Let
\[
\xymatrix{
X' \ar^{\overline{p}}[r] \ar^{\pi'}[d] & X \ar^{\pi}[d] \\
B' \ar^{p}[r] & B
}
\]
be a commutative diagram of schemes with $p\colon B'\rightarrow B$ and $X'\rightarrow B'\times_B X$ smooth. Let $s\in\Gamma(X, \cS_{X/B})$ be a relative d-critical structure and denote by $s'\in\Gamma(X', \cS_{X'/B'})$ its pullback. Let $o$ be an orientation of $X\rightarrow B$ and $o'$ its pullback to $X'\rightarrow B'$. Then there is a natural isomorphism
\[\alpha_{p, \overline{p}}\colon (\pi')^\varphi p^\dag(-)\xrightarrow{\sim}\overline{p}^\dag \pi^\varphi(-)\]
of functors $\Perv(B)\rightarrow \Perv(X')$
which satisfies the following properties:
\begin{enumerate}
    \item $\alpha_{\id, \id} = \id$.
    \item Given a diagram
    \[
    \xymatrix{
    X'' \ar^{\overline{q}}[r] \ar^{\pi''}[d] & X' \ar^{\overline{p}}[r] \ar^{\pi'}[d] & X \ar^{\pi}[d] \\
    B'' \ar^{q}[r] & B' \ar^{p}[r] & B
    }
    \]
    with $B''\xrightarrow{q} B'\xrightarrow{p} B$, $X'\rightarrow B'\times_B X$ and $X''\rightarrow B''\times_{B'} X'$ smooth, we have
    \[\alpha_{p, \overline{p}}\circ\alpha_{q, \overline{q}} = \alpha_{p\circ q, \overline{p}\circ \overline{q}}.\]
\end{enumerate}
If $R$ is a field, $\alpha_{p, \overline{p}}$ extends to a natural transformation of functors $\bD(B)\rightarrow \bD(X')$.
\end{proposition}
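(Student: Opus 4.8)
The plan is to construct $\alpha_{p,\overline p}$ locally on critical charts and then glue, following the same strategy as in the proof of \cref{prop:perversesmoothcompatibility}. First I would use \cref{thm:criticallocussmoothdescent}(1) (in the relative form, applied to the smooth morphism $X' \to B' \times_B X$, and then further to $\pi$) together with \cref{prop:criticalbasechange} to produce, locally on $X'$, a critical chart $(U'_a, f'_a, u'_a)$ for $X' \to B'$, a critical chart $(U_a, f_a, u_a)$ for $X \to B$, and a morphism $\tilde p_a \colon U'_a \to U_a$ over $p \colon B' \to B$, such that $\tilde p_a$ factors as a smooth morphism to the base change $U_a \times_B B'$ followed by the base change morphism, compatibly with the charts $u'_a, u_a$. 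On such a chart I would define $\alpha_{p,\overline p}$ via the composite of: the local model isomorphism \eqref{eq:microlocalpullbacklocalmodel} for $(\pi')^\varphi$; the exchange isomorphism $\Ex^!_\phi$ (invertible on $\phi_f(-)$ by \cref{prop:constructiblevanishingcycles}(4) since $U'_a \to U_a$ is smooth over $B' \to B$, combined with the base-change case) together with $\pur_{\tilde p_a}$ and the isomorphism \eqref{eq:Qpullback} on the orientation $\mu_2$-torsors; and then \eqref{eq:microlocalpullbacklocalmodel} backwards for $\overline p^\dag \pi^\varphi$. Concretely this means $\alpha_{p,\overline p}|_{\Crit_{U'_a/B'}(f'_a)}$ is the unique natural isomorphism making a square analogous to the one in \cref{prop:perversesmoothcompatibility} commute, with $\Ex_{\tilde p_a}$ replaced by the appropriate composite of the smooth-base-change exchange isomorphism from \cref{thm:nonlinearstabilization}(7) and the smooth-pullback exchange from \cref{thm:nonlinearstabilization}(6).

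The gluing step is where the work is. To check that the locally defined $\alpha_{p,\overline p}|_{\Crit_{U'_a/B'}(f'_a)}$ agree on overlaps $\Crit_{U'_a/B'}(f'_a) \times_{X'} \Crit_{U'_b/B'}(f'_b)$, I would invoke \cref{prop:stabilizationzigzagsmoothmorphism} to obtain an \'etale-local zigzag model for the pair of charts, compatible with the smooth morphisms down to the corresponding zigzag of charts on $X \to B$. On this model, the two composite identifications of $(\pi')^\varphi p^\dag(-)$ through the two charts are given (by \cref{thm:micropullsch}(2)) by composites of stabilization isomorphisms $\stab_{\Phi'_a}, \stab_{\Phi'_b}$ for the top critical morphisms, and similarly downstairs for $\pi^\varphi(-)$ via $\stab_{\Phi_a}, \stab_{\Phi_b}$. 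The required compatibility then reduces to \cref{thm:nonlinearstabilization}(6) (compatibility of the stabilization isomorphism with a diagram of critical and smooth morphisms with an \'etale map to the fiber product) together with \cref{thm:nonlinearstabilization}(7) (compatibility of the stabilization isomorphism with smooth base change $p \colon B' \to B$); the former handles the smooth-pullback direction along $X' \to X$ and the latter the base-change direction along $B' \to B$. I expect the main obstacle to be bookkeeping: one must factor the smooth morphism $X' \to X$ appearing implicitly as the composite of a genuine smooth morphism of schemes over $B'$ and a base change, and then apply properties (6) and (7) of \cref{thm:nonlinearstabilization} in the correct order, checking that the two factorizations (first base change then smooth pullback, versus first smooth pullback then base change) give the same answer. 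This is a compatibility of exchange 2-cells in the six-functor formalism and, since all the objects involved are perverse, amounts to a pointwise verification on reduced schemes; but laying out the zigzag diagram carefully is the delicate part.

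Once $\alpha_{p,\overline p}$ is constructed globally, the two functoriality properties follow formally. Property (1), $\alpha_{\id,\id} = \id$, is immediate from the construction since on each chart the exchange and purity isomorphisms for the identity are identities, and \eqref{eq:Qpullback} for the identity base change is the identity. Property (2), the cocycle identity $\alpha_{p,\overline p} \circ \alpha_{q,\overline q} = \alpha_{p\circ q, \overline p \circ \overline q}$, reduces via \cref{cor:criticaldescentsets} to a local statement on critical charts, where it follows from the functoriality of the exchange natural transformation $\Ex^!_\phi$ for compositions (\cref{prop:constructiblevanishingcycles}(2)), the functoriality of the purity isomorphism (\cref{prop:fundamentalclass}), and the compatibility of \eqref{eq:Qpullback} with composition of base-change squares. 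Finally, the extension of $\alpha_{p,\overline p}$ to a natural isomorphism of functors $\bD(B) \to \bD(X')$ when $R$ is a field is automatic from \cref{cor:extendfromPerv}, since $\pi^\varphi$ and $(\pi')^\varphi$ are the unique colimit-preserving $t$-exact extensions of their restrictions to $\Perv(-)$, and $p^\dag$, $\overline p^\dag$ are $t$-exact and colimit-preserving, so the natural transformation between exact functors on $\Perv$ lifts uniquely.
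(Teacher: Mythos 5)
Your proof attempts to construct $\alpha_{p,\overline p}$ in one step on critical charts, handling the base change along $p\colon B'\to B$ and the smooth morphism $X'\to B'\times_B X$ simultaneously, and to glue by invoking \cref{prop:stabilizationzigzagsmoothmorphism}. There is a genuine gap here: \cref{prop:stabilizationzigzagsmoothmorphism} is stated (and its proof is carried out) for a smooth morphism $\pi\colon X_1\to X_2$ of schemes carrying relative d-critical structures \emph{over the same base} $B$, where all the morphisms of LG pairs are $B$-morphisms. In the general base change situation $X'\to B'$ and $X\to B$ live over different bases, and the morphisms $\tilde p_a\colon U'_a\to U_a$ you would need in the zigzag lie over $p\colon B'\to B$ rather than over a fixed base; the proposition does not apply as stated, and you would have to formulate and prove a mixed base-change/smooth-pullback analogue of it. You gesture at the difficulty (``apply properties (6) and (7) of \cref{thm:nonlinearstabilization} in the correct order''), but without a lemma producing the required local zigzag model compatible with both $\tilde p_a$ and the overlap critical morphisms, the gluing argument does not go through.

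The paper avoids this entirely by factoring the problem globally. It first treats the Cartesian case: the critical charts $(U_a,f_a,u_a)$ on $X$ base-change along $p$ to critical charts $(U'_a,f'_a,u'_a)$ on $X'$ (by \cref{prop:criticalbasechange}), and the Zariski zigzag model of \cref{prop:stabilizationzigzag} on $X$ base-changes to a zigzag on $X'$, so gluing reduces to the single property \cref{thm:nonlinearstabilization}(7) and no zigzag lemma in the relative setting is needed. For a general square it then factors through $X\times_B B'$ as $X'\xrightarrow{\tilde p} X\times_B B'\xrightarrow{p'} X$ and sets $\alpha_{p,\overline p}\coloneqq\alpha_{\tilde p}\circ\alpha_{p,p'}$, where $\alpha_{\tilde p}$ is the already-established isomorphism of \cref{prop:perversesmoothcompatibility} (which is where \cref{prop:stabilizationzigzagsmoothmorphism} is used, in the fixed-base setting it actually covers). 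This decomposition is not just cosmetic bookkeeping: it is what lets one reuse the existing lemmas without generalizing them. Your treatment of properties (1), (2), and the extension to $\bD(-)$ when $R$ is a field is fine.
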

\begin{proof}
Let us first assume that the diagram is Cartesian. Consider a cover of $X$ by critical charts $(U_a, f_a, u_a)$ and let $(U'_a, f'_a, u'_a)$ be their base change along $p$ which define a cover of $X'$ by critical charts. We define $\alpha_{p, \overline{p}}|_{\Crit_{U'_a/B'}(f'_a)}$ as the unique natural isomorphism which fits into the commutative diagram
\[
\xymatrix{
((\pi')^\varphi p^\dag)(-)|_{\Crit_{U'_a/B'}(f'_a)} \ar^-{\eqref{eq:microlocalpullbacklocalmodel}}[r] \ar^{\alpha_{p, \overline{p}}|_{\Crit_{U'_a/B'}(f'_a)}}[d] & (\phi_{f'_a} (U'_a\rightarrow B)^\dag)(-)|_{\Crit_{U'_a/B'}(f'_a)} \otimes Q^{o'}_{U'_a, f'_a, u'_a} \ar^{\Ex^!_\phi\otimes\eqref{eq:Qpullback}}[d] \\
(\overline{p}^\dag \pi^\varphi)(-)|_{\Crit_{U'_a/B'}(f'_a)} \ar^-{\eqref{eq:microlocalpullbacklocalmodel}}[r] & (\phi_{f_a} (U_a\rightarrow B)^\dag)(-)|_{\Crit_{U'_a/B}(f'_a)} \otimes Q^{o}_{U_a, f_a, u_a}|_{\Crit_{U'_a/B'}(f'_a)}
}
\]

The fact that these locally defined natural isomorphisms $\alpha_{p, \overline{p}}|_{\Crit_{U'_a/B'}(f'_a)}$ glue into a global natural isomorphism $\alpha_{p, \overline{p}}$ is shown as in the proof of \cref{prop:perversesmoothcompatibility}:
\begin{itemize}
    \item By \cref{prop:stabilizationzigzag} we have a Zariski local model of the intersection $\Crit_{U_a/B}(f_a)\times_X \Crit_{U_b/B}(f_b)$ given by
    \[
    \xymatrix{
    (U_a, f_a, u_a) & (U^\circ_a, f^\circ_a, u^\circ_a) \ar^-{\Phi_a}[r] \ar[l] & (U_{ab}, f_{ab}, u_{ab}) & (U^\circ_b, f^\circ_b, u^\circ_b) \ar_-{\Phi_b}[l] \ar[r] & (U_b, f_b, u_b).
    }
    \]
    \item Let
    \[
    \xymatrix{
    (U'_a, f'_a, u'_a) & (U^{\prime\circ}_a, f^{\prime\circ}_a, u^{\prime\circ}_a) \ar^-{\Phi'_a}[r] \ar[l] & (U'_{ab}, f'_{ab}, u'_{ab}) & (U^{\prime\circ}_b, f^{\prime\circ}_b, u^{\prime\circ}_b) \ar_-{\Phi'_b}[l] \ar[r] & (U'_b, f'_b, u'_b)
    }
    \]
    be the base change of the above local model along $p\colon B'\rightarrow B$ which provides a Zariski local model of the intersection $\Crit_{U'_a/B'}(f'_a)\times_{X'} \Crit_{U'_b/B'}(f'_b)$.
    \item The compatibility of the two local models of the isomorphism $\alpha_p$ follows from the compatibility of the stabilization isomorphisms with smooth base change, \cref{thm:nonlinearstabilization}(7).
\end{itemize}

For an arbitrary commutative diagram we factor it as
\[
\xymatrix{
X' \ar^{\tilde{p}}[dr] \ar@/_0.5pc/_{\pi'}[ddr] \ar@/^0.5pc/^{\overline{p}}[drr] && \\
& X\times_{B} B' \ar^{p'}[r] \ar[d] & X \ar^{\pi}[d] \\
& B' \ar^{p}[r] & B
}
\]
and then define $\alpha_{p, \overline{p}}$ as the composite $\alpha_{\tilde{p}}\circ \alpha_{p, p'}$.

The functoriality of $\alpha_{p, \overline{p}}$ can be checked in a critical chart, where it follows from the functoriality of $\Ex^!_\phi$ with respect to compositions, \cref{prop:constructiblevanishingcycles}(2).
\end{proof}

Finally, we establish a compatibility of perverse pullbacks with finite push-forwards.

\begin{proposition}\label{prop:perversefinitepushforwardcompatibility}
Let
\[
\xymatrix{
\tilde{X} \ar^{\overline{c}}[r] \ar^{\tilde{\pi}}[d] & X \ar^{\pi}[d] \\
\tilde{B} \ar^{c}[r] & B
}
\]
be a Cartesian diagram of schemes with $c\colon \tilde{B}\rightarrow B$ finite. Let $s\in\Gamma(X, \cS_{X/B})$ be a relative d-critical structure and denote by $\tilde{s}\in\Gamma(\tilde{X}, \cS_{\tilde{X}/\tilde{B}})$ its pullback. Let $o$ be an orientation of $X\rightarrow B$ and denote by $\tilde{o}$ its pullback. Then there is a natural isomorphism
\[\beta_c\colon \pi^\varphi c_*\xrightarrow{\sim} \overline{c}_* \tilde{\pi}^\varphi\]
of functors $\Perv(\tilde{B})\rightarrow \Perv(X)$
which satisfies the following properties:
\begin{enumerate}
    \item $\beta_{\id} = \id$.
    \item Given a diagram
    \[
    \xymatrix{
    \dbtilde{X} \ar^{\overline{d}}[r] \ar^{\dbtilde{\pi}}[d] & \tilde{X} \ar^{\overline{c}}[r] \ar^{\tilde{\pi}}[d] & X \ar^{\pi}[d] \\
    \dbtilde{B} \ar^{d}[r] & \tilde{B} \ar^{c}[r] & B
    }
    \]
    with $\dbtilde{B}\xrightarrow{d} \tilde{B}\xrightarrow{c} B$ a composite of finite morphisms and both squares Cartesian, we have
    \[\beta_c\circ\beta_d = \beta_{c\circ d}.\]
    \item Let
    \[
    \xymatrix{
    X' \ar^{\overline{p}}[r] \ar^{\pi'}[d] & X \ar^{\pi}[d] \\
    B' \ar^{p}[r] & B
    }
    \]
    be a diagram of schemes with $p\colon B'\rightarrow B$ and $X'\rightarrow B'\times_B X$ smooth. Let $s\in\Gamma(X, \cS_{X/B})$ be a relative d-critical structure and denote by $s'\in\Gamma(X', \cS_{X'/B'})$ its pullback. Let $c\colon \tilde{B}\rightarrow B$ be a finite morphism and denote by $\tilde{X} = X\times_B \tilde{B}$, $\tilde{B}'=B'\times_B \tilde{B}$ and $\tilde{X}'=X'\times_B \tilde{B}$ their base changes which fit into commutative diagrams
    \[
    \vcenter{
    \xymatrix{
    \tilde{X}' \ar^{\tilde{\overline{c}}}[r] \ar^{\tilde{\pi}'}[d] & X' \ar^{\overline{p}}[r] \ar^{\pi'}[d] & X \ar^{\pi}[d] \\
    \tilde{B}' \ar^{\tilde{c}}[r] & B' \ar^{p}[r] & B
    }}
    \quad=\quad\vcenter{
    \xymatrix{
    \tilde{X}' \ar^{\tilde{\overline{p}}}[r] \ar^{\tilde{\pi}'}[d] & \tilde{X} \ar^{\overline{c}}[r] \ar^{\tilde{\pi}}[d] & X \ar^{\pi}[d] \\
    \tilde{B}' \ar^{\tilde{p}}[r] & \tilde{B} \ar^{c}[r] & B.
    }}
    \]
    Let $o$ be an orientation of $X\rightarrow B$ and consider its pullbacks to $X'$, $\tilde{X}$ and $\tilde{X}'$. Then the diagram
    \begin{equation}\label{eq-pushforward-compatibility}
    \xymatrix{
    (\pi')^\varphi \tilde{c}_* \tilde{p}^\dag \ar^{\beta_{\tilde{c}}}[r] \ar^{\Ex^!_*}[d] & \tilde{\overline{c}}_*(\tilde{\pi}')^\varphi\tilde{p}^\dag \ar^-{\alpha_{\tilde{p}, \tilde{\overline{p}}}}[r] & \tilde{\overline{c}}_* \tilde{\overline{p}}^\dag \tilde{\pi}^\varphi \ar^{\Ex^!_*}[d] \\
    (\pi')^\varphi p^\dag c_* \ar^-{\alpha_{p, \overline{p}}}[r] & \overline{p}^\dag \pi^\varphi c_* \ar^{\beta_c}[r] & \overline{p}^\dag \overline{c}_* \tilde{\pi}^\varphi
    }
    \end{equation}
    commutes.
\end{enumerate}
If $R$ is a field, $\beta_c$ extends to a natural transformation of functors $\bD(\tilde{B})\rightarrow \bD(X)$.
\end{proposition}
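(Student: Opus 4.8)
The plan is to construct $\beta_c$ locally on critical charts, then glue using the gluing formalism of \cref{cor:criticaldescent} and the compatibility properties of the stabilization isomorphisms $\stab_\Phi$. First I would choose a cover of $X$ by critical charts $(U_a, f_a, u_a)$ for $(X\rightarrow B, s)$ with $\dim(U_a/B)$ of the right parity, and pull back along $c$ to obtain critical charts $(\tilde U_a, \tilde f_a, \tilde u_a)$ for $(\tilde X\rightarrow \tilde B, \tilde s)$, using \cref{thm:dcritbasechange} (or rather \cref{cor:dcritbasechange}/\cref{thm:virtualcanonicalschemes}) to see that these are indeed critical charts and that $Q^o_{U_a, f_a, u_a}$ pulls back to $Q^{\tilde o}_{\tilde U_a, \tilde f_a, \tilde u_a}$ via \eqref{eq:Qpullback}. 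On each chart, using the local model \eqref{eq:microlocalpullbacklocalmodel} of \cref{thm:micropullsch}, the functor $\pi^\varphi c_*$ is identified with $\phi_{f_a}((U_a\rightarrow B)^\dag c_*)(-)|_{\Crit_{U_a/B}(f_a)}\otimes_{\mu_2} Q^o_{U_a, f_a, u_a}$, and similarly $\overline c_*\tilde\pi^\varphi$ is identified with the $\overline c_*$ of the analogous expression on $\tilde U_a$. I would define $\beta_c|_{\Crit_{U_a/B}(f_a)}$ by the composite of the natural transformations $\Ex^\phi_*$ and $\Ex^!_*$ (the finite base-change compatibilities from \cref{prop:constructiblevanishingcycles}(5) and smooth/proper base change), which are isomorphisms precisely because $c$ (hence $\overline c$) is finite, so $\Ex^\phi_*$ is invertible by \cref{prop:constructiblevanishingcycles}(\ref{item:constructiblevanishingcycles/proper}) and $\Ex^!_*$ is invertible by proper base change.

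Next I would verify that these local isomorphisms glue. For two charts $(U_a, f_a, u_a)$ and $(U_b, f_b, u_b)$, I would use \cref{prop:stabilizationzigzag} to find a Zariski-local zigzag of critical morphisms connecting their restrictions to the overlap $\Crit_{U_a/B}(f_a)\times_X\Crit_{U_b/B}(f_b)$, and pull this zigzag back along $c$ to obtain the corresponding zigzag over $\tilde B$. The transition isomorphisms for $\pi^\varphi c_*$ on the overlap are given by stabilization isomorphisms $\stab_\Phi$ for the bottom critical morphisms, while those for $\overline c_*\tilde\pi^\varphi$ are $\overline c_*$ of the stabilization isomorphisms for the pulled-back top critical morphisms; the compatibility is then exactly the content of \cref{thm:nonlinearstabilization}(8), the finite-base-change compatibility of $\stab_\Phi$. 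Concretely this is an application of \cref{cor:criticaldescentsets} to the sheaf of isomorphisms between the two (already glued) functors $\pi^\varphi c_*$ and $\overline c_*\tilde\pi^\varphi$, so once the local isomorphisms are shown to agree on overlaps, the global $\beta_c$ exists and is unique.

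Then I would check properties (1)--(3). Property (1) is immediate since for $c=\id$ all the exchange transformations involved are identities. Property (2) follows from the functoriality of $\Ex^\phi_*$ and $\Ex^!_*$ with respect to compositions (\cref{prop:constructiblevanishingcycles}(3) and the standard composition compatibility of $\Ex^!_*$), checked on a single critical chart. Property (3), the compatibility with smooth base change, is the most substantial: the square \eqref{eq-pushforward-compatibility} involves $\beta_c$, $\alpha_{p,\overline p}$ (from \cref{prop:perversesmoothbasechangecompatibility}), $\alpha_{\tilde p,\tilde{\overline p}}$, and two copies of $\Ex^!_*$. I would reduce this to a single critical chart, where all four maps unwind into composites of the six-functor exchange transformations $\Ex^\phi_*, \Ex^!_*, \Ex^!_\phi$ and the torsor isomorphisms \eqref{eq:Qpullback}; the commutativity then reduces to the compatibility diagram in \cref{prop:constructiblevanishingcycles}(3) together with the mixed base-change diagram for $\Ex^!_*$ over a composite of a finite and a smooth morphism (as in \cref{prop:Ex!*selfdual}-type reasoning). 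The main obstacle I expect is bookkeeping in property (3): making sure the ``finite then smooth'' factorization of the ambient diagram is carried through consistently on both descriptions, since $\tilde X' = X\times_B\tilde B'$ sits over both $X'$ and $\tilde X$, and one must check that the two resulting chains of exchange transformations agree---this is ultimately a formal consequence of the coherence of the six-functor formalism and \cref{prop:constructiblevanishingcycles}, but it requires careful diagram chasing rather than a new idea. Finally, the extension to $\bD(\tilde B)\rightarrow\bD(X)$ when $R$ is a field follows from \cref{cor:extendfromPerv}, exactly as in the proofs of \cref{prop:perversesmoothcompatibility} and \cref{prop:perversesmoothbasechangecompatibility}.
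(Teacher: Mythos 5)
Your proposal is correct and takes essentially the same approach as the paper: choose a cover of $X$ by critical charts, base-change along $c$ to get critical charts on $\tilde X$, define $\beta_c$ locally via the exchange transformations $\Ex^!_*$ and $\Ex^\phi_*$ together with the torsor isomorphism \eqref{eq:Qpullback}, glue using \cref{prop:stabilizationzigzag} to produce Zariski local zigzags of critical morphisms pulled back along $c$ and the finite base-change compatibility of stabilization isomorphisms (\cref{thm:nonlinearstabilization}(8)), and verify properties (1)--(3) on individual critical charts via the corresponding compatibilities in \cref{prop:constructiblevanishingcycles}. (One small numbering slip: the functoriality for compositions used in property (2) is \cref{prop:constructiblevanishingcycles}(2), not (3); property (3) of the proposition is what gives the commutative square needed for your part (3).)
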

\begin{proof}
Consider a cover of $X$ by critical charts $(U_a, f_a, u_a)$ and let $(\tilde{U}_a, \tilde{f}_a, \tilde{u}_a)$ be their base change along $c$ which define a cover of $\tilde{X}$ by critical charts. Let $c_a\colon \tilde{U}_a\rightarrow U_a$ be the projection. We define $\beta_c|_{\Crit_{U_a/B}(f_a)}$ as the unique natural isomorphism which fits into the commutative diagram
\[
\xymatrix{
(\pi^\varphi c_*)(-)|_{\Crit_{U_a/B}(f_a)} \ar^-{\eqref{eq:microlocalpullbacklocalmodel}}[r] \ar^{\beta_c|_{\Crit_{U_a/B}(f_a)}}[dd] & (\phi_{f_a} (U_a\rightarrow B)^\dag c_*)(-)\otimes Q^o_{U_a, f_a, u_a}|_{\Crit_{U_a/B}(f_a)} \ar^{\Ex^!_*\otimes \id}[d] \\
& (\phi_{f_a}(c_a)_*(\tilde{U}_a\rightarrow \tilde{B})^\dag)(-)\otimes Q^o_{U_a, f_a, u_a}|_{\Crit_{U_a/B}(f_a)} \ar^{\Ex^\phi_*\otimes\eqref{eq:Qpullback}}[d] \\
(\overline{c}_*\tilde{\pi}^\varphi)(-)|_{\Crit_{U_a/B}(f_a)} \ar^-{\eqref{eq:microlocalpullbacklocalmodel}}[r] & \overline{c}_*((\phi_{\tilde{f}_a} (\tilde{U}_a\rightarrow \tilde{B})^\dag)(-)\otimes Q^{\tilde{o}}_{\tilde{U}_a, \tilde{f}_a, \tilde{u}_a})|_{\Crit_{U_a/B}(f_a)}
}
\]

The fact that these locally defined natural isomorphisms $\beta_c|_{\Crit_{U_a/B}(f_a)}$ glue into a global natural isomorphism $\beta_c$ is shown as in the proof of \cref{prop:perversesmoothcompatibility}:
\begin{itemize}
    \item By \cref{prop:stabilizationzigzag} we have a Zariski local model of the intersection $\Crit_{U_a/B}(f_a)\times_X \Crit_{U_b/B}(f_b)$ given by
    \[
    \xymatrix{
    (U_a, f_a, u_a) & (U^\circ_a, f^\circ_a, u^\circ_a) \ar^-{\Phi_a}[r] \ar[l] & (U_{ab}, f_{ab}, u_{ab}) & (U^\circ_b, f^\circ_b, u^\circ_b) \ar_-{\Phi_b}[l] \ar[r] & (U_b, f_b, u_b).
    }
    \]
    \item Let
    \[
    \xymatrix{
    (\tilde{U}_a, \tilde{f}_a, \tilde{u}_a) & (\tilde{U}^\circ_a, \tilde{f}^\circ_a, \tilde{u}^\circ_a) \ar^-{\tilde{\Phi}_a}[r] \ar[l] & (\tilde{U}_{ab}, \tilde{f}_{ab}, \tilde{u}_{ab}) & (\tilde{U}^\circ_b, \tilde{f}^\circ_b, \tilde{u}^\circ_b) \ar_-{\tilde{\Phi}_b}[l] \ar[r] & (\tilde{U}_b, \tilde{f}_b, \tilde{u}_b)
    }
    \]
    be the base change of the above local model along $c\colon \tilde{B}\rightarrow B$ which provides a Zariski local model of the intersection $\Crit_{\tilde{U}_a/\tilde{B}}(\tilde{f}_a)\times_{\tilde{X}} \Crit_{\tilde{U}_b/\tilde{B}}(\tilde{f}_b)$.
    \item The compatibility of the two local models of the isomorphism $\beta_p$ follows from the compatibility of the stabilization isomorphisms with finite base change, \cref{thm:nonlinearstabilization}(8).
\end{itemize}

The properties of $\beta_c$ can be checked in a critical chart. The functoriality of $\beta_c$ follows from the functoriality of $\Ex^\phi_*$ and $\Ex^!_*$ with respect to compositions, \cref{prop:constructiblevanishingcycles}(2). Property (3) follows from \cref{prop:constructiblevanishingcycles}(3).
\end{proof}

\subsection{Compatibility with d-critical pushforwards}

In this section we establish a compatibility of perverse pullbacks with d-critical pushforwards. First we analyze a compatibility with d-critical pushforwards along smooth morphisms.

\begin{proposition}\label{prop:perversesmoothpushforwardcompatibility}
Let $\pi\colon X\rightarrow B$ be a morphism of schemes equipped with a relative d-critical structure $s$ and an orientation $o$. Let $p\colon B\rightarrow S$ be a smooth morphism. Let $p_*(X, s)$ be the d-critical pushforward which fits into a commutative diagram
\[
\xymatrix{
p_*(X, s) \ar^-{i}[r] \ar^{\overline{\pi}}[d] & X \ar^{\pi}[d] \\
S & B. \ar_{p}[l]
}
\]
Consider the orientation $p_*o$ of $p_*(X, s)$. Then there is a natural isomorphism
\[\gamma_p\colon \pi^\varphi p^\dag\xrightarrow{\sim} i_* \overline{\pi}^\varphi\]
of functors $\Perv(S)\rightarrow \Perv(X)$ which satisfies the following properties:
\begin{enumerate}
    \item It is functorial for compositions in $p$.
    \item Consider a commutative diagram
    \begin{equation}\label{eq:pushforwardpullbackdiagram}
    \xymatrix{
    X' \ar^{\pi'}[r] \ar^{q}[d] & B' \ar^{p'}[r] \ar^{a_1}[d] & S' \ar^{a_2}[d] \\
    X \ar^{\pi}[r] & B \ar^{p}[r] & S
    }
    \end{equation}
    of schemes with $p$, $a_2$, $B'\rightarrow S'\times_S B$ and $X'\rightarrow X\times_B B'$ smooth, a relative d-critical structure $s\in\Gamma(X, \cS_{X/B})$ and $s'=q^\ast s$ its pullback. Let
    \[
    \xymatrix{
    p'_*(X', s') \ar^{\overline{q}}[d] \ar^-{\overline{\pi}'}[r] & S' \ar^{a_2}[d] \\
    p_*(X, s) \ar^-{\overline{\pi}}[r] & S
    }
    \]
    be the diagram of d-critical pushforwards with $i'\colon p'_*(X', s')\rightarrow X'$ the natural inclusion. Let $o$ be an orientation of $X$ and consider orientations $q^* o$ of $X'$, $p_*o$ of $p_*(X, s)$ and $p'_* q^*o$ of $p'_*(X', s')$. Then the diagram of natural isomorphisms
    \[
    \xymatrix{
    q^\dag \pi^\varphi p^\dag \ar^{\gamma_p}[d] & (\pi')^\varphi a_1^\dag p^\dag \ar^{\sim}[r] \ar_{\alpha_{a_1, q}}[l] & (\pi')^\varphi (p')^\dag a_2^\dag \ar^{\gamma_{p'}}[d] \\
    q^\dag i_* \overline{\pi}^\varphi \ar^{\Ex^!_*}[r] & i'_* \overline{q}^\dag \overline{\pi}^\varphi & i'_* (\overline{\pi}')^\varphi a_2^\dag \ar_-{\alpha_{a_2, \overline{q}}}[l]
    }
    \]
    commutes.
    \item Consider a commutative diagram
    \[
    \xymatrix{
    \tilde{X} \ar^{q}[d] \ar^{\tilde{\pi}}[r] & \tilde{B} \ar^{c_1}[d] \ar^{\tilde{p}}[r] & \tilde{S} \ar^{c_2}[d] \\
    X \ar^{\pi}[r] & B \ar^{p}[r] & S
    }
    \]
    of schemes with both squares Cartesian, $c_2$ finite and $p$ smooth. Let $s\in\Gamma(X, \cS_{X/B})$ be a relative d-critical structure and let $\tilde{s} = \overline{c}^\ast s$ be its pullback to $\tilde{X}$. Let
    \[
    \xymatrix{
    \tilde{p}_*(\tilde{X}, \tilde{s}) \ar^{\overline{q}}[d] \ar^-{\overline{\tilde{\pi}}}[r] & \tilde{S} \ar^{c_2}[d] \\
    p_*(X, s) \ar^-{\overline{\pi}}[r] & S
    }
    \]
    be the Cartesian diagram of d-critical pushforwards with $\tilde{i}\colon \tilde{p}_*(\tilde{X}, \tilde{s})\rightarrow \tilde{X}$ the natural inclusion. Let $o$ be an orientation of $X$ and consider orientations $q^*o$ of $\tilde{X}$, $p_*o$ of $p_*(X, s)$ and $\tilde{p}_*q^*o$ of $\tilde{p}_*(\tilde{X}, \tilde{s})$. Then the diagram of natural isomorphisms
    \[
    \xymatrix{
    \pi^\varphi p^\dag (c_2)_* \ar^{\gamma_p}[d] \ar^{\Ex^!_*}[r] & \pi^\varphi (c_1)_*\tilde{p}^\dag \ar^{\beta_{c_1}}[r] & q_* \tilde{\pi}^\varphi \tilde{p}^\dag \ar^{\gamma_{\tilde{p}}}[d] \\
    i_*\overline{\pi}^\varphi (c_2)_* \ar^{\beta_{c_2}}[r] & i_* \overline{q}_* \overline{\tilde{\pi}}^\varphi \ar^{\sim}[r] & q_*\tilde{i}_*\overline{\tilde{\pi}}^\varphi
    }
    \]
    commutes.
\end{enumerate}
If $R$ is a field, $\gamma_p$ extends to a natural isomorphism of functors $\bD(S)\rightarrow \bD(X)$.
\end{proposition}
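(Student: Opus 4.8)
The plan is to follow the template of \cref{prop:perversesmoothcompatibility,prop:perversesmoothbasechangecompatibility,prop:perversefinitepushforwardcompatibility}: construct $\gamma_p$ on critical charts, glue the local constructions, and then verify properties (1)--(3) and the $\bD$-extension chart-by-chart.

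First I would set up the local picture. Given a critical chart $(U, f, u)$ of $(X\rightarrow B, s)$ whose relative dimension $\dim(U/B)$ has the parity of $o$, the morphism $U\rightarrow S$ is smooth, and by \cref{prop:pushforwardcriticallocus} together with the construction in the proof of \cref{cor:dcritpushforwardschemes} the triple $(U, f, \overline{u})$, with $(U, f)$ regarded as an LG pair over $S$, is a critical chart of $(p_*(X, s)\rightarrow S, p_*s)$; moreover $i\colon p_*(X, s)\rightarrow X$ restricts on this chart to the closed immersion $j\colon \Crit_{U/S}(f)\hookrightarrow \Crit_{U/B}(f)$, and the resulting square relating $u, \overline u, i, j$ is Cartesian by (the scheme case of) \cref{prop:dcritpushforwardpullback}. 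On the chart I would define $\gamma_p$ as the composite built from: the local model \eqref{eq:microlocalpullbacklocalmodel} for $\pi^\varphi$; functoriality of the $\dag$-pullback, so that $(U\rightarrow B)^\dag p^\dag\cong(U\rightarrow S)^\dag$; the fact that $\phi_f((U\rightarrow S)^\dag\cF)$ is supported on $\Crit_{U/S}(f)$ by \cref{prop:constructiblevanishingcycles}(1), so that $j_*$ of its restriction to $\Crit_{U/S}(f)$ recovers it on $\Crit_{U/B}(f)$; the projection formula for the $\mu_2$-twist $Q^o_{U,f,u}$ along $j$; the isomorphism \eqref{eq:Qpushforward} of orientation torsors $Q^o_{U,f,u}|_{\Crit_{U/S}(f)}\cong Q^{p_*s}_{U,f,\overline u}$; the local model \eqref{eq:microlocalpullbacklocalmodel} for $\overline\pi^\varphi$; and proper base change identifying $(i_*\overline\pi^\varphi\cF)|_{\Crit_{U/B}(f)}$ with $j_*((\overline\pi^\varphi\cF)|_{\Crit_{U/S}(f)})$.

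Then I would glue. For two charts $(U_a, f_a, u_a)$ and $(U_b, f_b, u_b)$ of $(X\rightarrow B, s)$, \cref{prop:stabilizationzigzag} gives a Zariski-local model of $\Crit_{U_a/B}(f_a)\times_X\Crit_{U_b/B}(f_b)$ by a zigzag of critical morphisms of critical charts; the same zigzag, with all LG pairs regarded over $S$, is a local model of the corresponding intersection of $p_*(X, s)$-charts, because every critical morphism over $B$ is automatically a critical morphism over $S$ (as in the proof of \cref{prop:canonicalpushforward}) and the $\mu_2$-torsors $P_\Phi$ are unchanged. By \cref{thm:micropullsch}(2) the transition isomorphism of $\pi^\varphi$ along the zigzag is assembled from the stabilization isomorphisms $\stab_\Phi$ over $B$, while that of $\overline\pi^\varphi$ is assembled from $\stab_\Phi$ over $S$; after the support identification these match by \cref{thm:nonlinearstabilization}(9). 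Since $\Perv(-)$, hence $\Fun_\ex(\Perv(S), \Perv(-))$, is a Zariski sheaf of categories, the local isomorphisms glue to a global natural isomorphism $\gamma_p$.

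Finally I would check the properties on charts. Property (1) follows from functoriality of $\dag$ for compositions together with transitivity of the torsor isomorphism \eqref{eq:Qpushforward} (via \eqref{eq:pushforwardcanonicalorientation} and \eqref{eq:orientationpushforwardassociative}). For property (2), restricting to compatible critical charts (as provided by \cref{thm:criticallocussmoothdescent}(1) and \cref{prop:stabilizationzigzagsmoothmorphism}) reduces the displayed square, all of whose arrows are built from stabilization isomorphisms and $\dag$-pullbacks, to the compatibility of $\stab_\Phi$ with smooth base change \cref{thm:nonlinearstabilization}(7) together with the torsor compatibilities \eqref{eq:Qpullback} and \eqref{eq:Qpushforward}. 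Property (3) reduces similarly on charts to the compatibility of $\stab_\Phi$ with finite base change \cref{thm:nonlinearstabilization}(8) and to \cref{prop:constructiblevanishingcycles}(3). For the last assertion, $i$ is finite so $i_*$ is perverse $t$-exact by \cref{prop:constructible6functor}(6c), both $\pi^\varphi p^\dag$ and $i_*\overline\pi^\varphi$ are exact functors $\Perv(S)\rightarrow\Perv(X)$, and \cref{cor:extendfromPerv} supplies the unique extensions to colimit-preserving $t$-exact functors $\bD(S)\rightarrow\bD(X)$ together with the extension of $\gamma_p$. I expect the main obstacle to be the gluing step: one must reconcile the two a priori distinct zigzag descriptions of the chart overlaps, one computed over $B$ and one over $S$, which is exactly the situation \cref{thm:nonlinearstabilization}(9) is designed to handle; the rest is a sequence of diagram chases using compatibilities already established for $\stab_\Phi$, $\alpha$, $\beta$, and the orientation torsors.
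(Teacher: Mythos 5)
Your proposal is correct and follows essentially the same route as the paper's proof: construct $\gamma_p$ chart by chart from the unit of $i_a^*\dashv (i_a)_*$ (invertible by the support statement in \cref{prop:constructiblevanishingcycles}(1)), the identification $(U\to B)^\dag p^\dag\cong (U\to S)^\dag$, and the torsor isomorphism \eqref{eq:Qpushforward}; glue via \cref{prop:stabilizationzigzag} and \cref{thm:nonlinearstabilization}(9); and verify properties (1)--(3) on charts. Two small cosmetic points: $Q^{p_*s}_{U,f,\overline u}$ should read $Q^{p_*o}_{U,f,\overline u}$, and once the diagrams in (2) and (3) are restricted to a fixed compatible chart no stabilization isomorphism is left, so the relevant check there is naturality of the unit $\id\to (i_a)_*(i_a)^*$ against $\Ex^!_\phi$, $\Ex^\phi_*$, and $\Ex^!_*$ (as the paper states), not \cref{thm:nonlinearstabilization}(7)--(8), which enter only in the gluing of the globally defined $\alpha$, $\beta$, $\gamma_p$.
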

\begin{proof}
Consider a cover of $X$ by critical charts $(U_a, f_a, u_a)$ and let $(U_a, f_a, \overline{u}_a)$ be the corresponding cover of $p_*(X, s)$ by critical charts. Let $i_a\colon \Crit_{U_a/S}(f_a)\rightarrow \Crit_{U_a/B}(f_a)$ be the obvious inclusion. In the critical chart $(U_a, f_a, u_a)$ the isomorphism $\gamma_p$ is the isomorphism
\[\phi_{f_a} (U_a\rightarrow B)^\dag p^\dag(-)|_{\Crit_{U_a/B}(f_a)}\otimes Q^o_{U_a, f_a, u_a}\xrightarrow{\sim} (i_a)_*(\phi_{f_a} (U_a\rightarrow S)^\dag(-)|_{\Crit_{U_a/S}(f_a)}\otimes Q^{p_*o}_{U_a, f_a, u_a})\]
obtained from \eqref{eq:Qpushforward} and the unit of the adjunction $i_a^*\dashv (i_a)_*$ which is an isomorphism since $\phi_{f_a} (U_a\rightarrow S)^\dag(-)$ is supported on $\Crit_{U_a/S}(f_a)$ by \cref{prop:constructiblevanishingcycles}(1).

The compatibility of the two locally defined isomorphisms $\gamma_p$ in two critical charts follows as in the proof of \cref{prop:perversesmoothcompatibility} from the local model of the intersection given by \cref{prop:stabilizationzigzag} and \cref{thm:nonlinearstabilization}(9).

By construction $\gamma_p$ is obviously functorial for compositions in $p$. The rest of the properties can be checked in critical charts in which case they follow from the naturality of the unit of the adjunction $i^*\dashv i_*$ with respect to pullbacks.
\end{proof}

\subsection{Compatibility with duality and products}

We begin by establishing a compatibility of perverse pullbacks with Verdier duality. Using the isomorphisms $\Ex^{\dag, \bbD}$ and $\Ex_{*, \bbD}$ we observe that the category $\Perv(X)\cap \bbD(\Perv(X))$ is stable under $f^\dag$ for $f$ smooth and $f_*$ for $f$ finite. Moreover, if $R$ is a field, it coincides with the category $\Perv(X)$ of perverse sheaves.

\begin{proposition}\label{prop:perverseverdiercompatibility}
Let $\pi \colon X \to B$ be a morphism of schemes equipped with a relative d-critical structure $s$ and an orientation $o$. Let $\pi^\varphi$ be the perverse pullback for $(X\rightarrow B, s)$ with respect to $o$ and $\pi^{\varphi, -}$ the perverse pullback for $(X\rightarrow B, -s)$ with respect to $\overline{o}$. Then there is a natural isomorphism
\[\delta\colon \pi^\varphi \bbD\xrightarrow{\sim} \bbD \pi^{\varphi, -}\]
of functors $\Perv(B)\cap \bbD(\Perv(B))\rightarrow \Perv(X)$ which satisfies the following properties:
\begin{enumerate}
    \item There is a commutative diagram
    \[\begin{tikzcd}
      \pi^\varphi \ar{d}{\psi}\ar{r}{\psi}
      & \bbD\bbD\pi^\varphi \ar[leftarrow]{d}{\delta}
      \\
      \pi^\varphi \bbD\bbD\ar{r}{\delta}
      & \bbD\pi^{\varphi, -}\bbD.
    \end{tikzcd}\]
    \item Consider a commutative square of schemes
    \[\begin{tikzcd}
        X' \ar{r}{\overline{p}}\ar{d}{\pi'}
        & X \ar{d}{\pi}
        \\
        B' \ar{r}{p}
        & B
    \end{tikzcd}\]
    with $p \colon B' \to B$ and $X' \to B' \times_B X$ smooth, so that $X'\rightarrow B'$ carries the pullback oriented relative d-critical structure. Then the diagram
    \[\begin{tikzcd}
        (\pi')^\varphi p^\dag \bbD \ar{r}{\alpha_{p,\overline{p}}} \ar{d}{\Ex^{\dag,\bbD}}
        & \overline{p}^\dag\pi^\varphi \bbD \ar{r}{\delta}
        & \overline{p}^\dag \bbD \pi^{\varphi, -} \ar{d}{\Ex^{\dag,\bbD}} \\
        (\pi')^\varphi\bbD p^\dag \ar{r}{\delta}
        & \bbD(\pi')^{\varphi, -} p^\dag 
        & \bbD \overline{p}^\dag \pi^{\varphi, -} \ar{l}[swap]{\alpha_{p,\overline{p}}}
    \end{tikzcd}\]
    commutes.
    \item Let
    \[
    \xymatrix{
    \tilde{X} \ar^{\overline{c}}[r] \ar^{\tilde{\pi}}[d] & X \ar^{\pi}[d] \\
    \tilde{B} \ar^{c}[r] & B
    }
    \]
    be a Cartesian diagram of schemes with $c\colon \tilde{B}\rightarrow B$ finite, so that $\tilde{X}\rightarrow \tilde{B}$ carries the pullback oriented relative d-critical structure. Then the diagram
    \[
    \xymatrix{
    \pi^\varphi c_*\bbD \ar^{\beta_c}[r] \ar^{\Ex_{*, \bbD}}[d] & \overline{c}_* \tilde{\pi}^\varphi \bbD \ar^{\delta}[r] & \overline{c}_* \bbD \tilde{\pi}^{\varphi, -} \ar^{\Ex_{*, \bbD}}[d] \\
    \pi^\varphi \bbD c_* \ar^{\delta}[r] & \bbD \pi^{\varphi, -} c_* & \bbD \overline{c}_* \tilde{\pi}^{\varphi, -} \ar_{\beta_c}[l]
    }
    \]
    commutes.
    \item Let $\pi\colon X\rightarrow B$ be a morphism of schemes equipped with a d-critical structure $s$ and $p\colon B\rightarrow S$ a smooth morphism. Let $p_*(X, s)$ be the d-critical pushforward which fits into a commutative diagram
    \[
    \xymatrix{
    p_*(X, s) \ar^-{i}[r] \ar^{\overline{\pi}}[d] & X \ar^{\pi}[d] \\
    S & B. \ar_{p}[l]
    }
    \]
    Then the diagram
    \[
    \xymatrix{
    \pi^\varphi p^\dag\bbD\ar^{\gamma_p}[r] \ar^{\Ex^{\dag, \bbD}}[d] & i_*\overline{\pi}^\varphi \bbD \ar^{\delta}[r] & i_* \bbD \overline{\pi}^{\varphi, -} \ar^{\Ex_{*, \bbD}}[d] \\
    \pi^\varphi \bbD p^\dag \ar^{\delta}[r] & \bbD \pi^{\varphi, -} p^\dag & \bbD i_*\overline{\pi}^{\varphi, -} \ar_{\gamma_p}[l]
    }
    \]
    commutes.
\end{enumerate}
If $R$ is a field, $\delta$ extends to a natural isomorphism of functors $\bD(B)\rightarrow \bD(X)$.
\end{proposition}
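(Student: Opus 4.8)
The plan is to construct $\delta$ chart by chart and then glue, exactly as in the proofs of \cref{prop:perversesmoothcompatibility,prop:perversesmoothbasechangecompatibility,prop:perversefinitepushforwardcompatibility,prop:perversesmoothpushforwardcompatibility}. Fix a cover of $X$ by critical charts $(U_a, f_a, u_a)$ with $\dim(U_a/B)$ of the parity of $o$, so that $(U_a, -f_a, u_a)$ is a critical chart for $(X\to B,-s)$. Since $U_a\to B$ is smooth we have the isomorphism $\Ex^{\dag,\bbD}\colon (U_a\to B)^\dag\bbD\xrightarrow{\sim}\bbD(U_a\to B)^\dag$ from \eqref{eq:ExdagD}, and by \cref{prop:phiD} the isomorphism $\Ex^{\phi,\bbD}\colon \phi_{f_a}\bbD\xrightarrow{\sim}\bbD\phi_{-f_a}$ on constructible objects; their composite is a natural isomorphism $\phi_{f_a}(U_a\to B)^\dag\bbD\xrightarrow{\sim}\bbD\phi_{-f_a}(U_a\to B)^\dag$. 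Applying the local model \eqref{eq:microlocalpullbacklocalmodel} on both sides, the identification $Q^o_{U_a,f_a,u_a}\cong Q^{\overline{o}}_{U_a,-f_a,u_a}$ of \eqref{eq:Qreverse}, the self-duality $\Ex^{Q,\bbD}$ of the $\mu_2$-torsor $Q^{\overline{o}}_{U_a,-f_a,u_a}$ to move it inside $\bbD$, and the fact that $\phi_{\pm f_a}(U_a\to B)^\dag(-)$ is supported on $\Crit_{U_a/B}(f_a)$ (so that $\bbD$ commutes with the restriction there) by \cref{prop:constructiblevanishingcycles}(1), I obtain for each $a$ a natural isomorphism $\delta|_{\Crit_{U_a/B}(f_a)}$. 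This also shows that $\bbD\pi^{\varphi,-}(\cF)$ is perverse for $\cF\in\Perv(B)\cap\bbD(\Perv(B))$, so that both sides are genuinely functors into $\Perv(X)$.

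The gluing step is the technical heart of the argument. As in \cref{prop:perversesmoothcompatibility} I would apply \cref{prop:stabilizationzigzag} to produce, over each overlap, a Zariski-local model
\[(U_a,f_a,u_a)\leftarrow (U_a^\circ,f_a^\circ,u_a^\circ)\xrightarrow{\Phi_a}(U_{ab},f_{ab},u_{ab})\xleftarrow{\Phi_b}(U_b^\circ,f_b^\circ,u_b^\circ)\rightarrow (U_b,f_b,u_b)\]
of $\Crit_{U_a/B}(f_a)\times_X\Crit_{U_b/B}(f_b)$ by critical morphisms of even relative dimension; the transition between the two copies of \eqref{eq:microlocalpullbacklocalmodel} is then the composite of the stabilization isomorphisms $\stab_{\Phi_a}$, $\stab_{\Phi_b}$ of \cref{thm:nonlinearstabilization} (by \cref{thm:micropullsch}(2)), and likewise for $\pi^{\varphi,-}$ over $(X\to B,-s)$. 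The equality $\delta|_{\Crit_{U_a/B}(f_a)}=\delta|_{\Crit_{U_b/B}(f_b)}$ then reduces precisely to property~\ref{item:nonlinearstabilization/verdier} of \cref{thm:nonlinearstabilization}, i.e.\ the compatibility of $\stab_\Phi$ with Verdier duality, together with the compatibility of \eqref{eq:Qreverse} with \eqref{eq:Popposite} that is built into the definitions of $Q^o$ and $Q^{\overline{o}}$; triple overlaps are handled by the associativity of $\stab$ in \cref{thm:nonlinearstabilization}(5). This produces the global $\delta$, whose naturality in $\cF$ is clear from the construction.

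Properties (1)--(4) are all verified on critical charts, where the perverse pullbacks become vanishing cycles. Property (1) reduces to \cref{prop:phiD}(1) and \cref{prop:fdagDD}, together with the (elementary) compatibility of $\psi$ with tensoring by a self-dual $\mu_2$-torsor. Property (2) reduces to \cref{prop:phiD}(2) and the compatibilities of $\psi$ with $\Ex^{\dag,\bbD}$ and with pullbacks recorded in \cref{sect:constructible}, combined with the local description of $\alpha_{p,\overline{p}}$ from \cref{prop:perversesmoothbasechangecompatibility}. Property (3) reduces to \cref{prop:phiD}(3) and the local description of $\beta_c$ from \cref{prop:perversefinitepushforwardcompatibility}. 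Property (4) reduces to the local description of $\gamma_p$ from \cref{prop:perversesmoothpushforwardcompatibility} together with the naturality of $\Ex^{\phi,\bbD}$ and $\Ex^{\dag,\bbD}$ with respect to $i_*$ for $i$ the inclusion of the $S$-relative critical locus. Finally, when $R$ is a field one has $\Perv(B)\cap\bbD(\Perv(B))=\Perv(B)$, and $\delta$ extends uniquely to a natural isomorphism of colimit-preserving $t$-exact functors $\bD(B)\to\bD(X)$ by \cref{cor:extendfromPerv}.

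I expect the main obstacle, as for the earlier compatibility statements, to be not a conceptual one but the careful bookkeeping of the $\mu_2$-torsor identifications ($Q^o$, $P_\Phi$, \eqref{eq:Qreverse}, \eqref{eq:Popposite}) and the verification that the sign discrepancies coming from the monodromy of vanishing cycles (cf.\ \cref{rmk:phireverse}) are absorbed correctly: one must take $\Ex^{\phi,\bbD}$ rather than $\overline{\Ex}^{\phi,\bbD}$ as the building block, so that property (1) holds with $\psi$ and no monodromy, which is exactly what \cref{prop:phiD} and property~\ref{item:nonlinearstabilization/verdier} of \cref{thm:nonlinearstabilization} are designed to make work.
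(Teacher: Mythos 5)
Your proposal takes essentially the same route as the paper: construct $\delta$ on each critical chart from $\Ex^{\dag,\bbD}$, $\Ex^{\phi,\bbD}$ and the $\mu_2$-torsor identification \eqref{eq:Qreverse}, glue over overlaps via the zigzag of \cref{prop:stabilizationzigzag} and \cref{thm:nonlinearstabilization}(\ref{item:nonlinearstabilization/verdier}), verify (1)--(4) on charts, and extend by \cref{cor:extendfromPerv} when $R$ is a field. Two small bookkeeping points the paper makes explicit that you leave implicit: for property~(3) the key input beyond \cref{prop:phiD}(3) is \cref{prop:Ex!*selfdual} (interaction of $\Ex^!_*$ with $\Ex_{*,\bbD}$ and $\Ex^{\dag,\bbD}$ in a Cartesian square with one proper and one smooth leg), and for property~(4) it is \cref{prop:epsilonfgspD} (the self-duality relating $\epsilon_i$ and $\fgsp_i$ for the inclusion $i$ of the critical locus), which is precisely what lets one pass $\bbD$ through the unit of $i^*\dashv i_*$ appearing in the local description of $\gamma_p$; your narrative points at these but does not name them.
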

\begin{proof}
Given a closed immersion of schemes $i\colon Z\rightarrow Y$ and a constructible complex $\cF\in\Dbc(Y)$ supported on $Z$ the natural morphism $\epsilon_i\colon i^!\cF\rightarrow i^*\cF$ is an isomorphism. In particular, in this case we have a natural isomorphism
\[i^*\bbD\cF\xleftarrow{\epsilon_i} i^!\bbD\cF\xrightarrow{\Ex^{!, \bbD}} \bbD i^*\cF\]
which will be implicit from now on.

Consider critical charts $(U_a, f_a, u_a)$ for $\pi \colon X \to B$. We define $\delta|_{\Crit_{U_a/B}(f_a)}$ as the unique natural isomorphism which fits into commutative diagrams
  \[\begin{tikzcd}
    (\pi^\varphi\bbD(-))|_{\Crit_{U_a/B}(f_a)} \ar[swap]{dd}{\delta|_{\Crit_{U_a/B}(f_a)}}\ar{r}{\eqref{eq:microlocalpullbacklocalmodel}}
    & (\phi_{f_a}(U_a \to B)^\dagger\bbD(-))|_{\Crit_{U_a/B}(f_a)}\otimes Q^o_{U_a, f_a, u_a} \ar{d}{\Ex^{\dag, \bbD}\otimes \id}
    \\
    & (\phi_{f_a}\bbD(U_a \to B)^\dagger(-))|_{\Crit_{U_a/B}(f_a)}\otimes Q^o_{U_a, f_a, u_a} \ar{d}{\Ex^{\phi, \bbD}\otimes \eqref{eq:Qreverse}}
    \\
    (\bbD\pi^{\varphi, -}(-))|_{\Crit_{U_a/B}(f_a)} \ar{r}{\eqref{eq:microlocalpullbacklocalmodel}}
    & (\bbD\phi_{-f_a}(U_a \to B)^\dagger(-))|_{\Crit_{U_a/B}(f_a)}\otimes Q^{\overline{o}}_{U_a, -f_a, u_a}.
  \end{tikzcd}\]
  The fact that these locally defined natural isomorphisms $\delta|_{\Crit_{U_a/B}(f_a)}$ glue into a global natural isomorphism $\delta
  $ is shown as in the proof of \cref{prop:perversesmoothcompatibility}, using the compatibility of the stabilization isomorphisms with Verdier duality, see \cref{thm:nonlinearstabilization}(\ref{item:nonlinearstabilization/verdier}).

  The properties of $\delta$ can be checked on critical charts. (1) follows by combining \cref{prop:phiD}(1) and \cref{prop:fdagDD}. (2) follows from \cref{prop:phiD}(2). (3) follows from \cref{prop:Ex!*selfdual} and \cref{prop:phiD}(3). (4) follows from \cref{prop:epsilonfgspD}.
\end{proof}

\begin{remark}
As in \cref{rmk:phireverse} we may define a natural isomorphism
\[\overline{\delta}\colon \pi^\varphi \bbD\xrightarrow{\sim} \bbD \pi^{\varphi}\]
which fits into a commutative diagram
\[
\xymatrix@C=1.3cm{
\pi^\varphi \ar^{\psi}[d] \ar^{T}[r] & \pi^\varphi \ar^-{\psi}[r]& \bbD \bbD \pi^\varphi \ar^{\overline{\delta}}[d] \\
\pi^\varphi \bbD \bbD \ar^{\overline{\delta}}[rr] && \bbD \pi^\varphi \bbD,
}
\]
where $T\colon \pi^\varphi\rightarrow \pi^\varphi$ is the natural isomorphism given locally by the monodromy operator of the functor of vanishing cycles.
\end{remark}

Next we establish a compatibility of perverse pullbacks with products. If $R$ is not a field, in general the external tensor product does not preserve perversity already in the case when the base is a point. We denote by
\[(\Perv(B_1)\times\Perv(B_2))_{\boxtimes\Perv}\subset \Perv(B_1)\times\Perv(B_2)\]
the full subcategory consisting of pairs $(\cF_1, \cF_2)$ such that $\cF_1\boxtimes\cF_2\in\Dbc(B_1\times B_2)$ is perverse.

\begin{proposition}\label{prop:perverseproductscompatibility}
Let $\pi_1 \colon X_1 \rightarrow B_1$ and $\pi_2 \colon X_2 \rightarrow B_2$ be morphisms of schemes equipped with relative d-critical structures $s_1$ and $s_2$ and orientations $o_1$ and $o_2$, respectively. Consider the morphism $\pi_1\times \pi_2\colon X_1 \times X_2 \to B_1 \times B_2$ equipped with the relative d-critical structure $s_1 \boxplus s_2$ and orientation $o_1\boxtimes o_2$. Then there is a natural isomorphism
\[\TS\colon \pi_1^\varphi(-)\boxtimes \pi_2^\varphi(-)\xrightarrow{\sim} (\pi_1\times \pi_2)^\varphi(-\boxtimes -)\]
of functors $(\Perv(B_1)\times\Perv(B_2))_{\boxtimes\Perv}\rightarrow \Perv(X_1\times X_2)$. It satisfies the following properties:
\begin{enumerate}
    \item It is associative: given another morphism of schemes $\pi_3\colon X_3\rightarrow B_3$ equipped with an oriented relative d-critical structure $(s_3, o_3)$ the diagram
    \[
    \xymatrix{
    \pi_1^\varphi(-)\boxtimes \pi_2^\varphi(-)\boxtimes \pi_3^\varphi(-)\ar^-{\TS\boxtimes \id}[r] \ar^{\id\boxtimes \TS}[d] & (\pi_1\times \pi_2)^\varphi(-\boxtimes -)\boxtimes \pi_3^\varphi(-) \ar^{\TS}[d] \\
    \pi_1^\varphi(-)\boxtimes (\pi_2\times \pi_3)^\varphi(-\boxtimes -) \ar^-{\TS}[r] & (\pi_1\times \pi_2\times \pi_3)^\varphi(-\boxtimes -\boxtimes -).
    }
    \]
    commutes, where we use the natural isomorphism $o_1\boxtimes (o_2\boxtimes o_3)\cong (o_1\boxtimes o_2)\boxtimes o_3$ of orientations.
    \item It is unital: if $\pi_1\colon \pt\rightarrow \pt$ equipped with the relative d-critical structure $s=0$ and the trivial orientation, then $\TS=\id$.
    \item It is graded-commutative: for the swapping isomorphism $\sigma\colon X_2\times X_1\rightarrow X_1\times X_2$ and $\cF_1\in\Dbc(X_1),\cF_2\in\Dbc(X_2)$ the diagram
    \[
    \xymatrix{
    \sigma^*(\pi_1^\varphi(\cF_1)\boxtimes \pi_2^\varphi(\cF_2)) \ar^{\TS}[r] \ar^{\sim}[d] & \sigma^*(\pi_1\times \pi_2)^\varphi(\cF_1\boxtimes\cF_2) \ar^{\alpha}[d] \\
    \pi_2^\varphi(\cF_2)\boxtimes \pi_1^\varphi(\cF_1) \ar^{\TS}[r] & (\pi_2\times \pi_1)^\varphi(\cF_2\boxtimes \cF_1),
    }
    \]
    where the right vertical isomorphism uses the natural isomorphism $\sigma^*(o_1\boxtimes o_2)\xrightarrow{\eqref{eq:orientationbraiding}} o_2\boxtimes o_1$ of orientations, commutes up to $(-1)^{\deg(o_1)\deg(o_2)}$.
    \item For $i=1, 2$ let
    \[
    \xymatrix{
    X'_i \ar^{\overline{p}_i}[r] \ar^{\pi'_i}[d] & X_i \ar^{\pi_i}[d] \\
    B'_i \ar^{p_i}[r] & B_i
    }
    \]
    be a commutative diagram of schemes with $p_i\colon B'_i\rightarrow B_i$ and $X'_i\rightarrow B'_i\times_{B_i} X_i$ smooth. Consider oriented relative d-critical structures on $X_i\rightarrow B_i$ and their pullbacks to $X'_i\rightarrow B'_i$. Then the diagram
    \[
    \xymatrix{
    (\pi'_1)^\varphi p_1^\dag(-)\boxtimes (\pi'_2)^\varphi p_2^\dag(-) \ar^-{\TS}[r] \ar^{\alpha_{p_1, \overline{p}_1}\boxtimes \alpha_{p_2, \overline{p}_2}}[d] & (\pi'_1\times \pi'_2)^\varphi(p_1^\dag(-)\boxtimes p_2^\dag(-)) \ar^-{\sim}[r] & (\pi'_1\times \pi'_2)^\varphi(p_1\times p_2)^\dag(-) \ar^{\alpha_{p_1\times p_2, \overline{p}_1\times \overline{p}_2}}[d] \\
    \overline{p}^\dag_1\pi_1^\varphi(-)\boxtimes \overline{p}^\dag_2\pi_2^\varphi(-) \ar^-{\sim}[r] & (\overline{p}_1\times \overline{p}_2)^\dag(\pi_1^\varphi(-)\boxtimes \pi_2^\varphi(-)) \ar^{\TS}[r] & (\overline{p}_1\times \overline{p}_2)^\dag(\pi_1\times \pi_2)^\varphi(-\boxtimes -)
    }
    \]
    commutes.
    \item For $i=1, 2$ let
    \[
    \xymatrix{
    \tilde{X}_i \ar^{\overline{c}_i}[r] \ar^{\tilde{\pi}_i}[d] & X_i \ar^{\pi_i}[d] \\
    \tilde{B}_i \ar^{c_i}[r] & B_i
    }
    \]
    be a Cartesian diagram of schemes with $c_i\colon \tilde{B}_i\rightarrow B_i$ finite. Consider oriented relative d-critical structures on $X_i\rightarrow B_i$ and their pullbacks to $\tilde{X}_i\rightarrow \tilde{B}_i$. Then the diagram
    \[
    \xymatrix{
    \pi_1^\varphi c_{1, *}(-)\boxtimes \pi_2^\varphi c_{2, *}(-) \ar^-{\TS}[r] \ar^{\beta_{c_1}\boxtimes \beta_{c_2}}[d] & (\pi_1\times \pi_2)^\varphi(c_{1, *}(-)\boxtimes c_{2, *}(-)) \ar^-{\sim}[r] & (\pi_1\times \pi_2)^\varphi(c_1\times c_2)_*(-\boxtimes -) \ar^{\beta_{c_1\times c_2}}[d] \\
    \overline{c}_{1, *} \tilde{\pi}^\varphi_1(-)\boxtimes \overline{c}_{2, *} \tilde{\pi}^\varphi_2(-) \ar^-{\sim}[r] & (\overline{c}_1\times \overline{c}_2)_* (\tilde{\pi}^\varphi_1(-)\boxtimes \tilde{\pi}^\varphi_2(-)) \ar^-{\TS}[r] & (\overline{c}_1\times \overline{c}_2)_* (\tilde{\pi}_1\times \tilde{\pi}_2)^\varphi(-\boxtimes -)
    }
    \]
    commutes.
    \item For $i=1, 2$ let $X_i\xrightarrow{\pi_i} B_i\xrightarrow{p_i} S_i$ be morphisms of schemes, where $\pi_i$ is equipped with an oriented relative d-critical structure $(s_i, o_i)$. Let $p_{i, *}(X_i, s_i)$ be the d-critical pushforward which fits into a commutative diagram
    \[
    \xymatrix{
    p_{i,*}(X_i, s_i) \ar^-{i}[r] \ar^{\overline{\pi}}[d] & X_i \ar^{\pi_i}[d] \\
    S_i & B_i. \ar_{p_i}[l]
    }
    \]
    Consider the orientation $p_{i,*}o_i$ of $p_{i,*}(X_i, s_i)$. Then the diagram
    \[
    \xymatrix{
    \pi_1^\varphi p_1^\dag(-)\boxtimes \pi_2^\varphi p_2^\dag(-) \ar^-{\TS}[r] \ar^{\gamma_{p_1}\boxtimes \gamma_{p_2}}[d] & (\pi_1\times \pi_2)^\varphi(p_1^\dag(-)\boxtimes p_2^\dag(-)) \ar^{\sim}[r] & (\pi_1\times \pi_2)^\varphi (p_1\times p_2)^\dag(-\boxtimes -) \ar^{\gamma_{p_1\times p_2}}[d] \\
    i_{1,*}\overline{\pi}^\varphi_1(-)\boxtimes i_{2,*}\overline{\pi}^\varphi_2(-) \ar^{\sim}[r] & (i_1\times i_2)_*(\overline{\pi}^\varphi_1(-)\boxtimes \overline{\pi}^\varphi_2(-)) \ar^-{\TS}[r] & (i_1\times i_2)_* (\overline{\pi}_1\times \overline{\pi}_2)^\varphi(-\boxtimes -)
    }
    \]
    commutes.
    \item The diagram
    \[
      \xymatrix{
      \pi_1^\varphi(\bbD(-))\boxtimes \pi_2^\varphi(\bbD(-)) \ar^-{\TS}[r] \ar^{\delta\boxtimes\delta}[d] & (\pi_1\times \pi_2)^\varphi(\bbD(-)\boxtimes\bbD(-)) \ar^{\Ex^{\bbD, \boxtimes}}[r] & (\pi_1\times \pi_2)^\varphi(\bbD(-\boxtimes -)) \ar^{\delta}[d] \\
      \bbD(\pi_1^{\varphi, -}(-))\boxtimes\bbD(\pi_2^{\varphi, -}(-)) \ar^{\Ex^{\bbD, \boxtimes}}[r] & \bbD(\pi_1^{\varphi, -}(-)\boxtimes\pi_2^{\varphi, -}(-)) & \bbD((\pi_1\times \pi_2)^{\varphi, -}(-\boxtimes -)) \ar_-{\TS}[l]
      }
    \]
    commutes.
\end{enumerate}
If $R$ is a field, $\TS$ extends to a natural isomorphism of functors $\bD(B_1)\times \bD(B_2)\rightarrow \bD(X_1\times X_2)$.
\end{proposition}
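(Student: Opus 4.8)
The plan is to follow the template of \cref{prop:perversesmoothcompatibility,prop:perversefinitepushforwardcompatibility}: construct $\TS$ chart by chart from the Thom--Sebastiani isomorphism for vanishing cycles, glue using \'etale/Zariski descent, and then read off the listed properties locally.

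First I would fix covers of $X_1$ and $X_2$ by critical charts $(U^1_a, f^1_a, u^1_a)$ and $(U^2_b, f^2_b, u^2_b)$ with $\dim(U^i_\bullet/B_i)$ of the parity of $o_i$. By \cref{prop:criticalproduct} the products $(U^1_a\times U^2_b, f^1_a\boxplus f^2_b, u^1_a\times u^2_b)$ are critical charts for $(X_1\times X_2\to B_1\times B_2, s_1\boxplus s_2)$, and since $u^1_a\times u^2_b$ is an open immersion and the images of the $u^i_\bullet$ cover $X_i$, these charts cover $X_1\times X_2$. On such a product chart I define the local model of $\TS$ by unwinding \eqref{eq:microlocalpullbacklocalmodel} on all three sides and composing: the Thom--Sebastiani isomorphism for vanishing cycles from \cref{prop:phiTS}; the K\"unneth isomorphism $(U^1_a\to B_1)^\dag(-)\boxtimes (U^2_b\to B_2)^\dag(-)\cong (U^1_a\times U^2_b\to B_1\times B_2)^\dag(-\boxtimes -)$ coming from the external compatibility $\Ex^{!,\boxtimes}$; and the isomorphism of orientation $\mu_2$-torsors $Q^{o_1}_{U^1_a, f^1_a, u^1_a}\boxtimes Q^{o_2}_{U^2_b, f^2_b, u^2_b}\cong Q^{o_1\boxtimes o_2}_{U^1_a\times U^2_b, f^1_a\boxplus f^2_b, u^1_a\times u^2_b}$ of \eqref{eq:Qproduct}. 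The resulting local model is manifestly perverse on the subcategory $(\Perv(B_1)\times\Perv(B_2))_{\boxtimes\Perv}$.

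Next I would glue. Since $\Perv(-)$ is a sheaf of categories, natural isomorphisms between the two fixed functors form a sheaf of sets over $X_1\times X_2$, so it suffices to check that the local models agree on pairwise overlaps of the product charts. Such an overlap is a product $(\Crit_{U^1_a/B_1}(f^1_a)\times_{X_1}\Crit_{U^1_{a'}/B_1}(f^1_{a'}))\times(\Crit_{U^2_b/B_2}(f^2_b)\times_{X_2}\Crit_{U^2_{b'}/B_2}(f^2_{b'}))$. I cover each factor by the Zariski-local zigzag models of \cref{prop:stabilizationzigzag}, take products (a product of critical morphisms is a critical morphism by \cref{prop:criticalproduct}), and work on the resulting finer cover of the overlap. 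On each piece the transition isomorphism for $(\pi_1\times\pi_2)^\varphi$ is, by \cref{thm:micropullsch}(2), the composite of stabilization isomorphisms $\stab_{\Phi^1_\bullet\times\Phi^2_\bullet}$ along the product zigzag tensored with the torsor transitions; by the product compatibility of stabilization isomorphisms, \cref{thm:nonlinearstabilization}(\ref{item:nonlinearstabilization/TS}), each such $\stab_{\Phi^1\times\Phi^2}$ is identified under $\TS$ with $\stab_{\Phi^1}\boxtimes\stab_{\Phi^2}$, and combined with the analogous compatibilities of \eqref{eq:Pproduct} and \eqref{eq:Qproduct} this shows the two local models of $\TS$ agree on the overlap. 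Hence they glue to a global isomorphism $\TS$, independent of the chosen charts. Properties (1)--(7) are then all statements about natural isomorphisms which can be verified on product charts: (1)--(3) reduce to unitality, associativity and commutativity of $\TS$ for vanishing cycles (\cref{prop:phiTS}(1)) together with the formal properties of the orientation-torsor product isomorphisms (tracking the Koszul sign in \eqref{eq:orientationbraiding}); (4) to \cref{prop:phiTS}(2) and \cref{thm:nonlinearstabilization}(7); (5) to \cref{prop:phiTS}(3) and \cref{thm:nonlinearstabilization}(8); (6) to \cref{thm:nonlinearstabilization}(9) and naturality of the adjunction units used in $\gamma_p$; and (7) to \cref{prop:phiTS}(4), the compatibility of $\Ex^{!,\bbD}$ and $\Ex^{\bbD,\boxtimes}$ with external products, and \cref{thm:nonlinearstabilization}(\ref{item:nonlinearstabilization/verdier}),(\ref{item:nonlinearstabilization/TS}). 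Finally, when $R$ is a field the external tensor product preserves perversity, so source and target are defined on all of $\Perv(B_1)\times\Perv(B_2)$ and both composites preserve colimits separately in each variable; applying \cref{cor:extendfromPerv} in each variable extends $\TS$ uniquely to a natural isomorphism of functors $\bD(B_1)\times\bD(B_2)\to \bD(X_1\times X_2)$.

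The hard part will be the cocycle check in the third step: verifying that the zigzag of \cref{prop:stabilizationzigzag} relating two product charts can, after shrinking, be taken to be the product of zigzags relating the factor charts, and then threading the product compatibilities of $\stab_\Phi$, $P_\Phi$ and $Q^o$ through the gluing while keeping the $\mu_2$-torsor and Koszul-sign bookkeeping consistent across the three functors involved.
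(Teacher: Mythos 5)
Your proposal is essentially the paper's own proof: define $\TS$ chart-by-chart on products of critical charts via the Thom--Sebastiani isomorphism of \cref{prop:phiTS}, the K\"unneth/$\Ex^{!,\boxtimes}$ isomorphism, and the torsor isomorphism \eqref{eq:Qproduct}; glue using the product compatibility \cref{thm:nonlinearstabilization}(\ref{item:nonlinearstabilization/TS}); and verify the listed properties on product charts by the same reductions you identify.

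One clarification on the step you flag as the ``hard part.'' You do not actually need to verify that an arbitrary zigzag of \cref{prop:stabilizationzigzag} relating two product charts can be taken to be a product of zigzags. The perverse pullback $(\pi_1\times\pi_2)^\varphi$ is well-defined via \cref{cor:criticaldescent}, and condition (3) of that corollary guarantees that the transition data is independent of the chosen critical morphism provided the restrictions to the critical loci coincide. Therefore the transition on a product overlap $R^{(1)}_{aa'}\times R^{(2)}_{bb'}$ may be \emph{computed} using whatever valid zigzag is convenient; in particular, the product of the factor zigzags (which is a zigzag of critical morphisms of even relative dimension by \cref{prop:criticalproduct} and \cref{prop:criticalembeddingnormalquadratic}(5)) suffices, and you only need to compare the answer with the external product of the factor transitions via \cref{thm:nonlinearstabilization}(\ref{item:nonlinearstabilization/TS}) together with \eqref{eq:Pproduct} and \eqref{eq:Qproduct}. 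So the cocycle check is exactly the bookkeeping you already describe in your ``take products of zigzags'' step, and there is no residual gap to fill.
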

\begin{proof}
Let $(U_{1,a}, f_{1,a}, u_{1,a})$ and $(U_{2,a}, f_{2,a}, u_{2,a})$ be critical charts for $\pi_1$ and $\pi_2$, respectively. Write $U_{12,a} \coloneqq U_{1,a} \times U_{2,a}$, $B_{12} \coloneqq B_1 \times B_2$, and $f_{12,a} \coloneqq f_{1,a} \boxplus f_{2,a}$, and denote by $p_{1,a} \colon U_{1,a} \to B_1$, $p_{2,a} \colon U_{2,a} \to B_2$, and $p_{12,a} \colon U_{12,a} \to B_{12}$ the projections. Recall that $(U_{12,a}, f_{12,a}, u_{1,a} \times u_{2,a})$ is a critical chart for $\pi$. Identifying $\Crit_{U_{12,a}/B_{12}}(f_{12,a}) \cong \Crit_{U_{1,a}/B_1}(f_{1,a}) \times \Crit_{U_{2,a}/B_2}(f_{2,a})$, we define $\TS|_{\Crit_{U_{12, a}/B_{12}}(f_{12, a})}$ as the unique isomorphism which fits into commutative diagrams
\[
\begin{tikzcd}[column sep=1.8cm,scale cd=0.8]
(\pi_1^\phi(-) \boxtimes \pi_2^\phi(-))|_{\Crit_{U_{12,a}/B_{12}}(f_{12,a})} \ar{d}{\TS|_{\Crit_{U_{12,a}/B_{12}}}} \ar{r}{\eqref{eq:microlocalpullbacklocalmodel} \boxtimes \eqref{eq:microlocalpullbacklocalmodel}} & (\phi_{f_{1,a}} p_{1, a}^\dag(-) \boxtimes \phi_{f_{2,a}} p_{2, a}^\dag(-))|_{\Crit_{U_{12,a}/B_{12}}(f_{12,a})} \otimes Q^{o_1}_{U_{1,a},f_{1,a},u_{1,a}}\boxtimes Q^{o_2}_{U_{2,a},f_{2,a},u_{2,a}} \ar{d}{\TS\otimes \eqref{eq:Qproduct}} \\
(\pi^\phi(-\boxtimes -))|_{\Crit_{U_{12,a}/B_{12}}(f_{12,a})} \ar{r}{\eqref{eq:microlocalpullbacklocalmodel}} & (\phi_{f_{12,a}} p_{12, a}^\dag(-\boxtimes -))|_{\Crit_{U_{12,a}/B_{12}}(f_{12,a})} \otimes Q^{o_1\boxtimes o_2}_{U_{1,a}\times U_{2,a},f_{1,a}\boxplus f_{2,a},u_{1,a}\times u_{2,a}}
\end{tikzcd}
\]
for each $a$. The fact that these locally defined isomorphisms glue into the global isomorphism $\TS$ is shown as in the proof of \cref{prop:perversesmoothcompatibility}, using \cref{thm:nonlinearstabilization}(\ref{item:nonlinearstabilization/TS}).

The associativity and unitality property of $\tau$ follows from the corresponding properties of the Thom--Sebastiani isomorphism $\TS$ for the sheaves of vanishing cycles. The graded-commutativity of $\tau$ follows from the following facts:
\begin{enumerate}
    \item The Thom--Sebastiani isomorphism $\TS$ for the sheaves of vanishing cycles is commutative.
    \item The diagram
    \[
    \xymatrix{
    \sigma^*(p_{1, a}^\dag(\cF_1)\boxtimes p_{2, a}^\dag(\cF_2)) \ar^-{\sim}[r] \ar^{\sim}[d] & \sigma^* p_{12, a}^\dag(\cF_1\boxtimes \cF_2) \ar^{\sim}[d] \\
    p_{2, a}^\dag(\cF_2)\boxtimes p_{1, a}^\dag(\cF_1) \ar^-{\sim}[r] & p_{21, a}^\dag(\cF_2\boxtimes \cF_1)
    }
    \]
    commutes up to the sign $(-1)^{\dim(U_{1, a}/B_1)\dim(U_{2, a}/B_2)}=(-1)^{\deg(o_1)\deg(o_2)}$ due to the presence of the shifts $[-\dim(U_{i, a}/B_i)]$ in the definition of $p_{i, a}^\dag$.
    \item The commutativity of the diagram
    \[
    \xymatrix{
    \sigma^*(Q^{o_1}_{U_{1,a},f_{1,a},u_{1,a}}\boxtimes Q^{o_2}_{U_{2,a},f_{2,a},u_{2,a}}) \ar^{\eqref{eq:Qproduct}}[r] \ar^{\sim}[dd] & \sigma^*(Q^{o_1\boxtimes o_2}_{U_{1,a}\times U_{2,a},f_{1,a}\boxplus f_{2,a},u_{1,a}\times u_{2,a}}) \ar^{\eqref{eq:Qpullback}}[d] \\
    & Q^{\sigma^*(o_1\boxtimes o_2)}_{U_{2,a}\times U_{1,a},f_{2,a}\boxplus f_{1,a},u_{2,a}\times u_{1,a}} \ar^{\eqref{eq:orientationbraiding}}[d] \\
    Q^{o_2}_{U_{2,a},f_{2,a},u_{2,a}}\boxtimes Q^{o_1}_{U_{1,a},f_{1,a},u_{1,a}} \ar^{\eqref{eq:Qproduct}}[r] & Q^{o_2\boxtimes o_1}_{U_{2,a}\times U_{1,a},f_{2,a}\boxplus f_{1,a},u_{2,a}\times u_{1,a}}
    }
    \]
    is equivalent to the commutativity of the diagram
    \[
    \begin{tikzcd}[scale cd=0.8]
    \sigma^*(\cL_1\boxtimes \cL_2) \ar{r}{o_1, o_2} \ar{d}{\eqref{eq:orientationbraiding}} & \sigma^*(K_{U_{1, a}}|_{\Crit_{U_{1, a}/B_1}(f_{1, a})}\boxtimes K_{U_{2, a}}|_{\Crit_{U_{2, a}/B_2}(f_{2, a})}) \ar{r}{\sim} & \sigma^*(K_{U_{1, a}\times U_{2, a}}|_{\Crit_{U_{1, a}\times U_{2, a}/B_1\times B_2}(f_{1, a}\boxplus f_{2, a})}) \ar{d}{\sim} \\
    \cL_2\boxtimes \cL_1 \ar{r}{o_2, o_1} & K_{U_{2, a}}|_{\Crit_{U_{2, a}/B_2}(f_{2, a})}\boxtimes K_{U_{1, a}}|_{\Crit_{U_{1, a}/B_1}(f_{1, a})} \ar{r}{\sim} & K_{U_{2, a}\times U_{1, a}}|_{\Crit_{U_{2, a}\times U_{1, a}/B_2\times B_1}(f_{2, a}\boxplus f_{1, a})},
    \end{tikzcd}
    \]
    which commutes on the nose.
\end{enumerate}
The compatibilities of $\alpha$, $\beta$ and $\gamma$ with respect to products follow from the compatibility of the isomorphisms $\Ex^!_\phi$, $\Ex^\phi_*$ (see \cref{prop:phiTS}) as well as the unit $\id\rightarrow i_*i^*$ for a closed immersion $i$ with products. The diagram in property (3) in a critical chart reduces to the commutative diagram from \cref{prop:phiTS}(4).
\end{proof}

\subsection{Perverse pullbacks for stacks}

We will now define the operation of perverse pullback along any morphism of higher Artin stacks equipped with a relative d-critical structure.

\begin{theorem}\label{thm:micropullstk}
Let $\pi \colon X\rightarrow B$ be a morphism of higher Artin stacks locally of finite type equipped with an oriented relative d-critical structure. There is an exact \defterm{perverse pullback} functor
\[\pi^\varphi\colon \Perv(B)\longrightarrow \Perv(X)\]
together with the following natural isomorphisms:
\begin{enumerate}
    \item Let
    \[
    \xymatrix{
    X' \ar^{\overline{p}}[r] \ar^{\pi'}[d] & X \ar^{\pi}[d] \\
    B' \ar^{p}[r] & B
    }
    \]
    be a commutative diagram of higher Artin stacks locally of finite type with $p\colon B'\rightarrow B$ and $X'\rightarrow B'\times_B X$ smooth and the pullback oriented relative d-critical structure on $X'\rightarrow B'$. Then there is a natural isomorphism
    \[\alpha_{p, \overline{p}}\colon (\pi')^\varphi p^\dag\xrightarrow{\sim} \overline{p}^\dag \pi^\varphi,\]
    \item Let
    \[
    \xymatrix{
    \tilde{X} \ar^{\overline{c}}[r] \ar^{\tilde{\pi}}[d] & X \ar^{\pi}[d] \\
    \tilde{B} \ar^{c}[r] & B
    }
    \]
    be a Cartesian diagram of higher Artin stacks locally of finite type with $c\colon \tilde{B}\rightarrow B$ finite and the pullback oriented relative d-critical structure on $\tilde{X}\rightarrow \tilde{B}$. Then there is a natural isomorphism
    \[\beta_c\colon \pi^\varphi c_*\xrightarrow{\sim} \overline{c}_*\tilde{\pi}^\varphi.\]
    \item Consider morphisms $X\xrightarrow{\pi}B\xrightarrow{p} S$ of higher Artin stacks locally of finite type, where $p$ is smooth and $\pi$ is equipped with a relative d-critical structure $s$ and an orientation $o$. Let $p_*(X, s)$ be the d-critical pushforward which fits into a commutative diagram
    \[
    \xymatrix{
    p_*(X, s) \ar^-{i}[r] \ar^{\overline{\pi}}[d] & X \ar^{\pi}[d] \\
    S & B. \ar_{p}[l]
    }
    \]
    and which carries the orientation $p_* o$. Then there is a natural isomorphism
    \[\gamma_p\colon \pi^\varphi p^\dag\xrightarrow{\sim} i_* \overline{\pi}^\varphi.\]
    \item For a morphism $X\xrightarrow{\pi} B$ of higher Artin stacks locally of finite type equipped with an oriented relative d-critical structure there is a natural isomorphism
    \[\delta\colon \pi^\varphi\bbD\xrightarrow{\sim}\bbD\pi^{\varphi, -},\]
    where $\pi^{\varphi, -}$ denotes the perverse pullback with respect to the opposite oriented relative d-critical structure.
    \item For a pair of morphisms $\pi_1\colon X_1\rightarrow B_1$ and $\pi_2\colon X_2\rightarrow B_2$ equipped with oriented relative d-critical structures with $\pi_1\times \pi_2\colon X_1\times X_2\rightarrow B_1\times B_2$ equipped with the product oriented relative d-critical structure there is a natural isomorphism
    \[\TS\colon \pi_1^\varphi(-)\boxtimes \pi_2^\varphi(-)\xrightarrow{\sim}(\pi_1\times \pi_2)^\varphi(-\boxtimes -).\] \label{item-micropullstk-TS}
\end{enumerate}
These natural isomorphisms satisfy natural compatibility relations as in \cref{prop:perversesmoothbasechangecompatibility,prop:perversefinitepushforwardcompatibility,prop:perversesmoothpushforwardcompatibility,prop:perverseverdiercompatibility,prop:perverseproductscompatibility}. Moreover, if $R$ is a field, $\pi^\varphi$ (along with the natural isomorphisms $\alpha,\beta,\gamma,\delta,\TS$ and their compatibility relations) extends to a colimit-preserving $t$-exact functor
\[\pi^\varphi\colon \bD(B)\longrightarrow \bD(X).\]
\end{theorem}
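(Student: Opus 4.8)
The plan is to package the scheme-level results into a single functor and right Kan extend it to higher Artin stacks by the mechanism of \cref{sect:Stkextensions} (in particular \eqref{eq:ShvArtrel}). First I would record that \cref{thm:micropullsch} together with \cref{prop:perversesmoothcompatibility,prop:perversesmoothbasechangecompatibility} assemble into a functor
\[
F\colon \mathcal{C}^{\op}\longrightarrow \Fun(\Delta^1,\Cat^{\mathrm{ab}}),
\]
where $\mathcal{C}$ is the Grothendieck construction of the presheaf $\DCrit^{\ori}\colon \Fun(\Delta^1,\Sch^{\sep\ft})^{\op}_{0\smooth,1\smooth}\to \Gpd$ — so an object of $\mathcal{C}$ is a morphism of schemes $\pi\colon X\to B$ equipped with an oriented relative d-critical structure, and a morphism is a square of the sort allowed in $\Fun(\Delta^1,\Sch)_{0\smooth,1\smooth}$ together with an identification of the pulled-back oriented d-critical structure — and $F$ lies over the functor $(X\to B)\mapsto(\Perv(B)\to\Perv(X))$ with transition maps the $\dag$-pullbacks. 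Concretely, $F$ sends an object to the triple $(\Perv(B),\Perv(X),\pi^\varphi)$ and a morphism to the pair of $\dag$-pullback functors together with the natural isomorphism $\alpha_{p,\overline p}$; the compatibility diagrams in \cref{prop:perversesmoothcompatibility,prop:perversesmoothbasechangecompatibility} are exactly the coherence data making $F$ a functor of $(2,1)$-categories, and no higher coherence is required since $\Cat^{\mathrm{ab}}$, hence $\Fun(\Delta^1,\Cat^{\mathrm{ab}})$, is a $(2,1)$-category. When $R$ is a field I would instead use the colimit-preserving $t$-exact extensions from \cref{thm:micropullsch}(4) and \cref{cor:extendfromPerv} to build the analogous functor $F_{\bD}$ with values in $\Fun(\Delta^1,\mathrm{Pr}^{\mathrm{St},t}_R)$, compatible with $F$ under \cref{prop:Catab}.

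Next I would verify \'etale descent for $F$ and extend. The presheaf $\DCrit^{\ori}$ satisfies \'etale descent (as recalled before the statement), so $\mathcal{C}$ carries the induced \'etale topology and its completion to higher Artin stacks locally of finite type is the Grothendieck construction $\mathcal{C}_{\Art}$ of the extension of $\DCrit^{\ori}$; both legs $(X\to B)\mapsto\Perv(B)$ and $(X\to B)\mapsto\Perv(X)$ with $\dag$-pullback transition maps satisfy \'etale descent (\'etale descent for perverse sheaves together with the definition of the \'etale topology on $\Fun(\Delta^1,\Sch)$), and $\dag$-pullback along \'etale maps is $t$-exact; hence $F$ satisfies \'etale descent. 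The right Kan extension of \eqref{eq:ShvArtrel}, applied to the site $\mathcal{C}$ — which is legitimate for exactly the reason \eqref{eq:ShvArtrel} itself is, the cited results of \cite{KhanWeavelisse} being stated for general sites — then produces, for every morphism $\pi\colon X\to B$ of higher Artin stacks locally of finite type with an oriented relative d-critical structure, an object of $\Fun(\Delta^1,\Cat^{\mathrm{ab}})$ whose source and target are the extensions of $\Perv(B)$ and $\Perv(X)$ to stacks — which, by the very definition of the perverse $t$-structure on stacks, are $\Perv(B)$ and $\Perv(X)$ — and whose underlying arrow is the desired exact functor $\pi^\varphi$. Feeding in the remaining scheme-level data (finite pushforwards from \cref{prop:perversefinitepushforwardcompatibility}, d-critical pushforwards along smooth maps from \cref{prop:perversesmoothpushforwardcompatibility}, Verdier duality from \cref{prop:perverseverdiercompatibility}, external products from \cref{prop:perverseproductscompatibility}) in the same way yields the natural isomorphisms $\beta_c,\gamma_p,\delta,\TS$; and each of their compatibility relations on stacks, being an equation between two natural transformations each of which is the Kan extension of the corresponding scheme-level one, follows from the scheme-level version by functoriality of the Kan extension. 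Finally, when $R$ is a field, running the argument with $F_{\bD}$ gives $\pi^\varphi\colon\bD(B)\to\bD(X)$, which is colimit-preserving as a morphism of $\PrSt_R$ and $t$-exact because it is a limit over the smooth charts of the $t$-exact functors $\pi_S^\varphi$ while the perverse $t$-structures on $\bD(B)$ and $\bD(X)$ are, under the identification with $\lim_{S\to B\ \mathrm{smooth}}\bD(S)$ and $\lim_{S\to X\ \mathrm{smooth}}\bD(S)$, the limit $t$-structures; its restriction to $\Perv$ agrees with the functor built from $F$, since both restrict on each smooth chart to the scheme-level constructions, which coincide by \cref{cor:extendfromPerv}.

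The main difficulty is organizational rather than conceptual: the real work is to arrange the long list of natural isomorphisms and commutative diagrams from \cref{prop:perversesmoothcompatibility,prop:perversesmoothbasechangecompatibility,prop:perversefinitepushforwardcompatibility,prop:perversesmoothpushforwardcompatibility,prop:perverseverdiercompatibility,prop:perverseproductscompatibility} into a single coherent functor $F$ whose domain category is large enough to witness every piece of structure asserted in the theorem, and to confirm that this $F$ meets the \'etale-descent hypothesis of \eqref{eq:ShvArtrel}. A secondary point needing care is the identification of the extensions of the two legs of $F$ with $\Perv(\cdot)$ (resp. $\bD(\cdot)$) on stacks and the fact that the extended $\pi^\varphi$ again lands in $\Perv$ and is $t$-exact; both are formal, since the right Kan extension is computed as a limit over smooth charts by schemes, over which the perverse $t$-structure and $t$-exactness are by definition detected.
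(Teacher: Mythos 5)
Your construction of the perverse pullback $\pi^\varphi$ and the natural isomorphism $\alpha_{p,\overline p}$ matches the paper's: package the scheme-level data as a functor $\Theta$ from (the Grothendieck construction of) $\DCrit^{\ori}$ over $\Fun(\Delta^1,\Sch^{\sep\ft})_{0\smooth,1\smooth}$ into $\Fun(\Delta^1,\Cat^{\mathrm{ab}})$, verify \'etale descent, and right Kan extend to higher Artin stacks via \eqref{eq:ShvArtrel}, identifying the resulting legs with $\Perv(B)$ and $\Perv(X)$ by the cofinality of smooth charts. The treatment of the field case via \cref{prop:Catab} is likewise the same.

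However, there is a genuine gap in your treatment of $\beta_c$, $\gamma_p$, $\delta$ and $\TS$. You write that ``feeding in the remaining scheme-level data in the same way yields'' these isomorphisms, but this is not an argument: these four pieces of structure do not live over the domain category $\mathcal C$ of the Kan extension. The morphisms of $\mathcal C$ are squares with $B'\to B$ and $X'\to X\times_B B'$ smooth, whereas $\beta_c$ involves a \emph{finite} morphism $c$, $\gamma_p$ involves a further smooth morphism $p\colon B\to S$ and the closed immersion $i$ into $X$, $\delta$ involves the Verdier duality functor $\bbD$ (which is not a morphism in any of these sites), and $\TS$ involves external products. None of these can simply be ``fed into'' the Kan extension of $F$; they must be glued separately using their compatibility with $\alpha$ over \v Cech nerves of smooth covers. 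The paper does exactly this, and crucially needs an induction on the level $n$ of Artin stacks: the \v Cech nerve of a smooth cover of a stack by a scheme has terms that are schemes only if the diagonal is schematic, so one first handles stacks with schematic diagonal, then $1$-Artin stacks (whose \v Cech nerves are algebraic spaces), and then reduces the $n$-Artin case to the $(n-1)$-Artin case. Your proposal says nothing about this totalization-and-induction step, and without it the claimed extensions of $\beta,\gamma,\delta,\TS$ to higher Artin stacks — and their compatibility relations, which similarly rest on commuting diagrams over \v Cech nerves — are not established.
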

\begin{proof}
Denote by $\DCrit(\Art^{\lft})$ the $\infty$-category of triples $(X \to B, s, o)$ where $X \to B$ is a morphism in $\Art^{\lft}$ equipped with a relative d-critical structure $s$ and an orientation $o$.
A morphism $(X_1 \to B_1, s_1, o_1) \to (X_2 \to B_2, s_2, o_2)$ is a morphism $(X_1 \to B_1) \to (X_2 \to B_2)$ in $\Fun(\Delta^1, \Art^{\lft})_{0\smooth,1\smooth}$ such that $s_1$ is the pullback of $s_2$, together with an isomorphism of orientations $o_1 \cong (X_1 \to X_2)^*(o_2)$.
Consider also the full subcategory $\DCrit(\Sch^{\sep\ft})$ consisting of $(X \to B, s, o)$ where $X$ and $B$ lie in $\Sch^{\sep\ft}$.
We consider the functor
\[
  \Theta \colon
  \DCrit(\Sch^{\sep\ft})^\op \to \Fun(\Delta^1, \Cat^{\mathrm{ab}}),
\]
where $\Cat^{\mathrm{ab}}$ is the category of $R$-linear abelian categories and $R$-linear exact functors, determined by the following data:
\begin{itemize}
    \item For every $(\pi \colon X \to B, s, o)$ in $\DCrit(\Sch^{\sep\ft})$, the perverse pullback functor $\pi^\varphi \colon \Perv(B) \to \Perv(X)$ defined in \cref{thm:micropullsch}.
    \item For every commutative square in $\Sch^{\sep\ft}$
    \[\begin{tikzcd}
        X' \ar{r}{\pi'}\ar{d}{\overline{p}}
        & B' \ar{d}{p}
        \\
        X \ar{r}{\pi}
        & B
    \end{tikzcd}\]
    where $B' \to B$ and $X' \to X \times_B B'$ are smooth, the commutative square in $\Cat$
    \[\begin{tikzcd}
        \Perv(B) \ar{r}{\pi^\varphi}\ar{d}{p^\dag}
        & \Perv(X) \ar{d}{\overline{p}^\dag}
        \\
        \Perv(B') \ar{r}{\pi'^\varphi}
        & \Perv(X')
    \end{tikzcd}\]
    determined by the invertible natural transformation $\alpha_{p,\overline{p}}$ of \cref{prop:perversesmoothbasechangecompatibility}, where $\pi'$ is equipped with the pullback relative d-critical structure $\overline{p}^*(s)$ and orientation $\overline{p}^*(o)$.
\end{itemize}
That this defines a functor is guaranteed by \cref{prop:perversesmoothbasechangecompatibility}(1,2).
Since $\Theta$ satisfies \'etale descent, it extends uniquely to an \'etale sheaf
\[\Theta \colon \DCrit(\Art^{\lft})^\op \to \Fun(\Delta^1, \Cat)\]
which by \cite[Corollary 3.2.6]{KhanWeavelisse} is given by the formula (compare \eqref{eq:ShvArtrel}):
\[
    \Theta(X\rightarrow B, s,o) = \lim_{(X'\rightarrow B',s',o', p)} \Theta(X'\rightarrow B', s',o'),
\]
where the limit is taken over $(X'\rightarrow B',s',o', p) \in \DCrit^{\ori}(\Sch^{\sep\ft})_{/(X \to B, s,o)}$ with $p \colon (X'\rightarrow B',s',o') \to (X\to B, s,o)$ a morphism in $\DCrit^{\ori}(\Art^{\lft})$ and $(X'\rightarrow B',s',o') \in \DCrit^{\ori}(\Sch^{\sep\ft})$.
As the forgetful functor $\DCrit^{\ori}(\Sch^{\sep\ft})_{/(X \to B, s,o)} \to (\Fun(\Delta^1, \Sch^{\sep\ft})_{0\smooth, 1\smooth})_{/(X \to B)}$ is cofinal, it can equivalently be taken over $(X' \to B', p)$ with $p \colon (X'\rightarrow B') \to (X\to B)$ in $\Fun(\Delta^1, \Art^{\lft})_{0\smooth, 1\smooth}$ and $X',B' \in \Sch^{\sep\ft}$.

Unwinding, we have for every morphism $\pi\colon X\to B$ in $\Art^\lft$ and every oriented relative d-critical structure $(s,o)$ a functor $\Theta(\pi\colon X \to B, s,o)$, which is by definition the limit of the functors
\[
    \pi'^\varphi \colon \Perv(B') \to \Perv(X')
\]
over morphisms $p \colon (\pi' \colon X' \to B') \to (\pi \colon X\to B)$ in $\Fun(\Delta^1, \Art^{\lft})_{0\smooth,1\smooth}$ where $X',B' \in \Sch^{\sep\ft}$ and $\pi'$ is equipped with the pullback oriented relative critical structure.
A simple cofinality argument shows that we have the natural equivalences
\[
    \lim_{(X' \to B', p)} \Perv(B') \cong \Perv(B),
    \qquad \lim_{(X' \to B', p)} \Perv(X') \cong \Perv(X),
\]
under which $\Theta_{X/B,s,o}$ is identified with a natural functor
\[\pi^\varphi \colon \Perv(B) \to \Perv(X).\]
Moreover, $\Theta$ encodes the (associative) natural isomorphisms $\alpha_{p,\overline{p}}$ of (1).

The natural isomorphisms $\beta$, $\gamma$, $\delta$ and $\TS$ defined for separated schemes of finite type in \cref{prop:perversefinitepushforwardcompatibility,prop:perversesmoothpushforwardcompatibility,prop:perverseverdiercompatibility,prop:perverseproductscompatibility} extend to natural isomorphisms defined for higher Artin stacks locally of finite type using the commutation of $\beta$, $\gamma$, $\delta$ and $\tau$ with $\alpha$.
For example, in the situation of (3) let $S_0 \twoheadrightarrow S$, $B_0 \twoheadrightarrow B \times_S S_0$, and $X_0 \twoheadrightarrow X \times_B B_0$ be smooth surjections from schemes in $\Sch^{\sep\ft}$, and denote by $S_\bullet$, $B_\bullet$ and $X_\bullet$ the respective \v{C}ech nerves of $S_0 \twoheadrightarrow S$, $B_0 \twoheadrightarrow B$, and $X_0 \twoheadrightarrow X$.
Denote by $\pi_\bullet \colon X_\bullet \to B_\bullet$ and $p_\bullet \colon B_\bullet \to S_\bullet$ the induced morphisms and by $(s_\bullet, o_\bullet)$ the induced oriented relative d-critical structures.
Note that by \cref{prop:dcritpushforwardpullback}, the induced morphism $p_{0,*}(X_0,s_0) \twoheadrightarrow p_*(X,s)$ is smooth surjective and its \v{C}ech nerve is identified with the d-critical push-forward $p_{\bullet,*}(X_\bullet, s_\bullet)$.
Thus, we may conclude by repeatedly applying the following observation: if we are given natural isomorphisms $\gamma_\bullet \colon \pi_\bullet^\varphi p_\bullet^\dag \cong i_{\bullet,*} \overline{\pi}_\bullet^\varphi$ compatible with smooth pullbacks, then by totalization we obtain a natural isomorphism $\gamma \colon \pi^\varphi p^\dag \cong i_* \overline{\pi}^\varphi$ compatible with smooth pullbacks.
We first apply this in the case when $X$, $B$ and $S$ have schematic diagonal, so that the $X_\bullet$, $B_\bullet$, and $S_\bullet$ are all schemes and we have the $\gamma_\bullet$ by \cref{prop:perversesmoothpushforwardcompatibility}.
Next, if $X$, $B$ and $S$ are $1$-Artin then $X_\bullet$, $B_\bullet$, and $S_\bullet$ are all algebraic spaces (hence a fortiori have schematic diagonal), so we have the $\gamma_\bullet$ by the previous case.
Similarly, the case of $n$-Artin stacks reduces to that of $(n-1)$-Artin stacks for each $n>1$.

Moreover, using the same commutation with $\alpha$ the proofs of the compatibility relations between the natural isomorphisms $\beta$, $\gamma$, $\delta$ and $\TS$ for morphisms of higher Artin stacks reduce to the compatibility relations between these natural isomorphisms for morphisms of separated schemes of finite type.

Finally, suppose that $R$ is a field, so that $\bD(X) \cong \Ind(\Db(\Perv(X)))$ for any $X \in \Sch^{\sep\ft}$ by \cref{prop:constructible6functor}(7).
Applying \cref{prop:Catab}, $\Theta \colon \DCrit(\Sch^{\sep\ft})^\op \to \Fun(\Delta^1, \Cat^{\mathrm{ab}})$ promotes to a functor
\[
  \widetilde{\Theta} \colon \DCrit(\Sch^{\sep\ft})^\op \to \Fun(\Delta^1, \mathrm{Pr}_{\mathrm{L}}^{\mathrm{St},t})
\]
valued in the $\infty$-category of presentable stable $R$-linear $\infty$-categories equipped with a t-structure, encoding the t-exact perverse pullbacks $\pi^\varphi \colon \bD(B) \to \bD(X)$ for $(\pi \colon X \to B, s, o)$ in $\DCrit(\Sch^{\sep\ft})$, together with the natural isomorphisms $\alpha_{p,\overline{p}}$, associative up to coherent homotopy.
As above, we now apply right Kan extension to obtain a functor
\[
  \widetilde{\Theta} \colon \DCrit(\Art^{\lft})^\op \to \Fun(\Delta^1, \mathrm{Pr}_{\mathrm{L}}^{\mathrm{St},t})
\]
encoding the t-exact perverse pullbacks $\pi^\varphi \colon \bD(B) \to \bD(X)$ for $(\pi \colon X \to B, s, o)$ in $\DCrit(\Art^{\lft})$, together with the associative natural isomorphisms $\alpha_{p,\overline{p}}$.
The compatibilities between $\alpha$ and the various natural isomorphisms $\beta$, $\gamma$, $\delta$, and $\TS$ can be encoded as invertible $2$-morphisms in the $\infty$-category $\Fun(\Delta^1, \mathrm{Pr}_{\mathrm{L}}^{\mathrm{St},t})$, so these also extend to $\Art^\lft$.
\end{proof}

\section{Perverse pullbacks for exact \texorpdfstring{$(-1)$}{-1}-shifted symplectic fibrations}

In this section we translate the construction of perverse pullbacks from the setting of morphisms of algebraic stacks equipped with a relative d-critical structure to the setting of morphisms of derived algebraic stacks equipped with a relative exact $(-1)$-shifted symplectic structure.

\subsection{D-critical and shifted symplectic structures}

The main source of relative d-critical structures is given by the theory of shifted symplectic structures \cite{PTVV}. Namely, given a morphism of derived stacks $X\rightarrow B$ the references \cite{PTVV,CPTVV,CHS,ParkSymplectic} define the spaces $\cA^{2, \cl}(X/B, n)$ ($\cA^{2, \ex}(X/B, n)$) of relative closed (exact) two-forms of degree $n$ using the cotangent complex of derived stacks together with a natural morphism $\cA^{2, \ex}(X/B, n)\rightarrow \cA^{2, \cl}(X/B, n)$ and a forgetful map $\cA^{2, \cl}(X/B, n)\rightarrow \cA^2(X/B, n)$ to the space of two-forms of degree $n$.

When $X \rightarrow B$ is a morphism of derived stacks with a perfect cotangent complex (e.g. an lfp geometric morphism), an (exact) $n$-shifted symplectic structure is an element $\omega\in \cA^{2, \cl}(X/B, n)$ ($\omega\in \cA^{2, \ex}(X/B, n)$) such that the underlying $2$-form induces an isomorphism $\cdot\omega\colon \bL_{X/B}\dual \cong \bL_{X/B}[n]$. We have the following natural source of exact $n$-shifted symplectic structures.

\begin{proposition}\label{prop:Gmcanonicallyexact}
Let $\pi\colon X\rightarrow B$ be a morphism of derived stacks together with a $\Gm$-action on $X$ such that $\pi$ is invariant. Let $w\in\Z$ be an integer invertible in $k$. Let $\cA^{2, \ex}(X/B, n)(w)$ and $\cA^{2, \cl}(X/B, n)(w)$ be the spaces of exact and closed relative $n$-shifted two-forms on $X\rightarrow B$ of weight $w$ with respect to the $\Gm$-action. Then the natural morphism
\[\cA^{2, \ex}(X/B, n)(w)\longrightarrow \cA^{2, \cl}(X/B, n)(w)\]
is an isomorphism. In particular, every $n$-shifted symplectic structure of weight $w$ is canonically exact.
\end{proposition}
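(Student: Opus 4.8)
The plan is to use the $\Gm$-action to contract the de Rham complex. Recall that a closed $p$-form of degree $n$ (relative to $B$) is by definition a sequence $(\omega_0, \omega_1, \omega_2, \dots)$ with $\omega_0$ the underlying form and $\omega_{i+1}$ a coherence witnessing $d_{\dR}\omega_i \sim 0$; equivalently, $\cA^{2,\cl}(X/B, n)$ is the limit of the truncated negative cyclic complexes, while $\cA^{2,\ex}(X/B, n)$ is the homotopy fiber of $d_B$ on $\cA^0(X/B, n+1)$, i.e. it only remembers $\omega_0$ together with a single nullhomotopy of $d_{\dR}\omega_0$ in one-forms. The forgetful map $\cA^{2,\ex}(X/B, n)\to \cA^{2,\cl}(X/B, n)$ records this. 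The key point is that on the subspaces of weight $w$ with respect to the $\Gm$-action, the de Rham differential becomes an equivalence up to the invertible scalar $w$, which forces the higher coherences $\omega_2, \omega_3, \dots$ to be uniquely determined.

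First I would reduce to a linear-algebra statement about mixed complexes. The $\Gm$-action on $X$ induces a grading (weight decomposition) on $\bL_{X/B}$ and hence on the whole relative de Rham complex $\bigoplus_p \wedge^p \bL_{X/B}$, and the de Rham differential $d_{\dR}$ has weight zero for this grading (it is $\Gm$-equivariant). However, the \emph{internal} grading coming from the action differs from the form-degree grading: the Euler vector field $E$ generating the $\Gm$-action acts on $\wedge^p\bL_{X/B}$ by the weight, and Cartan's formula $L_E = d_{\dR}\iota_E + \iota_E d_{\dR}$ shows that on the weight-$w$ part, multiplication by $w$ is chain-homotopic to zero through the operator $d_{\dR}\iota_E + \iota_E d_{\dR}$. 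Concretely, on the weight-$w$ eigenspace the complex $(\wedge^\bullet \bL_{X/B}, d_{\dR})$ with its internal differential is contractible once we invert $w$: the operator $h = \tfrac1w \iota_E$ is a contracting homotopy, $d_{\dR}h + h\,d_{\dR} = \tfrac1w L_E = \id$ on weight $w$. Hence all the negative cyclic / de Rham corrections beyond the one-form level are determined, and the map from exact to closed forms of weight $w$ is an equivalence. The cleanest way to phrase this is: the weight-$w$ part of the relative de Rham complex $\mathrm{DR}(X/B)$ is, after inverting $w$, equivalent to the weight-$w$ part of $\cO_X$ placed in appropriate degree, compatibly with the negative-cyclic-type structure, so that $\cA^{2,\ex}(X/B,n)(w)$ and $\cA^{2,\cl}(X/B,n)(w)$ are both computed by the same spectrum.

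Second, I would make this precise at the level of the constructions in \cite{CPTVV,ParkSymplectic}: both $\cA^{2,\ex}$ and $\cA^{2,\cl}$ are obtained by taking mapping spaces out of $\cO$ (resp. $k$) into a module built from $\mathrm{DR}(X/B)$, and the map between them is induced by a map of such modules; on weight-$w$ components this map of modules becomes an equivalence by the homotopy $h = \tfrac1w\iota_E$ above. The last sentence of the proposition then follows immediately: given any $n$-shifted symplectic structure $\omega$ of weight $w$, its underlying closed form lifts (uniquely) to an exact one, and nondegeneracy of the underlying $2$-form is a property of $\omega_0$ alone, so the lift is again symplectic. I expect the main obstacle to be bookkeeping: carefully matching the definition of $\cA^{2,\ex}$ (homotopy fiber of $d_B$ on $\cA^0(X/B, n+1)$, which only uses the $\le 1$ truncation of the de Rham complex) against the definition of $\cA^{2,\cl}$ (the full truncated $S^1$-equivariant complex) and checking that the comparison map is exactly the one induced by truncation, so that the contraction homotopy on weights $w$ kills precisely the "tail" being discarded. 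The Cartan-calculus input is standard, but verifying $\Gm$-equivariance of all the structure maps (and that the Euler operator $\iota_E$ is compatible with the relative de Rham differential over $B$, using $\pi$-invariance of the action) requires care.
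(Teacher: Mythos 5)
Your central idea --- contracting the weight-$w$ de Rham complex by $h = \tfrac{1}{w}\iota_E$ via Cartan's formula --- is the classical reason the statement holds and targets exactly the vanishing the paper proves, but the paper takes a genuinely different route that sidesteps the obstacle you flag at the end: making $\iota_E$ and the Cartan formula precise on the graded mixed de Rham complex of a derived stack is nontrivial, since the mixed differential is homotopy-coherent rather than a strict derivation. The paper instead reduces to affine $X$ (using that $\DR_{B\times\B\Gm}$ sends colimits to limits), then uses the coaction $a^\ast\colon\DR(X/B)(w)\to\DR(X/B)(w)\otimes\DR(\Gm)(w)$ together with the counit to exhibit $|\DR(X/B)(w)|$ as a retract of $|\DR(X/B)(w)|\otimes|\DR(\Gm)(w)|$, and the latter vanishes because $\DR(\Gm)(w)$ is the two-term complex spanned by $z^w$ and $z^{w-1}dz$ with differential equal to multiplication by $w$, hence an isomorphism. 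In effect the paper performs your Euler-vector-field contraction only on $\Gm$ itself, where the de Rham complex is strict and finite, and transports the resulting acyclicity through the comodule structure; both proofs use invertibility of $w$ in the same essential place. Your approach buys conceptual transparency; the paper's buys self-containedness within the graded-mixed-cdga framework, without having to develop an equivariant Cartan calculus on derived stacks. The reduction from the claim about $\cA^{2,\ex}\to\cA^{2,\cl}$ to the vanishing of $|\DR(X/B)(w)|$ is realized in the paper by identifying this map as the fiber of $\cA^{2,\cl}(X/B,n)(w)\to\cA^{0,\cl}(X/B,n+2)(w)=\Map(k[n+2],|\DR(X/B)(w)|)$, which is the precise form of the bookkeeping you describe informally.
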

\begin{proof}
We freely use the notation from \cite{PTVV,CPTVV}. Recall that for a morphism of derived stacks $X\rightarrow B$ one has the relative de Rham complex
\[\DR_B(X)\in\CAlg(\QCoh(B)^{\gr, \epsilon}),\]
a graded mixed commutative algebra in $\QCoh(B)$ defined as in \cite[Section B.11]{CHS} and \cite{ParkSymplectic}. Let
\[\DR(X/B) = \Gamma(B, \DR_B(X))\in\CAlg^{\gr, \epsilon}_k\]
be the underlying graded mixed cdga over $k$.

If $X$ is equipped with a $\Gm$-action, let $\overline{X}=[X/\Gm]$ and denote by
\[\DR(X/B)(w) = \Gamma(B\times \B\Gm, \DR_{B\times\B\Gm}(\overline{X})\otimes \cO(-w))\]
the graded mixed complex of forms of weight $w$, where $\cO(-w)$ is the line bundle on $\B\Gm$ corresponding to the one-dimensional $\Gm$-representation of weight $-w$. We claim that under the assumptions the complex $|\DR(X/B)(w)|\in\Mod_k$ is zero.

By construction the functor $\DR_{B\times\B\Gm}(-)$ sends colimits of stacks over $B\times\B\Gm$ to limits. Thus, the claim is reduced to the case $X$ is affine. The action map $a\colon \Gm\times X\rightarrow X$ gives rise to the coaction map
\[a^\ast\colon \DR(X/B)\longrightarrow \DR(X\times \Gm/B)\cong \DR(X/B)\otimes \DR(\Gm),\]
where the last isomorphism is the K\"unneth isomorphism. The action map $a$ is $\Gm\times\Gm$-equivariant, where on the left we act by the respective copy of $\Gm$ on the corresponding factor and on the right the two copies of $\Gm$ act in the same way on $X$. Therefore, the coaction map restricts to a morphism
\[a^\ast\colon \DR(X/B)(w)\longrightarrow \DR(X/B)(w)\otimes \DR(\Gm)(w).\]
If we denote the standard coordinate on $\Gm$ by $z$, then $\DR(\Gm)(w)$ is spanned by $z^w$ and $z^{w-1} dz$. But then $|\DR(\Gm)(w)|=0$ since the de Rham differential is an isomorphism. In particular,
\[|\DR(X/B)(w)\otimes \DR(\Gm)(w)|\cong |\DR(X/B)(w)|\otimes |\DR(\Gm)(w)| = 0.\]
Let $\epsilon\colon \DR(\Gm)(w)\rightarrow k$ be the counit map given by $z^w\mapsto 1$ and $dz\mapsto 0$. The composite
\[\DR(X/B)(w)\xrightarrow{a^\ast} \DR(X/B)(w)\otimes \DR(\Gm)(w)\xrightarrow{\id\otimes \epsilon} \DR(X/B)(w)\]
is equivalent to the identity. Therefore, $|\DR(X/B)(w)|$ is a retract of the zero object and hence it is the zero object itself. But since $\cA^{2, \ex}(X/B, n)(w)\rightarrow \cA^{2, \cl}(X/B, n)(w)$ is the fiber of
\[\cA^{2, \cl}(X/B, n)(w)\longrightarrow \cA^{0, \cl}(X/B, n+2)(w)=\Map_{\Mod_k}(k[n+2], |\DR(X/B)(w)|)\]
at the zero form, the claim follows.
\end{proof}

Given a derived stack $B$, consider the $\infty$-category $\mathrm{Symp}^{\mathrm{ex}}_{B,-1}$ whose objects are lfp geometric morphisms $\pi \colon X \to B$ equipped with a relative exact $(-1)$-shifted symplectic structure $\omega$, and whose morphisms $(X_1, \omega_1) \dashrightarrow (X_2, \omega_2)$ are exact Lagrangian correspondences $X_2 \gets L \to X_1$.
Given an lfp geometric morphism $p \colon B_1 \to B_2$, it is shown in \cite[Theorem A]{ParkSymplectic} that the base change functor $p^* \colon \mathrm{Symp}^{\mathrm{ex}}_{B_2,-1} \to \mathrm{Symp}^{\mathrm{ex}}_{B_1,-1}$ admits a right adjoint $p_*$.
Given $(X, \omega) \in \mathrm{Symp}^{\mathrm{ex}}_{B_1,-1}$, we call
\[p_*(X,\omega) \in \mathrm{Symp}^{\mathrm{ex}}_{B_2,-1}\]
the \defterm{symplectic pushforward} of $(X,\omega)$.
It can be described explicitly as the zero locus of the \emph{moment map}, a canonical Lagrangian $\mu_X \colon X \to \T^*(B_1/B_2)$ (see \cite[Proposition 2.3.1]{ParkSymplectic}).

\begin{example}\label{exam:dCrit}
Let $g \colon Y\rightarrow B$ be an lfp geometric morphism between derived stacks and $f$ a function on $Y$.
As explained in \cite[Eq.~(13)]{ParkSymplectic}, the function $f$ determines a relative exact $(-1)$-shifted symplectic structure on the identity map $Y\rightarrow Y$, which we also denote by $f$.
The \defterm{derived relative critical locus} of $f$ is its symplectic pushforward
\[
\left( \R\Crit_{Y / B}(f) \to B \right) \coloneqq g_* (Y, f).
\]
When $g$ is a smooth geometric morphism between classical stacks,
$\R\Crit_{Y / B}(f)$ is a derived enhancement of the relative critical locus defined in \cref{def-relative-crit}.
When $f$ is zero, the derived critical locus recovers the $(-1)$-shifted cotangent stack $\T^*[-1](Y / B)$.
\end{example}

Any morphism $X\rightarrow B$ of derived stacks induces on classical truncations a morphism $X^{\cl}\rightarrow B^{\cl}$ fitting in a commutative square
\[
\xymatrix{
X^{\cl} \ar[r] \ar[d] & X \ar[d] \\
B^{\cl} \ar[r] & B.
}
\]
In particular, we obtain a pullback morphism $\cA^{2, \ex}(X/B, n)\rightarrow \cA^{2, \ex}(X^{\cl}/B^{\cl}, n)$.

\begin{theorem}\label{thm:shiftedsymplecticdcritical}
Let $X\rightarrow B$ be an lfp geometric morphism of derived stacks equipped with a relative exact $(-1)$-shifted symplectic structure $\omega\in\cA^{2, \ex}(X/B, -1)$. Let $i\colon X^{\cl}\rightarrow X$ be the inclusion of the classical truncation. Then $i^\ast \omega$ is a relative d-critical structure on $X^{\cl}\rightarrow B^{\cl}$.
\end{theorem}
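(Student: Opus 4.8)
The plan is to recognise $i^\ast\omega$ as a section of $\cS_{X^\cl/B^\cl}$, reduce by base change and descent to the situation of a relative exact $(-1)$-shifted symplectic structure over a classical scheme, and there appeal to the relative shifted Darboux theorem, which exhibits the classical truncation smooth-locally as a relative critical locus carrying the section $s_f$ of \cref{sect:criticallocus}.

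First I would make the statement precise. Applying the pullback \eqref{eq:formpullback} to the square relating the classical truncations of $X\to B$ gives $\cA^{2,\ex}(X/B,-1)\to\cA^{2,\ex}(X^\cl/B^\cl,-1)$, and $i^\ast\omega$ is by definition the image of $\omega$; passing to $\pi_0$ this is an element of $\Gamma(X^\cl,\cS_{X^\cl/B^\cl})$, as required by \cref{def:dcriticalstack}. Equivalently, $i^\ast\omega$ is the restriction to the classical truncation of the relative exact $(-1)$-shifted $2$-form $\omega'$ on $X\times_B B^\cl\to B^\cl$ obtained by base change along $B^\cl\to B$; since $(-)^\cl$ preserves fibre products we have $(X\times_B B^\cl)^\cl=X^\cl$, and since $\bL_{(X\times_B B^\cl)/B^\cl}$ is the restriction of $\bL_{X/B}$ the form $\omega'$ is again relative exact $(-1)$-shifted symplectic. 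Replacing $X\to B$ by $X\times_B B^\cl\to B^\cl$ I may therefore assume that $B$ is classical and reduce to showing that the restriction of $\omega$ along $X^\cl\hookrightarrow X$ is a relative d-critical structure on $X^\cl\to B$.

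Next I would perform the local reduction. By \cref{def:dcriticalstack} it suffices to check, for every commutative square over $X^\cl\to B$ with $X'\to B'$ a morphism of schemes and $X'\to X^\cl\times_B B'$ smooth, that the pullback of $i^\ast\omega$ is a relative d-critical structure on $X'\to B'$ in the sense of \cref{def:dcriticalscheme}. Fixing such a square, set $\fX'\coloneqq X\times_B B'$: this is an lfp geometric morphism to the scheme $B'$ with $(\fX')^\cl=X^\cl\times_B B'$ and, by base change along $B'\to B$, carries a relative exact $(-1)$-shifted symplectic structure over $B'$. Since the pullback of a relative d-critical structure along a smooth morphism is again relative d-critical (\cref{prop:dcritbasechangestacks}(1)), and since $X'\to(\fX')^\cl$ is smooth, it is enough to show that $i^\ast\omega$ is a relative d-critical structure on $(\fX')^\cl\to B'$. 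At this point I would invoke the relative shifted Darboux theorem of \cite{ParkSymplectic} (building on \cite{BBJ,BouazizGrojnowski,BBBBJ}): smooth-locally on $\fX'$, the relative exact $(-1)$-shifted symplectic derived stack $\fX'\to B'$ is modeled on a derived relative critical locus $\R\Crit_{U/B'}(f)\to B'$ of a function $f$ on a smooth $B'$-scheme $U$, compatibly with the exact $(-1)$-shifted symplectic structures. Passing to classical truncations and using \cref{exam:dCrit}, this produces, smooth-locally on $(\fX')^\cl$, identifications with $\Crit_{U/B'}(f)$ under which $i^\ast\omega$ corresponds to the canonical section $s_f$ constructed in \cref{sect:criticallocus} (equivalently, to $p_\ast f$ for the d-critical pushforward of \cref{ex:dcritpushforwardcritlocus}). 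These are critical charts, and by \cref{thm:criticallocussmoothdescent} the existence of smooth — hence, after refining, étale, then Zariski — critical charts covering $(\fX')^\cl$ is exactly what is needed; descent back through the reductions (\cref{cor:dcritbasechange,prop:dcritbasechangestacks}) then finishes the proof.

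The step I expect to be the main obstacle is the correct deployment of the relative shifted Darboux theorem and, inside it, the identification of the restricted symplectic form $i^\ast\omega$ with the section $s_f$: one must match the relative Liouville-form-and-nullhomotopy construction of $s_f$ in \cref{sect:criticallocus} with the moment-map description of the symplectic pushforward $\R\Crit_{U/B'}(f)=p_\ast(U,f)$ recalled in \cref{sect:dcriticalpushforward}, and, when $\fX'$ is a genuine stack rather than a derived scheme, check that the Darboux model can be used along a smooth atlas of $\fX'$ without disturbing the exactness datum. This is a (somewhat delicate) computation with the relative de Rham complex and is in any case part of the package provided by the cited references; the remaining ingredients — the reduction to classical $B$, the passage to schematic charts, and the descent statements — are routine given \cref{prop:dcritbasechangestacks,cor:dcritbasechange,thm:criticallocussmoothdescent}.
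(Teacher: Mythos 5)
Your proof follows essentially the same route as the paper's: reduce by base change to classical (affine) base, invoke the relative exact Darboux theorem of \cite[Corollary 4.2.2]{ParkSymplectic} to exhibit the classical truncation smooth-locally as a relative critical locus compatibly with the two-forms, and conclude by smooth descent for d-critical structures (\cref{prop:dcritbasechangestacks}(2), \cref{cor:dcritbasechange}). The only cosmetic difference is that the paper phrases the Darboux input as a $(-1)$-shifted exact Lagrangian correspondence $\R\Crit_{U/B}(f)\xleftarrow{a}L\xrightarrow{b}X$ with $a^{\cl}$ an isomorphism and $b$ smooth surjective, producing a single smooth surjective cover by a critical chart, whereas you speak of "smooth-local" models; this is the same content, and the matching of $i^\ast\omega$ with $s_f$ that you flag as a delicate point is precisely what is packaged into Corollary 4.2.2.
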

\begin{proof}
Let us first prove the claim when the base $B$ is a classical affine scheme. By \cite[Corollary 4.2.2]{ParkSymplectic}, there exists an LG pair $(U, f)$ over $B$ and a $(-1)$-shifted exact Lagrangian correspondence
\[\xymatrix{ & L\ar[ld]_a \ar[rd]^b & \\ \R\Crit_{U/B}(f) && X,}\]
where $\R\Crit_{U/B}(f)$ is the derived critical locus, $a^\cl$ is an isomorphism and $b$ is smooth surjective.
In particular, we have a smooth surjective morphism
$b^\cl\circ (a^{\cl})^{-1}\colon \Crit_{U/B}(f) \to X^\cl$
such that 
\[(b^\cl\circ (a^{\cl})^{-1})^\ast (i^\ast(\omega)) \cong s_f \in \cA^{2, \ex}(\Crit_{U/B}(f)/B, -1).\]
By \cref{prop:dcritbasechangestacks}(2), $i^\ast\omega$ is a d-critical structure.

For a general derived stack $B$ we consider the commutative diagram
\[
\xymatrix{
X' \ar[d] \ar[r] & X^{\cl} \ar[r] \ar[d] & X \ar[d] \\
B' \ar[r] & B^{\cl} \ar[r] & B,
}
\]
where $X'\rightarrow B'$ is a morphism of classical schemes and $X'\rightarrow X^{\cl}\times_{B^{\cl}} B'$ is smooth, where we consider the fiber product in the $\infty$-category $\Stk$. We have to prove that the pullback of $\omega$ to $X'\rightarrow B'$ defines a relative d-critical structure. By \cref{prop:dcritbasechangestacks}(2) it is enough to prove the claim when $B'$ is a classical affine scheme. By the previous argument we know that the pullback of $\omega$ to $(X\times^\R_B B')^{\cl}\rightarrow B'$ defines a relative d-critical structure, where $X\times^\R_B B'$ is the fiber product in the $\infty$-category $\dStk$. But $X'\rightarrow (X\times^\R_B B')^{\cl}\cong X^{\cl}\times_{B^{\cl}} B'$ is smooth by assumption, so by \cref{cor:dcritbasechange}(1) the pullback of $\omega$ to $X'\rightarrow B'$ defines a relative d-critical structure.
\end{proof}

We will next discuss orientations of such relative d-critical structures by describing the relative canonical bundle $K_{X/B}=\det(\bL_{X/B})$. For a morphism of derived stacks $X\rightarrow B$ equipped with a relative $(-1)$-shifted symplectic structure and a point $x\in X$ we construct an isomorphism
\[\kappa^{\der}_x\colon K_{X/B, x}\cong \det(\tau^{\geq 0}\bL_{X/B, x})^{\otimes 2}\]
by the composition
\begin{align*}
K_{X/B, x}\cong \det(\tau^{\leq -1}\bL_{X/B, x})\otimes \det(\tau^{\geq 0}\bL_{X/B, x})&\cong \det((\tau^{\geq 0}\bL_{X/B, x})^\vee[1])\otimes \det(\tau^{\geq 0}\bL_{X/B, x}) \\
&\cong \det(\tau^{\geq 0}\bL_{X/B, x})^{\otimes 2},
\end{align*}
where the first isomorphism is induced by the fiber sequence
\[\tau^{\leq -1}\bL_{X/B, x}\longrightarrow \bL_{X/B, x}\longrightarrow \tau^{\geq 0}\bL_{X/B, x}\]
and the second isomorphism is induced by $(-\cdot\omega)^{-1}|_x\colon \tau^{\leq -1}\bL_{X/B, x}\cong (\tau^{\geq 0}\bL_{X/B, x})^{\vee}[1]$.

\begin{example}\label{ex:canonicalcriticallocus}
Let $B$ be a scheme and $(U, f)$ an LG pair over $B$. Let $X=\R\Crit_{U/B}(f)$ be the derived critical locus which carries a relative $(-1)$-shifted symplectic structure. 
We have a fiber sequence
\[
\mathbb{L}_{U/B} |_X \to \mathbb{L}_{X / B} \to \mathbb{L}_{X / U}
\]
and an isomorphism $\mathbb{L}_{X / U} \cong \mathbb{L}_{U / \T^*(U /B)}|_{X} \cong  \mathbb{L}_{U / B}^{\vee}[1] |_{X}$, hence we obtain an isomorphism
\[\Lambda_{(U, f)}\colon K_{X/B}|_{X^{\red}}\cong K_{U/B}^{\otimes 2}|_{X^{\red}}.\]
For $x\in X$ the diagram
\begin{equation}\label{eq:Kcritkappa}
\xymatrix{
K_{X/B, x} \ar^-{\Lambda_{(U, f)}|_x}[r] \ar^{\kappa^{\der}_x}[d] & K_{U/B}^{\otimes 2} \ar^{\kappa_x}[d] \\
\det(\tau^{\geq 0}\bL_{X/B, x})^{\otimes 2} \ar^{\sim}[r] & \det(\Omega^1_{X^{\cl}/B}, x)^{\otimes 2}
}
\end{equation}
commutes up to the sign $(-1)^{\dim \Omega^{1}_{X / B, x}}$.
This follows from the commutativity of the diagram \cite[(3.25)]{KPS} together with the discrepancy of the sign convention for $\kappa_x$ explained in \cref{rem:kappa-discrepancy-KPS}.
\end{example}

Next we describe the behavior of the canonical bundle with respect to Lagrangian correspondences. Consider a Lagrangian correspondence $Y\xleftarrow{q_Y} L\xrightarrow{q_X} X$ of relative exact $(-1)$-shifted symplectic stacks over $B$. As in \cite[(3.5)]{KPS} we have a natural isomorphism
\begin{equation}\label{eq-Upsilon-derived}
\Upsilon^{\der}_{(q_X, q_Y)}\colon K_{X/B}|_L\otimes K_{L/X}^{\otimes 2}\xrightarrow{\sim} K_{Y/B}|_L.
\end{equation}
Explicitly, this isomorphism is constructed as follows. First, there is an obvious isomorphism 
\[
K_{X/B}|_L\otimes K_{L/X} \otimes K_{L/Y}^{\vee} \cong K_{Y / B} |_{L}.
\]
On the other hand, the Lagrangian structure induces an isomorphism $\mathbb{L}_{L / X}^{\vee}[1] \cong \mathbb{L}_{L / Y}[-1]$, which in turn induces $K_{L/X} \cong K_{L / Y}^{\vee}$, and hence we obtain the desired isomorphism.
Note that if $q_X\colon L\rightarrow X$ is smooth, using the isomorphism $\bL_{L/X}^\vee\cong \bL_{L/Y}[-2]$, we get that $\bL_{L/Y}$ is 2-connective. The following statement for $Y$ and $L$ schemes and $q_X$ schematic in \cite[Theorem 3.18(b)]{BBBBJ} and \cite[Theorem 4.9]{KinjoDimred} (see also \cite[Proposition 6.9]{KPS} for a closely related statement). The proofs use the standard commutative diagrams involving the isomorphisms $i(\Delta)$ and work verbatim for higher Artin stacks.

\begin{lemma}\label{lm:kappaLagrangiancorrespondence}
Let $B$ be a scheme and $Y\xleftarrow{q_Y} L\xrightarrow{q_X} X$ be a Lagrangian correspondence of relative exact $(-1)$-shifted symplectic stacks over $B$. Assume that $q_X$ is smooth. For a point $l\in L$ the diagram
\[
\xymatrix{
K_{X/B, q_X(l)} \otimes K_{L/X, l}^{\otimes 2} \ar^-{\Upsilon^{\der}_{(q_X, q_Y)}|_l}[r] \ar^{\kappa^{\der}_{q_X(l)}\otimes \id}[d] & K_{Y/B, q_Y(l)} \ar^{\kappa^{\der}_{q_Y(l)}}[d] \\
\det(\tau^{\geq 0} \bL_{X/B, q_X(l)})^{\otimes 2} \otimes K_{L/X, l}^{\otimes 2} \ar^-{i(\Delta)^2}[r] & \det(\tau^{\geq 0} \bL_{Y/B, q_Y(l)})^{\otimes 2}
}
\]
commutes up to the sign $(-1)^{\dim (L / X)}$, where the bottom isomorphism is induced by the fiber sequence
\[\Delta\colon \tau^{\geq 0}\bL_{X/B, q_X(l)}\longrightarrow \tau^{\geq 0}\bL_{L/B, l}\longrightarrow \bL_{L/X, l}\]
as well as the isomorphism $q_Y^\ast\colon \tau^{\geq 0}\bL_{Y/B, q_Y(l)}\xrightarrow{\sim}\tau^{\geq 0}\bL_{L/B, l}$.
\end{lemma}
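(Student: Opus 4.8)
The plan is to follow the schemes-and-schematic-morphisms arguments of \cite[Theorem 3.18(b)]{BBBBJ} and \cite[Theorem 4.9]{KinjoDimred} (see also \cite[Proposition 6.9]{KPS}), which are formal manipulations of the elementary determinant isomorphisms $i(\Delta)$, $\iota_E$, $\theta_E$ and go through verbatim once everything is read off for perfect complexes over higher Artin stacks. Since both sides of the asserted square are $\Z/2\Z$-graded line bundles on $L^{\red}$ and the statement is pointwise, I would fix $l\in L$ and work throughout with the fibers $\bL_{X/B,q_X(l)}$, $\bL_{Y/B,q_Y(l)}$, $\bL_{L/B,l}$, $\bL_{L/X,l}$, $\bL_{L/Y,l}$, which are perfect complexes over $\kappa(l)$; all determinant lines then live in $\Pic^{\gr}(\kappa(l))$.

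First I would unwind the two isomorphisms appearing in the square. By construction $\kappa^{\der}$ is the composite of $i(\Delta)$ for the truncation fiber sequence $\tau^{\le -1}\bL\to\bL\to\tau^{\ge 0}\bL$, the determinant of the symplectic identification $(-\cdot\omega)^{-1}\colon \tau^{\le -1}\bL\cong(\tau^{\ge 0}\bL)^\vee[1]$, and $\theta$, $\iota$ giving $\det(E^\vee[1])\cong\det(E^\vee)^\vee\cong\det(E)$; and $\Upsilon^{\der}_{(q_X,q_Y)}$ of \eqref{eq-Upsilon-derived} is the composite of the two $i(\Delta)$ isomorphisms $K_{L/B}\cong K_{X/B}|_L\otimes K_{L/X}$ and $K_{L/B}\cong K_{Y/B}|_L\otimes K_{L/Y}$ coming from the cotangent fiber sequences of $L/X/B$ and $L/Y/B$, together with $\iota$ applied to the Lagrangian identification $\bL_{L/Y}\cong\bL_{L/X}^\vee[2]$ (obtained from $\bL_{L/X}^\vee[1]\cong\bL_{L/Y}[-1]$), which produces $K_{L/Y}\cong K_{L/X}^\vee$. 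Substituting these, the square in question becomes the outer boundary of a diagram in $\Pic^{\gr}(\kappa(l))$ whose inner cells are of three kinds.

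The first two kinds are formal: cells that commute by functoriality of $i(\Delta)$ along morphisms of fiber sequences and by the octahedral/$3\times 3$ compatibility of determinant lines, i.e.\ \cite[Corollary 2.3]{KPS} and \cite[Lemma 2.4]{KPS} (applied to the double fiber sequences relating $\tau^{\ge 0}\bL_{X/B}|_L$, $\tau^{\ge 0}\bL_{L/B}$, $\bL_{L/X}$ and their symplectic duals, and to $\Delta$ of the statement), and cells expressing compatibility of $\iota_E$, $\theta_E$ with $i(\Delta)$, duality and shift, again from \cite[Section 2]{KPS}. The third, geometric, cell expresses that the exact Lagrangian structure on the correspondence is compatible with the symplectic duality isomorphisms of $X$ and $Y$: the nondegeneracy data $-\cdot\omega_X$ and $-\cdot\omega_Y$ restricted to $L$ and the Lagrangian isomorphism $\bL_{L/X}^\vee[1]\cong\bL_{L/Y}[-1]$ fit into a commuting diagram of fiber sequences carrying $[\tau^{\ge 0}\bL_{X/B}|_L\to\tau^{\ge 0}\bL_{L/B}\to\bL_{L/X}]$ to the shifted dual of $[\tau^{\ge 0}\bL_{Y/B}|_L\to\tau^{\ge 0}\bL_{L/B}\to\bL_{L/Y}]$. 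This is exactly the content of the definition of a relative exact Lagrangian structure, read off on cotangent complexes; if one prefers, it can be checked in the local model where $X$ is a derived relative critical locus (\cite[Corollary 4.2.2]{ParkSymplectic}, as already used in \cref{thm:shiftedsymplecticdcritical}) and propagated along the smooth surjection using \cref{ex:canonicalcriticallocus}.

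I expect the main obstacle to be bookkeeping: arranging the pasting so that every inner cell is manifestly of one of the above types, and — more delicately — tracking the Koszul sign. The asserted factor $(-1)^{\dim(L/X)}$ should arise from the shift $[2]$ in the Lagrangian identification $\bL_{L/Y}\cong\bL_{L/X}^\vee[2]$, i.e.\ from iterating $\theta_E$ on the rank-$\dim(L/X)$ complex $\bL_{L/X}$, combined with the $(-1)^{\rank}$ discrepancy between $\kappa^{\der}$ and $\kappa$ already recorded in \cref{ex:canonicalcriticallocus} and \cref{rem:kappa-discrepancy-KPS}; the point requiring care is that these contributions add up to exactly $(-1)^{\dim(L/X)}$ rather than some other power, which is where the comparison with the signs in \cite[Theorem 3.18(b)]{BBBBJ} and \cite[Theorem 4.9]{KinjoDimred} is a useful check. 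The remaining verifications are routine given \cite[Section 2]{KPS}.
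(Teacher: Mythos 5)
Your strategy coincides with the paper's: the paper offers no independent proof, but simply cites the scheme-level versions in \cite[Theorem 3.18(b)]{BBBBJ}, \cite[Theorem 4.9]{KinjoDimred}, \cite[Proposition 6.9]{KPS} and observes that the argument, a pasting of standard commutative diagrams for the determinant isomorphisms $i(\Delta)$, $\iota_E$, $\theta_E$ from \cite[Section 2]{KPS}, carries over verbatim to higher Artin stacks; your decomposition into formal $i(\Delta)$/duality cells plus one geometric cell encoding the Lagrangian compatibility is exactly what those references do.

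One point to tighten: in your account of where $(-1)^{\dim(L/X)}$ comes from, you invoke the $(-1)^{\rank}$ discrepancy between $\kappa^{\der}$ and $\kappa$ from \cref{ex:canonicalcriticallocus} and \cref{rem:kappa-discrepancy-KPS}. That discrepancy cannot contribute here, because both vertical maps in the square are the \emph{derived} $\kappa^{\der}$; $\kappa$ never appears in this lemma. (It does appear in \cref{prop:symplecticorientations}, which applies this lemma, and is presumably what you had in mind.) The entire sign must therefore be accounted for by the determinant-line manipulations internal to $\Upsilon^{\der}_{(q_X,q_Y)}$ and $\kappa^{\der}$ themselves: the two applications of $\theta$ for the degree shift in the truncation sequences, the $\iota$ in the Lagrangian identification $K_{L/Y}\cong K_{L/X}^{\vee}$ (coming from $\bL_{L/Y}\cong\bL_{L/X}^{\vee}[2]$), and the Koszul sign in the graded Picard groupoid when reordering the various determinant factors of $\bL_{L/X}$. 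This is a standard but finicky computation, and the scheme-level references are indeed the right sanity check; just don't import the $\kappa^{\der}$-vs-$\kappa$ sign into it.
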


We are now ready to compare the virtual canonical bundles of a relative d-critical locus and its derived enhancement.

\begin{proposition}\label{prop:symplecticorientations}
Let $X\rightarrow B$ be an lfp geometric morphism of derived stacks equipped with a relative exact $(-1)$-shifted symplectic structure $\omega\in\cA^{2, \ex}(X/B, -1)$. Let $i\colon X^{\cl}\rightarrow X$ be the embedding of the classical truncation. Then there is an isomorphism
\[\Lambda_X\colon K_{X/B}|_{X^{\red}}\xrightarrow{\sim} K^{\vir}_{X^{\cl}/B^{\cl}}\]
such that for every $x\in X$ the diagram
\begin{equation}\label{eq:Lambdakappa}
\xymatrix{
K_{X/B, x}\ar^{\Lambda_X|_x}[r] \ar^{\kappa^{\der}_x}[d] & K^{\vir}_{X^{\cl}/B^{\cl}, x} \ar^{\kappa_x}[d] \\
\det(\tau^{\geq 0}\bL_{X/B, x})^{\otimes 2}\ar^{i^\ast}[r] & \det(\tau^{\geq 0}\bL_{X^{\cl}/B^{\cl}, x})
}
\end{equation}
commutes up to the sign $(-1)^{\rank (\Omega^1_{X /B, x})}$.
\end{proposition}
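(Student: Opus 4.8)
The plan is to construct $\Lambda_X$ smooth-locally from the local Darboux form of the relative exact $(-1)$-shifted symplectic structure and then glue, using throughout that an isomorphism of line bundles on a reduced classical stack is determined by its fibres: the compatibility \eqref{eq:Lambdakappa}, read with the \emph{fixed} isomorphisms $\kappa_x$, $\kappa^{\der}_x$ and the \emph{fixed} sign $(-1)^{\rank\Omega^1_{X/B,x}}$, pins down $\Lambda_X|_x$ for each $x$, so any two local models agree on overlaps and the resulting $\Lambda_X$ is automatically unique and independent of choices. Thus the substance is a local existence statement. As in the proof of \cref{thm:shiftedsymplecticdcritical}, the four ingredients of \eqref{eq:Lambdakappa} are compatible with derived smooth base change on $B$, so after pulling back along a smooth atlas and passing to classical truncations I may assume $B$ is a classical affine scheme. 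In that case \cite[Corollary 4.2.2]{ParkSymplectic} supplies an LG pair $(U,f)$ over $B$ and a $(-1)$-shifted exact Lagrangian correspondence $\R\Crit_{U/B}(f)\xleftarrow{a} L\xrightarrow{b} X$ with $a^\cl$ an isomorphism and $b$ smooth surjective, and $(b^\cl)^*(i^*\omega)$ matches $s_f$ on $\Crit_{U/B}(f)\cong L^\cl$ under $a^\cl$.

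I would then assemble an isomorphism $\widetilde{\Lambda}\colon K_{X/B}|_{L^{\red}}\xrightarrow{\sim} K^{\vir}_{X^\cl/B^\cl}|_{L^{\red}}$ from comparison isomorphisms already available. On the derived side, since $b$ is smooth, \eqref{eq-Upsilon-derived} applied to the correspondence gives $\Upsilon^{\der}_{(b,a)}\colon K_{X/B}|_L\otimes K_{L/X}^{\otimes 2}|_L\xrightarrow{\sim} K_{\R\Crit_{U/B}(f)/B}|_L$, which I compose with $\Lambda_{(U,f)}$ of \cref{ex:canonicalcriticallocus} to reach $K_{U/B}^{\otimes 2}|_{L^{\red}}$. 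On the classical side, $b^\cl\colon L^\cl\to X^\cl$ is smooth surjective and carries the pullback d-critical structure, so \cref{prop:virtualcanonicalstacks}(2) gives $\Upsilon_{L^\cl\to X^\cl}\colon K^{\vir}_{X^\cl/B^\cl}|_{(L^\cl)^{\red}}\otimes K_{L^\cl/X^\cl}^{\otimes 2}|_{(L^\cl)^{\red}}\xrightarrow{\sim} K^{\vir}_{L^\cl/B^\cl}$, and $K^{\vir}_{L^\cl/B^\cl}\cong K^{\vir}_{\Crit_{U/B}(f)/B}\cong K_{U/B}^{\otimes 2}|_{\Crit_{U/B}(f)^{\red}}$ via the critical chart $(U,f,\id)$ (\cref{thm:virtualcanonicalschemes}(1)). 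Since $b$ is smooth one has $K_{L/X}|_{L^{\red}}\cong K_{L^\cl/X^\cl}|_{L^{\red}}$; cancelling these $\otimes 2$ factors produces $\widetilde{\Lambda}$.

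Next I would verify, fibrewise, that for $l\in L$ over $x\in X$ the isomorphism $\widetilde{\Lambda}|_l$ makes \eqref{eq:Lambdakappa} commute up to $(-1)^{\rank\Omega^1_{X/B,x}}$. One chases the square using the pointwise compatibility of $\Lambda_{(U,f)}$ with $\kappa^{\der}$ and $\kappa$ (diagram \eqref{eq:Kcritkappa}, commuting up to $(-1)^{\dim\Omega^1_{\R\Crit_{U/B}(f)/B,a(l)}}$), the compatibility of $\Upsilon^{\der}_{(b,a)}$ with $\kappa^{\der}$ (\cref{lm:kappaLagrangiancorrespondence}, up to $(-1)^{\dim(L/X)}$), and the compatibility of $\Upsilon_{L^\cl\to X^\cl}$ with $\kappa$ (\cref{prop:virtualcanonicalstacks}(4), with no sign). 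That the resulting sign equals $(-1)^{\rank\Omega^1_{X/B,x}}$ reduces to the congruence $\dim\Omega^1_{\R\Crit_{U/B}(f)/B,a(l)}+\dim(L/X)\equiv \rank\Omega^1_{X/B,x}\pmod 2$, which follows from the cotangent fibre sequences attached to $a$ and $b$, the $2$-connectivity of $\bL_{X^\cl/X}$ (hence $\rank\Omega^1_{X/B,x}=\dim\Omega^1_{X^\cl/B^\cl,x}$, and likewise over $\R\Crit_{U/B}(f)$), and the identity $\dim\Omega^1_{L^\cl/B^\cl,l}=\dim\Omega^1_{X^\cl/B^\cl,x}+\dim(L/X)$ coming from smoothness of $b^\cl$.

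Granting this pointwise statement, $\widetilde{\Lambda}|_l$ depends only on $x=b(l)$, so the two pullbacks of $\widetilde{\Lambda}$ to $L^{\red}\times_{X^{\red}}L^{\red}$ agree pointwise, hence agree, and faithfully flat descent along $b^{\red}\colon L^{\red}\twoheadrightarrow X^{\red}$ gives $\Lambda_X$ on $X^{\red}$; it satisfies \eqref{eq:Lambdakappa} up to the prescribed sign because it does so after the surjective pullback along $b^{\red}$, and it is independent of the Darboux data by the same uniqueness. The hard part will be the sign bookkeeping in the fibrewise check --- keeping careful track of the $(-1)^{n(n-1)/2}$ factors in the squared volume forms, the sign discrepancy recorded in \cref{rem:kappa-discrepancy-KPS}, and the shifts hidden in $(-)^\dag$ and in the truncations $\tau^{\geq 0}\bL$ --- so that they telescope to exactly $(-1)^{\rank\Omega^1_{X/B,x}}$; the reduction, the assembly of $\widetilde{\Lambda}$, and the descent are routine once this is settled.
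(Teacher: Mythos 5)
Your proposal matches the paper's proof essentially step for step: reduce to $B$ classical affine via naturality of $\kappa_x$ and $\kappa^{\der}_x$ under base change, use \cite[Corollary 4.2.2]{ParkSymplectic} to produce the Darboux Lagrangian correspondence $\R\Crit_{U/B}(f)\xleftarrow{a} L\xrightarrow{b} X$, assemble a local isomorphism from $\Upsilon^{\der}_{(b,a)}$, $\Lambda_{(U,f)}$ and $\Upsilon$ with the identification $K_{L/X}\cong K_{L^\cl/X^\cl}$, check the fibrewise compatibility by combining \eqref{eq:Kcritkappa}, \cref{lm:kappaLagrangiancorrespondence} and \cref{prop:virtualcanonicalstacks}(4), and descend along the smooth surjection. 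Your sign congruence $\dim\Omega^1_{\R\Crit_{U/B}(f)/B,a(l)}+\dim(L/X)\equiv\rank\Omega^1_{X/B,x}\pmod 2$ (which follows from $a^\cl$ being an isomorphism, $b^\cl$ being smooth, and $2$-connectivity of $\bL_{X^\cl/X}$) is exactly what makes the two $\pm$ signs telescope to the prescribed one, and is left implicit in the paper's proof; otherwise the two arguments coincide.
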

\begin{proof}
\eqref{eq:Lambdakappa} uniquely determines the isomorphism $\Lambda_X$ if it exists. The isomorphism $\kappa^{\der}_x$ is natural with respect to base change since it involves the isomorphisms $i(\Delta)$ which are natural for isomorphisms.  The isomorphism $\kappa_x$ is natural with respect to base change by \cref{prop:virtualcanonicalstacks}(4). Therefore, it is enough to establish the existence of the isomorphism $\Lambda_X$ fitting into a commutative diagram \eqref{eq:Lambdakappa} for $B$ a classical affine scheme.

As in \cref{thm:shiftedsymplecticdcritical} we may find an LG pair $(U, f)$ over $B$ and a $(-1)$-shifted exact Lagrangian correspondence
\[\xymatrix{ & L\ar[ld]_a \ar[rd]^b & \\ \R\Crit_{U/B}(f) && X,}\]
$a^\cl$ is an isomorphism and $b$ is smooth surjective. In particular, we get a smooth surjective morphism $c\colon R=\Crit_{U/B}(f)\rightarrow X^{\cl}$ under which $c^\ast i^\ast \omega = s_f$ and such that the restriction $i_L^\ast\colon \bL_{L/X}\rightarrow \bL_{R/X^{\cl}}$ is an isomorphism. Consider the unique isomorphism
\[\Lambda_{X, U}\colon K_{X/B}|_{R^{\red}}\xrightarrow{\sim} K^{\vir}_{X^{\cl}/B}|_{R^{\red}}\]
which fits into the diagram
\[
\xymatrix{
K_{X/B}|_{R^{\red}}\otimes K_{L/X}^{\otimes 2}|_{R^{\red}} \ar^-{\Upsilon^{\der}_{(b, a)}}[r] \ar^{\Lambda_{X, U}\otimes i_L^\ast}[d] & K_{\R\Crit_{U/B}(f)/B}|_{R^{\red}} \ar^{\Lambda_{(U, f)}}[d] \\
K^{\vir}_{X^{\cl}/B}|_{R^{\red}}\otimes K_{R/X^{\cl}}^{\otimes 2}|_{R^{\red}} \ar^-{\Upsilon_{R\rightarrow X^{\cl}}}[r] & K^{\vir}_{R/B}.
}
\]

Using the commutative diagram (up to sign) \eqref{eq:Kcritkappa}, the compatibility of $\kappa^{\der}$ with $\Upsilon^{\der}$ given by \cref{lm:kappaLagrangiancorrespondence} and the compatibility of $\kappa$ with $\Upsilon$ given by \cref{prop:virtualcanonicalstacks}(4), for every $r\in R$ the diagram
\[
\xymatrix{
K_{X/B, c(r)}\ar^{\Lambda_{X, U}|_r}[r] \ar^{\kappa^{\der}_{c(r)}}[d] & K^{\vir}_{X^{\cl}/B^{\cl}, c(r)} \ar^{\kappa_{c(r)}}[d] \\
\det(\tau^{\geq 0}\bL_{X/B, c(r)})^{\otimes 2}\ar^{i^\ast}[r] & \det(\tau^{\geq 0}\bL_{X^{\cl}/B^{\cl}, c(r)})
}
\]
commutes up to the sign $(-1)^{\rank (\Omega_{X/B, x})}$. Therefore, $\Lambda_{X, U}$ descends along $c$ to an isomorphism $\Lambda_X$ independent of choices.
\end{proof}

We also note that the symplectic pushforward in \cite{ParkSymplectic} is compatible with the d-critical pushforward along \emph{smooth} morphisms.

\begin{proposition}\label{prop:symp push vs dcrit push along smooth}
Consider lfp geometric morphisms of derived stacks $X\rightarrow B_1\xrightarrow{p} B_2$, where $p$ is smooth, equipped with a relative exact $(-1)$-shifted symplectic structure $\omega\in\cA^{2, \ex}(X/B_1, -1)$. Then we have a canonical isomorphism $p_*(X,\omega)^\cl \cong p^{\cl}_*(X^\cl, i^\ast \omega)$ compatibly with the relative d-critical structures over $B^{\cl}_2$.
\end{proposition}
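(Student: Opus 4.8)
The plan is to recognise both sides as zero loci of a moment map --- one formed in derived, the other in classical geometry --- and to identify them after applying the truncation functor $(-)^{\cl}$. By \cite[Proposition 2.3.1]{ParkSymplectic} the symplectic pushforward is the derived fibre product $p_*(X,\omega)\cong X\times^{\R}_{\T^*(B_1/B_2)} B_1$, where $X\to\T^*(B_1/B_2)$ is the moment map $\mu_X$ and $B_1\to\T^*(B_1/B_2)$ the zero section; by \cref{def:dcritpushforwardscheme} the d-critical pushforward is the classical fibre product $p^{\cl}_*(X^{\cl},i^\ast\omega)=X^{\cl}\times_{\T^*(B_1^{\cl}/B_2^{\cl})}B_1^{\cl}$ along the moment map $\mu_{i^\ast\omega}$ of \cref{sect:dcriticalpushforward}. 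Since $(-)^{\cl}\colon\dStk\to\Stk$ is a right adjoint it preserves fibre products, so it is enough to check (a) $\T^*(B_1/B_2)^{\cl}\cong\T^*(B_1^{\cl}/B_2^{\cl})$ with $(\mu_X)^{\cl}=\mu_{i^\ast\omega}$, and (b) that the pushforward exact $(-1)$-shifted symplectic structure $p_*\omega$ restricts along $i$ to the d-critical pushforward $p^{\cl}_*(i^\ast\omega)$.

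For (a): since $p$ is smooth, hence flat, $\bL_{B_1/B_2}$ is a vector bundle in degree $0$ and the derived fibre product $B_1\times_{B_2}B_2^{\cl}$ is classical, namely $B_1^{\cl}$, so the relative cotangent fibre sequence gives $\bL_{B_1/B_2}|_{B_1^{\cl}}\cong\bL_{B_1^{\cl}/B_2^{\cl}}=\Omega^1_{B_1^{\cl}/B_2^{\cl}}$; passing to total spaces yields $\T^*(B_1/B_2)^{\cl}\cong\T^*(B_1^{\cl}/B_2^{\cl})$. The moment map $\mu_X$ is assembled from the fibre sequence $\pi^*\bL_{B_1/B_2}\to\bL_{X/B_2}\to\bL_{X/B_1}$ together with the exact structure $\omega$: writing $\omega$ as a function $\und(\omega)\in\cO_X$ equipped with a nullhomotopy of $d_{B_1}\und(\omega)$, that nullhomotopy lifts $d_{B_2}\und(\omega)$ to a point of $\T^*(B_1/B_2)$ over $X$, which is $\mu_X$. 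This is exactly the construction of $\mu_s$ in \cref{sect:dcriticalpushforward} via $\mu_{X\to B\to S}\colon\cS_{X/B}\to\pi^*\bL_{B/S}$, carried out with $X^{\cl},B_1^{\cl},B_2^{\cl}$ in place of $X,B_1,B_2$; using \cref{thm:shiftedsymplecticdcritical} (which identifies $i^\ast\omega$ with the relevant relative d-critical structure) and the pullback squares relating the spaces of relative forms on a derived stack and on its truncation, restriction along $i$ carries $\mu_X$ to $\mu_{i^\ast\omega}$. Hence $p_*(X,\omega)^{\cl}\cong p^{\cl}_*(X^{\cl},i^\ast\omega)$ as stacks over $B_2^{\cl}$.

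For (b): both sections of $\cS_{p_*(X,\omega)^{\cl}/B_2^{\cl}}$ have the same underlying function, the restriction of $\und(\omega)$ to the common classical pushforward. The pushforward exact structure $p_*\omega$ is built, following \cite{ParkSymplectic}, from the restriction of $\omega$ and the vanishing of the relative Liouville one-form $\lambda_{B_1/B_2}$ along the zero section --- precisely the recipe defining $p_*s$ in \cref{sect:dcriticalpushforward}; as the Liouville form, the cotangent fibre sequences and all intervening nullhomotopies are compatible with $(-)^{\cl}$, restriction along $i$ intertwines the two constructions, giving $i^\ast(p_*\omega)=p^{\cl}_*(i^\ast\omega)$. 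More robustly, one can reduce to the scheme case: by the proof of \cref{thm:shiftedsymplecticdcritical}, smooth-locally $(X,\omega)$ fits into a $(-1)$-shifted exact Lagrangian correspondence $\R\Crit_{U/B_1}(f)\gets L\to X$ with the left leg an isomorphism on classical truncations and the right leg smooth surjective; applying $p_*$ and base change along Lagrangian correspondences \cite[Theorem A]{ParkSymplectic}, together with $p_*\R\Crit_{U/B_1}(f)\cong\R\Crit_{U/B_2}(f)$ (\cref{exam:dCrit}), whose classical truncation is $\Crit_{U/B_2}(f)$ with relative d-critical structure $s_{f,B_2}=p^{\cl}_*(s_{f,B_1})$ (\cref{prop:pushforwardcriticallocus}, \cref{ex:dcritpushforwardcritlocus}), reduces the comparison to that local statement via the smooth descent of \cref{prop:dcritbasechangestacks} and \cref{prop:dcritpushforwardpullback}.

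The main obstacle is part (a): making precise that Park's moment map $\mu_X$ and the moment map $\mu_s$ of \cref{sect:dcriticalpushforward} become, on classical truncations, one and the same --- this requires unwinding both in terms of the exact structure $\omega$ and checking that each ingredient (the relative cotangent fibre sequences, the nullhomotopy witnessing exactness, the total-space functor $\Tot$) commutes with $(-)^{\cl}$. The identification $\T^*(B_1/B_2)^{\cl}\cong\T^*(B_1^{\cl}/B_2^{\cl})$, though elementary, is exactly the point where smoothness of $p$ enters; without it the derived structure of $\T^*(B_1/B_2)$ would survive truncation and the two pushforwards would genuinely differ.
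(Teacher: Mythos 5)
The paper states \cref{prop:symp push vs dcrit push along smooth} without proof, presumably treating it as a routine unwinding of the two pushforward constructions, so there is nothing to compare your argument against directly; I will assess it on its own terms.

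Your strategy is sound and is the natural one: both sides are zero loci of moment maps against the zero section of a relative cotangent stack, the functor $(-)^{\cl}\colon\dStk\to\Stk$ preserves fibre products because it is a right adjoint to the fully faithful inclusion, and the only thing left to check is that the derived and classical moment-map squares agree after truncation. Your treatment of part (a) is correct and correctly isolates where smoothness of $p$ is used: flatness of $p$ makes $B_1\times_{B_2}B_2^{\cl}$ classical and equal to $B_1^{\cl}$, so base change for cotangent complexes gives $\bL_{B_1/B_2}|_{B_1^{\cl}}\cong\bL_{B_1^{\cl}/B_2^{\cl}}$, and passing to total spaces yields $\T^*(B_1/B_2)^{\cl}\cong\T^*(B_1^{\cl}/B_2^{\cl})$. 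For the moment-map identification, the key point — which you gesture at but could spell out slightly more — is that the commutative ladder
\[
\begin{tikzcd}[column sep=small]
\pi^*\bL_{B_1/B_2}|_{X^{\cl}} \ar{r}\ar{d}{\sim} & \bL_{X/B_2}|_{X^{\cl}} \ar{r}\ar{d} & \bL_{X/B_1}|_{X^{\cl}}\ar{d}\\
\pi^*\bL_{B_1^{\cl}/B_2^{\cl}}|_{X^{\cl}} \ar{r} & \bL_{X^{\cl}/B_2^{\cl}} \ar{r} & \bL_{X^{\cl}/B_1^{\cl}}
\end{tikzcd}
\]
has left vertical map an isomorphism (again by smoothness of $p$), so the lift of $d_{B_2}\und(\omega)$ by the nullhomotopy $h$ and the lift of $d_{B_2^{\cl}}\und(i^*\omega)$ by $i^*h$ agree under that isomorphism; only the leftmost terms need to match, not the middle or right ones, so the fact that $\bL_{X/B_1}|_{X^{\cl}}\ne\bL_{X^{\cl}/B_1^{\cl}}$ is harmless. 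Your part (b) is correct in substance; the first version (``all intervening nullhomotopies are compatible with $(-)^{\cl}$'') is a summary rather than a verification, but your fallback reduction via \cite[Corollary 4.2.2]{ParkSymplectic}, \cref{exam:dCrit}, \cref{prop:pushforwardcriticallocus}, and the smooth descent of \cref{prop:dcritbasechangestacks} and \cref{prop:dcritpushforwardpullback} is a legitimate and more checkable route, matching how the paper itself proves \cref{thm:shiftedsymplecticdcritical}. Overall the proof is correct; I have no substantive objections.
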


We now describe the behavior of the canonical bundle under the symplectic pushforward.

\begin{proposition}\label{prop-symp-push-ori}
    Consider lfp geometric morphisms of derived stacks  $X\xrightarrow{\pi}B\xrightarrow{p} S$, where $\pi$ is equipped with a relative exact $(-1)$-shifted symplectic structure $\omega\in\cA^{2, \ex}(X/B, -1)$.
    Let $\overline{\pi} \colon  p_*(X, \omega) \to S$ be the symplectic pushforward, which fits into a commutative diagram
    \[
    \xymatrix{
    p_*(X, \omega) \ar^-{r}[r] \ar^{\overline{\pi}}[d] & X \ar^{\pi}[d] \\
    S & B. \ar_{p}[l]
    }
    \]
    Then there is a canonical isomorphism
        \begin{equation}\label{eq-symp-push-can}
        \Sigma^{\mathrm{der}}_p \colon
           K_{X / B} |_{p_*(X, \omega)} \otimes K_{B/ S}^{\otimes 2} |_{p_*(X, \omega)} \cong K_{p_*(X, \omega) / S},
        \end{equation}
    which satisfies the following:
    \begin{enumerate}
          \item \label{item-push-canonical-functorial}
          It is functorial for compositions: $\Sigma^{\mathrm{der}}_{\id} = \id$ and given another  morphism $q\colon S \rightarrow T$ with $R =(q\circ p)_*(X, \omega)$ the diagram
    \begin{equation}\label{eq-symp-push-canonical-composition}
    \xymatrix@C=1.5cm{
    K_{X/B}|_R\otimes K^{\otimes 2}_{B/S}|_R\otimes K^{\otimes 2}_{S/T}|_R \ar^-{\id\otimes i(\Delta)^2}[r] \ar^{\Sigma^{\mathrm{der}}_p\otimes \id}[d] 
    & K_{X/B}|_R\otimes K^{\otimes 2}_{B/T}|_R \ar^{\Sigma^{\mathrm{der}}_{q\circ p}}[d] \\
    K_{p_*(X, \omega)/S}|_R\otimes K^{\otimes 2}_{S/T}|_R \ar^-{\Sigma^{\mathrm{der}}_q}[r] & K_{(q \circ p)_*(X, \omega)/R}
    }
    \end{equation}
    commutes, where the top horizontal morphism is induced by the fiber sequence
    \[
    \Delta\colon p^*\bL_{S/T}\longrightarrow \bL_{B/T}\longrightarrow \bL_{B/S}.
    \]
    \item \label{item-push-canonical-base-change}
    Consider a commutative diagram of derived stacks
    \begin{equation}\label{eq-symp-push-canonical-base-change-diagram}
    \xymatrix{
    X' \ar[r] \ar[d] & B' \ar^{p'}[r] \ar[d]^{a_1} & S' \ar[d]^{a_2} \\
    X \ar[r] & B \ar^{p}[r] & S
    }
    \end{equation}
    with all morphisms lfp, 
    the left square being Cartesian,
    equipped with a relative exact $(-1)$-shifted symplectic structure $\omega\in\cA^{2, \ex}(X/B, -1)$ and let $\omega' \in\cA^{2, \ex}(X'/B', -1)$ be the base change. 
    Assume that the map $B' \to B \times_{S} S'$ is smooth.
    We set $R' \coloneqq p'_*(X', \omega')^{\cl}$. 
    Then the diagram
    \begin{equation}\label{eq-item-push-canonical-base-change-diagram}
    \begin{tikzcd}
    	{K_{X / B} |_{R'} \otimes K_{B / S}^{\otimes 2} |_{R'}} \otimes K_{B' / B \times_{S} S'}^{\otimes 2}|_{R'} & {K_{p_*(X, \omega) / S} |_{R'}  \otimes K_{B' / B \times_{S} S'}^{\otimes 2}|_{R'}} \\
    	{K_{X' / B'} |_{R'} \otimes K_{B / S}^{\otimes 2} |_{R'} \otimes K_{B' / B \times_{S} S'}^{\otimes 2} |_{R'}} & {K_{p_*(X, \omega) \times_{S} S' / S'} |_{R'}  \otimes K_{B' / B \times_{S} S'}^{\otimes 2}|_{R'}} \\
    	{K_{X' / B'} |_{R'} \otimes K_{B' / S'}^{\otimes 2}|_{R'}} & {K_{p'_*(X', \omega') / S'}|_{R'}}
    	\arrow["{\Sigma^{\mathrm{der}}_p \otimes \id}", from=1-1, to=1-2]
    	\arrow["\sim", from=1-1, to=2-1]
    	\arrow["\sim", from=1-2, to=2-2]
    	\arrow["{\id \otimes i(\Delta)}", from=2-1, to=3-1]
    	\arrow["{\Upsilon^{\mathrm{der}}_{(t, s)}}", from=2-2, to=3-2]
    	\arrow["{\Sigma^{\mathrm{der}}_{p'}}"', from=3-1, to=3-2]
    \end{tikzcd}\end{equation}
    commutes, where
    \[\Delta \colon \bL_{B/S}|_{B'}\longrightarrow \bL_{B'/S'}\longrightarrow \bL_{B'/B\times_{S} S'}\]
    and
    \begin{equation}\label{eq-Lag-corresp-in-push-canonical}
    \begin{tikzcd}
    	& {X' \times_{\T^*(B / S) }B} \\
    	  {p'_*(X', \omega')} &&  {p_*(X, \omega) \times_{S} S'}
    	\arrow["s"', from=1-2, to=2-1]
    	\arrow["t", from=1-2, to=2-3]
    \end{tikzcd}
    \end{equation}
    is the Lagrangian correspondence given by the Beck--Chevalley map.

    \item \label{item-push-canonical-smooth-pullback}
    Assume that we are given an exact Lagrangian correspondence
        $(Y, \omega') \xleftarrow[]{q_{Y}} L \xrightarrow[]{q_{X}} (X, \omega) $ over $B$,
        where $q_X$ is a smooth morphism.
        We let
        $p_*(Y, \omega') \xleftarrow{q_{p_*(Y, \omega')}} p_*L \xrightarrow[]{q_{p_*(X, \omega)}} (X, \omega)$ denote the induced Lagrangian correspondence.
        Then the following diagram commutes:
        \[\begin{tikzcd}
        	{K_{X / B} |_ {p_*L} \otimes K_{B / S} ^{\otimes 2}|_ {p_*L}  \otimes  K_{{L} /X}^{\otimes 2}|_ {p_*L} } & {K_{p_*(X, \omega) / S} |_ {p_*L} \otimes  K_{{p_*L} /p_*(X, \omega)}^{\otimes 2} } \\
        	{K_{Y / B} |_ {p_*L}\otimes  K_{B / S }^{\otimes 2} |_{p_*L}} & {K_{p_*(Y, \omega') / S} |_ {p_*L}.}
        	\arrow["{{{\Sigma^{\mathrm{der}}_p}}}", from=1-1, to=1-2]
        	\arrow["{{{\Upsilon^{\mathrm{der}}_{(q_{X}, q_{Y})}}}}", from=1-1, to=2-1]
        	\arrow["{{{\Upsilon^{\mathrm{der}}_{(q_{p_*(X, \omega)}, q_{p_*(Y, \omega')})}}}}", from=1-2, to=2-2]
        	\arrow["{{{\Sigma^{\mathrm{der}}_p}}}", from=2-1, to=2-2]
        \end{tikzcd}\]
        
    \item \label{item-push-canonical-d-critical-comparison}
        Assume that $p$ is smooth and let $s$ be the underlying d-critical structure on $p^{\cl} \colon X^{\cl} \to B^{\cl}$.
        Then the following diagram commutes:
        \[\begin{tikzcd}
    	{K_{X / B} |_{p_*(X, \omega)^{\mathrm{red}}} \otimes K_{B / S}^{\otimes 2} |_{p_*(X, \omega)^{\mathrm{red}}}} & {K_{p_*(X, \omega) / S} |_{p_*(X, \omega)^{\mathrm{red}}}} \\
    	{K^{\vir}_{X^{\cl} / B^{\cl}} |_{p^{\cl}_*(X^{\cl}, s)^{\mathrm{red}}} \otimes K_{B^{\cl} / S^{\cl}}^{\otimes 2} |_{p^{\cl}_*(X^{\cl}, s)^{\mathrm{red}}}} & {K^{\vir}_{p_*(X^{\cl}, s) / S^{\cl}}  |_{p^{\cl}_*(X^{\cl}, s)^{\mathrm{red}}}.}
    	\arrow[from=1-1, to=1-2]
    	\arrow["{{{\Sigma^{\mathrm{der}}_p}}}"{description}, shift left=3, draw=none, from=1-1, to=1-2]
    	\arrow["{{{\Lambda_{X}}}}", from=1-1, to=2-1]
    	\arrow["{{{\Lambda_{p_*(X, \omega)}}}}", from=1-2, to=2-2]
    	\arrow[from=2-1, to=2-2]
    	\arrow["{\Sigma_{p^{\cl}}}"{description}, shift left=3, draw=none, from=2-1, to=2-2]
        \end{tikzcd}\]
    \end{enumerate}
    
\end{proposition}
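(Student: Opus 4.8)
\emph{Overview.} The plan is to construct $\Sigma^{\mathrm{der}}_p$ as the symplectic analogue of the isomorphism $\Upsilon^{\mathrm{der}}$ of \eqref{eq-Upsilon-derived}, and then to verify all four properties by reducing each, in the usual way, to a standard commutative diagram of determinant lines associated with a (double or triple) fiber sequence of cotangent complexes together with the naturality of the isomorphisms $i(\Delta)$ --- in complete parallel with the treatment of $\Sigma_p$ in \cref{prop:canonicalpushforward}.

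\emph{Construction of $\Sigma^{\mathrm{der}}_p$.} By \cite[Proposition 2.3.1]{ParkSymplectic} the symplectic pushforward is the Lagrangian intersection $R\coloneqq p_*(X,\omega) = X\times_{\T^*(B/S)} B$, where $B\to\T^*(B/S)$ is the zero section and $\mu_X\colon X\to\T^*(B/S)$ is the moment map. From the Cartesian square defining $R$ one gets $\bL_{R/X}\cong (R\to B)^*\bL_{B/\T^*(B/S)}\cong \bL_{B/S}^\vee[1]|_R$, using that the conormal of the zero section of $\T^*(B/S)$ is $\bL_{B/S}^\vee$ by the linear structure of the relative cotangent bundle. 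Chaining the determinant isomorphisms $i(\Delta)$ attached to the fiber sequences $r^*\bL_{X/S}\to\bL_{R/S}\to\bL_{R/X}$ and $\pi^*\bL_{B/S}\to\bL_{X/S}\to\bL_{X/B}$, together with the canonical identification $\det(\bL_{B/S}^\vee[1])\cong \det(\bL_{B/S})$, produces a canonical isomorphism
\[K_{R/S}\cong K_{X/B}|_R\otimes K_{B/S}^{\otimes 2}|_R,\]
whose inverse is $\Sigma^{\mathrm{der}}_p$. That $\Sigma^{\mathrm{der}}_{\mathrm{id}}=\mathrm{id}$ is immediate, and by construction $\Sigma^{\mathrm{der}}_p$ is natural for Tor-independent base change.

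\emph{Properties (1) and (3).} For (1), functoriality for compositions follows from the isomorphism $q_*(p_*(X,\omega))\cong(q\circ p)_*(X,\omega)$, a consequence of the adjunction $p^*\dashv p_*$ of \cite{ParkSymplectic} and uniqueness of right adjoints, together with the standard compatibility of the $i(\Delta)$'s with the commutative triple of fiber sequences generated by $\Delta\colon p^*\bL_{S/T}\to\bL_{B/T}\to\bL_{B/S}$, exactly as in \cite[Lemma 2.4]{KPS}; this is the verbatim derived analogue of \cref{prop:canonicalpushforward}(1). For (3), the symplectic pushforward is functorial on relative exact $(-1)$-shifted symplectic stacks with exact Lagrangian correspondences as morphisms, so $p_*(-)$ carries $Y\xleftarrow{q_Y} L\xrightarrow{q_X} X$ to a Lagrangian correspondence $p_*(Y,\omega')\leftarrow p_*L\to p_*(X,\omega)$; with $q_X$ smooth one has $p_*L = L\times_{\T^*(B/S)} B$, and the identifications of $\bL_{p_*L/p_*(X,\omega)}$ and $\bL_{p_*L/p_*(Y,\omega')}$ with the appropriate base changes of $\bL_{L/X}$ and $\bL_{L/Y}$ reduce the square to naturality of the $i(\Delta)$'s over the commutative diagram of fiber sequences relating $X/B$, $L/X$, $B/S$ and their pushforwards.

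\emph{Property (2).} Following the proof of \cref{prop:canonicalpushforward}(2), I would factor the diagram \eqref{eq-symp-push-canonical-base-change-diagram} into the composite of the case in which both squares are Cartesian, the case in which the left square is Cartesian and $S'=S$ (the ``smooth direction'' $B'\to B$), and the case $B'=B$, $S'=S$. In the Cartesian case the Beck--Chevalley Lagrangian correspondence \eqref{eq-Lag-corresp-in-push-canonical} enters, and the needed compatibility of $\Sigma^{\mathrm{der}}$ with $\Upsilon^{\mathrm{der}}$ reduces to the commutativity of a diagram of determinant lines attached to a commutative square of fiber sequences of cotangent complexes, of the same shape as in the construction of \eqref{eq-Upsilon-derived} in \cite[Section 3]{KPS}; the remaining cases are immediate from the naturality of $\Sigma^{\mathrm{der}}$ under pullback and of the $i(\Delta)$'s. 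This step is the longest, but structurally routine given (1).

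\emph{Property (4) and the main obstacle.} When $p$ is smooth, \cref{prop:symp push vs dcrit push along smooth} identifies $p_*(X,\omega)^{\cl}$ with $p^{\cl}_*(X^{\cl},s)$ compatibly with the relative d-critical structures. Since $\Lambda_X$ and $\Lambda_{p_*(X,\omega)}$ are characterized pointwise via \eqref{eq:Lambdakappa}, while $\Sigma_{p^{\cl}}$ (on critical charts, cf.\ \cref{prop:canonicalpushforward}) and $\Sigma^{\mathrm{der}}_p$ are both built from $i(\Delta)^2$ isomorphisms of determinant lines --- the classical fiber sequences being the $\tau^{\ge 0}$-truncations of the derived ones, which preserves ranks --- the square reduces to a sign computation. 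The main obstacle will be precisely this bookkeeping: one must check that the $(-1)^{\operatorname{rk}\Omega^1_{X/B,x}}$ and $(-1)^{\operatorname{rk}\Omega^1_{p_*(X,\omega)/S,x}}$ discrepancies introduced by $\Lambda_X$ and $\Lambda_{p_*(X,\omega)}$, together with the Hessian-versus-symplectic-form sign conventions for $\kappa$ and $\kappa^{\mathrm{der}}$ recorded in \cref{rem:kappa-discrepancy-KPS} and \cref{lm:kappaLagrangiancorrespondence}, cancel so that the final diagram commutes on the nose as asserted; here one uses that $\operatorname{rk}\Omega^1_{\T^*(B/S)/S,x}=2\operatorname{rk}\Omega^1_{B/S,x}$ is even and that $K_{B/S}$ enters $\Sigma^{\mathrm{der}}_p$ to an even power, so the parities match.
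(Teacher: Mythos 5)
Your construction of $\Sigma^{\mathrm{der}}_p$ is the paper's: identify $\bL_{p_*(X,\omega)/X}\cong\bL_{B/S}^\vee[1]|_{p_*(X,\omega)}$ from the Lagrangian intersection description and chain the $i(\Delta)$/$\theta$/$\iota$ isomorphisms through the two fiber sequences over $S$. Properties (1) and (3) are handled in essentially the same spirit as the paper (which says ``obvious from the construction''). For (2) you propose the three-case factorization parallel to \cref{prop:canonicalpushforward}(2); the paper instead observes that the right-square-Cartesian case is immediate, reduces to $p=\id$, $a_2=\id$, and then --- using $p'_*(X\times_B B',\omega|_{X\times_B B'})\cong(X,\omega)\times\T^*[-1](B'/B)$ --- to $X=B$, where the Beck--Chevalley Lagrangian correspondence is the zero section of $\T^*[-1](B'/B)$ and everything can be computed by hand. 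Your factorization is also available, but note that the $B'=B$, $S'=S$ case is empty here since the left square is Cartesian by hypothesis, so there is really nothing to absorb in that slot; the relevant flexibility lives entirely in $B'\to B\times_S S'$ and $S'\to S$.

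The genuine gap is in (4). You propose to check the commutativity pointwise by cancelling the sign discrepancies $(-1)^{\rank\Omega^1_{X/B,x}}$ and $(-1)^{\rank\Omega^1_{p_*(X,\omega)/S,x}}$ from $\Lambda_X$ and $\Lambda_{p_*(X,\omega)}$, asserting that ``the parities match'' because $\rank\Omega^1_{\T^*(B/S)/S,x}$ is even. That parity claim is false, and the quantity $\rank\Omega^1_{\T^*(B/S)/S,x}$ is not what controls the sign in any case. Concretely, take $U=\bA^2_{x,y}\to B=\bA^1_x\to S=\pt$, $f=xy$, and the point $x=y=0$: then $\Crit_{U/B}(f)=\{x=0\}$ has $\rank\Omega^1_{\Crit_{U/B}(f)/B}=1$ there, while $\Crit_{U/S}(f)=\{0\}$ has $\rank\Omega^1_{\Crit_{U/S}(f)/S}=0$ --- opposite parities. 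So your two $\Lambda$ sign discrepancies do not cancel on their own; they have to be offset against the extra $(-1)^{\dim(L/X)}$ appearing in \cref{lm:kappaLagrangiancorrespondence}, and a direct pointwise bookkeeping would have to track all of these. The paper avoids this entirely: using (2), (3), \cref{prop:canonicalpushforward}(2)--(3), and the relative Darboux theorem \cite[Corollary 4.2.2]{ParkSymplectic}, it reduces first to $B,S$ classical affine and $X=\R\Crit_{U/B}(f)$, and then (by functoriality of $\Sigma^{\mathrm{der}}_p$) to $X=B=U$. There $K_{X/B}=\cO_U$, the $\Lambda_X$ sign is trivial, and both $\Sigma^{\mathrm{der}}_p$ and $\Sigma_{p^{\cl}}$ become the tautological identifications $K_{\R\Crit_{U/S}(f)/S}\cong K_{U/S}^{\otimes2}$ and $K^{\vir}_{\Crit_{U/S}(f)/S}\cong K_{U/S}^{\otimes2}$, which agree with $\Lambda_{\R\Crit_{U/S}(f)}$ by its very construction. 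You should adopt that reduction rather than attempt a global sign computation.
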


\begin{proof}
    We first construct the isomorphism \eqref{eq-symp-push-can}.
    Recall from \cite[Proposition 2.3.1]{ParkSymplectic} that the stack $p_* (X, \omega)$ fits into the Cartesian diagram
\[\begin{tikzcd}
	{p_* (X, \omega)} & X \\
	{B} & {\T^*(B / S).}
	\arrow[from=1-1, to=1-2]
	\arrow[from=1-1, to=2-1]
	\arrow[from=1-2, to=2-2]
	\arrow["0", from=2-1, to=2-2]
\end{tikzcd}\]
    In particular, there is a canonical isomorphism
    \[
    \mathbb{L}_{p_* (X, \omega) / X} \cong \mathbb{L}_{B / \T^*{(B / S)}} |_{p_* (X, \omega)} \cong \mathbb{L}_{B / S}^{\vee}[1] |_{p_* (X, \omega)}.
    \]
    Therefore we obtain a fiber sequence
    \[
    \Delta \colon  \mathbb{L}_{X / S} |_{p_* (X, \omega)} \to 
    \mathbb{L}_{p_* (X, \omega) / S} \to 
    \mathbb{L}_{B / S}^{\vee}[1] |_{p_* (X, \omega)}
    \]
    which induces an isomorphism
    \begin{align}\label{eq-det-pushforward-1}
       \begin{aligned}
    K_{p_* (X, \omega) / S}
    &\xrightarrow[\cong]{i(\Delta)}
    K_{X / S} |_{p_* (X, \omega)} \otimes
    \det(\mathbb{L}_{B / S}^{\vee}[1]) |_{p_* (X, \omega)} \\
    &\xrightarrow[\cong]{\id \otimes \theta_{\mathbb{L}_{B / S}^{\vee}}}
    K_{X / S} |_{p_* (X, \omega)} \otimes \det(\mathbb{L}_{B / S}^{\vee})^{\vee} |_{p_* (X, \omega)} \\
    &\xrightarrow[\cong]{\id \otimes \iota_{\mathbb{L}_{B / S}}}
    K_{X / S} |_{p_* (X, \omega)} \otimes K_{B / S} |_{p_* (X, \omega)}.
    \end{aligned}
        \end{align}
    Now consider the fiber sequence
    \[
    \Delta' \colon \mathbb{L}_{B / S} |_{X} \to \mathbb{L}_{X / S} \to \mathbb{L}_{X / B}.
    \]
    It induces an isomorphism
    \begin{equation}\label{eq-det-pushforward-2}
        i(\Delta) \colon K_{X / S} \cong K_{B / S} |_X \otimes K_{X / B}. 
    \end{equation}
    Combining \eqref{eq-det-pushforward-1} and \eqref{eq-det-pushforward-2},
    we obtain the desired isomorphism.

    The property (\ref{item-push-canonical-functorial}) is obvious from the construction.
    The property (\ref{item-push-canonical-base-change}) is obvious when the right square in the diagram \eqref{eq-symp-push-canonical-base-change-diagram} is Cartesian.
    Therefore it is enough to prove the case when $p$ and $a_2$ are identity maps.
    Since we have $p'_*(X \times_{B} B', \omega |_{X \times_{B} B'}) = a_{1, *} (X \times_{B} B', \omega |_{X \times_{B} B'}) = (X, \omega) \times \T^*[-1](B / S)$, we may further assume that $X = B$.
    In this case, the correspondence \eqref{eq-Lag-corresp-in-push-canonical} is identified with the following Lagrangian correspondence
    \[
    \begin{tikzcd}
        	& {B'} \\
    	  {\T^*[-1](B' / B)} &&  {B}
    	\arrow["0"', from=1-2, to=2-1]
    	\arrow["a_1", from=1-2, to=2-3]
    \end{tikzcd}
    \]
    given by the zero section.
    The map $\mathbb{L}^{\vee}_{B' / B}[1] \cong \mathbb{L}_{B' / \T^*[-1](B' / B)}[-1] $ induced from the Lagrangian correspondence is equivalent to the natural map.
    In particular, the isomorphism $\Upsilon^{\mathrm{der}}_{(a_1, 0)}$ is identified with the following isomorphism
    \begin{align*}
    K_{\T^*[-1](B' / B) /B} |_{B'}  
    \cong K_{B' / B} \otimes K_{B' / \T^*[-1](B' / B) }^{\vee} 
    &\cong K_{B' / B} \otimes \det(\mathbb{L}_{B' / \T^*[-1](B' / B)}[-1]) \\
    \cong &K_{B' / B} \otimes \det(\mathbb{L}_{B' / B}^{\vee}[1]) \cong K_{B' / B}^{\otimes 2}. 
    \end{align*}
    On the other hand, $\Sigma^{\mathrm{der}}_{p'}$ is identified with the isomorphism 
    \[
    K_{B' / B}^{\otimes 2} 
    \cong K_{B' / B} \otimes K_{\T^*[-1](B' / B) / B'} |_{B'}  \cong K_{\T^*[-1](B' / B) /B'} |_{B'},
    \]
    where the first isomorphism is induced by the isomorphism 
    $\mathbb{L}_{\T^*[-1](B' / B) / B'} |_{B'} \cong \mathbb{L}_{B' / \T^*(B' / B)} \cong \mathbb{L}_{B' / B}^{\vee}[1]$.
    Therefore we obtain the commutativity of the diagram \eqref{eq-symp-push-canonical-base-change-diagram}.
    
    The property (\ref{item-push-canonical-smooth-pullback}) is obvious from the construction.
    To prove the property (\ref{item-push-canonical-d-critical-comparison}), using (\ref{item-push-canonical-base-change}) and \cref{prop:canonicalpushforward}(2), we may assume that $B$ and $S$ are classical affine schemes. Further, using  (\ref{item-push-canonical-smooth-pullback}), \cref{prop:canonicalpushforward}(3) and \cite[Corollary 4.2.2]{ParkSymplectic}, 
    we may assume that there exists a LG pair $(U, f)$ over $B$ and $X = \R \Crit_{U / B}(f)$. Using the functoriality of the isomorphism $\Sigma^{\mathrm{der}}_{p}$, we may assume $X = B = U$ and the relative exact $(-1)$-shifted symplectic structure is induced from $f$.
    In this case, we have $p_* (X, \omega) = \R \Crit_{U / S}(f)$ and the isomorphism $\Sigma^{\mathrm{der}}_p$ is identified with the isomorphism
    \[
    K_{\R \Crit_{U / S}(f) / S} \cong K_{U / S} |_{\R \Crit_{U / S}(f)} \otimes K_{\R \Crit_{U / S}(f) / U}  \cong K_{U / S}^{\otimes 2}|_{\R \Crit_{U / S}(f)}
    \]
    where the latter isomorphism is induced from the isomorphism $\mathbb{L}_{\R \Crit_{U / S}(f) / U} \cong \mathbb{L}_{U / \T^*{(U / S)}} |_{\R \Crit_{U / S}(f)} \cong \mathbb{L}^{\vee}_{U / S}[-1]$.
    On the other hand, $\Sigma_{p^{\cl}}$ is given by the natural isomorphism $K_{\Crit_{U / S}(f) / S}^{\vir} \cong K_{U / S}^{\otimes 2} |_{\Crit_{U / S}(f)^{\red}}$.
    By the construction of the map $\Lambda_{\R \Crit_{U / S}(f)}$, we obtain the desired claim.  
\end{proof}

\begin{definition}
Let $\pi\colon X\rightarrow B$ be an lfp geometric morphism of derived stacks equipped with a relative exact $(-1)$-shifted symplectic structure $\omega$. An \defterm{orientation} of $(\pi\colon X\rightarrow B, \omega)$ is a pair $(\cL, o)$ consisting of a graded line bundle $\cL$ on $X$ together with an isomorphism $o\colon \cL^{\otimes 2}\xrightarrow{\sim} K_{X/B} = \det(\bL_{X/B})$.
\end{definition}

By \cref{thm:shiftedsymplecticdcritical} the induced morphism on classical truncations $\pi^\cl \colon X^\cl \to B^\cl$ inherits a canonical relative d-critical structure,
and \cref{prop:symplecticorientations} implies that an orientation $o$ of the relative exact $(-1)$-shifted symplectic structure on $X\rightarrow B$ naturally induces an orientation $o^{\cl}$ of the relative d-critical structure on $X^{\cl} \rightarrow B^{\cl}$.

In the setting of \cref{prop-symp-push-ori}, suppose that we are given an orientation $(\mathcal{L}, o)$ for $(X \to B, \omega)$. Then we define an orientation $p_*o$ on $p_*(X, \omega) \to S$ as the composite
\[
p_*o \colon (\mathcal{L}|_{p_*(X, \omega)} \otimes K_{B / S}|_{p_*(X, \omega)})^{\otimes 2}  \xrightarrow[\sim]{o \otimes \id} K_{X / B} |_{p_*(X, \omega)} \otimes K_{B /S}^{\otimes 2} |_{p_*(X, \omega)}\xrightarrow[\sim]{\Sigma_{p}} K_{p_*(X, \omega) / S}.
\]
As a special case, when $X = B$ and $\omega$ is induced from a function $f$ on $X$,
we equip $\pi \colon X \xrightarrow{=} X$ with the obvious orientation $o_{X /X}^{\mathrm{can}} \colon \mathcal{O}_X^{\otimes 2} \cong K_{X / X}$ and define the \defterm{canonical orientation} $o^{\can}_{\R \Crit_{X / B}(f) / B} \coloneqq p_* o_{X /X}^{\mathrm{can}}$ on $\R \Crit(f) = p_* (X, \omega) \to B$. As a special case when $f = 0$, we obtain a canonical orientation on $o^{\can}_{\T^*[-1](X / B) /B}$ on the $(-1)$-shifted cotangent stack $\T^*[-1](X / B) \to B$.
When $X$ is smooth over $B$, it is clear from the construction that the canonical orientation of the derived critical locus is compatible with that canonical orientation of the classical critical locus with the natural d-critical structure. Namely, there exists natural isomorphism
\[
(o^{\can}_{\R \Crit_{X / B}(f) / B})^{\cl} \cong o^{\can}_{\Crit_{X^{\cl} / B^{\cl}}(f) / B^{\cl}}.
\]

\subsection{Perverse pullbacks}

It is useful to express perverse pullbacks constructed in \cref{thm:micropullstk} in the language of shifted symplectic geometry. For this, let $R$ be a commutative ring and for a derived stack $X$ define $\Shv(X; R) = \Shv(X^{\cl}; R)$ and similarly for $\bD(-), \Perv(-)$.
Let $X\rightarrow B$ denote an lfp morphism between derived Artin stacks equipped with a relative exact $(-1)$-shifted symplectic structure $\omega \in \cA^{2, \ex}(X/B, -1)$ and an orientation $o$.
Then \cref{prop:symplecticorientations} implies that the morphism induced on the classical truncations $X^{\cl} \to B^{\cl}$ is naturally equipped with an relative exact d-critical structure, and inherits an orientation $o^{\cl}$.
Therefore, by \cref{thm:micropullstk} we obtain a perverse pullback functor
\[\pi^\varphi\colon \Perv(B) \to \Perv(X).\]
When $R$ is a field, it extends to a functor $\pi^\varphi \colon \bD(B) \to \bD(X)$.
The perverse pullback functor for morphisms with oriented relative exact $(-1)$-shifted symplectic structures satisfies the following properties, as a direct consequence of the corresponding properties in the d-critical setting (\cref{thm:micropullstk}):
\begin{enumerate}
    \item Assume that we are given a finite morphism $c \colon \tilde{B} \to B$ and form the Cartesian diagram 
    \[
    \begin{tikzcd}
        {\tilde{X}} & X \\
        {\tilde{B}} & {B.}
        \arrow["{\tilde{c}}", from=1-1, to=1-2]
        \arrow["{\tilde{\pi}}", from=1-1, to=2-1]
        \arrow["\pi", from=1-2, to=2-2]
        \arrow["c", from=2-1, to=2-2]
    \end{tikzcd}
    \]
    Equip $\tilde{\pi} \colon \tilde{X} \to \tilde{B}$ with the pullback relative exact $(-1)$-shifted symplectic structure and orientation.
    Then there exists a natural isomorphism  
    \begin{equation}\label{eq-perverse-pullback-symplectic-finite-base-change-compatibility}
        \beta_{c} \colon \pi^{\varphi} c_* \xrightarrow[]{\sim} \tilde{c}_* \tilde{\pi}^{\varphi}.
    \end{equation}
    
    \item Assume that we are given a smooth morphism $p \colon B \to S$. 
    Let $p_*(X, \omega)$ be the symplectic pushforward which fits into a commutative diagram
    \[\begin{tikzcd}
    	{p_*(X, \omega)} & X \\
    	S & {B.}
    	\arrow["i", from=1-1, to=1-2]
    	\arrow["{\bar{\pi}}", from=1-1, to=2-1]
    	\arrow["\pi", from=1-2, to=2-2]
    	\arrow["p", from=2-2, to=2-1]
    \end{tikzcd}\]
    Equip $p_*(X, \omega)$ with the pushforward orientation $p_* o$. Then there exists a natural isomorphism
    \begin{equation}\label{eq-symp-push-perverse-pullback-smooth}
    \gamma_{p} \colon \pi^{\varphi} p^{\dagger} \xrightarrow[]{\sim} i_* \bar{\pi}^{\varphi}.
    \end{equation}
    \item Let $\bar{o}$ be the orientation for $(X \to B, - \omega)$ defined in a similar manner as $\eqref{eq:orientationreversedcritical}$.
    We let $\pi^{\varphi, -}$ denote the perverse pullback for $(X \to B, - \omega)$ with respect to $\bar{o}$. Then there is a natural isomorphism
    \[
     \delta \colon \pi^{\varphi}\mathbb{D} \xrightarrow[]{\sim} \mathbb{D} \pi^{\varphi, -}.
    \]
    \item Assume that we are given lfp morphisms between derived Artin stacks $\pi_i \colon X_i \to B_i$ for $i = 1, 2$ equipped with relative exact $(-1)$-shifted symplectic structures $\omega_i \in \mathcal{A}^{2, \ex}(X_i/B_i, -1)$ and orientations $o_i$. Equip $\pi_1 \times \pi_2 \colon X_1 \times X_2 \to B_1 \times B_2$ with the relative exact $(-1)$-shifted symplectic structure $\omega_1 \boxplus \omega_2$ and orientation $o_1 \boxtimes o_2$.
    Then there exists a natural isomorphism
    \[
    \TS \colon \pi_1^{\varphi} (-) \boxtimes \pi_2^{\varphi} (-) \xrightarrow[]{\sim} (\pi_1 \times \pi_2)^{\varphi}(- \boxtimes -).
    \]
\end{enumerate}

The isomorphism $\alpha$ constructed for perverse pullbacks along morphisms equipped with a relative d-critical structure has the following meaning in terms of shifted symplectic structures.

\begin{proposition}\label{prop:Lagrangian functoriality of perverse pullback} 
    Let $\pi \colon X \to B$ be an lfp morphism of derived Artin stacks equipped with a relative exact $(-1)$-shifted symplectic structure and orientation.
    Let $p \colon B' \to B$ be a smooth geometric morphism, $\pi' \colon X' \to B'$ an lfp morphism of derived Artin stacks equipped with a $(-1)$-shifted symplectic structure,
    \[ X' \xleftarrow{q'} L \xrightarrow{\tilde{q}} X \times_B B' \]
    a Lagrangian correspondence over $B'$ with $\tilde{q}$ smooth, and $q \coloneqq \pr_1 \circ \tilde{q} \colon L \to X$ the composite.
    Regard $\pi'$ with the induced orientation \eqref{eq-Upsilon-derived}.
    Then there is a natural isomorphism
    \begin{equation}\label{eq-Lagrangian functoriality of perverse pullback}
    \alpha_{p, (q, q')} \colon (\pi')^\varphi p^{\dagger}
    \xrightarrow{\sim} q'_{*} q^\dag \pi^\varphi
    \end{equation}
    of functors $\Perv(B) \to \Perv(X')$.
\end{proposition}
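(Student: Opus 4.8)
The plan is to reduce the statement entirely to the d-critical setting and then to concatenate natural isomorphisms that are already available; no new sheaf-theoretic input is required, only bookkeeping on the symplectic side. Everything happens on classical truncations, where $\Perv$ lives.

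First I would set up the classical data. By \cref{thm:shiftedsymplecticdcritical} and \cref{prop:symplecticorientations}, the morphisms $X^\cl\to B^\cl$ and $X'^\cl\to B'^\cl$ carry canonical oriented relative d-critical structures $(s,o^\cl)$ and $(s',o'^\cl)$, and the base change $Z\coloneqq X\times_B B'$ inherits the base-changed oriented relative d-critical structure via \cref{cor:dcritbasechange}. Since $\tilde q$ is smooth, $\bL_{L/X'}$ is $2$-connective (as recorded before \cref{lm:kappaLagrangiancorrespondence}); combined with the lemma on $2$-connectivity of the cotangent complex of classical truncations this forces $q'^\cl\colon L^\cl\to X'^\cl$ to be an isomorphism -- this is the one place the smoothness of $\tilde q$ is crucially used -- while $q^\cl=\pr_1^\cl\circ\tilde q^\cl$ and $\tilde q^\cl$ are smooth. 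The geometric heart of the matter is then the following compatibility: the exact Lagrangian structure on the correspondence identifies the oriented relative d-critical structure obtained on $L^\cl$ by transporting $(s',o'^\cl)$ along the isomorphism $q'^\cl$ with the one obtained by pulling $(s,o^\cl)$ back along the smooth map $q^\cl$ (equivalently, along $\tilde q^\cl$ followed by the base-change projection $Z^\cl\to X^\cl$). On the level of underlying sections of $\cS_{L^\cl/B'^\cl}$ this is immediate from the definition of an exact Lagrangian structure, which supplies a homotopy between $\tilde q^\ast(p^\ast\omega)$ and $q'^\ast\omega'$ as relative exact $(-1)$-forms and hence an equality after passing to $\pi_0$; for the orientations one uses that $o'$ was by hypothesis defined from $o$ through $\Upsilon^{\der}_{(\tilde q,q')}$ of \eqref{eq-Upsilon-derived}, which is precisely the derived counterpart of the classical pullback orientation, so the claim follows by combining \cref{prop:symplecticorientations}, \cref{lm:kappaLagrangiancorrespondence} and \cref{prop:virtualcanonicalstacks}(4).

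With this in place, $\alpha_{p,(q,q')}$ is assembled from isomorphisms already constructed. Write $\pi_Z\colon Z^\cl\to B'^\cl$ and $\pi'_L\colon L^\cl\to B'^\cl$. I would use: the natural isomorphism $\alpha_{q'^\cl}$ of \cref{prop:perversesmoothcompatibility} (applied to the isomorphism $q'^\cl$, which is in particular smooth) to get $q'^{\cl,\ast}(\pi')^\varphi\cong(\pi'_L)^\varphi$; the isomorphism $\alpha_{\tilde q^\cl}$ to get $(\pi'_L)^\varphi\cong \tilde q^{\cl,\dag}(\pi_Z)^\varphi$; the base-change isomorphism $\alpha_{p,\pr_1}$ of \cref{prop:perversesmoothbasechangecompatibility} for the Cartesian square $Z\to X$, $B'\to B$, giving $(\pi_Z)^\varphi p^\dag\cong \pr_1^{\cl,\dag}\pi^\varphi$; and the functoriality of $(-)^\dag$ along composites of smooth morphisms (\cref{prop:fundamentalclass}) to identify $\tilde q^{\cl,\dag}\pr_1^{\cl,\dag}\pi^\varphi$ with $q^\dag\pi^\varphi$. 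Here the compatibility of Step 2 is exactly what is needed for $(\pi'_L)^\varphi$ to be unambiguously defined -- its d-critical structure computed through $q'^\cl$ agrees with the one computed through $\tilde q^\cl$ -- and the coherence of the $\alpha$'s (the compatibility relations of \cref{prop:perversesmoothbasechangecompatibility}(2)) makes the concatenation well defined. This produces a natural isomorphism $q'^{\cl,\ast}\bigl((\pi')^\varphi p^\dag\bigr)\cong q^\dag\pi^\varphi$; applying the inverse of $q'^\cl$ (i.e.\ $(q'^\cl)_\ast$) yields $\alpha_{p,(q,q')}\colon(\pi')^\varphi p^\dag\xrightarrow{\sim}q'_\ast q^\dag\pi^\varphi$ of functors $\Perv(B)\to\Perv(X')$. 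Naturality is clear from the construction, and when $R$ is a field the isomorphism extends to $\bD(B)\to\bD(X')$ by \cref{cor:extendfromPerv}, exactly as in \cref{thm:micropullstk}; one should also record that this isomorphism reduces to the already-established $\alpha_{p,\overline p}$ in the special case of the trivial correspondence $L=Z$, $q'=\id$.

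The main obstacle is the compatibility of Step 2: making precise that an exact Lagrangian structure on the correspondence induces an \emph{equality} of oriented relative d-critical structures on $L^\cl$ through its two legs, with the correct matching of orientations and of signs. This requires unwinding the definition of relative exact Lagrangian structures from \cite{ParkSymplectic}, the construction of the induced d-critical structure in \cref{thm:shiftedsymplecticdcritical} and the comparison $\Lambda$ of \cref{prop:symplecticorientations}, and then chasing the compatibility between the derived determinant-line identification $\Upsilon^{\der}$ and its classical analogue $\Upsilon$ through the maps $\kappa^{\der}$ and $\kappa$ -- essentially combining \cref{lm:kappaLagrangiancorrespondence} with \cref{prop:virtualcanonicalstacks}(4) -- while keeping track of the parity/shift discrepancy governed by $\dim\tilde q$, which is absorbed by the shift built into $q^\dag$. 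A secondary point requiring care is the verification that $q'^\cl$ is an isomorphism; in a formulation where $q'^\cl$ is only étale one would instead glue by étale descent, using the local structure of exact Lagrangian correspondences together with the functoriality of the stabilization isomorphisms (\cref{thm:nonlinearstabilization}(6),(7)) and of the vanishing cycles, in the style of the proof of \cref{prop:perversesmoothpushforwardcompatibility}.
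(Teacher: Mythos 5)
Your proposal is correct and takes essentially the same route as the paper: reduce to the d-critical setting on classical truncations, observe that the $2$-connectivity of $\bL_{L/X'}$ forces $q'^{\cl}$ to be an isomorphism while $q^{\cl}$ is smooth, check that the oriented relative d-critical structure on $L^{\cl}$ pulled back from $X'$ agrees with the one pulled back from $X$ (using the exact Lagrangian structure for the $\cS$-section and the $\Upsilon^{\der}$/$\kappa$ compatibilities for the orientations), and then apply the smooth-pullback compatibility $\alpha$ of \cref{thm:micropullstk}(1). The only cosmetic difference is that you assemble $\alpha_{p,(q,q')}$ by concatenating $\alpha_{q'^{\cl}}$, $\alpha_{\tilde{q}^{\cl}}$ and $\alpha_{p,\pr_1}$, whereas the paper applies \cref{thm:micropullstk}(1) once to the composite smooth morphism $f=q^{\cl}\circ(q'^{\cl})^{-1}$; these are the same by the already-established composability of $\alpha$.
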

\begin{proof}
    On classical truncations, $q$ induces a smooth morphism $f \colon (X')^{\cl} \cong L^\cl \to (X \times_B B')^{\cl} \to X^{\cl}$.
    Consider the relative d-critical structures $s$ and $s'$ on $\pi^{\cl} \colon X^{\cl} \to B^\cl$ and $\pi'^{\cl} \colon (X')^{\cl} \to (B')^\cl$ induced by the relative $(-1)$-shifted symplectic structures on $\pi$ and $\bar{\pi}$ (see \cref{thm:shiftedsymplecticdcritical}).
    Using the given Lagrangian correspondence, we obtain $f^\ast(s_2) = s_1$.
    Applying \cref{thm:micropullstk}(1) thus yields the isomorphism $(\pi'^{\cl})^\varphi p^{\dagger} \cong f^\dag (\pi^{\cl})^\varphi$. Translating from the language of relative d-critical structures to the language of relative exact $(-1)$-shifted symplectic structures, this becomes the isomorphism asserted.
\end{proof}

\begin{remark}
    When $p = \id$ (resp. $(q, q') = (\id, \id)$), we will write
    $\alpha_{(q, q')} \coloneqq \alpha_{p, (q, q')}$ (resp. $\alpha_{p} \coloneqq \alpha_{p, (q, q')}$).
\end{remark}


While we have defined d-critical pushforwards only along smooth morphisms, symplectic pushforwards are defined along arbitrary lfp geometric morphisms. We have the following additional compatibility between perverse pullbacks and symplectic pushforwards with respect to closed immersions.

\begin{proposition}\label{prop:symp push compatibility}
Suppose given lfp morphisms of derived Artin stacks  $X\xrightarrow{\pi}B\xrightarrow{p} S$, where $p$ is a closed immersion and $\pi$ is equipped with a relative exact $(-1)$-shifted symplectic structure $\omega\in\cA^{2, \ex}(X/B, -1)$.
Let $\overline{\pi} \colon  p_*(X, \omega) \to S$ be the symplectic pushforward, which fits into a commutative diagram
\[
\xymatrix{
p_*(X, \omega) \ar^-{r}[r] \ar^{\overline{\pi}}[d] & X \ar^{\pi}[d] \\
S & B. \ar_{p}[l]
}
\]
Assume further that $\pi \colon X \to B$ is equipped with an orientation $o$, and equip $\bar{\pi} \colon p_*(X, \omega) \to S$ with the orientation $p_* o$.
Then $r$ is smooth, and there is a canonical isomorphism
\begin{equation}\label{eq-symp-push-perverse-pullback}
\varepsilon_p\colon \overline{\pi}^\varphi p_* \xrightarrow{\sim} r^\dag \pi^\varphi,
\end{equation}
functorial for compositions in $p$.
Moreover, we have the following compatibilities between $\varepsilon_p$ and the isomorphisms $\alpha$, $\beta$, $\gamma$, $\delta$, and $\TS$ of \cref{thm:micropullstk}:
\begin{enumerate}
\item \label{item:symp-push-commutes-smooth-descent}
Let $h \colon S' \to S$ be a smooth morphism and set $B' \coloneqq B \times_{S} S'$ with projections $h' \colon B' \to B$ and $p' \colon B' \to S'$.
Let $\pi' \colon X' \to B'$ be an lfp geometric morphism equipped with a relative exact $(-1)$-shifted symplectic structure $\omega'$, and
\[ X' \xleftarrow{q'} L \xrightarrow{\tilde{q}} X \times_B B' \]
a Lagrangian correspondence over $B'$ with $\tilde{q}$ smooth.
Form the symplectic pushforward $S' \xleftarrow{\bar{\pi}'} p'_*(X',\omega') \xrightarrow{r'} X'$ and consider the induced Lagrangian correspondence
\[
  p'_*(X',\omega') \xleftarrow{\bar{q}'} \bar{L} \xrightarrow{\bar{\tilde{q}}} p'_*(X \times_B B', h^*\omega) \cong p_*(X,\omega) \times_S S'.
\]
Then $\bar{\tilde{q}}$ is smooth, and the following diagram commutes:
\[\begin{tikzcd}
    {(\bar{\pi}')^{\varphi} p'_* (h')^{\dagger}} & {(\bar{\pi}')^{\varphi}h^{\dagger}p_*} & {\bar{q}'_* \bar{q}^{\dagger} \bar{\pi}^{\varphi}p_*} \\
    {(r')^{\dagger}(\pi')^{\varphi}  (h')^{\dagger}} & {(r')^{\dagger} q'_* q^{\dagger} \pi^{\varphi} } & {\bar{q}'_* \bar{q}^{\dagger}r^{\dagger} \pi^{\varphi},}
    \arrow["{{\varepsilon_{p'}}}"', from=1-1, to=2-1]
    \arrow["{{\Ex^!_*}}"', "\sim", from=1-2, to=1-1]
    \arrow["{{\alpha_{h, (\bar{q}, \bar{q}')}}}", from=1-2, to=1-3]
    \arrow["{{\varepsilon_p}}", from=1-3, to=2-3]
    \arrow["{{\alpha_{h', (q, q')}}}"', from=2-1, to=2-2]
    \arrow["{{\Ex^!_*}}", "\sim"', from=2-3, to=2-2]
\end{tikzcd}\]
where $q \coloneqq \pr_1 \circ \tilde{q} \colon L \to X$, $\bar{q} \coloneqq \pr_1 \circ \tilde{\bar{q}} \colon \bar{L} \to p_*(X,\omega)$, and $\alpha$ is as in \cref{prop:Lagrangian functoriality of perverse pullback}.

\item \label{item-symp-finite-push-base-change}
Assume that we are given a finite morphism from a derived Artin stack $c_2 \colon S' \to S$, $c_1 \colon B' \to B$, $p' \colon S' \to B'$ and form the following commutative diagram:
\[\begin{tikzcd}
    {X' \times_{\T^*(B/S) } B} \\
    {p_*(X, \omega) \times_{S} S'} & {p'_*(X', \omega')} && {X'} \\
    {p_*(X, \omega)} && X \\
    & {S'} && {B'} \\
    S && B
    \arrow["i"', from=1-1, to=2-1]
    \arrow["h", from=1-1, to=2-2]
    \arrow["{c_2'}"', from=2-1, to=3-1]
    \arrow["{\bar{\pi} \times_{S} \id_{S'}}"{pos=0.3}, from=2-1, to=4-2]
    \arrow["{{r'}}"{pos=0.7}, from=2-2, to=2-4]
    \arrow["{{{\bar{\pi}'}}}"'{pos=0.2}, from=2-2, to=4-2]
    \arrow["q"', from=2-4, to=3-3]
    \arrow["{{c_2}}", from=4-2, to=5-1]
    \arrow["{{{p'}}}"'{pos=0.2}, from=4-4, to=4-2]
    \arrow["{{c_1}}", from=4-4, to=5-3]
    \arrow["p"'{pos=0.2}, from=5-3, to=5-1]
    \arrow["{{{\pi'}}}"'{pos=0.2}, from=2-4, to=4-4]
    \arrow["r"{pos=0.8}, from=3-1, to=3-3, crossing over]
    \arrow["{{{\bar{\pi}}}}"'{pos=0.2}, crossing over,  from=3-1, to=5-1]
    \arrow["\pi"'{pos=0.2}, crossing over, from=3-3, to=5-3]
\end{tikzcd}\]
Here, the right square is Cartesian, $\omega' \in \cA^{2, \ex}(X'/B', -1)$ is the restriction of $\omega$, the front and back squares are symplectic pushforward squares,  $c_2'$ is the base change of $c_2$, and $i$ and $h$ are the natural morphisms. 
Then $h$ is smooth, $i$ induces an isomorphism on the classical truncation and the following diagram commutes:
\[\begin{tikzcd}
    {\bar{\pi}^{\varphi}  p_* c_{1, *}} & {\bar{\pi}^{\varphi}  c_{2, *} p'_*} & {c_{2, *}' (\bar{\pi} \times_S \id_{S'})^{\varphi} p'_*} & {c_{2, *}' i_* h^{\dagger} (\bar{\pi}')^{\varphi} p'_*} \\
    {r^{\dagger} \pi^{\varphi} c_{1, *}} & {r^{\dagger}  q_* (\pi')^{\varphi} } && {c_{2, *}' i_* h^{\dagger} (r')^{\dagger} (\pi')^{\varphi}.}
    \arrow["\sim", from=1-1, to=1-2]
    \arrow["{{{{\varepsilon_p}}}}"', from=1-1, to=2-1]
    \arrow["{{{{\beta_{c_2}}}}}", from=1-2, to=1-3]
    \arrow["{\alpha_{(h, i)}}", from=1-3, to=1-4]
    \arrow["{{{{\varepsilon_{p'}}}}}"', from=1-4, to=2-4]
    \arrow["{{{{\beta_{c_1}}}}}"', from=2-1, to=2-2]
    \arrow["{{\Ex^!_*}}"', from=2-2, to=2-4]
\end{tikzcd}\]

\item \label{item-push-finite-smooth}
Assume that $p$ fits in the following Cartesian diagram:
\[\begin{tikzcd}
    R & {S'} \\
    S & {B}
    \arrow["{\bar{p}}", from=1-2, to=1-1]
    \arrow["{\bar{q}}"', from=2-1, to=1-1]
    \arrow["q"', from=2-2, to=1-2]
    \arrow["p"', from=2-2, to=2-1]
\end{tikzcd}\]
where $\bar{p}$ is a closed immersion and $\bar{q}$ is smooth. Form the following commutative diagram:
\[\begin{tikzcd}
    & {\bar{p}_*q_*(X, \omega)} && {q_*(X, \omega)} \\
    {p_*(X, \omega)} && X \\
    & R && {S'} \\
    S && B.
    \arrow["{\bar{r}}"{pos=0.7}, from=1-2, to=1-4]
    \arrow["{{{\bar{\pi}'}}}"'{pos=0.2}, from=1-2, to=3-2]
    \arrow["{{{\pi'}}}"'{pos=0.2}, from=1-4, to=3-4]
    \arrow["{\bar{s}}"', from=1-2, to=2-1]
    \arrow["{{{\bar{\pi}}}}"'{pos=0.2}, from=2-1, to=4-1]
    \arrow["s"', from=1-4, to=2-3]
    \arrow["{\bar{p}}"'{pos=0.2}, from=3-4, to=3-2]
    \arrow["{\bar{q}}"', from=4-1, to=3-2]
    \arrow["q"', from=4-3, to=3-4]
    \arrow["p"'{pos=0.2}, from=4-3, to=4-1]
    \arrow["{{{r}}}"{pos=0.8}, crossing over, from=2-1, to=2-3]
    \arrow["\pi"'{pos=0.2}, crossing over, from=2-3, to=4-3]
\end{tikzcd}\]
Here, the top and bottom squares are Cartesian and the other four squares are symplectic pushforward squares.
Then the following diagram commutes:
\[\begin{tikzcd}
    { \bar{\pi}^{\varphi} p_* q^{\dagger}} & { \bar{\pi}^{\varphi} \bar{q}^{\dagger} \bar{p}_*} & { \bar{s}_*  (\bar{\pi}')^{\varphi} \bar{p}_*} \\
    {r^{\dagger} \pi^{\varphi} q^{\dagger}} & {r^{\dagger} s_* (\pi')^{\varphi}} & { \bar{s}_* \bar{r}^{\dagger} (\pi')^{\varphi}.}
    \arrow["{{{\varepsilon_{p}}}}"', from=1-1, to=2-1]
    \arrow["{\Ex^!_*}"', from=1-2, to=1-1]
    \arrow["{{\gamma_{\bar{q}}}}", from=1-2, to=1-3]
    \arrow["{{{\varepsilon_{\bar{p}}}}}", from=1-3, to=2-3]
    \arrow["{{\gamma_q}}"', from=2-1, to=2-2]
    \arrow["{\Ex^!_*}"', from=2-2, to=2-3]
\end{tikzcd}\]     

\item \label{item-symp-push-perverse-pullback-dual}
We let $\pi^{\varphi, -}$ denote the perverse pullback for $(X\to B, - \omega)$ with respect to the orientation $\bar{o}$ and define $\bar{\pi}^{\varphi, -}$ in a similar manner. Then the following diagram commutes:
\[\begin{tikzcd}
    {r^{\dagger} \pi^{\varphi} \mathbb{D}} & {r^{\dagger}  \mathbb{D} \pi^{\varphi, -}} & {\mathbb{D} r^{\dagger}  \pi^{\varphi, -}} \\
    { \bar{\pi}^{\varphi}p_* \mathbb{D}} & { \bar{\pi}^{\varphi}  \mathbb{D} p_*} & { \mathbb{D} \bar{\pi}^{\varphi, -}  p_*.}
    \arrow["\delta", from=1-1, to=1-2]
    \arrow["{{\varepsilon_p}}"',  from=1-1, to=2-1]
    \arrow["{{\Ex^{!, \mathbb{D}}}}", "\sim"', from=1-2, to=1-3]
    \arrow["{{\Ex_{*, \mathbb{D}}}}"', "\sim", from=2-1, to=2-2]
    \arrow["\delta"', from=2-2, to=2-3]
    \arrow["{{\mathbb{D}\varepsilon_{p}}}"', from=2-3, to=1-3]
\end{tikzcd}\]

\item \label{item-symp-push-perverse-pullback-product}
Let $X_i \xrightarrow[]{\pi_i} B_i \xrightarrow[]{p_i} S_i$ be lfp morphisms between derived Artin stacks for $i = 1, 2$, where $\pi_i$ is equipped with a relative exact $(-1)$-shifted symplectic structure $\omega_i \in \mathcal{A}^{2, \ex}(X_i / B_i, -1)$ and an orientations $o_i$.
We let $\bar{\pi}_i \colon p_{i, *} (X_i, \omega_i) \to S_i$ be the symplectic pushforward and $r_i \colon p_{i, *}(X_i, \omega_i) \to X_i$ be the natural map. 
Equip $\pi_1 \times \pi_2$, $\bar{\pi}_1$, $\bar{\pi}_2$ and $\bar{\pi}_1 \times \bar{\pi}_2$ with orientations $o_1 \boxtimes o_2$, $p_{1, *} o_1$, $p_{2, *} o_2$ and $p_{1, *} o_1 \boxtimes p_{2, *} o_2$ respectively.
Then the following diagram commutes:
\[\begin{tikzcd}
    {\bar{\pi}_{1}^{\varphi} p_{1, *} (-) \boxtimes \bar{\pi}_{2}^{\varphi} p_{2, *} (-) } & {(\bar{\pi}_1 \times \bar{\pi}_2)^{\varphi}(p_{1, *}(-) \boxtimes  p_{2, *} (-)) } & {(\bar{\pi}_1 \times \bar{\pi}_2)^{\varphi} (p_1 \times p_2)_*((-) \boxtimes  (-)) } \\
    {r_1^{\dagger} \pi_{1}^{\varphi} (-) \boxtimes r_2^{\dagger} \pi_{2}^{\varphi} (-)} & {(r_1 \times r_2)^{\dagger} ( \pi_{1}^{\varphi} (-) \boxtimes  \pi_{2}^{\varphi} (-))} & {(r_1 \times r_2)^{\dagger} (\pi_1 \times \pi_2)^{\varphi} ((-) \boxtimes  (-)).}
    \arrow["\TS", from=1-1, to=1-2]
    \arrow["{\varepsilon_{p_1} \boxtimes \varepsilon_{p_2}}"', from=1-1, to=2-1]
    \arrow["\sim", from=1-2, to=1-3]
    \arrow["{\varepsilon_{p_1 \times p_2}}", from=1-3, to=2-3]
    \arrow["\sim"', from=2-1, to=2-2]
    \arrow["\TS"', from=2-2, to=2-3]
\end{tikzcd}\]        
\end{enumerate}
\end{proposition}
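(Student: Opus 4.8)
The proof breaks into three tasks: (i) verify that $r$ is smooth (and the ancillary facts about $i$ and $h$ asserted in \eqref{item-symp-finite-push-base-change}); (ii) construct $\varepsilon_p$; (iii) establish functoriality in $p$ and the five compatibilities \eqref{item:symp-push-commutes-smooth-descent}--\eqref{item-symp-push-perverse-pullback-product}.

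First I would check smoothness of $r$. By \cite[Proposition~2.3.1]{ParkSymplectic}, recalled in the proof of \cref{prop-symp-push-ori}, the stack $p_*(X,\omega)$ sits in a Cartesian square exhibiting $r$ as the base change of the zero section $0\colon B\to\T^*(B/S)$ along the moment map $\mu_X\colon X\to\T^*(B/S)$. Since $p$ is a closed immersion, $\bL_{B/S}$ is the shift $N^\vee_{B/S}[1]$ of its (perfect) conormal complex, and the standard computation $\bL_{B/\T^*(B/S)}\cong\bL_{B/S}^\vee[1]\cong N_{B/S}$ shows that this is locally free in degree $0$; hence $0$, and therefore $r$, is smooth, of relative dimension $-\dim(B/S)$, so that $r^\dagger$ is defined and perverse $t$-exact. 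The same cotangent-complex bookkeeping, applied along the closed immersion legs in \eqref{item-symp-finite-push-base-change}, gives that the map $i$ there is an isomorphism on classical truncations and that $h$ is smooth. The overall strategy for $\varepsilon_p$ is the local-to-global mechanism already used for $\alpha$, $\beta$, $\gamma$ in \cref{thm:micropullsch,prop:perversesmoothpushforwardcompatibility}: reduce to derived critical loci locally, build the isomorphism on critical charts, and glue via \cref{cor:criticaldescent}/\cref{prop:connecteddescent} using \cref{thm:nonlinearstabilization}.

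For the construction, I would argue as follows. Using \cite[Corollary~4.2.2]{ParkSymplectic} one reduces, smooth-locally on $X$ over $B$ and Zariski-locally on $S$, to the case $X=\R\Crit_{U/B}(f)$ for an LG pair $(U,f)$ over $B$; then by functoriality of the symplectic pushforward $p_*(X,\omega)=\R\Crit_{U/S}(f)$, and after a stabilization as in \cref{prop:chartopenaffine} this is presented, up to a Lagrangian correspondence with smooth legs, as a genuine derived critical locus $\R\Crit_{W/S}(h)$ with $W\to S$ smooth; here $W$ has the shape $W_0\times_S\bA^c$ with $U\hookrightarrow W_0$ a closed immersion into a smooth $S$-scheme, and $h=\tilde f+\sum_i y_i\bar t_i$ is built from an extension $\tilde f$ of $f$ and local equations $\bar t_i$ of $B$ in $S$, so that $\Crit_{W/S}(h)\cong\Crit_{U/B}(f)\times_S\bA^c$ and the projection to $\Crit_{U/B}(f)$ is (after base change) the map $r$. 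On such a chart, $\varepsilon_p$ is the composite of the local models \eqref{eq:microlocalpullbacklocalmodel} for $\bar\pi^\varphi$ and for $\pi^\varphi$ with the vanishing-cycles identity $\phi_h(W\to S)^\dagger p_*(-)\cong r^\dagger\bigl(\phi_f(U\to B)^\dagger(-)\bigr)$: the left side is supported on $\Crit_{W/S}(h)=\Crit_{U/B}(f)\times\bA^c$ and there is computed from $\phi_f$ of the $x$-variables by \cref{prop:constructiblevanishingcycles}(1), and the matching uses the smooth base change of \cref{prop:constructiblevanishingcycles}(4) together with \cref{prop:Lagrangian functoriality of perverse pullback} to move across the smooth legs; the orientation bookkeeping is supplied by $\Sigma^{\mathrm{der}}_p$ from \cref{prop-symp-push-ori} via \cref{prop:symplecticorientations}. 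That these local isomorphisms glue is checked exactly as in \cref{prop:perversesmoothpushforwardcompatibility}: one compares two charts using the local model of their intersection from \cref{prop:stabilizationzigzag,prop:stabilizationzigzagsmoothmorphism} and invokes the compatibilities of the stabilization isomorphisms with smooth and finite base change and with $\Sigma^{\mathrm{der}}$ recorded in \cref{thm:nonlinearstabilization}. Functoriality of $\varepsilon_p$ in $p$ then follows from \cref{prop-symp-push-ori}\eqref{item-push-canonical-functorial} and functoriality of purity, checked on charts; each of \eqref{item:symp-push-commutes-smooth-descent}--\eqref{item-symp-push-perverse-pullback-product} is likewise verified after restriction to critical charts, where it becomes a diagram chase with six-functor and $\phi$-exchange transformations: \eqref{item:symp-push-commutes-smooth-descent} and \eqref{item-push-finite-smooth} reduce to compatibility of $\Ex^!_*$, $\Ex^\phi_*$ with compositions (\cref{prop:constructiblevanishingcycles}(2)) plus the base-change and smooth-pullback properties of $\Sigma^{\mathrm{der}}_p$; \eqref{item-symp-finite-push-base-change} to \cref{prop:constructiblevanishingcycles}(3) and \cref{prop-symp-push-ori}\eqref{item-push-canonical-base-change}; \eqref{item-symp-push-perverse-pullback-dual} to \cref{prop:phiD} and \cref{prop-symp-push-ori}\eqref{item-push-canonical-d-critical-comparison}; and \eqref{item-symp-push-perverse-pullback-product} to \cref{prop:phiTS}.

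\textbf{Main obstacle.} The essential difficulty is the construction of $\varepsilon_p$, and within it the passage from a critical chart of $\pi\colon X\to B$ to one of $\bar\pi\colon p_*(X,\omega)\to S$. Unlike the smooth case of \cref{prop:perversesmoothpushforwardcompatibility}, where a chart for $X\to B$ restricts directly to one for the d-critical pushforward over $S$, here one must \emph{thicken} $(U,f)$ in the normal directions of the closed immersion $p$ and show that the resulting $(W,h)$ genuinely represents $\R\Crit_{U/S}(f)=p_*(X,\omega)$ as a critical chart, compatibly with the relative $(-1)$-shifted symplectic structure and its orientation — this is a relative form of the shifted Darboux theorem for symplectic pushforwards along closed immersions, and keeping the Lagrangian correspondences with their smooth legs coherent throughout the gluing is where the real work lies. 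Once the charts and the vanishing-cycles identity are in hand, the gluing and the five compatibility diagrams are routine, if lengthy, diagram chases of the type already carried out in \cref{prop:perversesmoothbasechangecompatibility,prop:perversefinitepushforwardcompatibility,prop:perversesmoothpushforwardcompatibility,prop:perverseverdiercompatibility,prop:perverseproductscompatibility}.
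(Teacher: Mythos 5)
Your plan for showing $r$ is smooth is essentially the paper's argument (moment map description of $p_*(X,\omega)$, Tor-amplitude of $\bL_{B/S}$ for a closed immersion), so that part is fine. The interesting divergence is in the construction of $\varepsilon_p$ and the subsequent compatibilities.

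The paper does \emph{not} build $\varepsilon_p$ by going through local critical charts. Instead it exploits two facts you do not use: a closed immersion $p$ is in particular \emph{finite}, so the already-established $\beta_p$ applies to the Cartesian square $p_*(X,\omega)\times_S B \to p_*(X,\omega)$ over $B \to S$; and the section $i\colon p_*(X,\omega)\to p_*(X,\omega)\times_S B$ of $p'$ together with $r\colon p_*(X,\omega)\to X$ is a Lagrangian correspondence with $r$ smooth, to which \cref{prop:Lagrangian functoriality of perverse pullback} (i.e.\ $\alpha_{(r,i)}$) applies. Since $p$ is a closed immersion, $p'$ (hence $i$) is an isomorphism on classical truncations, which converts $\alpha_{(r,i)}$ into $r^\dag\pi^\varphi\cong p'_*(\overline{\pi}')^\varphi$. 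Composing with $\beta_p$ yields $\varepsilon_p$ in one line, and the five compatibilities then reduce formally to the already-proved compatibilities of $\alpha$ and $\beta$ with $\alpha$, $\beta$, $\gamma$, $\delta$, $\TS$ from \cref{thm:micropullstk}, with no further chart-level work.

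Your proposed route has two genuine gaps. First, the reduction to a chart $\R\Crit_{W/S}(h)$ with $W$ smooth over $S$ requires a Darboux-type statement for the symplectic pushforward along a \emph{closed immersion}, not along a smooth map. \cref{prop:pushforwardcriticallocus} and the chart construction used in \cref{prop:perversesmoothpushforwardcompatibility} both assume $B\to S$ smooth, so a critical chart $(U,f)$ for $X\to B$ restricts directly to one for the pushforward; here it does not, and the thickening $W=W_0\times_S\bA^c$, $h=\tilde f+\sum y_i\bar t_i$ you sketch is new content which you would have to prove identifies $\R\Crit_{W/S}(h)$ with $p_*(\R\Crit_{U/B}(f),s_f)$ compatibly with symplectic structures and orientations — this is precisely the kind of work the paper's formal argument is designed to bypass. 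Second, and related, gluing your chart-level isomorphisms via \cref{cor:criticaldescent} would require a compatibility of the stabilization isomorphisms of \cref{thm:nonlinearstabilization} with closed-immersion pushforwards; the available compatibilities there are only for smooth base change (property 6), finite base change (property 8), and pushforward along a smooth $p\colon B\to B'$ (property 9). None of these covers the case you need, so you would effectively have to extend \cref{thm:nonlinearstabilization} — again something the paper's proof renders unnecessary.
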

\begin{proof}
    To see that $r$ is smooth, recall that $p_*(X, \omega)$ can be described as the zero locus of the moment map $\mu_{X} \colon X \to \T^*(B/S)$ (see \cite[Proposition 2.3.1]{ParkSymplectic}).
    Since $p$ is unramified, $\bL_{B/S}$ is of Tor-amplitude $\leq -1$.
    Hence the zero section of $\T^*(B/S)$ has cotangent complex of Tor-amplitude $\geq 0$ and is smooth.
    
    To construct the isomorphism \eqref{eq-symp-push-perverse-pullback}, form the Cartesian square of derived stacks
    \[\begin{tikzcd}
        p_*(X, \omega) \times_{S} B \ar{r}{p'}\ar{d}{\overline{\pi}'}
        & p_*(X, \omega) \ar{d}{\overline{\pi}}
        \\
        B \ar{r}{p}
        & S
    \end{tikzcd}\]
    and regard $\overline{\pi}'$ with its induced oriented relative exact $(-1)$-shifted symplectic structure.
    Denote by $i \colon p_*(X, \omega) \to p_*(X, \omega) \times_{S} B$ the unique morphism equipped with identifications $p' \circ i \cong \id$ and $\pi \circ r \cong \overline{\pi}' \circ i$.
    Then the diagram
    \[\begin{tikzcd}
        & p_*(X, \omega) \ar[swap]{ld}{r}\ar{rd}{i} & 
        \\
        X & & p_*(X, \omega) \times_{S} B
    \end{tikzcd}\]
    defines an oriented Lagrangian correspondence between $\pi$ and $\overline{\pi}'$.
    Since $\overline{\pi}$ factors through $p$ and $p$ is a closed immersion, $p'$ induces an isomorphism on classical truncations and hence so does its section $i$.
    Applying \cref{prop:Lagrangian functoriality of perverse pullback} we deduce the canonical isomorphism $ i_* r^\dag \pi^\varphi \cong \overline{\pi}'^\varphi$, hence $r^\dag \pi^\varphi \cong p'_* \overline{\pi}'^\varphi$.
    Combining this with the canonical isomorphism $\overline{\pi}^\varphi p_* \cong p'_* \overline{\pi}'^\varphi$ of \cref{thm:micropullstk}(2), using that $p$ is finite, we define:
    \[
    \varepsilon_{p} \colon \overline{\pi}^\varphi p_* \cong p'_* \overline{\pi}'^\varphi \cong r^\dag \pi^\varphi.
    \]
    Functoriality for compositions in $p$ follows from the corresponding functoriality statements in \cref{thm:micropullstk}(1,2). 

    The claims (\ref{item:symp-push-commutes-smooth-descent}, \ref{item-symp-finite-push-base-change}, \ref{item-push-finite-smooth}, \ref{item-symp-push-perverse-pullback-dual}, \ref{item-symp-push-perverse-pullback-product}) follow from the fact that the map $\varepsilon_{p}$ is constructed as a composite of the isomorphisms $\alpha$ of \cref{thm:micropullstk}(1) and $\beta$ of \cref{thm:micropullstk}(2), and these are both individually compatible with $\alpha$, $\beta$, $\gamma$, $\delta$, and $\TS$ by \cref{thm:micropullstk}.
    To illustrate this, we give a detailed proof of (\ref{item-push-finite-smooth}); the proofs of (\ref{item:symp-push-commutes-smooth-descent}, \ref{item-symp-finite-push-base-change}, \ref{item-symp-push-perverse-pullback-dual}, \ref{item-symp-push-perverse-pullback-product}) are similar.

    Consider the following commutative diagram:
    \[\begin{tikzcd}
    	{\bar{p}_* q_*(X, \omega)} & {q_*(X, \omega)} & {\bar{p}_* q_*(X, \omega) \times_{R}S'} \\
    	{p_*(X,  \omega)} & X & {p_*(X,  \omega) \times_S B.}
    	\arrow["{\bar{r}}", from=1-1, to=1-2]
    	\arrow["{{\bar{t}}}", shift left=2, curve={height=-24pt}, from=1-1, to=1-3]
    	\arrow["{\bar{p}_X}", curve={height=24pt}, from=1-3, to=1-1]
    	\arrow["{\bar{s}}"', from=1-1, to=2-1]
    	\arrow["{{r}}"', from=2-1, to=2-2]
    	\arrow["{{t}}"', shift right=2, curve={height=18pt}, from=2-1, to=2-3]
    	\arrow["s", from=1-2, to=2-2]
    	\arrow["{{\bar{s} \times_{R} \id_{S'}}}"', from=1-3, to=2-3]
    	\arrow["p_X"', curve={height=-18pt}, from=2-3, to=2-1]
    \end{tikzcd}\]
    Here, $p_X$ and $\bar{p}_X$ are base change of $p$ and $\bar{p}$, and $t$ and $t'$ are naturally defined morphisms.
    By construction, $t$ and $p_X$ are mutually inverse after passing to the classical truncation, and similarly for $\bar{t}$ and $\bar{p}_X$.
    Consider the following diagram:
    \[\adjustbox{scale=0.8,center}{
    \begin{tikzcd}
    	{ \bar{\pi} ^{\varphi} p_* q^{\dagger}} &[-15pt] &[-15pt] { \bar{\pi}^{\varphi} \bar{q}^{\dagger} \bar{p}_* } &[-10pt] &[-15pt] { \bar{s}_* (\bar{\pi}')^{\varphi} \bar{p}_*} \\
    	{p_{X, *}(\bar{\pi} \times_{S} \id_B)^{\varphi}  q^{\dagger}} && {p_{X, *} (\bar{s} \times_R \id_{S'})_*  (\bar{\pi}' \times_{R} \id_{S'})^{\varphi}} && { \bar{s}_*  \bar{p}_{X, *} (\bar{\pi}' \times_{R} \id_S)^{\varphi} } \\
    	{ p_{X, *} t_* r^{\dagger} \pi^{\varphi} q^{\dagger}} & { p_{X, *} t_* r^{\dagger} s_* (\pi')^{\varphi} } & { p_{X, *} t_*  \bar{s}_* \bar{r}^{\dagger}  (\pi')^{\varphi} } & { p_{X, *} (\bar{s} \times_R \id_{S'})_* \bar{t}_*  \bar{r}^{\dagger}  (\pi')^{\varphi} } & { \bar{s}_*  \bar{p}_{X, *} \bar{t}_* \bar{r}^{\dagger} (\pi')^{\varphi} } \\
    	{r^{\dagger} \pi^{\varphi} q^{\dagger}} && { {r^{\dagger} s_* (\pi')^{\varphi} }} && {{ \bar{s}_* \bar{r}^{\dagger} (\pi')^{\varphi} }.}
    	\arrow["{{\beta_{p}}}", from=1-1, to=2-1]
    	\arrow[""{name=0, anchor=center, inner sep=0}, "{{\varepsilon_p}}"', curve={height=60pt}, from=1-1, to=4-1]
    	\arrow["\sim", "\Ex^!_*"', from=1-3, to=1-1]
    	\arrow["{{\gamma_{\bar{q}}}}", from=1-3, to=1-5]
    	\arrow["{(B)}"{description}, draw=none, from=1-3, to=2-3]
    	\arrow["{{\beta_{\bar{p}}}}"', from=1-5, to=2-5]
    	\arrow[""{name=1, anchor=center, inner sep=0}, "{{\varepsilon_{\bar{p}}}}", curve={height=-65pt}, from=1-5, to=4-5]
    	\arrow[""{name=2, anchor=center, inner sep=0}, "{{\gamma_q}}", from=2-1, to=2-3]
    	\arrow[""{name=3, anchor=center, inner sep=0}, "{{\alpha_{(r, t)}}}", from=2-1, to=3-1]
    	\arrow[""{name=4, anchor=center, inner sep=0}, "\sim", from=2-3, to=2-5]
    	\arrow["{{\alpha_{(\bar{r}, \bar{t})}}}"{pos=0.6}, from=2-3, to=3-4]
    	\arrow[""{name=5, anchor=center, inner sep=0}, "{{\alpha_{(\bar{r}, \bar{t})}}}"', from=2-5, to=3-5]
    	\arrow[""{name=6, anchor=center, inner sep=0}, "{{\gamma_q}}", from=3-1, to=3-2]
    	\arrow["\sim", from=3-1, to=4-1]
    	\arrow[""{name=7, anchor=center, inner sep=0}, "{{\Ex^!_*}}", from=3-2, to=3-3]
    	\arrow["\sim", from=3-2, to=4-3]
    	\arrow[""{name=8, anchor=center, inner sep=0}, "\sim", from=3-3, to=3-4]
    	\arrow[""{name=9, anchor=center, inner sep=0}, "\sim", from=3-4, to=3-5]
    	\arrow["\sim"', from=3-5, to=4-5]
    	\arrow[""{name=10, anchor=center, inner sep=0}, "{{\gamma_q}}"', from=4-1, to=4-3]
    	\arrow[""{name=11, anchor=center, inner sep=0}, "{{\Ex^!_*}}"', from=4-3, to=4-5]
    	\arrow["{(A)}"{description}, draw=none, from=0, to=3]
    	\arrow["{(C)}"{description}, draw=none, from=2, to=7]
    	\arrow["{(D)}"{description}, draw=none, from=4, to=9]
    	\arrow["{{(G)}}"{description}, draw=none, from=5, to=1]
    	\arrow["{(E)}"{description}, draw=none, from=6, to=10]
    	\arrow["{(F)}"{description}, draw=none, from=8, to=11]
    \end{tikzcd}}\]
    It is enough to prove the commutativity of the outer square.
    The commutativity of the diagrams (A) and (G) follows from the construction of the map $\varepsilon_{p}$ and $\varepsilon_{\bar{p}}$.
    The commutativity of the diagrams (D), (E) and (F) are obvious.
    The commutativity of the diagram (B) follows from the compatibility between the $\beta_{-}$ and $\gamma_{-}$ proved in \cref{thm:micropullstk}.
    Similarly, the commutativity of the diagram (C) follows from the compatibility relation between $\alpha_-$ and the $\gamma_{-}$, which is also proved in \cref{thm:micropullstk}.
    Hence we conclude the commutativity of the outer square as desired.
\end{proof}

\printbibliography

\end{document}